\definecolor{allrefcolors}{rgb}{0,0.5,0.4}
\newtheorem{theorem}{Theorem}[subsection]
\newtheorem{lemma}[theorem]{Lemma}
\newtheorem{corollary}[theorem]{Corollary}
\newtheorem{proposition}[theorem]{Proposition}
\theoremstyle{definition}
\newtheorem{definition}[theorem]{Definition}
\theoremstyle{remark}
\newtheorem{remark}[theorem]{Remark}
\newtheorem{convention}[theorem]{Convention}
\newtheorem{example}[theorem]{Example}
\numberwithin{equation}{subsection}
\newcommand{\F}{\mathcal F}
\newcommand{\G}{\mathcal G}
\newcommand{\T}{\mathcal T}
\newcommand{\E}{\mathcal E}
\newcommand{\K}{\mathcal K}
\newcommand{\C}{\mathcal C}
\newcommand{\W}{\mathcal W}
\newcommand{\s}{\mathfrak s}
\newcommand{\ttt}{\mathfrak t}
\newcommand{\rr}{\mathfrak r}
\newcommand{\oo}{\mathfrak o}
\newcommand{\X}{\mathcal X}
\newcommand{\Y}{\mathcal Y}
\newcommand{\Z}{\mathcal Z}
\newcommand{\CC}{\mathbb C}
\newcommand{\ZZ}{\mathbb Z}
\newcommand{\RR}{\mathbb R}
\newcommand{\QQ}{\mathbb Q}
\newcommand{\HHH}{\mathcal H}
\newcommand{\HH}{\mathbb H}
\newcommand{\PPP}{\mathcal P}
\newcommand{\SSS}{\mathcal S}
\newcommand{\PT}{\mathsf{PT}}
\newcommand{\ttop}{\mathrm{top}}
\newcommand{\A}{\mathcal A}
\newcommand{\B}{\mathcal B}
\newcommand{\Abar}{\bar\A}
\newcommand{\Bbar}{\bar\B}
\newcommand{\cH}{\check H}
\newcommand{\sC}{\overline C}
\newcommand{\sH}{\overline H}
\newcommand{\cC}{\check C}
\newcommand{\Fix}{\operatorname{Fix}}
\newcommand{\ind}{\operatorname{ind}}
\newcommand{\proj}{\operatorname{proj}}
\newcommand{\std}{\mathrm{std}}
\newcommand{\Mbar}{\overline{\mathcal M}}
\newcommand{\Cbar}{\overline{\mathcal C}}
\newcommand{\delbar}{\bar\partial}
\newcommand{\Hom}{\operatorname{Hom}}
\newcommand{\Ext}{\operatorname{Ext}}
\newcommand{\Tor}{\operatorname{Tor}}
\newcommand{\coker}{\operatorname{coker}}
\newcommand{\colim}{\operatornamewithlimits{colim}}
\newcommand{\hocolim}{\operatornamewithlimits{hocolim}}
\newcommand{\codim}{\operatorname{codim}}
\newcommand{\pre}{\mathrm{pre}}
\newcommand{\tors}{\mathrm{tors}}
\newcommand{\vdim}{\operatorname{vdim}}
\newcommand{\sF}{\mathsf{F}}
\newcommand{\Ch}{\mathsf{Ch}}
\newcommand{\Set}{\mathsf{Set}}
\newcommand{\FlowCat}{\mathsf{FlowCat}}
\newcommand{\FlowCatIA}{\mathsf{FlowCat}^{\mathsf{IA}}}
\newcommand{\Ndg}{\mathsf{N}_{\mathsf{dg}}}
\newcommand{\supp}{\operatorname{supp}}
\newcommand{\im}{\operatorname{im}}
\newcommand{\op}{\mathrm{op}}
\newcommand{\inj}{\mathrm{inj}}
\newcommand{\rk}{\operatorname{rk}}
\newcommand{\vir}{\mathrm{vir}}
\newcommand{\ess}{\mathrm{ess}}
\newcommand{\pt}{\mathrm{pt}}
\newcommand{\crit}{\operatorname{crit}}
\newcommand{\st}{\mathrm{st}}
\newcommand{\id}{\operatorname{id}}
\newcommand{\gr}{\operatorname{gr}}
\newcommand{\GW}{\mathrm{GW}}
\newcommand{\HF}{\mathrm{HF}}
\newcommand{\JH}{J\!H}
\newcommand{\morse}{\mathrm{morse}}
\newcommand{\reg}{\mathrm{reg}}
\newcommand{\rel}{\operatorname{rel}}
\newcommand{\Prshv}{\operatorname{Prshv}}
\newcommand{\Shv}{\operatorname{Shv}}
\newcommand{\hShv}{\operatorname{hShv}}
\newcommand{\Rlim}{\mathop{R\varprojlim}}
\newcommand{\comment}[1]{}
\newcounter{rcounti}
\newenvironment{rlist}{\begin{list}{\roman{rcounti}.}{\usecounter{rcounti}
\setlength\itemsep{0pt}
\setlength\labelsep{4pt}
\setlength\labelwidth{16pt}
\setlength\leftmargin{32pt}
\setlength\listparindent{0pt}
\setlength\parsep{0pt}
\setlength\parskip{0pt}
\setlength\partopsep{0pt}
\setlength\rightmargin{0pt}
\setlength\topsep{0pt}
\setlength\itemindent{0pt}
}}{\end{list}}
\newcounter{rcountsave}
\newcommand{\rlistsave}{\setcounter{rcountsave}{\value{rcounti}}}
\newcommand{\rlistresume}{\setcounter{rcounti}{\value{rcountsave}}}
\begin{document}

\title{An algebraic approach to virtual fundamental cycles on moduli spaces of pseudo-holomorphic curves}

\author{John Pardon}

\date{8 June 2015}

\maketitle

\nocite{luriehttpub}

\begin{abstract}
We develop techniques for defining and working with virtual fundamental cycles on moduli spaces of pseudo-holomorphic curves which are not necessarily cut out transversally.  Such techniques have the potential for applications as foundations for invariants in symplectic topology arising from ``counting'' pseudo-holomorphic curves.

We introduce the notion of an \emph{implicit atlas} on a moduli space, which is (roughly) a convenient system of local finite-dimensional reductions.  We present a general intrinsic strategy for constructing a canonical implicit atlas on any moduli space of pseudo-holomorphic curves.  The main technical step in applying this strategy in any particular setting is to prove appropriate gluing theorems.  We require only topological gluing theorems, that is, smoothness of the transition maps between gluing charts need not be addressed.  Our approach to virtual fundamental cycles is algebraic rather than geometric (in particular, we do not use perturbation).  Sheaf-theoretic tools play an important role in setting up our functorial algebraic ``VFC package''.

We illustrate the methods we introduce by giving definitions of Gromov--Witten invariants and Hamiltonian Floer homology over $\QQ$ for general symplectic manifolds.  Our framework generalizes to the $S^1$-equivariant setting, and we use $S^1$-localization to calculate Hamiltonian Floer homology.  The Arnold conjecture (as treated by Floer, Hofer--Salamon, Ono, Liu--Tian, Ruan, and Fukaya--Ono) is a well-known corollary of this calculation.

MSC 2010 Primary: 37J10, 53D35, 53D40, 53D45, 57R17

MSC 2010 Secondary: 53D37, 53D42, 54B40

Keywords: virtual fundamental cycles, pseudo-holomorphic curves, implicit atlases, Gromov--Witten invariants, Floer homology, Hamiltonian Floer homology, Arnold conjecture, $S^1$-localization, transversality, gluing
\end{abstract}

\setcounter{tocdepth}{2}
\tableofcontents

\section{Introduction}\label{introsec}

In this paper, we develop a collection of tools and techniques for defining and working with virtual fundamental cycles on compact moduli spaces of pseudo-holomorphic curves (in the sense of Gromov \cite{gromov}) which are not necessarily cut out transversally.  Such techniques have a myriad of potential applications in symplectic geometry by providing foundations for invariants obtained by ``counting'' pseudo-holomorphic curves:
\begin{equation}\label{paradigm}
\begin{matrix}\text{Symplectic}\cr\text{manifold}\end{matrix}\;\Longrightarrow\;\begin{matrix}\text{Moduli space(s) of}\cr\text{pseudo-holomorphic curves}\end{matrix}\;\overset{\text{VFC}}\Longrightarrow\;\begin{matrix}\text{Desired}\cr\text{invariant}\end{matrix}
\end{equation}
In this paper, we build a general framework which can potentially be applied to give rigorous foundations for the wide variety of invariants defined using \eqref{paradigm}.  We hope that this framework may also be applicable to moduli spaces of solutions to other nonlinear elliptic PDEs which give rise to interesting invariants.

When a moduli space is not cut out transversally, its topological structure does not determine its virtual fundamental cycle; rather it must be endowed (canonically) with some additional extra structure.  We introduce the notion of an \emph{implicit atlas} on a compact Hausdorff space, which serves as this extra structure on moduli spaces of pseudo-holomorphic curves.  We use implicit atlases as a layer of abstraction between the two steps in \eqref{paradigm}, making them logically independent.

Our notion of an implicit atlas and our constructions of implicit atlases on moduli spaces of pseudo-holomorphic curves constitute a reworking of existing ideas, with convenient canonicity and functoriality properties.  Our construction of virtual fundamental cycles from implicit atlases is more novel (using algebraic rather than geometric methods), and also has good functoriality properties which are useful in applications.  It is noteworthy that this algebraic VFC setup requires only topological gluing theorems as input.

The basic idea of using an atlas of charts of the form \eqref{localchart} on a moduli space to construct its virtual fundamental cycle has existed since the inception of this problem, see for example Li--Tian \cite{litianII}, Liu--Tian \cite{liutian}, Fukaya--Ono \cite{fukayaono}, Ruan \cite{ruan}, Lu--Tian \cite{lutian}, Fukaya--Oh--Ohta--Ono \cite{FOOOI,FOOOII,foootechnicaldetails,fooonewI} (the theory of Kuranishi structures), and McDuff--Wehrheim \cite{mcduffwehrheim,mcduffnotes}.  The polyfolds project of Hofer--Wysocki--Zehnder \cite{polyfoldI,polyfoldII,polyfoldIII,polyfoldint,polyfoldscnew,polyfoldbookI,polyfoldGW} gives another method for defining virtual fundamental cycles by describing moduli spaces via a generalized infinite-dimensional Fredholm setup.

\subsection{Implicit atlases}

An implicit atlas organizes together a collection of local charts for a compact moduli space $X$.  A local chart for $X$ is a diagram:
\begin{equation}\label{localchart}
\begin{tikzcd}
X\ar[hookleftarrow]{r}{\text{open}}&s_\alpha^{-1}(0)\ar[hook]{r}{\text{closed}}&X_\alpha\ar{r}{s_\alpha}&E_\alpha
\end{tikzcd}
\end{equation}
where $E_\alpha$ is a finite-dimensional vector space (called the \emph{obstruction space}), $X_\alpha$ is an auxiliary moduli space (called the \emph{$\alpha$-thickened\footnote{Perhaps a better name would be ``$\alpha$-stabilized moduli space'', though we have decided not to risk confusing this notion of stabilization (i.e.\ product with a vector space) and the notion of stabilizing a Riemann surface (i.e.\ adding marked points).} moduli space}), and $s_\alpha$ is called the \emph{Kuranishi map}.  For the purpose of constructing the virtual fundamental cycle of $X$, such a local chart \eqref{localchart} is useful over $X\cap X_\alpha^\reg$, where $X_\alpha^\reg\subseteq X_\alpha$ (called the \emph{regular locus}) is the locus where $X_\alpha$ is cut out transversally (and thus, in particular, $X_\alpha^\reg$ is a finite-dimensional manifold).

An implicit atlas is an index set $A$ (whose elements are called \emph{thickening datums}) along with obstruction spaces $E_\alpha$ (for all $\alpha\in A$) and $I$-thickened moduli spaces $X_I$ (for all finite subsets $I\subseteq A$) fitting together globally in a natural generalization of \eqref{localchart}, where \emph{the $\varnothing$-thickened moduli space $X_\varnothing$ is identified with the original moduli space $X$}.  An implicit atlas also includes the data of open subsets $X_I^\reg\subseteq X_I$ which are manifolds and are required to cover all of $X$.  In particular, an implicit atlas carries a parameter $d\in\ZZ$, the \emph{virtual dimension}, and we require that $\dim X_I^\reg=d+\dim E_I$ for all $I\subseteq A$ (where $E_I:=\bigoplus_{\alpha\in I}E_\alpha$).  Implicit atlases also allow charts \eqref{localchart} which incorporate the action of a finite group $\Gamma_\alpha$ (so that such charts exist on spaces $X$ with nontrivial isotropy), though we will introduce the necessary notation later in the paper.

There is a natural notion of an ``implicit atlas with boundary'' (or corners) and of the ``product implicit atlas'' on a product of spaces equipped with implicit atlases (with boundary/corners).  These notions enable us to treat Floer-type homology theories via implicit atlases.

We use only ``topological'' implicit atlases in this paper (i.e.\ we only require that the $X_I^\reg$ are topological manifolds), since the topological structure is sufficient to construct virtual fundamental cycles.  There is, of course, a parallel notion of a smooth implicit atlas, which we will not need here.

\begin{remark}
From a theoretical standpoint, it would be desirable to endow the moduli space $X$ with the canonical structure of a ``derived manifold'' (a notion which should be more intrinsic than the notion of an implicit atlas on $X$).  A good notion of a ``derived smooth manifold'' exists (due to Spivak \cite{spivak}, Borisov--Noel \cite{borisovnoel}, and Joyce \cite{joyceI,joyceII}), and it is reasonable to expect that a parallel topological theory exists as well.  By nature, the theory of derived manifolds uses the language of higher category theory.  An implicit atlas on $X$ may be thought of as giving a ``presentation'' of $X$ as a derived manifold (just as a collection of open sets of $\RR^n$ and gluing data can be used to present an ordinary manifold).

The ``atlas'' approach which we follow here, while being less intrinsic, has the advantage of being more concrete and more elementary.  We believe that a more intrinsic approach is unlikely to lead to any simplification in the construction of this extra structure (implicit atlas or derived manifold structure) on moduli spaces of pseudo-holomorphic curves.  However, it would likely make it easier to work with (and, in particular, calculate) virtual fundamental cycles on such spaces.
\end{remark}

\subsection{Construction of implicit atlases}\label{constructionIntro}

Implicit atlases are designed to encode a system of charts which can be constructed naturally and intrisically on moduli spaces of pseudo-holomorphic curves in wide generality.  Moreover, the basic ingredients which go into the construction of implicit atlases are all familiar in the field.  We think of our specific examples of constructions of implicit atlases as special cases of a general strategy which produces on any moduli space of pseudo-holomorphic curves a canonical implicit atlas.  For this general strategy to succeed, there are (essentially) two steps which require setting-specific arguments.

The first step requiring setting-specific arguments is \emph{domain stabilization}.  For any pseudo-holomorphic curve $u:C\to M$ in $X$, we must show that there is a smooth codimension two submanifold (possibly with boundary) $D\subseteq M$ which intersects it transversally such that adding $u^{-1}(D)$ as added marked points on $C$ makes $C$ stable.\footnote{One can sometimes get away with a little less if this specific form of domain stabilization does not hold.  We will encounter such a situation when constructing $S^1$-equivariant implicit atlases on moduli spaces of stable Floer trajectories for $S^1$-invariant Hamiltonians.}  This is an important ingredient in verifying the \emph{covering axiom} of an implicit atlas.

The second step requiring setting-specific arguments is \emph{formal regularity implies topological regularity}.  For the $I$-thickened moduli spaces $X_I$, we denote by $X_I^\reg\subseteq X_I$ the locus where the relevant linearized operator is surjective.  We must show that $X_I^\reg$ is open and is a topological manifold of the correct dimension (we also need a certain topological submersion condition on how $X_I^\reg$ is cut out inside $X_J^\reg$ for $I\subseteq J$).  These are the \emph{openess and submersion axioms} of an implicit atlas.  This is the step where we must appeal to serious analytic results (in particular, this is where gluing of pseudo-holomorphic curves takes place).  Note, though, that we only require topological gluing results (i.e.\ smoothness of transition maps between gluing charts need not be addressed).  We should also point out that these axioms are local statements about the spaces $X_I^\reg$, and hence are independent of any auxiliary group action.

Our general strategy constructs an implicit atlas on $X$ which is \emph{canonical} (in the sense that we do not need to make any choices during its construction).  This is achieved simply by defining $A$ to be the set of \emph{all} possible thickening datums (of which there are uncountably many), where a thickening datum is a choice of divisor $D$, an obstruction space $E$, plus some additional data.  Note that it is nontrivial to formulate a notion of ``atlas'' which allows this type of ``universal'' construction.  Having such a canonical procedure is very useful in a number of places, and this aspect of our approach appears to be new.

Our general strategy works well for the construction of $S^1$-equivariant implicit atlases on the moduli spaces of stable Floer trajectories for $S^1$-invariant Hamiltonians (a key step towards the Arnold conjecture).  The domain stabilization step is harder in the equivariant setting (we must use a divisor inside $M$ instead of inside $M\times S^1$, and we must be satisfied with not stabilizing Morse-type components of trajectories), but is not difficult.  The formal regularity implies topological regularity step is the \emph{same} as in the non-equivariant case: the analysis is \emph{independent of the $S^1$-equivariance}, and thus is (essentially) identical to that in the non-equivariant case.  In particular, we never need to check that $S^1$ acts smoothly on anything.

\subsection{Construction of virtual fundamental cycles}

We develop a ``VFC package'' for any space $X$ equipped with an implicit atlas $A$.  The tools we develop are primarily \emph{algebraic} (chain complexes and sheaves) rather than geometric or topological.  Furthermore, these tools have nice functorial properties which allow them to be applied essentially independently of how their internals are constructed.  As mentioned earlier, this set of tools does not require a smooth structure on the atlas $A$.

Let us now discuss the components of the VFC package.  The main object we develop is the \emph{virtual cochain complex} $C^\bullet_\vir(X;A)$ defined whenever $A$ is \emph{finite}.\footnote{If $A\subseteq A'$, then $C^\bullet_\vir(X;A)$ and $C^\bullet_\vir(X;A')$ are equivalent for the purposes of defining virtual fundamental cycles, so given an infinite implicit atlas we can just work with any choice of finite subatlas.}  It comes with a canonical isomorphism:\footnote{In the present discussion, we ignore issues about orientations.}
\begin{equation}\label{fundisointro}
\cH^\bullet(X)\xrightarrow\sim H^\bullet_\vir(X;A)
\end{equation}
($H^\bullet_\vir$ is the homology of $C^\bullet_\vir$) and with a canonical map:
\begin{equation}\label{spushforwardintro}
C^{d+\bullet}_\vir(X;A)\xrightarrow{s_\ast}C_{\dim E_A-\bullet}(E_A,E_A\setminus 0)
\end{equation}
(where $E_A:=\bigoplus_{\alpha\in A}E_\alpha$ and $d$ is the virtual dimension of $A$).  Since $H_{\dim E_A}(E_A,E_A\setminus 0)=\ZZ$, combining \eqref{fundisointro} and \eqref{spushforwardintro} yields a map $\cH^d(X)\to\ZZ$.  We define the virtual fundamental class to be this element $[X]^\vir_A\in\cH^d(X)^\vee$; if $X$ is cut out transversally (that is, $X=X^\reg$), then it agrees with the usual fundamental class of $X$ as a closed manifold.  This complex $C_\vir^\bullet$ generalizes naturally to ``implicit atlases with boundary'' as well.

We use $C_\vir^\bullet$ for much more than just defining the virtual fundamental class.  Since it is a complex (rather than just a sequence of homology groups), it is sufficiently rich to provide a useful notion of virtual fundamental \emph{cycle} (more precisely, the map $s_\ast$ \eqref{spushforwardintro} can be thought of as the chain level virtual fundamental cycle).  This enables us to use the VFC package to treat Floer-type homology theories, which requires something like a ``coherent system of virtual fundamental cycles'' over a large system of spaces (a ``flow category'') equipped with implicit atlases.

Let us now explain and motivate the definition of $C^\bullet_\vir$.  First, let us imagine we have a single chart \eqref{localchart} which is global (i.e.\ $X=s_\alpha^{-1}(0)$) and $s_\alpha^{-1}(0)\subseteq X_\alpha^\reg$.  Then we consider the following diagram:
\begin{equation}
\cH^{\dim X_\alpha^\reg-\dim E_\alpha}(X)\xrightarrow\sim H_{\dim E_\alpha}(X_\alpha^\reg,X_\alpha^\reg\setminus X)\xrightarrow{(s_\alpha)_\ast}H_{\dim E_\alpha}(E_\alpha,E_\alpha\setminus 0)=\ZZ
\end{equation}
The first isomorphism is Poincar\'e--Lefschetz duality.  The virtual fundamental class is the resulting element $[X]^\vir\in\cH^d(X)^\vee$ (recall $d=\dim X_\alpha^\reg-\dim E_\alpha$) which is easily seen to agree with the class defined via perturbation.  Thus in this case, the complex $C_{\dim X_\alpha^\reg-\bullet}(X_\alpha^\reg,X_\alpha^\reg\setminus X)$ plays the role of $C^\bullet_\vir(X;A)$.

In general, $C^\bullet_\vir(X;A)$ (which may be thought of as a ``global finite-dimensional reduction up to homotopy'') is built out of the singular chain complexes of the $X_I^\reg$ (or, more accurately, of some auxiliary spaces $X_{I,J,A}$ defined from the $X_I^\reg$ using a \emph{deformation to the normal cone}).  We glue together these singular chain complexes using an appropriate homotopy colimit (a generalized sort of mapping cone); this is technically more convenient than gluing together the spaces themselves.  The latter could probably be made to work as well, as long as one is careful to glue using homotopy colimits\footnote{Here is a baby example of how one may use mapping cones to bypass point-set topological issues.  The long exact sequence of the pair $\cdots\to H_\bullet(A)\to H_\bullet(X)\to H_\bullet(X,A)\to\cdots$ is valid for an arbitrary pair $(X,A)$ of topological spaces (meaning $A\subseteq X$ has the subspace topology), and relative homology $H_\bullet(X,A)$ is always naturally isomorphic to the reduced homology of the mapping cone $(cA\sqcup X)/\sim$.  On the other hand, understanding $H_\bullet(X/A)$ usually requires some niceness assumptions on $(X,A)$ (to ensure that the natural map $H_\bullet(X,A)\to H_\bullet(X/A,\pt)$ is an isomorphism).  On the other hand, if one is content working with $H_\bullet(X,A)$ or with the mapping cone, then such niceness assumptions are unnecessary.} (one can run into point-set topological issues with certain natural topological gluings).\footnote{McDuff--Wehrheim \cite[Examples 3.1.14 and 3.1.15]{mcduffwehrheim} give examples of natural topological quotients which fail to be Hausdorff, fail to be locally compact, or fail to be locally metrizable.}

We use the language of sheaves to give an especially efficient construction of the key isomorphism \eqref{fundisointro}.  In particular, we reduce \eqref{fundisointro} to a statement which can be checked \emph{locally} on $X$.  In a word, we define a ``homotopy $\K$-sheaf'' $K\mapsto C^\bullet_\vir(K;A)$ on $X$, and we show that the stalk cohomology $H^\bullet_\vir(\{p\};A)$ is isomorphic to $\ZZ$ concentrated in degree zero.  The isomorphism \eqref{fundisointro} then follows from rather general sheaf-theoretic arguments (and in fact there is a corresponding map of complexes).  It should not be surprising that sheaves can be used effectively in this setting, since the problem we are facing is precisely to patch together local homological information (from charts \eqref{localchart}) into global homological information.

Moreover, we find in this paper that this sheaf-theoretic formalism continues to play a key role in the study and application of the complexes $C^\bullet_\vir$, beyond simply constructing the fundamental isomorphism \eqref{fundisointro}.  Hence we believe that the sheaf-theoretic formalism is of more importance than the precise manner of definition of $C^\bullet_\vir$.  In particular, checking commutativity of certain diagrams of homology groups can often be reduced to checking that a certain corresponding diagram of sheaves commutes (which is then just a \emph{local} calculation).  This is a key proof technique in many places where we use the VFC package.  This is perhaps surprising since the virtual fundamental class itself has no local homological characterization (though see Remark \ref{chainsishomotopysheaf}).

\begin{remark}
Though our definition of the virtual fundamental cycle does not involve perturbation, we do in some sense show that perturbation is a valid way to compute the virtual fundamental class (in fact, this is an easy corollary of some of its formal properties).
\end{remark}

\begin{remark}\label{smoothcandomaybe}
In this paper, we only construct virtual fundamental cycles in ordinary homology, which is thus inadequate for applications of moduli spaces of pseudo-holomorphic curves involving their fundamental class in smooth (or framed) bordism (e.g.\ as in Abouzaid \cite{abouzaid} or Ekholm--Smith \cite{ekholmsmith}).  However, in discussions with Abouzaid, we realized that by working on the space level, one can probably upgrade our construction to yield a virtual fundamental cycle in the (Steenrod) framed bordism group of $X$, twisted by a certain ``virtual spherical normal bundle''.
\end{remark}

\begin{remark}\label{chainsishomotopysheaf}
In broad philosophical outline, the strategy we follow to define the virtual fundamental cycle is the following.  The functor $U\mapsto C_\bullet(U\rel\partial U)$ (or Borel--Moore chains $C_\bullet^\mathrm{BM}(U)$) is homotopy sheaf on $X$, and the virtual fundamental cycle is a global section thereof.  Hence we may define the virtual fundamental cycle by \emph{specifying it on a convenient finite open cover and giving patching data on all higher overlaps} (in our case, the particular cover being $X=\bigcup_{I\subseteq A}\psi_{\varnothing I}((s_I|X_I^\reg)^{-1}(0))$ for finite $A$).  This philosophy is very natural and could apply quite generally.
\end{remark}

\begin{remark}\label{inftycategoriesbetter}
The language of $\infty$-categories as developed by Lurie \cite{luriehtt} seems to be a natural setting for the VFC package and, more generally, for a good theory of derived manifolds and their virtual fundamental cycles (indeed, the existing theory of derived smooth manifolds is by necessity written in the language of higher category theory).  We have avoided $\infty$-categories in this paper for sake of concreteness (though at the cost of needing to use lots of explicit homotopy colimits).  However, we expect that in this more abstract framework, one could ultimately develop the most flexible calculational tools (computing virtual fundamental cycles directly from our defintion seems rather difficult, due to the inexplicit nature of the isomorphism \eqref{fundisointro}).
\end{remark}

\subsection{Example applications}\label{applsec}

We use the framework developed in this paper to give new VFC foundations for classical results which rely on virtual moduli cycle techniques.  We define Gromov--Witten invariants for general symplectic manifolds (originally due in this generality to Li--Tian \cite{litianII}, Fukaya--Ono \cite{fukayaono}, and Ruan \cite{ruan}).  We define Hamiltonian Floer homology over $\QQ$ for general closed symplectic manifolds, and we use $S^1$-localization methods to calculate Hamiltonian Floer homology (originally due in this generality to Liu--Tian \cite{liutian}, Fukaya--Ono \cite{fukayaono}, and Ruan \cite{ruan}).  The Arnold conjecture on Hamiltonian fixed points is a standard corollary of this calculation.

We hope that the examples we treat here may persuade the reader that it is reasonable to expect to be able to construct implicit atlases on moduli spaces of pseudo-holomorphic curves in considerable generality, and hence that our VFC package is applicable to other curve counting invariants in symplectic topology.

\subsection{How to read this paper}

The logical dependence of the sections is roughly as follows:
\begin{equation*}
\begin{tikzcd}[column sep = tiny]
&&\text{\S\ref{gluingappendix}}\begin{matrix}\text{Gluing}\cr\text{for GW}\end{matrix}\ar[dotted]{d}
&&&\text{\S\ref{gluingHF}}\begin{matrix}\text{Gluing}\cr\text{for HF}\end{matrix}\ar[dotted]{d}\\
\text{\S\ref{implicitatlasdefsec}}\begin{matrix}\text{Implicit}\cr\text{atlases}\end{matrix}\ar{rr}\ar{rd}&&
\text{\S\ref{gromovwittensection}}\begin{matrix}\text{Gromov--}\cr\text{Witten}\end{matrix}\ar{rrr}&&&
\text{\S\ref{hamiltonianfloersec}}\begin{matrix}\text{Hamilt.}\cr\text{Floer}\end{matrix}\\
\text{\S\ref{homologicalalgebrasection}}\begin{matrix}\text{Homol.}\cr\text{algebra}\end{matrix}\ar{r}&
\text{\S\ref{Fsheafsection}}\begin{matrix}\text{VFC}\cr\text{package}\end{matrix}\ar{r}\ar[bend right]{rr}&
\text{\S\ref{fundamentalclasssection}}\begin{matrix}\text{Fund.}\cr\text{class}\end{matrix}\ar[dotted]{u}\ar[bend left, dotted]{rr}&
\text{\S\ref{auxiliarysection}}\text{ Stratifications}\ar{r}&
\text{\S\ref{homologygroupssection}}\begin{matrix}\text{Floer-type}\cr\text{homology}\end{matrix}\ar{r}\ar[dotted]{ur}&
\text{\S\ref{Slocalizationsection}}\begin{matrix}S^1\cr\text{localiz.}\end{matrix}\ar[dotted]{u}
\end{tikzcd}
\end{equation*}
Solid arrows indicate a strong dependency (logical and conceptual); dashed arrows indicate isolated logical dependence (quoting results as black boxes).  Obviously, the choice of solid/dashed is rather subjective.  In the bottom row, we develop the VFC package and apply it abstractly to spaces equipped with implicit atlases.  In the middle row, we construct implicit atlases on various moduli spaces of pseudo-holomorphic curves and define the desired invariants by applying the abstract results developed in the bottom row.  The top row contains gluing results necessary to prove certain key axioms for the implicit atlases constructed in the middle row.

We now summarize the contents of each section, from which the reader may decide which sections to read in detail.

\S\ref{technicalintrosection} is of an introductory flavor (it does not contain any definitions or results to be used elsewhere); it aims to develop technical intuition for our approach without getting bogged down in details.  We give a simplified definition of an implicit atlas, and we give some simple examples.  We give some prototypical constructions of implicit atlases in simplified settings.  We define the virtual fundamental class algebraically from some simple implicit atlases.  We also give a simplified outline of how we apply the VFC package to construct Floer-type homology theories from moduli spaces equipped with implicit atlases.

In \S\ref{implicitatlasdefsec}, we give the definition of an implicit atlas (and its variant with boundary).

In \S\ref{Fsheafsection}, we construct the VFC package.  For a space $X$ equipped with a finite implicit atlas $A$ with boundary, we define and study the \emph{virtual cochain complexes} $C^\bullet_\vir(X;A)$ and $C^\bullet_\vir(X\rel\partial;A)$.  The algebraic and sheaf-theoretic foundations from Appendix \ref{homologicalalgebrasection} play a key role.

In \S\ref{fundamentalclasssection}, we use the VFC package to define the virtual fundamental class.  We also derive some of its basic properties and provide some calculation tools.

In \S\ref{auxiliarysection}, we introduce and study implicit atlases with stratification.  We use the VFC package to obtain an inductive ``stratum by stratum'' understanding of virtual fundamental cycles.

In \S\ref{homologygroupssection}, we use the VFC package to define homology groups from a suitably compatible system of implicit atlases on a ``flow category''.  We also use the results concerning stratifications from \S\ref{auxiliarysection}.

In \S\ref{Slocalizationsection}, we construct an $S^1$-equivariant VFC package and use it prove some $S^1$-localization results for the virtual fundamental classes of \S\ref{fundamentalclasssection} and the homology groups of \S\ref{homologygroupssection}.

In \S\ref{gromovwittensection}, we construct Gromov--Witten invariants by constructing an implicit atlas on the moduli space of stable pseudo-holomorphic maps and using the results of \S\ref{fundamentalclasssection}.

In \S\ref{hamiltonianfloersec}, we construct Hamiltonian Floer homology by constructing implicit atlases on the moduli spaces of stable Floer trajectories and using the results of \S\ref{homologygroupssection}.  We also construct $S^1$-equivariant implicit atlases on the moduli spaces for time-independent Hamiltonians.  Applying the results of \S\ref{Slocalizationsection}, we calculate Hamiltonian Floer homology and thus deduce the Arnold conjecture.

In Appendix \ref{homologicalalgebrasection}, we review and develop foundational results about sheaves, homotopy sheaves, \v Cech cohomology, Poincar\'e duality, and homotopy colimits.

In Appendix \ref{gluingappendix}, we prove the gluing theorem needed in the construction of implicit atlases on Gromov--Witten moduli spaces in \S\ref{gromovwittensection}.

In Appendix \ref{gluingHF}, we prove the gluing theorem needed in the construction of implicit atlases on Hamiltonian Floer moduli spaces in \S\ref{hamiltonianfloersec}.

\subsection{Acknowledgements}

I owe much thanks to my advisor Yasha Eliashberg for suggesting this problem and for many useful conversations.  I also thank Mohammed Abouzaid, Dusa McDuff, and the anonymous referees in particular for many specific and detailed comments on this work.  I am grateful for the useful exchanges I had with Tobias Ekholm, S\o ren Galatius, Helmut Hofer, Michael Hutchings, Eleny Ionel, Dominic Joyce, Daniel Litt, Patrick Massot, Rafe Mazzeo, Paul Seidel, Dennis Sullivan, Arnav Tripathy, Ravi Vakil, and Katrin Wehrheim.  I thank the Simons Center for Geometry and Physics for its hospitality during visits where I presented and discussed this work.  This paper is part of the author's Ph.\,D.\ thesis at Stanford University.

The author was partially supported by a National Science Foundation Graduate Research Fellowship under grant number DGE--1147470.

\section{Technical introduction}\label{technicalintrosection}

We now give a more technical introduction to the main ideas of this paper.  We feel free to make simplifying assumptions for the sake of clarity of exposition, though we do aim to highlight some important technical points.  A full treatment is deferred to the body of the paper, where everything is properly defined.

In \S\ref{protoIAallsection}, we familiarize the reader with the notion of an implicit atlas.  In \S\ref{constructintro}, we give some prototypical constructions of implicit atlases.  In \S\ref{vfcintrosec}, we explain how to construct the virtual fundamental cycle from an implicit atlas in some simple cases.  In \S\ref{floerintrosection}, we show how our methods can be applied to construct Floer-type homology theories.  In \S\ref{Slocalintrosec}, we explain a rudimentary $S^1$-localization result for virtual fundamental cycles.

\subsection{Implicit atlases}\label{protoIAallsection}

\subsubsection{Implicit atlases (proto version)}\label{protoimplicitatlassection}

We now introduce (a simplified version of) implicit atlases and give some intuition for what they mean geometrically.  The impatient reader may also wish to refer to the true definition an implicit atlas in \S\ref{implicitatlasdefsec} (Definition \ref{implicitatlasdefinition}).  Here we have simplified things by assuming $\Gamma_\alpha=1$ (``trivial covering groups''), $X_I^\reg=X_I$ for $\#I\geq 1$, and $X_\varnothing^\reg=\varnothing$.

After the giving the definition (which appears rather complicated at first), we explain it further with examples.

\begin{definition}[Implicit atlas (proto version)]\label{IAproto}
Let $X$ be a compact Hausdorff space.  A \emph{(proto) implicit atlas} of dimension $d$ on $X$ is an index set $A$ along with the following data:
\begin{rlist}
\item(Obstruction spaces) A finite-dimensional $\RR$ vector space $E_\alpha$ for all $\alpha\in A$ (let $E_I:=\bigoplus_{\alpha\in I}E_\alpha$).
\item(Thickenings) A Hausdorff space $X_I$ for all finite $I\subseteq A$, and a homeomorphism $X\to X_\varnothing$.
\item(Kuranishi maps) A function $s_\alpha:X_I\to E_\alpha$ for all $\alpha\in I\subseteq A$ (for $I\subseteq J$, let $s_I:X_J\to E_I$ denote $\bigoplus_{\alpha\in I}s_\alpha$).
\item(Footprints) An open set $U_{IJ}\subseteq X_I$ for all $I\subseteq J\subseteq A$.
\item(Footprint maps) A function $\psi_{IJ}:(s_{J\setminus I}|X_J)^{-1}(0)\to U_{IJ}$ for all $I\subseteq J\subseteq A$.
\end{rlist}
which must satisfy the following ``compatibility axioms'':
\begin{rlist}
\item$\psi_{IJ}\psi_{JK}=\psi_{IK}$ and $\psi_{II}=\id$.
\item$s_I\psi_{IJ}=s_I$.
\item$U_{IJ_1}\cap U_{IJ_2}=U_{I,J_1\cup J_2}$ and $U_{II}=X_I$.
\item$\psi_{IJ}^{-1}(U_{IK})=U_{JK}\cap(s_{J\setminus I}|X_J)^{-1}(0)$.
\item(Homeomorphism axiom) $\psi_{IJ}$ is a homeomorphism.
\rlistsave
\end{rlist}
and the following ``transversality axioms'':
\begin{rlist}
\rlistresume
\item(Submersion axiom) $s_{J\setminus I}:X_J\to E_{J\setminus I}$ is locally modeled on the projection $\RR^{d+\dim E_J}\to\RR^{\dim E_{J\setminus I}}$ over $0\in E_{J\setminus I}$ for $\#I\geq 1$ (in particular, taking $I=J$ implies that $X_I$ is a topological manifold dimension $d+\dim E_I$ for $\#I\geq 1$).
\item(Covering axiom) $X_\varnothing=\bigcup_{\varnothing\ne I\subseteq A}\psi_{\varnothing I}((s_I|X_I)^{-1}(0))$.
\end{rlist}
\end{definition}

Let us unpack this definition a bit.  We have manifolds $X_\alpha$ (meaning $X_I$ for $I=\{\alpha\}$) indexed by $\alpha\in A$.  Each is equipped with a function $s_\alpha:X_\alpha\to E_\alpha$ and a homeomorphism $\psi_\alpha:s_\alpha^{-1}(0)\to U_\alpha$ (meaning $\psi_{\varnothing\{\alpha\}}:(s_\alpha|X_\alpha)^{-1}(0)\to U_{\varnothing\{\alpha\}}$) where $X=\bigcup_{\alpha\in A}U_\alpha$ is an open cover.  Thus so far this is nothing more than a set of charts (the ``basic charts'') of a particular form \eqref{localchart} covering $X$ and indexed by $\alpha\in A$.  If $A=\{\alpha\}$, then the implicit atlas is simply a single global chart \eqref{localchart}, and this is illustrated in Figure \ref{implicitatlaspicture}\subref{pica}.

\begin{figure}
\centering
\subfloat[$A=\{\alpha\}$, $d=1$, $E_\alpha=\RR$.  Illustrated are $X$ (black) and $X_\alpha$ (red; light/dark according to the sign of $s_\alpha$).]{\label{pica}\includegraphics[width=2.8in]{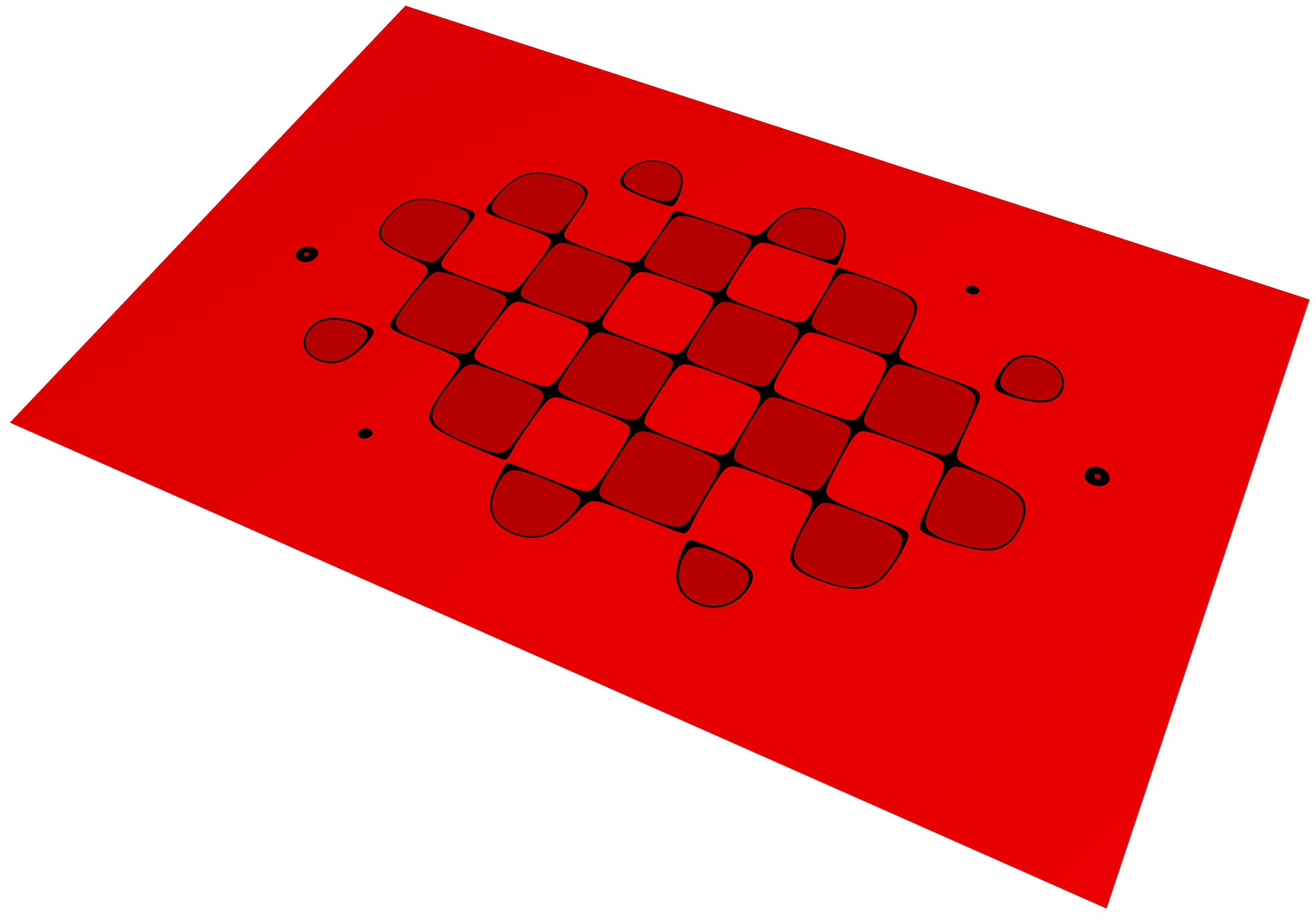}}
\qquad
\subfloat[$A=\{\alpha,\beta\}$, $d=1$, $E_\alpha=E_\beta=\RR$.  Illustrated are $X$ (black), $X_\alpha$ (red; light/dark according to the sign of $s_\alpha$), $X_\beta$ (blue; light/dark according to the sign of $s_\beta$), and $X_{\alpha\beta}$ (green boundary).]{\label{picb}\includegraphics[width=2.8in]{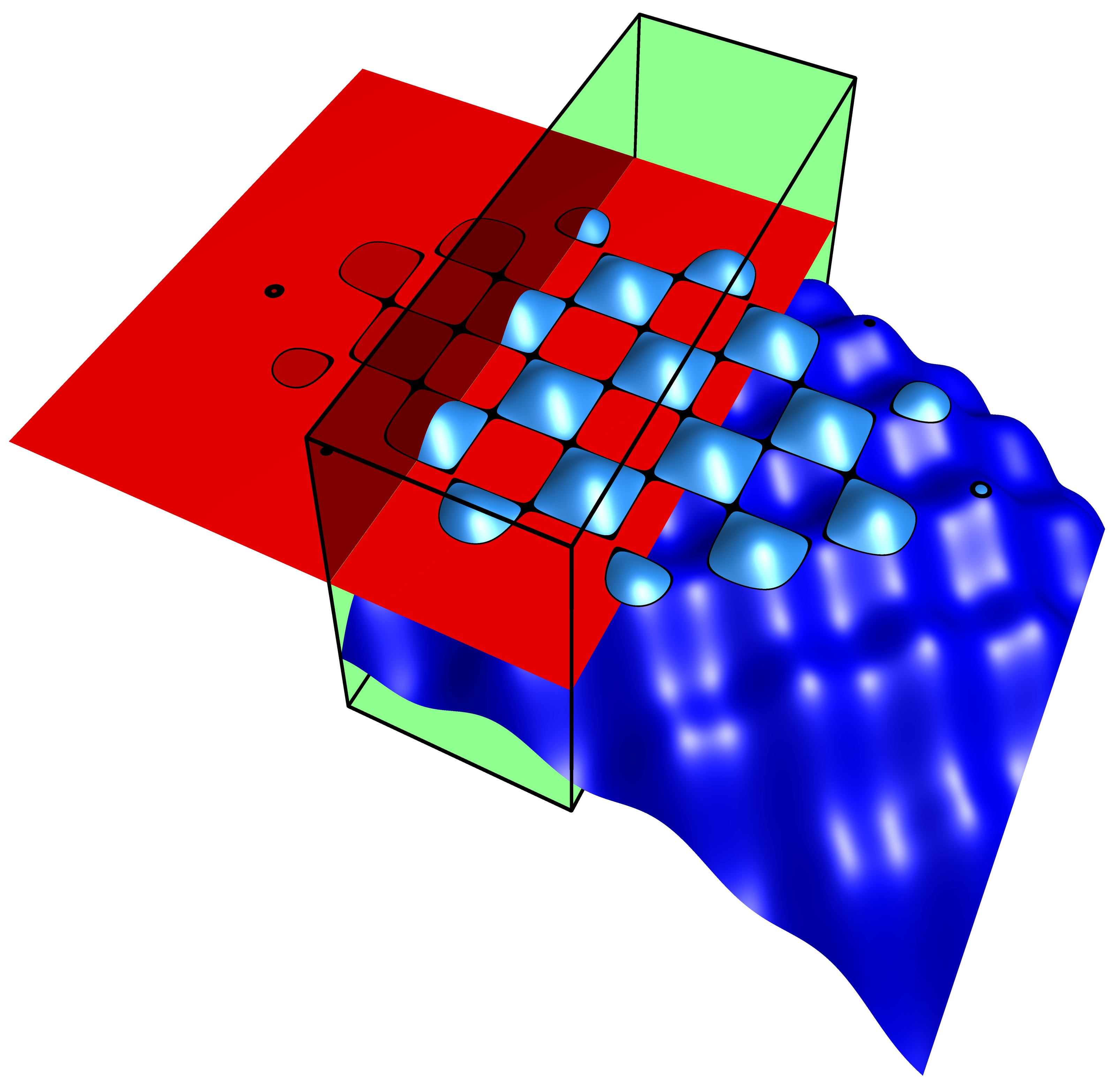}}
\caption{Illustrations of implicit atlases.}\label{implicitatlaspicture}
\end{figure}

Now for every pair of basic charts $\alpha,\beta\in A$, there is a ``overlap chart'' $X_{\alpha\beta}$ with footprint $U_\alpha\cap U_\beta$ and obstruction space $E_\alpha\oplus E_\beta$.  Furthermore, (open subsets $U_{\alpha,\alpha\beta}$ and $U_{\beta,\alpha\beta}$ of) the original charts $X_\alpha$ and $X_\beta$ are identified (via inclusion maps $\psi_{\alpha,\alpha\beta}^{-1}$ and $\psi_{\beta,\alpha\beta}^{-1}$) as the zero sets of $s_\beta$ and $s_\alpha$ respectively.  Note that they are cut out transversally by the submersion axiom, though they may not intersect each other transversally (they do so at precisely those points where $X$ is cut out transversally).  Such a system of charts in the case $A=\{\alpha,\beta\}$ is illustrated in Figure \ref{implicitatlaspicture}\subref{picb}.

More generally, we have charts indexed by the lattice of finite subsets of $A$.  The compatibility axioms relating $U_{IJ}$, $\psi_{IJ}$, and $s_\alpha$ are all just various aspects of the charts being suitably compatible with each other.  The submersion axiom is the precise property (which in practice would follow from every $X_I$ being cut out transversally for $\#I\geq 1$) we need in order to glue together the ``local virtual fundamental cycles''.  The covering axiom simply says the charts cover all of $X$, so we have enough information to recover its global virtual fundamental cycle.

\begin{remark}
The basic charts of a Kuranishi structure are indexed by the points of $X$.  Hence to define a Kuranishi structure on a space $X$, one must make a choice for each $p\in X$.  The charts of an implicit atlas are indexed by an abstract set $A$, and hence we can define implicit atlases without making any choices (see \S\ref{constructintro}).
\end{remark}

\begin{remark}\label{Iemptynotspecial}
Most of the axioms of an implicit atlas are stated without reference to whether $I$ is empty or nonempty.  This contrasts with other approaches, where there is an axiomatic and notational distinction between the $X_I$, $\psi_{IJ}$, or $U_{IJ}$ depending on whether $I$ is empty or nonempty.  We believe that our uniform treatment makes implicit atlases simpler notationally and conceptually, and this is a novel aspect of our approach.
\end{remark}

\begin{remark}\label{bettertouseXSreg}
The requirement that $X_I$ be a manifold \emph{whenever $I$ is nonempty} is rather unnatural (c.f.\ Remark \ref{Iemptynotspecial}) and is too strong of an assumption for two different reasons.  First, we do not know how to construct implicit atlases with this property on moduli spaces of pseudo-holomorphic curves (it is a subtle question of choosing good neighborhoods to ensure transversality over all $X_I$ if $\#I\geq 1$).  Second, with this axiom we cannot form the ``product implicit atlas'' (Definition \ref{productimplicitatlas}) which is crucial for understanding coherence of virtual fundamental cycles between moduli spaces when treating Floer-type homology theories.  Thus in the real definition of an implicit atlas (Definition \ref{implicitatlasdefinition}), there is no such requirement on $X_I$.  Rather, we specify open subsets $X_I^\reg\subseteq X_I$ for all $I\subseteq A$ (morally, this is the locus where $X_I$ is cut out transversally), and we modify the transversality axioms appropriately.
\end{remark}

\begin{remark}\label{equivalenceofIA}
The most natural notion of ``equivalence'' between two implicit atlases $A$ and $B$ on a space $X$ seems to be the existence of a chain of inclusions of implicit atlases $A\subseteq C_1\supseteq\cdots\subseteq C_n\supseteq B$.  It is common to speak of ``cobordisms'' between Kuranishi structures or Kuranishi atlases; the analogous notion of cobordism between implicit atlases (say on the same index set $A$) on a space $X$ is an implicit atlas with boundary on $X\times[0,1]$ whose restriction to the boundary coincides with the first (resp.\ second) implicit atlas on $X\times\{0\}$ (resp.\ $X\times\{1\}$).

It is an easy consequence of the VFC machinery that the virtual fundamental class is invariant under both notions of equivalence.
\end{remark}

\subsubsection{Implicit atlases on spaces with nontrivial isotropy}\label{isotropyintrosec}

We now give a simple construction of a convenient system of charts on any smooth orbifold.  This system of charts is a special case of (and motivates the definition of) an implicit atlas (with nontrivial covering groups, so as to apply to spaces with nontrivial isotropy).

Fix a smooth orbifold $X$ and let $\{X_\alpha/\Gamma_\alpha=V_\alpha\subseteq X\}_{\alpha\in A}$ be an open cover, where each $X_\alpha$ is a smooth manifold with a smooth action by a finite group $\Gamma_\alpha$ (let us call this the ``covering group'').  Then for any finite subset $I=\{\alpha_1,\ldots,\alpha_n\}\subseteq A$, there is an ``overlap chart'':
\begin{align}
\label{fpoverorbifold}X_I&:=X_{\alpha_1}\underset X\times\cdots\underset X\times X_{\alpha_n}\\
\Gamma_I&:=\Gamma_{\alpha_1}\times\cdots\times\Gamma_{\alpha_n}
\end{align}
(where \eqref{fpoverorbifold} is the ``orbifold fiber product''; see Remark \ref{orbifoldfiberproductrmk} below).  It is easy to check that $X_I$ is a smooth manifold with a smooth action by $\Gamma_I$ and that:
\begin{equation}
X_I/\Gamma_I=V_{\alpha_1}\cap\cdots\cap V_{\alpha_n}\subseteq X
\end{equation}
As an exercise, the reader may check that this system of charts described above gives an implicit atlas in the sense of Definition \ref{implicitatlasdefinition} where every $E_\alpha=0$.

\begin{remark}[Orbifold fiber product]\label{orbifoldfiberproductrmk}
Let $\X,\Y,\Z$ be orbifolds, and fix maps of orbifolds $\X,\Y\to\Z$.  Then the \emph{orbifold fiber product} $\X\times_\Z\Y$ is simply the categorical $2$-fiber product in the weak $2$-category of orbifolds.  It exists whenever $\X\times\Y\to\Z\times\Z$ is transverse to the diagonal $\Z\to\Z\times\Z$ (in particular, it exists if at least one of the maps $\X,\Y\to\Z$ is \emph{\'etale} as in the above example of $X_\alpha\to X$).

Thurston \cite[Proof of Proposition 13.2.4]{thurston} gives an explicit hands-on definition of $\X\times_\Z\Y$ in the case $\X,\Y\to\Z$ are both \'etale by working locally on $\Z$ and then gluing.  In a more modern perspective (defining orbifolds as certain stacks on the site of smooth manifolds), we may define the orbifold fiber product by the following universal property:
\begin{equation}
\Hom(S,\X\times_\Z\Y):=\Hom(S,\X)\times_{\Hom(S,\Z)}\Hom(S,\Y)
\end{equation}
for any smooth manifold $S$.  The right hand side denotes the $2$-fiber product in the weak $2$-category of groupoids, which admits the explicit description:
\begin{equation}
G_1\times_HG_2:=\Bigl\{(g_1,g_2,\theta)\Bigm|g_1\in G_1,\,g_2\in G_2,\,\theta\!:\!f_1(g_1)\to f_2(g_2)\Bigr\}
\end{equation}
(with the obvious notion of isomorphism between triples $(g_1,g_2,\theta)$) for groupoids $G_1,G_2,H$ and functors $f_i:G_i\to H$.

The orbifold fiber product usually does not coincide with the fiber product of the underlying topological spaces $X\times_YZ$, though there is at least always a map $\X\times_\Z\Y\to X\times_ZY$.  The construction \eqref{fpoverorbifold} would not work if we used the fiber product of topological spaces.
\end{remark}

\subsection{Constructions of implicit atlases}\label{constructintro}

\subsubsection{Zero set of a smooth section}\label{constructFredholmimplicitatlasintro}

We now give a simple example of the construction of an implicit atlas.  This example is also universal in the sense that all constructions of implicit atlases in this paper are to be thought of as generalizations of this construction.

Suppose we have a smooth manifold $B$, a smooth vector bundle $p:E\to B$, and a smooth section $s:B\to E$ with $s^{-1}(0)$ compact.  Let us now construct an implicit atlas $A$ of dimension $d:=\dim B-\dim E$ on $X:=s^{-1}(0)$.  We will revisit this example in \S\ref{vfcobsbundspace}, so the reader may also refer there for more details.

We set $A$ to be the set of all \emph{thickening datums} where a thickening datum $\alpha$ is a triple $(V_\alpha,E_\alpha,\lambda_\alpha)$ consisting of:
\begin{rlist}
\item An open set $V_\alpha\subseteq B$.
\item A finite-dimensional vector space $E_\alpha$.
\item A smooth homomorphism of vector bundles $\lambda_\alpha:V_\alpha\times E_\alpha\to p^{-1}(V_\alpha)$.
\end{rlist}
Now our thickenings are:
\begin{equation}\label{fredholmsectionchart}
X_I:=\Bigl\{(x,\{e_\alpha\}_{\alpha\in I})\in\bigcap_{\alpha\in I}V_\alpha\times\bigoplus_{\alpha\in I}E_\alpha\Bigm|s(x)+\sum_{\alpha\in I}\lambda_\alpha(x,e_\alpha)=0\Bigr\}
\end{equation}
The function $s_\alpha:X_I\to E_\alpha$ is simply projection to the $E_\alpha$ component.  The set $U_{IJ}\subseteq X_I$ is the locus where $x\in\bigcap_{\alpha\in J}V_\alpha$, and the footprint map $\psi_{IJ}$ is simply the natural map forgetting $e_\alpha$ for $\alpha\in J\setminus I$.  It may be a good exercise for the reader to verify the compatibility axioms in this particular case.

The transversality axioms as we have stated them in Definition \ref{IAproto} do not hold because $X_I$ might not be cut out transversally for $\#I\geq 1$.  The best way to fix this is to use the real definition of an implicit atlas (Definition \ref{implicitatlasdefinition}) where we keep track of the locus $X_I^\reg\subseteq X_I$ where it is cut out transversally and modify the transversality axioms appropriately (c.f.\ Remark \ref{bettertouseXSreg}).

\begin{remark}\label{notsetremark}
The reader may rightfully object that the index set $A$ defined above is not a set but rather a groupoid (just as there is no ``set of all finite sets'' or ``set of all compact smooth manifolds'').  There are two ways of resolving this issue.  The simplest solution is to add appropriate ``rigidifying data'' to turn $A$ into a set (e.g.\ we could add the data of an isomorphism $E_\alpha\xrightarrow\sim\RR^{\dim E_\alpha}$ to the definition of a thickening datum).  Another (more cumbersome, and ultimately unnecessary) solution would be to just check that the notion of an implicit atlas and the accompanying theory of virtual fundamental cycles remains valid for $A$ a groupoid instead of a set (see also Remark \ref{bettertouseovercategory} for more details).
\end{remark}

\begin{remark}
The index set $A$ consists of \emph{all} possible thickening datums.  It is thus canonical in the sense that it does not depend on any auxiliary choice.  For the purposes of extracting the virtual fundamental cycle from an implicit atlas, the only choice we will need to make is that of a finite subatlas; the independence of this choice is part of the VFC machinery.
\end{remark}

\begin{remark}\label{directsumisbetter}
To construct the overlap charts $X_I$, it is crucial that we are able to take the abstract direct sum of the obstruction spaces $E_\alpha$.  Hence, it is better to think of them as abstract vector spaces equipped with maps to $E$ over $V_\alpha$, rather than as trivialized subbundles of $E$ over $V_\alpha$ (which also imposes the unnecessary restriction that $\lambda_\alpha$ be injective), since the latter category is not closed under direct sum.

To illustrate this distinction further, let us mention a problematic alternative version of \eqref{fredholmsectionchart}:
\begin{equation}\label{fredholmsectionchartwrong}
X_I:=\Bigl\{x\in\bigcap_{\alpha\in I}V_\alpha\Bigm|s(x)\in\Bigl(\bigoplus_{\alpha\in I}\lambda_\alpha\Bigr)(E_I)\Bigr\}
\end{equation}
This definition agrees with \eqref{fredholmsectionchart} if the following map is injective:
\begin{equation}\label{lambdatotal}
\bigoplus_{\alpha\in I}\lambda_\alpha:\bigcap_{\alpha\in I}V_\alpha\times\bigoplus_{\alpha\in I}E_\alpha\to p^{-1}\Bigl(\bigcap_{\alpha\in I}V_\alpha\Bigr)
\end{equation}
but in general it can be different, and indeed, if \eqref{lambdatotal} fails to be injective, the definition \eqref{fredholmsectionchartwrong} isn't particularly useful.  Note that this can occur even if we add the requirement that each $\lambda_\alpha$ be injective.

The importance of using the direct sum was independently observed by McDuff--Wehrheim \cite{mcduffwehrheim}, and it is implicit in Fukaya--Oh--Ohta--Ono \cite{foootechnicaldetails}.
\end{remark}

\subsubsection{Moduli space of pseudo-holomorphic curves}\label{constructJholimplicitatlasintro}

We now give an example of the construction of an implicit atlas on a moduli space of pseudo-holomorphic curves.  This construction can be fruitfully interpreted as a generalization\footnote{Moduli spaces of pseudo-holomorphic curves do not fit literally into the setting of \S\ref{constructFredholmimplicitatlasintro} (generalized appropriately to Banach manifolds/bundles) because of three main issues: gluing (nodal domain curves), orbifold structure (nontrivial isotropy groups), and varying complex structures on the domain (nondifferentiability of the reparameterization action).  The polyfolds project of Hofer--Wysocki--Zehnder aims to setup an infinite-dimensional Fredholm framework in which moduli spaces of pseudo-holomorphic curves may be described.} of the construction from \S\ref{constructFredholmimplicitatlasintro}.

Specifically, we construct an implicit atlas $A$ on the moduli space of stable maps $\Mbar_{0,0}(X,B)$ (we fix a symplectic manifold $X$, a smooth $\omega$-tame almost complex structure $J$, and a homology class $B\in H_2(X;\ZZ)$).  The reader impatient for the full details may also wish to refer to \S\ref{gromovwittensection} where we give a full treatment.  Here we have simplified things by assuming that $(g,n)=(0,0)$, $\Gamma_\alpha=S_{r_\alpha}$ and $\Mbar_\alpha=\Mbar_{0,r_\alpha}$ (which is a smooth manifold!).

We define $A$ to be the set of all \emph{thickening datums} where a thickening datum is a $4$-tuple $(r_\alpha,D_\alpha,E_\alpha,\lambda_\alpha)$ consisting of:
\begin{rlist}
\item An integer $r_\alpha>2$; let $\Gamma_\alpha:=S_{r_\alpha}$.
\item A smooth compact codimension two submanifold $D_\alpha\subseteq X$ with boundary.
\item A finite-dimensional $\RR[S_{r_\alpha}]$-module $E_\alpha$.
\item A $S_{r_\alpha}$-equivariant map $\lambda_\alpha:E_\alpha\to C^\infty(\Cbar_{0,r_\alpha}\times X,\Omega^{0,1}_{\Cbar_{0,r_\alpha}/\Mbar_{0,r_\alpha}}\otimes_\CC TX)$ (where $\Cbar_{0,r_\alpha}\to\Mbar_{0,r_\alpha}$ is the universal family over the Deligne--Mumford moduli space) supported away from the nodes and marked points of the fibers.
\end{rlist}
Let us remark that the analogue of the open set $V_\alpha$ from \S\ref{constructFredholmimplicitatlasintro} is the set of maps $u:C\to X$ satisfying the conditions appearing in (\ref{firstcondition}) below.

Now our thickening $\Mbar_{0,0}(X,B)_I$ (any finite $I\subseteq A$) is defined as the set of:
\begin{rlist}
\item\label{firstcondition}A smooth map $u:C\to X$ where $C$ is a nodal curve of genus $0$ so that for all $\alpha\in I$, we have $u\pitchfork D_\alpha$ (meaning $u^{-1}(\partial D_\alpha)=\varnothing$ and for all $p\in u^{-1}(D_\alpha)$, the differential $(du)_p:T_pC\to T_{u(p)}X/T_{u(p)}D_\alpha$ is surjective and $p$ is not a node of $C$) with $\#u^{-1}(D_\alpha)=r_\alpha$ such that adding these $r_\alpha$ intersections as marked points makes $C$ stable.
\item Elements $e_\alpha\in E_\alpha$ for all $\alpha\in I$
\item Labellings of $u^{-1}(D_\alpha)$ by $\{1,\ldots,r_\alpha\}$ for all $\alpha\in I$.
\end{rlist}
such that:
\begin{equation}\label{thickenedintro}
\delbar u+\sum_{\alpha\in I}\lambda_\alpha(e_\alpha)(\phi_\alpha,u)=0
\end{equation}
where $\phi_\alpha:C\to\Cbar_{0,r_\alpha}$ is the unique isomorphism onto a fiber respecting the labeling of $u^{-1}(D_\alpha)$.  There is an action of $\Gamma_\alpha=S_{r_\alpha}$ on $\Mbar_{0,0}(X,B)_I$ (for $\alpha\in I$) given by changing the labelling of $u^{-1}(D_\alpha)$ and by its given action on $e_\alpha\in E_\alpha$.

There are obvious projection maps $s_\alpha:\Mbar_{0,0}(X,B)_I\to E_\alpha$ and forgetful maps $\psi_{IJ}:(s_{J\setminus I}|X_J)^{-1}(0)\to U_{IJ}$, where $U_{IJ}\subseteq\Mbar_{0,0}(X,B)_I$ is the locus of curves satisfying the conidition in (\ref{firstcondition}) above for all $\alpha\in J$.  Thus we have specified the atlas data for $A$.  The compatibility axioms are rather trivial (as in \S\ref{constructFredholmimplicitatlasintro}), though for the homeomorphism axiom requires a bit of thought.

Now let us discuss the transversality axioms, which have much nontrivial content.  To verify the covering axiom, we need to show \emph{domain stabilization}, namely that for any $J$-holomorphic $u:C\to X$, there is a divisor $D_\alpha$ with $u\pitchfork D_\alpha$ and so that adding $u^{-1}(D_\alpha)$ to $C$ as marked points makes $C$ stable.  Given domain stabilization, the rest of the proof of the covering axiom is rather standard (choose $(E_\alpha,\lambda_\alpha)$ big enough to cover the cokernel of the linearized operator $D\delbar$ at $u$).  The submersion axiom asserts (in particular) that the regular locus $\Mbar_{0,0}(X,B)_I^\reg\subseteq\Mbar_{0,0}(X,B)_I$ is a topological manifold.  Proving the submersion axiom is not too difficult over the locus where the domain curve $C$ is smooth (it follows immediately from the implicit function theorem for Banach manifolds), but to show it near a nodal domain curve amounts to proving a gluing theorem (which we do in Appendix \ref{gluingappendix}).

\begin{remark}
The thickened moduli spaces $\Mbar_{0,0}(X,B)_I$ are defined as moduli spaces of solutions to the ``$I$-thickened $\delbar$-equation'' \eqref{thickenedintro}.  With this intrinsic definition, the convenient overlap properties of the charts (the compatibility axioms) follow rather trivially.  The atlas also clearly does not depend on any choice of Sobolev norms or ``gluing profile''.

On this point, it is useful to compare with other approaches, which often take the perspective of defining thickened moduli spaces as subsets of some particular Banach manifold of maps.  In this context, achieving good overlap properties seems to be more difficult and less conceptual.  Another inconvenience of this setting is the lack of differentiability of the reparameterization action (and large reparameterization groups for bubbles).
\end{remark}

\begin{remark}
In our approach, the most serious analytic questions are encountered in verifying the openness and submersion axioms (that is, in proving the necessary gluing theorems).  We remark that these only concern the local properties of the thickened moduli spaces, and, in particular, are separated from the other technical aspects of the construction of the implicit atlas (e.g.\ the compatibility axioms, the action of the groups $\Gamma_I$, or the action of a larger symmetry group on the entire atlas).
\end{remark}

\begin{remark}
Standard gluing techniques suffice to verify the openness and submersion axioms for the implicit atlases we construct on moduli spaces of pseudo-holomorphic curves.  In fact, the transition maps between gluing charts are clearly smooth when restricted to each stratum (i.e.\ for a fixed topological type of the domain), and this yields a canonical ``stratified smooth structure'' on each $X_I^\reg$.  If one wanted to obtain a smooth structure on the $X_I^\reg$, one would need to construct gluing charts whose transition maps are truly smooth.  This would require a choice of ``gluing profile'' (on which the resulting smooth structure would depend) and is slightly more delicate (see Fukaya--Oh--Ohta--Ono \cite{FOOOII,foootechnicaldetails} or Hofer--Wysocki--Zehnder \cite{polyfoldGW}).  Such methods might yield smooth implicit atlases (see Definition \ref{smoothIAdef}).
\end{remark}

\subsection{Construction of virtual fundamental cycles}\label{vfcintrosec}

Let us now describe concretely some simple cases of our algebraic definition of the virtual fundamental class of a space $X$ with implicit atlas $A$.  While the cases we treat (one chart and two charts) are admittedly rather basic, they nevertheless illustrate the main ideas necessary to deal with arbitrary implicit atlases.  We will see that certain chain complexes play a key role; they will turn out to be the \emph{virtual cochain complexes} $C_\vir^\bullet$, which are the central objects of the ``VFC package''.

The reader interested in the details of our treatment in full generality should refer to \S\ref{Fsheafsection} (where we construct the VFC package) and \S\ref{fundamentalclasssection} (where we define the virtual fundamental class).

For the purposes of this section, we use implicit atlases in the sense of Definition \ref{IAproto}.  We will ignore issues about orientations (as they can be dealt with rather trivially by introducing the relevant orientation sheaves/groups).

Here we work over $\ZZ$; in the main body of the paper we consider any ground ring in which the orders of all relevant ``covering groups'' are invertible.  It seems plausible that, with some more work, this could be weakened to assuming only invertibility of $\#\Gamma_x$ for all $x\in X$, where $\Gamma_x$ denotes the isotropy group of $x\in X$ (i.e.\ the stabilizer of any inverse image of $x$ under $\psi_{\varnothing,I}$ lying in $X_I^\reg$).

\subsubsection{What ``homology group'' does the virtual fundamental class live in?}

Since $X$ is just a compact Hausdorff space,\footnote{The existence of an implicit atlas on $X$ does impose some additional restriction on the topology on $X$.  For example, if $X$ admits an implicit atlas then it is locally metrizable and hence metrizable by Smirnov's theorem (a paracompact Hausdorff locally metrizable space is metrizable).} we must be careful about what homology theory we use to house the virtual fundamental class (Example \ref{singularhomologyisbad} below shows that the singular homology $H_\bullet(X)$ of $X$ is insufficient for this purpose).

The \emph{dual of \v Cech cohomology} $\cH^\bullet(X)^\vee:=\Hom(\cH^\bullet(X),\ZZ)$ is a good candidate (and it is what we choose to use, though see Remark \ref{alsogetsteenrod} for further discussion).  For example, a map of spaces $f:X\to Y$ induces a pushforward $f_\ast:\cH^\bullet(X)^\vee\to\cH^\bullet(Y)^\vee$ (defined as the dual of pullback $f^\ast:\cH^\bullet(Y)\to\cH^\bullet(X)$).  Moreover, for a finite CW-complex $Z$, we have $\cH^\bullet(Z)=H^\bullet(Z)$, so $\cH^\bullet(Z)^\vee=H_\bullet(Z)/\tors$.  It follows that (for many purposes) a virtual fundamental class $[X]_A^\vir\in\cH^d(X)^\vee$ can be used in the same way one would use the fundamental class $[X]\in H_d(X)$ if $X$ were a closed manifold of dimension $d$.

\begin{figure}
\centering
\includegraphics[width=2.5in]{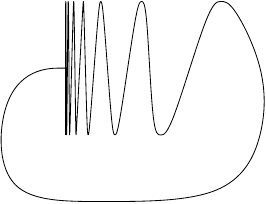}
\caption{The Warsaw circle $W\subseteq\RR^2$, namely the union of $\overline{\{(x,\sin\frac\pi x):0<x<1\}}$ and a path from $(0,0)$ to $(1,0)$.}\label{warsawfigure}
\end{figure}

\begin{example}[Insufficiency of singular homology]\label{singularhomologyisbad}
Consider the Warsaw circle $W\subseteq\RR^2$ as illustrated in Figure \ref{warsawfigure}; note that singular $H_1(W)=0$.  Now $\RR^2\setminus W$ has two connected components; let $s:\RR^2\to\RR$ be positive on one component and negative on the other; this gives the data of an implicit atlas on $W=s^{-1}(0)$.  Using any reasonable definition, we certainly want $[W]^\vir\in\cH^1(W)^\vee\cong\ZZ$ to be a generator, however this is clearly not in the image of singular homology under the natural map $H_\bullet(W)\to H^\bullet(W)^\vee\to\cH^\bullet(W)^\vee$.  Alternatively, the pushforward of $[W]^\vir$ to a small annular neighborhood $A\subseteq\RR^2$ of $W$ should be a generator of $H_1(A)\cong\ZZ$ (as one can see by perturbing $s$).
\end{example}

\subsubsection{Virtual fundamental class from a single chart}\label{vfcsinglechart}

We have a space $X$, and the implicit atlas $A=\{\alpha\}$ consists of the following data.  We have a topological manifold $X_\alpha$ (not necessarily compact), a vector space $E_\alpha$, and a continuous function $s_\alpha:X_\alpha\to E_\alpha$.  We also have an identification $X=s_\alpha^{-1}(0)$ (see Figure \ref{implicitatlaspicture}\subref{pica}).

We define the virtual fundamental class via the following diagram, which we explain below:
\begin{equation}\label{singlechartFC}
\cH^d(X)=H_{\dim E_\alpha}(X_\alpha,X_\alpha\setminus X)\xrightarrow{(s_\alpha)_\ast}H_{\dim E_\alpha}(E_\alpha,E_\alpha\setminus 0)\xrightarrow{[E_\alpha]\mapsto 1}\ZZ
\end{equation}
(recall that $\dim X_\alpha=d+\dim E_\alpha$).  Poincar\'e duality gives a canonical\footnote{For the moment, we ignore the necessary twist by the orientation sheaf.} identification $\cH^\bullet(X)=H_{\dim X_\alpha-\bullet}(X_\alpha,X_\alpha\setminus X)$, which gives the first equality in \eqref{singlechartFC} (this is a rather strong version of Poincar\'e duality, valid for any compact subset $X$ of a manifold $X_\alpha$; we will say more about how to prove it in \S\ref{homotopysheavesintrosec}).  We think of the composite of the maps in \eqref{singlechartFC} as an element $[X]_A^\vir\in\cH^d(X)^\vee$, which we call the virtual fundamental class.

In this setting, one can also define the virtual fundamental class using perturbation (under the additional assumption that $X_\alpha$ carries a smooth structure).  Specifically, one can perturb $s_\alpha$ to $\tilde s_\alpha$ so that it is a submersion over $0\in E_\alpha$; we consider $\tilde s_\alpha^{-1}(0)$ as a ``perturbed moduli space'' near $X$.  Using the continuity property of \v Cech cohomology, one can then make sense of $\lim_{\tilde s_\alpha\to s_\alpha}[\tilde s_\alpha^{-1}(0)]$ as an element of $\cH^d(X)^\vee$.

The algebraic approach is more general (it does not require a smooth structure on $X_\alpha$), and it is easy to see that it gives the same answer as the perturbation approach when $X_\alpha$ has a smooth structure.

\begin{example}
Let $X=\{0\}$, $X_\alpha=E_\alpha=\RR$, and $s_\alpha(x)=x^n$ ($n\geq 1$).  The reader may easily check that with our definition, the virtual fundamental class is $1$ if $n$ is odd and $0$ if $n$ is even (as is consistent with the perturbation picture).

Let $X=\{0\}$, $X_\alpha=E_\alpha=\CC$ (considered as an $\RR$-vector space), and $s_\alpha(z)=z^n$ ($n\geq 1$).  The reader may check that in this case, the virtual fundamental class is $n$.
\end{example}

\subsubsection{Virtual fundamental class from a single chart (with covering group)}\label{vfcsinglechartisotropy}

We now describe how the construction from \S\ref{vfcsinglechart} must be modified in the presence of nontrivial covering groups (as in \S\ref{isotropyintrosec}).  We have not yet introduced implicit atlases with nontrivial covering groups, so we will simply say explicitly what this means in the present situation of a single chart (the reader may also refer to Definition \ref{implicitatlasdefinition}).

We have a space $X$, and the implicit atlas $A=\{\alpha\}$ consists of the following data.  We have a topological manifold $X_\alpha$ (not necessarily compact), a vector space $E_\alpha$, and a continuous function $s_\alpha:X_\alpha\to E_\alpha$.  We have a finite group $\Gamma_\alpha$ acting on $X_\alpha$ and (linearly) on $E_\alpha$ so that $s_\alpha$ is $\Gamma_\alpha$-equivariant.  Finally, we have an identification $X=s_\alpha^{-1}(0)/\Gamma_\alpha$.  We must now work over the coefficient ring $R=\ZZ[\frac 1{\#\Gamma_\alpha}]$.

To define the virtual fundamental class we consider:
\begin{equation}\label{singlechartisotropyFC}
\cH^d(s_\alpha^{-1}(0);R)^{\Gamma_\alpha}=H_{\dim E_\alpha}(X_\alpha,X_\alpha\setminus s_\alpha^{-1}(0);R)^{\Gamma_\alpha}\xrightarrow{(s_\alpha)_\ast}H_{\dim E_\alpha}(E_\alpha,E_\alpha\setminus 0;R)^{\Gamma_\alpha}\xrightarrow{[E_\alpha]\mapsto 1}R
\end{equation}
Now it is a general property of \v Cech cohomology that $\cH^\bullet(Y;R)^\Gamma=\cH^\bullet(Y/\Gamma;R)$ for a compact Hausdorff space $Y$ acted on by a finite group $\Gamma$ as long as $R$ is a module over $\ZZ[\frac 1{\#\Gamma}]$.  We precompose \eqref{singlechartisotropyFC} with the canonical isomorphism $\cH^d(X;R)\to\cH^d(s_\alpha^{-1}(0);R)^{\Gamma_\alpha}$ given as $\frac 1{\#\Gamma_\alpha}$ times the pullback.  This gives an element $[X]_A^\vir\in\cH^d(X;R)^\vee$, which we call the virtual fundamental class.

\subsubsection{Virtual fundamental class from two charts}\label{vfcmultiplecharts}

Generalizing the approach in \S\ref{vfcsinglechart} to multiple charts leads immediately to the heart of the problem of defining virtual fundamental cycles.  Namely, we must figure out how to glue together the information contained in each local chart to define the global virtual fundamental class.  We explain our solution to this problem in the simple case of $A=\{\alpha,\beta\}$, which nevertheless illustrates most of the ideas necessary to deal with the general case (which in addition just requires good organization of the combinatorics).  Since the problem we are facing is precisely to glue together local homological information into global homological information, it should not be surprising that sheaf-theoretic tools and homological algebra are useful.

Our space is $X$, and the implicit atlas $A=\{\alpha,\beta\}$ amounts to the following data:
\begin{align}
s_\alpha:X_\alpha&\to E_\alpha&s_\alpha^{-1}(0)=U_\alpha\subseteq X\text{ (open subset)}\\
s_\beta:X_\beta&\to E_\beta&s_\beta^{-1}(0)=U_\beta\subseteq X\text{ (open subset)}\\
s_\alpha\oplus s_\beta:X_{\alpha\beta}&\to E_\alpha\oplus E_\beta&(s_\alpha\oplus s_\beta)^{-1}(0)=U_{\alpha\beta}=U_\alpha\cap U_\beta\subseteq X
\end{align}
which fit together as outlined in \S\ref{protoimplicitatlassection} (and as illustrated in Figure \ref{implicitatlaspicture}\subref{picb}).

We would like to generalize the approach in \S\ref{vfcsinglechart}, specifically equation \eqref{singlechartFC}.  For this, we need a replacement for the group $H_\bullet(X_\alpha,X_\alpha\setminus X)$.  To construct such a replacement, we would like to ``glue together'' $C_\bullet(X_\alpha,X_\alpha\setminus U_\alpha)$ and $C_\bullet(X_\beta,X_\beta\setminus U_\beta)$ along $C_\bullet(X_{\alpha\beta},X_{\alpha\beta}\setminus U_{\alpha\beta})$ (as remarked earlier, it is easier to glue together these \emph{complexes} rather than glue together the corresponding \emph{spaces}).  The resulting complex should calculate the \v Cech cohomology of $X$ (by some version of Poincar\'e duality) and also have a natural map to $C_\bullet(E_\alpha\oplus E_\beta,E_\alpha\oplus E_\beta\setminus 0)$.  If we can construct a complex with these two properties, then we can define the virtual fundamental class just as in \eqref{singlechartFC}.  This complex we construct will be called the \emph{virtual cochain complex}.

\begin{remark}
The complex $C_{\dim X_\alpha-\bullet}(X_\alpha,X_\alpha\setminus U_\alpha)$ calculates $\cH^\bullet_c(U_\alpha)$, and the map $s_\ast:C_\bullet(X_\alpha,X_\alpha\setminus U_\alpha)\to C_\bullet(E_\alpha,E_\alpha\setminus 0)$ should be thought of as the chain level ``local virtual fundamental cycle'' $[X]^\vir\in\cH^{\dim X_\alpha-\dim E_\alpha}_c(U_\alpha)^\vee$, which we would like to glue together into the global virtual fundamental cycle.
\end{remark}

\begin{remark}
Technically speaking, it is very important to have a uniform functorial definition of the virtual cochain complexes (one which does not require making any extra choices).
\end{remark}

As a first try towards gluing the desired complexes together, let us consider using the mapping cone of the following chain map:
\begin{equation}\label{mappingconefirsttry}
C_{\dim X_{\alpha\beta}-\bullet}(X_{\alpha\beta},X_{\alpha\beta}\setminus U_{\alpha\beta})\xrightarrow{\cap[s_\beta^{-1}(0)]\oplus\cap[s_\alpha^{-1}(0)]}\begin{matrix}C_{\dim X_\alpha-\bullet}(X_\alpha,X_\alpha\setminus U_\alpha)\cr\oplus\cr C_{\dim X_\beta-\bullet}(X_\beta,X_\beta\setminus U_\beta)\end{matrix}
\end{equation}
where the maps are intersection of chains with the (transversely cut out!) submanifolds $s_\beta^{-1}(0)$ and $s_\alpha^{-1}(0)$ of $X_{\alpha\beta}$.  There is the question, though, of how these maps are to be defined on the chain level.  There are various direct ways to define these maps\footnote{One could use very fine chains, do a (chain level) cap product with a choice of cochain level Poincar\'e dual of the relevant submanifold and then project ``orthogonally'' onto the submanifold.  Alternatively, one could just use ``generic'' chains (or perturb the chains) so they are transverse to the submanifold, and then triangulate the intersection in a suitable way.}, however (at least when attempted by the author) the multitude of ``choices'' one has to make invariably leads to a big mess.

As a second try, let us try to glue together the complexes:
\begin{align}
C_{\dim X_{\alpha\beta}-\bullet}&(X_\alpha\times E_\beta,[X_\alpha\setminus U_\alpha]\times E_\beta)\\
C_{\dim X_{\alpha\beta}-\bullet}&(E_\alpha\times X_\beta,E_\alpha\times[X_\beta\setminus U_\beta])\\
C_{\dim X_{\alpha\beta}-\bullet}&(X_{\alpha\beta},X_{\alpha\beta}\setminus U_{\alpha\beta})
\end{align}
This should let us let us avoid the cap product maps (since there is no ``dimension shift'' between these complexes).  Each of these complexes has a canonical map to $C_{\dim X_{\alpha\beta}-\bullet}(E_\alpha\oplus E_\beta,E_\alpha\oplus E_\beta\setminus 0)$, but it is not clear how to glue them together in a manner compatible with this map.  In particular, there is no reason for there to exist commuting diagrams:
\begin{equation}\label{strongtubularneighborhood}
\begin{tikzcd}[column sep = tiny]
U_{\alpha,\alpha\beta}\times E_\beta\arrow{rr}\arrow{dr}[swap]{s_\alpha\oplus\id}&&X_{\alpha\beta}\arrow{dl}{s_{\alpha\beta}}\\
&E_\alpha\oplus E_\beta
\end{tikzcd}
\qquad
\begin{tikzcd}[column sep = tiny]
E_\alpha\times U_{\beta,\alpha\beta}\arrow{rr}\arrow{dr}[swap]{\id\oplus s_\beta}&&X_{\alpha\beta}\arrow{dl}{s_{\alpha\beta}}\\
&E_\alpha\oplus E_\beta
\end{tikzcd}
\end{equation}
Moreover, it is technically very inconvenient (functoriality of the construction is a mess) to have a complex which depends on a choice of maps \eqref{strongtubularneighborhood} (even if this turns out to be a contractible choice, and thus morally irrelevant).

As a third try (which ends up working nicely), let us consider the following ``deformation to the normal cone'':
\begin{equation}
Y_{\alpha\beta}:=\left\{(e_\alpha,e_\beta,t,x)\in E_\alpha\times E_\beta\times[0,1]\times X_{\alpha\beta}\;\middle|\;\begin{matrix}s_\alpha(x)=t\cdot e_\alpha\hfill\cr s_\beta(x)=(1-t)\cdot e_\beta\hfill\end{matrix}\right\}
\end{equation}
We think of $Y_{\alpha\beta}$ as a family of spaces parameterized by $[0,1]$ (via the projection $Y_{\alpha\beta}\to[0,1]$).  Observe that if $t\in(0,1)$, then $e_\alpha,e_\beta$ are determined uniquely by $x$.  Therefore, over the open interval $(0,1)$, we see that $Y_{\alpha\beta}$ is a trivial product space $X_{\alpha\beta}\times(0,1)$.  Over the point $t=0$, though, the fiber is $E_\alpha\times U_{\beta,\alpha\beta}$, which is the ``normal cone'' of the submanifold $U_{\beta,\alpha\beta}\subseteq X_{\alpha\beta}$ cut out (transversally!) by the equation $s_\alpha=0$.  Similarly, over the point $t=1$, the fiber is $U_{\alpha,\alpha\beta}\times E_\beta$.  Also observe that $Y_{\alpha\beta}$ is a manifold by the submersion axiom.

Now we consider the mapping cone of the following:
\begin{equation}\label{multichartcomplex}
\begin{matrix}
C_{\dim X_{\alpha\beta}-\bullet}(U_{\alpha,\alpha\beta}\times E_\beta,U_{\alpha,\alpha\beta}\times E_\beta\setminus U_\alpha\times 0)\cr\oplus\cr
C_{\dim X_{\alpha\beta}-\bullet}(E_\alpha\times U_{\beta,\alpha\beta},E_\alpha\times U_{\beta,\alpha\beta}\setminus 0\times U_\beta)
\end{matrix}
\to
\begin{matrix}
C_{\dim X_{\alpha\beta}-\bullet}(X_\alpha\times E_\beta,X_\alpha\times E_\beta\setminus U_\alpha\times 0)\cr\oplus\cr
C_{\dim X_{\alpha\beta}-\bullet}(Y_{\alpha\beta},Y_{\alpha\beta}\setminus 0\times[0,1]\times U_{\alpha\beta})\cr\oplus\cr
C_{\dim X_{\alpha\beta}-\bullet}(E_\alpha\times X_\beta,E_\alpha\times X_\beta\setminus 0\times U_\beta)
\end{matrix}
\end{equation}
The maps are simply pushforward along the maps of spaces (with appropriate signs):
\begin{align}
U_{\alpha,\alpha\beta}\times E_\beta&\hookrightarrow X_\alpha\times E_\beta\\
U_{\alpha,\alpha\beta}\times E_\beta&\hookrightarrow Y_{\alpha\beta}\quad\text{(isomorphism onto $t=1$ fiber)}\\
E_\alpha\times U_{\beta,\alpha\beta}&\hookrightarrow Y_{\alpha\beta}\quad\text{(isomorphism onto $t=0$ fiber)}\\
E_\alpha\times U_{\beta,\alpha\beta}&\hookrightarrow E_\alpha\times X_\beta
\end{align}
There is also an evident map from (the mapping cone of) \eqref{multichartcomplex} to $C_\bullet(E_\alpha\oplus E_\beta,E_\alpha\oplus E_\beta\setminus 0)$ (namely pushforward on the three factors on the right hand side and zero on the left hand side).

Now to complete the definition of the virtual fundamental cycle of $X$ using the mapping cone of \eqref{multichartcomplex}, we need an argument to show that its homology is canonically isomorphic to the \v Cech cohomology of $X$ (a sort of Poincar\'e duality isomorphism).  We discuss this next in \S\ref{homotopysheavesintrosec}.

\subsubsection{Homotopy \texorpdfstring{$\K$}{K}-sheaves in the theory of virtual fundamental cycles}\label{homotopysheavesintrosec}

As we have already mentioned, the central objects we use to understand virtual fundamental cycles are the \emph{virtual cochain complexes} $C^\bullet_\vir(X;A)$ (for example, the mapping cone of \eqref{multichartcomplex} plays the role of the virtual cochain complex in \S\ref{vfcmultiplecharts}).  A crucial ingredient in this approach is an isomorphism:
\begin{equation}\label{homotopyisogoal}
H^\bullet_\vir(X;A)=\cH^\bullet(X)
\end{equation}
In \S\ref{vfcsinglechart}, where we used $C^\bullet_\vir(X;A):=C_{\dim X_\alpha-\bullet}(X_\alpha,X_\alpha\setminus X)$, this isomorphism was simply (a strong form of) Poincar\'e duality.

Let us now discuss our general approach to the isomorphism \eqref{homotopyisogoal}, which we think of as a generalized form of Poincar\'e duality.  An efficient approach (in fact, the only approach known to the author) to constructing this isomorphism is through the language of homotopy $\K$-sheaves, and so this is the way we present it.  We develop the necessary sheaf-theoretic foundations in Appendix \ref{homologicalalgebrasection}, so the reader may also wish to refer to that section for more details.

As an introduction to the language of sheaves and homotopy sheaves, let us first use it to give a proof of ordinary Poincar\'e duality (in fact, a strong version for arbitrary compact subsets of a manifold).\footnote{An easier proof (using the fact that \v Cech cohomology satisfies the ``continuity axiom'') is available if $M$ has a smooth structure (along the lines of \cite[pp887--888 Lemmas 3.1 and 3.3]{pardonHS}).  This approach does not seem to apply to the more general setting we need to treat here.}  The following argument appears in full detail in Lemma \ref{poincareduality}.

Fix a closed manifold $M$ of dimension $n$.  For any compact subset $K\subseteq M$, let $\F^\bullet(K)$ denote the complex $C_{n-\bullet}(M,M\setminus K)$.  This object $\F^\bullet$ is a \emph{$\K$-presheaf\footnote{The prefix ``$\K$-'' indicates sections are given over compact sets instead of open sets.  For technical reasons, it is more convenient to work with $\K$-presheaves rather than presheaves, though at the conceptual level, the reader may safely ignore the difference.} of complexes} (or \emph{complex of $\K$-presheaves}) on $M$, which just means that we have natural maps $\F^\bullet(K)\to\F^\bullet(K')$ for $K\supseteq K'$, which are suitably compatible with each other.  Now $\F^\bullet$ satisfies the following key properties:
\begin{rlist}
\item(``$\F^\bullet$ is a homotopy $\K$-sheaf'') The total complex of the following double complex is acyclic:
\begin{equation}
\F^\bullet(K_1\cup K_2)\to\F^\bullet(K_1)\oplus\F^\bullet(K_2)\to\F^\bullet(K_1\cap K_2)
\end{equation}
This is essentially just a restatement of the Mayer--Vietoris exact sequence.
\item(``$\F^\bullet$ is pure and $H^0\F^\bullet=\oo_M$'') The homology of $\F^\bullet(\{p\})$ (namely $H_{n-\bullet}(M,M\setminus p)$) is concentrated in degree zero, where it can be canonically identified with the fiber of $\oo_M$ (the orientation sheaf of $M$) at $p\in M$.
\end{rlist}
(the precise notions of a homotopy $\K$-sheaf and of purity are given in Definitions \ref{homotopyKsheaf} and \ref{purehomotopyKsheaf}; in the above statements they have been simplified for sake of exposition).  A formal consequence (Proposition \ref{cechcohomologypurehomotopysheaf}) of the fact that $\F^\bullet$ is a pure homotopy $\K$-sheaf with $H^0\F^\bullet=\oo_M$ is that there is a canonical isomorphism:
\begin{equation}\label{PDintropurehomotopysheafex}
H_{n-i}(M,M\setminus K)=H^i\F^\bullet(K)=\cH^i(K;\oo_M)
\end{equation}
Specializing to $K=M$, we have derived the Poincar\'e duality isomorphism $H_{n-i}(M)=\cH^i(M;\oo_M)$.

This argument generalizes as follows to prove the isomorphism \eqref{homotopyisogoal}.  For sake of concreteness, let us take $C^\bullet_\vir(X;A)$ to be the mapping cone of \eqref{multichartcomplex}, though the general case is not much different.  First of all, we observe that there is a natural complex of $\K$-presheaves $\F^\bullet$ on $X$ whose complex of global sections is $C^\bullet_\vir(X;A)$.  Namely, to get $\F^\bullet(K)$ we simply replace every occurence of $U_\alpha$, $U_\beta$, or $U_{\alpha\beta}$ in \eqref{multichartcomplex} by its intersection with $K$.  Now $\F^\bullet$ is a homotopy $\K$-sheaf (extensions of homotopy $\K$-sheaves are homotopy $\K$-sheaves by Lemma \ref{associatedgradedhomotopysheaf}, and each of the individual complexes appearing in \eqref{multichartcomplex} gives a homotopy $\K$-sheaf by Mayer--Vietoris).  To prove that $\F^\bullet$ is pure and to identify its $H^0$, we can calculate $H^i\F^\bullet(\{p\})$ using a spectral sequence which degenerates at the $E_2$ term (this is the argument in Lemma \ref{gluingcechsimple}).  It is this second step where it matters that we glued the complexes together ``correctly''.  Since $\F^\bullet$ is a pure homotopy $\K$-sheaf and we have identified its $H^0$, this gives the desired isomorphism.

Let us also mention that the fact that the virtual cochain complex $C^\bullet_\vir(X;A)$ is naturally the complex of global sections of a homotopy $\K$-sheaf on $X$ plays a key role in proving other crucial properties in addition to \eqref{homotopyisogoal}.

\subsection{Floer-type homology theories}\label{floerintrosection}

In this section, we introduce the basic ideas needed to apply our methods to construct Hamiltonian Floer homology in the setting of non-degenerate periodic orbits and non-transverse moduli spaces of Floer trajectories.  A toy example of the same flavor (which we mention only for sake of analogy) is the problem of defining Morse homology from a Morse function with gradient-like vector field which is not necessarily Morse--Smale.

The methods developed thus far (the VFC package and the framework for constructing implicit atlases) are robust in that they apply to moduli spaces of Floer trajectories without much modification.  The main task is to add a layer of (rather intricate) combinatorics and algebra to properly organize together the information they yield.  Essentially what we must do is execute the key diagram \eqref{fundisointro}--\eqref{spushforwardintro} on the chain level.

We approach the problem in two logically separate steps.  In \S\ref{floerintroIA}, we describe the implicit atlases we put on the moduli spaces of Floer trajectories.  In \S\ref{floerintroVFC}, we describe how to use the VFC package to define Floer-type homology groups from an appropriate abstract collection of ``flow spaces'' equipped with implicit atlases.

\subsubsection{The system of implicit atlases}\label{floerintroIA}

We describe the ``compatible system of implicit atlases'' we put on the moduli spaces of Floer trajectories relevant for defining Hamiltonian Floer homology.  For sake of exposition, we will imagine we are in an artificially simplified setting (the reader may refer to \S\ref{hamiltonianfloersec} for the full details).

We assume there are just three periodic orbits $p\prec q\prec r$ (ordered by action).  Hence there are just three moduli spaces we have to deal with: $\Mbar(p,q)$, $\Mbar(q,r)$, and $\Mbar(p,r)$ (which are all compact).  There is a single concatenation map:
\begin{equation}\label{concatintro}
\Mbar(p,q)\times\Mbar(q,r)\to\Mbar(p,r)
\end{equation}
and it is a homeomorphism onto its image.  We describe the construction of implicit atlases ((\ref{IAPQ})--(\ref{IAPQR}) below) which are enough to provide a robust notion of ``coherent system of virtual fundamental cycles'' with which we can define Floer homology.

The moduli spaces $\Mbar(p,q)$ and $\Mbar(q,r)$ do not contain any ``broken trajectories''.  There is thus a straightforward generalization of the construction in \S\ref{constructJholimplicitatlasintro} by which we may define:
\begin{rlist}
\item\label{IAPQ}An implicit atlas $A(p,q)$ on $\Mbar(p,q)$.
\item\label{IAQR}An implicit atlas $A(q,r)$ on $\Mbar(q,r)$.
\rlistsave
\end{rlist}
Now since $\Mbar(p,r)$ contains the codimension one boundary stratum $\Mbar(p,q)\times\Mbar(q,r)$, we cannot expect to equip it with an implicit atlas in the sense of Definition \ref{IAproto} or \ref{implicitatlasdefinition}.  Rather, we would like to equip it with an \emph{implicit atlas with boundary}.  Such an atlas on a space $X$ is given by the same data as an implicit atlas, except that in addition we specify closed subsets $\partial X_I\subseteq X_I$ for all $I\subseteq A$ which are compatible with the $\psi_{IJ}$, and we modify the transversality axioms to assert that $X_I^\reg$ is a manifold with boundary $\partial X_I\cap X_I^\reg$.  In particular, there should already be a natural choice of $\partial X\subseteq X$, which in the present case is simply the definition $\partial\Mbar(p,r):=\Mbar(p,q)\times\Mbar(q,r)$.  The notion of an implicit atlas with boundary is formulated so that the natural generalization of the construction in \S\ref{constructJholimplicitatlasintro} defines:
\begin{rlist}
\rlistresume
\item\label{IAPR}An implicit atlas with boundary $A(p,r)$ on $\Mbar(p,r)$.
\rlistsave
\end{rlist}
For this atlas, the closed subsets $\partial X_I\subseteq X_I$ are simply the loci where the thickened trajectory has a break at $q$.

Of course, to have any reasonable notion of ``coherent virtual fundamental cycles'' for the spaces $\Mbar(\cdot,\cdot)$, we need to ``relate'' the atlases (\ref{IAPQ}), (\ref{IAQR}), (\ref{IAPR}).  It turns out that there is a natural way to do this ``over $\Mbar(p,q)\times\Mbar(q,r)$'' which we now describe.

First, let us observe that (\ref{IAPQ}) and (\ref{IAQR}) naturally give rise to an implicit atlas on $\Mbar(p,q)\times\Mbar(q,r)$.  This is a special case of a general observation: implicit atlases $A$ on $X$ and $A'$ on $X'$ induce an implicit atlas $A\sqcup A'$ (disjoint union of index sets) on $X\times X'$, simply by defining $(X\times X')_{I\sqcup I'}:=X_I\times X'_{I'}$.  Hence there is a ``product implicit atlas'' $A(p,q)\sqcup A(q,r)$ on $\Mbar(p,q)\times\Mbar(q,r)$.

Second, let us observe that (\ref{IAPR}) naturally gives rise to an implicit atlas on $\partial\Mbar(p,r)=\Mbar(p,q)\times\Mbar(q,r)$.  This is also a special case of a general observation: an implicit atlas with boundary on $X$ induces an implicit atlas (with the same index set) on $\partial X$, simply by defining $(\partial X)_I:=\partial X_I$.  Hence there is a ``restriction to the boundary'' implicit atlas $A(p,r)$ on $\Mbar(p,q)\times\Mbar(q,r)$.

Now, a good notion of ``compatibility'' between two implicit atlases $A$ and $B$ on a space $X$ is the existence of an implicit atlas on $X$ with index set $A\sqcup B$, whose subatlases $A$ and $B$ coincide with the given atlases.  More importantly, this seems to be the notion of compatibility which arises most naturally in practice (in particular, there is usually a canonical choice for the atlas $A\sqcup B$ which does not depend on any ``extra choices'').  Hence the final implicit atlas we need is:
\begin{rlist}
\rlistresume
\item\label{IAPQR}An implicit atlas $A(p,q)\sqcup A(q,r)\sqcup A(p,r)$ on $\Mbar(p,q)\times\Mbar(q,r)$ whose subatlas $A(p,q)\sqcup A(q,r)$ coincides with the ``product implicit atlas'' above and whose subatlas $A(p,r)$ coincides with the ``restriction to the boundary'' above.
\end{rlist}
This atlas (\ref{IAPQR}) is constructed essentially using the same ideas from \S\ref{constructJholimplicitatlasintro}, however a few remarks are in order about its definition.

The thickened moduli spaces for the atlas (\ref{IAPQR}) are defined in the usual way, as moduli spaces of broken Floer trajectories $u:C\to M\times S^1$ with some intersection conditions with divisors $D_\alpha$, satisfying a ``thickened $\delbar$-equation''.  Let us discuss these conditions more precisely.  The broken Floer trajectories in question are trajectories from $p$ to $r$ broken at $q$, or, equivalently, a pair of trajectories, one from $p$ to $q$ and one from $q$ to $r$.  Let us denote the entire broken trajectory as $u:C_{p,r}\to M\times S^1$, and let us use $C_{p,q},C_{q,r}\subseteq C_{p,r}$ to denote the closed subcurves representing the portion of the trajectory from $p$ to $q$ and from $q$ to $r$ respectively (so $C_{p,r}=(C_{p,q}\sqcup C_{q,r})/\sim$ where $\sim$ identifies a point on $C_{p,q}$ with a point on $C_{q,r}$).  Then the important points are:
\begin{rlist}
\item For thickening datums $\alpha\in A(p,q)$ we require that $(u|C_{p,q})\pitchfork D_\alpha$, and we label the intersections with $\{1,\ldots,r_\alpha\}$, inducing a unique map $\phi_\alpha:C_{p,q}\to\Cbar_{0,2+r_\alpha}$.
\item For thickening datums $\alpha\in A(q,r)$ we require that $(u|C_{q,r})\pitchfork D_\alpha$, and we label the intersections with $\{1,\ldots,r_\alpha\}$, inducing a unique map $\phi_\alpha:C_{q,r}\to\Cbar_{0,2+r_\alpha}$.
\item For thickening datums $\alpha\in A(p,r)$ we require that $(u|C_{p,r})\pitchfork D_\alpha$, and we label the intersections with $\{1,\ldots,r_\alpha\}$, inducing a unique map $\phi_\alpha:C_{p,r}\to\Cbar_{0,2+r_\alpha}$.
\item The thickened $\delbar$-equation we impose is still written in the form \eqref{thickenedintro}, though the term $\lambda_\alpha(e_\alpha)(\phi_\alpha,u)$ is defined to be zero outside the domain of $\phi_\alpha$ (namely $C_{p,q}$, $C_{q,r}$, or $C_{p,r}$, depending on whether $\alpha$ comes from $A(p,q)$, $A(q,r)$ or $A(p,r)$).
\end{rlist}
A good exercise in understanding this definition is to check that the subatlases $A(p,q)\sqcup A(q,r)$ and $A(p,r)$ are the ``product implicit atlas'' and ``restriction to the boundary'' respectively (this is just a matter of matching up definitions).

\begin{remark}
In theory, there is no way to go from a pair of implicit atlases $A$ and $B$ on a space $X$ to an implicit atlas $A\sqcup B$ on $X$ whose subatlases $A$ and $B$ coincide with the given atlases (and there is no uniqueness for atlases $A\sqcup B$ with this property).  However in practice, there is often a natural choice of such an atlas, which moreover is essentially the only reasonable choice.  The atlas (\ref{IAPQR}) is a good example of this.
\end{remark}

The main ideas necessary to define the compatible system of implicit atlases in the general case are all present above; the only real difference is that there are more moduli spaces and more atlases to keep track of.

\subsubsection{Applying the VFC package}\label{floerintroVFC}

We now explain how to use the VFC package to define homology groups from a system of moduli spaces (equipped with implicit atlases) as which appear in a Morse-type setup.  The extra complicating factor in this construction (compared with constructing virtual fundamental classes) is that we must make ``coherent'' choices for each moduli space (i.e.\ choices with certain compatibility properties with respect to the maps between moduli spaces).  Such choices give rise to ``counts'' for the $0$-dimensional moduli spaces and thus to a differential.  Thus we must show that such choices always exist and that the resulting homology groups are independent of this choice.

In our presentation here, we make a number of simplifying assumptions (which we try to point out when relevant) for sake of exposition.  The full details appear in \S\ref{homologygroupssection}.

Let us being by fixing some notation.  We fix a finite set $\PPP$ (the ``set of generators'') equipped with partial order $\prec$ and a grading $\gr:\PPP\to\ZZ$.  For all pairs $p\prec q$ in $\PPP$, we fix a compact Hausdorff space $\X(p,q)$ (the ``space of broken trajectories from $p$ to $q$'').  These spaces $\X(p,q)$ are also equipped with ``concatenation maps'':
\begin{equation}\label{productmapexample}
\X(p,q)\times\X(q,r)\to\X(p,r)
\end{equation}
which satisfy some natural properties, in particular associativity.

We call such a pair $(\PPP,\X)$ (when defined precisely) a \emph{flow category}.  This terminology is due to Cohen--Jones--Segal \cite{cohenjonessegal}, who used it to mean something closer to what we prefer to call a \emph{Morse--Smale flow category}, namely a flow category in which every $\X(p,q)$ is a manifold with corners of dimension $\gr(q)-\gr(p)-1$ in a manner compatible with the concatenation maps \eqref{productmapexample}.  The basic example of a flow category is the flow category of a Morse function: $\PPP$ is the set of critical points, and $\X(p,q)$ is the space of (broken) Morse trajectories from $p$ to $q$; if the Morse function is Morse--Smale, then this flow category is Morse--Smale.

Given a Morse--Smale flow category $(\PPP,\X)$, one can construct a map $d:\ZZ[\PPP]\to\ZZ[\PPP]$ by counting those $\X$ of dimension $0$, and one can prove that $d^2=0$ by considering those $\X$ of dimension $1$.  Our goal is to generalize this construction to flow categories equipped with implicit atlases (meaning the spaces $\X(p,q)$ carry suitably compatible implicit atlases with boundary, with dimension $\gr(q)-\gr(p)-1$).  Specifically, let us assume that we have fixed implicit atlases $\A(p,q)$ on $\X(p,q)$ and that $\A(p,q)\sqcup\A(q,r)=\A(p,r)$ for all $p\prec q\prec r\in\PPP$.

\begin{remark}
There are many different choices for what one could mean by a ``compatible system of implicit atlases'' $\A(p,q)$ on the spaces $\X(p,q)$.  For sake of exposition, we have chosen here the structure for which it is \emph{easiest to apply the VFC package}.  In the actual construction in \S\ref{homologygroupssection}, we use the structure which is the \emph{easiest to construct in practice} (via the natural generalization of \S\ref{floerintroIA}).  As a result, various complexes that here \emph{coincide} are only \emph{canonically quasi-isomorphic} in \S\ref{homologygroupssection}.  Basically, this means that in \S\ref{homologygroupssection}, we will have to take lots of homotopy colimits to make certain maps well-defined on the chain level.  A systematic use of $\infty$-categories of complexes would tame the resulting explosion of notation, at the cost of relying on that more abstract language/machinery.
\end{remark}

We now review what the VFC package gives to us for a flow category $(\PPP,\X)$ equipped with a compatible system of implicit atlases.  For a space $X$ equipped with an implicit atlas with boundary, the VFC package provides \emph{virtual cochain complexes} $C^\bullet_\vir(X\rel\partial)$ and $C^\bullet_\vir(X)$ (defined using the ideas from \S\ref{vfcintrosec}).  Moreover, giving $\partial X$ the ``restriction to the boundary'' implicit atlas, there is a natural map:
\begin{equation}\label{introcoboundary}
C_\vir^{\bullet-1}(\partial X)\to C_\vir^\bullet(X\rel\partial)
\end{equation}
whose mapping cone is $C^\bullet_\vir(X)$ (by definition).  There are canonical isomorphisms:
\begin{align}
\label{HXisointro}H^\bullet_\vir(X)&\xrightarrow\sim\cH^\bullet(X;\oo_X)\\
\label{HXrelisointro}H^\bullet_\vir(X\rel\partial)&\xrightarrow\sim\cH^\bullet(X;\oo_{X\rel\partial})
\end{align}
where $\oo_X$ is the ``virtual orientation sheaf'' of $X$.  There is also a map:
\begin{equation}\label{thomintro}
C^\bullet_\vir(X\rel\partial)\xrightarrow{s_\ast}C_{\dim E_A-\bullet}(E_A,E_A\setminus 0)
\end{equation}
(which, when combined with \eqref{HXrelisointro}, can be thought of as pairing against the virtual fundamental cycle of $X$).  Now, there are also product maps:
\begin{equation}
C_\vir^\bullet(\X(p,q)\rel\partial)\otimes C_\vir^\bullet(\X(q,r)\rel\partial)\to C_\vir^\bullet([\X(p,q)\times\X(q,r)]\rel\partial)
\end{equation}
which, when combined with the concatenation maps \eqref{productmapexample} induce maps:
\begin{equation}\label{productmapfromIA}
C_\vir^\bullet(\X(p,q)\rel\partial)\otimes C_\vir^\bullet(\X(q,r)\rel\partial)\to C_\vir^\bullet(\partial\X(p,r))
\end{equation}
From construction, it is clear that the following diagram commutes:
\begin{equation}\label{productofVCandonEcompare}
\begin{tikzcd}
C_\vir^\bullet(\X(p,q)\rel\partial)\otimes C_\vir^\bullet(\X(q,r)\rel\partial)\ar{r}{\eqref{productmapfromIA}}\ar{d}{s_\ast\otimes s_\ast}&C_\vir^\bullet(\partial\X(p,r))\ar{d}{s_\ast}\\
C_\bullet(E_{\A(p,q)},E_{\A(p,q)}\setminus 0)\otimes C_\bullet(E_{\A(q,r)},E_{\A(q,r)}\setminus 0)\ar{r}&C_\bullet(E_{\A(p,r)},E_{\A(p,r)}\setminus 0)
\end{tikzcd}
\end{equation}
where the bottom map is simply the cartesian product on chains (recall that $\A(p,r)=\A(p,q)\sqcup\A(q,r)$).

Now, let us describe the construction of a boundary operator $d:\ZZ[\PPP]\to\ZZ[\PPP]$ given the flow category $(\PPP,\X)$ and its system of implicit atlases.  To define $d$, we need to choose:\footnote{That these should be contractible choices is suggested by the Dold--Thom--Almgren theorem \cite{almgren} \cite[p430]{gromovI}.}
\begin{rlist}
\item(``Chain level coherent orientations'') Cochains $\lambda(p,q)\in C^0_\vir(\X(p,q)\rel\partial)$ satisfying the following property.  Define:
\begin{equation}\label{boundarychainlevelcoherent}
\mu(p,r):=\sum_{p\prec q\prec r}\lambda(p,q)\cdot\lambda(q,r)\in C_\vir^0(\partial\X(p,r))
\end{equation}
(where we implicitly use \eqref{productmapfromIA} on the right hand side).  We require that $d\lambda(p,r)$ equal (the image under \eqref{introcoboundary}) of $\mu(p,r)$.  Thus $(\mu(p,r),\lambda(p,r))$ defines a cycle in the mapping cone of \eqref{introcoboundary}, and thus a homology class in $H^0_\vir(\X(p,r))=\cH^0(\X(p,r),\oo_{\X(p,r)})$ (by \eqref{HXisointro}).  We require that this homology class coincide with the (given) orientation on $\X(p,r)$.
\item(``Thom cocycles'') Cocycles $[[E_{\A(p,q)}]]\in C^{\dim E_{\A(p,q)}}(E_{\A(p,q)},E_{\A(p,q)}\setminus 0)$ whose pairing with $[E_{\A(p,q)}]$ is $1$ and which are compatible in the sense that the following diagram commutes:
\begin{equation}\label{coherentcofundamentalcocycles}
\hspace{-1.5cm}
\begin{tikzcd}
C_\bullet(E_{\A(p,q)},E_{\A(p,q)}\setminus 0)\otimes C_\bullet(E_{\A(q,r)},E_{\A(q,r)}\setminus 0)\ar{r}\ar{d}[swap]{[[E_{\A(p,q)}]]\otimes[[E_{\A(q,r)}]]}&C_\bullet(E_{\A(p,r)},E_{\A(p,r)}\setminus 0)\ar{ld}{[[E_{\A(p,r)}]]}\\
\ZZ
\end{tikzcd}
\hspace{-1.5cm}
\end{equation}
\end{rlist}
Given such choices, we define the boundary operator $d:\ZZ[\PPP]\to\ZZ[\PPP]$ to have ``matrix coefficients'' $[[E_{\A(p,q)}]](s_\ast\lambda(p,q))$.  One can show that $d^2=0$ by using $d\lambda(p,q)=\mu(p,q)$ and the compatibilities \eqref{productofVCandonEcompare} and \eqref{coherentcofundamentalcocycles}.

Let us now sketch the proof of the existence of $\lambda$ and $[[E]]$ as above.  We will work by induction on $(p,q)$; that is, we show that valid $\lambda(p,q)$ and $[[E_{\A(p,q)}]]$ exist given that we have fixed valid $\lambda(p',q')$ and $[[E_{\A(p',q')}]]$ for all $p\preceq p'\prec q'\preceq q$ (other than $(p',q')=(p,q)$).

To construct $\lambda(p,q)$, argue as follows.  Notice that $\mu(p,q)$ is automatically a cycle (apply \eqref{boundarychainlevelcoherent} to expand its boundary and everything cancels).  Now from a general statement about cycles in mapping cones, the existence of $\lambda(p,q)$ inducing the correct homology class in $H_\vir^0(\X(p,q))=\cH^0(\X(p,q);\oo_{\X(p,q)})$ reduces to showing that the homology class $[\mu(p,q)]\in H^0_\vir(\partial\X(p,q))=\cH^0(\partial\X(p,q),\oo_{\partial\X(p,q)})$ is correct (namely, that it coincides with the image of the desired class under the coboundary map $\cH^0(\X(p,q);\oo_{\X(p,q)})\to\cH^0(\partial\X(p,q),\oo_{\partial\X(p,q)})$).  The key observation is that \emph{this can be checked locally} since $\oo_{\partial\X(p,q)}$ is a sheaf.  Over the top strata of $\partial\X(p,q)$ (that is, those trajectories that split only once), the agreement is clear by the induction hypothesis on $\lambda$ and the compatibility of the given coherent orientations.  Unfortunately, the top strata may not be dense in $\partial\X(p,q)$, so we need to work harder (see Proposition \ref{stratifiedispurehomotopysheaf}).  In the end, we must use the induction hypothesis for \emph{all} $\lambda(p',q')$.

To construct $[[E_{\A(p,r)}]]$, argue as follows.  First, observe that the ``homology diagram'' trivially commutes:
\begin{equation}\label{coherentcofundamentalcocycleshomology}
\begin{tikzcd}
H_\bullet(E_{\A(p,q)},E_{\A(p,q)}\setminus 0)\otimes H_\bullet(E_{\A(q,r)},E_{\A(q,r)}\setminus 0)\ar{r}\ar{d}[swap]{[E_{\A(p,q)}]\otimes[E_{\A(q,r)}]\mapsto 1}&H_\bullet(E_{\A(p,r)},E_{\A(p,r)}\setminus 0)\ar{ld}{[E_{\A(p,r)}]\mapsto 1}\\
\ZZ
\end{tikzcd}
\end{equation}
If the horizontal map in \eqref{coherentcofundamentalcocycles} were a cofibration (think ``injective'') in a suitable sense, then the commutativity of \eqref{coherentcofundamentalcocycleshomology} would be sufficient to imply the existence of $[[E_{\A(p,r)}]]$.  Unfortunately, it is far from clear that this map is a cofibration; moreover, its failure to be a cofibration is a genuine obstruction to defining $[[E_{\A(p,r)}]]$ inductively.  For the correct inductive construction, we must use ``cofibrant replacements'' for the system of complexes $C_\bullet(E_{\A(p,q)},E_{\A(p,q)}\setminus 0)$ (the details of which we leave for \S\ref{homologygroupssection}).

Finally, we should argue that the homology groups defined via a choice of $(\lambda,[[E]])$ are in fact independent of that choice.  For this, we use the usual strategy of constructing a chain map between $(\mathbb Z[\PPP],d)$ and $(\mathbb Z[\PPP],d')$ for any $d$ and $d'$ arising from $(\lambda,[[E]])$ and $(\lambda',[[E]]')$ respectively (plus appropriate chain homotopies).  To construct such a chain map, we use a similar inductive procedure, starting from the base choices $(\lambda,[[E]])$ and $(\lambda',[[E]]')$.  Note that for this argument for independence of choice to work, the inductive nature of our construction is crucial.

\subsection{\texorpdfstring{$S^1$}{S{\textasciicircum}1}-localization}\label{Slocalintrosec}

We now explain our strategy for proving $S^1$-localization results for virtual fundamental cycles.  The full details of our treatment appear in \S\ref{Slocalizationsection}.

The most basic setting in which our results apply (and are interesting) is that of a free $S^1$-space $X$ (i.e.\ a space with a free action of $S^1$) equipped with an $S^1$-equivariant implicit atlas $A$.  Such an atlas is simply an implicit atlas where all the thickenings $X_I$ are equipped with an action of $S^1$ so that the functions $\psi_{IJ}$ are $S^1$-equivariant and the functions $s_I$ are $S^1$-invariant.  This last point bears repeating: $S^1$ does not act on the obstruction spaces $E_I$ (or, alternatively, it acts trivially on them).

In the above setup, our $S^1$-localization result states that $\pi_\ast[X]^\vir=0$, where $\pi:X\to X/S^1$ is the quotient map and $\pi_\ast:\cH^\bullet(X)^\vee\to\cH^\bullet(X/S^1)^\vee$ is the dual of the pullback $\pi^\ast:\cH^\bullet(X/S^1)\to\cH^\bullet(X)$.  Observe that $\pi_\ast$ is an isomorphism for $\bullet=0$ (since $S^1$ is connected), hence this implies that if the virtual dimension $d$ of $X$ is zero, then $[X]^\vir=0$.

One should expect this result to be true if one believes that one can choose a ``chain representative'' of $[X]^\vir$ which is $S^1$-invariant (as the pushforward of such a chain representative is clearly null-homologous).  For instance, in the perturbation approach, this result would follow if one could construct $S^1$-invariant transverse perturbations.  Conversely, one can interpret our vanishing result $\pi_\ast[X]^\vir=0$ as a sense in which our $[X]^\vir$ is $S^1$-invariant at the chain level.

\begin{remark}\label{cannotquotientbySS}
A natural strategy for proving $S^1$-localization results is to consider $X_I/S^1$ as forming an atlas on $X/S^1$ of virtual dimension one less, and then ``pulling back'' to $X$ the virtual fundamental cycle on $X/S^1$ (this is the approach taken by Fukaya--Oh--Ohta--Ono \cite{fukayaono,foootechnicaldetails}).  In the general setting where the $S^1$-action is merely continuous, the implicit atlas on $X$ does not induce an implicit atlas on $X/S^1$ because of the existence of free $S^1$-actions on topological manifolds whose quotients are not manifolds\footnote{Such an action may be constructed out of any non-manifold $X$ for which $X\times\RR$ is a manfold (namely the obvious action on $X\times S^1$).  Many examples of such spaces are known, the first being due to Bing \cite{bingstablemanifoldexample}; see also Cannon \cite{cannon}.}, though we still consider this ``quotient implicit atlas'' in spirit.  This extra generality is convenient, since it means we do not need to construct an $S^1$-equivariant gluing map (providing the local slice necessary to show that the ``quotient implicit atlas'' exists).
\end{remark}

Let us now prove our assertion $\pi_\ast[X]^\vir=0$ in the simple case of a single chart (for which we defined $[X]^\vir$ in \S\ref{vfcsinglechart}).  In this case, the implicit atlas $A=\{\alpha\}$ consists of a topological manifold $X_\alpha$, a function $s_\alpha:X_\alpha\to E_\alpha$, and an identification $X=s_\alpha^{-1}(0)$.  This atlas being $S^1$-equivariant means that $X_\alpha$ is equipped with an $S^1$-action for which $s_\alpha$ is $S^1$-invariant and which induces the given action on $X$.  The desired statement follows from the following diagram, as we explain below:
\begin{equation*}%unnumbered\label{SSkeytoydiagram}
\begin{CD}
\cH^d(X/S^1)@=H_{\dim E_\alpha-1}^{S^1}(X_\alpha,X_\alpha\setminus X)@>(s_\alpha)_\ast>>H_{\dim E_\alpha-1}^{S^1}(E_\alpha,E_\alpha\setminus 0)\cr
@V\pi^\ast VV@VV\pi^!V@VV\pi^!V\cr
\cH^d(X)@=H_{\dim E_\alpha}(X_\alpha,X_\alpha\setminus X)@>(s_\alpha)_\ast>>H_{\dim E_\alpha}(E_\alpha,E_\alpha\setminus 0)@>[E_\alpha]\mapsto 1>>\ZZ
\end{CD}
\end{equation*}
The map $\pi^\ast$ is pullback under $\pi:X\to X/S^1$.  The maps $\pi^!$ are the maps from the Gysin long exact sequence for an $S^1$-space:
\begin{equation}
\cdots\xrightarrow{\cap e}H_{\bullet-1}^{S^1}(Z)\xrightarrow{\pi^!}H_\bullet(Z)\xrightarrow{\pi_\ast}H_\bullet^{S^1}(Z)\xrightarrow{\cap e}H_{\bullet-2}^{S^1}(Z)\xrightarrow{\pi^!}\cdots
\end{equation}
The two leftmost horizontal identifications are a form of Poincar\'e duality (which can be proven, along with the commutativity of the square, with homotopy $\K$-sheaves as in \S\ref{homotopysheavesintrosec}); it is at this step where we use the fact that $S^1$ acts freely on $X$.

Now $[X]^\vir$ is by definition the composition of the bottom row.  Hence $\pi_\ast[X]^\vir$ is the map from the upper left corner to the bottom right.  On the other hand, the rightmost vertical map $\pi^!:H_{\dim E_\alpha-1}^{S^1}(E_\alpha,E_\alpha\setminus 0)\to H_{\dim E_\alpha}(E_\alpha,E_\alpha\setminus 0)$ vanishes since $S^1$ acts trivially on $E_\alpha$.  It follows that $\pi_\ast[X]^\vir=0$ as desired.

To generalize this approach to arbitrary implicit atlases, we introduce ``$S^1$-equivariant virtual cochain complexes'' which play the role of $H_{\dim E_\alpha-1}^{S^1}(X_\alpha,X_\alpha\setminus X)$ above.  Morally speaking, these $S^1$-equivariant virtual cochain complexes play the role of the virtual cochain complexes of the (non-existent; c.f.\ Remark \ref{cannotquotientbySS}) induced implicit atlas on $X/S^1$.  To define the $S^1$-equivariant virtual cochain complexes, we use the same definition as for the ordinary virtual cochain complexes, except using (shifted) $S^1$-equivariant chains $C_{\bullet-1}^{S^1}$ in place of chains $C_\bullet$.

\begin{remark}
We do not construct an $S^1$-equivariant virtual fundamental class, nor do we address $S^1$-localization for actions which are not free or almost free (having finite stabilizer at every point).  However, the machinery we develop could potentially be used for these purposes, see Remark \ref{SSequivariantvfcmaybe}.
\end{remark}

\section{Implicit atlases}\label{implicitatlasdefsec}

\subsection{Implicit atlases}

\begin{definition}[Implicit atlas]\label{implicitatlasdefinition}
Let $X$ be a compact Hausdorff space.  An \emph{implicit atlas of dimension $d=\vdim_AX$} on $X$ is an index set $A$ along with the following data:
\begin{rlist}
\item(Covering groups) A finite group $\Gamma_\alpha$ for all $\alpha\in A$ (let $\Gamma_I:=\prod_{\alpha\in I}\Gamma_\alpha$).
\item(Obstruction spaces) A finitely generated $\RR[\Gamma_\alpha]$-module $E_\alpha$ for all $\alpha\in A$ (let $E_I:=\bigoplus_{\alpha\in I}E_\alpha$).
\item(Thickenings) A Hausdorff $\Gamma_I$-space $X_I$ for all finite $I\subseteq A$, and a homeomorphism $X\to X_\varnothing$.
\item(Kuranishi maps) A $\Gamma_\alpha$-equivariant function $s_\alpha:X_I\to E_\alpha$ for all $\alpha\in I\subseteq A$ (for $I\subseteq J$, let $s_I:X_J\to E_I$ denote $\bigoplus_{\alpha\in I}s_\alpha$).
\item(Footprints) A $\Gamma_I$-invariant open set $U_{IJ}\subseteq X_I$ for all $I\subseteq J\subseteq A$.
\item(Footprint maps) A $\Gamma_J$-equivariant function $\psi_{IJ}:(s_{J\setminus I}|X_J)^{-1}(0)\to U_{IJ}$ for all $I\subseteq J\subseteq A$.
\item(Regular locus) A $\Gamma_I$-invariant subset $X_I^\reg\subseteq X_I$ for all $I\subseteq A$.
\end{rlist}
which must satisfy the following ``compatibility axioms'':
\begin{rlist}
\item$\psi_{IJ}\psi_{JK}=\psi_{IK}$ and $\psi_{II}=\id$.
\item$s_I\psi_{IJ}=s_I$.
\item$U_{IJ_1}\cap U_{IJ_2}=U_{I,J_1\cup J_2}$ and $U_{II}=X_I$.
\item$\psi_{IJ}^{-1}(U_{IK})=U_{JK}\cap(s_{J\setminus I}|X_J)^{-1}(0)$.\footnote{Added in proof: Abouzaid recently pointed out that this axiom is superfluous; it is implied by the other axioms.}
\item(Homeomorphism axiom) $\psi_{IJ}$ induces a homeomorphism $(s_{J\setminus I}|X_J)^{-1}(0)/\Gamma_{J\setminus I}\to U_{IJ}$.
\rlistsave
\end{rlist}
and the following ``transversality axioms'':
\begin{rlist}
\rlistresume
\item$\psi_{IJ}^{-1}(X_I^\reg)\subseteq X_J^\reg$.
\item$\Gamma_{J\setminus I}$ acts freely on $\psi_{IJ}^{-1}(X_I^\reg)$.
\item(Openness axiom) $X_I^\reg\subseteq X_I$ is open.
\item(Submersion axiom) The map $s_{J\setminus I}:X_J\to E_{J\setminus I}$ is locally modeled on the projection $\RR^{d+\dim E_I}\times\RR^{\dim E_{J\setminus I}}\to\RR^{\dim E_{J\setminus I}}$ over $\psi_{IJ}^{-1}(X_I^\reg)\subseteq X_J$.
\item(Covering axiom) $X_\varnothing=\bigcup_{I\subseteq A}\psi_{\varnothing I}((s_I|X_I^\reg)^{-1}(0))$.
\end{rlist}
\end{definition}

\begin{remark}
The VFC machinery in this paper would go through admitting a slight weakening of the axioms of an implicit atlas.  For example, we only ever use the fact that the openness and submersion axioms hold in a neighborhood of $(s_I|X_I)^{-1}(0)$.  We will not make this precise here, however, since the constructions of implicit atlases we know of would not be made any easier by such a weakening of the axioms.
\end{remark}

\begin{definition}[Smooth implicit atlas]\label{smoothIAdef}
A \emph{smooth structure} on an implicit atlas consists of a smooth structure on each $X_I^\reg$ such that:
\begin{rlist}
\item$\Gamma_I$ acts smoothly on $X_I^\reg$.
\item$s_I$ is smooth over $X_I^\reg$.
\item$s_{J\setminus I}:X_J\to E_{J\setminus I}$ is a smooth submersion over $\psi_{IJ}^{-1}(X_I^\reg)$.
\item$\psi_{IJ}$ is a local diffeomorphism over $\psi_{IJ}^{-1}(X_I^\reg)$.
\end{rlist}
\end{definition}

\begin{remark}
The VFC machinery in this paper applies to implicit atlases (without a smooth structure), though the notion of a smooth implicit atlas may be useful for other applications of implicit atlases.
\end{remark}

\begin{remark}[Using finite $I\to A$ instead of finite $I\subseteq A$]\label{bettertouseovercategory}
An implicit atlas consists of data parameterized by the \emph{category of finite subsets of $A$} (objects: finite subsets, morphisms: inclusions).  A direct modification of the definition allows one to instead use the \emph{category of finite sets over $A$} (objects: finite sets $I\to A$, morphisms: injective maps $I\hookrightarrow J$ compatible with the maps to $A$).  In fact, all constructions of implicit atlases we know of yield implicit atlases in this generalized sense.  We won't need this generalization in this paper, but let us point out some reasons why it may be useful to keep in mind.

With the definition as it stands now (using finite subsets of $A$), we can ``pull back'' an implicit atlas along any \emph{injection} $B\hookrightarrow A$.  If we instead use finite sets over $A$, then we can pull back an implicit atlas along any \emph{map} $B\to A$ (in fact, we can pull back along any \emph{coproduct preserving functor} from finite sets over $B$ to finite sets over $A$, which amounts to the specification of a finite set $I_\beta\to A$ for all $\beta\in B$).  Also, in the category of finite subsets of $A$ we can only take the disjoint union of subsets which are already disjoint; however in the category of finite sets over $A$ there exist arbitrary abstract finite disjoint unions.  This allows some extra (though currently unneeded) flexibility in certain constructions, since we do not need to ensure certain sets are disjoint or that certain maps are injective.

Using the category of sets over $A$ is also the natural perspective to take if we wanted to allow $A$ to be a groupoid instead of a set (then an object is a finite set $I$ along with a collection of objects $\{\alpha_i\}_{i\in I}$ of $A$, and a morphism $(I,\{\alpha_i\}_{i\in I})\to(J,\{\alpha_j\}_{j\in J})$ is an injection $j:I\hookrightarrow J$ along with isomorphisms $\alpha_i\xrightarrow\sim\alpha_{j(i)}$ in $A$).
\end{remark}

\subsection{Implicit atlases with boundary}

\begin{definition}[Implicit atlas with boundary]\label{IAwboundary}
Let $X$ be a compact Hausdorff space together with a closed subset denoted $\partial X\subseteq X$.  An \emph{implicit atlas of dimension $d$ with boundary} on $X$ consists of the same data as an implicit atlas, except that in addition we specify a $\Gamma_I$-invariant closed subset $\partial X_I\subseteq X_I$ for all $I\subseteq A$, such that $\partial X_\varnothing=\partial X$.  We add the following ``compatibility axiom'':
\begin{rlist}
\item$\psi_{IJ}^{-1}(\partial X_I)=(s_{J\setminus I}|\partial X_J)^{-1}(0)$.
\rlistsave
\end{rlist}
and we modify one ``transversality axiom'':
\begin{rlist}
\rlistresume
\item(Submersion axiom) We allow an additional local model $\RR_{\geq 0}\times\RR^{d+\dim E_I-1}\times\RR^{\dim E_{J\setminus I}}\to\RR^{\dim E_{J\setminus I}}$, and $\partial X_J^\reg\subseteq X_J^\reg$ must correspond to the boundary of the local model.
\end{rlist}
\end{definition}

\begin{remark}
Just as a manifold is a special case of a manifold with boundary, an implicit atlas is a special case of an implicit atlas with boundary (namely where $\partial X_I=\varnothing$ for all $I$).
\end{remark}

\begin{definition}[Restriction of implicit atlas to boundary]
Let $X$ be a space with implicit atlas $A$ of dimension $d$ with boundary.  Then this induces an implicit atlas $A$ (the same index set) of dimension $d-1$ on $\partial X$, simply by setting $(\partial X)_I:=\partial X_I$ and restricting the rest of the data to these subspaces.
\end{definition}

\section{The VFC package}\label{Fsheafsection}

In this section, we develop the \emph{VFC package}, which is the algebraic machinery we will apply in later sections to work with virtual fundamental cycles.  The reader should be comfortable with the material from Appendix \ref{homologicalalgebrasection}, where we recall and develop the necessary foundational language of sheaves and homological algebra.

\begin{convention}
In this section, we work over a fixed ground ring $R$, and everything takes place in the category of $R$-modules.  We restrict to implicit atlases $A$ for which $\#\Gamma_\alpha$ is invertible in $R$ for all $\alpha\in A$.
\end{convention}

Let us now introduce the formalism of our VFC package.

For any space $X$ equipped with a \emph{finite} \emph{locally orientable} implicit atlas $A$ of dimension $d$ with boundary, we define \emph{virtual cochain complexes} $C^\bullet_\vir(X;A)$ and $C^\bullet_\vir(X\rel\partial;A)$.  We construct canonical isomorphisms:
\begin{align}
\label{HXiso}H^\bullet_\vir(X;A)&\xrightarrow\sim\cH^\bullet(X;\oo_X)\\
\label{HXreliso}H^\bullet_\vir(X\rel\partial;A)&\xrightarrow\sim\cH^\bullet(X;\oo_{X\rel\partial})
\end{align}
($H^\bullet_\vir$ denotes the cohomology of $C^\bullet_\vir$) for certain \emph{virtual orientation sheaves} $\oo_X$ and $\oo_{X\rel\partial}$ on $X$.  There is a canonical map:
\begin{equation}\label{sastintrotvfcsec}
C^{d+\bullet}_\vir(X\rel\partial;A)\xrightarrow{s_\ast}C_{\dim E_A-\bullet}(E_A,E_A\setminus 0;\oo_{E_A}^\vee)^{\Gamma_A}
\end{equation}
which can be thought of as the (chain level) \emph{virtual fundamental cycle}.

To study the virtual cochain complexes (in particular, to construct the isomorphisms \eqref{HXiso}--\eqref{HXreliso}), we define complexes of $\K$-presheaves on $X$:
\begin{align}
K&\mapsto C^\bullet_\vir(K;A)\\
K&\mapsto C^\bullet_\vir(K\rel\partial;A)
\end{align}
whose global sections are the virtual cochain complexes.\footnote{Note that this is a certain abuse of notation, as $A$ is not an implicit atlas on $K\subsetneqq X$.}\footnote{Note that the map $s_\ast$ \eqref{sastintrotvfcsec} is global; it does not exist on $C^\bullet_\vir(K\rel\partial;A)$ for $K\subsetneqq X$ or on any $C^\bullet_\vir(K;A)$.}  We show that they are pure homotopy $\K$-sheaves, and that there are canonical isomorphisms of sheaves on $X$:
\begin{align}
H^0_\vir(-;A)&=\oo_X\\
H^0_\vir(-\rel\partial;A)&=\oo_{X\rel\partial}
\end{align}
The isomorphisms \eqref{HXiso}--\eqref{HXreliso} then follow from Proposition \ref{cechcohomologypurehomotopysheaf}.

The fact that the virtual cochain complexes are the global sections of pure homotopy $\K$-sheaves with known $H^0$ will also play a key role in the applications of the VFC package.

Another useful fact we prove here is that the isomorphisms \eqref{HXiso}--\eqref{HXreliso} are compatible with the long exact sequence of the pair $(X,\partial X)$ in $\cH^\bullet$ and a corresponding long exact sequence of $H^\bullet_\vir$.

\subsection{Orientations}\label{virtorsheaf}

Recall the notion of the orientation sheaf of a topological manifold (resp.\ with boundary) given in Definition \ref{manordef}.

\begin{definition}[Orientation module of a vector space]
Let $E$ be a finite-dimensional vector space over $\RR$.  We let $\oo_E$ denote the orientation module of $E$, namely $H_{\dim E}(E,E\setminus 0)$ (a free $R$-module of rank $1$).\footnote{According to the previous definition, we should really call this $(\oo_E)_0$ (the stalk at $0\in E$ of the orientation sheaf of $E$ considered as a manifold), though we do not anticipate this abuse causing any particular confusion.}
\end{definition}

\begin{definition}[Locally orientable implicit atlas]\label{IAlocalorient}
Let $X$ be a space with implicit atlas $A$ with boundary.  We say that $A$ is \emph{locally orientable}\footnote{This is analogous to the notion of an orbifold being locally orientable.} iff for every $I\subseteq A$ and every $x\in(s_I|X_I^\reg)^{-1}(0)$, the stabilizer $(\Gamma_I)_x$ acts trivially on $(\oo_{X_I^\reg})_x\otimes\oo_{E_I}^\vee$ (this action is always by a sign $(\Gamma_I)_x\to\{\pm 1\}$).  This notion is independent of the ring $R$ (due to our restriction that $\#\Gamma_\alpha$ be invertible in $R$).
\end{definition}

\begin{definition}[Virtual orientation sheaf $\oo_X$ of a space with implicit atlas]\label{IAorientationsheaf}
Let $X$ be a space with locally orientable implicit atlas $A$ with boundary.  Then there exists a sheaf $\oo_{X,A}$ on $X$ equipped with $\Gamma_I$-equivariant isomorphisms $\psi_{\varnothing I}^\ast\oo_{X,A}\xrightarrow\sim\oo_{X_I^\reg}\otimes\oo_{E_I}^\vee$ over $(s_I|X_I^\reg)^{-1}(0)$ for every $I\subseteq A$, which are compatible with the maps $\psi_{IJ}$.  We call $\oo_{X,A}$ the \emph{virtual orientation sheaf} of $X$ (it is unique up to unique isomorphism); it is locally isomorphic to the constant sheaf $\underline R$.  We write $\oo_X$ for $\oo_{X,A}$ when the atlas is clear from context.  We let $\oo_{X\rel\partial}:=j_!j^\ast\oo_X$ where $j:X\setminus\partial X\hookrightarrow X$.
\end{definition}

\subsection{Virtual cochain complexes \texorpdfstring{$C^\bullet_\vir(X;A)$}{C\_vir(X;A)} and \texorpdfstring{$C^\bullet_\vir(X\rel\partial;A)$}{C\_vir(X rel d;A)}}\label{Fsheafdefsec}

\begin{definition}[Deformation to the normal cone]
Let $X$ be a space with finite implicit atlas $A$ with boundary.  For $I\subseteq J\subseteq A$, we define:
\begin{equation*}%%unnumbered
X_{I,J,A}:=\left\{(e,t,x)\in E_A\times\RR_{\geq 0}^A\times X_J^\reg\;\middle|\;\begin{matrix}t_\alpha=0\text{ for }\alpha\in A\setminus I\hfill\cr s_\alpha(x)=t_\alpha e_\alpha\text{ for }\alpha\in J\hfill\cr\psi_{\{\alpha\in A:t_\alpha>0\},J}(x)\in X_{\{\alpha\in A:t_\alpha>0\}}^\reg\hfill\end{matrix}\right\}
\end{equation*}
The condition $\psi_{\{\alpha\in A:t_\alpha>0\},J}(x)\in X_{\{\alpha\in A:t_\alpha>0\}}^\reg$ ensures that we only deform to the normal cone of those zero sets $s_{J\setminus I'}^{-1}(0)$ which are cut out transversally ($I'\subseteq I$).  Clearly $(\partial X)_{I,J,A}\subseteq X_{I,J,A}$ is the subset where $x\in\partial X_J^\reg$ (the former being with respect to the restriction of $A$ to $\partial X$).  There are compatible maps:
\begin{equation}\label{pushforward}
X_{I,J,A}\times E_{A'\setminus A}\to X_{I',J',A'}\text{ for }I\subseteq I'\subseteq J'\subseteq J\subseteq A\subseteq A'
\end{equation}
given by $t_{A'\setminus A}=0$ and $x\mapsto\psi_{J',J}(x)$.  For $K\subseteq X$, we let $X_{I,J,A}^K\subseteq X_{I,J,A}$ denote the subspace where $e=0$ and $x\in\psi_{\varnothing J}^{-1}(K)$.  Note that $\Gamma_A$ acts on $X_{I,J,A}$ (acting on $X_J^\reg$ via the projection $\Gamma_A\to\Gamma_J$ and on $E_A$).
\end{definition}

\begin{remark}[Chains and cochains]\label{chainfixing}
We use $C_\bullet(X)$ (resp.\ $C^\bullet(X)$) to denote singular simplicial chains (resp.\ cochains) on a space $X$, and we use $C_\bullet(X,Y)$ to denote the cokernel of $C_\bullet(Y)\hookrightarrow C_\bullet(X)$ for $Y\subseteq X$ (``relative chains'').

Let us also recall the ``Eilenberg--Zilber map'' $C_\bullet(X)\otimes C_\bullet(Y)\to C_\bullet(X\times Y)$ for spaces $X$ and $Y$, corresponding to the standard subdivision of $\Delta^n\times\Delta^m$ into $\binom{n+m}n$ copies of $\Delta^{n+m}$.  The Eilenberg--Zilber map is associative (in the sense that it gives rise to a unique map $C_\bullet(X)\otimes C_\bullet(Y)\otimes C_\bullet(Z)\to C_\bullet(X\times Y\times Z)$) and commutative (in the sense that the following diagram commutes:
\begin{equation}\label{chainproductcommutes}
\begin{CD}
C_\bullet(X)\otimes C_\bullet(Y)@>>>C_\bullet(X\times Y)\cr
@VVV@VVV\cr
C_\bullet(Y)\otimes C_\bullet(X)@>>>C_\bullet(Y\times X)
\end{CD}
\end{equation}
for all $X$ and $Y$).  We should point out that this commutativity fails for some other common models of singular chains, for example singular cubical chains (modulo degeneracies).
\end{remark}

\begin{remark}[Independence of chain model]\label{independenceofchainmodel}
The particular choice of \emph{singular simplicial} chains is not particularly important.  The virtual fundamental classes, etc.\ resulting from our theory should be unchanged by using any other model of singular chains.  This independence would follow immediately (and all issues about finding chain models with good chain level functoriality properties would go away) if we setup the VFC package using $\infty$-categories.
\end{remark}

\begin{definition}[Fundamental cycles of vector spaces]
Let $I$ be a finite subset of an implicit atlas $A$.  We define:
\begin{equation}
C_\bullet(E;I):=C_{\dim E_I+\bullet}(E_I,E_I\setminus 0;\oo_{E_I}^\vee)^{\Gamma_I}
\end{equation}
There is a canonical isomorphism $H_\bullet(E;I)=R$ (concentrated in degree zero), and we denote the canonical generator by $[E_I]\in H_\bullet(E;I)$.
\end{definition}

\begin{definition}[Partial virtual cochain complexes $C^\bullet_\vir(-;A)_{IJ}$ and $C^\bullet_\vir(-\rel\partial;A)_{IJ}$]\label{CSTcomplexdef}
Let $X$ be a space with finite implicit atlas $A$ of dimension $d$ with boundary.  For any compact $K\subseteq X$, we define:
\begin{align}
\label{CcomplexcriticaldefI}C^\bullet_\vir(K\rel\partial;A)_{IJ}&:=C_{d+\dim E_A-\bullet}(X_{I,J,A},X_{I,J,A}\setminus X_{I,J,A}^K;\oo_{E_A}^\vee)^{\Gamma_A}\\
\label{CcomplexcriticaldefII}C^\bullet_\vir(K;A)_{IJ}&:=\left[\begin{matrix}C_{d+\dim E_A-1-\bullet}((\partial X)_{I,J,A},(\partial X)_{I,J,A}\setminus(\partial X)_{I,J,A}^{K\cap\partial X};\oo_{E_A}^\vee)^{\Gamma_A}\hfill\cr\downarrow\cr C_{d+\dim E_A-\bullet}(X_{I,J,A},X_{I,J,A}\setminus X_{I,J,A}^K;\oo_{E_A}^\vee)^{\Gamma_A}\hfill\end{matrix}\right]
\end{align}
It is clear that $K\mapsto C^\bullet_\vir(K;A)_{IJ}$ and $K\mapsto C^\bullet_\vir(K\rel\partial;A)_{IJ}$ are both complexes of $\K$-presheaves on $X$.

There are canonical maps:
\begin{align}
\label{Erelpartialpullback}C^\bullet_\vir(K\rel\partial;A)_{IJ}&\to C^\bullet_\vir(K;A)_{IJ}\\
\label{Ethompushforward}C^{d+\bullet}_\vir(X\rel\partial;A)_{IJ}&\xrightarrow{s_\ast}C_{-\bullet}(E;A)\\
\label{Epushforward1}\hspace{1.5in}C^\bullet_\vir(-;A)_{IJ}&\to C^\bullet_\vir(-;A)_{I',J'}\text{ for }I\subseteq I'\subseteq J'\subseteq J
\end{align}
(\eqref{Ethompushforward} is induced by the projection $X_{I,J,A}\to E_A$, and \eqref{Epushforward1} is induced by \eqref{pushforward} with $A=A'$).  These are compatible with each other in that certain obvious diagrams commute.
\end{definition}

\begin{definition}[Virtual cochain complexes $C^\bullet_\vir(-;A)$ and $C^\bullet_\vir(-\rel\partial;A)$]\label{Ccomplexdef}
Let $X$ be a space with finite implicit atlas $A$ of dimension $d$ with boundary.  For any compact $K\subseteq X$, we define:
\begin{align}
\label{CvirdefI}C^\bullet_\vir(K;A)&:=\hocolim_{I\subseteq J\subseteq A}C^\bullet_\vir(K;A)_{IJ}\\
\label{CvirdefII}C^\bullet_\vir(K\rel\partial;A)&:=\hocolim_{I\subseteq J\subseteq A}C^\bullet_\vir(K\rel\partial;A)_{IJ}
\end{align}
where $\hocolim_{I\subseteq J\subseteq A}$ is the homotopy colimit (Definition \ref{homotopycolimit}) over $2^A$ with structure maps given by $(\#\Gamma_{J\setminus J'})^{-1}$ times \eqref{Epushforward1}.  It is clear that $K\mapsto C^\bullet_\vir(K;A)$ and $K\mapsto C^\bullet_\vir(K\rel\partial;A)$ are both complexes of $\K$-presheaves on $X$.

There are canonical maps:
\begin{align}
\label{Frelpartialpullback}C^\bullet_\vir(K\rel\partial;A)&\to C^\bullet_\vir(K;A)\\
\label{Fpushforward}C^{d+\bullet}_\vir(X\rel\partial;A)&\xrightarrow{s_\ast}C_{-\bullet}(E;A)
\end{align}
(induced by \eqref{Erelpartialpullback}--\eqref{Ethompushforward}).  More precisely, \eqref{Fpushforward} is given by \eqref{Ethompushforward} on the $p=0$ part of the $\hocolim$ and is zero on the rest.
\end{definition}

\begin{definition}[Maps $C^\bullet_\vir(-;A)\to C^\bullet_\vir(-;A')$]
Let $X$ be a space with finite implicit atlases $A\subseteq A'$ with boundary.  There are canonical maps:
\begin{equation}
\label{Epushforward1more}C^\bullet_\vir(-;A)_{IJ}\otimes C_{-\bullet}(E;A'\setminus A)\to C^\bullet_\vir(-;A')_{I',J'}\text{ for }I\subseteq I'\subseteq J'\subseteq J\subseteq A\subseteq A'
\end{equation}
induced by \eqref{pushforward} (of which \eqref{Epushforward1} is a special case).  These are compatible with each other, and thus extend to the homotopy colimit over $I\subseteq J\subseteq A$.  Hence we get canonical maps:
\begin{equation}
\label{Fpushforward1}C^\bullet_\vir(-;A)\otimes C_{-\bullet}(E;A'\setminus A)\to C^\bullet_\vir(-;A')
\end{equation}
which are compatible with \eqref{Frelpartialpullback}--\eqref{Fpushforward}.
\end{definition}

Note that all of the complexes defined here are free $R$-modules (this follows from our assumption that $\#\Gamma_\alpha$ be invertible in $R$) and that everything in sight is compatible with base change $\otimes_RS$ for ring homomorphisms $R\to S$.

\subsection{Isomorphisms \texorpdfstring{$H^\bullet_\vir(X;A)=\cH^\bullet(X;\oo_X)$}{H\_vir(X;A)=H(X;o\_X)} (also \texorpdfstring{$\rel\partial$}{rel d})}\label{Fsheafcechsimplesec}

\begin{lemma}[$C^\bullet_\vir(-;A)_{IJ}$ are pure homotopy $\K$-sheaves]\label{CSTpure}
Let $X$ be a space with finite locally orientable implicit atlas $A$ with boundary.  Then $C^\bullet_\vir(-;A)_{IJ}$ and $C^\bullet_\vir(-\rel\partial;A)_{IJ}$ are pure homotopy $\K$-sheaves on $X$.  Furthermore, there are canonical isomorphisms of sheaves on $X$:
\begin{align}
\label{EisoI}H^0_\vir(-;A)_{IJ}&=j_!j^\ast\oo_X\\
\label{EisoII}H^0_\vir(-\rel\partial;A)_{IJ}&=j_!j^\ast\oo_{X\rel\partial}
\end{align}
where $j:V_I\cap V_J\hookrightarrow X$ for $V_I:=\psi_{\varnothing I}((s_I|X_I^\reg)^{-1}(0))\subseteq X$ (an open subset).
\end{lemma}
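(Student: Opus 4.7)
The proof proceeds in two stages: establishing the homotopy $\K$-sheaf property (routine) and computing stalks to verify purity and identify $H^0$ (the main work). For the first stage, the key observation is that for any continuous map $\pi: Y \to X$ and local system $\mathcal{L}$ on $Y$, the $\K$-presheaf $K \mapsto C_\bullet(Y, Y \setminus \pi^{-1}(K); \mathcal{L})$ is a homotopy $\K$-sheaf on $X$ by Mayer--Vietoris for singular chains applied to $\pi^{-1}(K_1 \cup K_2) = \pi^{-1}(K_1) \cup \pi^{-1}(K_2)$. Applying this to $Y = X_{I,J,A}$ with the projection to $X$, and then taking $\Gamma_A$-invariants (which preserves the homotopy $\K$-sheaf property since $\#\Gamma_A \in R^\times$ makes invariants a direct summand via averaging), handles $C^\bullet_\vir(-\rel\partial;A)_{IJ}$. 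Since $C^\bullet_\vir(-;A)_{IJ}$ is a mapping cone of this and its boundary analogue, it is also a homotopy $\K$-sheaf by Lemma \ref{associatedgradedhomotopysheaf} (extensions of homotopy $\K$-sheaves).

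For the stalk computation, fix $p \in X$. First I will observe that $V_I \subseteq V_J$ for $I \subseteq J$ (lift preimages in $(s_I|X_I^\reg)^{-1}(0)$ along $\psi_{IJ}$, using $\psi_{IJ}^{-1}(X_I^\reg) \subseteq X_J^\reg$), so $V_I \cap V_J = V_I$. Next, $X_{I,J,A}^{\{p\}} \neq \varnothing$ forces $p \in V_I$: any $(0, t, x) \in X_{I,J,A}^{\{p\}}$ satisfies the regularity condition $\psi_{SJ}(x) \in X_S^\reg$ for $S = \{\alpha : t_\alpha > 0\} \subseteq I$, hence $p = \psi_{\varnothing S}(\psi_{SJ}(x)) \in V_S \subseteq V_I$. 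The stalk at $p$ of the $\K$-presheaf $K \mapsto C_\bullet(X_{I,J,A}, X_{I,J,A} \setminus X_{I,J,A}^K; \oo_{E_A}^\vee)^{\Gamma_A}$ is the filtered colimit $C_\bullet(X_{I,J,A}, X_{I,J,A} \setminus X_{I,J,A}^{\{p\}}; \oo_{E_A}^\vee)^{\Gamma_A}$, which vanishes when $X_{I,J,A}^{\{p\}} = \varnothing$. So both $H^\bullet_\vir(-;A)_{IJ}$ and $H^\bullet_\vir(-\rel\partial;A)_{IJ}$ have vanishing stalks outside $V_I$.

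For $p \in V_I$, I will pick $\bar x \in (s_J|X_J^\reg)^{-1}(0)$ with $\psi_{\varnothing J}(\bar x) = p$ and $\psi_{IJ}(\bar x) \in X_I^\reg$. The submersion axiom for $(I, J)$ provides local coordinates on $X_J^\reg$ near $\bar x$ as $\RR^{d+\dim E_I} \times \RR^{\dim E_{J\setminus I}}$ (possibly with boundary) with $s_{J\setminus I}$ the projection to the second factor. In these coordinates, the top stratum of $X_{I,J,A}$ near $(0, 0, \bar x)$ is a topological manifold of dimension $d + \dim E_A + |I|$, and $X_{I,J,A}^{\{p\}}$ near $\bar x$ contains the $|I|$-dimensional locus $\{0\} \times \RR_{>0}^I \times \{\bar x\}$ at which it has codimension $d + \dim E_A$. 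A local Poincar\'e--Lefschetz / Thom computation will then identify the local homology in degree $d + \dim E_A$ as $\oo_{X_J^\reg}|_{\bar x} \otimes \oo_{E_J}^\vee$ (after combining the $\oo_{E_A}^\vee$-twist with the normal-bundle orientation and canceling $\oo_{E_{A\setminus I}} \otimes \oo_{E_{A\setminus I}}^\vee$), which equals $(\oo_X)_p$ by Definition \ref{IAorientationsheaf}. Summing over the $\Gamma_J$-orbit of $\bar x$ and taking $\Gamma_A$-invariants, with local orientability (Definition \ref{IAlocalorient}) ensuring that stabilizers act trivially on orientations, yields $(\oo_X)_p$, giving $H^0_\vir(-;A)_{IJ} \cong j_! j^* \oo_X$. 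The $\rel\partial$ case is parallel, yielding $H^0_\vir(-\rel\partial;A)_{IJ} \cong j_! j^* \oo_{X\rel\partial}$: at a boundary point $p \in \partial X$, the local homology of a half-space relative to a boundary point is zero, matching $\oo_{X\rel\partial}|_p = 0$.

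The main obstacle is the local Poincar\'e--Lefschetz computation at $p \in V_I$. The space $X_{I,J,A}$ carries a non-trivial stratification indexed by $S \subseteq I$, and $X_{I,J,A}^{\{p\}}$ near $\bar x$ is a cone-like subset of $\{0\} \times \RR_{\geq 0}^I \times \{\bar x\}$ whose precise shape depends on which intermediate conditions $\psi_{SJ}(\bar x) \in X_S^\reg$ happen to hold. Verifying that only the top-stratum contribution survives in the local homology, with its orientation matching the construction of $\oo_X$ in Definition \ref{IAorientationsheaf}, is the technical heart of the proof; local orientability is essential for the final $\Gamma_A$-invariants step to recover the orientation sheaf rather than a smaller quotient.
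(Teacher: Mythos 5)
Your first stage (Mayer--Vietoris for singular chains, $\Gamma_A$-averaging, then Lemma \ref{associatedgradedhomotopysheaf}) is sound and matches the paper's use of Lemmas \ref{singularchainshomotopyKsheaf} and \ref{associatedgradedhomotopysheaf}.

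The stalk computation contains a concrete error. The claim $V_I\subseteq V_J$ for $I\subseteq J$ is false: in the special case where every $X_S^\reg=X_S$ one has $V_I=U_{\varnothing I}$, and the compatibility axiom $U_{\varnothing I}\cap U_{\varnothing J}=U_{\varnothing,I\cup J}=U_{\varnothing J}$ gives the \emph{opposite} containment $V_J\subseteq V_I$; in general neither holds, which is why the lemma's statement keeps the intersection $V_I\cap V_J$. The flaw in your lifting argument is that $\psi_{IJ}$ has image $U_{IJ}\subseteq X_I$, and a point of $(s_I|X_I^\reg)^{-1}(0)$ need not lie in $U_{IJ}$, so it cannot be lifted. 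Relatedly, $X_{I,J,A}^{\{p\}}\ne\varnothing$ does not force $p\in V_I$: the point $(0,0,x)$ with all $t_\alpha=0$ requires only $\psi_{\varnothing J}(x)\in X_\varnothing^\reg$, i.e.\ $p\in V_\varnothing\cap V_J$, and $V_\varnothing\not\subseteq V_I$ in general. What vanishes off $V_I\cap V_J$ is the local homology, not $X_{I,J,A}^{\{p\}}$ itself, and establishing this at points where $X_{I,J,A}^{\{p\}}$ is nonempty but the top stratum $t\in\RR_{>0}^I$ over $p$ is empty is precisely the stratified local computation you flag as ``the technical heart'' and do not carry out.

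The paper sidesteps that local analysis entirely. It applies Poincar\'e--Lefschetz duality (Lemma \ref{poincareduality}) to the pair $(X_{I,J,A},X_{I,J,A}^K)$ globally: the twist by $j_!j^\ast\oo_M$ kills contributions from non-top strata automatically, yielding $\cH^\bullet_c(\psi_{\varnothing J}^{-1}(K\cap V_I\cap V_J)\times\RR_{>0}^I;\cdot)$ in one step, which is exactly the bookkeeping you would otherwise need to do by hand. Then the K\"unneth formula cancels the $\RR_{>0}^I$ factor, local orientability identifies the coefficient local system with $\psi_{\varnothing J}^\ast\oo_X$ (resp.\ $\psi_{\varnothing J}^\ast\oo_{X\rel\partial}$), Lemma \ref{cechcohomologyfinitequotient} handles the $\Gamma_J$-quotient, and Lemma \ref{cptcechembeddingproperty} rewrites the result as $\cH^\bullet(K;j_!j^\ast\oo_X)$. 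This establishes purity and identifies $H^0$ simultaneously for all compact $K$, rather than stalk by stalk, and avoids the difficulty you ran into.
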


\begin{proof}
Applying Lemmas \ref{singularchainshomotopyKsheaf} and \ref{associatedgradedhomotopysheaf}, we see that $C^\bullet_\vir(-;A)_{IJ}$ and $C^\bullet_\vir(-\rel\partial;A)_{IJ}$ are both homotopy $\K$-sheaves.

Now let us calculate $H^\bullet_\vir(K;A)_{IJ}$ and $H^\bullet_\vir(K\rel\partial;A)_{IJ}$.  It follows from the submersion axiom that $X_{I,J,A}$ is a topological manifold of dimension $\vdim_AX+\dim E_A+\#I$ with boundary (not necessarily second countable or paracompact).\footnote{In fact, it carries a natural structure of a manifold with corners, which for the present purpose is mostly irrelevant.}  The boundary $\partial(X_{I,J,A})$ is a (not necessarily disjoint) union of two pieces, namely the locus where $x\in\partial X_J^\reg$ (which is precisely $(\partial X)_{I,J,A}$) and the locus where $t_\alpha=0$ for some $\alpha\in I$.  It is easy to see that the first piece $(\partial X)_{I,J,A}\subseteq\partial(X_{I,J,A})$ is a closed tamely embedded codimension zero submanifold with boundary.  Hence we may apply Poincar\'e--Lefschetz duality in the form of Lemma \ref{poincareduality} to see that:
\begin{align}
H^\bullet_\vir(K\rel\partial;A)_{IJ}&=\cH_c^{\bullet+\#I}(\psi_{\varnothing J}^{-1}(K\cap V_I\cap V_J)\times\RR_{>0}^I,\oo_\RR^{\otimes I}\otimes\oo_{E_J}^\vee\otimes\oo_{X_J^\reg\rel\partial})^{\Gamma_A}\\
H^\bullet_\vir(K;A)_{IJ}&=\cH_c^{\bullet+\#I}(\psi_{\varnothing J}^{-1}(K\cap V_I\cap V_J)\times\RR_{>0}^I,\oo_\RR^{\otimes I}\otimes\oo_{E_J}^\vee\otimes\oo_{X_J^\reg})^{\Gamma_A}
\end{align}
(the orientation sheaf of $X_{I,J,A}$ is given by $\oo_{X_J^\reg}\otimes\oo_{E_A}\otimes\oo_{E_J}^\vee\otimes\oo_\RR^{\otimes I}$).  Now we apply the K\"unneth formula to conclude that:
\begin{align}
H^\bullet_\vir(K\rel\partial;A)_{IJ}&=\cH^\bullet_c(\psi_{\varnothing J}^{-1}(K\cap V_I\cap V_J),\oo_{E_J}^\vee\otimes\oo_{X_J^\reg\rel\partial})^{\Gamma_A}\\
H^\bullet_\vir(K;A)_{IJ}&=\cH^\bullet_c(\psi_{\varnothing J}^{-1}(K\cap V_I\cap V_J),\oo_{E_J}^\vee\otimes\oo_{X_J^\reg})^{\Gamma_A}
\end{align}
Since $X$ is locally orientable, we have $\oo_{X_J^\reg}\otimes\oo_{E_J}^\vee=\psi_{\varnothing J}^\ast\oo_X$, so we can write:
\begin{align}
H^\bullet_\vir(K\rel\partial;A)_{IJ}&=\cH^\bullet_c(\psi_{\varnothing J}^{-1}(K\cap V_I\cap V_J),\psi_{\varnothing J}^\ast\oo_{X\rel\partial})^{\Gamma_A}\\
H^\bullet_\vir(K;A)_{IJ}&=\cH^\bullet_c(\psi_{\varnothing J}^{-1}(K\cap V_I\cap V_J),\psi_{\varnothing J}^\ast\oo_X)^{\Gamma_A}
\end{align}
Now $\Gamma_A$ acts through the projection $\Gamma_A\to\Gamma_J$, and $\psi_{\varnothing J}:\psi_{\varnothing J}^{-1}(K\cap V_I\cap V_J)\to K\cap V_I\cap V_J$ is exactly the quotient by $\Gamma_J$.  Hence by Lemma \ref{cechcohomologyfinitequotient} we get isomorphisms:
\begin{align}
H^\bullet_\vir(K\rel\partial;A)_{IJ}&=\cH^\bullet_c(K\cap V_I\cap V_J,\oo_{X\rel\partial})\\
H^\bullet_\vir(K;A)_{IJ}&=\cH^\bullet_c(K\cap V_I\cap V_J,\oo_X)
\end{align}
Now use Lemma \ref{cptcechembeddingproperty} to write this as:
\begin{align}
H^\bullet_\vir(K\rel\partial;A)_{IJ}&=\cH^\bullet(K,j_!j^\ast\oo_{X\rel\partial})\\
H^\bullet_\vir(K;A)_{IJ}&=\cH^\bullet(K,j_!j^\ast\oo_X)
\end{align}
Thus $C^\bullet_\vir(-;A)_{IJ}$ and $C^\bullet_\vir(-\rel\partial;A)_{IJ}$ are both pure, and we manifestly have the desired isomorphisms \eqref{EisoI}--\eqref{EisoII}.
\end{proof}

\begin{lemma}[Local description of isomorphisms $H^0_\vir(-;A)_{II}=j_!j^\ast\oo_X$]\label{localdescofISO}
Let $X$ be a space with finite locally orientable implicit atlas $A$ of dimension $d$ with boundary.  Consider the following maps of complexes of $\K$-presheaves on $X$:
\begin{align}
C^\bullet_\vir(K;A)_{II}&\leftarrow\left[\begin{matrix}C_{d+\dim E_A-1-\bullet}(E_{A\setminus I}\times\partial X_I^\reg,E_{A\setminus I}\times(\partial X_I^\reg\setminus\psi_{\varnothing I}^{-1}(K));\oo_{E_A}^\vee)^{\Gamma_A}\hfill\cr\downarrow\cr C_{d+\dim E_A-\bullet}(E_{A\setminus I}\times X_I^\reg,E_{A\setminus I}\times(X_I^\reg\setminus\psi_{\varnothing I}^{-1}(K));\oo_{E_A}^\vee)^{\Gamma_A}\hfill\end{matrix}\right]\\
C^\bullet_\vir(K\rel\partial;A)_{II}&\leftarrow C_{d+\dim E_A-\bullet}(E_{A\setminus I}\times X_I^\reg,E_{A\setminus I}\times(X_I^\reg\setminus\psi_{\varnothing I}^{-1}(K));\oo_{E_A}^\vee)^{\Gamma_A}
\end{align}
(induced by the corresponding maps on spaces).  These maps are quasi-isomorphisms, and the isomorphisms \eqref{EisoI}--\eqref{EisoII} from Lemma \ref{CSTpure} coincide with the Poincar\'e duality isomorphisms for the complexes on the right above.
\end{lemma}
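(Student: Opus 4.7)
The plan is to produce the stated maps from explicit maps of spaces, and then to use the homotopy $\K$-sheaf framework of Appendix~\ref{homologicalalgebrasection} to reduce the quasi-isomorphism assertion to a computation on stalks.  The map of spaces I would use is the codimension-$\#I$ slice inclusion $\phi\colon E_{A\setminus I}\times X_I^\reg\to X_{I,I,A}$ defined by $\phi(e',x):=((s_I(x),e'),(\mathbf{1}_I,\mathbf{0}_{A\setminus I}),x)$, which realizes $E_{A\setminus I}\times X_I^\reg$ as the fiber over $\mathbf{1}_I$ of the projection $X_{I,I,A}\to\RR_{\geq 0}^I$.  One checks directly that $\phi$ is $\Gamma_A$-equivariant and that $\phi^{-1}(X_{I,I,A}^K)\subseteq E_{A\setminus I}\times\psi_{\varnothing I}^{-1}(K)$, so that pushforward of singular chains along $\phi$ induces the claimed chain maps; the parallel construction using $\partial X_I^\reg$ assembles into the statement for the mapping cones.

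Next I would verify that the right-hand side, as a function of $K\subseteq X$, is a pure homotopy $\K$-sheaf on $X$ with the same $H^0$-sheaf as the left-hand side.  The homotopy $\K$-sheaf property follows from Lemma~\ref{singularchainshomotopyKsheaf} applied to the ambient manifold $E_{A\setminus I}\times X_I^\reg$, together with Lemma~\ref{associatedgradedhomotopysheaf} to handle the mapping cone.  Purity and the identification of $H^0$ then follow by applying Poincar\'e--Lefschetz duality (Lemma~\ref{poincareduality}) exactly as in the proof of Lemma~\ref{CSTpure}: combining it with the local orientability identification $\oo_{X_I^\reg}\otimes\oo_{E_I}^\vee\cong\psi_{\varnothing I}^\ast\oo_X$ from Definition~\ref{IAorientationsheaf} and then passing to $\Gamma_A$-invariants via the free quotient $\psi_{\varnothing I}$ yields $\cH^\bullet$ on $K\cap V_I$ with the appropriate twist, matching the computation in Lemma~\ref{CSTpure}.

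With both sides established as pure homotopy $\K$-sheaves having the same $H^0$-sheaf, it suffices to show that $\phi_\ast$ induces the identity on $H^0$ under the canonical identifications with $j_!j^\ast\oo_X$ and $j_!j^\ast\oo_{X\rel\partial}$.  This is a purely local claim, checked at a stalk $p\in V_I$: there $\phi$ restricts to the inclusion of a smooth slice, and one verifies that the associated Poincar\'e-dual orientation class is carried to the corresponding canonical generator.  The asserted compatibility between the identifications of Lemma~\ref{CSTpure} and the Poincar\'e duality isomorphisms on the right-hand side is then automatic, since both arise from the same underlying Poincar\'e--Lefschetz duality applied respectively to $X_{I,I,A}$ and to its codimension-$\#I$ slice $\phi(E_{A\setminus I}\times X_I^\reg)$.

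The main obstacle I anticipate is the careful orientation-bookkeeping in the purity step: one must track the tensor factors $\oo_{X_I^\reg}$, $\oo_{E_I}$, and $\oo_{E_{A\setminus I}}$, the K\"unneth shifts coming from the extra $E_{A\setminus I}$ factor, and the $\Gamma_A$-action on each of these, in order to confirm that after taking invariants and invoking local orientability everything collapses canonically onto $j_!j^\ast\oo_X$ (resp.\ $j_!j^\ast\oo_{X\rel\partial}$) in degree zero.
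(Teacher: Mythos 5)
Your slice map $\phi$ and the overall strategy (show both sides are pure homotopy $\K$-sheaves, compare $H^0$ stalks, invoke Corollary~\ref{homotopysheafcheckqionstalks}) is exactly the right framework; the paper's own proof is just a pointer back to Lemma~\ref{CSTpure}, and your $\phi$ is the implicit construction.  There is, however, a degree mismatch that your sketch glosses over.  The precise preimage is $\phi^{-1}(X_{I,I,A}^K)=\{0\}\times\psi_{\varnothing I}^{-1}(K)$, not the full product $E_{A\setminus I}\times\psi_{\varnothing I}^{-1}(K)$: since $s_I\equiv 0$ on $\psi_{\varnothing I}^{-1}(K)$, the $E_I$-component of $e=(s_I(x),e')$ is automatically zero there, so requiring $e=0$ forces $e'=0$.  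Consequently $\phi_\ast$ factors through the complex built from the pair with open part $(E_{A\setminus I}\times X_I^\reg)\setminus(\{0\}\times\psi_{\varnothing I}^{-1}(K))$, and it is \emph{that} pair, not the one in the statement, whose Poincar\'e dual matches the left-hand side.

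For the pair as you (and the statement) wrote it, the closed complement inside $E_{A\setminus I}\times X_I^\reg$ is $E_{A\setminus I}\times\psi_{\varnothing I}^{-1}(K)$, so Lemma~\ref{poincareduality} yields $\cH^\bullet_c(E_{A\setminus I}\times\psi_{\varnothing I}^{-1}(K);\cdot)$, which by K\"unneth for compactly supported cohomology equals $\cH^{\bullet-\dim E_{A\setminus I}}_c(\psi_{\varnothing I}^{-1}(K);\cdot)$.  (Equivalently: contractibility of $E_{A\setminus I}$ gives $H_\bullet(E_{A\setminus I}\times X_I^\reg,\,E_{A\setminus I}\times(X_I^\reg\setminus\psi_{\varnothing I}^{-1}(K)))\cong H_\bullet(X_I^\reg,X_I^\reg\setminus\psi_{\varnothing I}^{-1}(K))$, whose top degree is $d+\dim E_I$ rather than $d+\dim E_A$.)  So at a stalk $K=\{p\}$ with $p\in V_I$ the right-hand complex as written has cohomology concentrated in degree $\dim E_{A\setminus I}$, not degree $0$; your sentence asserting that Poincar\'e duality yields $\cH^\bullet_c$ on $K\cap V_I$ matching the computation in Lemma~\ref{CSTpure} is therefore the point of failure whenever $A\neq I$.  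The fix is to use $(E_{A\setminus I}\times X_I^\reg,\,(E_{A\setminus I}\times X_I^\reg)\setminus(\{0\}\times\psi_{\varnothing I}^{-1}(K)))$---precisely $\phi^{-1}$ of the pair defining $C^\bullet_\vir(-\rel\partial;A)_{II}$---for which Poincar\'e--Lefschetz duality against the compact subset $\{0\}\times\psi_{\varnothing I}^{-1}(K)$ lands in the correct degree; with that substitution the rest of your outline (homotopy $\K$-sheaf property via Lemmas~\ref{singularchainshomotopyKsheaf} and~\ref{associatedgradedhomotopysheaf}, purity, matching of $H^0$, orientation bookkeeping) goes through.
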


\begin{proof}
Clear from the proof of Lemma \ref{CSTpure}.
\end{proof}

\begin{proposition}[$C^\bullet_\vir(-;A)$ are pure homotopy $\K$-sheaves]\label{Cpure}
Let $X$ be a space with finite locally orientable implicit atlas $A$ with boundary.  Then $C^\bullet_\vir(-;A)$ and $C^\bullet_\vir(-\rel\partial;A)$ are pure homotopy $\K$-sheaves on $X$.  Furthermore, there are canonical isomorphisms of sheaves on $X$:
\begin{align}
\label{FisoI}H^0_\vir(-;A)&=\oo_X\\
\label{FisoII}H^0_\vir(-\rel\partial;A)&=\oo_{X\rel\partial}
\end{align}
\end{proposition}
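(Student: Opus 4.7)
The plan is to deduce the proposition from Lemma \ref{CSTpure}, using the general principles that homotopy colimits preserve the homotopy $\K$-sheaf property and that, in the presence of pointwise purity, the stalk cohomology of the colimit is computable from the $H^0$ pieces alone.

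First I would establish the homotopy $\K$-sheaf property. By Lemma \ref{CSTpure} each $C^\bullet_\vir(-;A)_{IJ}$ and $C^\bullet_\vir(-\rel\partial;A)_{IJ}$ is a homotopy $\K$-sheaf on $X$. Being a homotopy $\K$-sheaf is the acyclicity of the total complex of a finite double complex built from sections over $K_1\cup K_2$, $K_1$, $K_2$, and $K_1\cap K_2$; this acyclicity is preserved by arbitrary homotopy colimits of diagrams of chain complexes (a Fubini swap of total complexes, in the spirit of Lemma \ref{associatedgradedhomotopysheaf}). Applying this to the homotopy colimit in Definition \ref{Ccomplexdef} yields the homotopy $\K$-sheaf property for $C^\bullet_\vir(-;A)$ and $C^\bullet_\vir(-\rel\partial;A)$.

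Next, for purity and the identification of $H^0$, I would compute the stalk cohomology at a point $p\in X$ via the spectral sequence of the hocolim in Definition \ref{Ccomplexdef}. Its $E_1$ page assembles the groups $H^q_\vir(\{p\};A)_{IJ}$ over the indexing category of pairs $(I,J)$ with $I\subseteq J\subseteq A$. By Lemma \ref{CSTpure} (together with the explicit local form in Lemma \ref{localdescofISO}), each such group is concentrated in degree $q=0$, where it equals $(\oo_X)_p$ (respectively $(\oo_{X\rel\partial})_p$) when $p\in V_I\cap V_J$ and vanishes otherwise. The spectral sequence therefore degenerates at $E_2$, and the problem reduces to showing that the homotopy colimit of the resulting degree-zero diagram is $(\oo_X)_p$ (resp.\ $(\oo_{X\rel\partial})_p$) concentrated in degree zero.

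This last combinatorial step is the main obstacle. It amounts to showing that the full subcategory $\mathcal{C}_p=\{(I,J):I\subseteq J\subseteq A,\;p\in V_I\cap V_J\}$ of the indexing poset has contractible nerve, so that the hocolim of a constant functor on $\mathcal{C}_p$ (extended by zero elsewhere) is simply the constant value. I would verify this by unpacking the compatibility axioms: the maps $\psi_{IJ}$ give inclusions among the footprints $V_I$, and together with the covering axiom these should imply that $\mathcal{C}_p$ admits an initial object, or at least a cofinal subcategory with one. A category with an initial object has contractible nerve, which completes the stalk computation. Finally, naturality in $p$ of these identifications (a straightforward consequence of Lemma \ref{localdescofISO}) upgrades them to the isomorphisms of sheaves \eqref{FisoI}--\eqref{FisoII}, and purity of $C^\bullet_\vir(-;A)$ and $C^\bullet_\vir(-\rel\partial;A)$ follows from the degeneration established above.
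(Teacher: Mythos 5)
Your proposal follows essentially the same route as the paper, which packages the argument as Lemma \ref{gluingcechsimple} and then invokes it: the steps (homotopy $\K$-sheaf property via Lemma \ref{pihomotopysheaf}, spectral sequence of the $p$-filtration on the homotopy colimit, degeneration at $E_2$ by purity of the pieces, and identification of the stalk with $\G_p$ via a nerve-contractibility argument) appear there nearly verbatim. Two points in your write-up deserve tightening, however. First, the combinatorial step: the indexing poset of the homotopy colimit of Definition \ref{homotopycolimit} is $2^A$ (chains $\mathfrak s_0\prec\cdots\prec\mathfrak s_p$, with coefficient depending only on the endpoints), not a category of pairs $(I,J)$. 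The stalk at $p$ is $\G_p$ tensored with the simplicial chains of the nerve of the subposet $(2^A)_p:=\{I\subseteq A:p\in V_I\}$, because the axiom $V_I\cap V_K\subseteq V_J$ for $I\subseteq J\subseteq K$ makes the nonvanishing condition on a chain depend only on its endpoints. Contractibility then follows because $(2^A)_p$ is nonempty (covering axiom) and has a \emph{maximal} element, namely the union of its members, by the other axiom $V_I\cap V_{I'}\subseteq V_{I\cup I'}$. Your hoped-for \emph{initial} object need not exist ($\varnothing\in(2^A)_p$ only if $p\in V_\varnothing$), so the argument goes through the terminal object, not a cofinal initial one. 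Second, your closing sentence ``naturality in $p$ ... upgrades them to isomorphisms of sheaves'' is where the real content of the verification sits: you must check that the isomorphisms of Lemma \ref{CSTpure} are compatible with the transition maps of the homotopy diagram, and this compatibility genuinely uses the normalization factor $(\#\Gamma_{J\setminus J'})^{-1}$ built into Definition \ref{Ccomplexdef}; without that factor the local isomorphisms fail to patch, so this cannot be waved through as ``straightforward'' without noting where that normalization enters.
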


\begin{proof}
This is a special case of Lemma \ref{gluingcechsimple}; we just need to make sure all of the hypotheses are satisfied.  The open cover in question is $V_I:=\psi_{\varnothing I}((s_I|X_I^\reg)^{-1}(0))$; it follows from the axioms of an implicit atlas that $V_I\cap V_K\subseteq V_J$ for $I\subseteq J\subseteq K$ and $V_I\cap V_{I'}\subseteq V_{I\cup I'}$ for all $I,I'$.  Now we just need to check that the system of isomorphisms from Lemma \ref{CSTpure} are compatible with the maps of the homotopy diagram.  These being maps of sheaves, it suffices to check compatibility locally (on stalks) and thus is a straightforward calculation (this is where the extra normalization factor of $(\#\Gamma_{J\setminus J'})^{-1}$ in Definition \ref{Ccomplexdef} is important).
\end{proof}

\begin{theorem}[Calculation of $H^\bullet_\vir$]\label{Fsheaffundamentaliso}
Let $X$ be a space with finite locally orientable implicit atlas $A$ with boundary.  Then there are canonical isomorphisms:
\begin{align}
H^\bullet_\vir(X;A)&=\cH^\bullet(X;\oo_X)\\
H^\bullet_\vir(X\rel\partial;A)&=\cH^\bullet(X;\oo_{X\rel\partial})
\end{align}
\end{theorem}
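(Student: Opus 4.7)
The plan is to derive this theorem as an essentially immediate corollary of the preceding work, with no substantial new argument required. Proposition \ref{Cpure} has already established that the complexes of $\K$-presheaves $K \mapsto C^\bullet_\vir(K;A)$ and $K \mapsto C^\bullet_\vir(K\rel\partial;A)$ on $X$ are pure homotopy $\K$-sheaves, and has canonically identified their $H^0$-sheaves as $\oo_X$ and $\oo_{X\rel\partial}$ respectively. The virtual cohomology groups $H^\bullet_\vir(X;A)$ and $H^\bullet_\vir(X\rel\partial;A)$ in the statement are by definition the cohomology of the complexes of global sections of these two homotopy $\K$-sheaves, so we are in exactly the situation handled by the general sheaf-theoretic result Proposition \ref{cechcohomologypurehomotopysheaf}.

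My proof will therefore consist of a single step: apply Proposition \ref{cechcohomologypurehomotopysheaf} to each of the two pure homotopy $\K$-sheaves supplied by Proposition \ref{Cpure}. This general result says that for any pure homotopy $\K$-sheaf $\F^\bullet$ on a compact Hausdorff space $X$, the cohomology of $\F^\bullet(X)$ is canonically isomorphic to the \v Cech cohomology $\cH^\bullet(X; H^0 \F^\bullet)$. Specializing once to $\F^\bullet = C^\bullet_\vir(-;A)$ with $H^0 \F^\bullet = \oo_X$ and once to $\F^\bullet = C^\bullet_\vir(-\rel\partial;A)$ with $H^0 \F^\bullet = \oo_{X\rel\partial}$ immediately yields the two desired canonical isomorphisms.

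There is no substantive obstacle at this stage; the genuinely hard work has already been carried out earlier. The real content lives in Lemma \ref{CSTpure}, where Poincar\'e--Lefschetz duality on the manifolds $X_{I,J,A}$, the K\"unneth formula, Lemma \ref{cechcohomologyfinitequotient} for passing to $\Gamma_A$-invariants, and the local orientability hypothesis (which identifies $\oo_{X_J^\reg} \otimes \oo_{E_J}^\vee$ with $\psi_{\varnothing J}^\ast \oo_X$) are combined to compute the cohomology of the partial virtual cochain complexes $C^\bullet_\vir(-;A)_{IJ}$ and $C^\bullet_\vir(-\rel\partial;A)_{IJ}$; and in Proposition \ref{Cpure}, where the gluing lemma \ref{gluingcechsimple} is invoked to pass from these partial pieces to the full virtual cochain complexes via the homotopy colimit over $I \subseteq J \subseteq A$ (with the normalization factors $(\#\Gamma_{J \setminus J'})^{-1}$ ensuring local compatibility of the isomorphisms on stalks). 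The only minor verification at the present level is that the isomorphism produced by Proposition \ref{cechcohomologypurehomotopysheaf} is natural enough to deserve the label ``canonical'' — this should be automatic from the functoriality built into the definitions of the pure homotopy $\K$-sheaves and of the associated identification with $H^0$.
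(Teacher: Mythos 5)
Your proof is correct and follows exactly the same route as the paper: the theorem is deduced in one line from Proposition \ref{Cpure} via Proposition \ref{cechcohomologypurehomotopysheaf}. Your additional commentary on where the real work lives (Lemma \ref{CSTpure} and Proposition \ref{Cpure}) is accurate but not part of the paper's stated proof.
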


\begin{proof}
By Proposition \ref{cechcohomologypurehomotopysheaf}, this is a consequence of Proposition \ref{Cpure}.
\end{proof}

\subsection{Long exact sequence for the pair \texorpdfstring{$(X,\partial X)$}{(X,dX)}}\label{Fsheafproperties}

In this subsection, we compare two long exact sequences for the pair $(X,\partial X)$, namely the one in \v Cech cohomology and one coming from the virtual cochain complexes.

It follows from the definition that $C^\bullet_\vir(K;A)_{IJ}$ is the mapping cone of the obvious map $C^{\bullet-1}_\vir(K\cap\partial X;A)_{IJ}\to C^\bullet_\vir(K\rel\partial;A)_{IJ}$.  Since homotopy colimit commutes with the formation of mapping cones, we see that the same is true dropping the $_{IJ}$ subscript.  Hence there are natural maps:
\begin{equation}\label{sesofF}
\cdots\to C^{\bullet-1}_\vir(K\cap\partial X;A)\to C^\bullet_\vir(K\rel\partial;A)\to C^\bullet_\vir(K;A)\to\cdots
\end{equation}
and they induce a long exact sequence on cohomology.

Similarly, there is a sequence of sheaves on $X$ which is exact on stalks:
\begin{equation}\label{sesofO}
0\to\oo_{X\rel\partial}\to\oo_X\to i_\ast\oo_{\partial X}\to 0
\end{equation}
(where $i:\partial X\hookrightarrow X$).  This induces a long exact sequence on \v Cech cohomology (Lemma \ref{cechles}).  Note that $\cH^\bullet(X;i_\ast\oo_{\partial X})=\cH^\bullet(\partial X;\oo_{\partial X})$ (Lemma \ref{finitemap}).

\begin{remark}\label{bdryorrmk}
To be slightly pedantic about orientations, it would be more precise to say that $C^\bullet_\vir(-;A)=[C^\bullet_\vir(-\cap\partial X;A)\otimes\oo_\RR\to C^\bullet_\vir(-\rel\partial;A)]$, where we identify $\oo_\RR$ canonically with the orientation line of the normal bundle of $\partial X_I^\reg\subseteq X_I^\reg$.  Also, note that we should either say that the last map in \eqref{sesofO} is odd or that it is really $\oo_X\to i_\ast\oo_{\partial X}\otimes\oo_\RR$ (and is even).
\end{remark}

\begin{proposition}[Compatibility of long exact sequences of the pair $(X,\partial X)$]\label{compatibilityoflongexactsequence}
The following diagram commutes:
\begin{equation}\label{compatibilityoflesdiagram}
\begin{tikzcd}
H^\bullet_\vir(X\rel\partial;A)\ar{r}{\eqref{sesofF}}\ar[equal]{d}{\text{Thm \ref{Fsheaffundamentaliso}}}&H^\bullet_\vir(X;A)\ar{r}{\eqref{sesofF}}\ar[equal]{d}{\text{Thm \ref{Fsheaffundamentaliso}}}&H^\bullet_\vir(\partial X;A)\ar{r}{\eqref{sesofF}}\ar[equal]{d}{\text{Thm \ref{Fsheaffundamentaliso}}}&H^{\bullet+1}_\vir(X\rel\partial;A)\ar[equal]{d}{\text{Thm \ref{Fsheaffundamentaliso}}}\\
\cH^\bullet(X;\oo_{X\rel\partial})\ar{r}{\eqref{sesofO}}&\cH^\bullet(X;\oo_X)\ar{r}{\eqref{sesofO}}&\cH^\bullet(\partial X;\oo_{\partial X})\ar{r}{\eqref{sesofO}}&\cH^{\bullet+1}(X;\oo_{X\rel\partial})
\end{tikzcd}
\end{equation}
\end{proposition}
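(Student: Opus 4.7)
The plan is to lift the entire comparison to the level of complexes of $\K$-presheaves on $X$, so that both long exact sequences are deduced from a single short exact sequence by the machinery behind Proposition \ref{cechcohomologypurehomotopysheaf}. By the definition of $C^\bullet_\vir(-;A)$ as a mapping cone (together with the fact that $\hocolim$ commutes with cones), there is a termwise short exact sequence
\[
0 \to C^\bullet_\vir(-\rel\partial;A) \to C^\bullet_\vir(-;A) \to C^\bullet_\vir(-\cap\partial X;A) \to 0
\]
of complexes of $\K$-presheaves on $X$, whose long exact sequence on global sections is precisely the one induced by \eqref{sesofF}. By Proposition \ref{Cpure} each term is a pure homotopy $\K$-sheaf, so passing to $H^0$ (which is exact stalkwise, since the higher $H^i$ vanish) yields a short exact sequence of sheaves
\[
0 \to H^0_\vir(-\rel\partial;A) \to H^0_\vir(-;A) \to H^0_\vir(-\cap\partial X;A) \to 0
\]
on $X$, whose outer terms are $\oo_{X\rel\partial}$ and $i_\ast\oo_{\partial X}$ by Proposition \ref{Cpure}.

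Granting that this SES coincides with \eqref{sesofO}, the commutativity of \eqref{compatibilityoflesdiagram} --- including the connecting homomorphisms --- will follow from the naturality of the identification $H^\bullet(\Gamma(X,\F^\bullet))=\cH^\bullet(X;H^0\F^\bullet)$ for pure homotopy $\K$-sheaves: a short exact sequence of pure homotopy $\K$-sheaves gives a natural isomorphism between the long exact sequence obtained from cohomology of global sections and the Čech cohomology long exact sequence attached to its $H^0$. So the entire content of the proposition reduces to identifying the induced sheaf maps on $H^0$ with those of \eqref{sesofO}.

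This last identification is a stalk-local computation, and by Lemma \ref{localdescofISO} it reduces to comparing the simpler explicit complexes of relative chains on $E_{A\setminus I}\times X_I^\reg$ and $E_{A\setminus I}\times\partial X_I^\reg$, on which the isomorphisms of Proposition \ref{Cpure} are realized directly by Poincaré--Lefschetz duality. Under these models the map \eqref{Frelpartialpullback} is induced by enlarging the ``open piece'' of the relative chains from the interior to all of $X_I^\reg$, which under Poincaré--Lefschetz becomes the natural inclusion $j_!j^\ast\oo_{X_I^\reg}\hookrightarrow\oo_{X_I^\reg}$; similarly, the quotient map to $C^\bullet_\vir(-\cap\partial X;A)$ becomes the restriction to the boundary manifold $\partial X_I^\reg$.

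The main obstacle is bookkeeping. The orientation and sign normalizations implicit in the mapping cone (flagged in Remark \ref{bdryorrmk}) and in the $\hocolim$ structure maps of Definition \ref{Ccomplexdef} must be traced through the Poincaré--Lefschetz isomorphisms established in Lemma \ref{CSTpure}, so as to verify that the resulting sheaf map $\oo_X\to i_\ast\oo_{\partial X}$ is precisely the one appearing in \eqref{sesofO} (rather than differing by a sign), and in particular that the connecting map of the distinguished triangle matches the Čech coboundary of \eqref{sesofO}. Once the identification of $\oo_\RR$ with the normal orientation line of $\partial X_I^\reg\subseteq X_I^\reg$ is chosen consistently throughout, this is essentially a diagram chase at a stalk.
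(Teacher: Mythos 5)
Your plan for the first two squares is essentially the paper's argument: pass to the sheaf diagram on $H^0$, check commutativity on stalks, and use Lemma \ref{localdescofISO} to reduce to the explicit Poincar\'e--Lefschetz models. That part is fine, and your identification of the termwise short exact sequence
\[
0 \to C^\bullet_\vir(-\rel\partial;A) \to C^\bullet_\vir(-;A) \to C^\bullet_\vir(-\cap\partial X;A) \to 0
\]
of complexes of $\K$-presheaves on $X$ is correct (the last term is the pushforward $i_\ast$ of the virtual cochain $\K$-presheaf on $\partial X$), as is the observation that stalkwise purity turns its $H^0$ into a short exact sequence of sheaves.

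The gap is in the step you flag as ``will follow from the naturality of the identification $H^\bullet(\Gamma(X,\F^\bullet))=\cH^\bullet(X;H^0\F^\bullet)$.'' There is no naturality statement of that form in the paper, and it is not automatic. The isomorphism of Proposition \ref{cechcohomologypurehomotopysheaf} is built from the zigzag $\F^\bullet \to \tau_{\geq 0}\F^\bullet \leftarrow H^0\F^\bullet$ (see \eqref{isosforpurehtpysheafcech}), and the truncation functor $\tau_{\geq 0}$ does not take short exact sequences to short exact sequences. The map $H^0\F^\bullet \to \tau_{\geq 0}\F^\bullet$ is only a stalkwise quasi-isomorphism, not an honest quasi-isomorphism of complexes of $\K$-presheaves. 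So your short exact sequence of complexes and your short exact sequence of $H^0$-sheaves are not connected by a map of short exact sequences, and you cannot read off compatibility of the two connecting homomorphisms from a generic functoriality argument. Compatibility with the connecting homomorphism is precisely the content of the last square of \eqref{compatibilityoflesdiagram}, and you are effectively asserting it rather than proving it.

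What the paper does instead is avoid the short exact sequence of sheaves entirely for the connecting map: it realizes $H^\bullet_\vir(\partial X;A)$ via the intermediate complex $[C^{\bullet+1}_\vir(X\rel\partial;A)\to C^\bullet_\vir(X;A)]$, which by \eqref{sesofF} receives a quasi-isomorphism from $C^\bullet_\vir(\partial X;A)$ and also projects onto $C^{\bullet+1}_\vir(X\rel\partial;A)$ by the canonical map of a mapping cone. Then it applies the \emph{two-term} generalization of Proposition \ref{cechcohomologypurehomotopysheaf} (which is stated and proved in the paper) to this mapping cone, reducing the last square to the already-established commutativity of the sheaf diagram \eqref{sheafcommutativity}. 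This mapping-cone realization is what actually delivers the naturality for the connecting map; your proposal needs this (or an equivalent Puppe-rotation argument) in place of the unproven general lemma. The bookkeeping on orientations you mention is real but secondary -- the missing idea is the mapping-cone comparison.
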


\begin{proof}
By the definition of the vertical identifications, the commutativity of the first two squares of \eqref{compatibilityoflesdiagram} reduces to the commutativity of the diagram of sheaves:
\begin{equation}\label{sheafcommutativity}
\begin{tikzcd}
H^0_\vir(-\rel\partial;A)\ar{r}{\eqref{sesofF}}\ar[equal]{d}{\text{Prop \ref{Cpure}}}&H^0_\vir(-;A)\ar{r}{\eqref{sesofF}}\ar[equal]{d}{\text{Prop \ref{Cpure}}}&H^0_\vir(-;A)\ar[equal]{d}{\text{Prop \ref{Cpure}}}\\
\oo_{X\rel\partial}\ar{r}{\eqref{sesofO}}&\oo_X\ar{r}{\eqref{sesofO}}&i_\ast\oo_{\partial X}
\end{tikzcd}
\end{equation}
It suffices to check commutativity on stalks, and to do this, we can work with the complexes on the right in Lemma \ref{localdescofISO}, for which the commutativity is clear.

In fact, the commutativity of the last square of \eqref{compatibilityoflesdiagram} is also a consequence of the commutativity of \eqref{sheafcommutativity}.  To see this, consider the following diagram:
\begin{equation*}%%unnumbered
\begin{tikzcd}[column sep = small]
H^\bullet_\vir(\partial X;A)\ar[equal]{d}{\text{Prop \ref{cechcohomologypurehomotopysheaf}}}&\ar{l}[swap]{\sim}H^\bullet[C^{\bullet+1}_\vir(X\rel\partial;A)\to C^\bullet_\vir(X;A)]\ar{r}\ar[equal]{d}{\text{Prop \ref{cechcohomologypurehomotopysheaf}}}&H^{\bullet+1}_\vir(X\rel\partial;A)\ar[equal]{d}{\text{Prop \ref{cechcohomologypurehomotopysheaf}}}\\
\cH^\bullet(\partial X;H^0_\vir(-;A))\ar{d}{\sim}&\ar{l}[swap]{\sim}\cH^\bullet(X;[H^0_\vir(-\rel\partial;A)[1]\to H^0_\vir(-;A)])\ar{r}\ar{d}{\sim}&\cH^{\bullet+1}(X,H^0_\vir(-\rel\partial;A))\ar{d}{\sim}\\
\cH^\bullet(\partial X;\oo_{\partial X})&\ar{l}[swap]{\sim}\cH^\bullet(X;[\oo_{X\rel\partial}[1]\to\oo_X])\ar{r}&\cH^{\bullet+1}(X,\oo_{X\rel\partial})
\end{tikzcd}
\end{equation*}
The middle vertical arrow is defined because of the commutativity of the left square in \eqref{sheafcommutativity}.  The top left horizontal arrow is a quasi-isomorphism, and thus every left horizontal arrow is a quasi-isomorphism.  The bottom left square is commutative because of the commutativity of the right square in \eqref{sheafcommutativity}; the other squares are trivially commutative.  Reversing the left horizontal arrows, we see that the outermost square commutes, and this is exactly the last square in \eqref{compatibilityoflesdiagram}.
\end{proof}

\section{Virtual fundamental classes}\label{fundamentalclasssection}

In this section, we use the technical results of \S\ref{Fsheafsection} to define the virtual fundamental class of a space with implicit atlas and derive some of its properties.  We also show how these properties can be used to calculate the virtual fundamental class in some special situations (calculation directly from the definition seems prohibitively complicated in all but the simplest of cases).  The properties we prove here are sufficient for some rudimentary purposes, and we think they at least demonstrate that the virtual fundamental class we have defined is the ``right'' one.

For more sophisticated applications than those considered in this paper, one would certainly like to have more properties than those proven here.  For example, one would very much like to prove the expected formula for $[X\times_MY]^\vir$ in terms of $[X]^\vir$ and $[Y]^\vir$ (given some natural ``fiber product implicit atlas'' on $X\times_MY$, where $X,Y$ are spaces with implicit atlases and $M$ is a manifold).

\begin{convention}
In this section, we work over a fixed ground ring $R$, and everything takes place in the category of $R$-modules.  We restrict to implicit atlases $A$ for which $\#\Gamma_\alpha$ is invertible in $R$ for all $\alpha\in A$.
\end{convention}

\begin{remark}[Comparison of homology theories]\label{alsogetsteenrod}
There are (at least) three natural ``homology groups'' which one can assign to a compact Hausdorff space $X$:
\begin{rlist}
\item\emph{Dual of \v Cech cohomology} $\cH^\bullet(X)^\vee$ ($^\vee$ denotes dual, i.e.\ $\Hom(-,R)$).
\item\emph{\v Cech homology} $\cH_\bullet(X)$ (the inverse limit of the homology of nerves of finite covers).
\item\emph{Steenrod--Sitnikov homology} $\sH_\bullet(X)$ (the homology of the homotopy/derived inverse limit of nerves of finite covers; see \S\ref{steenrodhomologysec}).
\end{rlist}
These are successively more refined, in the sense that there are natural maps:
\begin{equation}
\sH_\bullet(X)\to\cH_\bullet(X)\to\cH^\bullet(X)^\vee
\end{equation}
If $X$ is homeomorphic to a finite CW-complex, then there are natural isomorphisms:
\begin{equation}
\sH_\bullet(X)=H_\bullet(X)\qquad\cH_\bullet(X)=H_\bullet(X)\qquad\cH^\bullet(X)^\vee=H^\bullet(X)^\vee
\end{equation}
It follows that a virtual fundamental class in any of these groups can be used for all the applications we are aware of.  On the other hand, there are some potential advantages to working with the more refined homology groups:
\begin{rlist}
\item$\sH_\bullet$ and $\cH_\bullet$ retain torsion information.
\item$\sH_\bullet$ and $\cH_\bullet$ have a natural ``extension of scalars'' map for any map of rings $R\to S$.
\item$\sH_\bullet$ has the expected long exact sequence for pairs of spaces.
\end{rlist}
\end{remark}

We first define the virtual fundamental class as an element of $\cH_\bullet(X)^\vee$ (with appropriate twisted coefficients) and derive some properties.  At the end, we indicate how to define a canonical lift to $\sH_\bullet(X)$ by working at the level of the derived category.

\subsection{Definition}\label{fclassdefsec}

\begin{definition}[{Virtual fundamental class $[X]^\vir$}]\label{fclassdefinition}
Let $X$ be a space with locally orientable implicit atlas $A$ of dimension $d$ with boundary.  Let $B\subseteq A$ be a finite subatlas (which exists by compactness).  We consider the composite:
\begin{equation}\label{Fpushforwardfundclassdef}
\cH^{d+\bullet}(X;\oo_{X\rel\partial})\overset{\text{Thm \ref{Fsheaffundamentaliso}}}=H^{d+\bullet}_\vir(X\rel\partial;B)\xrightarrow{\eqref{Fpushforward}}H_{-\bullet}(E;B)\xrightarrow{[E_B]\mapsto 1}R
\end{equation}
We thus get a map $\cH^d(X;\oo_{X\rel\partial})\to R$.  Now suppose $B\subseteq B'$ are two finite subatlases.  Then the following diagram commutes (it suffices to check commutativity on stalks, and to do this, we can work with the complexes on the right in Lemma \ref{localdescofISO}, for which the commutativity is clear):
\begin{equation}\label{sheafcommforFC}
\begin{tikzcd}
\oo_{X\rel\partial}\ar[equal]{r}{\text{Thm \ref{Fsheaffundamentaliso}}}\ar[equal]{d}&H^0_\vir(-\rel\partial;B)\ar{d}{\times[E_{B'\setminus B}]}[swap]{\eqref{Fpushforward1}}\\
\oo_{X\rel\partial}\ar[equal]{r}{\text{Thm \ref{Fsheaffundamentaliso}}}&H^0_\vir(-\rel\partial;B')
\end{tikzcd}
\end{equation}
Hence the following diagram commutes (the first square following from \eqref{sheafcommforFC}, the rest being clear):
\begin{equation}\label{Fpushforwardfundclassdefcommdiagram}
\begin{tikzcd}
\cH^{d+\bullet}(X;\oo_{X\rel\partial})\ar[equal]{r}{\text{Thm \ref{Fsheaffundamentaliso}}}\ar[equal]{d}&H^{d+\bullet}_\vir(X\rel\partial;B)\ar{r}{\eqref{Fpushforward}}\ar{d}{\times[E_{B'\setminus B}]}[swap]{\eqref{Fpushforward1}}&H_{-\bullet}(E;B)\ar{r}{[E_B]\mapsto 1}\ar{d}{\times[E_{B'\setminus B}]}&R\ar[equal]{d}\\
\cH^{d+\bullet}(X;\oo_{X\rel\partial})\ar[equal]{r}{\text{Thm \ref{Fsheaffundamentaliso}}}&H^{d+\bullet}_\vir(X\rel\partial;B')\ar{r}{\eqref{Fpushforward}}&H_{-\bullet}(E;B')\ar{r}{[E_{B'}]\mapsto 1}&R
\end{tikzcd}
\end{equation}
This shows that the maps $\cH^d(X;\oo_{X\rel\partial})\to R$ induced by $B$ and $B'$ coincide.  Since any two $B_1,B_2\subseteq A$ are contained in a third $B_1\cup B_2\subseteq A$, we see that the resulting element:
\begin{equation}
[X]^\vir_A\in\cH^d(X;\oo_{X\rel\partial})^\vee
\end{equation}
is independent of $B$.  We write $[X]^\vir$ for $[X]_A^\vir$ when the atlas is clear from context.
\end{definition}

\subsection{Properties}\label{fclassproperties}

\begin{lemma}[{Passing to a subatlas preserves $[X]^\vir$}]\label{sameassubatlas}
Let $X$ be a space with locally orientable implicit atlas $A$ with boundary.  If $B\subseteq A$ is any subatlas, then $[X]^\vir_A=[X]^\vir_B$.
\end{lemma}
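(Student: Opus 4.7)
The plan is to reduce the claim to the independence-of-subatlas statement already built into Definition \ref{fclassdefinition}. Concretely, both $[X]^\vir_A$ and $[X]^\vir_B$ are constructed by choosing some finite subatlas and then applying the composite \eqref{Fpushforwardfundclassdef}; to compare them, I would choose a common finite subatlas and show that the two constructions agree on the nose.

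First I would address the mild bookkeeping issue that a priori $[X]^\vir_A\in\cH^d(X;\oo_{X\rel\partial,A})^\vee$ and $[X]^\vir_B\in\cH^d(X;\oo_{X\rel\partial,B})^\vee$ live in groups defined with potentially different virtual orientation sheaves. However, by the uniqueness-up-to-unique-isomorphism clause of Definition \ref{IAorientationsheaf}, the restriction of $\oo_{X,A}$ to the compatibility data coming from finite subsets of $B$ exhibits $\oo_{X,A}$ as a valid choice for $\oo_{X,B}$; this yields a canonical identification $\oo_{X,A}=\oo_{X,B}$ (and similarly for $\oo_{X\rel\partial}$), so the two classes genuinely live in the same group.

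Next I would invoke compactness of $X$ to pick a finite subatlas $B'\subseteq B$. Since $B\subseteq A$, this $B'$ is also a finite subatlas of $A$. The composite \eqref{Fpushforwardfundclassdef} defining the image of the virtual fundamental class in $\cH^d(X;\oo_{X\rel\partial})^\vee$ is built entirely out of the data indexed by the chosen finite subatlas (the thickenings $X_I$, Kuranishi maps, footprints, and obstruction spaces for $I\subseteq B'$), together with the ambient space $X$. This data is identical whether one views $B'$ as a subatlas of $A$ or of $B$; hence the two maps $\cH^d(X;\oo_{X\rel\partial})\to R$ computed via $B'$ literally coincide.

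Finally, the well-definedness diagram \eqref{Fpushforwardfundclassdefcommdiagram} in Definition \ref{fclassdefinition} guarantees that the resulting element of $\cH^d(X;\oo_{X\rel\partial})^\vee$ is independent of the choice of finite subatlas, so the computation using $B'$ agrees with $[X]^\vir_A$ on the one hand and with $[X]^\vir_B$ on the other; this gives the desired equality. There is no real obstacle here: the only subtlety is the sheaf-theoretic identification in the first step, and everything else is a direct unwinding of Definition \ref{fclassdefinition}.
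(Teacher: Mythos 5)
Your argument is correct and is exactly the unwinding the paper has in mind when it says the lemma ``follows immediately from the definition'': pick a finite $B'\subseteq B\subseteq A$, observe that the data and the composite \eqref{Fpushforwardfundclassdef} built from $B'$ do not depend on whether $B'$ is regarded as a subatlas of $A$ or of $B$, and invoke the independence-of-finite-subatlas clause of Definition \ref{fclassdefinition}. Your preliminary remark identifying $\oo_{X,A}$ with $\oo_{X,B}$ via the uniqueness clause of Definition \ref{IAorientationsheaf} is a reasonable piece of bookkeeping that the paper leaves implicit.
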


\begin{proof}
This follows immediately from the definition.
\end{proof}

\begin{lemma}[{Shrinking the charts preserves $[X]^\vir$}]\label{openreduction}
Let $X$ be a space with locally orientable implicit atlas $A$ with boundary.  Let $A'$ be obtained from $A$ by using instead some open subsets $U_{IJ}'\subseteq U_{IJ}$, $X_I'\subseteq X_I$, $X_I^{\reg\prime}\subseteq X_I^\reg$, and restricting $\psi_{IJ}$, $s_I$ to these subsets, so that $A'$ is also an implicit atlas.  Then $[X]^\vir_A=[X]^\vir_{A'}$.
\end{lemma}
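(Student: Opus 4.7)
The plan is to compare $[X]^\vir_A$ and $[X]^\vir_{A'}$ by constructing, for a common finite subatlas $B$, a natural chain-level map between the virtual cochain complexes of $A$ and $A'$ that is compatible with both the pushforward to $C_\bullet(E;B)$ and the canonical isomorphism of Theorem \ref{Fsheaffundamentaliso}. The two atlases share the same index set, the same covering groups $\Gamma_\alpha$, the same obstruction spaces $E_\alpha$, and (since $X_I^{\reg\prime} \subseteq X_I^\reg$ is open, so pulls back the orientation sheaf) the same virtual orientation sheaf $\oo_{X\rel\partial}$ on $X$. Hence $[X]^\vir_A$ and $[X]^\vir_{A'}$ lie in the same group $\cH^d(X;\oo_{X\rel\partial})^\vee$, and it is enough to exhibit a finite subatlas $B \subseteq A$ (automatically also a subatlas of $A'$) for which the two compositions \eqref{Fpushforwardfundclassdef} coincide.

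Let me write $X_{I,J,B}$ and $X_{I,J,B}'$ for the two normal-cone deformations constructed using the data of $A$ and $A'$ respectively, for $I \subseteq J \subseteq B$. The key observation is that $X_{I,J,B}' \subseteq X_{I,J,B}$ is an open subset and that $(X_{I,J,B}')^K = X_{I,J,B}^K \cap X_{I,J,B}'$ for every compact $K \subseteq X$; hence ``extension by inclusion'' of singular chains defines a chain map $\iota_{IJ}\colon C^\bullet_\vir(K;B)_{IJ}' \to C^\bullet_\vir(K;B)_{IJ}$ (and its $\rel\partial$ analogue), which commutes with the structure maps \eqref{Epushforward1} of the homotopy colimit. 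Assembling via \eqref{CvirdefI}--\eqref{CvirdefII}, I obtain maps $\iota\colon C^\bullet_\vir(-;B)' \to C^\bullet_\vir(-;B)$ of complexes of $\K$-presheaves on $X$.

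I then need two compatibilities. (a) The map $\iota$ commutes with the pushforward $s_\ast$ of \eqref{Fpushforward}: this is immediate because $s_\ast$ factors through the projection $X_{I,J,B} \to E_A$, and this projection agrees on $X_{I,J,B}'$ and $X_{I,J,B}$. (b) The induced map on $H^0$ coincides with the identity on $\oo_{X\rel\partial}$ (respectively $\oo_X$) under the isomorphisms of Proposition \ref{Cpure}. Statement (b) is a local claim on stalks, and via the explicit local description of the identifications in Lemma \ref{localdescofISO} it reduces to the classical naturality of Poincar\'e--Lefschetz duality under open inclusion: for $p \in X_I^{\reg\prime}$, the pushforward $C_\bullet(X_I^{\reg\prime}, X_I^{\reg\prime}\setminus p) \to C_\bullet(X_I^\reg, X_I^\reg \setminus p)$ is a quasi-isomorphism inducing the identity on the orientation module $(\oo_{X_I^\reg})_p = (\oo_{X_I^{\reg\prime}})_p$. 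Combining (a) and (b) yields a commutative diagram of the same shape as \eqref{Fpushforwardfundclassdefcommdiagram} whose top and bottom rows compute $[X]^\vir_{A'}$ and $[X]^\vir_A$ respectively and whose outer vertical maps are identities, giving $[X]^\vir_A = [X]^\vir_{A'}$.

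The main obstacle is step (b). Although morally transparent, it requires carefully matching the orientation data passed through the isomorphism $H^0_\vir(-\rel\partial;B) \cong \oo_{X\rel\partial}$ of Proposition \ref{Cpure}: both atlases use the same $\oo_{E_A}^\vee$ and the same $\Gamma_A$-action, and the open inclusion $X_J^{\reg\prime} \hookrightarrow X_J^\reg$ pulls back $\oo_{X_J^\reg}$ tautologically, so once the local model of Lemma \ref{localdescofISO} is unpacked the verification reduces to the classical statement mentioned above. Everything else in the argument is a formal consequence of the construction of $C^\bullet_\vir$ as a homotopy colimit of relative chains on the deformations to the normal cone, together with the general homotopy-$\K$-sheaf formalism of Appendix \ref{homologicalalgebrasection}.
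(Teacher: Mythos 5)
Your proof is correct and follows essentially the same route as the paper: replace by a finite subatlas, use the inclusion $X_{I,J,B}'\subseteq X_{I,J,B}$ to get a chain map on the virtual cochain complexes that commutes with $s_\ast$, and reduce the compatibility with Theorem \ref{Fsheaffundamentaliso} to a local sheaf computation in degree zero via Lemma \ref{localdescofISO}. The only small slip is in the parenthetical ``(automatically also a subatlas of $A'$)'': the implication goes the other way --- one should choose $B$ so that the $A'$-data on $B$ satisfies the covering axiom, and then the $A$-data on $B$ (with larger charts) does so automatically.
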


\begin{proof}
We may assume that $A$ is finite.  Certainly there is a map $C^\bullet_\vir(X\rel\partial;A')\to C^\bullet_\vir(X\rel\partial;A)$ which respects the map \eqref{Fpushforward}.  Hence it suffices to show that the following diagram commutes:
\begin{equation}
\begin{tikzcd}
H^\bullet_\vir(X\rel\partial;A')\ar{d}\ar[equal]{r}{\text{Thm \ref{Fsheaffundamentaliso}}}&\cH^\bullet(X;\oo_{X\rel\partial})\ar[equal]{d}\\
H^\bullet_\vir(X\rel\partial;A)\ar[equal]{r}{\text{Thm \ref{Fsheaffundamentaliso}}}&\cH^\bullet(X;\oo_{X\rel\partial})
\end{tikzcd}
\end{equation}
where the left vertical arrow is induced by the obvious pushforward on chains.  By definition of the isomorphisms above, it suffices to check commutativity of the corresponding diagram of sheaves with $\bullet=0$.  This can be checked locally, where it is clear.
\end{proof}

\begin{lemma}[{$[X\sqcup Y]^\vir=[X]^\vir\oplus[Y]^\vir$}]\label{disjointunionfc}
Let $X$ and $Y$ be spaces equipped with locally orientable implicit atlases with boundary, on the same index set $A$ and of the same virtual dimension.  Let us also denote by $A$ the resulting implicit atlas on $X\sqcup Y$ (let $(X\sqcup Y)_I:=X_I\sqcup Y_I$).  Then $A$ on $X\sqcup Y$ is locally orientable with $\oo_{X\sqcup Y}=(i_X)_\ast\oo_X\oplus(i_Y)_\ast\oo_Y$, and $[X\sqcup Y]^\vir=[X]^\vir\oplus[Y]^\vir\in\cH^\bullet(X\sqcup Y;\oo_{X\sqcup Y\rel\partial})^\vee=\cH^\bullet(X;\oo_{X\rel\partial})^\vee\oplus\cH^\bullet(Y;\oo_{Y\rel\partial})^\vee$.
\end{lemma}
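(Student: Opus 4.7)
The plan is to reduce everything to the observation that each piece of data in the atlas on $X\sqcup Y$, and each complex built from it, splits canonically as a direct sum indexed by the two components, and then to check that the identifications in Theorem \ref{Fsheaffundamentaliso} and the pushforward \eqref{Fpushforward} respect this splitting.

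First I would verify that the prescription $(X\sqcup Y)_I := X_I\sqcup Y_I$ (with $s_\alpha,\psi_{IJ}, U_{IJ},X_I^\reg,\partial X_I$ taken to be the disjoint union of the corresponding data from $X$ and $Y$) really does satisfy the compatibility and transversality axioms of Definition \ref{IAwboundary}. Each axiom is a pointwise/local statement on the thickenings, and every point of $(X\sqcup Y)_I$ lies in exactly one of $X_I$ or $Y_I$, so each axiom holds on $X\sqcup Y$ iff it holds on both $X$ and $Y$ separately. Local orientability is similarly pointwise: the stabilizer $(\Gamma_I)_x$ at a point of $(s_I|(X\sqcup Y)_I^\reg)^{-1}(0)$ acts on $\oo_{(X\sqcup Y)_I^\reg}\otimes\oo_{E_I}^\vee$ through the same sign character as in whichever component contains $x$. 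Inspecting Definition \ref{IAorientationsheaf}, the local data defining $\oo_{X\sqcup Y}$ restricts on the open set $X\subseteq X\sqcup Y$ to the data defining $\oo_X$, and similarly for $Y$; since $X$ and $Y$ are both open and closed, this gives the claimed splitting $\oo_{X\sqcup Y} = (i_X)_\ast\oo_X \oplus (i_Y)_\ast\oo_Y$, and the same for $\oo_{X\sqcup Y\rel\partial}$.

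Next I would show that for any finite subatlas $B\subseteq A$ and any compact $K\subseteq X\sqcup Y$ there is a canonical splitting
\begin{equation*}
C^\bullet_\vir(K\rel\partial;B) \;=\; C^\bullet_\vir(K\cap X\rel\partial;B) \;\oplus\; C^\bullet_\vir(K\cap Y\rel\partial;B).
\end{equation*}
The key observation is that the deformation to the normal cone splits: $(X\sqcup Y)_{I,J,B} = X_{I,J,B}\sqcup Y_{I,J,B}$ as $\Gamma_B$-spaces, and the relevant ``support'' subspace splits accordingly as $X_{I,J,B}^{K\cap X}\sqcup Y_{I,J,B}^{K\cap Y}$. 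Singular chains of a disjoint union are the direct sum of chains of the components, taking $\Gamma_B$-invariants preserves direct sums (we work over a ring in which $\#\Gamma_\alpha$ is invertible), and $\hocolim_{I\subseteq J\subseteq B}$ commutes with finite direct sums, so the partial and total virtual cochain complexes of Definitions \ref{CSTcomplexdef} and \ref{Ccomplexdef} split as direct sums over the two components. The pushforward $s_\ast$ of \eqref{Fpushforward} lands in the single complex $C_{-\bullet}(E;B)$, and under the splitting it is exactly the sum of the two pushforwards for $X$ and $Y$.

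Finally I would verify that the canonical identification of Theorem \ref{Fsheaffundamentaliso} respects these splittings. Since the identification is constructed locally, it suffices to check it on stalks, and at a stalk on $X$ (resp.\ $Y$) the Poincar\'e--Lefschetz calculation of Lemma \ref{CSTpure} and Lemma \ref{localdescofISO} only sees the $X$-summand (resp.\ $Y$-summand) of the chain complex. Putting these ingredients together, the composite \eqref{Fpushforwardfundclassdef} defining $[X\sqcup Y]^\vir_A$ splits as the direct sum of the composites defining $[X]^\vir_A$ and $[Y]^\vir_A$, which is exactly the claim. The only real bookkeeping hurdle is tracking that the $\hocolim$ and $\Gamma_A$-invariants genuinely distribute over the direct sum decomposition of chain complexes; this is the step I would check most carefully, but it follows formally from the fact that both operations are exact functors of $R$-modules under our running hypothesis on the order of the covering groups.
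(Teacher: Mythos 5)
Your proposal is correct and takes essentially the same route as the paper: the key step in both is the observation that $C^\bullet_\vir(X\sqcup Y\rel\partial;A)$ splits canonically as $C^\bullet_\vir(X\rel\partial;A)\oplus C^\bullet_\vir(Y\rel\partial;A)$ compatibly with $s_\ast$, after which the compatibility of the Poincar\'e--Lefschetz identification with this splitting is checked by reducing to a diagram of sheaves and verifying it on stalks. You spell out the axiom-checking and the $\hocolim$/$\Gamma_A$-invariants bookkeeping in somewhat more detail than the paper, but the argument is the same.
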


\begin{proof}
We may assume that $A$ is finite.  There is a natural isomorphism:
\begin{equation}
C^\bullet_\vir(X\sqcup Y\rel\partial;A)=C^\bullet_\vir(X\rel\partial;A)\oplus C^\bullet_\vir(Y\rel\partial;A)
\end{equation}
compatible with the map to $C_{-\bullet}(E;A)$.  Thus it suffices to show that the following square of isomorphisms commutes:
\begin{equation}
\begin{tikzcd}
H^\bullet_\vir(X\sqcup Y\rel\partial;A)\ar[equal]{d}\ar[equal]{r}{\text{Thm \ref{Fsheaffundamentaliso}}}&\cH^\bullet(X\sqcup Y;\oo_{X\sqcup Y\rel\partial})\ar[equal]{d}\\
H^\bullet_\vir(X\rel\partial;A)\oplus H^\bullet_\vir(Y\rel\partial;A)\ar[equal]{r}{\text{Thm \ref{Fsheaffundamentaliso}}}&\cH^\bullet(X;\oo_{X\rel\partial})\oplus\cH^\bullet(Y;\oo_{Y\rel\partial})
\end{tikzcd}
\end{equation}
By definition of the horizontal maps, it suffices to check the commutativity of the corresponding diagram of sheaves on $X\sqcup Y$, which is clear.
\end{proof}

\begin{lemma}[{$\partial[X]^\vir=[\partial X]^\vir$}]\label{boundaryfundamentalclass}
Let $X$ be a space with locally orientable implicit atlas $A$ with boundary.  Then the dual connecting homomorphism $\delta^\vee:\cH^\bullet(X;\oo_{X\rel\partial})^\vee\to\cH^{\bullet-1}(\partial X;\oo_{\partial X})^\vee$ sends $[X]^\vir$ to $[\partial X]^\vir$.
\end{lemma}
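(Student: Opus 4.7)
The plan is to reduce the claim to a chain-level compatibility between the pushforward map $s_\ast$ of \eqref{Fpushforward} and the connecting chain map underlying the long exact sequence \eqref{sesofF}. First I pass to a finite subatlas $B\subseteq A$, which preserves the virtual fundamental class by Lemma \ref{sameassubatlas}; the restriction of $B$ to $\partial X$ is a finite implicit atlas on $\partial X$, so $[\partial X]^\vir$ may likewise be computed via Definition \ref{fclassdefinition} with the same subatlas.

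Next I invoke Proposition \ref{compatibilityoflongexactsequence} to identify the \v Cech connecting homomorphism $\delta$ with the virtual connecting map $\delta'\colon H^{d-1}_\vir(\partial X;B)\to H^d_\vir(X\rel\partial;B)$ coming from \eqref{sesofF}, under the isomorphisms of Theorem \ref{Fsheaffundamentaliso}. Unwinding the definition of the virtual fundamental class, the claim $\delta^\vee[X]^\vir=[\partial X]^\vir$ is then equivalent to the commutativity of
\begin{equation*}
\begin{tikzcd}
H^{d-1}_\vir(\partial X;B)\ar{r}{\delta'}\ar{dr}[swap]{s_\ast}&H^d_\vir(X\rel\partial;B)\ar{d}{s_\ast}\\
&H_0(E;B)
\end{tikzcd}
\end{equation*}
followed by the normalization $[E_B]\mapsto 1$ on the bottom-right.

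The main obstacle is verifying this last triangle, but I expect it to hold on the nose already at the chain level of the partial complexes $(-)_{IJ}$, and then to survive passage to the homotopy colimit of Definition \ref{Ccomplexdef}. Inspecting \eqref{CcomplexcriticaldefII} together with Remark \ref{bdryorrmk}, the chain map $C^{\bullet-1}_\vir(\partial X;B)_{IJ}\to C^\bullet_\vir(X\rel\partial;B)_{IJ}$ underlying $\delta'$ is the pushforward of chains along the closed inclusion $(\partial X)_{I,J,B}\hookrightarrow X_{I,J,B}$ (twisted by the orientation of the boundary normal direction), whereas both $s_\ast$ maps of \eqref{Ethompushforward} are pushforwards along the projection to $E_B$; since the projection on $X_{I,J,B}$ restricts to the projection on $(\partial X)_{I,J,B}$, the triangle commutes at each $_{IJ}$ level. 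The remaining work is bookkeeping of the orientation twists of Remark \ref{bdryorrmk}, the $\oo_{E_A}^\vee$ coefficients, and the normalization factors $(\#\Gamma_{J\setminus J'})^{-1}$ appearing in the structure maps of the homotopy colimit, which should all match on both sides by naturality; the commutativity then persists upon taking hocolim, yielding the desired identity.
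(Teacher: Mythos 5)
Your argument is correct and follows essentially the same route as the paper's proof: pass to a finite subatlas, invoke the last square of Proposition~\ref{compatibilityoflongexactsequence}, and observe that the connecting chain map $C^{\bullet-1}_\vir(\partial X;B)\to C^\bullet_\vir(X\rel\partial;B)$ from~\eqref{sesofF} commutes with both $s_\ast$ maps to $C_{-\bullet}(E;B)$. The paper states this chain-level compatibility in a single sentence; your observation that at each $_{IJ}$ level both $s_\ast$ maps are pushforwards along projections to $E_B$, and that the projection from $(\partial X)_{I,J,B}$ factors through the one from $X_{I,J,B}$, is the right justification for that sentence.
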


\begin{proof}
We may assume that $A$ is finite.  The map $C^\bullet_\vir(\partial X;A)\to C^{\bullet+1}_\vir(X\rel\partial;A)$ in \eqref{sesofF} commutes with the maps from both of these groups to $C_{d-\bullet}(E;A)$.  Hence the result follows from the commutativity of the last square in Proposition \ref{compatibilityoflongexactsequence}.
\end{proof}

\begin{lemma}[{$(\partial X\to X)_\ast[\partial X]^\vir=0$}]\label{cobordismworks}
Let $X$ be a space with locally orientable implicit atlas with boundary.  Then the pushforward map $\cH^\bullet_\vir(\partial X;\oo_{\partial X})^\vee\to\cH^\bullet(X;\oo_X)^\vee$ annihilates $[\partial X]^\vir$.
\end{lemma}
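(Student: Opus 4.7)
The plan is to reduce this to the exactness of the long exact sequence from Proposition~\ref{compatibilityoflongexactsequence}, after identifying $[\partial X]^\vir$ with the image of $[X]^\vir$ under the dual connecting homomorphism via Lemma~\ref{boundaryfundamentalclass}. Write $d=\vdim_AX$ and $i:\partial X\hookrightarrow X$ for the inclusion; by definition, the pushforward $i_\ast:\cH^{d-1}(\partial X;\oo_{\partial X})^\vee\to\cH^{d-1}(X;\oo_X)^\vee$ is the dual of the restriction map $i^\ast:\cH^{d-1}(X;\oo_X)\to\cH^{d-1}(\partial X;\oo_{\partial X})$ that appears as the middle map in the bottom row of diagram~\eqref{compatibilityoflesdiagram} (coming from the short exact sequence~\eqref{sesofO} of sheaves on $X$).

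Concretely, for arbitrary $\alpha\in\cH^{d-1}(X;\oo_X)$ I would compute
\begin{equation*}
\langle i_\ast[\partial X]^\vir,\alpha\rangle
=\langle[\partial X]^\vir,i^\ast\alpha\rangle
=\langle\delta^\vee[X]^\vir,i^\ast\alpha\rangle
=\langle[X]^\vir,\delta(i^\ast\alpha)\rangle,
\end{equation*}
where the second equality is precisely Lemma~\ref{boundaryfundamentalclass} and the third unwinds the definition of the dual of the connecting homomorphism $\delta:\cH^{d-1}(\partial X;\oo_{\partial X})\to\cH^d(X;\oo_{X\rel\partial})$. Now by exactness of the \v Cech long exact sequence attached to~\eqref{sesofO} (the bottom row of~\eqref{compatibilityoflesdiagram}), the composite $\delta\circ i^\ast$ vanishes, so $\delta(i^\ast\alpha)=0$ and the pairing is zero. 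Since $\alpha$ was arbitrary, $i_\ast[\partial X]^\vir=0$.

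There is essentially no obstacle here: the argument is a purely formal consequence of Lemma~\ref{boundaryfundamentalclass} together with exactness of the long exact sequence of the pair, both already established. Conceptually, the statement is the analogue for virtual fundamental classes of the familiar identity $i_\ast[\partial M]=0$ for an oriented topological manifold with boundary, and the proof above is just the dual assertion that the cochain-level boundary map agrees with the geometric boundary of the virtual fundamental cycle.
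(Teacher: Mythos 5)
Your proof is correct and is essentially identical to the paper's: both reduce the claim to Lemma~\ref{boundaryfundamentalclass} (so that $[\partial X]^\vir=\delta^\vee[X]^\vir$) together with exactness of the \v Cech long exact sequence of the pair, which makes the composite $\cH^{\bullet+1}(X;\oo_{X\rel\partial})^\vee\to\cH^\bullet(\partial X;\oo_{\partial X})^\vee\to\cH^\bullet(X;\oo_X)^\vee$ vanish. You just spell out the pairing computation that the paper leaves implicit.
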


\begin{proof}
The composition $\cH^{\bullet+1}(X;\oo_{X\rel\partial})^\vee\to\cH^\bullet(\partial X;\oo_{\partial X})^\vee\to\cH^\bullet(X;\oo_X)^\vee$ is zero, and $[\partial X]^\vir$ is in the image of the first map by Lemma \ref{boundaryfundamentalclass}.
\end{proof}

\begin{lemma}[{If $X=X^\reg$ then $[X]^\vir=[X]$}]\label{fclasstransverse}
Let $X$ be a space with locally orientable implicit atlas $A$ with boundary.  If $X^\reg=X$ (so in particular, $X$ is a compact topological manifold with boundary of dimension $d=\vdim_AX$), then $[X]^\vir_A$ is the usual fundamental class of $X$.
\end{lemma}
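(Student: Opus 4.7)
The plan is to reduce immediately to the empty subatlas $B = \varnothing \subseteq A$. The key observation is that when $X = X^\reg = X_\varnothing^\reg$, the empty subset of $A$ is itself a locally orientable implicit atlas with boundary on $X$: the covering axiom holds trivially (taking $I = \varnothing$), and local orientability is automatic since $\Gamma_\varnothing$ is the trivial group. By Lemma \ref{sameassubatlas}, we have $[X]^\vir_A = [X]^\vir_\varnothing$, so it suffices to compute the latter.

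With $B = \varnothing$, every ingredient collapses. One has $E_\varnothing = 0$, $\Gamma_\varnothing$ trivial, and the deformation space $X_{\varnothing,\varnothing,\varnothing}$ unwinds to $X$ itself. The homotopy colimit defining $C^\bullet_\vir(X \rel \partial; \varnothing)$ has only a single term ($I = J = \varnothing$), yielding the literal identification $C^\bullet_\vir(X \rel \partial; \varnothing) = C_{d-\bullet}(X)$. The map $s_\ast : C^d_\vir(X \rel \partial; \varnothing) \to C_0(E; \varnothing)$ is then induced by the constant map $X \to \{0\}$, namely the augmentation $\epsilon : C_0(X) \to R$.

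The remaining step will be to verify that the isomorphism of Theorem \ref{Fsheaffundamentaliso} reduces in this case to the classical Poincar\'e--Lefschetz duality isomorphism $H_{d-\bullet}(X) \xrightarrow{\sim} \cH^\bullet(X; \oo_{X \rel \partial})$. This is precisely what Lemma \ref{localdescofISO} delivers when applied with $I = \varnothing$: the abstract isomorphism of Lemma \ref{CSTpure} is characterized by agreeing with standard Poincar\'e--Lefschetz duality on the chain complex $C_{d-\bullet}(X, X \setminus K; \oo_{X \rel \partial})$, and in our situation that complex \emph{is} the virtual cochain complex. Under the standard identification $\cH^d(X; \oo_{X \rel \partial})^\vee \cong H_d(X, \partial X; \oo_X)$, the composite $\cH^d(X; \oo_{X \rel \partial}) \xrightarrow{\mathrm{PL}} H_0(X) \xrightarrow{\epsilon} R$ is exactly pairing with the classical fundamental class $[X]$, as desired. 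The only points requiring any care — that the covering axiom genuinely holds for $B = \varnothing$ (where the hypothesis $X = X^\reg$ is used) and that the orientation and sign conventions match in reducing Lemma \ref{CSTpure} to classical Poincar\'e--Lefschetz — are routine, so no serious obstacle is expected.
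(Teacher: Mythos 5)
Your proposal is correct and follows essentially the same route as the paper's own proof: pass to the empty subatlas $\varnothing\subseteq A$ (which is a valid atlas precisely because $X=X^\reg$), observe that $C^\bullet_\vir(K\rel\partial;\varnothing)$ collapses to $C_{d-\bullet}(X,X\setminus K)$ and $s_\ast$ to the augmentation, and identify the isomorphism of Theorem \ref{Fsheaffundamentaliso} with classical Poincar\'e--Lefschetz duality. The only difference is presentational: you make explicit the appeal to Lemma \ref{sameassubatlas} and to Lemma \ref{localdescofISO} (for the compatibility of the abstract isomorphism with classical duality), which the paper leaves tacit.
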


\begin{proof}
We may replace $A$ with the finite subatlas $\varnothing\subseteq A$.  Now in this case, we have:
\begin{equation}
C^\bullet_\vir(K\rel\partial;\varnothing)=C^\bullet_\vir(K\rel\partial;\varnothing)_{\varnothing\varnothing}=C_{d-\bullet}(X_{\varnothing,\varnothing,\varnothing},X_{\varnothing,\varnothing,\varnothing}\setminus X_{\varnothing,\varnothing,\varnothing}^K)=C_{d-\bullet}(X,X\setminus K)
\end{equation}
It follows that the identification $\cH^\bullet(X;\oo_{X\rel\partial})=H^\bullet_\vir(X\rel\partial;\varnothing)=H_{d-\bullet}(X)$ is simply the usual Poincar\'e duality isomorphism, and the map $H_{-\bullet}(X)\to H_{-\bullet}(E;\varnothing)\to R$ is the usual augmentation on $H_0$.  Hence the composition $\cH^\bullet(X;\oo_{X\rel\partial})\to R$ is pairing against the usual fundamental class of $X$.
\end{proof}

\begin{remark}[{Calculating $[X]^\vir$ using perturbation}]\label{perturbationcalc}
The properties from this section can be used to calculate the virtual fundamental class in the following sense.  Let us suppose we are given an ``explicit'' implicit atlas $A$ on a space $X$.  We may always (for convenience) replace $A$ by a subatlas and/or shrink the charts of $A$ and the virtual fundamental class is preserved (Lemmas \ref{sameassubatlas} and \ref{openreduction}).  Now suppose we can extend $A$ to an implicit atlas with boundary on a space $Y$ with $\partial Y=X\cup X'$ where $X'=(X')^\reg$ (if $A$ is a smooth implicit atlas, then such an extension is obtained if one can ``coherently perturb'' the Kuranishi maps $s_\alpha$ so they become transverse to zero).  Then $X'$ is a closed smooth manifold, and $[X']^\vir=[X']$ is the naive fundamental class (Lemma \ref{fclasstransverse}).  Now $Y$ is a cobordism between $X$ and $X'$, so we have $[X]^\vir=[X']$ as homology classes in $Y$ (Lemmas \ref{disjointunionfc} and \ref{cobordismworks}).  This allows us to understand the pushforward of $[X]^\vir$ under any map $f:X\to Z$ which extends continuously to $Y$.  We do not claim such perturbations always exist; an affirmative answer is provided in very similar, though not identical, contexts by Fukaya--Ono \cite{fukayaono}, Fukaya--Oh--Ohta--Ono \cite{FOOOII,foootechnicaldetails} and McDuff--Wehrheim \cite{mcduffwehrheim}.
\end{remark}

\subsection{Manifold with obstruction bundle}\label{vfcobsbundspace}

A natural ``test case'' (beyond Lemma \ref{fclasstransverse} and Remark \ref{perturbationcalc}) for our definition of the virtual fundamental class is that of the natural implicit atlas on a ``manifold with obstruction bundle'' (the expected answer being the Poincar\'e dual of the Euler class).  More generally, there is a natural implicit atlas on the zero set of a section of a vector bundle over a manifold (the ``manifold with obstruction bundle'' case is when the section is identically zero), and again the expected answer is a type of Euler class.

In this section, we show that our definition of the virtual fundamental class indeed agrees with this expected answer.  To prove this, we use only the properties of the virtual fundamental class from \S\ref{fclassproperties}.

\begin{definition}[Implicit atlas on the zero set of a smooth section]\label{IAonvectorbundlezeroset}
Let $B$ be a smooth manifold with boundary, let $p:E\to B$ be a smooth vector bundle, and let $s:B\to E$ be a smooth section with $s^{-1}(0)$ compact.  We define an implicit atlas of dimension $\dim B-\dim E$ with boundary on $X:=s^{-1}(0)$ as follows.  The index set $A$ consists of all triples $(V_\alpha,E_\alpha,\lambda_\alpha)$ (called \emph{thickening datums}) where:
\begin{rlist}
\item$V_\alpha\subseteq B$ is an open subset.
\item$E_\alpha$ is a finite-dimensional vector space.
\item$\lambda_\alpha:E_\alpha\times V_\alpha\to p^{-1}(V_\alpha)$ is a smooth homomorphism of vector bundles.
\end{rlist}
The thickened spaces are defined as follows:
\begin{equation}
X_I:=\Bigl\{(x,\{e_\alpha\}_{\alpha\in I})\in\bigcap_{\alpha\in I}V_\alpha\times\bigoplus_{\alpha\in I}E_\alpha\Bigm|s(x)+\sum_{\alpha\in I}\lambda_\alpha(x,e_\alpha)=0\Bigr\}
\end{equation}
The Kuranishi map $s_\alpha:X_I\to E_\alpha$ is the obvious projection map, the footprint $U_{IJ}$ is the locus where $x\in V_\alpha$ for all $\alpha\in J$, and the footprint maps $\psi_{IJ}:(s_{J\setminus I}|X_J)^{-1}(0)\to U_{IJ}$ are the obvious forgetful maps.  The compatibility axioms are an easy exercise.

The regular locus $X_I^\reg\subseteq X_I$ is the locus where $X_I$ is ``cut out transversally''; more precisely, $(x,\{e_\alpha\}_{\alpha\in I})\in X_I$ is in $X_I^\reg$ iff the following map is surjective:
\begin{equation}
\Bigl(ds+\sum_{\alpha\in I}d\lambda_\alpha(\cdot,e_\alpha)\Bigr)\oplus\bigoplus_{\alpha\in I}\lambda_\alpha(x,\cdot):T_xB\oplus\bigoplus_{\alpha\in I}E_\alpha\to E_x
\end{equation}
The transversality axioms are an easy exercise.  Thus $A$ is indeed an implicit atlas on $X$.

Since everything here is in the smooth category, $A$ is in fact a smooth implicit atlas.
\end{definition}

\begin{definition}[Euler class]
Let $B$ be a space equipped with a vector bundle $p:E\to B$ of rank $k$.  This induces a canonical locally constant sheaf $\oo_E$ on $B$ whose stalk at $b\in B$ is $\oo_{E_b}$.  Let $\tau_E\in H^k(E,E\setminus 0;p^\ast\oo_E)$ denote the Thom class of $E$ (characterized uniquely by the property that its restriction to any local trivialization $\RR^n\times U\to U$ is the pullback of the tautological class in $H^n(\RR^n,\RR^n\setminus 0;\oo_{\RR^n})$).

The \emph{Euler class} $e(E)\in H^k(B;\oo_E)$ is $s^\ast\tau_E$ where $s:B\to E$ is any section.  Any two sections are homotopic, so $e(E)$ is well-defined.  For any section $s:B\to E$ with $s^{-1}(0)$ compact, the \emph{Euler class with compact support} $e_c(E,s)\in H^k_c(B;\oo_E)$ is $s^\ast\tau_E$.  If $s_0$ and $s_1$ are homotopic through a section $\tilde s:B\times[0,1]\to E$ with compact support, then $e_c(E,s_0)=e_c(E,s_1)$.  The natural map $H^k_c(B;\oo_E)\to H^k(B;\oo_E)$ sends the Euler class with compact support to the Euler class.
\end{definition}

\begin{remark}
The Euler class with compact support can be nonzero even for the trivial vector bundle.
\end{remark}

\begin{proposition}\label{actuallygiveseulerclass}
Let $B$ be a smooth manifold, let $p:E\to B$ be a smooth vector bundle, and let $s:B\to E$ be a smooth section with $s^{-1}(0)$ compact.  Consider the implicit atlas $A$ on $X:=s^{-1}(0)$ from Definition \ref{IAonvectorbundlezeroset}.  Then there is a canonical isomorphism of sheaves $\oo_X=\oo_B\otimes\oo_E^\vee$ on $X$ (in particular, $A$ is locally orientable), and the image of $[X]^\vir$ in $H^{\dim B-\dim E}(B;\oo_B\otimes\oo_E^\vee)^\vee$ equals $e_c(E,s)\cap[B]$.
\end{proposition}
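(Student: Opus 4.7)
The plan is to first identify the virtual orientation sheaf and then to reduce the computation of $[X]^\vir$ to a transverse perturbation via the cobordism strategy of Remark \ref{perturbationcalc}. For the identification $\oo_X = i^*(\oo_B \otimes \oo_E^\vee)$, at any $(x, \{e_\alpha\}) \in X_I^\reg \subseteq B \times E_I$ the tangent space sits in a short exact sequence
$$0 \to T_{(x,e)}X_I^\reg \to T_xB \oplus E_I \to E_x \to 0,$$
whose rightmost map is surjective by the very definition of the regular locus in Definition \ref{IAonvectorbundlezeroset}. Taking top exterior powers gives a canonical $\Gamma_I$-equivariant isomorphism $\oo_{X_I^\reg} \otimes \oo_{E_I}^\vee \cong \pi_B^*(\oo_B \otimes \oo_E^\vee)$ on $X_I^\reg$ (equivariance is automatic since all $\Gamma_\alpha$ are trivial here), and it is compatible with the footprint maps $\psi_{IJ}$ because the same exact sequence governs the inclusion $X_I^\reg \hookrightarrow X_J^\reg$. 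By Definitions \ref{IAlocalorient} and \ref{IAorientationsheaf} this exhibits $A$ as locally orientable and yields the claimed identification of $\oo_X$.

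For the second step, choose a thickening datum $\alpha = (V_\alpha, E_\alpha, \lambda_\alpha)$ with $V_\alpha$ a precompact open neighborhood of $X$ in $B$ and $E_\alpha$ a sufficiently large finite-dimensional space of smooth sections of $E$ cut off to vanish near $\partial V_\alpha$, so that $\lambda_\alpha$ is pointwise surjective on a neighborhood of $X$. A standard Sard argument produces a residual set of small $e^* \in E_\alpha$ for which the perturbed section $s^* := s + \lambda_\alpha(e^*, \cdot)$ (extended by $s$ outside $V_\alpha$) is transverse to zero with compact zero set $X^*$. Applying Definition \ref{IAonvectorbundlezeroset} to the linear family $s_t := s + t\,\lambda_\alpha(e^*, \cdot)$ on $B \times [0,1]$ yields an implicit atlas with boundary on the zero set $Y$, with $\partial Y = X \sqcup (-X^*)$. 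Up to shrinking charts (Lemma \ref{openreduction}) and passing to common subatlases (Lemma \ref{sameassubatlas}), this restricts to the given atlas on $X$ at $t = 0$ and to the trivial (zero-chart) atlas on $X^*$ at $t = 1$.

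Pushing forward along the projection $Y \to B \times [0,1] \to B$, Lemmas \ref{disjointunionfc}, \ref{boundaryfundamentalclass}, and \ref{cobordismworks} together yield $(i_X)_*[X]^\vir = (i_{X^*})_*[X^*]^\vir$ in $\cH^d(B; \oo_B \otimes \oo_E^\vee)^\vee$, and Lemma \ref{fclasstransverse} identifies the right-hand side with the image of the ordinary fundamental class $[X^*]$. The remaining identification with $e_c(E, s) \cap [B]$ is classical: for a transverse section $s^*$ with compact zero set, the Poincar\'e dual of $[X^*]$ in $H^k_c(B; \oo_E)$ equals $(s^*)^* \tau_E = e_c(E, s^*)$ by naturality of the Thom class under transverse pullback, and homotopy invariance of $e_c$ through compactly supported sections gives $e_c(E, s^*) = e_c(E, s)$. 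I expect the main obstacle to be the atlas bookkeeping in the cobordism step: namely, constructing the atlas on $Y$ from Definition \ref{IAonvectorbundlezeroset} and verifying, via Lemmas \ref{sameassubatlas} and \ref{openreduction}, that its boundary restrictions genuinely match the given atlas on $X$ and the trivial atlas on $X^*$. The perturbation itself is routine Sard theory and the final identification is standard algebraic topology.
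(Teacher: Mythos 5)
Your proposal is correct and follows essentially the same route as the paper's proof: apply Definition \ref{IAonvectorbundlezeroset} on $B\times[0,1]$ to a homotopy from $s$ to a transverse section, and then combine Lemmas \ref{sameassubatlas}, \ref{disjointunionfc}, \ref{cobordismworks}, and \ref{fclasstransverse}. The ``atlas bookkeeping'' you flag as the main obstacle is dispatched cleanly in the paper by observing that the given atlas $A$ is a subatlas of the atlas on the cobordism via the map on thickening datums $V_\alpha \mapsto V_\alpha\times[0,1]$ (with $\lambda_\alpha$ extended constantly in the interval direction), so Lemma \ref{sameassubatlas} applies directly and no chart-shrinking via Lemma \ref{openreduction} is needed.
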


Note that $\cH^\bullet(X;\oo_X)=\varinjlim_{X\subseteq U}H^\bullet(U;\oo_B\otimes\oo_E^\vee)$ (direct limit over open neighborhoods $U\subseteq B$ of $X$), by the continuity axiom of \v Cech cohomology.  The proposition applies equally well with $U$ in place of $B$, and thus determines $[X]^\vir\in\cH^\bullet(X;\oo_X)^\vee$ uniquely (note the use of either Lemma \ref{sameassubatlas} or \ref{openreduction}).

\begin{proof}
The statement about orientation sheaves is a straightforward calculation which we omit.

Choose some smooth family of sections $\tilde s:B\times[0,1]\to E$ with $\tilde s^{-1}(0)$ compact, so that $\tilde s(\cdot,0)=s$ and $s_1:=\tilde s(\cdot,1)$ is transverse to the zero section.

Now $B\times[0,1]$ is a smooth manifold with boundary, so let $\tilde A$ be the implicit atlas on $\tilde X:=\tilde s^{-1}(0)$ from Definition \ref{IAonvectorbundlezeroset}.  Similarly, let $A'$ be the implicit atlas on $X':=(s')^{-1}(0)$.

Now we have two implicit atlases on $X$, namely $A$ and (the restriction to the boundary of) $\tilde A$.  On the other hand, we can exhibit $A$ as a subatlas of $\tilde A$ by the map on thickening datums sending $V_\alpha$ to $V_\alpha\times[0,1]$ and $\lambda_\alpha$ to its obvious extension.  Hence they induce the same virtual fundamental class (Lemma \ref{sameassubatlas}), so we may just write $[X]^\vir$ (dropping the subscript indicating which atlas we use).  The same reasoning applies to $X'$.

Now $\partial\tilde X=X\sqcup X'$, so we have $[X]^\vir=[X']^\vir$ as homology classes on $\tilde X$ (Lemmas \ref{disjointunionfc} and \ref{cobordismworks}).  Hence we have:
\begin{equation}
[X]^\vir=[X']^\vir
\end{equation}
as elements of $H^{\dim B-\dim E}(B;\oo_B\otimes\oo_E^\vee)^\vee$.

We know $X'=(X')^\reg$ (it is cut out transversally), so $X'$ is a closed smooth manifold and $[X']^\vir=[X']$ (Lemma \ref{fclasstransverse}).  But now $[X']=e_c(E,s)\cap[B]$ by elementary Poincar\'e duality.
\end{proof}

\begin{remark}
Proposition \ref{actuallygiveseulerclass} in its present form is useless in practice, because the implicit atlases which arise in interesting examples are not literally isomorphic to the one given in Definition \ref{IAonvectorbundlezeroset}.  One could, though, hope to show that they are equivalent (c.f.\ Remark \ref{equivalenceofIA}) and then apply Proposition \ref{actuallygiveseulerclass}.
\end{remark}

\subsection{Lift to Steenrod homology}

We now define a virtual fundamental class in Steenrod homology $\sH_\bullet(X,\partial X;\oo_X^\vee)$ refining the class in $\cH^\bullet(X;\oo_{X\rel\partial})^\vee$ constructed in \S\ref{fclassdefsec}.  To do this, it suffices to lift our earlier reasoning at the level of homology groups to the level of objects in the derived category.  We will be using the notation and results from \S\ref{steenrodhomologysec}.

\begin{remark}
The natural map $\sH_\bullet(X,\partial X;\oo_X^\vee)\to\cH^\bullet(X;\oo_{X\rel\partial})^\vee$ is an isomorphism if the base ring $R$ is a field (this follows from Lemma \ref{steenrodisderiveddualofcech}), but for $R=\ZZ$ the kernel can be substantial (in particular, it contains all torsion).
\end{remark}

\begin{definition}[Virtual fundamental class in Steenrod homology]
Let $X$ be a space with localy orientable implicit atlas $A$ of dimension $d$ with boundary.  Let $B\subseteq A$ be a finite subatlas.  By Propositions \ref{Cpure} and \ref{cechcohomologypurehomotopysheaf}, there is a canonical isomorphism $\cC^\bullet(X;\oo_{X\rel\partial})=C_\vir^\bullet(X\rel\partial;B)$ in $D(R)$ (observe that the isomorphisms in the proof of Proposition \ref{cechcohomologypurehomotopysheaf} all come from canonical quasi-isomorphisms on the chain level).  There is also a canonical isomorphism $C_{-\bullet}(E;B)=R$ in $D(R)$.    Since $C_\vir^\bullet(X\rel\partial;B)$ is free and bounded above, we have:
\begin{equation}
\Hom_R(C_\vir^\bullet(X\rel\partial;B),C_{-\bullet}(E;B))=R\Hom_{D(R)}(\cC^\bullet(X;\oo_{X\rel\partial}),R)
\end{equation}
Thus the map $s_\ast:C_\vir^{d+\bullet}(X\rel\partial;B)\to C_{-\bullet}(E;B)$ gives rise to an element:
\begin{equation}\label{steenrodfclasseqndef}
[X]^\vir_A:=[s_\ast]\in\Hom_{D(R)}(\cC^\bullet(X;\oo_{X\rel\partial}),R[-d])\overset{\text{Lem \ref{steenrodisderiveddualofcech}}}=\sH_d(X,\partial X;\oo_X^\vee)
\end{equation}
The commutativity of \eqref{sheafcommforFC} implies that this element is independent of the chosen finite subatlas $B\subseteq A$.

By definition, this fundamental class \eqref{steenrodfclasseqndef} projects to the fundamental class of Definition \ref{fclassdefinition} under the map $\sH_\bullet(X,\partial X;\oo_X^\vee)\to\cH^\bullet(X;\oo_{X\rel\partial})^\vee$.  By chasing the various isomorphisms involved in its definition, it can be checked that $[X]^\vir\in\sH_d(X,\partial X;\oo_X^\vee)$ is preserved under extension of scalars.
\end{definition}

The statements and proofs of Lemmas \ref{sameassubatlas}--\ref{fclasstransverse} and Proposition \ref{actuallygiveseulerclass} generalize readily to the case of $[X]^\vir\in\sH_d(X,\partial X;\oo_X^\vee)$.  Note that (the generalization of) Proposition \ref{actuallygiveseulerclass} does not determine $[X]^\vir\in\sH_d(X,\partial X;\oo_X^\vee)$ uniquely since the map from the Steenrod homology of $X$ to the inverse limit of the homology of its neighborhoods is not necessarily an isomorphism (see \cite[p87, Theorem 4]{milnor}).  However, it does at least determine the image of $[X]^\vir$ under any map from $X$ to a finite CW-complex, since any such map extends to some neighborhood of $X$.

\section{Stratifications}\label{auxiliarysection}

In this section, we introduce \emph{implicit atlases with cell-like stratification}.  Roughly speaking, an implicit atlas with cell-like stratification is an implicit atlas on a stratified space, along with suitably compatible stratifications on each of the thickenings.  We show how to apply the VFC package in this setting to obtain a ``stratum by stratum'' understanding of virtual fundamental cycles.  We also define the \emph{product implicit atlas}, a natural implicit atlas on $X\times Y$ induced from implicit atlases on $X$ and $Y$.

\begin{convention}
In this section, we work over a fixed ground ring $R$, and everything takes place in the category of $R$-modules.  We restrict to implicit atlases $A$ for which $\#\Gamma_\alpha$ is invertible in $R$ for all $\alpha\in A$.
\end{convention}

\subsection{Implicit atlas with cell-like stratification}\label{stratifiedIAsec}

\begin{definition}[Stratification]\label{stratificationdefinition}
Let $X$ be a topological space and let $\SSS$ be a poset.  A \emph{stratification} of $X$ by $\SSS$ is a lower semicontinuous function $X\to\SSS$.  We let $X^{\leq\s}$ (resp. $X^\s$, $X^{\geq\s}$) denote the inverse image of $\SSS^{\leq\s}$ (resp.\ $\{\s\}$, $\SSS^{\geq\s}$); lower semicontinuity of $X\to\SSS$ means by definition that every $X^{\geq\s}$ is open.

For a pair of spaces $(X,\partial X)$, a stratification $(X,\partial X)\to(\SSS,\partial\SSS)$ shall mean a stratification $X\to\SSS$ along with a downward closed (i.e.\ closed under taking smaller elements) subset $\partial\SSS\subseteq\SSS$ such that $X^{\partial\SSS}=\partial X$ (note that this implies automatically that $\partial X\subseteq X$ is closed).
\end{definition}

\begin{definition}[Cell-like stratification]\label{celllikedef}
Let $(M,\partial M)$ be a topological manifold with boundary and stratification by $(\SSS,\partial\SSS)$, and fix a map $\dim:\SSS\to\ZZ$.  This stratification is called \emph{cell-like} iff each pair $(M^{\leq\s},M^{<\s})$ is a topological manifold with boundary of dimension $\dim\s$.

If $(M,\partial M)\to(\SSS,\partial\SSS)$ is cell-like, then so is $\partial M\to\partial\SSS$ (with empty boundary) and $(M^{\leq\s},M^{<\s})\to(\SSS^{\leq\s},\SSS^{<\s})$.
\end{definition}

\begin{example}\label{celllikeexample}
Let $T$ be a simplicial complex, and suppose that its geometric realization $M:=\left|T\right|$ is a topological manifold with boundary (then $\partial M$ necessarily corresponds to a subcomplex $\partial T\subseteq T$).  Let $(\SSS,\partial\SSS):=(\F(T),\F(\partial T))$ be the face poset of $T$ (resp.\ $\partial T$).  Then the stratification $(M,\partial M)\to(\SSS,\partial\SSS)$ is cell-like (note that $T$ need not be a PL manifold).
\end{example}

\begin{example}\label{celllikecornerexample}
The natural stratification $\RR_{\geq 0}^k\times\RR^{n-k}\to\{(0,\infty)>\{0\}\}^k$ is cell-like.  The natural stratification $\RR_{\geq 0}^k\times\RR^{n-k}\to\ZZ$ given by ($n$ minus) the number of zeros in the first $k$ coordinates is \emph{not} cell-like for $k\geq 2$.
\end{example}

\begin{lemma}[Some local properties of cell-like stratifications]\label{celllikelocalstructure}
Let $(M,\partial M)\to(\SSS,\partial\SSS)$ be cell-like.  Then:
\begin{rlist}
\item\label{inbdry}If $\s\prec\ttt$ then $M^\s\subseteq\partial M^{\leq\ttt}$.
\item\label{toplessdim}If $M^\s\ne\varnothing$, then $\dim\s\leq\dim M$.
\item\label{topopen}If $\dim\s=\dim M$, then $M^\s\subseteq M$ is open.
\item\label{bdrystratumstruct}If $\dim\s=\dim M-1$, $\s\in\partial\SSS$, and $M^\s\ne\varnothing$, then $\#\SSS^{>\s}=1$.
\item\label{intstratumstruct}If $\dim\s=\dim M-1$, $\s\notin\partial\SSS$, and $M^\s\ne\varnothing$, then $\#\SSS^{>\s}=2$, and these $M^{\leq\ttt}$ give collars on either side of $M^\s\subseteq M$ (so in particular $M^\s\subseteq M$ is locally flat).
\end{rlist}
\end{lemma}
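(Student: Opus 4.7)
The plan is to handle (i)--(iii) by direct application of invariance of domain, and (iv)--(v) by a local analysis at a point $p\in M^\s$ combined with the collar neighborhood theorem. Part (i) is immediate from cell-likeness: $\partial M^{\leq\ttt}=M^{<\ttt}$, and $\s\prec\ttt$ places $M^\s$ inside $M^{<\ttt}$. For (ii), $M^{\leq\s}$ is a closed subset of the metrizable space $M$ which carries the structure of a topological manifold of dimension $\dim\s$, so monotonicity of covering dimension forces $\dim\s\leq\dim M$. For (iii), any $p\in M^\s=\mathrm{int}(M^{\leq\s})$ has a Euclidean neighborhood in $M^{\leq\s}$ of dimension $\dim M$; invariance of domain applied to the inclusion into $M$ forces this neighborhood to be open in $M$ (and rules out $p\in\partial M$), hence $M^\s$ is open in $M$.

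For (iv) and (v), I will fix $p\in M^\s$ and a small neighborhood $V$ of $p$ in $M$. The first step is to pin down which strata above $\s$ meet $V$. For each $\ttt>\s$ with $M^\ttt\ne\varnothing$, monotonicity of dimension combined with (ii) forces $\dim\ttt\in\{\dim M-1,\dim M\}$; the value $\dim M-1$ is excluded because $M^\s\subseteq M^{<\ttt}=\partial M^{\leq\ttt}$ would embed a $(\dim M-1)$-manifold into a $(\dim M-2)$-dimensional boundary. Once $\dim\ttt=\dim M$, every point of $M^\s$ is a boundary point of the manifold-with-boundary $M^{\leq\ttt}$, so $M^\ttt=\mathrm{int}(M^{\leq\ttt})$ accumulates at $p$. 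Analogous dimension arguments (using (iii) to rule out $\dim\s'=\dim M$ strata lying inside the $(\dim M-1)$-dimensional $M^{<\ttt}$) exclude any stratum strictly between $\s$ and $\ttt$ meeting $V$, so after shrinking $V$ we have $V\cap M^{<\ttt}=V\cap M^\s$. The collar neighborhood theorem applied to $M^{\leq\ttt}$ then yields, after further shrinking, an open embedding $H_\ttt\colon(V\cap M^\s)\times[0,\varepsilon)\hookrightarrow V$ with image $V\cap M^{\leq\ttt}$, restricting to the identity on $V\cap M^\s$ and landing in $V\cap M^\ttt$ on the interior.

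The proof concludes by a topological count. The interiors of the $H_\ttt$ lie in the pairwise disjoint open sets $V\cap M^\ttt$, and together with $V\cap M^\s$ they cover $V$, so $V$ is homeomorphic to the gluing of $n:=\#\{\ttt>\s:M^\ttt\ne\varnothing\}$ copies of $\RR^{\dim M-1}\times[0,\varepsilon)$ along the common boundary $\RR^{\dim M-1}\times\{0\}$. For this gluing to be locally Euclidean of dimension $\dim M$ at a point of the common boundary one needs $n\in\{1,2\}$: $n=1$ gives a half-space, $n=2$ gives the double $\RR^{\dim M}$, while $n\geq 3$ produces a non-manifold ``book'' singularity (detected, e.g., by the local complement having $n$ components rather than $2$). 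In case (v), $\s\notin\partial\SSS$ forces $p\notin\partial M$ and $V\cong\RR^{\dim M}$; the half-space model $n=1$ has a boundary and is therefore ruled out, leaving $n=2$ and yielding both claimed collars together with the local flatness of $M^\s$ in $M$. In case (iv), $\s\in\partial\SSS$ forces $p\in\partial M$ and $V\cong\RR^{\dim M}_{\geq 0}$, which matches only $n=1$. The main obstacle is matching the a priori local count $n$ with the global cardinality $\#\SSS^{>\s}$, and this is precisely what the accumulation property established above buys us: every nonempty $M^\ttt$ with $\ttt>\s$ meets every neighborhood of $p$, so the global count coincides with the local count at $p$.
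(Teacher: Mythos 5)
Your proposal is correct in its broad strokes, but it takes a genuinely different route from the paper's for parts (iv)--(v), and there are a few places where the paper's argument is leaner or where your write-up glosses over a technical point.

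For (iv)--(v), the paper proceeds incrementally: it locates a single stratum $\ttt\succ\s$, builds a one-sided collar using that $M^\s$ is open in $\partial M^{\leq\ttt}$ and $M^\ttt$ is open in $M$, and then invokes invariance of domain (via a doubling when $p\in\partial M$, or via a second collar when $p\notin\partial M$) to conclude that one or two collars already exhaust a neighborhood of $p$. Part (i) then forbids any further stratum $\succ\s$, since such a stratum would accumulate at $p$ but be disjoint from the exhausted neighborhood. Your approach instead enumerates \emph{all} strata $\ttt\succ\s$ up front, shows each has $\dim\ttt=\dim M$ and contributes a collar, assembles the local picture as an $n$-page ``book,'' and concludes $n\in\{1,2\}$ by a manifold test. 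Both approaches are valid, but the paper's avoids ever needing the $n$-book local model or its homology: invariance of domain applied to a two-sided collar is all it takes, and it never needs to establish that the collar images exactly fill $V\cap M^{\leq\ttt}$.

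Two points where your write-up is thin. First, the assertion that, ``after further shrinking, \ldots an open embedding $H_\ttt\colon(V\cap M^\s)\times[0,\varepsilon)\hookrightarrow V$ with image $V\cap M^{\leq\ttt}$'' is not immediate from the collar theorem: shrinking $V$ so that $V\cap M^{\leq\ttt}$ fits inside the collar simultaneously shrinks $V\cap M^\s$, and the restricted collar need not be a product over the shrunken $V\cap M^\s$. What one can do is fix a ball $B\ni p$ in $M^\s$ and an $\varepsilon>0$ so that $Y:=\bigcup_\ttt c_\ttt(B\times[0,\varepsilon))$ is defined, note that each $c_\ttt(B\times[0,\varepsilon))$ is open in $M^{\leq\ttt}$ (closed in $M$), so there are open $U_\ttt\subseteq M$ with $U_\ttt\cap M^{\leq\ttt}=c_\ttt(B\times[0,\varepsilon))$, and then $V_1:=\bigcap_\ttt U_\ttt$ is a neighborhood of $p$ contained in $Y$; excision then identifies $H_*(M,M\setminus p)$ with $H_*(Y,Y\setminus p)$, and $Y\cong B\times\mathrm{star}(n)$ gives $\ZZ^{n-1}$ in degree $\dim M$, forcing $n\in\{1,2\}$. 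Phrased this way, the ``local complement has $n$ components'' remark becomes the honest invariant; as stated, without first knowing the $n$-book structure, the component count of a complement of a (possibly wild) codimension-one submanifold is not a priori $\leq 2$. Second, in (ii) you appeal to monotonicity of covering dimension for subsets of a metrizable space, but the paper explicitly makes no metrizability or second-countability assumptions on its manifolds; the fix is to argue locally via invariance of domain exactly as the paper does. Finally, both your proof and the paper's silently use that $M^\ttt\ne\varnothing$ for \emph{every} $\ttt\succ\s$ when identifying the local count with $\#\SSS^{>\s}$; this follows since $M^\ttt=\varnothing$ would make $M^{\leq\ttt}$ a nonempty manifold with empty interior, hence empty, contradicting $\varnothing\ne M^\s\subseteq M^{<\ttt}$, but it is worth saying.
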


\begin{proof}
Since $\s\prec\ttt$, we have $M^\s\subseteq M^{<\ttt}=\partial M^{\leq\ttt}$, giving (\ref{inbdry}).

Recall Brouwer's ``invariance of domain'', which implies that if a subset $X\subseteq\RR^n$ is (in the subspace topology) locally homeomorphic to $\RR^m$, then $m\leq n$, with equality iff $X$ is open.  This immediately gives (\ref{toplessdim}), (\ref{topopen}).

We prove (\ref{bdrystratumstruct}).  Fix $p\in M^\s$.  A neighborhood of $p$ is covered by strata $\succ\s$, so since $M^\s\subseteq M$ is not open, there exists a stratum $\ttt\succ\s$.  Now $M^\s\subseteq\partial M^{\leq\ttt}$ is open by (\ref{topopen}) (applied to $M^{<\ttt}\to\SSS^{<\ttt}$).  A doubling argument (and invariance of domain) near $p\in\partial M$ shows that $M^s\cup M^\ttt$ contains a neighborhood of $p$.  Since we have exhausted a neighborhood of $p\in M^\s$, it follows using (\ref{inbdry}) that there is no other stratum $\succ\s$.

We prove (\ref{intstratumstruct}).  Fix $p\in M^\s$.  A neighborhood of $p$ is covered by strata $\succ\s$, so since $M^\s\subseteq M$ is not open, there exists a stratum $\ttt\succ\s$.  Now $M^\s\subseteq\partial M^{\leq\ttt}$ is open by (\ref{topopen}), as is $M^\ttt\subseteq M$.  This gives a collar on one side of $M^\s\subseteq M$.  This still does not fill out a neighborhood of $p$, so there exists $\ttt'\ne\ttt$ with $\ttt'\succ\s$, giving a collar on the other side of $M^\s\subseteq M$.  Now by invariance of domain, we have thus exhausted a neighborhood of $p\in M^\s$ inside $M$, and hence it follows using (\ref{inbdry}) that there are no more strata $\succ\s$ other than $\ttt,\ttt'$.
\end{proof}

\begin{definition}[Implicit atlas with cell-like stratification]\label{IAwithstratification}
Let $(X,\partial X)$ be a pair of compact Hausdorff spaces equipped with a stratification $(X,\partial X)\to(\SSS,\partial\SSS)$ ($\SSS$ finite) along with a map $\dim:\SSS\to\ZZ$.  Also fix ``orientation data'':
\begin{rlist}
\item For every $\s\in\SSS$, an orientation line $\oo_\s$ (i.e.\ a free $\ZZ/2$-graded $\ZZ$-module of rank one).
\item For $\codim(\s\preceq\ttt)=1$, an odd ``coboundary'' map $\oo_\s\to\oo_\ttt$.
\end{rlist}
An \emph{implicit atlas of dimension $d$ with boundary and cell-like stratification} on $(X,\partial X)\to(\SSS,\partial\SSS)$ consists of the same data as an implicit atlas with boundary, except that in addition we specify a $\Gamma_I$-invariant stratification $(X_I,\partial X_I)\to(\SSS,\partial\SSS)$ for all $I\subseteq A$, whose restriction to $X_\varnothing$ is the given stratification.  We add the following ``compatibiltiy axiom'':
\begin{rlist}
\item The restriction of the stratification on $X_J$ to $(s_{J\setminus I}|X_J)^{-1}(0)$ coincides with the pullback of the stratification on $X_I$ via $\psi_{IJ}$.
\rlistsave
\end{rlist}
and we modify one ``transversality axiom'':
\begin{rlist}
\rlistresume
\item\label{stratiscelllike}(Submersion axiom) In the local model $\RR^{d+\dim E_I}\times\RR^{\dim E_{J\setminus I}}\to\RR^{\dim E_{J\setminus I}}$ (or $(\RR_{\geq 0}\times\RR^{d+\dim E_I-1})\times\RR^{\dim E_{J\setminus I}}\to\RR^{\dim E_{J\setminus I}}$), we require the stratification on the domain to be pulled back from a cell-like stratification on the first factor (for $\s\mapsto\dim\s+\dim E_I$).
\end{rlist}
We also specify isomorphisms of sheaves $\oo_{(X_I^\reg)^{\leq\s}}\otimes\oo_\s\xrightarrow\sim\oo_{X_I^\reg}$ over $(X_I^\reg)^{\leq\s}$ which are compatible with $\psi_{IJ}$ and such that the following diagram commutes:
\begin{equation}
\begin{tikzcd}
\oo_{(X_I^\reg)^{\leq\s}}\otimes\oo_\s\ar{r}\ar{d}&\oo_{X_I^\reg}\ar[equals]{d}\\
\oo_{(X_I^\reg)^{\leq\ttt}}\otimes\oo_\ttt\ar{r}&\oo_{X_I^\reg}
\end{tikzcd}
\end{equation}
for $\codim(\s\preceq\ttt)=1$ (the left vertical map is the tensor product of the coboundary map $\oo_\s\to\oo_\ttt$ and the inverse of the boundary map $\oo_{(X_I^\reg)^{\leq\ttt}}\to\oo_{(X_I^\reg)^{\leq\s}}$).  In other words, we identify $\oo_\s$ with the orientation line of the normal bundle of $(X_I^\reg)^{\leq\s}\subseteq X_I^\reg$, so that the coboundary maps $\oo_\s\to\oo_\ttt$ coincide with the geometric coboundary maps on normal bundles.

Given an implicit atlas with boundary and cell-like stratification $A$ on $(X,\partial X)\to(\SSS,\partial\SSS)$, we may obtain by restriction to the corresponding strata an implicit atlas with cell-like stratification on $\partial X\to\partial\SSS$ (empty boundary, tensor every orientation line with $\oo_\RR$) and an implicit atlas with boundary and cell-like stratification on $(X^{\leq\s},X^{<\s})\to(\SSS^{\leq\s},\SSS^{<\s})$ (tensor every orientation line with $\oo_\s^\vee$).
\end{definition}

\begin{remark}
There should be a slightly more general setting for the results of this section (and their proofs), which takes as input a weakened version of Definition \ref{celllikedef}.  For example, it is probably enough to require that in a neighborhood of any $p\in M$, the closure of any (local) component of $M^\s$ is a manifold with boundary whose interior is this local component (this is satisfied by both stratifications in Example \ref{celllikecornerexample}); basically this allows ``non-embedded faces''.
\end{remark}

\subsection{Stratified virtual cochain complexes}\label{stratvirtcochainssec}

In this section, we apply the VFC package to obtain a ``stratum by stratum'' understanding of virtual fundamental cycles on a space with implicit atlas with cell-like stratification.  To do this, we build a complex out of the virtual cochain complexes associated to each stratum and then study the properties of this larger complex.  This construction can be viewed as a generalization of the definition of $C_\vir^\bullet(X;A)$ as the mapping cone $[C^\bullet_\vir(\partial X;A)\to C^\bullet_\vir(X\rel\partial;A)]$, and the main result here can be viewed as a generalization of Lemma \ref{compatibilityoflongexactsequence}.

\begin{example}\label{stratifiedchainsremark}
As a first step towards understanding the main construction of this section, let us first describe a similar construction in a more familiar setting.  Let $M$ be a topological manifold with cell-like stratification by $\SSS$.  Define:
\begin{equation}
C_\bullet(M;\SSS):=\bigoplus_{\s\in\SSS}C_{\bullet-(\dim M-\dim\s)}(M^{\leq\s};\oo_M\otimes\oo_{M^{\leq\s}}^\vee)
\end{equation}
equipped with the differential given by the sum over $\codim(\s\preceq\ttt)=1$ of the pushforwards $C_\bullet(M^{\leq\s})\to C_\bullet(M^{\leq\ttt})$ (covered by the dual of the boundary map $\oo_{M^{\leq\ttt}}\to\oo_{M^{\leq\s}}$) plus the internal differential.  This differential squares to zero by Lemma \ref{celllikelocalstructure} (the square of the differential is a sum over $\codim(\s\preceq\ttt)=2$ of maps $C_\bullet(M^{\leq\s})\to C_\bullet(M^{\leq\ttt})$, each of which can be seen to vanish by applying Lemma \ref{celllikelocalstructure}(\ref{intstratumstruct}) to $M^{<\ttt}\to\SSS^{<\ttt}$).  Now there is a natural map:
\begin{align}\label{stratchainstousual}
C_\bullet(M;\SSS)&\to C_\bullet(M)\\
\bigoplus_{\s\in\SSS}\gamma_\s&\mapsto\sum_{\begin{smallmatrix}\s\in\SSS\cr\dim\s=\dim M\end{smallmatrix}}\gamma_\s
\end{align}
(it follows from Lemma \ref{celllikelocalstructure} that this is a chain map) which we claim is a quasi-isomorphism.

To see that \eqref{stratchainstousual} is a quasi-isomorphism, observe that it is the map on global sections of a corresponding map of complexes of $\K$-presheaves $C_\bullet(M,M\setminus K;\SSS)\to C_\bullet(M,M\setminus K)$ on the one-point compactification $M_+$ of $M$.  Both are homotopy $\K$-sheaves by Lemma \ref{singularchainshomotopyKsheaf}.  Thus by Corollary \ref{homotopysheafcheckqionstalks}, it suffices to show that the map $C_\bullet(M,M\setminus p;\SSS)\to C_\bullet(M,M\setminus p)$ is a quasi-isomorphism for every $p\in M$.  This holds by the following local argument.  Note that $H_\bullet(M,M\setminus p;\SSS)$ is isomorphic to $\ZZ$ (to see this, consider the filtration by $\dim\s$), so it suffices to construct a cycle in $C_\bullet(M,M\setminus p;\SSS)$ representing a generator of $\ZZ$ and show that its image in $H_\bullet(M,M\setminus p)=\ZZ$ is a generator.  Such a cycle may be constructed by induction on $\SSS$ (starting from the stratum containing $p$ and going up), and its image in $H_\bullet(M,M\setminus p)$ generates since its degree at $p$ coincides with its degree at any nearby point in an open stratum, which is one by construction.  A similar argument appears in Barraud--Cornea \cite[p670, Lemma 2.2]{barraudcornea}.

Thus we can think of $C_\bullet(M;\SSS)$ as a model for chains on $M$ (the reader should make sure they understand what this means geometrically).
\end{example}

\begin{definition}[Stratified virtual cochain complexes $C^\bullet_\vir(-,\SSS;A)$]\label{stratifiedvirtualdef}
Let $(X,\partial X)\to(\SSS,\partial\SSS)$ be equipped with a finite implicit atlas with boundary and cell-like stratification $A$.  For any compact $K\subseteq X$, we define:
\begin{equation}\label{virtualchainswithcornersdef}
C^\bullet_\vir(K,\SSS;A):=\bigoplus_{\s\in\SSS}C^\bullet_\vir(K\cap X^{\leq\s}\rel\partial;A)\otimes\oo_\s
\end{equation}
(on the right, $A$ refers to the restriction of the atlas to $X^{\leq\s}$) equipped with the differential given by the sum over $\codim(\s\preceq\ttt)=1$ of the pushforwards:
\begin{equation}\label{stratumpushforward}
C^\bullet_\vir(K\cap X^{\leq\s}\rel\partial;A)\to C^{\bullet+1}_\vir(K\cap X^{\leq\ttt}\rel\partial;A)
\end{equation}
tensored with the specified coboundary map $\oo_\s\to\oo_\ttt$ (plus the internal differential).  The fact that this differential squares to zero follows from Lemma \ref{celllikelocalstructure} and the compatibility of the coboundary maps $\oo_\s\to\oo_\ttt$ with the geometric boundary maps $\oo_{(X_I^\reg)^{\leq\ttt}}\to\oo_{(X_I^\reg)^{\leq\s}}$.

There is a natural map:
\begin{align}\label{stratifiedtounstratifiedpushforward}
C^\bullet_\vir(K,\SSS;A)&\xrightarrow\sim C^\bullet(K;A)\\
\label{stratifiedtounformula}\{\gamma_\s\}_{\s\in\SSS}&\mapsto\biggl(\sum_{\begin{smallmatrix}\s\in\partial\SSS\cr\dim\s=d-1\end{smallmatrix}}\gamma_\s,\sum_{\begin{smallmatrix}\s\in\SSS\cr\dim\s=d\end{smallmatrix}}\gamma_\s\biggr)
\end{align}
Note that for $\dim s=d$, the identification $\oo_{(X_I^\reg)^{\leq\s}}\otimes\oo_\s=\oo_{X_I^\reg}$ gives a (locally constant, but possibly not constant) isomorphism $\oo_\s=\ZZ$ over $(X_I^\reg)^{\leq\s}$ (and similarly for $\s\in\partial\SSS$ with $\dim\s=d-1$, see also Remark \ref{bdryorrmk}) which is used implicitly in \eqref{stratifiedtounformula}.  This is a chain map by Lemma \ref{celllikelocalstructure}.
\end{definition}

\begin{proposition}[$C^\bullet_\vir(-,\SSS;A)\to C^\bullet_\vir(-;A)$ is a quasi-isomorphism]\label{stratifiedispurehomotopysheaf}
Let $(X,\partial X)\to(\SSS,\partial\SSS)$ be equipped with a finite locally orientable implicit atlas with boundary and cell-like stratification $A$.  Then \eqref{stratifiedtounstratifiedpushforward} is a quasi-isomorphism, and the following diagram of sheaves on $X$ commutes:
\begin{equation*}%%unnumbered\label{restrictiontostratacompatiblewithorientations}
\begin{tikzcd}
H^0_\vir(-,\SSS;A)\ar{d}\ar{r}{\eqref{stratifiedtounstratifiedpushforward}}&H^0_\vir(-;A)\ar{r}{\text{Prop \ref{Cpure}}}&\oo_X\ar{d}\\
H^0_\vir(-\cap X^{\leq\s},\SSS^{\leq\s};A)\otimes\oo_\s\ar{r}{\eqref{stratifiedtounstratifiedpushforward}}&H^0_\vir(-\cap X^{\leq\s};A)\otimes\oo_\s\ar{r}{\text{Prop \ref{Cpure}}}&\oo_{X^{\leq\s}}\otimes\oo_\s\\
H^0_\vir(-\cap X^{\leq\s}\rel\partial;A)\otimes\oo_\s\ar{u}\ar{rr}{\text{Prop \ref{Cpure}}}&&\oo_{X^{\leq\s}\rel\partial}\otimes\oo_\s\ar{u}
\end{tikzcd}
\end{equation*}
\end{proposition}

Note that $C^\bullet_\vir(K\cap X^{\leq\s}\rel\partial;A)\to C^\bullet_\vir(K\cap X^{\leq\s},\SSS^{\leq\s};A)$ is an isomorphism for $K\subseteq X^\s$.

\begin{proof}
Filter $C^\bullet_\vir(-,\SSS;A)$ by $\dim\s$; the associated graded of this filtration is the direct sum of $C^\bullet_\vir(-\cap X^{\leq\s}\rel\partial;A)$.  Each these is a homotopy $\K$-sheaf on $X$ (pushforward from $X^{\leq\s}$ to $X$ preserves homotopy $\K$-sheaves by Definition \ref{pushforwardofHsheavesworks}), and hence $C^\bullet_\vir(-,\SSS;A)$ is also a homotopy $\K$-sheaf (Lemma \ref{associatedgradedhomotopysheaf}).

Now the map \eqref{stratifiedtounstratifiedpushforward} is a map of homotopy $\K$-sheaves, so to check that it is a quasi-isomorphism, it suffices to check it is a quasi-isomorphism on stalks (Corollary \ref{homotopysheafcheckqionstalks}).  For this, we may use the argument from Example \ref{stratifiedchainsremark} (adapted to the case with boundary) along with Lemma \ref{localdescofISO}.  Moreover, this local construction also gives us the desired commutativity of the diagram of sheaves on $X$.
\end{proof}

\subsection{Product implicit atlas}\label{productatlasdef}

\begin{definition}[Product implicit atlas]\label{productimplicitatlas}
Let $X_1$ and $X_2$ be spaces with implicit atlases $A_1$ and $A_2$ respectively.  The \emph{product implicit atlas} $A_1\sqcup A_2$ on $X_1\times X_2$ is defined by setting $(X_1\times X_2)_{I_1\sqcup I_2}:=(X_1)_{I_1}\times(X_2)_{I_2}$ and $(X_1\times Y_2)^\reg_{I_1\sqcup I_2}:=(X_1)_{I_1}^\reg\times(X_2)_{I_2}^\reg$, with the rest of the data extended in the obvious manner.  Of course, this extends naturally to the setting of implicit atlases with boundary and cell-like stratification (given $X_1$ and $X_2$ stratified by $\SSS_1$ and $\SSS_2$ respectively, their product $X_1\times X_2$ is stratified by $\SSS_1\times\SSS_2$, with $\partial(\SSS_1\times\SSS_2):=(\partial\SSS_1\times\SSS_2)\cup(\SSS_1\times\partial\SSS_2)$).
\end{definition}

\begin{definition}\label{productatlasF}
Let $X_1$ and $X_2$ be spaces with finite implicit atlases $A_1$ and $A_2$; equip $X_1\times X_2$ with the product implicit atlas $A_1\sqcup A_2$.  Let us define a canonical map:
\begin{equation}\label{productatlaspushforward}
C^\bullet_\vir(X_1\rel\partial;A_1)\otimes C^\bullet_\vir(X_2\rel\partial;A_2)\to C^\bullet_\vir(X_1\times X_2\rel\partial;A_1\sqcup A_2)
\end{equation}
which is compatible with the maps \eqref{Fpushforward} and \eqref{Fpushforward1} and is associative.

There are isomorphisms $(X_1)_{I_1,J_1,A_1}\times(X_2)_{I_2,J_2,A_2}\to(X_1\times X_2)_{I_1\sqcup I_2,J_1\sqcup J_2,A_1\sqcup A_2}$ which are compatible with the maps \eqref{pushforward}.  This induces maps:
\begin{equation}
C^\bullet_\vir(X_1\rel\partial;A_1)_{I_1,J_1}\otimes C^\bullet_\vir(X_2\rel\partial;A_2)_{I_2,J_2}\to C^\bullet_\vir(X_1\times X_2\rel\partial;A_1\sqcup A_2)_{I_1\sqcup I_2,J_1\sqcup J_2}
\end{equation}
These maps are compatible with the maps \eqref{Epushforward1more} (note that this compatibility uses the commutativity of \eqref{chainproductcommutes}).  It follows using Definition \ref{simplicialproduct} that they induce the desired map \eqref{productatlaspushforward}.

Note also that \eqref{productatlaspushforward} extends to a collection of maps:
\begin{equation}\label{productatlaspushforwardonK}
C^\bullet_\vir(K_1\rel\partial;A_1)\otimes C^\bullet_\vir(K_2\rel\partial;A_2)\to C^\bullet_\vir(K_1\times K_2\rel\partial;A_1\sqcup A_2)
\end{equation}
for compact $K_1\subseteq X_1$ and $K_2\subseteq X_2$, which are compatible with restriction, thus inducing a map of sheaves $p_1^\ast\oo_{X_1}\otimes p_2^\ast\oo_{X_2}\to\oo_{X_1\times X_2}$.  It can be checked that this is the tautological such map by checking locally (i.e.\ for $K_1$ and $K_2$ single points) using Lemma \ref{localdescofISO}.
\end{definition}

\section{Floer-type homology theories}\label{homologygroupssection}

In this section, we define Floer-type homology groups from a collection of ``flow spaces'' (equipped with appropriately compatible implicit atlases) as which arise in a Morse-type setup.  The necessary VFC machinery has already been setup in \S\ref{Fsheafsection} and \S\ref{auxiliarysection}.  The main task in this section is to correctly organize everything together algebraically.

\begin{convention}
In this section, we work over a fixed ground ring $R$, and everything takes place in the category of $R$-modules unless stated otherwise.  We restrict to implicit atlases $A$ for which $\#\Gamma_\alpha$ is invertible in $R$ for all $\alpha\in A$.
\end{convention}

The main object of study is a \emph{flow category diagram} $\X/Z_\bullet$ where $Z_\bullet$ is a semisimplicial set.  Roughly speaking, this consists of a set of generators $\PPP_z$ for every vertex $z\in Z_0$, along with a collection of spaces $\X(\sigma,p,q)$, which are to be thought of as the spaces of flows from $p\in\PPP_{z_0}$ to $q\in\PPP_{z_n}$ over $\sigma\in Z_n$ spanning vertices $z_0,\ldots,z_n$.  Given a flow category diagram $\X/Z_\bullet$, our goal is to construct:
\begin{rlist}
\item For every $\sigma\in Z_0$, a boundary map $R[\PPP_{z_0}]\to R[\PPP_{z_0}]$.
\item For every $\sigma\in Z_1$, a chain map $R[\PPP_{z_0}]\to R[\PPP_{z_1}]$.
\item For every $\sigma\in Z_2$, a chain homotopy between the two maps $R[\PPP_{z_0}]\to R[\PPP_{z_2}]$.
\item For every $\sigma\in Z_3$, \ldots
\end{rlist}
We will refer to such data as a \emph{diagram} $\HH:Z_\bullet\to\Ndg(\Ch_R)$ (see Definition \ref{NdgdefII} for a precise formulation).

Indeed, when the flow spaces $\X(\sigma,p,q)$ are compact oriented manifolds with corners of dimension $\gr(q)-\gr(p)+\dim\sigma-1$ in a compatible manner (that is, $\X/Z_\bullet$ is a \emph{Morse--Smale flow category diagram}), one may obtain such a diagram $\HH$ by counting the $0$-dimensional flow spaces (one may see that the maps satisfy the required identities by considering the boundary of the $1$-dimensional flow spaces).

Our goal is to generalize this construction to the setting where the spaces $\X$ are equipped with appropriately compatible implicit atlases.  In this generalization, the diagram $\HH:Z_\bullet\to\Ndg(\Ch_R)$ is not determined uniquely.  Rather, its construction depends on making a certain set of ``coherent choices'' (of virtual fundamental cycles) for the spaces $\X$.  Hence, the main steps we must take are: formulating precisely what ``coherent choices'' mean, proving such choices always exist, and proving that the diagram is (in a suitable sense) independent of the choices.  Let us now comment briefly on the latter two steps.

Given a flow category diagram $\X/Z_\bullet$ with an implicit atlas, we encode the ``space'' of coherent choices via a map $\pi:\tilde Z_\bullet\to Z_\bullet$.  Namely, $\pi:\tilde Z_\bullet\to Z_\bullet$ is defined by the property that \emph{giving a section $s:Z_\bullet\to\tilde Z_\bullet$ of $\pi$ is the same as making coherent choices over all of $Z_\bullet$}.  Now, the statement that coherent choices give rise to a diagram $\HH:Z_\bullet\to\Ndg(\Ch_R)$ translates into a canonical diagram $\widetilde\HH:\tilde Z_\bullet\to\Ndg(\Ch_R)$ (defined essentially by the property that the set of coherent choices over $Z_\bullet$ corresponding to a section $s:Z_\bullet\to\tilde Z_\bullet$ gives rise to the diagram $\HH:=\widetilde\HH\circ s$).  Thus, we have constructed:
\begin{equation}
\begin{CD}
\tilde Z_\bullet@>\widetilde\HH>>\Ndg(\Ch_R)\cr
@V\pi VV\cr
Z_\bullet
\end{CD}
\end{equation}
Now, the key result we prove is that \emph{$\pi:\tilde Z_\bullet\to Z_\bullet$ is a trivial Kan fibration} (think: ``is a bundle with contractible fibers'').  From this, we obtain (mostly formally) that coherent choices exist and that the resulting diagram is (up to quasi-isomorphism) independent of the choice (both are incarnations of the fact that ``the space of sections of a trivial Kan fibration is contractible'').

\begin{remark}[Restricting to the $2$-skeleton of $Z_\bullet$]\label{restricttotwoskeleton}
If one is satisfied with working in the homotopy category (i.e.\ constructing a diagram $Z_\bullet\to H^0(\Ch_R)$), then one needs only the $2$-skeleton of $Z_\bullet$.  On the other hand, there is little simplification to be gained by using $2$-truncated semisimplicial sets instead of semisimplicial sets.  Moreover, the ``higher homotopies'' which are kept track of in $\Ndg(\Ch_R)$ are known to contain interesting information in certain settings (for example, they can be used to obstruct isotopies between symplectic embeddings, as in Floer--Hofer--Wysocki \cite{floerhoferwysockiSHI}).
\end{remark}

\subsection{Sets of generators, triples \texorpdfstring{$(\sigma,p,q)$}{(sigma,p,q)}, and \texorpdfstring{$\sF$}{F}-modules}

\begin{definition}[Simplicial set and semisimplicial set]
Let $\Delta$ be the category of finite nonempty totally ordered sets with morphisms weakly order-preserving maps.  Let $\Delta_\inj$ be the subcategory of injective morphisms.  A \emph{simplicial set} $Z_\bullet$ is a functor $Z:\Delta^\op\to\Set$, and a \emph{semisimplicial set} $Z_\bullet$ is a functor $Z:\Delta_\inj^\op\to\Set$.  In both cases, we write $Z_n$ for $Z(\{0,\ldots,n\})$.  For $\sigma\in Z_n$ and $0\leq j_0<\cdots<j_m\leq n$, we denote by $\sigma|[j_0\ldots j_m]$ the image of $\sigma$ under the map $Z_n\to Z_m$ induced by the map $\{0,\ldots,m\}\to\{0,\ldots,n\}$ given by $i\mapsto j_i$ ($\sigma|[j_0\ldots j_m]$ is called a facet of $\sigma$); if $m=0$ we also write $\sigma(j)$ for $\sigma|[j]$.
\end{definition}

\begin{definition}[Set of generators]
Let $Z$ be a set.  A \emph{set of generators} $\PPP/Z$ is a collection of sets $\{\PPP_z\}_{z\in Z}$, each equipped with a grading $\gr:\PPP_z\to\ZZ$ and an ``action'' $a:\PPP_z\to\RR$.  Given a set of generators $\PPP/Z$ and a map $f:Y\to Z$, we can form the pullback set of generators $f^\ast\PPP/Y$ defined by $(f^\ast\PPP)_y:=\PPP_{f(y)}$.
\end{definition}

\begin{definition}[Triples $(\sigma,p,q)$]
Let $Z_\bullet$ be a semisimplicial set, and let $\PPP/Z_0$ be a set of generators.  The notation $(\sigma,p,q)$ \emph{always} means a triple where $\sigma\in Z_n$ is an $n$-simplex and $(p,q)\in\PPP_{\sigma(0)}\times\PPP_{\sigma(n)}$, where either $\dim\sigma>0$ or $a(p)<a(q)$.  We say that $(\sigma',p',q')\prec(\sigma,p,q)$ (``strictly precedes'') iff one of the two conditions holds:
\begin{rlist}
\item$\sigma'\subsetneqq\sigma$ (i.e.\ $\sigma'$ is a facet of positive codimension of $\sigma$).
\item$\sigma'=\sigma$ and $a(p)\leq a(p')$ and $a(q')\leq a(q)$ with at least one inequality being strict.
\end{rlist}
It is easy to see that $\prec$ is a partial order.
\end{definition}

\begin{definition}[$\sF$-module]\label{flowobjectdef}
Let $Z_\bullet$ be a semisimplicial set, and let $\PPP/Z_0$ be a set of generators.  Let $\C^\otimes$ be a monoidal category with an initial object $0\in\C$ such that $X\otimes 0=0=0\otimes X$ for all $X\in\C$.

An \emph{$\sF(\PPP/Z_\bullet)$-module} $\W$ (often abbreviated ``$\sF$-module'') in $\C$ is a collection of objects $\W=\{\W(\sigma,p,q)\in\C\}_{(\sigma,p,q)}$ equipped with:
\begin{align}
\label{Wcomposition}\text{\emph{Product maps}:}&&\W(\sigma|[0\ldots k],p,q)\otimes\W(\sigma|[k\ldots n],q,r)&\to\W(\sigma,p,r)\quad\text{for }0\leq k\leq n\\
\label{Wsimplexextension}\text{\emph{Face maps}:}&&\W(\sigma|[0\ldots\hat k\ldots n],p,q)&\to\W(\sigma,p,q)\quad\text{for }0<k<n
\end{align}
which are compatible in a sense we will now describe.  Note that for both \eqref{Wcomposition} and \eqref{Wsimplexextension}, the triples indexing the domain strictly precede the triple indexing the target (because we always restrict to triples $(\sigma,p,q)$ with $\dim\sigma>0$ or $a(p)<a(q)$).

Now given any $\sigma\in Z_\bullet$ spanning vertices $0,\ldots,n$ and a choice of:
\begin{align}
\label{SdataA}0&=j_0<\cdots<j_\ell=n\\
0&=a_0\leq\cdots\leq a_m=\ell\\
\label{SdataB}p_i&\in\PPP_{\sigma(j_{a_i})}\quad\text{for }0\leq i\leq m
\end{align}
we can apply the product map $m-1$ times and the face map $n-\ell$ times in some order to obtain a map:
\begin{equation}\label{Wtotalcomposition}
\W(\sigma|[j_{a_0}\ldots j_{a_1}],p_0,p_1)\otimes\cdots\otimes\W(\sigma|[j_{a_{m-1}}\ldots j_{a_m}],p_{m-1},p_m)\to\W(\sigma,p_0,p_m)
\end{equation}
We say that the product/face maps are \emph{compatible} iff this map is independent of the order in which they are applied (this reduces to three basic commutation identities).

Given an $\sF(\PPP/Z_\bullet)$-module $\W$ and a map $f:Y_\bullet\to Z_\bullet$, we can form the pullback $f^\ast\W$ which is an $\sF(f^\ast\PPP/Y_\bullet)$-module defined by $(f^\ast\W)(\sigma,p,q):=\W(f(\sigma),p,q)$.

The categories $\C$ relevant for this paper are:
\begin{rlist}
\item The category of spaces with the product monoidal structure (``$\sF$-module space'').
\item The category of posets with the product monoidal structure (``$\sF$-module poset'').
\item The category of chain complexes with the tensor product monoidal structure (``$\sF$-module complex'').
\end{rlist}
All are in fact symmetric monoidal (noting that the relevant symmetric monoidal structure on complexes is the super tensor product).
\end{definition}

\begin{example}\label{Fmoduleexample}
Let $Z_\bullet$ be any semisimplicial set, and let us take as set of generators $\PPP:=Z_0$ (i.e.\ a single generator over every vertex of $Z_\bullet$).  We define an $\sF(\PPP/Z_\bullet)$-module space $\W$ by $\W(\sigma,p,q):=\F(\sigma)$, where $\F$ is the space of broken Morse trajectories from Definition \ref{simplexflows}.  The reader may easily verify that this forms an $\sF$-module space, with product/face maps given by \eqref{Fproduct}--\eqref{Fface}.
\end{example}

\begin{definition}[Support of an $\sF$-module]
Let $\W$ be an $\sF$-module.  We define the \emph{support} of $\W$, denoted $\supp\W$, as the smallest collection of triples containing those for which $\W(\sigma,p,q)\ne 0$ that is closed under product/face operations (meaning that if the triples on the left side of \eqref{Wcomposition} or \eqref{Wsimplexextension} are in the set, then so is the triple on the right).  Equivalently, $(\sigma,p,q)\in\supp\W$ iff there is some choice of \eqref{SdataA}--\eqref{SdataB} for which $p_0=p$ and $p_m=q$ and for which every factor on the left hand side of \eqref{Wtotalcomposition} is $\ne 0$.
\end{definition}

\begin{definition}[Strata of an $\sF$-module]
Let $\W$ be an $\sF$-module.  We let $\SSS_\W(\sigma,p,q)$ denote the set of choices of \eqref{SdataA}--\eqref{SdataB} for which $p_0=p$ and $p_m=q$ and for which every factor on the left hand side of \eqref{Wtotalcomposition} is in $\supp\W$.  We equip $\SSS_\W(\sigma,p,q)$ with the partial order induced by formally applying product/face maps.  The reader may easily convince themselves that $\SSS_\W$ is itself an $\sF$-module poset.

There is an order-reversing map $\codim:\SSS_\W(\sigma,p,q)\to\ZZ_{\geq 0}$ defined by $\codim\s:=(m-1)+(n-\ell)$ for $\s\in\SSS_\W(\sigma,p,q)$.  We let $\s^\ttop\in\SSS_\W(\sigma,p,q)$ denote the unique maximal element (the only element $\s$ with $\codim\s=0$; it is given by $\ell=n$ and $m=1$).

For $\s\in\SSS_\W(\sigma,p,q)$, we let $\W(\sigma,p,q,\s)$ denote the left hand side of \eqref{Wtotalcomposition}.  The following \emph{boundary inclusion map}:
\begin{equation}\label{boundaryinclusion}
\colim_{\s\in\partial\SSS_\W(\sigma,p,q)}\W(\sigma,p,q,\s)\to\W(\sigma,p,q)
\end{equation}
(where $\partial\SSS_\W(\sigma,p,q)$ denotes $\SSS_\W(\sigma,p,q)\setminus\s^\ttop$) will play an important role.
\end{definition}

\subsection{Flow category diagrams and their implicit atlases}

\begin{definition}[Flow category diagram]\label{semisimplicialfcdef}
Let $Z_\bullet$ be a semisimplicial set.  A \emph{flow category diagram} $\X/Z_\bullet$ (read ``$\X$ over $Z_\bullet$'') is:
\begin{rlist}
\item A set of generators $\PPP/Z_0$.
\item An $\sF$-module space $\X$ where each $\X(\sigma,p,q)$ is compact Hausdorff and each $\SSS_\X(\sigma,p,q)$ is finite.
\item A stratification of each $\X(\sigma,p,q)$ by $\SSS_\X(\sigma,p,q)$ which is compatible with the product/face maps and so that $\X(\sigma,p,q,\s)\to\X(\sigma,p,q)$ is a homeomorphism onto $\X(\sigma,p,q)^{\leq\s}$.
\rlistsave
\end{rlist}
with the following finiteness properties:
\begin{rlist}
\rlistresume
\item\label{novikovfiniteness}For all $\sigma$, $p$, and $M<\infty$, we have $\#\{q:\X(\sigma,p,q)\ne\varnothing\text{ and }a(q)<a(p)+M\}<\infty$.
\item\label{novikovfinitenessII}For all $\sigma$, we have $\inf\{a(q)-a(p):\X(\sigma,p,q)\ne\varnothing\}>-\infty$.
\rlistsave
\end{rlist}
Let $H$ be a group.  An \emph{$H$-equivariant flow category diagram} is a flow category diagram along with:
\begin{rlist}
\rlistresume
\item A free action of $H$ on $\PPP$.
\item An action of $H$ on $\X$ (meaning compatible maps $h:\X(\sigma,p,q)\to\X(\sigma,hp,hq)$).
\item Homomorphisms $\gr:H\to\ZZ$ and $a:H\to\RR$ such that $\gr(hp)=\gr(h)+\gr(p)$ and $a(hp)=a(h)+a(p)$ for all $h\in H$ and $p\in\PPP$.
\end{rlist}
Given an $H$-equivariant flow category diagram $\X/Z_\bullet$ and a map $f:Y_\bullet\to Z_\bullet$, we can form the pullback $H$-equivariant flow category diagram $f^\ast\X/Y_\bullet$.
\end{definition}

\begin{remark}[Morse--Smale flow category diagram]\label{MSflowcatediag}
A \emph{Morse--Smale} flow category diagram is one in which each $\X(\sigma,p,q)$ is a (compact) topological manifold with corners of dimension $\gr(q)-\gr(p)+\dim\sigma-1$ (the corner structure being induced by the stratification by $\SSS_\X(\sigma,p,q)$).
\end{remark}

\begin{remark}[$\infty$-category $\FlowCat$]
Let $\FlowCat$ be the semisimplicial set which represents the functor $Z_\bullet\mapsto\{$Flow category diagrams over $Z_\bullet\}$.  The reader familiar with $\infty$-categories may wish to think of $\FlowCat$ as an $\infty$-category of flow categories (though only in a vague sense, since we have not given it the structure of a simplicial set, nor have we verified the weak Kan condition).  All of the constructions in this section involving flow category diagrams over a semisimplicial set $Z_\bullet$ are compatible with pullback, and thus can be equivalently thought of as ``universal'' constructions over $\FlowCatIA$ (which represents the functor of flow category diagrams equipped with implicit atlases).
\end{remark}

\begin{definition}[Implicit atlas on flow category diagram]\label{IAonflowcategorydiagramdef}
Let $\X/Z_\bullet$ be a flow category diagram.  An implicit atlas $\A$ on $\X/Z_\bullet$ consists of the following data.  We give index sets $\Abar(\sigma,p,q)$, and we define:\footnote{Warning: a particular set $\Abar(\sigma',p',q')$ may appear many times on the right hand side.}
\begin{equation}\label{coproductforA}
\A(\sigma,p,q)^{\geq\s}:=\coprod_{\begin{smallmatrix}0\leq i_0<\cdots<i_m\leq n\cr(p',q')\in\PPP_{\sigma(i_0)}\times\PPP_{\sigma(i_m)}\cr\exists\ttt\in\SSS_\X(\sigma,p,q)^{\geq\s}\text{ containing }([i_0\ldots i_m],p',q')\end{smallmatrix}}\Abar(\sigma|[i_0\ldots i_m],p',q')
\end{equation}
We explain the notation: recall that $\SSS_\X(\sigma,p,q)$ parameterizes the ``possible left hand sides'' of \eqref{Wtotalcomposition}; the coproduct is over all $([i_0\ldots i_m],p',q')$ which appear as a factor in some $\ttt\in\SSS_\X(\sigma,p,q)$ with $\s\preceq\ttt$.

For all $(\sigma,p,q)$ and $\s\in\SSS_\X(\sigma,p,q)$, we give an implicit atlas with boundary with cell-like stratification $\A(\sigma,p,q)^{\geq\s}$ on $\X(\sigma,p,q)^{\leq\s}$ (stratified by $\SSS_\X(\sigma,p,q)^{\leq\s}$, of virtual dimension $\gr(q)-\gr(p)+\dim\sigma-1-\codim\s$), for which the stratification conforms to the following local model.  Given $\s'\preceq\s$, let $G=G(\s',\s)$ denote the set of possible product/face operations which may be applied to $\s'$ for which the result is still $\preceq\s$.  There is a tautological isomorphism of posets $2^G\to\SSS_\X(\sigma,p,q)^{\s'\leq\cdot\leq\s}$ sending a given set of product/face operations to the result of applying them to $\s'$.  Now the local model for the stratification on (regular thickened moduli spaces of) $\X(\sigma,p,q)^{\leq\s}$ near a point of type $\s'$ is given by $\RR_{\geq 0}^G\times\RR^N$, stratified in the obvious way by $\SSS_\X(\sigma,p,q)^{\s'\leq\cdot\leq\s}$.  Clearly this stratification is cell-like.  Moreover, the normal bundle to the $\s'$ stratum is canonically identified with $\oo_\RR^{\otimes G(\s',\s)}$, and the implicit atlas should use this as the orientation data.

In addition, we give compatible identifications between these atlases as follows:
\begin{rlist}
\item\label{substratumC}Let $\s\preceq\ttt\in\SSS_\X(\sigma,p,q)$.  Then by definition:
\begin{equation*}%%unnumbered
\A(\sigma,p,q)^{\geq\ttt}\subseteq\A(\sigma,p,q)^{\geq\s}
\end{equation*}
Both are implicit atlases on $\X(\sigma,p,q)^{\leq\s}$ (the former by restriction to this substratum of $\X(\sigma,p,q)^{\leq\ttt}$), and we identify the former with the subatlas of the latter corresponding to this tautological inclusion of index sets.
\item\label{faceC}Let $\s\in\SSS_\X(\sigma|[0\ldots\hat k\ldots n],p,q)\subseteq\SSS_\X(\sigma,p,q)$.  Then by definition:
\begin{equation*}%%unnumbered\label{systemofatlasesface}
\A(\sigma|[0\ldots\hat k\ldots n],p,q)^{\geq\s}\subseteq\A(\sigma,p,q)^{\geq\s}
\end{equation*}
Both are implicit atlases on:
\begin{equation*}%%unnumbered
\X(\sigma|[0\ldots\hat k\ldots n],p,q)^{\leq\s}=\X(\sigma,p,q)^{\leq\s}
\end{equation*}
and we identify the former with the subatlas of the latter corresponding to this tautological inclusion of index sets.
\item\label{productC}Let $\s_1\times\s_2\in\SSS_\X(\sigma|[0\ldots k],p,q)\times\SSS_\X(\sigma|[k\ldots n],q,r)\subseteq\SSS_\X(\sigma,p,r)$.  Then by definition:
\begin{equation*}%%unnumbered\label{systemofatlasesproduct}
\A(\sigma|[0\ldots k],p,q)^{\geq\s_1}\sqcup\A(\sigma|[k\ldots n],q,r)^{\geq\s_2}\subseteq\A(\sigma,p,r)^{\geq(\s_1\times\s_2)}
\end{equation*}
Both are implicit atlases on:
\begin{equation*}%%unnumbered
\X(\sigma|[0\ldots k],p,q)^{\leq\s_1}\times\X(\sigma|[k\ldots n],q,r)^{\leq\s_2}=\X(\sigma,p,r)^{\leq(\s_1\times\s_2)}
\end{equation*}
and we identify the former with the subatlas of the latter corresponding to this tautological inclusion of index sets.
\end{rlist}
An implicit atlas on an $H$-equivariant flow category $\X/Z_\bullet$ is an implicit atlas along with a lift of the action of $H$ to the implicit atlas structure.

Given an implicit atlas $\A$ on $\X/Z_\bullet$ and a map $f:Y_\bullet\to Z_\bullet$, we can form the pullback implicit atlas $f^\ast\A$ on $f^\ast\X/Y_\bullet$.
\end{definition}

\begin{remark}
The above definition has been formulated to reflect the collection of atlases which is the simplest to construct, yet still sufficient to define Floer-type homology groups.
\end{remark}

\begin{definition}[Coherent orientations]\label{cohordefabstract}
Let $\X/Z_\bullet$ be a flow category diagram with locally orientable implicit atlas $\A$.  A set of \emph{coherent orientations} $\omega$ is a choice of global sections $\omega(\sigma,p,q)\in\cH^0(\X(\sigma,p,q);\oo_{\X(\sigma,p,q)})$ with the following property.  Note that covering each of the product/face maps:
\begin{align}
\X(\sigma|[0\ldots k],p,q)\times\X(\sigma|[k\ldots n],q,r)&\to\partial\X(\sigma,p,r)\\
\X(\sigma|[0\ldots\hat k\ldots n],p,q)&\to\partial\X(\sigma,p,q)
\end{align}
is an isomorphism of orientation sheaves.  We require that $\omega$ transform in the following way under this isomorphism:
\begin{align}
\label{coherentOproduct}\omega(\sigma|[0\ldots k],p,q)\times\omega(\sigma|[k\ldots n],q,r)&=(-1)^{k+\gr(p)}d\omega(\sigma,p,r)\\
\label{coherentOface}-\omega(\sigma|[0\ldots\hat k\ldots n],p,q)&=(-1)^{k+\gr(p)}d\omega(\sigma,p,q)
\end{align}
where $d\omega\in\cH^0(\partial\X;\oo_{\partial\X})$ is the boundary orientation induced by $\omega$.  Each of the following:
\begin{gather}
\label{multiorientationI}\omega(\sigma|[0\ldots k],p,p')\times\omega(\sigma|[k\ldots\ell],p',p'')\times\omega(\sigma|[\ell\ldots n],p'',q)\\
\label{multiorientationII}\omega(\sigma|[0\ldots\hat k\ldots\ell],p,p')\times\omega(\sigma|[\ell\ldots n],p',q)\\
\label{multiorientationIII}\omega(\sigma|[0\ldots k],p,p')\times\omega(\sigma|[k\ldots\hat\ell\ldots n],p',q)\\
\label{multiorientationIV}\omega(\sigma|[0\ldots\hat k\ldots\hat\ell\ldots n],p,q)
\end{gather}
can be expressed in terms of $\omega(\sigma,p,q)$ in two different ways using \eqref{coherentOproduct}--\eqref{coherentOface}.  One can easily check that with the choice of signs in \eqref{coherentOproduct}--\eqref{coherentOface}, these two expressions coincide for each of \eqref{multiorientationI}--\eqref{multiorientationIV} (the Koszul rule of signs applies to calculating the boundary of a product of orientations, which means that this coherence condition involves $\vdim\X(\sigma|[0\ldots k],p,p')$ for both \eqref{multiorientationI} and \eqref{multiorientationIII}).

Coherent orientations on an $H$-equivariant flow category diagram with implicit atlas are coherent orientations which are invariant under the action of $H$.

Given coherent orientations $\omega$ on $\X/Z_\bullet$ and a map $f:Y_\bullet\to Z_\bullet$, we can form the pullback coherent orientations $f^\ast\omega$ on $f^\ast\X/Y_\bullet$.
\end{definition}

\begin{remark}
One can obtain alternative sign conventions in \eqref{coherentOproduct}--\eqref{coherentOface} by ``twisting'' $\omega(\sigma,p,q)$.  For example, multiplying $\omega(\sigma,p,q)$ by $(-1)$ flips the sign of \eqref{coherentOproduct}, multiplying by $(-1)^{\dim\sigma}$ flips the sign of \eqref{coherentOface}, and multiplying by $(-1)^{\gr(p)}$ or $(-1)^{\gr(q)}$ multiplies \eqref{coherentOproduct} by $(-1)^{\gr(q)}$.
\end{remark}

\subsection{Augmented virtual cochain complexes}

We would like to endow $C^\bullet_\vir(\X(\sigma,p,q)\rel\partial)$ with the structure of an $\sF$-module complex.  More precisely, we would like to construct product/face maps:
\begin{align}
\label{Ccompositionnaive}C^\bullet_\vir(\X(\sigma|[0\ldots k],p,q)\rel\partial)\otimes C^\bullet_\vir(\X(\sigma|[k\ldots n],q,r)\rel\partial)&\to C^\bullet_\vir(\partial\X(\sigma,p,r))\\
\label{Csimplexextensionnaive}C^\bullet_\vir(\X(\sigma|[0\ldots\hat k\ldots n],p,q)\rel\partial)&\to C^\bullet_\vir(\partial\X(\sigma,p,q))
\end{align}
(induced by the corresponding product/face maps of the spaces $\X(\sigma,p,q)$).  However, to obtain maps \eqref{Ccompositionnaive}--\eqref{Csimplexextensionnaive} defined on the chain level, we must replace the virtual cochain complexes with certain \emph{augmented virtual cochain complexes} (which are canonically quasi-isomorphic to their ``non-augmented'' counterparts).

In this subsection, we build the augmented virtual cochain complexes (using a homotopy colimit construction) and then we define the $\sF$-module structure on them.  We also define the analogue of the map $s_\ast$ for the augmented virtual cochain complexes.  We remark that the proliferation of homotopy colimits could probably be abated (and, indeed, this entire subsection eliminated) at the expense of using more abstract language (specifically, working in a symmetric monoidal $\infty$-category of complexes).

\begin{definition}[Augmented virtual cochain complexes $C^\bullet_\vir(-;\A)^+$]\label{Gsheavesdef}
Let $\X/Z_\bullet$ be a flow category diagram with implicit atlas $\A$, where every $\Abar(\sigma,p,q)$ is finite and we have fixed fundamental cycles $[E_\alpha]\in C_\bullet(E;\alpha)$ for all $\alpha\in\Abar(\sigma,p,q)$.  We define the following complexes:
\begin{align}
\label{GmasterI}C^\bullet_\vir(\X\rel\partial;\A)^+(\sigma,p,q)&:=\hocolim_{\mathfrak s\preceq\mathfrak t\in\SSS_\X(\sigma,p,q)}\hphantom{\Biggl\{}C^{\bullet-\codim\mathfrak s}_\vir(\X(\sigma,p,q)^{\leq\s}\rel\partial;\A(\sigma,p,q)^{\geq\ttt})\\
\label{GmasterII}C^{\bullet-1}_\vir(\partial\X;\A)^+(\sigma,p,q)&:=\hocolim_{\mathfrak s\preceq\mathfrak t\in\SSS_\X(\sigma,p,q)}\begin{cases}\hfill C^{\bullet-1}_\vir(\partial\X(\sigma,p,q);\A(\sigma,p,q)^{\geq\ttt})&\mathfrak s=\mathfrak t=\mathfrak s^\ttop\cr C^{\bullet-\codim\mathfrak s}_\vir(\X(\sigma,p,q)^{\leq\s}\rel\partial;\A(\sigma,p,q)^{\geq\ttt})&\text{otherwise}\end{cases}
\end{align}
The structure maps of the homotopy diagrams come from the obvious pushforward maps (for increasing $\s$) and the maps \eqref{Fpushforward1} using the fixed fundamental cycles $[E_\alpha]$ (for decreasing $\ttt$).  These are compatible because of the commutativity of \eqref{chainproductcommutes}.

Next, let us observe that we have a natural commutative diagram:
\begin{equation}\label{topiscofinal}
\begin{tikzcd}
C^{\bullet-1}_\vir(\partial\X(\sigma,p,q);\A(\sigma,p,q)^{\geq\s^\ttop})\ar{d}\ar[hook]{r}{\sim}&C^{\bullet-1}_\vir(\partial\X;\A)^+(\sigma,p,q)\ar{d}\\
C^\bullet_\vir(\X(\sigma,p,q)\rel\partial;\A(\sigma,p,q)^{\geq\s^\ttop})\ar[hook]{r}{\sim}&C^\bullet_\vir(\X\rel\partial;\A)^+(\sigma,p,q)
\end{tikzcd}
\end{equation}
The horizontal maps (inclusions of the $\mathfrak s=\mathfrak t=\mathfrak s^\ttop$ subcomplexes) are quasi-isomorphisms by Lemma \ref{comboretraction} (which morally says that $\mathfrak s^\ttop\in\SSS_\X(\sigma,p,q)$ acts as a final object in the homotopy colimits \eqref{GmasterI}--\eqref{GmasterII}) which applies because each of the structure maps from $(\mathfrak s,\mathfrak t)$ to $(\mathfrak s,\mathfrak t')$ is a quasi-isomorphism.

Note that by definition, the support of $C^\bullet_\vir(\X\rel\partial;\A)^+$ and of $C^\bullet_\vir(\partial\X;\A)^+$ are contained in the support of $\X$.
\end{definition}

\begin{definition}[Product/face maps for $C^\bullet_\vir(-;\A)^+$]\label{Cproductface}
Let $\X/Z_\bullet$ be a flow category diagram with implicit atlas $\A$, where every $\Abar(\sigma,p,q)$ is finite and we have fixed fundamental cycles $[E_\alpha]$.  Let us now define face and product maps for $C^\bullet_\vir(\X\rel\partial;\A)^+$.  These will be of degree $1$ and will be equipped with a canonical factorization through $C^{\bullet-1}_\vir(\partial\X;\A)^+\to C^\bullet_\vir(\X\rel\partial;\A)^+$.  In other words, we really are going to construct maps:
\begin{align}
\label{Ccomposition}C^\bullet_\vir(\X\rel\partial;\A)^+(\sigma|[0\ldots k],p,q)\otimes C^\bullet_\vir(\X\rel\partial;\A)^+(\sigma|[k\ldots n],q,r)&\to C^\bullet_\vir(\partial\X;\A)^+(\sigma,p,r)\\
\label{Csimplexextension}C^\bullet_\vir(\X\rel\partial;\A)^+(\sigma|[0\ldots\hat k\ldots n],p,q)&\to C^\bullet_\vir(\partial\X;\A)^+(\sigma,p,q)
\end{align}

We construct \eqref{Csimplexextension}.  The corresponding face map for $\SSS_\X$ is covered by a corresponding morphism of the homotopy diagrams \eqref{GmasterI}--\eqref{GmasterII} (namely \eqref{Fpushforward1} using the fixed fundamental cycle $[E_{\A(\sigma,p,q)^{\geq\ttt}\setminus\A(\sigma|[0,\ldots,\hat k,\ldots,n],p,q)^{\geq\ttt}}]$).  This gives rise to a corresponding map \eqref{Csimplexextension} on homotopy colimits.

We construct \eqref{Ccomposition}.  We construct a morphism of homotopy diagrams \eqref{GmasterI}--\eqref{GmasterII} covering the corresponding product map for $\SSS_\X$.  Using the product operation on homotopy diagrams (Definition \ref{simplicialproduct}), it suffices to construct compatible maps:
\begin{multline}
C^\bullet_\vir(\X(\sigma|[0,\ldots,k],p,q)^{\leq\s_1}\rel\partial;\A(\sigma|[0,\ldots,k],p,q)^{\geq\ttt_1})\otimes\\C^\bullet_\vir(\X(\sigma|[k,\ldots,n],q,r)^{\leq\s_2}\rel\partial;\A(\sigma|[k,\ldots,n],q,r)^{\geq\ttt_2})\\\to C^\bullet_\vir(\X(\sigma,p,q)^{\leq\s_1\times\s_2}\rel\partial;\A(\sigma,p,q)^{\geq\ttt_1\times\ttt_2})
\end{multline}
for $\s_1\preceq\ttt_1\in\SSS(\sigma|[0,\ldots,k],p,q)$ and $\s_2\preceq\ttt_2\in\SSS(\sigma|[k,\ldots,n],q,r)$.  By definition, the subatlas $\A(\sigma|[0,\ldots,k],p,q)^{\geq\ttt_1}\sqcup\A(\sigma|[k,\ldots,n],q,r)^{\geq\ttt_2}\subseteq\A(\sigma,p,q)^{\geq\ttt_1\times\ttt_2}$ is the product implicit atlas on $\X(\sigma,p,q)^{\leq\s_1\times\s_2}=\X(\sigma|[0,\ldots,k],p,q)^{\leq\s_1}\times\X(\sigma|[k,\ldots,n],q,r)^{\leq\s_2}$.  Thus the desired map is constructed in Definition \ref{productatlasF}.
\end{definition}

\begin{definition}[Complexes $C_\bullet(E;\A)^+$]\label{CEcomplexaugmented}
Let $\X/Z_\bullet$ be a flow category diagram with implicit atlas $\A$, where every $\Abar(\sigma,p,q)$ is finite and we have fixed fundamental cycles $[E_\alpha]$.  We define:
\begin{equation}\label{Cmaster}
C_\bullet(E;\A)^+(\sigma,p,q):=\hocolim_{\mathfrak s\preceq\mathfrak t\in\SSS_\X(\sigma,p,q)}C_\bullet(E;\A^{\geq\ttt}(\sigma,p,q))
\end{equation}
where the maps in the homotopy diagram are $\times[E_{A(\sigma,p,q,\mathfrak t')\setminus A(\sigma,p,q,\mathfrak t)}]$.  We equip $C_\bullet(E;\A)^+$ with product/face maps just as in Definition \ref{Cproductface}.  Now there are natural maps:
\begin{equation}\label{GtoCpushforward}
C^{\vdim\X(\sigma,p,q)+\bullet}_\vir(\X\rel\partial;\A)^+(\sigma,p,q)\xrightarrow{s_\ast}C_{-\bullet}(E;\A)^+(\sigma,p,q)
\end{equation}
(induced by \eqref{Fpushforward}), which are maps of $\sF$-modules (that is, they respect the product/face maps).

Note that by definition, the support of $C_\bullet(E;\A)^+$ is contained in the support of $\X$.
\end{definition}

\subsection{Cofibrant \texorpdfstring{$\sF$}{F}-module complexes}

We introduce the notion of an $\sF$-module complex being \emph{cofibrant}, and we introduce a cofibrant replacement functor $Q$ for $\sF$-module complexes.  The machinery we set up is used only for technical reasons in Definition \ref{Lambdadef} so that the proof of Proposition \ref{trivialKanfibration} works correctly.

Recall that an injection of modules with projective cokernel automatically splits.

\begin{definition}[Cofibrations of complexes]
We say a complex is \emph{cofibrant} iff it is projective (as a module).  We say a map of complexes is a \emph{cofibration} iff it is injective and its cokernel is cofibrant; we use the arrow $\rightarrowtail$ to indicate that a map is a cofibration.  It is easy to check that a composition of cofibrations is again a cofibration.
\end{definition}

\begin{definition}[Cofibrant $\sF$-module complex]\label{Fmodulecofibrantdef}
Let $\W_\bullet$ be an $\sF$-module complex.  We say that $\W_\bullet$ is \emph{cofibrant} iff for all $(\sigma,p,q)$:
\begin{rlist}
\item$\W_\bullet(\sigma,p,q)$ is cofibrant.
\item The map:
\begin{equation}\label{boundaryinjective}
\colim_{\s\in\partial\SSS_\W(\sigma,p,q)}\W_\bullet(\sigma,p,q,\s)\rightarrowtail\W_\bullet(\sigma,p,q)
\end{equation}
is a cofibration.
\end{rlist}
\end{definition}

\begin{lemma}\label{cofibrantinjectivelemma}
Let $\W_\bullet$ be a cofibrant $\sF$-module complex.  Then the map:
\begin{equation}\label{generalizedboundaryinjective}
\colim_{\begin{smallmatrix}\s\in\SSS_\W(\sigma,p,q)\cr\s\prec\s_0\end{smallmatrix}}\W(\sigma,p,q,\s)\rightarrowtail\W(\sigma,p,q,\s_0)
\end{equation}
a cofibration for all $\s_0\in\SSS(\sigma,p,q)$.
\end{lemma}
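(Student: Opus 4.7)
The plan is to exploit the product decomposition of $\SSS_\W(\sigma,p,q)$ below $\s_0$ in order to rewrite the colimit as an $m$-fold Leibniz (pushout-product) of maps that are cofibrations by hypothesis, and then to observe that Leibniz products of cofibrations (in the sense of injections with projective cokernel) are themselves cofibrations. The base case $\s_0=\s^\ttop$ is already the content of hypothesis (ii) of Definition \ref{Fmodulecofibrantdef}, so the real work is to reduce the general case to an $m$-fold version of that hypothesis.

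First, I would unpack $\s_0$ via the data \eqref{SdataA}--\eqref{SdataB}, producing factors $(\sigma_i,p_{i-1},p_i)$ with $\sigma_i:=\sigma|[j_{a_{i-1}}\ldots j_{a_i}]$ for $i=1,\ldots,m$. Since $\SSS_\W$ is generated by iterated product/face operations, a direct check of definitions yields an isomorphism of posets
\[\SSS_\W(\sigma,p,q)^{\leq\s_0}\xrightarrow\sim\prod_{i=1}^m\SSS_\W(\sigma_i,p_{i-1},p_i)\]
sending $\s_0$ to the top element $(\s_1^\ttop,\ldots,\s_m^\ttop)$, with $\W(\sigma,p,q,\s)=\bigotimes_{i=1}^m\W(\sigma_i,p_{i-1},p_i,\s'_i)$ whenever $\s$ corresponds to $(\s'_1,\ldots,\s'_m)$. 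This reduces the statement to computing the colimit of this tensor-product diagram over the product poset with its top element removed.

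Second, set $B_i:=\W(\sigma_i,p_{i-1},p_i)$ and $A_i:=\colim_{\s'\prec\s_i^\ttop}\W(\sigma_i,p_{i-1},p_i,\s')$; then hypothesis (ii) applied to each $(\sigma_i,p_{i-1},p_i)$ makes each $A_i\rightarrowtail B_i$ a cofibration. The indexing poset $\prod_i\SSS_\W(\sigma_i,p_{i-1},p_i)\setminus\{\s_0\}$ is the union of the $m$ downward-closed subposets obtained by restricting one factor to $\SSS_\W(\sigma_i,p_{i-1},p_i)^{<\s_i^\ttop}$, and their multi-intersections are products in which a chosen nonempty subset of factors is restricted strictly below the top. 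Since tensor product commutes with colimits in each variable separately, the iterated-pushout description of colimits over such a cover identifies
\[\colim_{\s\prec\s_0}\W(\sigma,p,q,\s)\]
with the $m$-fold Leibniz tensor product of the maps $A_i\to B_i$, i.e.\ the submodule of $B_1\otimes\cdots\otimes B_m$ spanned by tensors involving at least one $A_i$ factor.

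Finally, since each $C_i:=B_i/A_i$ is projective, each short exact sequence $0\to A_i\to B_i\to C_i\to 0$ splits, yielding
\[B_1\otimes\cdots\otimes B_m\;\cong\;\bigoplus_{S\subseteq\{1,\ldots,m\}}\bigl(\textstyle\bigotimes_{i\in S}A_i\bigr)\otimes\bigl(\textstyle\bigotimes_{i\notin S}C_i\bigr),\]
and the Leibniz product is precisely the direct summand where $S\ne\varnothing$, with complementary summand the projective module $C_1\otimes\cdots\otimes C_m$. This verifies the cofibration property. The only delicate bookkeeping in the whole argument is the identification, in Step 2, of the 1-categorical colimit of a tensor-product diagram over the product-poset-minus-top with the Leibniz product inside $B_1\otimes\cdots\otimes B_m$; this is a formal consequence of cocompleteness together with the downward-closedness of the covering subposets, and not a substantive difficulty.
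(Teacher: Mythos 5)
Your proof is correct and takes essentially the same route as the paper: the same product decomposition $\SSS_\W(\sigma,p,q)^{\leq\s_0}=\prod_i\SSS_\W(\sigma_i,p_{i-1},p_i)$, the same identification of the colimit with the ``colimit over the $m$-cube minus the top vertex'' (i.e.\ the Leibniz product of the $m$ cofibrations $A_i\rightarrowtail B_i$ furnished by hypothesis (ii) of Definition \ref{Fmodulecofibrantdef}), and the same splitting $B_i=A_i\oplus(\text{projective})$ to exhibit the cokernel as $\bigotimes_i(B_i/A_i)$, a projective module. The only difference is cosmetic: you spell out the cofinality/descent step identifying $\colim_{\s\prec\s_0}$ with the Leibniz product via a cover by downward-closed subposets, where the paper simply asserts the identification.
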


\begin{proof}
Write $\s_0$ in the form \eqref{SdataA}--\eqref{SdataB}.  Then we have:
\begin{equation}
\SSS_\W(\sigma,p,q)^{\leq\s_0}=\SSS_\W(\sigma|[j_{a_0}\ldots j_{a_1}],p_0,p_1)\times\cdots\times\SSS_\W(\sigma|[j_{a_{m-1}}\ldots j_{a_m}],p_{m-1},p_m)
\end{equation}
Now we consider the $m$-cubical diagram:
\begin{equation}
\bigotimes_{i=1}^m\left[\colim_{\s\in\partial\SSS_\W(\sigma|[j_{a_{i-1}}\ldots j_{a_i}],p_{i-1},p_i)}\W_\bullet(\sigma|[j_{a_{i-1}}\ldots j_{a_i}],p_{i-1},p_i,\s)\rightarrowtail\W_\bullet(\sigma|[j_{a_{i-1}}\ldots j_{a_i}],p_{i-1},p_i)\right]
\end{equation}
Now \eqref{generalizedboundaryinjective} is simply the map to the maximal vertex of the $m$-cube from the colimit over the $m$-cube minus the maximal vertex.  This is a cofibration for any cubical diagram of the form $\bigotimes_{i=1}^m[A_i\rightarrowtail B_i]$ where each $A_i\rightarrowtail B_i$ is a cofibration.  To see this, write $B_i=A_i\oplus P_i$ where $P_i$ is projective, and then the map is obviously injective with cokernel $\bigotimes_{i=1}^mP_i$.  A tensor product of projective modules is projective, as can be seen using either the tensor-hom adjunction or the fact that a module is projective iff it is a direct summand of a free module.
\end{proof}

\begin{lemma}\label{cofibranthaslotsofcofibrations}
Let $\W_\bullet$ be a cofibrant $\sF$-module complex.  Let $\SSS\subseteq\T\subseteq\SSS_\W(\sigma,p,q)$ be finite downward closed subsets.  Then the map:
\begin{equation}
\colim_{\s\in\SSS}\W_\bullet(\sigma,p,q,\s)\rightarrowtail\colim_{\s\in\T}\W_\bullet(\sigma,p,q,\s)
\end{equation}
is a cofibration.  In particular, $\colim_{\s\in\T}\W_\bullet(\sigma,p,q,\s)$ is projective (take $\SSS=\varnothing$ above).
\end{lemma}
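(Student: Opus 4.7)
The plan is to induct on $\#(\T\setminus\SSS)$. The base case $\SSS=\T$ is trivial (the identity is a cofibration), so the task is to reduce a single step of the induction to Lemma~\ref{cofibrantinjectivelemma} together with the stability of cofibrations under pushout.

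For the inductive step, pick a minimal element $\s_0\in\T\setminus\SSS$; such an element exists because $\T$ is finite. I claim that $\SSS':=\SSS\cup\{\s_0\}$ is again downward closed in $\SSS_\W(\sigma,p,q)$. Indeed, suppose $\s\prec\s_0$ in $\SSS_\W(\sigma,p,q)$. Since $\T$ is downward closed and $\s_0\in\T$, we have $\s\in\T$. If $\s\notin\SSS$, then $\s\in\T\setminus\SSS$, contradicting the minimality of $\s_0$. Thus $\s\in\SSS$, and hence $\SSS'$ is downward closed.

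The key observation is then the pushout square
\begin{equation*}
\begin{tikzcd}
\colim_{\s\prec\s_0}\W_\bullet(\sigma,p,q,\s)\ar[r]\ar[d]&\W_\bullet(\sigma,p,q,\s_0)\ar[d]\\
\colim_{\s\in\SSS}\W_\bullet(\sigma,p,q,\s)\ar[r]&\colim_{\s\in\SSS'}\W_\bullet(\sigma,p,q,\s)
\end{tikzcd}
\end{equation*}
where the left vertical map is well-defined because every $\s\prec\s_0$ lies in $\SSS$. The top horizontal map is a cofibration by Lemma~\ref{cofibrantinjectivelemma} applied at $\s_0$. Since cofibrations (injections with projective cokernel) are stable under pushout in the category of complexes (the cokernel of the bottom map is canonically isomorphic to the cokernel of the top map, hence projective), the bottom map is also a cofibration. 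Composing with the cofibration $\colim_{\s\in\SSS'}\W_\bullet(\sigma,p,q,\s)\rightarrowtail\colim_{\s\in\T}\W_\bullet(\sigma,p,q,\s)$ supplied by the inductive hypothesis (with $\SSS'$ in place of $\SSS$) completes the step, using that a composition of cofibrations is a cofibration.

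The ``in particular'' statement then follows by taking $\SSS=\varnothing$: the map $0\rightarrowtail\colim_{\s\in\T}\W_\bullet(\sigma,p,q,\s)$ is a cofibration, so the target is projective. The only genuinely substantive point is the verification that $\SSS'$ remains downward closed after adding a minimal element of $\T\setminus\SSS$; everything else is formal manipulation of pushouts plus a citation of Lemma~\ref{cofibrantinjectivelemma}.
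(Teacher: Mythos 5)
Your proof is correct and takes essentially the same approach as the paper's, just run in the opposite direction: the paper inducts on $|\T|$ and peels off a maximal element of $\T$, while you induct on $|\T\setminus\SSS|$ and adjoin a minimal element of $\T\setminus\SSS$ (which is maximal in $\SSS\cup\{\s_0\}$, the fact you implicitly use to get the pushout square). In both cases the substance is the pushout decomposition of the colimit with Lemma~\ref{cofibrantinjectivelemma} supplying the cofibration on the other leg, so these are the same argument up to bookkeeping.
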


\begin{proof}
Let us abbreviate $A_\s:=\W_\bullet(\sigma,p,q,\s)$.

We proceed by induction on the cardinality of $\T$, the case $\T=\varnothing$ being clear.  Using the fact that a composition of cofibrations is again a cofibration, it suffices to consider the case $(\SSS,\T)=(\SSS\setminus\s_0,\SSS)$ where $\s_0\in\SSS$ is a maximal element.  Now $\colim_{\s\in\SSS}A_\s$ is the colimit of the following diagram:
\begin{equation}\label{colimitdiagrambuildup}
\begin{tikzcd}[row sep = tiny]
&A_{\s_0}\\
\smash{\displaystyle\colim_{\begin{smallmatrix}\s\in\SSS\cr\s\prec\s_0\end{smallmatrix}}}A_\s\ar[tail]{ur}\ar[tail]{rd}\\
&\displaystyle\colim_{\s\in\SSS\setminus\s_0}A_\s
\end{tikzcd}
\end{equation}
where the top arrow is a cofibration by Lemma \ref{cofibrantinjectivelemma} and the bottom arrow is a cofibration by the induction hypothesis.  It follows that $\colim_{\s\in\SSS\setminus\s_0}A_\s\to\colim_{\s\in\SSS}A_\s$ is injective, with cokernel isomorphic to the cokernel of the top map above.  Hence it is a cofibration as needed.
\end{proof}

\begin{definition}[Cofibrant replacement functor $Q$]\label{cofibrantreplacements}
Let $\W_\bullet$ be an $\sF$-module complex.  Suppose that $\W_\bullet$ satisfies the following properties:
\begin{rlist}
\item Each $\W_\bullet(\sigma,p,q)$ is cofibrant.
\item Each $\SSS_\W(\sigma,p,q)$ is finite.
\end{rlist}
In this case, we define (functorially) an $\sF$-module complex $Q\W_\bullet$ (called the \emph{cofibrant replacement}) with the following properties:
\begin{rlist}
\item$Q\W_\bullet$ is cofibrant.
\item$\supp Q\W_\bullet=\supp\W_\bullet$.
\item There is a (functorial) surjective quasi-isomorphism $Q\W_\bullet\overset\sim\twoheadrightarrow\W_\bullet$ (compatible with product/face maps).
\end{rlist}
We define $Q\W_\bullet(\sigma,p,q)$ and the product/face maps with target $(\sigma,p,q)$ by induction on the set of triples $(\sigma,p,q)$, equipped with the partial order $\preceq_\W$ in which $(\sigma',p',q')\preceq_\W(\sigma,p,q)$ iff $(\sigma',p',q')$ appears as a ``factor'' of some element of $\SSS_\W(\sigma,p,q)$.  This partial order is \emph{well-founded} (i.e.\ there is no strictly decreasing infinite sequence) since each $\SSS_\W(\sigma,p,q)$ is finite, and thus it is valid for induction.  The inductive step for $(\sigma,p,q)$ works as follows.  By the induction hypothesis, we have defined $Q\W_\bullet(\sigma,p,q,\s)$ for all $\s\in\partial\SSS_\W(\sigma,p,q)$.  Hence it suffices to construct (functorially) $Q\W_\bullet(\sigma,p,q)$ fitting into the following commutative diagram:
\begin{equation}\label{cofibrantdefdiagram}
\begin{tikzcd}[column sep = huge]
\smash{\displaystyle\colim_{\s\in\partial\SSS_\W(\sigma,p,q)}}Q\W_\bullet(\sigma,p,q,\mathfrak s)\ar{d}\ar[dashed, tail]{r}{\text{product/face}}&?\exists\,Q\W_\bullet(\sigma,p,q)\ar[dashed, two heads]{d}{\sim}\\
\displaystyle\colim_{\s\in\partial\SSS_\W(\sigma,p,q)}\W_\bullet(\sigma,p,q,\mathfrak s)\ar{r}{\text{product/face}}&\W_\bullet(\sigma,p,q)
\end{tikzcd}
\end{equation}
Now we define $Q\W_\bullet(\sigma,p,q)$ to be the mapping cylinder of the diagonal composition.  Since the domain and codomain of this map are both cofibrant (for the domain, use Lemma \ref{cofibranthaslotsofcofibrations}, which applies since $\SSS_\W(\sigma,p,q)$ has been assumed to be finite), it follows that the top map is a cofibration and that $Q\W_\bullet(\sigma,p,q)$ is cofibrant.  The product/face maps with target $Q\W_\bullet(\sigma,p,q)$ are defined using the top horizontal map; they are compatible since by construction they factor through the colimit in the upper left corner.

Certainly $\supp Q\W_\bullet\supseteq\supp\W_\bullet$.  Conversely, suppose $Q\W_\bullet(\sigma,p,q)$ is nonzero.  Then either $\W_\bullet(\sigma,p,q)\ne 0$ (so $(\sigma,p,q)\in\supp\W_\bullet$), or $Q\W_\bullet(\sigma,p,q,\s)\ne 0$ for some $\s\in\partial\SSS(\sigma,p,q)$ (so by induction, the triples comprising $\s$ are in $\supp\W_\bullet$, and hence so is $(\sigma,p,q)$).
\end{definition}

\begin{lemma}\label{tensorproductvanishing}
Let $\{C_\bullet^k\}_{k=1}^n$ be cofibrant complexes over $\ZZ$ such that $H_iC_\bullet^k=0$ for $i<0$.  Then $H_i(\bigotimes_{k=1}^nC_\bullet^k)=0$ for $i<0$.
\end{lemma}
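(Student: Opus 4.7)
The plan is to induct on $n$, reducing to the case $n=2$, and then apply the classical Künneth short exact sequence.

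First, I would handle the inductive step. The key observation is that if $A_\bullet$ and $B_\bullet$ are cofibrant complexes of abelian groups, then $A_\bullet\otimes B_\bullet$ is again cofibrant: in each degree, $(A\otimes B)_i=\bigoplus_{p+q=i}A_p\otimes B_q$ is a direct sum of tensor products of projective $\ZZ$-modules, and over $\ZZ$ projective equals free, so each $A_p\otimes B_q$ is free and hence projective. This allows us to pair off the tensor factors one at a time: once the $n=2$ case is established, we may apply it to $(\bigotimes_{k=1}^{n-1}C^k_\bullet)\otimes C^n_\bullet$, noting that the first factor is cofibrant by induction on $n$ and has vanishing negative homology by the inductive hypothesis.

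For the base case $n=2$, write $A=C^1_\bullet$ and $B=C^2_\bullet$. Since $\ZZ$ is a PID and projective modules are flat, the Künneth formula provides a natural short exact sequence
\begin{equation*}
0\to\bigoplus_{p+q=i}H_p(A)\otimes H_q(B)\to H_i(A\otimes B)\to\bigoplus_{p+q=i-1}\Tor^1_\ZZ(H_p(A),H_q(B))\to 0.
\end{equation*}
For any $i<0$, every pair $(p,q)$ with $p+q=i$ or $p+q=i-1$ has $p+q<0$, so at least one of $p,q$ is negative; hence both $H_p(A)\otimes H_q(B)$ and $\Tor^1(H_p(A),H_q(B))$ vanish by the hypothesis on the $C^k_\bullet$. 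Consequently $H_i(A\otimes B)=0$, completing the base case.

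There is no real main obstacle here; the only point requiring a moment of thought is verifying that the Künneth sequence applies in the generality needed (potentially unbounded complexes of projective $\ZZ$-modules). This is standard: projectivity implies flatness, and for a flat complex $A$ the Künneth sequence follows by resolving the homology of $B$ degreewise over the PID $\ZZ$ (where every module has a length-one free resolution), so no boundedness hypothesis is needed beyond what is automatic for the tensor product to be defined degreewise.
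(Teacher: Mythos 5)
Your proof is correct and takes essentially the same route as the paper: induct on $n$ (using that tensor products of projectives are projective over $\ZZ$), then invoke the K\"unneth short exact sequence for the case $n=2$ and observe that every bidegree $(p,q)$ contributing to $H_i$ with $i<0$ has a negative index. The paper simply cites references (Osborne, Rotman) for the fact that the K\"unneth exact sequence holds without boundedness hypotheses for complexes of free $\ZZ$-modules, whereas you sketch the justification yourself; your parenthetical gloss about ``resolving the homology of $B$'' is a little loose (the standard argument uses the free resolution $0\to B_n(A)\to Z_n(A)\to H_n(A)\to 0$ supplied by the cycles and boundaries of $A$), but this does not affect the substance.
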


\begin{proof}
Since the tensor product of projective modules is projective, we may use induction to reduce to the case $k=2$.

Thus, we have two cofibrant complexes $A_\bullet$ and $B_\bullet$ with $H_iA_\bullet=H_iB_\bullet=0$ for $i<0$ and we would like to conclude that $H_i(A_\bullet\otimes B_\bullet)=0$ for $i<0$.  This follows from the K\"unneth theorem, specifically in the form of \cite[p301, Theorem 9.16]{osborne} or \cite[p679, Theorem 10.81]{rotman} (both of which apply because $\Tor^i_\ZZ(\cdot,\cdot)=0$ for $i>1$).\footnote{If we were assuming $A_\bullet$ and $B_\bullet$ to be bounded below, then we could apply the K\"unneth spectral sequence $\bigoplus_{i+j=q}\Tor_p(H_iA_\bullet,H_jB_\bullet)\Rightarrow H_{p+q}(A_\bullet\otimes B_\bullet)$ (see \cite[p686, Theorem 10.90]{rotman}) to reach the desired conclusion without any assumptions on the ground ring.}
\end{proof}

\begin{lemma}\label{Hzeroofcofibrantcolimit}
Let $\W_\bullet$ be a cofibrant $\sF$-module complex over $\ZZ$ such that $H_i\W_\bullet=0$ for $i<0$.  Then for any finite downward closed subset $\SSS\subseteq\SSS_\W(\sigma,p,q)$, the map:
\begin{equation}\label{tocolimitsurjective}
H_i\bigoplus_{\mathfrak s\in\SSS}\W_\bullet(\sigma,p,q,\mathfrak s)\twoheadrightarrow H_i\colim_{\mathfrak s\in\SSS}\W_\bullet(\sigma,p,q,\mathfrak s)
\end{equation}
is surjective for $i\leq 0$.  Moreover, both sides vanish for $i<0$.
\end{lemma}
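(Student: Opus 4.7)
The plan is to prove both assertions of Lemma \ref{Hzeroofcofibrantcolimit} simultaneously by induction on $\#\SSS$. First, I note that every ``stratum complex'' $\W_\bullet(\sigma,p,q,\s)$ is by definition a tensor product of complexes of the form $\W_\bullet(\sigma',p',q')$, each of which has vanishing homology in negative degrees and, since $\W_\bullet$ is cofibrant, is projective as a module. Hence Lemma \ref{tensorproductvanishing} gives $H_i\W_\bullet(\sigma,p,q,\s)=0$ for $i<0$ for every $\s$. The base cases $\#\SSS=0$ and $\#\SSS=1$ are then immediate (the colimit equals the direct sum in both cases).

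For the inductive step I would pick a maximal element $\s_0\in\SSS$ (using finiteness of $\SSS$) and identify $\colim_{\s\in\SSS}\W_\bullet(\cdot,\s)$ with the pushout of the span
\begin{equation*}
\W_\bullet(\cdot,\s_0)\;\longleftarrow\;\colim_{\begin{smallmatrix}\s\in\SSS\\\s<\s_0\end{smallmatrix}}\W_\bullet(\cdot,\s)\;\longrightarrow\;\colim_{\s\in\SSS\setminus\s_0}\W_\bullet(\cdot,\s),
\end{equation*}
which is a standard rewriting of the colimit over a finite poset with a chosen maximal element. Since $\SSS$ is downward closed and contains $\s_0$, the set $\{\s\in\SSS:\s<\s_0\}$ equals $\SSS_\W(\sigma,p,q)^{<\s_0}$, so Lemma \ref{cofibrantinjectivelemma} shows that the left leg of the span is a cofibration. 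Both $\{\s\in\SSS:\s<\s_0\}$ and $\SSS\setminus\s_0$ are downward closed in $\SSS_\W(\sigma,p,q)$ (using maximality of $\s_0$ for the latter), so Lemma \ref{cofibranthaslotsofcofibrations} shows that the right leg is a cofibration as well.

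Because each leg is a cofibration (and in particular injective), the pushout sits in a short exact sequence of complexes
\begin{equation*}
0\to\colim_{\s<\s_0,\,\s\in\SSS}\W_\bullet(\cdot,\s)\to\W_\bullet(\cdot,\s_0)\oplus\colim_{\s\in\SSS\setminus\s_0}\W_\bullet(\cdot,\s)\to\colim_{\s\in\SSS}\W_\bullet(\cdot,\s)\to 0,
\end{equation*}
giving a Mayer--Vietoris long exact sequence in homology. The induction hypothesis provides vanishing in negative degrees for the two colimits on the left, and the stratum complex $\W_\bullet(\cdot,\s_0)$ vanishes in negative degrees by the first paragraph; hence the middle term, and then also the right term, vanish in negative degrees. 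This establishes the vanishing claim.

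For the surjectivity in degree zero I would read off from the long exact sequence the piece
\begin{equation*}
H_0\W_\bullet(\cdot,\s_0)\oplus H_0\colim_{\s\in\SSS\setminus\s_0}\W_\bullet(\cdot,\s)\to H_0\colim_{\s\in\SSS}\W_\bullet(\cdot,\s)\to H_{-1}\colim_{\s<\s_0,\,\s\in\SSS}\W_\bullet(\cdot,\s),
\end{equation*}
where the last term vanishes by the vanishing statement just proved. Composing the resulting surjection with the identity on the $\s_0$-summand and with the inductive surjection $H_0\bigoplus_{\s\in\SSS\setminus\s_0}\W_\bullet(\cdot,\s)\twoheadrightarrow H_0\colim_{\s\in\SSS\setminus\s_0}\W_\bullet(\cdot,\s)$ yields the desired surjection out of $H_0\bigoplus_{\s\in\SSS}\W_\bullet(\cdot,\s)$, and one checks that this composite is the canonical map. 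I expect the only real bookkeeping obstacle to be verifying that the two legs of the span are honest cofibrations (so that the pushout is computed by a short exact sequence on the nose, not merely up to homotopy); everything else is a direct combination of the induction hypothesis with Lemma \ref{tensorproductvanishing}.
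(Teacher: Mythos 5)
Your proof is correct and follows essentially the same route as the paper: reduce the vanishing claim to Lemma \ref{tensorproductvanishing} on each stratum complex, then induct on $\#\SSS$ by peeling off a maximal element $\s_0$ and using the short exact sequence from the pushout presentation of the colimit, together with Lemmas \ref{cofibrantinjectivelemma} and \ref{cofibranthaslotsofcofibrations} to justify that the relevant legs are cofibrations (so the pushout really does give a short exact sequence of complexes). The only minor difference is that you verify that both legs are cofibrations and prove the vanishing of the right-hand side of \eqref{tocolimitsurjective} separately from the Mayer--Vietoris sequence, whereas the paper only needs one leg to be injective and extracts that vanishing for free from the surjectivity statement combined with the already-established vanishing of the left-hand side.
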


\begin{proof}
Let us abbreviate $A_\s:=\W_\bullet(\sigma,p,q,\s)$.  Since $A_\s$ is a tensor product of various $\W_\bullet(\sigma',p',q')$, it follows (using Lemma \ref{tensorproductvanishing}) that $H_iA_\s$ for $i<0$, so the left hand side of \eqref{tocolimitsurjective} vanishes for $i<0$.  Hence it remains just to show that \eqref{tocolimitsurjective} is surjective for $i\leq 0$.

We proceed by induction on the cardinality of $\SSS$, the case $\SSS=\varnothing$ being clear.  Let $\s_0\in\SSS$ be any maximal element.  Now $\colim_{\s\in\SSS}A_\s$ is the colimit of \eqref{colimitdiagrambuildup}; it follows that we have the following exact sequence:
\begin{equation}\label{colimitSEScofibrant}
0\to\colim_{\begin{smallmatrix}\s\in\SSS\cr\mathfrak s\prec\s_0\end{smallmatrix}}A_\mathfrak s\to A_{\s_0}\oplus\colim_{\s\in\SSS\setminus\s_0}A_\s\to\colim_{\s\in\SSS}A_\s\to 0
\end{equation}
By the induction hypothesis, it suffices to show that the second map above is surjective on $H_i$ for $i\leq 0$.  By the long exact sequence induced by \eqref{colimitSEScofibrant}, it suffices to show that $H_i\colim_{\begin{smallmatrix}\s\in\SSS\cr\mathfrak s\prec\s_0\end{smallmatrix}}A_\mathfrak s=0$ for $i<0$.  Now this follows from the induction hypothesis.
\end{proof}

\begin{lemma}[Lifting cycles along a fibration]\label{cycleliftinglemma}
Let $\tilde A^\bullet\twoheadrightarrow A^\bullet$ be surjective.  Fix a cycle $a\in A^0$ and a homology class $\tilde{\mathfrak a}\in H^0\tilde A^\bullet$, whose images in $H^0A^\bullet$ coincide.  Then there exists a cycle $\tilde a\in\tilde A^0$ which maps to $a$ and which represents $\tilde{\mathfrak a}$.
\end{lemma}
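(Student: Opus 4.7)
The plan is to argue by modifying an arbitrary lift, using surjectivity of $\pi\colon\tilde A^\bullet\twoheadrightarrow A^\bullet$ to absorb the discrepancy into a boundary. This is a standard move whenever one has a surjection of chain complexes, and there is essentially no obstacle here.

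First I would pick any representative $\tilde a_0\in\tilde A^0$ of the class $\tilde{\mathfrak a}$, so that $d\tilde a_0=0$. Its image $\pi(\tilde a_0)\in A^0$ is a cycle, and by hypothesis represents the same class in $H^0A^\bullet$ as $a$. Thus there exists $b\in A^{-1}$ with $a-\pi(\tilde a_0)=db$. Using the surjectivity of $\pi$, lift $b$ to some $\tilde b\in\tilde A^{-1}$ with $\pi(\tilde b)=b$.

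Now set $\tilde a:=\tilde a_0+d\tilde b$. Then $d\tilde a=0$, so it is a cycle; its image is $\pi(\tilde a)=\pi(\tilde a_0)+d\pi(\tilde b)=\pi(\tilde a_0)+db=a$; and $\tilde a-\tilde a_0=d\tilde b$ is a boundary, so $[\tilde a]=[\tilde a_0]=\tilde{\mathfrak a}$. This $\tilde a$ has all the required properties, completing the proof. The only step that uses anything nontrivial is the surjectivity of $\pi$ at degree $-1$, which is part of the hypothesis that $\pi$ is a surjection of complexes; there is no main obstacle.
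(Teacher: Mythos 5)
Your proof is correct and follows essentially the same argument as the paper: pick an arbitrary cycle representative $\tilde a_0$ of $\tilde{\mathfrak a}$, write the discrepancy $a-\pi(\tilde a_0)$ as a boundary $db$, lift $b$ to $\tilde b$, and correct by $d\tilde b$. The paper's version differs only by a sign convention in how the discrepancy is written.
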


\begin{proof}
Pick any cycle $\tilde a'\in\tilde A^0$ representing $\tilde{\mathfrak a}$.  Then $\tilde a'-a\in A^0$ is a boundary $db$, and lifting $b\in A^{-1}$ to $\tilde b\in\tilde A^{-1}$, we let $\tilde a:=\tilde a'-d\tilde b$.
\end{proof}

\begin{lemma}[Representing homology classes in mapping cones]\label{mappingconechoiceboundary}
Fix $f:A^\bullet\to B^\bullet$ and $\omega\in H^0[A^\bullet\to B^{\bullet-1}]$.  Let $a\in A^0$ be a cycle representing $\delta\omega\in H^0(A^\bullet)$.  Then there exists $b\in B^{-1}$ with $db=f(a)$ such that $a\oplus b$ represents $\omega$.
\end{lemma}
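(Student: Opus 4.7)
The plan is to unpack the definition of the mapping cone and its connecting homomorphism, pick an arbitrary representative of $\omega$, and then adjust it by an exact cochain so that its $A$-component becomes the prescribed cycle $a$.

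Concretely, recall that the mapping cone $[A^\bullet\to B^{\bullet-1}]$ has $n$-cochains $A^n\oplus B^{n-1}$, differential $d(x,y)=(dx,\,f(x)-dy)$ (up to a sign depending on convention), and that the connecting map $\delta:H^0[A^\bullet\to B^{\bullet-1}]\to H^0(A^\bullet)$ sends the class of a cocycle $(x,y)$ to $[x]$. First I would pick any cocycle representative $(a',b')\in A^0\oplus B^{-1}$ of $\omega$. By assumption $[a']=\delta\omega=[a]$ in $H^0(A^\bullet)$, so there exists $\alpha\in A^{-1}$ with $a-a'=d\alpha$.

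Then I would consider the modified representative
\begin{equation*}
(a'',b''):=(a',b')+d(\alpha,0)=(a'+d\alpha,\,b'+f(\alpha))=(a,\,b'+f(\alpha)),
\end{equation*}
which still represents $\omega$ because it differs from $(a',b')$ by a coboundary. Setting $b:=b'+f(\alpha)$, the cocycle condition $d(a,b)=0$ becomes precisely $db=f(a)$ (with the appropriate sign), which is exactly the required property. There is essentially no obstacle here: the only thing to be careful about is matching sign conventions between the differential on the cone, the formula for $\delta$, and the equation $db=f(a)$, but these are harmless bookkeeping. The argument is really just the standard fact that any two representatives of $\delta\omega$ in $A^0$ differ by a boundary, and pulling that boundary back to an exact term in the cone lets me prescribe the $A$-component freely.
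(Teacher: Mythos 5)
Your proof is correct and takes essentially the same approach as the paper, which derives the lemma as a special case of its Lemma \ref{cycleliftinglemma} (lifting cycles along the surjection $[A^\bullet\to B^{\bullet-1}]\twoheadrightarrow A^\bullet$); you have simply unrolled that general lemma in the mapping-cone case. The manipulations match: pick a representative, note the difference of $A$-components is a boundary $d\alpha$, and adjust by the coboundary $d(\alpha,0)$ to prescribe the $A$-component.
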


\begin{proof}
This is a special case of Lemma \ref{cycleliftinglemma} for the surjection $[A^\bullet\to B^{\bullet-1}]\twoheadrightarrow A^\bullet$.
\end{proof}

\begin{lemma}[Universal coefficient theorem]\label{universalcoefficient}
Let $A_\bullet$ be a cofibrant complex over $\ZZ$.  Then there is a natural short exact sequence:
\begin{equation}
0\to\Ext^1(H_{i-1}A_\bullet,\ZZ)\to H^i\Hom(A_\bullet,\ZZ)\to\Hom(H_iA_\bullet,\ZZ)\to 0
\end{equation}
\end{lemma}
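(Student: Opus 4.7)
The plan is to deploy the classical proof of the universal coefficient theorem, taking advantage of the hypothesis that $A_\bullet$ is cofibrant. Since ``cofibrant'' here means each $A_i$ is projective over $\ZZ$ and $\ZZ$ is a PID, each $A_i$ is in fact a free abelian group, which is what actually powers the standard argument.

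First I would consider, in each degree, the short exact sequence of abelian groups
\begin{equation}
0\to Z_i\to A_i\xrightarrow{d}B_{i-1}\to 0,
\end{equation}
where $Z_i=\ker d$ and $B_i=\operatorname{im} d$. Assembling these as $i$ varies yields a short exact sequence of complexes $0\to Z_\bullet\to A_\bullet\to B_{\bullet-1}\to 0$, in which the outer complexes carry the zero differential. The key observation is that $B_{i-1}\subseteq Z_{i-1}\subseteq A_{i-1}$ and $A_{i-1}$ is free abelian, so $B_{i-1}$ is free abelian as well, hence projective. Consequently the sequence splits in each degree, so applying the left-exact contravariant functor $\Hom(-,\ZZ)$ preserves exactness and yields a short exact sequence of cochain complexes
\begin{equation}
0\to\Hom(B_{\bullet-1},\ZZ)\to\Hom(A_\bullet,\ZZ)\to\Hom(Z_\bullet,\ZZ)\to 0.
\end{equation}

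Next I would pass to the associated long exact sequence in cohomology. Because $Z_\bullet$ and $B_\bullet$ carry zero differentials, their cohomology groups are just $\Hom(Z_i,\ZZ)$ and $\Hom(B_i,\ZZ)$ respectively. A straightforward chase of the connecting homomorphism shows that it is precisely the map $\Hom(Z_{i-1},\ZZ)\to\Hom(B_{i-1},\ZZ)$ induced by the inclusion $B_{i-1}\hookrightarrow Z_{i-1}$. Extracting the kernel–cokernel short exact sequence at degree $i$ gives
\begin{equation}
0\to\coker\!\bigl[\Hom(Z_{i-1},\ZZ)\to\Hom(B_{i-1},\ZZ)\bigr]\to H^i\Hom(A_\bullet,\ZZ)\to\ker\!\bigl[\Hom(Z_i,\ZZ)\to\Hom(B_i,\ZZ)\bigr]\to 0.
\end{equation}

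The final step is to identify the outer terms. The kernel is visibly $\Hom(Z_i/B_i,\ZZ)=\Hom(H_iA_\bullet,\ZZ)$. For the cokernel, the sequence $0\to B_{i-1}\to Z_{i-1}\to H_{i-1}A_\bullet\to 0$ is a resolution of $H_{i-1}A_\bullet$ by free abelian (hence projective) groups, so applying $\Hom(-,\ZZ)$ computes $\Ext$: the cokernel is $\Ext^1(H_{i-1}A_\bullet,\ZZ)$ (and $\Ext^{\geq 2}$ vanishes because $\ZZ$ has global dimension one, so this two-term resolution suffices). Naturality in $A_\bullet$ is automatic since every step of the construction (formation of $Z_\bullet$, $B_\bullet$, $\Hom$, the long exact sequence, and the resolution of $H_{i-1}$) is functorial in chain maps. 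There is no real obstacle in this proof; the only point that requires the cofibrancy hypothesis is the splitting of the degreewise sequence, which is where projectivity of $B_{i-1}$ (inherited from freeness of submodules of free abelian groups over a PID) enters.
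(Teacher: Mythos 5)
Your proof is correct and is exactly the standard argument that the paper invokes with the word ``well-known.'' The only place the cofibrancy hypothesis is used is, as you say, to get $B_{i-1}$ free (via freeness of submodules of free modules over a PID), which gives the degreewise splitting needed so that $\Hom(-,\ZZ)$ produces a short exact sequence of cochain complexes; the rest is the familiar long-exact-sequence bookkeeping.
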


\begin{proof}
Well-known.
\end{proof}

\begin{lemma}[Extending cocycles along a cofibration]\label{cocycleextendinglemma}
Suppose we have the following commuting diagrams (solid arrows) of complexes over $\ZZ$ and their homology (where $\ZZ$ is concentrated in degree zero):
\begin{equation}
\begin{tikzcd}
A_\bullet\ar[tail]{r}\ar{d}&\tilde A_\bullet\ar[dashed]{dl}\\
\ZZ
\end{tikzcd}
\overset{H_\bullet}\Longrightarrow
\begin{tikzcd}
H_\bullet A_\bullet\ar{r}\ar{d}&H_\bullet\tilde A_\bullet\ar{dl}\\
\ZZ
\end{tikzcd}
\end{equation}
where $A_\bullet$ and $\tilde A_\bullet$ are cofibrant and $H_{-1}A_\bullet=0$.  Then there exists a dashed arrow compatible with the rest of the diagram.
\end{lemma}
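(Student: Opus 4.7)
My plan is to reduce to Lemma \ref{cycleliftinglemma} applied to the restriction map $i^\ast \colon \Hom(\tilde A_\bullet, \ZZ) \twoheadrightarrow \Hom(A_\bullet, \ZZ)$ of cochain complexes, where $i \colon A_\bullet \rightarrowtail \tilde A_\bullet$ denotes the given cofibration. This map is degreewise surjective because the cokernel of $i$ is degreewise projective (in fact, the short exact sequence $0 \to A_\bullet \to \tilde A_\bullet \to C_\bullet \to 0$ splits in each degree). Given the given cocycle $\phi \in \Hom(A_0, \ZZ)$ together with a class $\tilde{\mathfrak a} \in H^0 \Hom(\tilde A_\bullet, \ZZ)$ satisfying $i^\ast \tilde{\mathfrak a} = [\phi]$ in $H^0 \Hom(A_\bullet, \ZZ)$, Lemma \ref{cycleliftinglemma} produces a cocycle $\tilde \phi \in \Hom(\tilde A_0, \ZZ)$ extending $\phi$ and representing $\tilde{\mathfrak a}$. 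So the whole task reduces to producing one such class $\tilde{\mathfrak a}$ that also induces the prescribed map $\tilde\phi_\ast \colon H_0 \tilde A_\bullet \to \ZZ$ on homology.

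\textbf{Construction of $\tilde{\mathfrak a}$.} The universal coefficient theorem (Lemma \ref{universalcoefficient}) applied to $\tilde A_\bullet$ provides a surjection $H^0\Hom(\tilde A_\bullet, \ZZ) \twoheadrightarrow \Hom(H_0 \tilde A_\bullet, \ZZ)$, so we may pick $\tilde{\mathfrak a}$ mapping to $\tilde\phi_\ast$ under this surjection. It remains to verify $i^\ast \tilde{\mathfrak a} = [\phi]$ in $H^0 \Hom(A_\bullet, \ZZ)$. By naturality of UCT, the class $i^\ast \tilde{\mathfrak a}$ induces the map $\tilde\phi_\ast \circ (H_0 i) = \phi_\ast$ on $H_0 A_\bullet$, which is exactly the map induced by $[\phi]$. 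Now invoke the hypothesis $H_{-1}A_\bullet = 0$: this makes the $\Ext^1$ term in the UCT short exact sequence for $A_\bullet$ vanish, so the map $H^0\Hom(A_\bullet,\ZZ) \to \Hom(H_0 A_\bullet, \ZZ)$ is an isomorphism. Hence $i^\ast\tilde{\mathfrak a} = [\phi]$, completing the hypothesis of Lemma \ref{cycleliftinglemma}.

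\textbf{Main obstacle.} The whole argument hinges on the vanishing of $H_{-1} A_\bullet$: this is precisely what collapses UCT for $A_\bullet$ into an isomorphism and forces the restriction of $\tilde{\mathfrak a}$ to equal $[\phi]$ on the nose at the cohomology level. Without this hypothesis two cocycles in $\Hom(A_\bullet, \ZZ)$ can induce the same map on $H_0$ yet represent distinct cohomology classes differing by an element of $\Ext^1(H_{-1}A_\bullet, \ZZ)$, and this would be a genuine obstruction: there would then be no way to match $\phi$ and $i^\ast\tilde{\mathfrak a}$ on the nose while simultaneously arranging the homology diagram to commute. Everything else is bookkeeping with the splitting of the cofibration and the two cited lemmas.
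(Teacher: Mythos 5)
Your proposal is correct and matches the paper's proof in essentially all the key steps: use UCT surjectivity for $\tilde A_\bullet$ to find a cocycle $\tilde f$ inducing the right map on $H_0$, then exploit the hypothesis $H_{-1}A_\bullet=0$ to turn the UCT for $A_\bullet$ into an isomorphism and so identify $[f]$ with $i^\ast[\tilde f]$ in $H^0\Hom(A_\bullet,\ZZ)$, and finally correct by a coboundary whose lifting is enabled by the degreewise splitting of the cofibration. The only cosmetic difference is that you package the final correction step by citing Lemma~\ref{cycleliftinglemma}, whereas the paper spells out that same lifting-and-adjusting argument inline; they are the same thing.
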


\begin{proof}
By Lemma \ref{universalcoefficient}, we have:
\begin{align}
\label{notildeiso}H^0\Hom(A_\bullet,\ZZ)&\xrightarrow\sim\Hom(H_0A_\bullet,\ZZ)\\
\label{tildesurj}H^0\Hom(\tilde A_\bullet,\ZZ)&\twoheadrightarrow\Hom(H_0\tilde A_\bullet,\ZZ)
\end{align}
Let $f:A_0\to\ZZ$ denote the vertical map in the first diagram.  Using the surjectivity of \eqref{tildesurj}, there exists a cocycle $\tilde f:\tilde A_0\to\ZZ$ giving the desired map on homology.  Denote by $i:A_\bullet\rightarrowtail\tilde A_\bullet$ the inclusion.  Now by the commutativity of the second diagram, the difference $f-\tilde fi$ acts as zero on $H_0A_\bullet$.  Since \eqref{notildeiso} is an isomorphism, this difference is thus a coboundary $\delta g$ for some $g:A_{-1}\to\ZZ$.  Now extend $g$ to $\tilde g:\tilde A_{-1}\to\ZZ$ (using the fact that $A_{-1}\rightarrowtail\tilde A_{-1}$ splits) and let the dotted arrow be $\tilde f+\delta\tilde g$.
\end{proof}

\subsection{Resolution \texorpdfstring{$\tilde Z_\bullet\to Z_\bullet$}{tilde Z to Z}}

We now introduce the space $\pi:\tilde Z_\bullet\to Z_\bullet$ (depending on a flow category diagram $\X/Z_\bullet$ equipped with an implicit atlas and coherent orientations) which one may think of as parameterizing coherent choices of virtual fundamental cycles over all of the flow spaces $\X(\sigma,p,q)$.

The main result is that $\pi:\tilde Z_\bullet\to Z_\bullet$ is a trivial Kan fibration.

\begin{definition}[System of chains]\label{systemofcycles}
Let $\W^\bullet$ be an $\sF$-module complex with each $\SSS_\W(\sigma,p,q)$ finite.  A \emph{system of chains} $\lambda\in\W^\bullet$ (of degree $d(\sigma,p,q)$) is a collection of elements $\lambda_{\sigma,p,q}\in\W^{d(\sigma,p,q)}(\sigma,p,q)$ satisfying $d\lambda_{\sigma,p,q}=\mu_{\sigma,p,q}$ where:
\begin{equation}\label{lambdamaster}
\mu_{\sigma,p,r}:=\sum_{k=0}^n\sum_{q\in\PPP_{\sigma(k)}}(-1)^{k+\gr(p)}\lambda_{\sigma|[0\ldots k],p,q}\cdot\lambda_{\sigma|[k\ldots n],q,r}-\sum_{k=1}^{n-1}(-1)^{k+\gr(p)}\lambda_{\sigma|[0\ldots\hat k\ldots n],p,r}\in\W^\bullet(\sigma,p,r)
\end{equation}
(using the product/face maps on the right hand side).  Note that this sum is finite since $\SSS_\W(\sigma,p,q)$ is finite.  Also, note that the triples on the right hand side all strictly precede the triple on the left hand side.  We also require that the parity (in the sense of Convention \ref{signconventions}) of $\lambda_{\sigma,p,q}$ equals $\gr(q)-\gr(p)+\dim\sigma-1\in\ZZ/2$.  This ensures (via the Koszul rule of signs) that expanding $d\mu_{\sigma,p,q}$ using the identity $d\lambda_{\sigma,p,q}=\mu_{\sigma,p,q}$ yields zero (the signs work out correctly as a consequence of the discussion surrounding \eqref{multiorientationI}--\eqref{multiorientationIV}).  In practice the degrees $d(\sigma,p,q)$ are chosen so that $\mu_{\sigma,p,q}$ (as defined by \eqref{lambdamaster}) is formally homogeneous of the same degree as $d\lambda_{\sigma,p,q}$.
\end{definition}

\begin{remark}
There is a natural bijection between systems of chains $\lambda\in\W^\bullet$ and maps of $\sF$-modules $R[\SSS_\W]\to\W^\bullet$, where $R[\SSS_\W](\sigma,p,q)$ is the free $R$-module on $\SSS_\W(\sigma,p,q)$ with differential $d\s:=\sum_{\codim(\s'\preceq\s)=1}\s'$ (with appropriate signs) and equipped with the obvious product/face maps coming from the $\sF$-module structure on $\SSS_\W$ (more intrinsically, $R[\SSS_\W](\sigma,p,q)$ is the direct sum over $\s\in\SSS_\W(\sigma,p,q)$ of the orientation lines from Definition \ref{IAonflowcategorydiagramdef}).  This perspective is relevant for the key step of the proof of Proposition \ref{trivialKanfibration} below.
\end{remark}

\begin{definition}[Resolution $\pi:\tilde Z_\bullet\to Z_\bullet$]\label{Lambdadef}
Let $\X/Z_\bullet$ be an $H$-equivariant flow category diagram with implicit atlas $\A$ and coherent orientations $\omega$.  We construct a \emph{resolution} $\pi:\tilde Z_\bullet\to Z_\bullet$ (which depends on $\X$, $\A$, $\omega$) as follows.  A simplex $\Delta^n\to\tilde Z_\bullet$ consists of the following data:
\begin{rlist}
\item A map $f:\Delta^n\to Z_\bullet$ (where $\Delta^n$ is the semisimplicial $n$-simplex).
\item\label{subatlaschoice} An $H$-invariant subatlas $\B\subseteq f^\ast\A$ (meaning choices of $\Bbar(\sigma,p,q)\subseteq\Abar(f(\sigma),p,q)$) where each $\Bbar(\sigma,p,q)$ is finite.
\item An $H$-invariant collection of fundamental cycles $[E_\alpha]\in C_\bullet(E;\alpha)$ for all $\alpha\in\Bbar$.
\item\label{lambdachoice}An $H$-invariant system of chains $\lambda\in C^\bullet_\vir(f^\ast\X\rel\partial;\B)^+$ (degree $0$ and supported inside $\supp\X$) with the following property.  Note that $(\mu,\lambda)$ is a cycle in the mapping cone $[C^\bullet_\vir(\partial f^\ast\X;\B)^+\to C^\bullet_\vir(f^\ast\X\rel\partial;\B)^+]$, whose homology is identified with $\cH^\bullet(f^\ast\X;\oo_{f^\ast\X})$.  We require that the homology class of $(\mu,\lambda)$ equal $f^\ast\omega\in\cH^0(f^\ast\X;\oo_{f^\ast\X})$.
\item\label{liftschoice}An $H$-invariant system of chains $\tilde\lambda$ for $QC_\bullet(E;\B)^+$ (degree $\gr(q)-\gr(p)+\dim\sigma-1$ and supported inside $\supp\X$) whose image in $C_\bullet(E;\B)^+$ coincides with $s_\ast\lambda$ (recall \eqref{GtoCpushforward}).
\item\label{augmentationschoice}An $H$-invariant map of $\sF$-modules $[[E]]:QC_\bullet(E;\B)^+_\ZZ\to\ZZ$ which sends the fundamental class to $1$.  The left hand side $QC_\bullet(E;\B)^+_\ZZ$ is defined over $\ZZ$, and the right hand side is the $\sF$-module which for every $(\sigma,p,q)$ is $\ZZ$ concentrated in degree zero (with the product/face maps being multiplication/identity respectively).
\end{rlist}
\end{definition}

\begin{remark}[Resolution commutes with pullback]\label{Lambdaandpullback}
Let $\X/Z_\bullet$ be an $H$-equivariant flow category diagram with implicit atlas $\A$ and coherent orientations $\omega$.  Let $f:Y_\bullet\to Z_\bullet$ be a map, and consider $f^\ast\X/Y_\bullet$ with implicit atlas $f^\ast\A$ and coherent orientations $f^\ast\omega$.  Then there is a canonical fiber diagram relating the resolutions:
\begin{equation}
\begin{CD}
\tilde Y_\bullet@>>>\tilde Z_\bullet\cr
@VVV@VVV\cr
Y_\bullet@>>>Z_\bullet
\end{CD}
\end{equation}
\end{remark}

\begin{proposition}[$\pi:\tilde Z_\bullet\to Z_\bullet$ is a trivial Kan fibration]\label{trivialKanfibration}
The map $\pi:\tilde Z_\bullet\to Z_\bullet$ has the right lifting property with respect to the boundary inclusions $\partial\Delta^n\hookrightarrow\Delta^n$ for all $n\geq 0$ (where $\Delta^n$ is the semisimplicial $n$-simplex).  In other words, given any commuting diagram of solid arrows below:
\begin{equation}
\begin{tikzcd}
\partial\Delta^n\arrow{r}\arrow{d}&\tilde Z_\bullet\arrow{d}\\
\Delta^n\arrow{r}\arrow[dashrightarrow]{ur}&Z_\bullet
\end{tikzcd}
\end{equation}
there exists a dashed arrow making the diagram commute.
\end{proposition}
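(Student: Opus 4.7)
The plan is to build the dashed simplex by extending the data given over $\partial\Delta^n$ to the top $n$-simplex $\sigma$ of $\Delta^n$. Since the restrictions of $\B$, the fundamental cycles, $\lambda$, $\tilde\lambda$, and $[[E]]$ to all strictly smaller triples are fixed by the incoming data, the only new data to supply consists of these objects on triples of the form $(\sigma,p,q)$ where $\sigma$ is the top simplex. Because $H$ acts freely on $\PPP$, I work with a choice of $H$-orbit representatives and extend by equivariance at the end.

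The first task is to extend the subatlas. For each $H$-orbit representative $(\sigma,p,q)$, I choose a finite $\Bbar(\sigma,p,q) \subseteq \Abar(f(\sigma),p,q)$ such that, after combining via \eqref{coproductforA} with the previously specified $\Bbar$-values on strict subtriples, the resulting collection satisfies the covering axiom and is thus a genuine finite subatlas of $\A(f(\sigma),p,q)^{\geq\s^\ttop}$. Such a choice exists by compactness of $\X(f(\sigma),p,q)$ and the covering axiom for $\A$: the strict substrata of $\X(f(\sigma),p,q)$ are already covered by the boundary data, so only the interior requires the new thickenings. Fundamental cycles $[E_\alpha]$ for newly added $\alpha$ are chosen freely. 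The remaining data is now constructed by induction on $H$-orbit representatives of triples $(\sigma,p,q)$ ordered by $\prec$; this induction is well-founded by the finiteness property (\ref{novikovfiniteness}) in Definition~\ref{semisimplicialfcdef}.

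The central step of the induction is the construction of $\lambda_{\sigma,p,q}$. The formula \eqref{lambdamaster} applied to the inductively chosen $\lambda$'s on strict predecessors assembles into an element $\mu_{\sigma,p,q}\in C^\bullet_\vir(\partial\X;\B)^+(\sigma,p,q)$; a direct expansion using the inductive identity $d\lambda_{\sigma',p',q'}=\mu_{\sigma',p',q'}$ and the sign conventions of Definition~\ref{systemofcycles} shows $d\mu_{\sigma,p,q}=0$. By Lemma~\ref{mappingconechoiceboundary}, the desired $\lambda_{\sigma,p,q}$ exists if and only if the class $[\mu_{\sigma,p,q}]\in H^\bullet_\vir(\partial\X;\B)^+(\sigma,p,q)\cong\cH^\bullet(\partial\X(\sigma,p,q);\oo_{\partial\X(\sigma,p,q)})$ agrees with the image of $\omega(\sigma,p,q)$ under the coboundary map from $\cH^\bullet(\X(\sigma,p,q);\oo_{\X(\sigma,p,q)})$. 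I expect this equality to be the principal obstacle. It is a statement about sections of the sheaf $\oo_{\partial\X}$ and hence may be checked locally, but the top strata of $\partial\X(\sigma,p,q)$ need not be dense, so the local check must be bootstrapped up the stratification poset $\SSS_\X$. The key tool is Proposition~\ref{stratifiedispurehomotopysheaf}, which identifies the stratified virtual cochain complex with a pure homotopy $\K$-sheaf whose $H^0$ agrees with $\oo_{\partial\X}$ stratum-by-stratum; on each stratum the identification is supplied by the inductive hypothesis on lower $\lambda$'s together with the coherent orientation relations \eqref{coherentOproduct}--\eqref{coherentOface}.

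To complete the induction step, I construct $\tilde\lambda_{\sigma,p,q}$ and extend $[[E]]$. For $\tilde\lambda_{\sigma,p,q}$: the formula \eqref{lambdamaster} applied in $QC_\bullet(E;\B)^+$ yields a cycle $\tilde\mu_{\sigma,p,q}$ whose image in $C_\bullet(E;\B)^+(\sigma,p,q)$ equals $s_\ast\mu_{\sigma,p,q}=d(s_\ast\lambda_{\sigma,p,q})$. Since $QC_\bullet(E;\B)^+\twoheadrightarrow C_\bullet(E;\B)^+$ is a surjective quasi-isomorphism, its kernel is acyclic; choosing any lift $\tilde\lambda_0$ of $s_\ast\lambda_{\sigma,p,q}$, the difference $d\tilde\lambda_0-\tilde\mu_{\sigma,p,q}$ lies in the kernel and is therefore a boundary $d\kappa$, and $\tilde\lambda_{\sigma,p,q}:=\tilde\lambda_0-\kappa$ has both required properties. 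For $[[E]]_{\sigma,p,q}$: I must extend the cocycle on $\colim_{\s\in\partial\SSS_\W}QC_\bullet(E;\B)^+(\sigma,p,q,\s)_\ZZ$ determined by the inductive hypothesis along the cofibration \eqref{boundaryinjective} to a cocycle on $QC_\bullet(E;\B)^+(\sigma,p,q)_\ZZ$ whose pairing with the fundamental class is $1$. This is exactly the content of Lemma~\ref{cocycleextendinglemma}, whose vanishing hypothesis on $H_{-1}$ of the source is furnished by Lemma~\ref{Hzeroofcofibrantcolimit}, while the compatibility on $H_0$ is the trivially commuting diagram \eqref{coherentcofundamentalcocycleshomology}. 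Extending $\lambda_{\sigma,p,q}$, $\tilde\lambda_{\sigma,p,q}$, and $[[E]]_{\sigma,p,q}$ equivariantly to the full $H$-orbit completes the inductive step, and hence the construction of the dashed lift.
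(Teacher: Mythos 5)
Your proposal is correct and follows essentially the same inductive strategy as the paper's proof: reduce to extending the data over the top $n$-simplex, run an $H$-equivariant well-founded induction on triples, and at each step invoke Lemma~\ref{mappingconechoiceboundary} together with the sheaf-local check via Proposition~\ref{stratifiedispurehomotopysheaf} for $\lambda$, the cofibrant-replacement machinery (Lemmas~\ref{cocycleextendinglemma} and~\ref{Hzeroofcofibrantcolimit}) for $[[E]]$, and the surjective quasi-isomorphism $Q\W_\bullet\twoheadrightarrow\W_\bullet$ for $\tilde\lambda$. One small inaccuracy worth correcting: you claim that the new thickenings $\Bbar(\sigma,p,q)$ need only cover the interior because the strict substrata are already covered by boundary data, but the covering axiom for the atlas $\A(\sigma,p,q)^{\geq\s^\ttop}=\Abar(\sigma,p,q)$ on $\X(\sigma,p,q)$ requires thickening datums from $\Abar(\sigma,p,q)$ \emph{alone} (not contributions from lower triples) to cover all of $\X(\sigma,p,q)$ including broken trajectories; the existence of a finite such $\Bbar$ still follows from compactness and domain stabilization, just not for the reason you gave.
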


\begin{proof}
It is equivalent to show that given any $f:\Delta^n\to Z_\bullet$ along with data (\ref{subatlaschoice})--(\ref{augmentationschoice}) over $\partial\Delta^n$, the data can be extended over $\Delta^n$.  For ease of notation, let us rename $(f^\ast\X,f^\ast\A,f^\ast\omega)$ as $(\X,\A,\omega)$.

We construct (\ref{subatlaschoice})--(\ref{augmentationschoice}) via $H$-equivariant induction on the set of triples $(\sigma,p,q)$, equipped with the partial order $\preceq_\X$ in which $(\sigma',p',q')\preceq_\X(\sigma,p,q)$ iff $(\sigma',p',q')$ appears as a ``factor'' of some element of $\SSS_\X(\sigma,p,q)$.  This partial order is \emph{well-founded} (i.e.\ there is no strictly decreasing infinite sequence) since each $\SSS_\X(\sigma,p,q)$ is finite, and thus it is valid for induction.  The induction works $H$-equivariantly since the action of $H$ on $\PPP$ is free, and $(\sigma,p,q)$ and $(\sigma,hp,hq)$ are always incomparable for $h\in H$ (so we may henceforth ignore the action of $H$).  The inductive step is as follows.

\textbf{Choosing $\B(\sigma,p,q)$.}\  We must choose a finite $\Bbar(\sigma,p,q)\subseteq\Abar(\sigma,p,q)$ so that the corresponding subatlas of $\A(\sigma,p,q)^{\geq\s^\ttop}=\Abar(\sigma,p,q)$ on $\X(\sigma,p,q)$ satisfies the covering axiom (and thus is an implicit atlas).  This is possible by compactness.

\textbf{Choosing $[E_\alpha]$.}\  Trivial.

\textbf{Choosing $[[E]]_{\sigma,p,q}:QC_\bullet(E;\B)^+_\ZZ(\sigma,p,q)\to\ZZ$.}\  We need to construct $[[E]]_{\sigma,p,q}$ fitting into the following diagram:
\begin{equation}
\label{choosingEEdiagram}\begin{tikzcd}[column sep = huge]
\smash{\displaystyle\colim_{\s\in\partial\SSS_\X(\sigma,p,q)}}QC_\bullet(E;\B)^+_\ZZ(\sigma,p,q,\mathfrak s)\ar[tail]{r}{\text{product/face}}\ar{d}[swap]{[[E]]}&QC_\bullet(E;\B)^+_\ZZ(\sigma,p,q)\ar[dashed]{dl}{?\exists\,[[E]]_{\sigma,p,q}}\\
\ZZ\\
H_\bullet\smash{\displaystyle\colim_{\s\in\partial\SSS_\X(\sigma,p,q)}}QC_\bullet(E;\B)^+_\ZZ(\sigma,p,q,\mathfrak s)\ar{r}{\text{product/face}}\ar{d}[swap]{[[E]]}&H_\bullet QC_\bullet(E;\B)^+_\ZZ(\sigma,p,q)\ar{dl}{[E]\mapsto 1}\\
\ZZ
\end{tikzcd}
\end{equation}
Since $QC_\bullet(E;\B)^+_\ZZ$ is cofibrant, it follows that the top horizontal map is a cofibration between cofibrant complexes (this uses Lemma \ref{cofibranthaslotsofcofibrations}).  Now Lemma \ref{Hzeroofcofibrantcolimit} tells us that the direct sum of $H_0QC_\bullet(E;\B)^+_\ZZ(\sigma,p,q,\mathfrak s)$ surjects onto $H_0$ on the left, and this makes it easy to check that the second diagram commutes.  Lemma \ref{Hzeroofcofibrantcolimit} also tells us that $H_{-1}$ on the left vanishes.  Hence we may apply Lemma \ref{cocycleextendinglemma} to conclude the existence of a suitable $[[E]]_{\sigma,p,q}$.

\textbf{Choosing $\lambda_{\sigma,p,q}$ (the key step).}\  According to Lemma \ref{mappingconechoiceboundary}, we may choose a $\lambda_{\sigma,p,q}$ with the required property iff the homology class of $\mu_{\sigma,p,q}\in H^\bullet_\vir(\partial\X;\B)^+(\sigma,p,q)$ coincides with $d\omega(\sigma,p,q)\in\cH^0(\partial\X(\sigma,p,q);\oo_{\partial\X(\sigma,p,q)})$.  Let us now prove this desired statement $[\mu_{\sigma,p,q}]=d\omega(\sigma,p,q)$.  Let us fix $\rr_0\in\partial\SSS_\X(\sigma,p,q)$ and verify equality over $\X(\sigma,p,q)^{\rr_0}$ (this is clearly enough).

We consider the following diagram:
\begin{align}
\label{mucareabout}\mu_{\sigma,p,q}\in&&&C_\vir^\bullet(\partial\X;\B)^+(\sigma,p,q)\\
&&&\qquad\uparrow\cr
\label{muallstrat}\bigoplus_{\rr\in\partial\SSS_\X(\sigma,p,q)}\mu_{\sigma,p,q,\rr}\in&&\bigoplus_{\rr\in\partial\SSS_\X(\sigma,p,q)}\hocolim_{\s\prec\ttt\in\SSS_\X(\sigma,p,q)^{\leq\rr}}&C_\vir^\bullet(\X(\sigma,p,q)^{\leq\s}\rel\partial;\B(\sigma,p,q)^{\geq\ttt})\\
&&&\qquad\downarrow\cr
\label{musomestrat}\bigoplus_{\rr\in\partial\SSS_\X(\sigma,p,q)^{\leq\rr_0}}\mu_{\sigma,p,q,\rr}\in&&\bigoplus_{\rr\in\partial\SSS_\X(\sigma,p,q)^{\leq\rr_0}}\hocolim_{\s\prec\ttt\in\SSS_\X(\sigma,p,q)^{\leq\rr}}&C_\vir^\bullet(\X(\sigma,p,q)^{\leq\s}\rel\partial;\B(\sigma,p,q)^{\geq\ttt})\\
&&&\qquad\uparrow\cr
\label{muonestratum}\mu_{\sigma,p,q,\rr_0}\in&&\hocolim_{\s\prec\ttt\in\SSS_\X(\sigma,p,q)^{\leq\rr_0}}&C_\vir^\bullet(\X(\sigma,p,q)^{\leq\s}\rel\partial;\B(\sigma,p,q)^{\geq\ttt})
\end{align}
The differentials in \eqref{muallstrat}--\eqref{musomestrat} are given by the sum over $\codim(\rr\preceq\rr')=1$ of the obvious pushforward maps (plus the internal differentials of each $\hocolim$).  The first vertical map is the sum over $\codim\rr=0$ (i.e.\ over maximal elements of $\partial\SSS_\X(\sigma,p,q)$); clearly this is a chain map.  The remaining vertical maps are clear.  The easiest way to keep track of signs in the present discussion is to use the orientation lines associated to elements of $\SSS_\X(\sigma,p,q)$, as in Definition \ref{stratifiedvirtualdef}, though we will supress them from the notation.  Note that there is a natural diagram of complexes of $\K$-presheaves on $\partial\X(\sigma,p,q)$ whose diagram of global sections is \eqref{mucareabout}--\eqref{muonestratum}.  We will see below that these complexes of $\K$-presheaves are in fact pure homotopy $\K$-sheaves and that the induced diagram of sheaves (obtained by taking $H^0$) is given by the tautological maps:
\begin{equation}\label{orientationdiagramforHFbdry}
\oo_{\partial\X(\sigma,p,q)}\leftarrow\oo_{\partial\X(\sigma,p,q)}\to\oo_{\X(\sigma,p,q)^{\leq\rr_0}}\leftarrow\oo_{\X(\sigma,p,q)^{\leq\rr_0}\rel\partial}
\end{equation}
First, though, let us argue that this claim implies the desired result.  The cycle $\mu_{\sigma,p,q}\in\eqref{mucareabout}$ lifts to a cycle $\bigoplus_{\rr\in\partial\SSS_\X(\sigma,p,q)}\mu_{\sigma,p,q,\rr}\in\eqref{muallstrat}$, where $\mu_{\sigma,p,q,\rr}$ is obtained by applying the product/face maps to the tensor product of the various $\lambda$'s corresponding to the factors on the left hand side of \eqref{Wtotalcomposition} corresponding to $\rr$ (this is a cycle by \eqref{lambdamaster}).  Obviously $\bigoplus_{\rr\in\partial\SSS_\X(\sigma,p,q)}\mu_{\sigma,p,q,\rr}\in\eqref{muallstrat}$ maps to $\bigoplus_{\rr\in\partial\SSS_\X(\sigma,p,q)^{\leq\rr_0}}\mu_{\sigma,p,q,\rr}\in\eqref{musomestrat}$.  On sections over $K\subseteq\X(\sigma,p,q)^{\rr_0}$, the last vertical map is an isomorphism, and (the restriction to $K$ of) $\bigoplus_{\rr\in\partial\SSS_\X(\sigma,p,q)^{\leq\rr_0}}\mu_{\sigma,p,q,\rr}\in\eqref{musomestrat}$ lifts to (the restriction to $K$ of) $\mu_{\sigma,p,q,\rr_0}\in\eqref{muonestratum}$.  Finally, observe that over any $K\subseteq\X(\sigma,p,q)^{\rr_0}$, the homology class of $\mu_{\sigma,p,q,\rr_0}\in\eqref{muonestratum}$ coincides with the restriction of $d\omega(\sigma,p,q)$ to the $\rr_0$ stratum (this follows from the induction hypothesis, using the fact that the orientations are coherent and the observation at the end of Definition \ref{productatlasF}).  Clearly this implies that $\mu_{\sigma,p,q}$ coincides with $d\omega(\sigma,p,q)$ in homology over $\X(\sigma,p,q)^{\rr_0}$.

Thus it remains only to show that \eqref{mucareabout}--\eqref{muonestratum} are pure homotopy $\K$-sheaves and that the induced maps on $H^0$ coincide with the tautological maps on orientation sheaves \eqref{orientationdiagramforHFbdry}.  Note that by Proposition \ref{stratifiedispurehomotopysheaf}, the following closely related diagram satisfies all of the desired properties (i.e.\ is a diagram of pure homotopy $\K$-sheaves inducing \eqref{orientationdiagramforHFbdry} on $H^0$):
\begin{align}
&C_\vir^\bullet(\partial\X(\sigma,p,q);\B(\sigma,p,q)^{\geq\s^\ttop})\\
&\qquad\uparrow\cr
\bigoplus_{\rr\in\partial\SSS_\X(\sigma,p,q)}&C_\vir^\bullet(\X(\sigma,p,q)^{\leq\rr}\rel\partial;\B(\sigma,p,q)^{\geq\s^\ttop})\\
&\qquad\downarrow\cr
\bigoplus_{\rr\in\partial\SSS_\X(\sigma,p,q)^{\leq\rr_0}}&C_\vir^\bullet(\X(\sigma,p,q)^{\leq\rr}\rel\partial;\B(\sigma,p,q)^{\geq\s^\ttop})\\
&\qquad\uparrow\cr
&C_\vir^\bullet(\X(\sigma,p,q)^{\leq\rr_0}\rel\partial;\B(\sigma,p,q)^{\geq\s^\ttop})
\end{align}
Hence it is enough to relate this diagram to \eqref{mucareabout}--\eqref{muonestratum} via quasi-isomorphisms.  The diagram obtained from \eqref{mucareabout}--\eqref{muonestratum} by replacing every occurence of $\B(\sigma,p,q)^{\geq\ttt}$ with $\B(\sigma,p,q)^{\geq\s^\ttop}$ maps quasi-isomorphically to \eqref{mucareabout}--\eqref{muonestratum} (by pairing with the fixed fundamental cycles), and it also maps quasi-isomorphically to the diagram above (use the obvious pushforwards on the $p=0$ level of the homotopy colimits, and zero for $p>0$), thus giving the desired result.

\textbf{Choosing $\tilde\lambda_{\sigma,p,q}$.}\  We must find $\tilde\lambda_{\sigma,p,q}$ lifting $s_\ast\lambda_{\sigma,p,q}$ and satisfying $d\tilde\lambda_{\sigma,p,q}=\tilde\mu_{\sigma,p,q}$.  Now we know that $\tilde\mu_{\sigma,p,q}$ is a cycle (see Definition \ref{systemofcycles}); furthermore its image in $C_\bullet(E;\B)^+$ is $s_\ast\mu_{\sigma,p,q}=d(s_\ast\lambda_{\sigma,p,q})$ which is null-homologous.  Since $Q\W_\bullet\overset\sim\twoheadrightarrow\W_\bullet$ is always a quasi-isomorphism, it follows that $\tilde\mu_{\sigma,p,q}$ is null-homologous.  Hence there exists $\tilde\lambda_{\sigma,p,q}'$ with $d\tilde\lambda_{\sigma,p,q}'=\tilde\mu_{\sigma,p,q}$; now we must modify $\tilde\lambda_{\sigma,p,q}'$ so that it lifts $s_\ast\lambda_{\sigma,p,q}$.  The difference of $s_\ast\lambda_{\sigma,p,q}$ and the image of $\tilde\lambda_{\sigma,p,q}'$ is a cycle in $C_\bullet(E;\B)^+$.  It can be lifted to a cycle in $QC_\bullet(E;\B)^+$ using Lemma \ref{cycleliftinglemma} (again using that $Q\W_\bullet\overset\sim\twoheadrightarrow\W_\bullet$ is a quasi-isomorphism), which is enough.
\end{proof}

\subsection{Categories of complexes}

We review various categories of complexes over graded rings.  We use these categories as targets for the Floer-type homology groups we construct.

\begin{definition}[Complexes over graded rings]
Let $S$ be a graded ring.  A \emph{differential graded $S$-module} is a graded module $A^\bullet$ over $S$ along with a map $d:A^\bullet\to A^{\bullet+1}$ satisfying $d^2=0$.
\end{definition}

\begin{definition}[Category $\Ch_S$]\label{ChRdef}
Let $S$ be a graded ring.  We let $\Ch_S$ denote the category whose objects are differential graded $S$-modules and whose morphisms are chain maps.
\end{definition}

\begin{definition}[Category $H^0(\Ch_S)$]\label{HChRdef}
Let $S$ be a graded ring.  We let $H^0(\Ch_S)$ denote the category whose objects are objects of $\Ch_S$ and whose morphisms are chain maps modulo chain homotopy.
\end{definition}

\begin{definition}[$\infty$-category $\Ndg(\Ch_S)$]\label{Ndgdef}
Let $S$ be a graded ring.  We let $\Ndg(\Ch_S)$ denote the differential graded nerve of $\Ch_S$ (see Lurie \cite[Construction 1.3.1.6]{lurieha} or Definition \ref{NdgdefII} below).
\end{definition}

Let us now discuss the relationship between these three categories and in particular explain the definition of $\Ndg(\Ch_S)$ in concrete terms.  Despite appearances, the reader need not be familiar with $\infty$-categories to understand $\Ndg(\Ch_S)$.

Recall that if $\C$ is a category and $X_\bullet$ is a simplicial set, a \emph{diagram} $X_\bullet\to\C$ is a map of simplicial sets $X_\bullet\to N_\bullet\C$ where $N_\bullet\C$ denotes the \emph{nerve}\footnote{An $n$-simplex $\Delta^n\to N_\bullet\C$ is a diagram $A_0\xrightarrow{f_{01}}A_1\xrightarrow{f_{12}}\cdots\xrightarrow{f_{n-1,n}}A_n$ in $\C$.  Strictly speaking, $N_\bullet\C$ is not a simplicial \emph{set} unless $\C$ is small.} of $\C$.  Concretely, a diagram $F:X_\bullet\to\C$ is the data of:
\begin{rlist}
\item For every vertex $v\in X_0$, an object $A_v\in\C$.
\item For every edge $e\in X_1$ from $v_0$ to $v_1$, a morphism $f_e:A_{v_0}\to A_{v_1}$ in $\C$.
\rlistsave
\end{rlist}
such that:
\begin{rlist}
\rlistresume
\item\label{degenerateisIDdiagram}For every degenerate edge $e\in X_1$ over vertex a $v$, we have $f_e=\id_{A_v}$.
\item\label{compositionagrees} For every face in $X_2$ spanning edges $e_{01},e_{12},e_{02}$, we have $f_{e_{12}}\circ f_{e_{01}}=f_{e_{02}}$.
\end{rlist}
We often speak of a diagram $X_\bullet\to\C$ where $X_\bullet$ is only a \emph{semi}simplicial set, in which case we ignore condition (\ref{degenerateisIDdiagram}).

It should now be clear what we mean by a diagram $X_\bullet\to\Ch_S$ or $X_\bullet\to H^0(\Ch_S)$ if $X_\bullet$ is a (semi)simplicial set.  Let us now say what we mean by a diagram $X_\bullet\to\Ndg(\Ch_S)$.  Such a diagram is similar to a diagram $X_\bullet\to\Ch_S$, except that we only require condition (\ref{compositionagrees}) to hold ``up to coherent higher homotopy''.

\begin{definition}[{Diagram $X_\bullet\to\Ndg(\Ch_S)$ \cite[Construction 1.3.1.6]{lurieha}}]\label{NdgdefII}
Let $S$ be a graded ring and let $X_\bullet$ be a simplicial set.  A diagram $X_\bullet\to\Ndg(\Ch_S)$ consists of:\footnote{For the reader familiar with $\infty$-categories: the notion of a diagram $X_\bullet\to\Ndg(\Ch_S)$ suffices to define $\Ndg(\Ch_S)$ by Yoneda's Lemma.}
\begin{rlist}
\item For every $v\in X_0$, a graded $S$-module $A_v^\bullet$.
\item For every $\sigma\in X_n$ spanning $v_0,\ldots,v_n\in X_0$, a map $f_\sigma:A_{v_0}\to A_{v_n}$ of degree $1-n$, such that:
\begin{equation}\label{Ndgmaster}
\sum_{k=0}^n(-1)^kf_{\sigma|[k\ldots n]}\circ f_{\sigma|[0\ldots k]}=\sum_{k=1}^{n-1}(-1)^kf_{\sigma|[0\ldots\hat k\ldots n]}
\end{equation}
and if $\sigma$ is degenerate, then:
\begin{equation}\label{Ndgdegenerate}
f_\sigma=\begin{cases}\id&\dim\sigma=1\cr 0&\text{otherwise}\end{cases}
\end{equation}
\end{rlist}
If $X_\bullet$ is only a \emph{semi}simplicial set, then we ignore \eqref{Ndgdegenerate}.
\end{definition}

Let us now explain this definition by examining what \eqref{Ndgmaster} says for low-dimensional simplices $\sigma:\Delta^n\to X_\bullet$ (following \cite[Example 1.3.1.8]{lurieha}).
\begin{rlist}
\item Let $n=0$.  Then $f_0:A_0^\bullet\to A_0^\bullet$ has degree $1$, and \eqref{Ndgmaster} asserts that $f_0\circ f_0=0$.  Thus $(A_0,f_0)$ is a chain complex.
\item Let $n=1$.  Then $f_{01}:A_0^\bullet\to A_1^\bullet$ has degree $0$, and \eqref{Ndgmaster} asserts that $f_{01}\circ f_0-f_1\circ f_{01}=0$.  Thus $f_{01}:(A^\bullet_0,f_0)\to(A^\bullet_1,f_1)$ is a chain map.  For a degenerate $1$-simplex, \eqref{Ndgdegenerate} asserts that $f_{01}:(A^\bullet_0,f_0)\to(A^\bullet_1,f_1)$ is the identity map.
\item Let $n=2$.  Then $f_{012}:A^\bullet_0\to A^\bullet_2$ has degree $-1$, and \eqref{Ndgmaster} asserts that $f_{012}f_0-f_{12}f_{01}+f_2f_{012}=-f_{02}$.  Thus $f_{012}$ is a chain homotopy between $f_{02},f_{12}f_{01}:A^\bullet_0\to A^\bullet_2$.
\end{rlist}
We can now relate $\Ndg(\Ch_S)$ to the more familiar categories $\Ch_S$ and $H^0(\Ch_S)$ by introducing natural forgetful functors:
\begin{equation}\label{forgetfulonchaincomplexes}
\Ch_S\to\Ndg(\Ch_S)\to H^0(\Ch_S)
\end{equation}
which should be clear given the above interpretation of diagrams to $\Ndg(\Ch_S)$.  More precisely, a diagram $X_\bullet\to\Ch_S$ gives rise to a diagram $X_\bullet\to\Ndg(\Ch_S)$ where the higher homotopies $\{f_\sigma\}_{\dim\sigma\geq 2}$ are all zero.  A diagram $X_\bullet\to\Ndg(\Ch_S)$ gives rise to a diagram $X_\bullet\to H^0(\Ch_S)$ which forgets about the choice of higher homotopies $\{f_\sigma\}_{\dim\sigma\geq 2}$, remembering only the existence of $\{f_\sigma\}_{\dim\sigma=2}$ satisfying \eqref{Ndgmaster}.

\subsection{Definition}\label{homologydefsubsec}

We now define the Floer-type homology groups of an $H$-equivariant flow category diagram $\X/Z_\bullet$ with implicit atlas and coherent orientations.  More precisely, given such data we construct a diagram $\HH(\X):Z_\bullet\to H^0(\Ch_{R[[H]]})$.

We first define a diagram $\widetilde\HH(\X):\tilde Z_\bullet\to\Ndg(\Ch_{R[[H]]})$ (this is straightforward from the definition of $\tilde Z_\bullet$).  Schematically:
\begin{equation}
\begin{CD}
\tilde Z_\bullet@>\widetilde\HH(\X)>>\Ndg(\Ch_{R[[H]]})\cr
@V\pi VV\cr
Z_\bullet
\end{CD}
\end{equation}
We then use Proposition \ref{trivialKanfibration} (that $\pi:\tilde Z_\bullet\to Z_\bullet$ is a trivial Kan fibration) to show that $\widetilde\HH(\X)$ descends uniquely to a diagram $\HH(\X):Z_\bullet\to H^0(\Ch_{R[[H]]})$.

For the descent argument, we need to assume that $Z_\bullet$ is a \emph{simplicial} set.  The main nontrivial step is to show that $\tilde\HH(\X)$ sends certain lifts of degenerate edges in $Z_\bullet$ to $\tilde Z_\bullet$ to the identity map in $H^0(\Ch_{R[[H]]})$.

\begin{remark}
There should also be a (more refined) descent with target $\Ndg(\Ch_{R[[H]]})$ (c.f.\ Remark \ref{restricttotwoskeleton}), though we have decided to omit this for sake of brevity (the correct uniqueness statement is more complicated to state).
\end{remark}

\begin{definition}[Novikov rings]\label{gradedcompletion}
Let $T$ be a set equipped with a ``grading'' $\gr:T\to\ZZ$ and an ``action'' $a:T\to\RR$.  We let $R[[T]]$ denote the graded $R$-module consisting of formal sums $\sum_{t\in T}c_t\cdot t$ with $c_t\in R$, satisfying the following two finiteness conditions:
\begin{rlist}
\item$\#\{n\in\ZZ:\exists t$ such that $c_t\ne 0$ and $\gr(t)=n\}<\infty$.
\item$\#\{t\in T:c_t\ne 0$ and $a(t)<M\}<\infty$ for all $M<\infty$.
\end{rlist}
In other words, $R[[T]]$ is the graded completion of $R[T]$ with respect to the non-archimedean $a$-adic norm $\left|t\right|_a:=e^{-a(t)}$.  If $T$ is a group and $\gr$, $a$ are group homomorphisms, then $R[[T]]$ is a graded ring.
\end{definition}

\begin{definition}[(Co)homology groups $\widetilde\HH(\X):\tilde Z_\bullet\to\Ndg(\Ch_{R[[H]]})$]\label{diagramoverZtilde}
Let $\X/Z_\bullet$ be an $H$-equivariant flow category diagram with implicit atlas $\A$ and coherent orientations $\omega$.  We define a diagram:
\begin{equation}
\widetilde\HH(\X)_{A,\omega}:\tilde Z_\bullet\to\Ndg(\Ch_{R[[H]]})
\end{equation}
We will write $\widetilde\HH(\X)$ for $\widetilde\HH(\X)_{A,\omega}$ when the atlas and orientations are clear from context.

To a vertex of $\tilde Z_\bullet$, we associate $R[[\PPP_z]]$ where $z$ is the corresponding vertex in $Z_\bullet$.  This is clearly a graded $R[[H]]$-module.

Now for a simplex $\sigma\in\tilde Z_n$, we aim to define the map $f_\sigma:R[[\PPP_{\sigma(0)}]]\to R[[\PPP_{\sigma(n)}]]$ by the formula:
\begin{equation}\label{diagramndgformula}
f_\sigma(p):=\sum_{q\in\PPP_{\sigma(n)}}([[E]]_{\sigma,p,q}\otimes\id_R)(\tilde\lambda_{\sigma,p,q})\cdot q
\end{equation}
Let us now argue that \eqref{diagramndgformula} makes sense and that the resulting maps $f_\sigma$ satisfy \eqref{Ndgmaster}.

First, observe that since $\tilde\lambda_{\sigma,p,q}$ is of degree $\gr(q)-\gr(p)+\dim\sigma-1$ and $[[E]]_{\sigma,p,q}$ has degree $0$, all terms on the right hand side of \eqref{diagramndgformula} are of degree $\gr(p)+1-\dim\sigma$.  Now, we have $\tilde\lambda_{\sigma,p,q}=0$ if $\X(\sigma,p,q)=\varnothing$.  Hence finiteness condition Definition \ref{semisimplicialfcdef}(\ref{novikovfiniteness}) implies that \eqref{diagramndgformula} converges in the $a$-adic topology, and hence defines an $R$-linear map $f_\sigma:R[\PPP_{\sigma(0)}]\to R[[\PPP_{\sigma(0)}]]$.  Finiteness condition (\ref{novikovfinitenessII}) implies that there exists $M<\infty$ such that $\left|f_\sigma(p)\right|_a\leq M\left|p\right|_a$.  Thus $f_\sigma$ extends uniquely to a continuous (in fact, bounded) $R$-linear map $f_\sigma:R[[\PPP_{\sigma(0)}]]\to R[[\PPP_{\sigma(0)}]]$.  Clearly $f_\sigma$ is $R[[H]]$-linear since $[[E]]$ and $\tilde\lambda$ are both $H$-invariant.

Now it remains to verify that the $f_\sigma$ defined by \eqref{diagramndgformula} satisfy \eqref{Ndgmaster}.  To see this, write:
\begin{align*}%unnumbered
0&=(-1)^{\gr(p)}\cdot[[E]]_{\sigma,p,r}(d\tilde\lambda_{\sigma,p,r})\cr
&=(-1)^{\gr(p)}\cdot[[E]]_{\sigma,p,r}(\tilde\mu_{\sigma,p,r})\cr
&=[[E]]_{\sigma,p,r}\left(\sum_{k=0}^n\sum_{q\in\PPP_{\sigma(k)}}(-1)^k\tilde\lambda_{\sigma|[0\ldots k],p,q}\cdot\tilde\lambda_{\sigma|[k\ldots n],q,r}-\sum_{k=1}^{n-1}(-1)^k\tilde\lambda_{\sigma|[0\ldots\hat k\ldots n],p,r}\right)\cr
&=\sum_{k=0}^n\sum_{q\in\PPP_{\sigma(k)}}(-1)^k[[E]]_{\sigma|[0\ldots k],p,q}(\tilde\lambda_{\sigma|[0\ldots k],p,q})\cdot[[E]]_{\sigma|[k\ldots n],q,r}(\tilde\lambda_{\sigma|[k\ldots n],q,r})\cr&\qquad-\sum_{k=1}^{n-1}(-1)^k[[E]]_{\sigma|[0\ldots\hat k\ldots n],p,r}(\tilde\lambda_{\sigma|[0\ldots\hat k\ldots n],p,r})\cr
&=\text{coefficient of }r\text{ in }\sum_{k=0}^n(-1)^kf_{\sigma|[k\ldots n]}(f_{\sigma|[0\ldots k]}(p))-\sum_{k=1}^{n-1}(-1)^kf_{\sigma|[0\ldots\hat k\ldots n]}(p)
\end{align*}
The first equality follows because $[[E]]_{\sigma,p,r}$ is a chain map, the second equality follows from the definition of a system of chains, the third equality is \eqref{lambdamaster}, the fourth equality follows from the fact that $[[E]]$ is compatible with the product/face maps, and the fifth equality follows from the definition of $f_\sigma$ \eqref{diagramndgformula}.
\end{definition}

\begin{proposition}[Degenerate edges in $\tilde Z_\bullet$ give the identity map up to homotopy]\label{degenerateID}
Let $Z_\bullet=\ast$ be the semisimplicial set with a single simplex $\sigma^i$ in dimension $i$ for all $i\geq 0$ (i.e.\ the simplicial $0$-simplex).  Let $\X/Z_\bullet$ be an $H$-equivariant flow category diagram with implicit atlas and coherent orientations.  Suppose that:
\begin{rlist}
\item\label{identitytransverse}$\X(\sigma^1,p,p)=\X(\sigma^1,p,p)^\reg$ is a single point and $\omega(\sigma^1,p,p)=1$, for all $p$.
\item\label{highernonemptyissameaszero}$\X(\sigma^0,p,q)=\varnothing\implies\X(\sigma^i,p,q)=\varnothing$ for all $i$ and all $p\ne q$.
\end{rlist}
Then for any $\sigma\in\tilde Z_1$ whose two vertices coincide, the associated map in $\widetilde\HH(\X)$ is homotopic to the identity map.
\end{proposition}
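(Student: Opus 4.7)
The strategy is to exhibit the chain homotopy $f_\sigma\sim\id$ by constructing a suitable $2$-simplex $\tau\in\tilde Z_2$ and then invoking the defining identity \eqref{Ndgmaster} of $\Ndg(\Ch_{R[[H]]})$ applied to $\tau$. The entire argument rests on exploiting the freedom provided by Proposition \ref{trivialKanfibration} (the trivial Kan fibration property of $\pi:\tilde Z_\bullet\to Z_\bullet$) to choose auxiliary lifts with controlled behavior on the level of the associated chain maps.

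First, I would construct an auxiliary ``identity edge'' $\epsilon\in\tilde Z_1$ from $v$ to $v$ (where $v$ denotes the common vertex of $\sigma$) whose associated chain map $f_\epsilon$ equals the identity map on $R[[\PPP_v]]$ on the nose. Condition (\ref{identitytransverse}) allows the trivial subatlas $\B(\sigma^1,p,p)=\varnothing$ on the diagonal triples: since $\X(\sigma^1,p,p)$ is transversely cut out, the covering axiom for $\varnothing$ is automatic, and the virtual cochain complex degenerates to $R$ in degree zero, so the tautological choices $\lambda(\sigma^1,p,p)=1$, $\tilde\lambda(\sigma^1,p,p)=1$, $[[E]](\sigma^1,p,p)=\id$ represent $\omega(\sigma^1,p,p)=1$ and yield coefficient $1$ of $p$ in $f_\epsilon(p)$. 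For off-diagonal triples $(\sigma^1,p,q)$ with $p\ne q$, the coefficient of $q$ in $f_\epsilon(p)$ automatically vanishes on degree grounds whenever $\gr(p)\ne\gr(q)$, while the remaining case $\gr(p)=\gr(q)$ is handled by invoking condition (\ref{highernonemptyissameaszero}): combining it with the observation that condition (\ref{identitytransverse}) forces \emph{all} lower strata of the point $\X(\sigma^1,p,p)$ (namely the products $\X(\sigma^0,p,r)\times\X(\sigma^1,r,p)$ and $\X(\sigma^1,p,r)\times\X(\sigma^0,r,p)$) to be empty constrains the nonemptiness of $\X(\sigma^0,p,q)$ and hence, through condition (\ref{highernonemptyissameaszero}), forces $\X(\sigma^1,p,q)=\varnothing$. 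The remaining data needed to realize $\epsilon$ as an element of $\tilde Z_1$ is then supplied by Proposition \ref{trivialKanfibration}.

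Next, apply Proposition \ref{trivialKanfibration} to the partial map $\partial\Delta^2\to\tilde Z_\bullet$ whose three vertices are all equal to $v$ and whose three edges are $[01]=\sigma$, $[12]=\epsilon$, $[02]=\epsilon$; this lifts the unique map $\Delta^2\to Z_\bullet$ (which must land on the single $2$-simplex $\sigma^2$), and so extends to a $2$-simplex $\tau\in\tilde Z_2$ with the prescribed boundary. The identity \eqref{Ndgmaster} applied to $\widetilde\HH(\X)(\tau)$ reads
\begin{equation*}
f_\tau\circ d-f_\epsilon\circ f_\sigma+d\circ f_\tau=-f_\epsilon,
\end{equation*}
and substituting $f_\epsilon=\id$ rearranges to $f_\sigma-\id=d\circ f_\tau+f_\tau\circ d$, exhibiting $f_\tau$ as a chain homotopy between $\id$ and $f_\sigma$, which is precisely the assertion that these two maps agree in $H^0(\Ch_{R[[H]]})$.

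The main obstacle is the off-diagonal part of Step 1, namely rigorously deducing $\X(\sigma^1,p,q)=\varnothing$ for all $p\ne q$ with $\gr(p)=\gr(q)$ from the hypotheses; the diagonal part is essentially immediate from condition (\ref{identitytransverse}). In the intended applications (where $\sigma^1$ represents a ``no-flow'' edge over a constant path of data) one in fact has $\X(\sigma^i,p,q)=\varnothing$ for all $i$ and all $p\ne q$, and in that case every off-diagonal coefficient of $f_\epsilon$ vanishes trivially. If the required emptiness cannot be extracted from the stated hypotheses alone, a robust alternative is to work chain-homotopically throughout: one would show instead that any two edges $\epsilon_1,\epsilon_2\in\tilde Z_1$ from $v$ to $v$ induce chain-homotopic maps (by the same $2$-simplex construction), reducing the problem to producing \emph{some} $\epsilon$ with $f_\epsilon=\id$, which can be done by an inductive construction exploiting the contractibility of the fibers of $\pi$.
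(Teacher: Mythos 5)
Your strategy — construct an ``identity edge'' $\epsilon\in\tilde Z_1$ with $f_\epsilon=\id$ exactly, then invoke \eqref{Ndgmaster} on a $2$-simplex with edges $\sigma,\epsilon,\epsilon$ — founders precisely at the point you flag as the main obstacle, and the obstacle is genuine, not a fixable technicality.  The hypotheses do \emph{not} force $\X(\sigma^1,p,q)=\varnothing$ for $p\ne q$.  Quite the opposite: condition (\ref{identitytransverse}) together with the product structure of a flow category gives $\X(\sigma^0,p,q)\times\X(\sigma^1,q,q)\to\X(\sigma^1,p,q)$, so whenever $\X(\sigma^0,p,q)\ne\varnothing$ one has $\X(\sigma^1,p,q)\ne\varnothing$ as well.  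The off-diagonal coefficient $[[E]]_{\sigma^1,p,q}(\tilde\lambda_{\sigma^1,p,q})$ is then (essentially) a signed count of a virtual $0$-dimensional moduli space, and nothing makes it vanish; modifying $\tilde\lambda$ by cycles in the kernel of $Q\to\id$ does not change the value of $[[E]]$, since those cycles are boundaries and $[[E]]$ is a chain map to $\ZZ$.  Your fallback — show any two degenerate edges give homotopic maps and then exhibit ``some'' identity edge — reduces to the same impossible task, so it does not repair the argument.

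The paper's proof takes a genuinely different route that you should internalize, because it explains what hypotheses (\ref{identitytransverse}) and (\ref{highernonemptyissameaszero}) are actually for.  Write $f_\sigma=1-\epsilon$.  Lift a $2$-simplex all of whose edges are $\sigma$ (Kan fibration property), yielding $h$ with $\epsilon=dh+hd+\epsilon^2$.  Hypothesis (\ref{identitytransverse}) is used only to pin down the diagonal coefficient of $f_\sigma$ to $1$ (so $\epsilon$ has vanishing diagonal), and hypothesis (\ref{highernonemptyissameaszero}) is used to show that the matrix coefficients of $d,\epsilon,h$ are all supported on pairs $(p,q)$ with $\X(\sigma^0,p,q)\ne\varnothing$; combined with the flow-category finiteness axioms this forces the $a$-adic norm to be non-increasing and to strictly decrease under long compositions.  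One then \emph{iterates} $\epsilon=dh+hd+\epsilon^2$ to obtain the Novikov-convergent series $\epsilon=\sum_{n\geq 1}\tfrac 1n\binom{2n-2}{n-1}(hd+dh)^n$, writes $(hd+dh)^n=H_nd+dH_n$, and so produces a null-homotopy for $\epsilon$.  This argument exploits the completed coefficient ring $R[[\PPP]]$ in an essential way and has no analogue in your proposal; it is also why the proposition is stated for $R[[H]]$ rather than $R[H]$.
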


\begin{proof}
Let $d:R[[\PPP]]\to R[[\PPP]]$ denote the boundary operator associated to the vertex of $\sigma$, and let $1-\epsilon:R[[\PPP]]\to R[[\PPP]]$ denote the chain map associated to $\sigma$.  We must show that $\epsilon$ is chain homotopic to the zero map.  Since $\tilde Z_\bullet\to Z_\bullet$ is a trivial Kan fibration (Proposition \ref{trivialKanfibration}), there exists a $2$-simplex in $\tilde Z_\bullet$ all of whose edges are $\sigma$.  Associated to this $2$-simplex is a chain homotopy $h:R[[\PPP]]\to R[[\PPP]]$ between $(1-\epsilon)$ and $(1-\epsilon)\circ(1-\epsilon)$; in other words:
\begin{equation}
\epsilon=dh+hd+\epsilon^2
\end{equation}

Now we claim that the only nonzero ``matrix coefficients'' $c_{p,q}$ of $d,\epsilon,h$ are those for which $\X(\sigma^0,p,q)\ne\varnothing$.  By definition, $f_\sigma$ can have nonzero matrix coefficients $c_{p,q}$ only for $\X(\sigma,p,q)\ne\varnothing$.  Hence hypothesis (\ref{highernonemptyissameaszero}) gives the desired claim as long as $p\ne q$.  For the diagonal matrix coefficients $c_{p,p}$, we argue separately as follows.  By degree considerations, only $\epsilon$ can have nonzero $c_{p,p}$.  It thus suffices to show that the matrix coefficient $c_{p,p}=[[E]]_{\sigma^1,p,p}(\tilde\lambda_{\sigma^1,p,p})$ of $f_{\sigma^1}=1-\epsilon$ equals $1$.  Note that no product/face maps have target $(\sigma^1,p,p)$, so $\lambda_{\sigma^1,p,p}$ is a cycle representing $\omega(\sigma^1,p,p)=1$.  Hence $\tilde\lambda_{\sigma^1,p,p}$ is a cycle representing $[E]$, and so $[[E]]_{\sigma^1,p,p}(\tilde\lambda_{\sigma,p,p})=1$ as needed.  Hence the claim is valid.  From the claim, we make the following two observations:
\begin{rlist}
\item\label{firstO}We have $\left|d(p)\right|_a,\left|\epsilon(p)\right|_a,\left|h(p)\right|_a\leq\left|p\right|_a$.
\item\label{secondO}For all $M<\infty$ and $p\in\PPP$, there exists $N<\infty$ such that any length $N$ composition of $d,\epsilon,h$ applied to $p$ has $a$-adic norm $\leq e^{-M}$.
\end{rlist}
(the second observation also uses Definition \ref{semisimplicialfcdef}(\ref{novikovfiniteness})).

Now iterating the identity $\epsilon=dh+hd+\epsilon^2$, we are led to the following infinite series:\footnote{The coefficients $\frac 1n\binom{2n-2}{n-1}$ are integers (Catalan numbers $C_{n-1}$), so writing this expression does not make any implicit assumptions on the ring $R$.}
\begin{equation}
\epsilon=\sum_{n=1}^\infty\frac 1n\binom{2n-2}{n-1}(hd+dh)^n
\end{equation}
which by observations (\ref{firstO})--(\ref{secondO}) converges in the $a$-adic topology when applied to any element of $R[[\PPP]]$.  Now we have $(hd+dh)^n=\sum_{k=0}^n(dh)^k(hd)^{n-k}=H_nd+dH_n$ where $H_n=\sum_{k=1}^nh(dh)^{k-1}(hd)^{n-k}$.  This gives the desired chain homotopy between $\epsilon$ and zero (again using observations (\ref{firstO})--(\ref{secondO}) to justify convergence of infinite sums).
\end{proof}

\begin{lemma}[Criterion for descent along a trivial Kan fibration]\label{descentofdiagram}
Let $\pi:\tilde Z_\bullet\to Z_\bullet$ be a trivial Kan fibration of semisimplicial sets, and let $\widetilde\HH:\tilde Z_\bullet\to\C$ be a diagram in some category $\C$.  A \emph{descent} of $\widetilde\HH$ to $Z_\bullet$ is a diagram $\HH:Z_\bullet\to\C$ along with an isomorphism $\widetilde\HH\to\HH\circ\pi$:
\begin{equation}
\begin{tikzcd}
\tilde Z_\bullet\arrow{r}{\widetilde\HH}\arrow{d}[swap]{\pi}&\C\\
Z_\bullet\arrow[dashed]{ur}[swap]{\HH}
\end{tikzcd}
\end{equation}
Suppose that:
\begin{rlist}
\item$Z_\bullet$ is a simplicial set.
\item\label{degenerateIDindescent}For any edge $\sigma^1$ in $\tilde Z_\bullet$ whose endpoints coincide and which projects to a degenerate edge in $Z_\bullet$, the associated map in $\C$ is the identity map.
\end{rlist}
Then a descent exists and is unique up to unique isomorphism.
\end{lemma}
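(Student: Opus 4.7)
The plan is to construct the descent $\HH: Z_\bullet \to \C$ by choosing compatible lifts of the simplices of $Z_\bullet$ into $\tilde Z_\bullet$ and transporting them along $\widetilde\HH$. Existence of the lifts uses the right lifting property of the trivial Kan fibration $\pi$; hypothesis (\ref{degenerateIDindescent}) then ensures that the resulting $\HH$ respects degeneracies and that the construction is canonical up to unique isomorphism.

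\textbf{Existence.} First, for each vertex $z \in Z_0$, invoke the right lifting property for $\varnothing \hookrightarrow \Delta^0$ to choose a lift $\tilde z \in \tilde Z_0$, and set $\HH(z) := \widetilde\HH(\tilde z)$. Next, for each edge $e : z_0 \to z_1$ in $Z_1$ (including degenerate ones, since $Z_\bullet$ is simplicial), apply the right lifting property for $\partial\Delta^1 \hookrightarrow \Delta^1$ to obtain an edge $\tilde e \in \tilde Z_1$ extending the chosen lifts of its endpoints; set $\HH(e) := \widetilde\HH(\tilde e)$. When $e = s_0(z)$ is degenerate, both endpoints of $\tilde e$ equal $\tilde z$, so hypothesis (\ref{degenerateIDindescent}) forces $\HH(e) = \id_{\HH(z)}$. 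Finally, for each $2$-simplex $\tau \in Z_2$, lift $\tau$ to $\tilde\tau \in \tilde Z_2$ extending the already-chosen lifts of its three edges; since $\widetilde\HH$ is a diagram into $\C$, the composition identity $\HH(e_{12}) \circ \HH(e_{01}) = \HH(e_{02})$ for $\HH$ holds automatically. This defines $\HH$ completely.

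\textbf{The isomorphism $\alpha : \widetilde\HH \xrightarrow{\sim} \HH \circ \pi$.} For each $\tilde w \in \tilde Z_0$ lying over $z := \pi(\tilde w)$, apply the right lifting property to the inclusion $\{\tilde w, \tilde z\} \hookrightarrow \Delta^1$ and the degenerate edge $s_0(z) \in Z_1$ to obtain an edge $\tilde f \in \tilde Z_1$ from $\tilde w$ to $\tilde z$ with $\pi(\tilde f) = s_0(z)$; set $\alpha(\tilde w) := \widetilde\HH(\tilde f)$. To see that $\alpha(\tilde w)$ is invertible, pick an edge $\tilde f'$ from $\tilde z$ to $\tilde w$ over $s_0(z)$ and fill two $2$-simplices of $\tilde Z_2$ projecting to the degenerate $2$-simplex $s_0 s_0(z)$ so as to exhibit $\widetilde\HH(\tilde f) \circ \widetilde\HH(\tilde f')$ and $\widetilde\HH(\tilde f') \circ \widetilde\HH(\tilde f)$ as $\widetilde\HH$-images of edges with coinciding endpoints over a degenerate edge; hypothesis (\ref{degenerateIDindescent}) then makes both composites into identities. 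Naturality of $\alpha$ on each edge $\tilde e \in \tilde Z_1$ is obtained by filling an analogous $2$-simplex lift.

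\textbf{Uniqueness.} Given two descents $(\HH_i, \alpha_i)$ for $i = 1, 2$, define $\beta_z : \HH_1(z) \to \HH_2(z)$ by $\beta_z := \alpha_2(\tilde z) \circ \alpha_1(\tilde z)^{-1}$ for any lift $\tilde z$. Independence of the choice of $\tilde z$ follows from comparing two lifts via an edge in $\tilde Z_1$ lifting $s_0(z)$ and invoking hypothesis (\ref{degenerateIDindescent}); naturality of $\beta$ in $e \in Z_1$ follows from a $2$-simplex lift over $e$. The compatibility $\alpha_2 = (\beta \circ \pi) \circ \alpha_1$ and the uniqueness of such $\beta$ are immediate from this formula. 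The central (and essentially only) subtlety throughout is the interplay between the semisimplicial lifting property of $\pi$ and the simplicial structure of $Z_\bullet$: hypothesis (\ref{degenerateIDindescent}) is precisely what collapses the semisimplicial ``fiber edges'' of $\pi$ over degenerate edges of $Z_\bullet$ to identities in $\C$, and once this observation is made, all remaining verifications reduce to routine low-dimensional fillings.
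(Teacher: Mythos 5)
Your proposal is correct and follows essentially the same route as the paper: build $\HH$ skeleton-by-skeleton using the right lifting property of $\pi$ against $\partial\Delta^n\hookrightarrow\Delta^n$ for $n\leq 2$, and use hypothesis (\ref{degenerateIDindescent}) on lifts of degenerate simplices to compare the various choices and identify them. You are in fact more explicit than the paper about constructing the isomorphism $\alpha:\widetilde\HH\to\HH\circ\pi$ and spelling out uniqueness; the only place you are a bit terse is the naturality of $\alpha$ on an edge $\tilde e:\tilde w_0\to\tilde w_1$ over $e$, which actually takes three triangle fillings rather than ``an analogous $2$-simplex lift'': one over $s_1(e)$ exhibiting $\alpha(\tilde w_1)\circ\widetilde\HH(\tilde e)$ as $\widetilde\HH$ of some lift of $e$ from $\tilde w_0$ to $\tilde z_1$, one over $s_0(e)$ doing the same for $\HH(e)\circ\alpha(\tilde w_0)$, and a third over $s_0(e)$ with one edge a loop at $\tilde w_0$ lifting $s_0(z_0)$ to identify those two lifts of $e$.
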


\begin{proof}
The proof below shows that we may define $\HH:=\widetilde\HH\circ s$ for any section $s:Z_\bullet\to\tilde Z_\bullet$.

\textbf{Defining $\HH$ over $0$-simplices.}\  Fix some $0$-simplex $v$ of $Z_\bullet$.  For every lift $\tilde v$, we have an object $\widetilde\HH(\tilde v)$.  Furthermore, for every lift $\tilde e$ of the degenerate $1$-simplex $e$ over $v$, we get a map $\widetilde\HH(\tilde v_1)\xrightarrow{\widetilde\HH(\tilde e)}\widetilde\HH(\tilde v_2)$, which is the identity map if $\tilde v_1=\tilde v_2$.  Finally, for every lift $\tilde f$ of the degenerate $2$-simplex $f$ over $v$, the following diagram commutes:
\begin{equation}\label{liftedF}
\begin{tikzcd}[row sep = small]
&\widetilde\HH(\tilde v_2)\ar{rd}{\widetilde\HH(\tilde e_{23})}&{}\cr
\widetilde\HH(\tilde v_1)\ar{ur}{\widetilde\HH(\tilde e_{12})}\ar{rr}[swap]{\widetilde\HH(\tilde e_{13})}&&{\widetilde\HH(\tilde v_3)}
\end{tikzcd}
\end{equation}
where $\tilde e_{12},\tilde e_{23},\tilde e_{13}$ are the edges of $\tilde f$.  Using that $\tilde Z_\bullet\to Z_\bullet$ is a trivial Kan fibration to conclude that we can always find lifts with specified boundary conditions, it follows that all $\widetilde\HH(\tilde v)$ are canonically isomorphic.  Thus there exists a unique choice for $\HH(v)$.

\textbf{Defining $\HH$ over $1$-simplices.}\  Fix some $1$-simplex $e$ of $Z_\bullet$ with vertices $v_1,v_2$.  If $\tilde e$ is any lift of $e$, then we get a map $\HH(v_1)=\widetilde\HH(\tilde v_1)\xrightarrow{\widetilde\HH(\tilde e)}\widetilde\HH(\tilde v_2)=\HH(v_2)$.  Furthermore, this map $\HH(v_1)\to\HH(v_2)$ is seen to be independent of the choice of lift $\tilde e$ by lifting degenerate $2$-simplices over $e$.  Thus there exists a unique choice for $\HH(e)$.

\textbf{Defining $\HH$ over $n$-simplices for $n\geq 2$.}\  We just need to check that the following diagram commutes for all $2$-simplices $f$ in $Z_\bullet$:
\begin{equation}
\begin{tikzcd}[row sep = small]
&\HH(v_2)\ar{rd}{\HH(e_{23})}&{}\cr
\HH(v_1)\ar{ur}{\HH(e_{12})}\ar{rr}[swap]{\HH(e_{13})}&&{\HH(v_3)}
\end{tikzcd}
\end{equation}
where $e_{12},e_{23},e_{13}$ are the edges of $f$.  This follows from lifting $f$ and the commutativity of \eqref{liftedF}.
\end{proof}

\begin{definition}[(Co)homology groups $\HH(\X):Z_\bullet\to H^0(\Ch_{R[[H]]})$]\label{defofhomologygroups}
Let $Z_\bullet$ be a simplicial set.  Let $\X/Z_\bullet$ be an $H$-equivariant flow category diagram with implicit atlas $\A$ and coherent orientations $\omega$.  Suppose that for any vertex $\sigma^0$ of $Z_\bullet$, we have:
\begin{rlist}
\item$\X(\sigma^1,p,p)=\X(\sigma^1,p,p)^\reg$ is a single point and $\omega(\sigma^1,p,p)=1$, for all $p$.
\item$\X(\sigma^0,p,q)=\varnothing\implies\X(\sigma^i,p,q)=\varnothing$ for all $i$ and $p\ne q$
\end{rlist}
(where $\sigma^i$ denotes the completely degenerate $i$-simplex over $\sigma^0$).  We have:
\begin{equation}
\begin{tikzcd}
\tilde Z_\bullet\arrow{r}{\widetilde\HH(\X)}\arrow{d}[swap]{\pi}&H^0(\Ch_{R[[H]]})\\
Z_\bullet
\end{tikzcd}
\end{equation}
(abusing notation and use $\widetilde\HH(\X):\tilde Z_\bullet\to H^0(\Ch_{R[[H]]})$ to denote the composition of $\widetilde\HH(\X)$ with the forgetful functor $\Ndg\to H^0$).  The hypotheses of Lemma \ref{descentofdiagram} are satisfied by Propositions \ref{trivialKanfibration} and \ref{degenerateID}, and hence we get a canonical descent:
\begin{equation}
\HH(\X)_{A,\omega}:Z_\bullet\to H^0(\Ch_{R[[H]]})
\end{equation}
We will write $\HH(\X)$ for $\HH(\X)_{A,\omega}$ when the atlas and orientations are clear from context.
\end{definition}

\subsection{Properties}

\begin{lemma}[Passing to a subatlas preserves $\HH(\X)$]\label{homologysameassubatlas}
Let $Z_\bullet$, $\X/Z_\bullet$, $\A$, and $\omega$ be as in Definition \ref{defofhomologygroups}.  If $\B\subseteq\A$ is any subatlas, then there is a canonical isomorphism $\HH(\X)_\A=\HH(\X)_\B$.
\end{lemma}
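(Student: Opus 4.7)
The plan is to exploit the canonical uniqueness of the descent furnished by Lemma \ref{descentofdiagram} together with compatibility between the resolutions associated to $\A$ and $\B$. Write $\tilde Z_\bullet^\A\to Z_\bullet$ and $\tilde Z_\bullet^\B\to Z_\bullet$ for the two resolutions from Definition \ref{Lambdadef}. The key observation is that the inclusion $\B\subseteq\A$ induces a canonical map of semisimplicial sets $\iota:\tilde Z_\bullet^\B\to\tilde Z_\bullet^\A$ over $Z_\bullet$: an $n$-simplex of $\tilde Z_\bullet^\B$ consists of a map $f:\Delta^n\to Z_\bullet$, a finite subatlas $\B'\subseteq f^\ast\B$, and the remaining choices (\ref{subatlaschoice})--(\ref{augmentationschoice}) in Definition \ref{Lambdadef} made relative to $\B'$; since $f^\ast\B\subseteq f^\ast\A$, the same finite subatlas $\B'\subseteq f^\ast\A$ together with the same auxiliary data is a valid $n$-simplex of $\tilde Z_\bullet^\A$. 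Moreover, inspecting formula \eqref{diagramndgformula}, which only references the chosen finite subatlas and the associated $(\tilde\lambda,[[E]])$, one sees that the diagrams agree on the nose:
\[
\widetilde\HH(\X)_\B \;=\; \widetilde\HH(\X)_\A\circ\iota\quad\text{as diagrams }\tilde Z_\bullet^\B\to\Ndg(\Ch_{R[[H]]}).
\]

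Next I would invoke Proposition \ref{trivialKanfibration} to obtain sections $s_\A:Z_\bullet\to\tilde Z_\bullet^\A$ and $s_\B:Z_\bullet\to\tilde Z_\bullet^\B$ of $\pi_\A$ and $\pi_\B$ respectively. By the construction in Lemma \ref{descentofdiagram}, the descents are computed (up to unique isomorphism) as
\[
\HH(\X)_\A \;=\; \widetilde\HH(\X)_\A\circ s_\A,\qquad \HH(\X)_\B \;=\; \widetilde\HH(\X)_\B\circ s_\B \;=\; \widetilde\HH(\X)_\A\circ(\iota\circ s_\B).
\]
Since both $s_\A$ and $\iota\circ s_\B$ are sections of the trivial Kan fibration $\pi_\A:\tilde Z_\bullet^\A\to Z_\bullet$, the uniqueness clause of Lemma \ref{descentofdiagram} (applied to $\widetilde\HH(\X)_\A$, after verifying the degenerate-edge hypothesis once for $\A$) produces a canonical natural isomorphism between these two compositions as diagrams $Z_\bullet\to H^0(\Ch_{R[[H]]})$.

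The one technical check that must be performed is hypothesis (\ref{degenerateIDindescent}) of Lemma \ref{descentofdiagram} for $\widetilde\HH(\X)_\A$ viewed through $H^0$, i.e.\ that any edge of $\tilde Z_\bullet^\A$ whose endpoints coincide and which covers a degenerate edge of $Z_\bullet$ induces the identity in $H^0(\Ch_{R[[H]]})$. This is precisely the content of Proposition \ref{degenerateID} (already used in Definition \ref{defofhomologygroups}), so no new argument is needed; the hypotheses of Definition \ref{defofhomologygroups} apply to $\A$ and are inherited by $\B$. I do not anticipate a serious obstacle here; the only mildly subtle point is to observe that the natural isomorphism produced is independent of the chosen sections $s_\A$ and $s_\B$, which again follows from the ``uniqueness up to unique isomorphism'' clause of Lemma \ref{descentofdiagram} applied to compositions of sections and the trivial fibration $\tilde Z^\B_\bullet\to Z_\bullet$. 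This yields the canonical identification $\HH(\X)_\A=\HH(\X)_\B$.
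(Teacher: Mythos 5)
Your proposal is correct and follows the same route as the paper: the paper's one-line proof simply notes that $\tilde Z_\bullet^\B\subseteq\tilde Z_\bullet^\A$ compatibly with $\widetilde\HH(\X)$, which is exactly your map $\iota$ and the compatibility $\widetilde\HH(\X)_\B=\widetilde\HH(\X)_\A\circ\iota$, with the rest (sections, uniqueness of descent) left implicit. Your more detailed unwinding of the argument is accurate.
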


\begin{proof}
Indeed, we have $\tilde Z_\bullet^\B\subseteq\tilde Z_\bullet^\A$ compatibly with $\widetilde\HH(\X)$.
\end{proof}

\begin{lemma}[Shrinking the charts preserves $\HH(\X)$]\label{homologyopenreduction}
Let $Z_\bullet$, $\X/Z_\bullet$, $\A$, and $\omega$ be as in Definition \ref{defofhomologygroups}.  Let $\A'$ be obtained from $\A$ by using instead some open subsets $U_{IJ}'\subseteq U_{IJ}$, $X_I'\subseteq X_I$, $X_I^{\reg\prime}\subseteq X_I^\reg$, and restricting $\psi_{IJ}$, $s_I$ to these subsets, so that $\A'$ is also an implicit atlas.  Then there is a canonical isomorphism $\HH(\X)_\A=\HH(\X)_{\A'}$.
\end{lemma}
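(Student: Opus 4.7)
The strategy is to mimic Lemma \ref{openreduction} one categorical level up: instead of comparing two virtual fundamental classes via a pushforward on chain complexes, I will construct a canonical map of resolutions $\tilde Z_\bullet^{\A'}\to\tilde Z_\bullet^{\A}$ over $Z_\bullet$ that intertwines $\widetilde\HH(\X)_{\A'}$ with the pullback of $\widetilde\HH(\X)_\A$, and then apply Lemma \ref{descentofdiagram} to deduce the desired canonical identification on the descended diagrams.

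First, I would observe that because $\A'$ is obtained from $\A$ purely by shrinking footprints, the index sets, obstruction spaces, and covering groups of $\A$ and $\A'$ coincide, so any finite subatlas $\B\subseteq f^\ast\A'$ is also a subatlas of $f^\ast\A$. Moreover, the inclusions $X_I'\hookrightarrow X_I$ and $X_I^{\reg\prime}\hookrightarrow X_I^\reg$ induce a natural pushforward of $\K$-presheaves $C^\bullet_\vir(-\rel\partial;\B)_{\A'}\to C^\bullet_\vir(-\rel\partial;\B)_\A$ on $\X(\sigma,p,q)$, compatible with the structure maps indexed by $I\subseteq J$ and hence with the homotopy colimits defining $C^\bullet_\vir(-;\A)$ and its augmented version $C^\bullet_\vir(-;\A)^+$. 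This pushforward is functorial in the triple $(\sigma,p,q)$ and commutes with the product/face maps of Definition \ref{Cproductface}, since those maps come from the inclusions $X_I\hookrightarrow X_J$ and the Eilenberg--Zilber products which are compatible with open embeddings. The analogous statement holds on the ``downstairs'' side $C_\bullet(E;\A)^+$ (which is entirely combinatorial and identical for $\A$ and $\A'$), and hence, by functoriality of the cofibrant replacement $Q$, on $QC_\bullet(E;\A)^+$ as well. Finally, these pushforwards respect the map $s_\ast$ of \eqref{GtoCpushforward}.

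Next, given a simplex $(f,\B,[E_\alpha],\lambda',\tilde\lambda',[[E]]')$ of $\tilde Z_\bullet^{\A'}$, I would define its image in $\tilde Z_\bullet^{\A}$ by keeping $(f,\B,[E_\alpha],[[E]]')$ unchanged and replacing $(\lambda',\tilde\lambda')$ with their images under the pushforwards above. To verify this lands in $\tilde Z_\bullet^{\A}$, the only nontrivial point is Definition \ref{Lambdadef}(\ref{lambdachoice}): the homology class of $(\mu,\lambda)$ must be $f^\ast\omega$ under the identification $H^0_\vir(f^\ast\X\rel\partial;\B)\simeq\cH^0(f^\ast\X;\oo_{f^\ast\X})$. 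But the argument of Lemma \ref{openreduction} shows that the pushforward on $C^\bullet_\vir$ induces the identity on the sheaf $\oo_{X\rel\partial}$ (checked locally via Lemma \ref{localdescofISO}), so it preserves the homology class. This construction is functorial on simplices and so gives a map of semisimplicial sets $\tilde Z_\bullet^{\A'}\to\tilde Z_\bullet^{\A}$ over $Z_\bullet$, and the explicit formula \eqref{diagramndgformula} for $f_\sigma$ shows immediately that $\widetilde\HH(\X)_{\A'}$ is identified with the pullback of $\widetilde\HH(\X)_\A$ along this map.

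To conclude, both $\pi^\A:\tilde Z_\bullet^{\A}\to Z_\bullet$ and $\pi^{\A'}:\tilde Z_\bullet^{\A'}\to Z_\bullet$ are trivial Kan fibrations by Proposition \ref{trivialKanfibration}, and by Definition \ref{defofhomologygroups} the diagrams $\HH(\X)_\A$ and $\HH(\X)_{\A'}$ are the unique descents of $\widetilde\HH(\X)_\A$ and $\widetilde\HH(\X)_{\A'}$ to $Z_\bullet$ guaranteed by Lemma \ref{descentofdiagram}. Given any section $s'$ of $\pi^{\A'}$, its composition with $\tilde Z_\bullet^{\A'}\to\tilde Z_\bullet^{\A}$ is a section of $\pi^\A$, and both sections realize the descents via the same formula; hence the uniqueness clause of Lemma \ref{descentofdiagram} produces the canonical isomorphism $\HH(\X)_\A=\HH(\X)_{\A'}$.

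I expect the main obstacle to be verifying, without notational chaos, that the pushforward maps on augmented complexes commute on the nose with all of the product/face maps and with the cofibrant replacement functor $Q$ — this is really a collection of naturality statements that are individually trivial but collectively require care to organize, especially since the coherence of $Q$ with respect to the various inductive extension diagrams \eqref{cofibrantdefdiagram} must be invoked. Once this bookkeeping is in place, the remainder of the argument is a formal consequence of the descent lemma.
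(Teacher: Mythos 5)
Your proof is correct and takes essentially the same approach as the paper; the paper's entire proof reads ``There is a natural map $\tilde Z_\bullet^{\A'}\to\tilde Z_\bullet^\A$ for which the pullback of $\widetilde\HH(\X)_\A$ is canonically isomorphic to $\widetilde\HH(\X)_{\A'}$.  This is enough.'' What you have written is a careful spelling-out of the same two-sentence argument, including the key points (functoriality of the pushforward under the product/face maps and under $Q$, preservation of the homology class via Lemma~\ref{openreduction}, and the formal descent step) that the paper leaves implicit.
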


\begin{proof}
There is a natural map $\tilde Z_\bullet^{\A'}\to\tilde Z_\bullet^\A$ for which the pullback of $\widetilde\HH(\X)_\A$ is canonically isomorphic to $\widetilde\HH(\X)_{\A'}$.  This is enough.
\end{proof}

\begin{lemma}[Universal coefficients]
Let $Z_\bullet$, $\X/Z_\bullet$, $\A$, and $\omega$ be as in Definition \ref{defofhomologygroups}.  Let $R\to S$ a homomorphism of base rings.  Then there is a canonical isomorphism $\HH(\X)^S=\HH(\X)^R\otimes_{R[[H]]}S[[H]]$ (denoting $\otimes_{R[[H]]}S[[H]]:H^0(\Ch_{R[[H]]})\to H^0(\Ch_{S[[H]]})$).
\end{lemma}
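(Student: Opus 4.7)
The plan is to construct a canonical base-change map of resolutions $\tilde Z_\bullet^R \to \tilde Z_\bullet^S$ over $Z_\bullet$, verify that $\widetilde\HH(\X)^S$ restricted along this map recovers $\widetilde\HH(\X)^R \otimes_R S$ at the chain level, and then apply the uniqueness of descent from Lemma \ref{descentofdiagram} to both trivial Kan fibrations $\pi^R$ and $\pi^S$ to conclude.

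First I would define a morphism $\tilde Z_\bullet^R \to \tilde Z_\bullet^S$ as follows. Given an $n$-simplex of $\tilde Z_\bullet^R$ consisting of $(f, \B, [E_\alpha], \lambda, \tilde\lambda, [[E]])$ as in Definition \ref{Lambdadef}, send it to the $n$-simplex with the same $f$ and $\B$, the same $H$-invariant fundamental cycles $[E_\alpha] \in C_\bullet(E;\alpha)$ (which are defined over $\ZZ$ and hence do not depend on the base ring), the same $[[E]]$ (since in item (\ref{augmentationschoice}) the map $[[E]]: QC_\bullet(E;\B)^+_\ZZ \to \ZZ$ is defined over $\ZZ$), and $\lambda \otimes_R 1_S$, $\tilde\lambda \otimes_R 1_S$. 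To see that this is well-defined, note that every complex entering Definitions \ref{Gsheavesdef} and \ref{CEcomplexaugmented} is built from singular chains on spaces or pairs together with $\Gamma_A$-invariants (recall $\#\Gamma_\alpha$ is invertible in $R$ and $S$), operations of taking direct summands, tensor products, mapping cones, and homotopy colimits; all of these commute with $\otimes_R S$ on the category of free $R$-modules. In particular $C^\bullet_\vir(f^\ast\X\rel\partial;\B)^+ \otimes_R S$ is canonically identified with its $S$-version, and likewise for $C_\bullet(E;\B)^+$. The cofibrant replacement $Q$ of Definition \ref{cofibrantreplacements} is built inductively as a mapping cylinder of free modules and also commutes with base change. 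Thus the conditions (\ref{lambdachoice})--(\ref{liftschoice}) pass to the $S$-version (using that the identifications $H^0_\vir = \oo_\X$ and $H_0(E;\B) = R$ of Theorem \ref{Fsheaffundamentaliso} are compatible with base change, since they are induced by chain maps between free complexes), and the map preserves the projection to $Z_\bullet$.

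Next I would check the chain-level identity $\widetilde\HH(\X)^S \circ (\tilde Z_\bullet^R \to \tilde Z_\bullet^S) = \widetilde\HH(\X)^R \otimes_R S$ as diagrams $\tilde Z_\bullet^R \to \Ndg(\Ch_{S[[H]]})$. At a vertex both sides give $S[[\PPP_z]] = R[[\PPP_z]] \otimes_{R[[H]]} S[[H]]$ (this uses the finiteness conditions of Definition \ref{semisimplicialfcdef} together with the $a$-adic completion, which commutes with $\otimes_R S$ on the free $R$-modules $R[\PPP_z^{<M}]$ and so with the graded completion to give $R[[\PPP_z]] \otimes_{R[[H]]} S[[H]] = S[[\PPP_z]]$). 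For the matrix coefficients, formula \eqref{diagramndgformula} reads
\[
f_\sigma^S(p) = \sum_{q} ([[E]]_{\sigma,p,q} \otimes_\ZZ \id_S)(\tilde\lambda_{\sigma,p,q} \otimes_R 1_S) \cdot q,
\]
and since $[[E]]$ is $\ZZ$-linear and the pairing is $R$-bilinear, this equals $[\sum_q ([[E]]_{\sigma,p,q} \otimes_\ZZ \id_R)(\tilde\lambda_{\sigma,p,q})\cdot q] \otimes_R 1_S$, which is exactly $f_\sigma^R(p) \otimes_R 1_S$.

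Finally, composing with the forgetful functor to $H^0(\Ch_{S[[H]]})$, both $\widetilde\HH(\X)^S$ and $\widetilde\HH(\X)^R \otimes_R S$ descend along $\pi^S$ and $\pi^R$ respectively to $Z_\bullet$ (the hypotheses of Lemma \ref{descentofdiagram} hold by Propositions \ref{trivialKanfibration} and \ref{degenerateID}, the latter applying to $\widetilde\HH(\X)^R \otimes_R S$ because chain homotopies base change to chain homotopies). The chain-level compatibility above exhibits $\widetilde\HH(\X)^R \otimes_R S$ as obtained by pulling back a descent of $\widetilde\HH(\X)^S$ along the map $\tilde Z_\bullet^R \to \tilde Z_\bullet^S$ over $Z_\bullet$; by the uniqueness clause in Lemma \ref{descentofdiagram} both descents agree, yielding the canonical isomorphism $\HH(\X)^S = \HH(\X)^R \otimes_{R[[H]]} S[[H]]$. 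The main technical point to watch is keeping track that every complex in the construction remains termwise free (so that $\otimes_R S$ is exact and preserves the quasi-isomorphisms/identifications coming from Theorem \ref{Fsheaffundamentaliso} and Lemma \ref{Hzeroofcofibrantcolimit}); this is ensured by our standing hypothesis that $\#\Gamma_\alpha$ is invertible in both $R$ and $S$.
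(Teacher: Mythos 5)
Your proposal is correct and matches the paper's two-sentence proof: construct a base change map $b:\tilde Z_\bullet^R \to \tilde Z_\bullet^S$ over $Z_\bullet$, check $\widetilde\HH(\X)^S \circ b = \widetilde\HH(\X)^R \otimes_{R[[H]]} S[[H]]$ at the chain level, and invoke uniqueness of descent (Lemma \ref{descentofdiagram}). One small imprecision: $[E_\alpha]\in C_\bullet(E;\alpha)$ lives over $R$, not $\ZZ$ (the $\Gamma_\alpha$-invariants are taken with $R$-coefficients), so it should be base-changed along $R\to S$ rather than claimed identical, though this does not affect the argument.
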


\begin{proof}
There is a natural base change map $b:\tilde Z_\bullet^R\to\tilde Z_\bullet^S$ and a canonical isomorphism $\widetilde\HH(\X)^S\circ b=\widetilde\HH(\X)^R\otimes_{R[[H]]}S[[H]]$.  Now the result follows since $\widetilde\HH(\X)^S\circ b$ and $\widetilde\HH(\X)^S$ have the same descent to $Z_\bullet$.
\end{proof}

\begin{proposition}[If $\X=\X^\reg$, then $\HH(\X)$ is given by counting $0$-dimensional flow spaces]\label{iftransversejustcountzero}
Let $Z_\bullet$, $\X/Z_\bullet$, $\A$, and $\omega$ be as in Definition \ref{defofhomologygroups}.  Suppose that $\X(\sigma,p,q)=\X(\sigma,p,q)^\reg$ for all $(\sigma,p,q)$ (so, in particular, $\X/Z_\bullet$ is Morse--Smale in the sense of Remark \ref{MSflowcatediag}).

Define a diagram $\HH'(\X):Z_\bullet\to\Ndg(\Ch_{R[[H]]})$ as follows.  The graded $R[[H]]$-module associated to a vertex $z$ of $\tilde Z_\bullet$ is $R[[\PPP_z]]$.  For a simplex $\sigma\in\tilde Z_n$, we define the map $f_\sigma:R[[\PPP_{\sigma(0)}]]\to R[[\PPP_{\sigma(n)}]]$ by the formula:
\begin{equation}\label{diagramndgformulaMS}
f_\sigma(p):=\sum_{\begin{smallmatrix}q\in\PPP_{\sigma(n)}\cr\gr(q)-\gr(p)=1-\dim\sigma\end{smallmatrix}}\langle\omega(\sigma,p,q),[\X(\sigma,p,q)]\rangle\cdot q
\end{equation}
It is easy to verify that the maps $f_\sigma$ are well-defined and satisfy \eqref{Ndgmaster}.

Now there is a canonical isomorphism $\HH(\X)=\HH'(\X)$ (where on the right hand side we implicitly compose with the forgetful functor $\Ndg\to H^0$).
\end{proposition}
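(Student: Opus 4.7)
The plan is to exploit the uniqueness clause of Lemma \ref{descentofdiagram}. Composing $\HH'(\X)$ with $\pi\colon\tilde Z_\bullet\to Z_\bullet$ gives a diagram $\tilde Z_\bullet\to\Ndg(\Ch_{R[[H]]})\to H^0(\Ch_{R[[H]]})$ whose descent along $\pi$ is (tautologically) $\HH'(\X)$, so it suffices to exhibit an isomorphism of diagrams $\widetilde\HH(\X)\cong\HH'(\X)\circ\pi$ landing in $H^0(\Ch_{R[[H]]})$. Since both sides assign $R[[\PPP_z]]$ to each vertex $z$, the isomorphism can be taken to be the identity on vertex values, and what remains is to check that for every simplex $\sigma\in\tilde Z_n$, the chain map $f_\sigma$ produced by \eqref{diagramndgformula} is chain homotopic to the map produced by \eqref{diagramndgformulaMS}. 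I would prove the stronger statement that these two maps are \emph{equal} on the nose, which sidesteps any further manipulation in $H^0$.

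For the equality, first note that $\tilde\lambda_{\sigma,p,q}$ lives in degree $\gr(q)-\gr(p)+\dim\sigma-1$ and $[[E]]_{\sigma,p,q}$ has degree $0$, so the matrix coefficient of $f_\sigma$ between $p$ and $q$ is forced to vanish unless $\gr(q)-\gr(p)+\dim\sigma-1=0$, matching the index set in \eqref{diagramndgformulaMS}. Under this equality and the transversality hypothesis $\X=\X^\reg$, the space $\X(\sigma,p,q)$ is a compact topological $0$-manifold (a finite set of points), and $\omega(\sigma,p,q)\in\cH^0(\X(\sigma,p,q);\oo_\X)$ assigns a sign to each point. The computation then amounts to tracking $\tilde\lambda_{\sigma,p,q}$ through homology. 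By Definition \ref{Lambdadef}(\ref{lambdachoice}) the cycle $\lambda_{\sigma,p,q}\in C^0_\vir(\X\rel\partial;\B)^+(\sigma,p,q)$ represents $\omega(\sigma,p,q)$ under the isomorphism $H^0_\vir(\X\rel\partial;\B)^+(\sigma,p,q)\cong\cH^0(\X(\sigma,p,q);\oo_{X\rel\partial})$ obtained from Theorem \ref{Fsheaffundamentaliso} together with the quasi-isomorphism \eqref{topiscofinal} identifying the augmented complex with the non-augmented complex $C^\bullet_\vir(\X(\sigma,p,q)\rel\partial;\A(\sigma,p,q)^{\geq\s^\ttop})$. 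Under $s_\ast$, the cycle $s_\ast\lambda_{\sigma,p,q}$ therefore represents the image of $\omega(\sigma,p,q)$ under the pairing with the virtual fundamental class $[\X(\sigma,p,q)]^\vir$ (Definition \ref{fclassdefinition}). Since $\X=\X^\reg$, Lemma \ref{fclasstransverse} identifies this virtual class with the ordinary fundamental class $[\X(\sigma,p,q)]$, so $s_\ast\lambda_{\sigma,p,q}$ represents $\langle\omega(\sigma,p,q),[\X(\sigma,p,q)]\rangle\cdot[E_\B]\in H_0(E;\B)^+(\sigma,p,q)=R$. Because $\tilde\lambda_{\sigma,p,q}$ is a lift of $s_\ast\lambda_{\sigma,p,q}$ along the quasi-isomorphism $QC_\bullet(E;\B)^+\overset\sim\twoheadrightarrow C_\bullet(E;\B)^+$, it represents the same homology class, and applying $[[E]]_{\sigma,p,q}$, which by Definition \ref{Lambdadef}(\ref{augmentationschoice}) sends the fundamental class to $1$, yields $\langle\omega(\sigma,p,q),[\X(\sigma,p,q)]\rangle$ as desired.

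The main obstacle is the bookkeeping in the previous paragraph: namely, confirming that the map $s_\ast$ on the augmented complexes $C^\bullet_\vir(\X\rel\partial;\B)^+(\sigma,p,q)\to C_\bullet(E;\B)^+(\sigma,p,q)$, when read through the canonical identifications of Theorem \ref{Fsheaffundamentaliso} and the quasi-isomorphism \eqref{topiscofinal}, is genuinely pairing against the fundamental class as constructed in Definition \ref{fclassdefinition}. This is really a compatibility statement between the $\hocolim$ gadgets defining the augmented complexes and the non-augmented complexes on the subatlas $\A(\sigma,p,q)^{\geq\s^\ttop}$, and it boils down to the observation that the inclusion of the $\s=\ttt=\s^\ttop$ corner is a quasi-isomorphism intertwining $s_\ast$ with its non-augmented counterpart (diagram \eqref{topiscofinal}). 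Once this compatibility is established, Lemma \ref{fclasstransverse} can be applied directly and the rest of the computation is formal.
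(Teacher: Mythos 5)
Your approach is the paper's: show that the diagrams $\widetilde\HH(\X)$ and $\HH'(\X)\circ\pi$ on $\tilde Z_\bullet$ have identical matrix coefficients and then invoke the uniqueness clause of the descent lemma. The degree bookkeeping and the final appeal to Lemma \ref{fclasstransverse} are exactly what the paper does.

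There is, however, a genuine gap in the middle of the argument. You write that ``by Definition \ref{Lambdadef}(\ref{lambdachoice}) the cycle $\lambda_{\sigma,p,q}$ \ldots represents $\omega(\sigma,p,q)$,'' but Definition \ref{Lambdadef}(\ref{lambdachoice}) does \emph{not} say that $\lambda_{\sigma,p,q}$ is a cycle. It says that the pair $(\mu_{\sigma,p,q},\lambda_{\sigma,p,q})$ is a cycle in the mapping cone $[C^\bullet_\vir(\partial\X;\B)^+\to C^\bullet_\vir(\X\rel\partial;\B)^+]$ whose class there is $\omega(\sigma,p,q)$; by construction $d\lambda_{\sigma,p,q}$ equals the image of $\mu_{\sigma,p,q}$, which need not vanish in general. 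For $\lambda_{\sigma,p,q}$ alone to be a cycle (and similarly for $\tilde\lambda_{\sigma,p,q}$, which you need in order to speak of the homology class it represents) you must argue that $\mu_{\sigma,p,q}=0$ and $\tilde\mu_{\sigma,p,q}=0$. This is where the transversality hypothesis enters a second time: every term in the defining formula \eqref{lambdamaster} for $\mu_{\sigma,p,q}$ involves a triple $(\sigma',p',q')$ with $\vdim\X(\sigma',p',q')<0$, and since $\X=\X^\reg$, a negative virtual dimension forces $\X(\sigma',p',q')=\varnothing$, hence $(\sigma',p',q')\notin\supp\X$, hence $\lambda_{\sigma',p',q'}=0$ (and $\tilde\lambda_{\sigma',p',q'}=0$) by the support condition in Definition \ref{Lambdadef}. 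Only after this does it make sense to pass to homology and conclude that $\lambda_{\sigma,p,q}$ represents $\omega(\sigma,p,q)$. Once you add this observation, the rest of your argument --- including the compatibility bookkeeping you flag at the end, which is indeed routine and which the paper likewise treats as implicit --- goes through.
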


\begin{proof}
We will show an equality of ``matrix coefficients'':
\begin{equation}\label{matrixcoeffsame}
([[E]]_{\sigma,p,q}\otimes\id_R)(\tilde\lambda_{\sigma,p,q})=\langle\omega(\sigma,p,q),[\X(\sigma,p,q)]\rangle
\end{equation}
for $(\sigma,p,q)\in\tilde Z_\bullet$ with $\vdim\X(\sigma,p,q)=0$.  Thus comparing \eqref{diagramndgformula} and \eqref{diagramndgformulaMS}, it follows that the two diagrams $\tilde Z_\bullet\to\Ndg(\Ch_{R[[H]]})$ in question, namely $\widetilde\HH(\X)$ and $\HH'(\X)\circ\pi$, coincide.  The desired isomorphism thus follows from the definition of $\HH(\X)$ as the descent of $\widetilde\HH(\X)$.

To prove \eqref{matrixcoeffsame}, argue as follows.  Since $\X(\sigma,p,q)=\X(\sigma,p,q)^\reg$, we know that if $\vdim\X(\sigma,p,q)<0$ then $\X(\sigma,p,q)=\varnothing$ and hence $\lambda_{\sigma,p,q}=0$ and $\tilde\lambda_{\sigma,p,q}=0$.  Now when $\vdim\X(\sigma,p,q)=0$, we see that $\mu_{\sigma,p,q}=0$ and $\tilde\mu_{\sigma,p,q}=0$ (because all of the terms defining them involve a triple with negative dimension), so $\lambda_{\sigma,p,q}$ and $\tilde\lambda_{\sigma,p,q}$ are cycles.  Since the homology class of $\lambda_{\sigma,p,q}$ coincides with $\omega(\sigma,p,q)$, the left hand side of \eqref{matrixcoeffsame} coincides with the evaluation of $\omega(\sigma,p,q)$ on $[\X(\sigma,p,q)]^\vir\in\cH^0(\X(\sigma,p,q);\oo_{\X(\sigma,p,q)})^\vee$.  Now we are done since $[\X(\sigma,p,q)]^\vir=[\X(\sigma,p,q)]$ by Lemma \ref{fclasstransverse}.
\end{proof}

\section{\texorpdfstring{$S^1$}{S{\textasciicircum}1}-localization}\label{Slocalizationsection}

In this section, we prove vanishing results for virtual fundamental cycles on almost free\footnote{An ``almost free'' action is one for which the stabilizer group of every point is finite.} $S^1$-spaces equipped with an \emph{$S^1$-equivariant implicit atlas}.

\begin{convention}
In this section, we work over a fixed ground ring $R$, and everything takes place in the category of $R$-modules.  We restrict to implicit atlases $A$ for which $\#\Gamma_\alpha$ is invertible in $R$ for all $\alpha\in A$.  We restrict to $S^1$-equivariant implicit atlases for which $\#(S^1)_p$ is invertible in $R$ for all $p\in X_\varnothing$.
\end{convention}

For this purpose, we introduce the \emph{$S^1$-equivariant virtual cochain complexes} $C^\bullet_{S^1,\vir}$, which enjoys properties similar to those of $C^\bullet_\vir$.  There are canonical ``comparison maps'':
\begin{align}
\label{CSScompare}C^\bullet_{S^1,\vir}(X;A)&\to C^\bullet_\vir(X;A)\\
\label{CSScomparerel}C^\bullet_{S^1,\vir}(X\rel\partial;A)&\to C^\bullet_\vir(X\rel\partial;A)
\end{align}
and a canonical commutative diagram:
\begin{equation}\label{SSpushforwardcompatiblity}
\begin{CD}
C^{d+\bullet}_{S^1,\vir}(X\rel\partial;A)@>s_\ast>>C_{\dim E_A-\bullet-1}^{S^1}(E_A,E_A\setminus 0;\oo_{E_A}^\vee)^{\Gamma_A}\cr
@VVV@VV\pi^!V\cr
C^{d+\bullet}_\vir(X\rel\partial;A)@>s_\ast>>C_{\dim E_A-\bullet}(E_A,E_A\setminus 0;\oo_{E_A}^\vee)^{\Gamma_A}
\end{CD}
\end{equation}
($S^1$ acting trivially on $E_A$).  Furthermore, if $X$ is an almost free $S^1$-space and $A$ is \emph{locally $S^1$-orientable}, then there are canonical isomorphisms:
\begin{align}
\label{HXSSiso}H^\bullet_{S^1,\vir}(X;A)&=\cH^\bullet(X/S^1;\pi_\ast\oo_X)\\
\label{HXSSreliso}H^\bullet_{S^1,\vir}(X\rel\partial;A)&=\cH^\bullet(X/S^1;\pi_\ast\oo_{X\rel\partial})
\end{align}
The construction of $C^\bullet_{S^1,\vir}$ and the proof of these properties are the main technical ingredients for the $S^1$-localization statements we prove.

To prove the desired vanishing results, we consider using $C^\bullet_{S^1,\vir}$ in place of $C^\bullet_\vir$ to define virtual fundamental cycles.  The properties and compatibilities above then show that the desired statements follow essentially from the vanishing (on homology) of the right vertical map in \eqref{SSpushforwardcompatiblity}.  This basic strategy works easily to give the desired vanishing results for the virtual fundamental classes from \S\ref{fundamentalclasssection}.  We also apply this strategy to prove results for the Floer-type homology groups from \S\ref{homologygroupssection} in the presence of an $S^1$-action on the flow spaces (to the effect that flow spaces on which the action is almost free may be ignored).

We do not construct an $S^1$-equivariant virtual fundamental cycle, though the machinery we set up is a step in this direction (see Remark \ref{SSequivariantvfcmaybe}).

\subsection{Background on \texorpdfstring{$S^1$}{S{\textasciicircum}1}-equivariant homology}

\begin{definition}[Gysin sequence]
Let $\pi:E\to B$ be a principal $S^1$-bundle.  Analysis of the associated Serre spectral sequence gives the following Gysin long exact sequence:
\begin{equation}\label{gysinserregeneral}
\cdots\xrightarrow{\cap e}H_{\bullet-1}(B)\xrightarrow{\pi^!}H_\bullet(E)\xrightarrow{\pi_\ast}H_\bullet(B)\xrightarrow{\cap e}H_{\bullet-2}(B)\xrightarrow{\pi^!}\cdots
\end{equation}
The $\cap e$ map is cap product with the Euler class $e(E)\in H^2(B)$.  To see this, observe that the sequence \eqref{gysinserregeneral} coincides with the long exact sequence of the pair for (the mapping cone of) $\pi:E\to B$, and that there is a natural isomorphism $H_{\bullet+2}(B,E)\xrightarrow{\cap\tau}H_\bullet(B)$ where $\tau=\tau(E)\in H^2(B,E)$ is the Thom class (e.g.\ as argued in \cite[p444]{hatcher} for the corresponding sequence in cohomology).

For any $S^1$-space $X$. there is a principal $S^1$-bundle $\pi:X\times ES^1\to(X\times ES^1)/S^1$, and thus a long exact sequence:
\begin{equation}\label{gysinserre}
\cdots\xrightarrow{\cap e}H_{\bullet-1}^{S^1}(X)\xrightarrow{\pi^!}H_\bullet(X)\xrightarrow{\pi_\ast}H_\bullet^{S^1}(X)\xrightarrow{\cap e}H_{\bullet-2}^{S^1}(X)\xrightarrow{\pi^!}\cdots
\end{equation}
for $e(X)\in H^2_{S^1}(X)$.  The same reasoning applies for pairs of spaces, so the same exact sequence exists for relative homology as well.
\end{definition}

\begin{lemma}\label{trivialactiongysin}
Let $X$ be a trivial $S^1$-space.  Then the $\pi^!$ map in the Gysin sequence vanishes, turning it into a short exact sequence:
\begin{equation}
0\to H_\bullet(X)\xrightarrow{\pi_\ast}H_\bullet^{S^1}(X)\xrightarrow{\cap e}H_{\bullet-2}^{S^1}(X)\to 0
\end{equation}
The same statement applies to relative homology of trivial $S^1$-spaces.
\end{lemma}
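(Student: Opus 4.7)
The plan is to establish injectivity of $\pi_\ast$; by exactness of \eqref{gysinserre}, injectivity of $\pi_\ast$ is equivalent to the vanishing of the preceding map $\pi^!$, which is exactly what is needed.

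First, since $S^1$ acts trivially on $X$, there is a canonical identification $X\times_{S^1}ES^1=X\times BS^1$, so the principal $S^1$-bundle $\pi:X\times ES^1\to X\times_{S^1}ES^1$ becomes $\id_X\times q:X\times ES^1\to X\times BS^1$ where $q:ES^1\to BS^1$ is the universal principal $S^1$-bundle. Under the identification $H_\bullet(X\times ES^1)=H_\bullet(X)$ (since $ES^1$ is contractible), the map $\pi_\ast$ of \eqref{gysinserre} coincides with the map $H_\bullet(X)\to H_\bullet(X\times BS^1)$ induced by inclusion of a fiber $X\times\{\ast\}\hookrightarrow X\times BS^1$ for any basepoint $\ast\in BS^1$.

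Next I would invoke the K\"unneth formula. Since $H_\bullet(BS^1;\ZZ)$ is concentrated in nonnegative even degrees where it equals $\ZZ$, it is a free $\ZZ$-module, so $H_\bullet(BS^1;R)$ is a free $R$-module for any coefficient ring $R$ and the K\"unneth map
\begin{equation*}
H_\bullet(X;R)\otimes_R H_\bullet(BS^1;R)\xrightarrow{\sim} H_\bullet(X\times BS^1;R)
\end{equation*}
is an isomorphism (no Tor terms). Under this identification, $\pi_\ast$ is just $\alpha\mapsto\alpha\otimes 1$, where $1\in H_0(BS^1)$ is the class of a point. This is the inclusion of a direct summand, hence injective. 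By exactness of the Gysin sequence, $\pi^!$ vanishes, and \eqref{gysinserre} breaks up into the asserted short exact sequences.

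For the relative statement, the same argument applies using relative K\"unneth: $H_\bullet^{S^1}(X,A)=H_\bullet((X,A)\times BS^1)$, and since $H_\bullet(BS^1)$ is free the relative K\"unneth map $H_\bullet(X,A)\otimes H_\bullet(BS^1)\xrightarrow{\sim}H_\bullet((X,A)\times BS^1)$ is an isomorphism; then $\pi_\ast$ is again inclusion of a summand, hence injective. There is no real obstacle; the only point requiring care is ensuring the K\"unneth formula is unconditional, which is guaranteed by the freeness of $H_\bullet(BS^1;\ZZ)$.
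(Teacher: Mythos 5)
Your argument is correct, and it takes a slightly different route from the paper. You and the paper both begin by identifying $\pi_\ast$ with the map on homology induced by the inclusion of a fiber $X\times\{\ast\}\hookrightarrow X\times BS^1$ (using that the action is trivial, so the Borel construction is an honest product). To prove this map is injective, the paper simply observes that the composition $X\to X\times BS^1\to X$, where the second map is projection, is the identity; functoriality then gives injectivity immediately, with no hypothesis beyond naturality of homology. You instead invoke the K\"unneth theorem, which requires the extra observation that $H_\bullet(BS^1)$ is free and identifies $\pi_\ast$ as inclusion of a direct summand. Both proofs are valid and essentially the same length here, but the paper's retraction trick is the leaner tool: it sidesteps K\"unneth entirely, and would also work if $BS^1$ were replaced by any pointed space whatsoever, freeness or no freeness. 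Your K\"unneth approach does give the additional information that $H_\bullet^{S^1}(X)\cong H_\bullet(X)\otimes H_\bullet(BS^1)$ as graded modules, which is more than the lemma asks for but could be useful context. The relative case goes through in both approaches for the same reasons you indicate.
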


\begin{proof}
The composition $H_\bullet(X)\to H_\bullet(X\times BS^1)\to H_\bullet(X)$ is the identity map, and so the first map is injective.  On the other hand, this map is precisely $\pi_\ast:H_\bullet(X)\to H_\bullet^{S^1}(X)$ since the $S^1$-action is trivial, so $\pi_\ast$ is injective which is sufficient.  The same argument applies in the relative setting as well.
\end{proof}

\begin{lemma}\label{freeactionislocallytrivial}
Let $X$ be a locally compact Hausdorff $S^1$-space which is almost free at $p\in X$, and suppose that the order of the stabilizer $\#(S^1)_p$ is invertible in the ground ring $R$.  Then there exists an $S^1$-invariant neighborhood $S^1p\subseteq K\subseteq X$ so that the Euler class $e(K)\in H^2_{S^1}(K)$ vanishes.
\end{lemma}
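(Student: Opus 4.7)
The plan is to use a slice theorem to identify a neighborhood of $S^1p$ that is $S^1$-equivariantly homotopy equivalent to the orbit itself, and then to observe that the $S^1$-equivariant cohomology of that orbit vanishes in degree $2$ when $\#\Gamma$ is invertible in $R$.

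Set $\Gamma := (S^1)_p$, which is a finite cyclic subgroup of $S^1$ by the almost freeness hypothesis at $p$. Since $S^1$ is a compact Lie group acting continuously on a locally compact (hence completely regular) Hausdorff space, Palais's slice theorem furnishes a $\Gamma$-invariant subset $N \ni p$ such that the multiplication map $S^1 \times_\Gamma N \to X$, $(g,n) \mapsto g \cdot n$, is an $S^1$-equivariant homeomorphism onto an $S^1$-invariant open neighborhood of $S^1p$. I would then shrink $N$ to a $\Gamma$-invariant subneighborhood of $p$ that $\Gamma$-equivariantly deformation retracts onto $\{p\}$, and take $K$ to be the corresponding image in $X$.

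By construction $K$ is $S^1$-equivariantly homotopy equivalent to $S^1 \times_\Gamma \{p\} = S^1/\Gamma$, and computing the Borel construction gives
\[
(S^1/\Gamma) \times_{S^1} ES^1 \;=\; ES^1/\Gamma \;=\; B\Gamma,
\]
so $H^\bullet_{S^1}(K;R) \cong H^\bullet(B\Gamma;R)$. Since $\#\Gamma$ is invertible in $R$, this vanishes in positive degrees; in particular $H^2_{S^1}(K;R) = 0$, and hence the Euler class $e(K)$ vanishes tautologically.

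The main technical obstacle is producing a small $\Gamma$-equivariantly contractible neighborhood $N$ of $p$ inside the slice: when $X$ has enough local structure (e.g.\ it is a topological manifold or ANR) this is routine, but in the stated generality of an abstract locally compact Hausdorff space one must instead pass to \v Cech cohomology and use its continuity axiom, reducing the computation to $\cH^\bullet(\{p\};R) = R$ concentrated in degree zero, which gives the same vanishing on a cofinal system of small $\Gamma$-invariant neighborhoods. Either way, one obtains a $K$ of the claimed form.
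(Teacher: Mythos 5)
Your final computation matches the paper's --- $H^\bullet_{S^1}(S^1/\Gamma)\cong H^\bullet(B\Gamma;R)$, which vanishes in positive degree because $\#\Gamma$ is invertible in $R$ --- but the route you take to reduce to it has a real gap. You want $K$ to be $S^1$-equivariantly \emph{homotopy equivalent} to $S^1/\Gamma$, which forces you to find a $\Gamma$-equivariantly contractible neighborhood $N$ of $p$ inside the Palais slice. For a general locally compact Hausdorff $S^1$-space there is no reason such an $N$ exists: equivariant local contractibility is a genuine hypothesis on the local topology of $X$, and the lemma does not assume it. You correctly flag this as ``the main technical obstacle,'' but the fallback you sketch is not worked out. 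The class you need to kill lives in \emph{equivariant} cohomology, so the relevant limit is $\cH^\bullet_\Gamma(\{p\};R)\cong H^\bullet(B\Gamma;R)$ rather than the nonequivariant $\cH^\bullet(\{p\};R)=R$ that you quote; and since the Euler class here is constructed from singular Borel cohomology, you would first need to justify passing to a \v Cech model and then establish a continuity axiom for the Borel construction (which is noncompact even when $K$ is compact) before invoking it.

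The paper's proof sidesteps all of this by observing that one only needs an $S^1$-equivariant \emph{map} $r\colon K\to S^1p$, not a homotopy equivalence: since the Euler class is pulled back from $BS^1$, naturality already gives $e(K)=r^\ast e(S^1p)$, so the vanishing of $e(S^1p)$ suffices. Such an $r$ is manufactured directly by applying the Tietze extension theorem to the identity of $S^1p$ (on a compact, hence normal, neighborhood) to obtain a retraction, then averaging over $S^1$ and shrinking $K$ to make it equivariant. This places no demands on the local topology of $X$, which is exactly why the lemma holds at the stated generality. Your identification $H^\bullet_{S^1}(S^1/\Gamma)\cong H^\bullet(B\Gamma)$ is fine and is what the paper uses too; the step to replace is the contractible-slice reduction, either by the retraction-plus-naturality argument above or by fully developing the equivariant \v Cech continuity argument you allude to.
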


\begin{proof}
Apply the Tietze extension theorem to the identity map $S^1p\to S^1p$ to obtain an $S^1$-invariant neighborhood $K$ of $S^1p$ and a retraction $r:K\to S^1p$.  By averaging and passing to a smaller neighborhood, we may assume without loss of generality that $r$ is $S^1$-equivariant.  Now by the naturality of the Euler class, we have $e(K)=r^\ast e(S^1p)$.  Thus it suffices to show that $e(S^1p)\in H^2_{S^1}(S^1p)$ vanishes.  Now we have $H^2_{S^1}(S^1p)=H^2_{(S^1)_p}(p)=H^2((S^1)_p)$, where the latter is the group cohomology of the finite stabilizer group $(S^1)_p$.  It is a standard fact that the group cohomology of a finite group is annihilated by the order of the group.  Hence our assumption that $\#(S^1)_p$ is invertible in $R$ guarantees that this cohomology group vanishes.
\end{proof}

\begin{lemma}\label{SSmanifoldPDbase}
Let $M$ be a topological $S^1$-manifold of dimension $d$ which is almost free near $p\in M$.  Suppose that $\#(S^1)_p$ is invertible in the ground ring $R$.  Then there is a canonical isomorphism:
\begin{equation}
H_\bullet^{S^1}(M,M\setminus S^1p)=\begin{cases}H^1(S^1p;\oo_M)&\bullet=d-1\cr 0&\bullet\ne d-1\end{cases}
\end{equation}
(note also that $H^1(S^1p;\oo_M)=H^0(S^1p;\oo_M\otimes\oo_{S^1p}^\vee)$).
\end{lemma}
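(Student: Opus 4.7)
The plan is to pass to an $S^1$-invariant neighborhood $K$ of $S^1p$ on which the equivariant Euler class vanishes, use this vanishing to collapse the Gysin long exact sequence \eqref{gysinserre} for the pair $(K, K\setminus S^1p)$ into short exact sequences, and then extract $H^{S^1}_\bullet$ from the ordinary relative homology $H_\bullet(M, M\setminus S^1p)$, which is computed directly by Poincar\'e--Lefschetz duality.

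First, I would invoke Lemma \ref{freeactionislocallytrivial} (using the hypothesis that $\#(S^1)_p$ is invertible in $R$) to produce an $S^1$-invariant open $K \supseteq S^1p$ with $e(K) = 0 \in H^2_{S^1}(K)$. Equivariant excision, applied at the level of Borel constructions, gives $H^{S^1}_\bullet(M, M\setminus S^1p) = H^{S^1}_\bullet(K, K\setminus S^1p)$, since the closed set $M \setminus K$ is disjoint from $S^1p$. Because $e(K) = 0$, every cap product $\cap e$ in the Gysin sequence for the pair $(K, K\setminus S^1p)$ vanishes, and the sequence collapses to short exact sequences
\begin{equation*}
0 \to H^{S^1}_{\bullet-1}(K, K\setminus S^1p) \xrightarrow{\pi^!} H_\bullet(K, K\setminus S^1p) \xrightarrow{\pi_\ast} H^{S^1}_\bullet(K, K\setminus S^1p) \to 0.
\end{equation*}
Meanwhile, Poincar\'e--Lefschetz duality for the compact subset $S^1p \subset M$ of the topological $d$-manifold $M$ gives $H_\bullet(M, M\setminus S^1p) = \cH^{d-\bullet}(S^1p; \oo_M)$, supported only in degrees $\bullet \in \{d-1, d\}$ with values $H^1(S^1p; \oo_M)$ and $H^0(S^1p; \oo_M)$ respectively.

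From here the claim follows by chasing the short exact sequences: for every $\bullet \notin \{d-1, d\}$ the middle term $H_\bullet(K, K\setminus S^1p)$ vanishes, forcing $H^{S^1}_\bullet = 0$ for all $\bullet \ne d-1$; the sequence at $\bullet = d-1$ then collapses to the isomorphism $\pi_\ast \colon H_{d-1}(M, M\setminus S^1p) \xrightarrow\sim H^{S^1}_{d-1}(M, M\setminus S^1p)$, whose left side is precisely $H^1(S^1p; \oo_M)$. As an internal consistency check, the sequence at $\bullet = d$ yields a second isomorphism $\pi^! \colon H^{S^1}_{d-1} \xrightarrow\sim H_d(M, M\setminus S^1p) = H^0(S^1p; \oo_M)$; the resulting identification $H^0(S^1p;\oo_M) \cong H^1(S^1p;\oo_M)$ is just Poincar\'e duality on the orbit $S^1p \cong S^1$, which is canonically meaningful because the connected group $S^1$ acts on $M$ by orientation-preserving homeomorphisms and so $\oo_M|_{S^1p}$ is a trivial local system.

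The main point requiring care is verifying that the classical machinery (equivariant excision, the Gysin sequence for pairs, and Poincar\'e--Lefschetz duality for compact subsets) is available in our setting without any smoothness or local-flatness assumptions on the $S^1$-action. None of these presents a serious obstacle, since each is a formal consequence of its non-equivariant counterpart applied to the Borel pair; the most delicate piece is really just the preliminary application of Lemma \ref{freeactionislocallytrivial}, which already absorbs the invertibility hypothesis on $\#(S^1)_p$.
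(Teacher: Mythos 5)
Your proof is correct and follows essentially the same route as the paper's: excise to an $S^1$-invariant neighborhood where the equivariant Euler class vanishes (via Lemma \ref{freeactionislocallytrivial}), collapse the Gysin sequence into short exact sequences, and feed in Poincar\'e--Lefschetz duality for $S^1p\subseteq M$. One small inaccuracy in your optional consistency check: it is not true in general that $\oo_M|_{S^1p}$ is a trivial local system just because $S^1$ is connected; the orientation double cover inherits an $S^1$-action, but the orbit of a lift $\hat p$ can double-cover $S^1p$ when $(S^1)_p$ acts by $-1$ on $(\oo_M)_p$. The consistency check still goes through, though, because that case forces $\#(S^1)_p$ to be even, whence $2$ is invertible in $R$ and both $H^0(S^1p;\oo_M)$ and $H^1(S^1p;\oo_M)$ vanish.
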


\begin{proof}
By Poincar\'e duality, we have canonical isomorphisms:
\begin{equation}\label{PDforSSPD}
H_\bullet(M,M\setminus S^1p)=\begin{cases}H^0(S^1p;\oo_M)&\bullet=d\cr H^1(S^1p;\oo_M)&\bullet=d-1\cr 0&\text{otherwise}\end{cases}
\end{equation}
By excision and Lemma \ref{freeactionislocallytrivial}, the Gysin sequence reduces to a short exact sequence:
\begin{equation}\label{reducedgysinserreforSSPD}
0\to H_{\bullet-1}^{S^1}(M,M\setminus S^1p)\xrightarrow{\pi^!}H_\bullet(M,M\setminus S^1p)\xrightarrow{\pi_\ast}H_\bullet^{S^1}(M,M\setminus S^1p)\to 0
\end{equation}
Combining this with \eqref{PDforSSPD}, we see that $H_\bullet^{S^1}(M,M\setminus S^1p)=0$ for $\bullet\ne d-1$, and that the following are both isomorphisms:
\begin{equation}\label{gysinweirdcomposition}
H_{d-1}(M,M\setminus S^1p)\xrightarrow{\pi_\ast}H_{d-1}^{S^1}(M,M\setminus S^1p)\xrightarrow{\pi^!}H_d(M,M\setminus S^1p)
\end{equation}
which yields the desired result.
\end{proof}

\subsection{\texorpdfstring{$S^1$}{S{\textasciicircum}1}-equivariant implicit atlases}

\begin{definition}[$S^1$-equivariant implicit atlas]\label{SSimplicitatlasdef}
Let $X$ be an $S^1$-space.  An \emph{$S^1$-equivariant implicit atlas} $A$ on $X$ is an implicit atlas $A$ along with an action of $S^1$ on each thickening $X_I$ (commuting with the $\Gamma_I$-action) such that each map $\psi_{IJ}$ is $S^1$-equivariant, each function $s_\alpha$ is $S^1$-invariant, and each subset $X_I^\reg$ is $S^1$-invariant.

Similarly, we define an $S^1$-equivariant implicit atlas with boundary and/or stratification by in addition requiring that the boundary loci $\partial X_I$ and/or stratifications $X_I\to\SSS$ be $S^1$-invariant.
\end{definition}

Note that in the above definition, $S^1$ does not act on any of the obstruction spaces $E_\alpha$ (or, alternatively, it acts trivially on them).

\subsection{\texorpdfstring{$S^1$}{S{\textasciicircum}1}-equivariant orientations}

We begin with the trivial observation that if $X$ is equipped with a locally orientable $S^1$-equivariant implicit atlas $A$, then the $S^1$-action on $X$ lifts canonically to the orientation sheaf $\oo_X$.

\begin{definition}[Locally $S^1$-orientable implicit atlas]\label{virtualorSS}
Let $X$ be an $S^1$-space with $S^1$-equivariant implicit atlas with boundary.  We say that $A$ is \emph{locally $S^1$-orientable} iff it is locally orientable and for all $p\in X$, the stabilizer $(S^1)_p$ acts trivially on $(\oo_X)_p$ (this action is always by a sign $(S^1)_p\to\{\pm 1\}$).  This notion is independent of the ring $R$ (due to our restriction that $\#(S^1)_p$ be invertible in $R$).
\end{definition}

It is easy to see that if $A$ is locally $S^1$-orientable, then $\pi_\ast\oo_X$ is locally isomorphic to the constant sheaf $\underline R$ (where $\pi:X\to X/S^1$).

\begin{remark}\label{orientedthenSSoriented}
If $A$ is locally orientable and $\oo_X$ has a global section, then $A$ is automatically locally $S^1$-orientable ($S^1$ is connected, so the section must be $S^1$-invariant, hence the claim).
\end{remark}

\subsection{\texorpdfstring{$S^1$}{S{\textasciicircum}1}-equivariant virtual cochain complexes \texorpdfstring{$C^\bullet_{S^1,\vir}(X;A)$}{C\_S{\textasciicircum}1,vir(X;A)} and \texorpdfstring{$C^\bullet_{S^1,\vir}(X\rel\partial;A)$}{C\_S{\textasciicircum}1,vir(X rel d;A)}}\label{SSCconstruct}

To define the $S^1$-equivariant virtual cochain complexes, we must first fix a model $C_\bullet^{S^1}$ of $S^1$-equivariant chains to work with.

\begin{remark}
If we used the language of $\infty$-categories, there would be no need to construct models of chains with good (chain level) functoriality properties (c.f.\ Remark \ref{independenceofchainmodel}).
\end{remark}

We begin by stating the (chain level) properties we would like our model $C_\bullet^{S^1}$ to satisfy.  We want a functor $C_\bullet^{S^1}$ from spaces to chain complexes of free $\ZZ$-modules (and then we can tensor up to any base ring $R$); we also demand that $C_\bullet^{S^1}(A)\to C_\bullet^{S^1}(X)$ be injective for $A\subseteq X$ (and then we define relative chains $C_\bullet^{S^1}(X,A)$ as the cokernel).  Now we need there to be functorial maps:
\begin{align}
\label{pishriek}C_\bullet^{S^1}(X)&\xrightarrow{\pi^!}C_{\bullet+1}(X)\\
\label{chainproducteq2}C_\bullet^{S^1}(X)\otimes C_\bullet(Y)&\to C_\bullet^{S^1}(X\times Y)
\end{align}
realizing (respectively) the Gysin map and the obvious product map.  The map \eqref{chainproducteq2} must be compatible with the Eilenberg--Zilber map on $C_\bullet$ in that the two ways of building up the following map coincide:
\begin{equation}
C_\bullet^{S^1}(X)\otimes C_\bullet(Y)\otimes C_\bullet(Z)\to C_\bullet^{S^1}(X\times Y\times Z)
\end{equation}
Moreover, \eqref{pishriek} and \eqref{chainproducteq2} must be compatible in that the following diagram commutes:
\begin{equation}
\begin{CD}
C_\bullet^{S^1}(X)\otimes C_\bullet(Y)@>>>C_\bullet^{S^1}(X\times Y)\cr
@VVV@VVV\cr
C_{\bullet+1}(X)\otimes C_\bullet(Y)@>>>C_{\bullet+1}(X\times Y)\cr
\end{CD}
\end{equation}

To define $C_\bullet^{S^1}$ with the aforementioned properties, let us recall the construction of the Serre spectral sequence due to Dress \cite{dress}.  For a Serre fibration $\pi:E\to B$, we consider diagrams of the form:
\begin{equation}
\begin{CD}
\Delta^p\times\Delta^q@>>>E\cr
@VVV@VV\pi V\cr
\Delta^p@>>>B
\end{CD}
\end{equation}
Let $C_{p,q}(\pi:E\to B)$ denote the free abelian group generated by such diagrams.  Then the direct sum of all of these $C_{\bullet,\bullet}(\pi:E\to B)$ is a double complex (differentials corresponding to the two pieces of boundary $\partial\Delta^p\times\Delta^q$ and $\Delta^p\times\partial\Delta^q$).  We let $C_\bullet(\pi:E\to B)$ denote the corresponding total complex.  There is a natural map $C_\bullet(\pi:E\to B)\to C_\bullet(E)$, where we subdivide $\Delta^p\times\Delta^q$ in the usual way.  By considering the spectral sequence associated to the filtration by $q$, Dress showed that this map is a quasi-isomorphism.  Dress also showed that the spectral sequence associated to the filtration by $p$ is the Serre spectral sequence; in particular, the $E^2_{p,q}$ term is $H_p(B,H_q(F))$.

To define $C_\bullet^{S^1}$, fix once and for all an $ES^1$.  Then for any space $X$, the map $\pi:X\times ES^1\to(X\times ES^1)/S^1$ is a principal $S^1$-bundle, so \emph{a fortiori} it is a Serre fibration.  We define:
\begin{multline}\label{qdegreedef}
C_{\bullet-1}^{S^1}(X):=\Bigl\{\gamma\in C_\bullet(\pi:X\times ES^1\to(X\times ES^1)/S^1)\Bigm|\\
\gamma,d\gamma\text{ have no component with $q$-degree}<1\Bigr\}
\end{multline}
The inclusion $C_{\bullet-1}^{S^1}(X)\hookrightarrow C_\bullet(\pi:X\times ES^1\to(X\times ES^1)/S^1)$ is compatible with the $p$-grading on each.  Let us consider the associated morphism of spectral sequences (induced by the $p$-filtration).  The latter has $E^2_{p,q}$ term $H_p^{S^1}(X,H_q(S^1))$ by Dress.  It follows from the definition \eqref{qdegreedef} that the $E^2_{p,q}$ term of the former is the same in degrees $q\geq 1$ and zero otherwise.  Since $H_q(S^1)$ is nonzero only for $q\leq 1$, it follows that the former spectral sequence has no further differentials, and we conclude that the homology of $C_\bullet^{S^1}(X)$ is indeed $H_\bullet^{S^1}(X)$ as needed.

Now let us define the maps \eqref{pishriek} and \eqref{chainproducteq2} and verify the required properties.  The map \eqref{pishriek} is obtained by the standard subdivision of $\Delta^p\times\Delta^q$ into simplices along with the projection map $X\times ES^1\to X$.  The map \eqref{chainproducteq2} is defined as follows.  Given maps $\Delta^p\times\Delta^q\to X\times ES^1$ and $\Delta^{p'}\to Y$, we obtain a map $\Delta^{p'}\times\Delta^p\times\Delta^q\to Y\times X\times ES^1$ and we subdivide $\Delta^{p'}\times\Delta^p$.  The required properties are then straightforward to verify.

\begin{definition}[$S^1$-equivariant virtual cochain complexes $C_{S^1,\vir}^\bullet(-;A)$ (and $_{IJ}$)]
Let $X$ be an $S^1$-space with finite $S^1$-equivariant implicit atlas with boundary $A$.  For any $S^1$-invariant compact $K\subseteq X$, we define:
\begin{align}
\label{bigSSpartial}C^\bullet_{S^1,\vir}(K;A)_{IJ}\quad&C^\bullet_{S^1,\vir}(K\rel\partial;A)_{IJ}\\
\label{bigSS}C^\bullet_{S^1,\vir}(K;A)\phantom{_{IJ}}\quad&C^\bullet_{S^1,\vir}(K\rel\partial;A)
\end{align}
as in Definitions \ref{CSTcomplexdef}--\ref{Ccomplexdef}, except using $C_{\bullet-1}^{S^1}$ in place of $C_\bullet$ in \eqref{CcomplexcriticaldefI}--\eqref{CcomplexcriticaldefII}.  It is clear that \eqref{bigSSpartial}--\eqref{bigSS} are complexes of $\K$-presheaves on $X/S^1$ (replace $K$ with $\pi^{-1}(K)$, where $\pi:X\to X/S^1$).  The Gysin map \eqref{pishriek} induces ``comparison maps'':
\begin{equation}\label{CSStimesSS}
C^\bullet_{S^1,\vir}(-;A)_{(IJ)}\to C^\bullet_\vir(-;A)_{(IJ)}
\end{equation}
for all flavors \eqref{bigSSpartial}--\eqref{bigSS}

Analogously with \eqref{Erelpartialpullback}--\eqref{Epushforward1} and \eqref{Frelpartialpullback}--\eqref{Fpushforward}, there are natural maps (compatible with \eqref{CSStimesSS}):
\begin{align}
C^\bullet_{S^1,\vir}(K\rel\partial;A)_{IJ}&\to C^\bullet_{S^1,\vir}(K;A)_{IJ}\\
C^{d+\bullet}_{S^1,\vir}(X\rel\partial;A)_{IJ}&\xrightarrow{s_\ast}C_{-\bullet-1}^{S^1}(E;A)\\
C^\bullet_{S^1,\vir}(-;A)_{IJ}&\to C^\bullet_{S^1,\vir}(-;A)_{I',J'}\\
\label{CSSrelpartialpullback}C^\bullet_{S^1,\vir}(K\rel\partial;A)&\to C^\bullet_{S^1,\vir}(K;A)\\
\label{CSSpushforward}C^{d+\bullet}_{S^1,\vir}(X\rel\partial;A)&\xrightarrow{s_\ast}C_{-\bullet-1}^{S^1}(E;A)
\end{align}
Analogously with \eqref{Epushforward1more}--\eqref{Fpushforward1}, there are natural maps (compatible with \eqref{CSStimesSS}):
\begin{align}
C^\bullet_{S^1,\vir}(-;A)_{IJ}\otimes C_{-\bullet}(E;A'\setminus A)&\to C^\bullet_{S^1,\vir}(-;A')_{I',J'}\\
C^\bullet_{S^1,\vir}(-;A)\otimes C_{-\bullet}(E;A'\setminus A)&\to C^\bullet_{S^1,\vir}(-;A')
\end{align}
\end{definition}

\subsection{Isomorphisms \texorpdfstring{$H^\bullet_{S^1,\vir}(X;A)=\cH^\bullet(X/S^1;\pi_\ast\oo_X)$}{H\_S{\textasciicircum}1,vir(X;A)=H(X/S{\textasciicircum}1,pi o\_X)} (also \texorpdfstring{$\rel\partial$}{rel d})}

\begin{lemma}[$C^\bullet_{S^1,\vir}(-;A)_{IJ}$ are pure homotopy $\K$-sheaves]\label{CSTpureSS}
Let $X$ be an almost free $S^1$-space with finite locally $S^1$-orientable $S^1$-equivariant implicit atlas with boundary $A$.  Then $C^\bullet_{S^1,\vir}(-;A)_{IJ}$ and $C^\bullet_{S^1,\vir}(-\rel\partial;A)_{IJ}$ are pure homotopy $\K$-sheaves on $X/S^1$.  Furthermore, there are canonical isomorphisms of sheaves on $X/S^1$:
\begin{align}
H^0_{S^1,\vir}(-;A)_{IJ}&=\pi_\ast j_!j^\ast\oo_X\\
H^0_{S^1,\vir}(-\rel\partial;A)_{IJ}&=\pi_\ast j_!j^\ast\oo_{X\rel\partial}
\end{align}
where $j:V_I\cap V_J\hookrightarrow X$ for $V_I:=\psi_{\varnothing I}((s_I|X_I^\reg)^{-1}(0))\subseteq X$ and $\pi:X\to X/S^1$.
\end{lemma}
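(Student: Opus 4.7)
The strategy mirrors the proof of Lemma \ref{CSTpure}, with the main substantive change being the use of an $S^1$-equivariant version of Poincar\'e--Lefschetz duality in place of the ordinary one. I would first address the homotopy $\K$-sheaf property. Since our model $C_\bullet^{S^1}$ is built as a subcomplex of $C_\bullet(\pi: X \times ES^1 \to (X \times ES^1)/S^1)$ via the $q$-degree condition \eqref{qdegreedef}, the Mayer--Vietoris property for $S^1$-equivariant chains is inherited termwise from the ordinary one on each bidegree piece $C_{p,q}$ (the defining $q$-degree inequality is preserved by both differentials). The $S^1$-equivariant analogues of Lemmas \ref{singularchainshomotopyKsheaf} and \ref{associatedgradedhomotopysheaf} then give that $C^\bullet_{S^1,\vir}(-;A)_{IJ}$ and $C^\bullet_{S^1,\vir}(-\rel\partial;A)_{IJ}$ are homotopy $\K$-sheaves on $X/S^1$.

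The substantive step is the stalk computation. I would first check that the $S^1$-action on $X_J^\reg$ is almost free in a neighborhood of $\psi_{\varnothing J}^{-1}(X)$: the submersion axiom, $S^1$-equivariance of $\psi_{IJ}$, and the triviality of the $S^1$-action on the $E_\alpha$ together provide local $S^1$-equivariant product decompositions near $(s_J|X_J^\reg)^{-1}(0)$ with trivial action on the $E_J$-factor, reducing almost freeness there to almost freeness of the action on $X = X_\varnothing$ (propagated up along the $\Gamma_J$-cover $\psi_{\varnothing J}$). Hence $S^1$ acts almost freely on $X_{I,J,A}^K$ for $S^1$-invariant compact $K \subseteq X$ chosen small enough. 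The key technical input is then an $S^1$-equivariant Poincar\'e--Lefschetz duality for a topological $n$-manifold with boundary $M$ carrying an $S^1$-action which is almost free on a neighborhood of a compact invariant subset $N$, producing a canonical isomorphism
\[
H_{n-\bullet}^{S^1}(M, M\setminus N) \;=\; \cH^{\bullet-1}(\pi(N);\,\pi_\ast\oo_{M\rel\partial}).
\]
Granted this, the argument of Lemma \ref{CSTpure} runs verbatim: apply the duality to $X_{I,J,A}$ (a manifold with boundary of dimension $\vdim_A X + \dim E_A + \#I$), strip off the $\RR_{>0}^I$ factor by K\"unneth (trivial $S^1$-action there), use local $S^1$-orientability (Definition \ref{virtualorSS}) to rewrite $\oo_{X_J^\reg} \otimes \oo_{E_J}^\vee$ as $\psi_{\varnothing J}^\ast \oo_X$ with its canonical $S^1$-equivariant structure, invoke the analogue of Lemma \ref{cechcohomologyfinitequotient} downstairs on the $X/S^1$ side to pass to the $\Gamma_A$-quotient (valid because $\Gamma_A$ and $S^1$ actions commute and $\#\Gamma_\alpha$ is invertible), and conclude using Lemma \ref{cptcechembeddingproperty}.

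The main obstacle is the $S^1$-equivariant Poincar\'e--Lefschetz duality above. I would prove it by the same sheaf-theoretic method used in Lemma \ref{poincareduality}: both sides, viewed as functors in the compact subset $N$, are homotopy $\K$-sheaves on any fixed compact invariant ambient subset, so it suffices to construct a natural chain-level comparison map and check it is a quasi-isomorphism on stalks. The stalk verification reduces to the case $N = S^1 p$, which is exactly Lemma \ref{SSmanifoldPDbase}; here Lemma \ref{freeactionislocallytrivial} ensures that the Gysin sequence collapses into the short exact sequence \eqref{reducedgysinserreforSSPD} on a sufficiently small $S^1$-invariant neighborhood, turning the composition \eqref{gysinweirdcomposition} into an isomorphism. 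The unit degree shift relative to the non-equivariant Lemma \ref{CSTpure} is the sheaf-theoretic manifestation of the one-dimensional fiber of $\pi:X \to X/S^1$, and is exactly what is built into $C^\bullet_{S^1,\vir}$ through the use of $C_{\bullet-1}^{S^1}$ in place of $C_\bullet$ in the definitions \eqref{CcomplexcriticaldefI}--\eqref{CcomplexcriticaldefII}.
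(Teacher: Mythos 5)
Your conclusion is correct, but your route to purity is genuinely different from the paper's. The paper does not formulate a global $S^1$-equivariant Poincar\'e--Lefschetz duality: for purity it reduces (via Lemma~\ref{openclosedpurity}) to the open set $(V_I\cap V_J)/S^1$, observes that the comparison maps $\pi^!:H^\bullet_{S^1,\vir}(K;A)_{IJ}\to H^\bullet_\vir(K;A)_{IJ}$ are injective for small $S^1$-invariant compact $K\supseteq S^1p$ (the Euler class vanishing there by Lemma~\ref{freeactionislocallytrivial}), and then extracts purity and weak vanishing from the non-equivariant Lemma~\ref{CSTpure} through the resulting Gysin short exact sequences; the stalk duality of Lemma~\ref{SSmanifoldPDbase} is invoked only at $K=S^1p$ to identify $H^0$. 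Your plan---one global $S^1$-equivariant duality from which both purity and the $H^0$-identification follow---is also viable and more uniform, but the paper's route is more economical because it piggybacks on Lemma~\ref{CSTpure} rather than re-deriving a duality, and it sidesteps the delicate question of exactly which orientation local systems enter the general statement.

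On that last point, the twist in your proposed duality is slightly off: at $N=S^1p$ your right-hand side $\cH^{\bullet-1}(\pi(N);\pi_\ast\oo_{M\rel\partial})$ produces $H^0(S^1p;\oo_M)$, whereas Lemma~\ref{SSmanifoldPDbase} gives $H^1(S^1p;\oo_M)=H^0(S^1p;\oo_M\otimes\oo_{S^1p}^\vee)$, so a factor of the orbit-orientation line is missing and must be trivialized using the canonical $S^1$-orientation of the orbits before your formula is literally correct. Two smaller remarks: your appeal to the submersion axiom to establish almost-freeness of $S^1$ on $X_J^\reg$ near $\psi_{\varnothing J}^{-1}(X)$ is not valid---the axiom gives a topological, not an $S^1$-equivariant, product structure---but the conclusion holds for the simpler reason that the almost-free locus is open (its complement being the closed fixed-point set $(X_J^\reg)^{S^1}$) and contains $\psi_{\varnothing J}^{-1}(X)$. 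And your $q$-degree filtration argument for Mayer--Vietoris of $C_\bullet^{S^1}$ does not immediately go through as stated, since \eqref{qdegreedef} constrains $d\gamma$ as well as $\gamma$; the paper simply invokes Mayer--Vietoris for $C_\bullet^{S^1}$ as a model of equivariant chains.
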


\begin{proof}
As in the proof of Lemma \ref{CSTpure}, we use Lemmas \ref{singularchainshomotopyKsheaf} and \ref{associatedgradedhomotopysheaf} to see that $C^\bullet_{S^1,\vir}(-;A)_{IJ}$ and $C^\bullet_{S^1,\vir}(-\rel\partial;A)_{IJ}$ are homotopy $\K$-sheaves.  More precisely, in the present context we need the statement of Lemma \ref{singularchainshomotopyKsheaf} for $C_\bullet^{S^1}$ in place of $C_\bullet$.  The proof of this lemma applies equally well to $C_\bullet^{S^1}$, noting that since $C_\bullet^{S^1}$ is a model of $S^1$-equivariant chains, it in particular satisfies Mayer--Vietoris in that following (total complex) is acyclic:
\begin{equation}
C_\bullet^{S^1}(U\cap U')\to C_\bullet^{S^1}(U)\oplus C_\bullet^{S^1}(U')\to C_\bullet^{S^1}(U\cup U')
\end{equation}

Now let us show purity.  By Lemma \ref{openclosedpurity}, it suffices to show that the restrictions of $C^\bullet_{S^1,\vir}(-;A)_{IJ}$ and $C^\bullet_{S^1,\vir}(-\rel\partial;A)_{IJ}$ to $(V_I\cap V_J)/S^1$ and to $X/S^1\setminus(V_I\cap V_J)/S^1$ are pure.  The latter restriction is trivially pure, since both complexes are simply zero for $K\cap V_I\cap V_J=\varnothing$.  Hence it suffices to show that the restrictions of both complexes to $(V_I\cap V_J)/S^1$ are pure.

We consider the comparison maps:
\begin{align}
\label{HSTinjective}H^\bullet_{S^1,\vir}(K;A)_{IJ}&\to H^\bullet_\vir(K;A)_{IJ}\\
\label{HSTinjectiverel}H^\bullet_{S^1,\vir}(K\rel\partial;A)_{IJ}&\to H^\bullet_\vir(K\rel\partial;A)_{IJ}
\end{align}
These are Gysin maps $\pi^!$, and hence by the Gysin sequence, their kernels coincide (respectively) with the images of:
\begin{align}
\label{HSTzero}H^{\bullet-2}_{S^1,\vir}(K;A)_{IJ}&\xrightarrow{\cap e}H^\bullet_{S^1,\vir}(K;A)_{IJ}\\
\label{HSTzerorel}H^{\bullet-2}_{S^1,\vir}(K\rel\partial;A)_{IJ}&\xrightarrow{\cap e}H^\bullet_{S^1,\vir}(K\rel\partial;A)_{IJ}
\end{align}
where $e$ is (the pullback of) $e\in H^2_{S^1}(X_J^\reg/\Gamma_J)$.  Pick any $p\in V_J\subseteq X$.  By Lemma \ref{freeactionislocallytrivial}, this $e$ vanishes when restricted to small $S^1$-invariant compact neighborhoods of $S^1p\subseteq V_J=(s_J|X_J^\reg)^{-1}(0)/\Gamma_J\subseteq X_J^\reg/\Gamma_J$.  It follows that \eqref{HSTzero}--\eqref{HSTzerorel} vanish for small $S^1$-invariant compact neighborhoods $K$ of $S^1p\subseteq V_J$.  Hence \eqref{HSTinjective}--\eqref{HSTinjectiverel} are injective for such $K$.  Now, using this injectivity and the fact that $C^\bullet_\vir(-;A)_{IJ}$ and $C^\bullet_\vir(-\rel\partial;A)_{IJ}$ are pure homotopy $\K$-sheaves (Lemma \ref{CSTpure}), it follows that $C^\bullet_{S^1,\vir}(-;A)_{IJ}$ and $C^\bullet_{S^1,\vir}(-\rel\partial;A)_{IJ}$ are pure on $V_J/S^1$, and hence on all of $X/S^1$.

It remains to identify $H^0_{S^1,\vir}(-;A)_{IJ}$ and $H^0_{S^1,\vir}(-\rel\partial;A)_{IJ}$.  Consider the comparison maps:
\begin{align}
H^0_{S^1,\vir}(-;A)_{IJ}&\to\pi_\ast H^0_\vir(-;A)_{IJ}\\
H^0_{S^1,\vir}(-\rel\partial;A)_{IJ}&\to\pi_\ast H^0_\vir(-\rel\partial;A)_{IJ}
\end{align}
which are maps of sheaves on $X/S^1$.  It suffices (by Lemma \ref{CSTpure}) to show that these are isomorphisms (which we will check on stalks, i.e.\ for $K=S^1p$).  Now this is just a calculation, similar to that in the proof of Lemma \ref{CSTpure}, except using an $S^1$-equivariant version of Poincar\'e--Lefschetz duality based on Lemma \ref{SSmanifoldPDbase} in place of Lemma \ref{poincareduality} (and it is in this calculation where we use the local $S^1$-orientability of $A$).
\end{proof}

\begin{proposition}[$C^\bullet_{S^1,\vir}(-;A)$ are pure homotopy $\K$-sheaves]\label{CpureSS}
Let $X$ be an almost free $S^1$-space with finite locally $S^1$-orientable $S^1$-equivariant implicit atlas with boundary $A$.  Then $C^\bullet_{S^1,\vir}(-;A)$ and $C^\bullet_{S^1,\vir}(-\rel\partial;A)$ are pure homotopy $\K$-sheaves on $X/S^1$.  Furthermore, there are canonical isomorphisms of sheaves on $X/S^1$:
\begin{align}
H^0_{S^1,\vir}(-;A)&=\pi_\ast\oo_X\\
H^0_{S^1,\vir}(-\rel\partial;A)&=\pi_\ast\oo_{X\rel\partial}
\end{align}
where $\pi:X\to X/S^1$.
\end{proposition}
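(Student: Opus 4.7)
The plan is to mimic the proof of Proposition \ref{Cpure} line-by-line, using Lemma \ref{CSTpureSS} in place of Lemma \ref{CSTpure}. The key observation is that the $S^1$-equivariant virtual cochain complexes $C^\bullet_{S^1,\vir}(-;A)$ and $C^\bullet_{S^1,\vir}(-\rel\partial;A)$ are defined as homotopy colimits over $I\subseteq J\subseteq A$ of their partial counterparts $C^\bullet_{S^1,\vir}(-;A)_{IJ}$ and $C^\bullet_{S^1,\vir}(-\rel\partial;A)_{IJ}$, with exactly the same normalization factors $(\#\Gamma_{J\setminus J'})^{-1}$ as in Definition \ref{Ccomplexdef}. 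Thus the situation is a direct application of Lemma \ref{gluingcechsimple}, with the relevant open cover of $X/S^1$ being $\pi(V_I)$ where $V_I:=\psi_{\varnothing I}((s_I|X_I^\reg)^{-1}(0))\subseteq X$ and $\pi:X\to X/S^1$.

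First I would verify the combinatorial hypotheses of Lemma \ref{gluingcechsimple}. The $S^1$-invariance of each $V_I$ (immediate from the axioms of an $S^1$-equivariant implicit atlas, Definition \ref{SSimplicitatlasdef}) ensures that the sets $\pi(V_I)\subseteq X/S^1$ form an open cover, and the set-theoretic identities $V_I\cap V_K\subseteq V_J$ for $I\subseteq J\subseteq K$ and $V_I\cap V_{I'}\subseteq V_{I\cup I'}$ descend verbatim to $X/S^1$. By Lemma \ref{CSTpureSS}, each $C^\bullet_{S^1,\vir}(-;A)_{IJ}$ and $C^\bullet_{S^1,\vir}(-\rel\partial;A)_{IJ}$ is a pure homotopy $\K$-sheaf on $X/S^1$ with $H^0$ identified respectively with $\pi_\ast j_!j^\ast\oo_X$ and $\pi_\ast j_!j^\ast\oo_{X\rel\partial}$ (where $j:V_I\cap V_J\hookrightarrow X$), which is precisely the local structure that Lemma \ref{gluingcechsimple} requires.

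The only remaining point is to check that the system of isomorphisms supplied by Lemma \ref{CSTpureSS} is compatible with the structure maps in the homotopy diagram indexing the $\hocolim$; this was the place in the non-equivariant proof where the normalization factor $(\#\Gamma_{J\setminus J'})^{-1}$ played a crucial role. Since the $S^1$-action does not affect either the $\Gamma_I$-action or the obstruction spaces $E_\alpha$, and since the comparison maps \eqref{CSStimesSS} induced by the Gysin map $\pi^!$ of \eqref{pishriek} are compatible with all structure maps by construction, the entire compatibility check reduces (via Lemma \ref{SSmanifoldPDbase}, which was already invoked in the identification of the $H^0$ in Lemma \ref{CSTpureSS}) to the identical local calculation in the proof of Proposition \ref{Cpure}. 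As there, it suffices to check this compatibility on stalks, i.e.\ at orbits $S^1p\subseteq X$, and the result will follow.

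The main obstacle I anticipate is bookkeeping the $S^1$-equivariant Poincar\'e--Lefschetz duality compatibility at the stalk level: one must verify that as $J$ varies, the Gysin-style identifications built into Lemma \ref{SSmanifoldPDbase} are compatible with the pushforward maps \eqref{pushforward} relating the deformations to the normal cone $X_{I,J,A}$, up to exactly the factor $(\#\Gamma_{J\setminus J'})^{-1}$. Once this has been verified at a single orbit (which is where the hypothesis that $\#(S^1)_p$ be invertible in $R$ enters, via our standing convention), Lemma \ref{gluingcechsimple} assembles the local information into the stated global conclusion, completing the proof.
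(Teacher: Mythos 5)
Your proposal is correct and follows the same approach as the paper, which simply remarks that Proposition \ref{CpureSS} has the same proof as Proposition \ref{Cpure} with Lemma \ref{CSTpureSS} substituted for Lemma \ref{CSTpure}; you have correctly identified the invocation of Lemma \ref{gluingcechsimple}, the cover by $\pi(V_I)$, and the stalk-level compatibility check where the normalization factor $(\#\Gamma_{J\setminus J'})^{-1}$ enters.
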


\begin{proof}
This is exactly analogous to Proposition \ref{Cpure} and has the same proof (using Lemma \ref{CSTpureSS} in place of Lemma \ref{CSTpure}).
\end{proof}

\begin{theorem}[Calculation of $H^\bullet_{S^1,\vir}$]\label{Slocalfundamentaliso}
Let $X$ be an almost free $S^1$-space with finite locally $S^1$-orientable $S^1$-equivariant implicit atlas with boundary $A$.  Then there are canonical isomorphisms fitting (as the top horizontal maps) into commutative diagrams:
\begin{equation*}%%unnumbered\label{Sonepullbackcommutes}
\begin{CD}
H^\bullet_{S^1,\vir}(X;A)@>\sim>>\cH^\bullet(X/S^1,\pi_\ast\oo_X)\cr
@V\eqref{CSStimesSS}VV@V\pi^\ast VV\cr
H^\bullet_\vir(X;A)@>\text{Thm \ref{Fsheaffundamentaliso}}>>\cH^\bullet(X,\oo_X)
\end{CD}
\qquad
\begin{CD}
H^\bullet_{S^1,\vir}(X\rel\partial;A)@>\sim>>\cH^\bullet(X/S^1,\pi_\ast\oo_{X\rel\partial})\cr
@V\eqref{CSStimesSS}VV@V\pi^\ast VV\cr
H^\bullet_\vir(X\rel\partial;A)@>\text{Thm \ref{Fsheaffundamentaliso}}>>\cH^\bullet(X,\oo_{X\rel\partial})
\end{CD}
\end{equation*}
\end{theorem}

\begin{proof}
Consider the following diagram:
\begin{equation*}%%unnumbered\label{bigsquare}
\begin{tikzcd}
H^\bullet_{S^1,\vir}(X;A)\ar{d}{\sim}\ar{rr}{\eqref{CSStimesSS}}&&H^\bullet_\vir(X;A)\ar{d}{\sim}\\
\cH^\bullet(X/S^1;C^\bullet_{S^1,\vir}(-;A))\ar{r}{\eqref{CSStimesSS}}\ar{d}{\sim}&\cH^\bullet(X/S^1,\pi_\ast C^\bullet_\vir(-;A))\ar{r}{\pi^\ast}\ar{d}{\sim}&\cH^\bullet(X;C^\bullet_\vir(-;A))\ar{d}{\sim}\\
\cH^\bullet(X/S^1;\tau_{\geq 0}C^\bullet_{S^1,\vir}(-;A))\ar{r}{\eqref{CSStimesSS}}&\cH^\bullet(X/S^1,\pi_\ast\tau_{\geq 0}C^\bullet_\vir(-;A))\ar{r}{\pi^\ast}&\cH^\bullet(X;\tau_{\geq 0}C^\bullet_\vir(-;A))\\
\cH^\bullet(X/S^1;H^0_{S^1,\vir}(-;A))\ar{u}[swap]{\sim}\ar{r}{\eqref{CSStimesSS}}\ar{d}{\sim}&\cH^\bullet(X/S^1,\pi_\ast H^0_\vir(-;A))\ar{r}{\pi^\ast}\ar{u}\ar{d}{\sim}&\cH^\bullet(X;H^0_\vir(-;A))\ar{u}[swap]{\sim}\ar{d}{\sim}\\
\cH^\bullet(X/S^1;\pi_\ast\oo_X)\ar[equal]{r}&\cH^\bullet(X/S^1,\pi_\ast\oo_X)\ar{r}{\pi^\ast}&\cH^\bullet(X;\oo_X)
\end{tikzcd}
\end{equation*}
where the vertical isomorphisms are by Propositions \ref{CpureSS} and \ref{Cpure} (see also \eqref{isosforpurehtpysheafcech}).

Now each small square of this diagram above commutes (only the bottom left requires an argument; one shows that the corresponding diagram of sheaves commutes by checking it locally, where it is just a calculation).  Now the first square in Theorem \ref{Slocalfundamentaliso} is the same as the big square above, which commutes by a diagram chase (for which one should be careful of the fact that one of the vertical arrows is not an isomorphism).

An identical argument applies to the second square in Theorem \ref{Slocalfundamentaliso}.
\end{proof}

\subsection{Localization for virtual fundamental classes}\label{fclocalizationsection}

\begin{theorem}[{$S^1$-localization for $[X]^\vir$}]\label{fclocalization}
Let $X$ be an almost free $S^1$-space with locally $S^1$-orientable $S^1$-equivariant implicit atlas with boundary $A$.  Then $\pi_\ast[X]^\vir=0$, where $\pi_\ast=(\pi^\ast)^\vee:\cH^\bullet(X;\oo_{X\rel\partial})^\vee\to\cH^\bullet(X/S^1;\pi_\ast\oo_{X\rel\partial})^\vee$.
\end{theorem}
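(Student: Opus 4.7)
The plan is to lift the calculation defining $[X]^\vir$ to the $S^1$-equivariant setting using the complexes $C^\bullet_{S^1,\vir}$ from \S\ref{SSCconstruct} and exploit the fact that $S^1$ acts trivially on the obstruction spaces $E_\alpha$ (Definition \ref{SSimplicitatlasdef}).

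First I would choose a finite subatlas $B\subseteq A$, which exists by compactness. Since $S^1$ acts on every thickening of $A$, the subatlas $B$ inherits the structure of a locally $S^1$-orientable $S^1$-equivariant implicit atlas with boundary (the orientation sheaf is canonically identified under restriction to a subatlas). By Lemma \ref{sameassubatlas}, $[X]^\vir_A = [X]^\vir_B$, so it suffices to work with $B$.

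Next, I would assemble the diagram
\begin{equation*}
\begin{tikzcd}[column sep = small]
\cH^d(X/S^1;\pi_\ast\oo_{X\rel\partial}) \ar[equal]{r} \ar{d}{\pi^\ast} & H^d_{S^1,\vir}(X\rel\partial;B) \ar{r}{s_\ast} \ar{d} & H^{S^1}_{\dim E_B-1}(E_B,E_B\setminus 0;\oo_{E_B}^\vee)^{\Gamma_B} \ar{d}{\pi^!} & \\
\cH^d(X;\oo_{X\rel\partial}) \ar[equal]{r} & H^d_\vir(X\rel\partial;B) \ar{r}{s_\ast} & H_{\dim E_B}(E_B,E_B\setminus 0;\oo_{E_B}^\vee)^{\Gamma_B} \ar{r}{[E_B]\mapsto 1} & R
\end{tikzcd}
\end{equation*}
By Definition \ref{fclassdefinition}, the bottom row composed is pairing against $[X]^\vir$, so the total composition from the upper-left corner down to the bottom-right $R$ equals $\pi_\ast[X]^\vir$ evaluated at an arbitrary class. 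The left square commutes by Theorem \ref{Slocalfundamentaliso}, and the middle square is the homology of the chain-level compatibility \eqref{SSpushforwardcompatiblity} (for $\bullet=0$, and with $A$ replaced by $B$). Consequently $\pi_\ast[X]^\vir$ is also computed by traveling right along the top row and then down through $\pi^!$ (followed by the augmentation).

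Finally, by Definition \ref{SSimplicitatlasdef}, the $S^1$-action on each $E_\alpha$ is trivial, hence $S^1$ acts trivially on the pair $(E_B,E_B\setminus 0)$. Lemma \ref{trivialactiongysin} then implies that the rightmost vertical Gysin map $\pi^!$ vanishes; since the Gysin map is equivariant for $\Gamma_B$ and for the orientation twist, the vanishing passes to the invariant subquotient. Therefore the outer composite is zero, which forces $\pi_\ast[X]^\vir=0$. The main obstacle in this argument has already been overcome by the construction of $C^\bullet_{S^1,\vir}$ and its key properties (Proposition \ref{CpureSS}, Theorem \ref{Slocalfundamentaliso}, and the compatibility \eqref{SSpushforwardcompatiblity}); given these, the localization statement reduces to a diagram chase combined with the classical Gysin vanishing for trivial $S^1$-actions.
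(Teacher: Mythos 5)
Your proof is correct and follows essentially the same route as the paper: reduce to a finite subatlas, assemble the commutative diagram comparing $H^\bullet_{S^1,\vir}$ with $H^\bullet_\vir$ via Theorem \ref{Slocalfundamentaliso} and \eqref{SSpushforwardcompatiblity}, and conclude via the vanishing of the Gysin map $\pi^!$ on $(E_B,E_B\setminus 0)$ from Lemma \ref{trivialactiongysin} since $S^1$ acts trivially on $E_B$. The paper simply writes ``we may assume $A$ is finite'' where you invoke Lemma \ref{sameassubatlas}, but the content is identical.
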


\begin{proof}
We may assume that $A$ is finite.  Now by Theorem \ref{Slocalfundamentaliso}, we have a commutative diagram:
\begin{equation*}%unnumbered
\begin{tikzcd}
\cH^{d+\bullet}(X/S^1;\pi_\ast\oo_{X\rel\partial})\ar{d}{\pi^\ast}\ar[equal]{r}{\text{Thm \ref{Slocalfundamentaliso}}}&H^{d+\bullet}_{S^1,\vir}(X\rel\partial;A)\ar{d}{\eqref{CSStimesSS}}\ar{r}{\eqref{CSSpushforward}}&H_{-\bullet-1}^{S^1}(E;A)\ar{d}{\pi^!}\\
\cH^{d+\bullet}(X;\oo_{X\rel\partial})\ar[equal]{r}{\text{Thm \ref{Fsheaffundamentaliso}}}&H^{d+\bullet}_\vir(X\rel\partial;A)\ar{r}{\eqref{Fpushforward}}&H_{-\bullet}(E;A)\ar{r}{[E_A]\mapsto 1}&R
\end{tikzcd}
\end{equation*}
By definition, $[X]^\vir$ is the total composition of the bottom row for $\bullet=0$.  Now the desired statement follows since the rightmost vertical map is zero (by Lemma \ref{trivialactiongysin}, because $S^1$ acts trivially on $E_A$).
\end{proof}

\begin{corollary}
Let $X$ be an almost free $S^1$-space with locally $S^1$-orientable $S^1$-equivariant implicit atlas with boundary $A$ of dimension $0$.  Then $[X]^\vir=0$.
\end{corollary}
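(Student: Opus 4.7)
The plan is to deduce this corollary directly from Theorem \ref{fclocalization} by showing that when the virtual dimension is zero, the pullback $\pi^\ast$ in degree zero \v Cech cohomology is an isomorphism, so that its dual $\pi_\ast$ (which kills $[X]^\vir$ by the theorem) is also an isomorphism.

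First I would invoke Theorem \ref{fclocalization} to conclude that $\pi_\ast[X]^\vir = 0$ as an element of $\cH^0(X/S^1; \pi_\ast \oo_{X\rel\partial})^\vee$. Since $[X]^\vir \in \cH^0(X;\oo_{X\rel\partial})^\vee$ and $\pi_\ast$ is by definition the dual of the pullback $\pi^\ast : \cH^0(X/S^1;\pi_\ast\oo_{X\rel\partial}) \to \cH^0(X;\oo_{X\rel\partial})$, showing that $\pi^\ast$ is surjective (in fact, an isomorphism) in degree zero will finish the argument.

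Next I would verify that $\pi^\ast$ is indeed an isomorphism in degree zero. For any sheaf $\F$ on $X$ and any continuous map $\pi : X \to X/S^1$, the adjunction $\pi^{-1} \dashv \pi_\ast$ gives a tautological identification $(\pi_\ast \F)(X/S^1) = \F(\pi^{-1}(X/S^1)) = \F(X)$ on global sections. Since $\cH^0$ of a sheaf on a (paracompact Hausdorff) space equals its module of global sections, this identification gives precisely the pullback map $\pi^\ast$ on $\cH^0$, so $\pi^\ast$ is an isomorphism in degree zero. (Alternatively, since $\oo_{X\rel\partial}$ is locally constant on $X$ and the atlas is locally $S^1$-orientable so that $S^1$ acts trivially on all stalks, the sheaf $\pi_\ast\oo_{X\rel\partial}$ is locally constant on $X/S^1$ with the same stalks along orbits, and $\pi^\ast$ is just the tautological identification of sections.)

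Combining the two steps, $\pi_\ast : \cH^0(X;\oo_{X\rel\partial})^\vee \to \cH^0(X/S^1;\pi_\ast\oo_{X\rel\partial})^\vee$ is an isomorphism, so $\pi_\ast[X]^\vir = 0$ forces $[X]^\vir = 0$. The argument is essentially formal given Theorem \ref{fclocalization}; the only point that requires even a moment's thought is the identification of $\pi^\ast$ on $\cH^0$ as an isomorphism, and this is immediate from the definition of $\pi_\ast$ on sheaves.
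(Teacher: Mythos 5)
Your proposal is correct and follows the same route as the paper: apply Theorem \ref{fclocalization} and then observe that $\pi^\ast$ is an isomorphism on $\cH^0$, so that its dual $\pi_\ast$ is injective. Your primary justification (that $\Gamma(X/S^1;\pi_\ast\F)=\Gamma(X;\F)$ tautologically for any sheaf) is in fact cleaner than the paper's parenthetical appeal to connectedness of $S^1$ and local $S^1$-orientability; your alternative parenthetical remark is slightly imprecise in asserting that $\oo_{X\rel\partial}=j_!j^\ast\oo_X$ is locally constant on all of $X$ (it vanishes on $\partial X$), but this does not affect the main argument.
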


\begin{proof}
This follows from Theorem \ref{fclocalization} since $\pi^\ast:\cH^0(X/S^1;\pi_\ast\oo_{X\rel\partial})\to\cH^0(X;\oo_{X\rel\partial})$ is an isomorphism (since $S^1$ is connected and $A$ is locally $S^1$-orientable).
\end{proof}

\begin{remark}\label{SSequivariantvfcmaybe}
We expect that the machinery of this section can be used to define an $S^1$-equivariant virtual fundamental cycle $[X]^{S^1,\vir}\in\cH^{d-1}(X/S^1;\pi_\ast\oo_{X\rel\partial})^\vee\xrightarrow{(\pi_!)^\vee}\cH^d(X;\oo_{X\rel\partial})^\vee$ lifting $[X]^\vir$, via the diagram:
\begin{equation*}%unnumbered
\begin{tikzcd}
\cH^{d+\bullet}(X;\oo_{X\rel\partial})\ar{d}{\pi_!}\ar[equal]{r}{\text{Thm \ref{Fsheaffundamentaliso}}}&H^{d+\bullet}_\vir(X\rel\partial;A)\ar{d}\ar{r}{\eqref{Fpushforward}}&H_{-\bullet}(E;A)\ar{d}{\pi_\ast}\ar{r}{[E_A]\mapsto 1}&R\ar[equal]{d}\\
\cH^{d+\bullet-1}(X/S^1;\pi_\ast\oo_{X\rel\partial})\ar[equal]{r}{\text{Thm \ref{Slocalfundamentaliso}}}&H^{d+\bullet-1}_{S^1,\vir}(X\rel\partial;A)\ar{r}{\eqref{CSSpushforward}}&H_{-\bullet}^{S^1}(E;A)\ar{r}{[E_A]\mapsto 1}&R
\end{tikzcd}
\end{equation*}
where the second vertical map is induced by pushforward $\pi_\ast$ on chains.  Note that we would need to show that the first square commutes.  This would provide another proof of Theorem \ref{fclocalization}.  This should also be applicable without any restriction on the $S^1$-action on $X$ (provided we use $S^1$-equivariant (co)homology in the appropriate places).
\end{remark}

\subsection{Localization for homology}\label{homologylocalizationsection}

To prove the desired localization result for Floer-type homology groups, the chain models used in \S\ref{SSCconstruct} are inadequate.  Specifically, the fact that we need to consider product maps $\X(p,q)\times\X(q,r)\to\X(p,r)$ between flow spaces with an $S^1$-action means that we need a corresponding map $C_\bullet^{S^1}(X)\times C_\bullet^{S^1}(Y)\to C_{\bullet+1}^{S^1}(X\times Y)$ on $S^1$-equivariant chains.  To obtain such a map, we will end up using a different models of chains for every $(\sigma,p,q)$.  We first introduce the models of chains we will use, we then describe how to use these chain models for the constructions of \S\ref{homologygroupssection}, and finally we prove the localization result for homology groups.

\begin{remark}
If we used the language of $\infty$-categories, there would be no need to construct models of chains with good (chain level) functoriality properties (c.f.\ Remark \ref{independenceofchainmodel}).
\end{remark}

\subsubsection{MF-sets and $S^1$-MF-sets}

\begin{definition}
A \emph{PL manifold with cells} $(M,\SSS)$ is a compact connected nonempty orientable PL-manifold with boundary $M$ with stratification by a finite poset $\SSS$ (see Definition \ref{stratificationdefinition}) such that:
\begin{rlist}
\item$M^{\leq\s}$ is a connected nonempty orientable PL-submanifold with boundary.
\item$(M^{\leq\s})^\circ=M^\s$.
\item$\SSS$ has a unique maximal element.
\end{rlist}
For every $\s\in\SSS$, there corresponds a \emph{face} $(M^{\leq\s},\SSS^{\leq\s})$, which is also a PL manifold with cells.
\end{definition}

\begin{definition}
An \emph{MF-set} is a set $\Y$ along with:
\begin{rlist}
\item For every $i\in\Y$, a PL manifold with cells $(M_i,\SSS_i)$.
\item(Face identifications). For every $i\in\Y$ and every $\s\in\SSS_i$, an index $j\in\Y$ and an isomorphism $(M_i^{\leq\s},\SSS_i^{\leq\s})\xrightarrow\sim(M_j,\SSS_j)$.  These indices and isomorphisms must be (strictly) transitive in the following obvious sense.  If $\s\in\SSS$ is maximal, then $j=i$ and $(M_i,\SSS_i)=(M_i^{\leq\s},\SSS_i^{\leq\s})\xrightarrow\sim(M_j,\SSS_j)=(M_i,\SSS_i)$ is the identity map.  If $\s'\preceq\s$ is a nested pair of faces with identifications $(M_i^{\leq\s},\SSS_i^{\leq\s})\xrightarrow\sim(M_j,\SSS_j)$ and $(M_i^{\leq\s'},\SSS_i^{\leq\s'})\xrightarrow\sim(M_{j'},\SSS_{j'})$, and the $\s'$ face of $(M_j,\SSS_j)$ is identified $(M_j^{\leq\s'},\SSS_j^{\leq\s'})\xrightarrow\sim(M_k,\SSS_k)$, then $k=j'$ and the two identifications of $(M_i^{\leq\s'},\SSS_i^{\leq\s'})$ with $(M_k,\SSS_k)=(M_{j'},\SSS_{j'})$ are the same.
\end{rlist}
A morphism of MF-sets $f:\Y\to\Y'$ is a map of sets covered by isomorphisms $(M_i,\SSS_i)\to(M_{f(i)},\SSS_{f(i)})$, compatible with the face identifications for $\Y$ and $\Y'$ in the obvious way.

The category of MF-sets has a natural symmetric monoidal structure: given MF-sets $\{M_i\}_{i\in\Y}$ and $\{M_j'\}_{j\in\Y'}$, their product is defined to be $\{M_i\times M_j'\}_{(i,j)\in\Y\times\Y'}$, which is again an MF-set.

For an MF-set $\Y$, let $C^\Y_\bullet(X)$ denote the complex freely generated\footnote{A given map $M_i\to X$ contributes a copy of the orientation module of $M_i$, which is isomorphic to $\ZZ$ but not canonically so.} by maps $M_i\to X$ (for $i\in\Y$), with differential given by the obvious sum over all codimension one faces.  Note that there is a natural map:
\begin{equation}
C_\bullet^\Y(X)\otimes\C_\bullet^{\Y'}(X')\to C_\bullet^{\Y\times\Y'}(X\times X')
\end{equation}
\end{definition}

\begin{remark}
It would perhaps be more natural to work with MF-sets in the DIFF category, although in that case it is not clear precisely what sort of stratifications and corner structure one should allow so that the proof of Lemma \ref{saturatedcalculatesH} goes through.
\end{remark}

\begin{example}\label{standardsimplicialMFset}
The collection of standard simplices $\{\Delta^n\}_{n\geq 0}$ equipped with their standard simplicial stratifications and the standard identifications of the faces of $\Delta^n$ with the various $\Delta^i$, forms an MF-set.
\end{example}

\begin{definition}\label{saturateddef}
An MF-set $\Y$ is called \emph{saturated} iff for every PL manifold with cells $(M,\SSS)$ along with, for every $\s\in\SSS$ of positive codimension, an index $j\in\Y$ and an isomorphism $(M_j,\SSS_j)\xrightarrow\sim(M^{\leq\s},\SSS^{\leq\s})$, such that these indices and isomorphisms are (strictly) transitive in the obvious sense, there exists $i\in\Y$ and an isomorphism $(M_i,\SSS_i)\xrightarrow\sim(M,\SSS)$ respecting these given face identifications.
\end{definition}

Every MF-set $\Y$ embeds into a saturated MF-set $\Y_\infty$, which may be constructed (non-canonically) as follows.  We define a sequence of inclusions $\Y=\Y_{-1}\hookrightarrow\Y_0\hookrightarrow\Y_1\hookrightarrow\cdots$, and we let $\Y_\infty:=\varinjlim\Y_n$.  To define $\Y_n$, we consider all PL manifolds with cells of dimension $n$ with face identifications to elements of $\Y_{n-1}$ as in Definition \ref{saturateddef}.  The collection of isomorphism classes of such data (manifold along with face identifications) forms a set, so we may choose (non-canonically) a set $\Z_n$ parameterizing all of them.  We then set $\Y_n:=\Y_{n-1}\sqcup\Z_n$.  Now it is easy to check that $\Y_\infty$ is saturated.

\begin{lemma}\label{saturatedcalculatesH}
Let $\Y$ be a saturated MF-set.  Then there is a canonical isomorphism between singular homology $H_\bullet(X)$ and $H^\Y_\bullet(X)$ (the homology of $C^\Y_\bullet(X)$).
\end{lemma}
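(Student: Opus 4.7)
The plan is to construct a canonical quasi-isomorphism between singular chains and $C^\Y_\bullet(X)$ by embedding the standard simplicial MF-set $\Y^\Delta$ of Example \ref{standardsimplicialMFset} into $\Y$. First, I would use saturation of $\Y$ inductively on dimension to produce an embedding of MF-sets $\iota:\Y^\Delta\hookrightarrow\Y$: having chosen lifts of $\Delta^0,\ldots,\Delta^{n-1}$ in $\Y$ compatible with the standard face inclusions, Definition \ref{saturateddef} furnishes a lift of $\Delta^n$ with the prescribed face identifications. This embedding induces a chain map $\iota_\ast:C^{\mathrm{sing}}_\bullet(X)=C^{\Y^\Delta}_\bullet(X)\to C^\Y_\bullet(X)$.

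To show $\iota_\ast$ is a quasi-isomorphism, I would apply the sheaf-theoretic machinery of Appendix \ref{homologicalalgebrasection} (in the same spirit as the proof of Lemma \ref{CSTpure}). Consider the complexes of $\K$-presheaves $K\mapsto C^{\mathrm{sing}}_\bullet(X,X\setminus K)$ and $K\mapsto C^\Y_\bullet(X,X\setminus K)$ on $X$. The first is a homotopy $\K$-sheaf by Lemma \ref{singularchainshomotopyKsheaf}. For the second, the same argument applies once one verifies the analogue of barycentric subdivision: any generator $M_i\to X$ can be subdivided, and saturation of $\Y$ guarantees that the subdivision (together with all the new interior faces appearing in the subdivided cell structure) lies again in $\Y$, giving the usual small-chains argument. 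By Corollary \ref{homotopysheafcheckqionstalks}, it then suffices to check that $\iota_\ast$ is a quasi-isomorphism on stalks.

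The main obstacle is the stalk computation, i.e.\ showing that $H^\Y_\bullet(X,X\setminus K)\to R$ concentrated in degree zero as $K$ shrinks to a point $p\in X$. The geometric input is that any cycle in a small contractible neighborhood of $p$ can be coned off: given $M\to X$ landing in a ball around $p$, one produces a nullhomotopy using $M\times[0,1]\to X$ (viewed as a PL manifold with cells whose boundary involves $M$ together with lower-dimensional pieces already killed by induction). The key point is that saturation of $\Y$ guarantees such product PL manifolds with cells occur in $\Y$, so that the coning-off chain homotopy is representable inside $C^\Y_\bullet$. An induction on dimension of chains, leveraging saturation at each step to find the requisite filling, then yields the acyclicity of the stalk in positive degrees and the identification with $R$ in degree zero.

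Finally, to upgrade this to a \emph{canonical} isomorphism independent of the choice of embedding $\iota$, I would show that any two embeddings $\iota_0,\iota_1:\Y^\Delta\hookrightarrow\Y$ are connected by a ``homotopy of embeddings'' realized as an MF-set morphism from a prism-like enlargement of $\Y^\Delta$ (built inductively using the MF-set generated by $\Delta^n\times[0,1]$) into $\Y$; existence again follows from saturation, and this homotopy induces a chain homotopy between $\iota_{0,\ast}$ and $\iota_{1,\ast}$. The same acyclic-models-style inductive argument gives uniqueness of this chain homotopy up to higher homotopy, so that the induced isomorphism $H^{\mathrm{sing}}_\bullet(X)\xrightarrow{\sim}H^\Y_\bullet(X)$ depends on no auxiliary choices.
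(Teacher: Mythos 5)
Your first step — using saturation to produce a morphism of MF-sets $\{\Delta^n\}_{n\geq 0}\hookrightarrow\Y$ and hence a chain map $C_\bullet(X)\to C_\bullet^\Y(X)$ — matches the paper, as does the prism argument for independence of this choice. But the middle of your argument has two genuine problems.

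First, the stalk statement is wrong. You claim that the stalk computation should show $H^\Y_\bullet(X,X\setminus p)$ is ``$R$ concentrated in degree zero,'' but that group is the local homology of $X$ at $p$, and $X$ here is an arbitrary topological space: it need not be locally contractible, and even when it is (say $X$ a manifold of dimension $n$) the local homology sits in degree $n$, not degree $0$. The actual content of a stalk check would be that the \emph{comparison map} $C_\bullet(X,X\setminus p)\to C_\bullet^\Y(X,X\setminus p)$ is a quasi-isomorphism — not that either side vanishes away from degree zero. Your coning-off argument via $M\times[0,1]$ presupposes a contractible neighborhood of $p$ (not available in general) and moreover doesn't engage with the fact that relative cycles in $C_\bullet^\Y(X,X\setminus p)$ are absolute chains with boundary in $X\setminus p$, which need not be supported near $p$ at all. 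Because $X$ is arbitrary, passing to stalks does not simplify the comparison: the local problem is of the same type as the global one, and the homotopy $\K$-sheaf machinery (which is designed to exploit known local geometry, as in Lemma~\ref{poincareduality} or Example~\ref{stratifiedchainsremark} where the space is a manifold) doesn't buy you anything here.

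Second, even setting the stalk issue aside, your verification that $K\mapsto C^\Y_\bullet(X,X\setminus K)$ is a homotopy $\K$-sheaf is where the real work hides. Mayer--Vietoris for $\Y$-chains requires showing that a generator $M_i\to X$ is \emph{homologous in $C_\bullet^\Y(X)$} to a refined chain (a sum of smaller pieces), not merely that such a refinement exists. The refined pieces are different generators of $C_\bullet^\Y(X)$, so you need a chain homotopy between a chain and its subdivision, which means producing a PL manifold with cells of the form $M_i\times[0,1]$ — one end carrying the original stratification, the other a triangulated refinement, with all new faces admitted into $\Y$ by saturation. That cylinder construction \emph{is} the technical heart of the paper's direct proof: after choosing compatible semisimplicial triangulations $T_i$ of all $M_i$ (giving a retraction $C_\bullet^\Y(X)\to C_\bullet(X)$ that composed with your $\iota_\ast$ is literally the identity), the paper builds exactly such an MF-set $\{M_i\}_{i\in\Y}\sqcup\{M_i\times[0,1]\}_{i\in\Y}$ to chain-homotope the other composition to the identity. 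In effect you are re-deriving the same cylinder construction at one remove, via a sheaf-theoretic detour that also introduces the incorrect stalk claim. The direct argument in the paper is shorter, proves the stronger statement that $C_\bullet(X)$ is a deformation retract of $C_\bullet^\Y(X)$ (not merely a quasi-isomorphism), and requires no hypotheses on the local topology of $X$.
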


\begin{proof}
Since $\Y$ is saturated, there exists a morphism of MF-sets $\{\Delta^n\}_{n\geq 0}\to\{M_i\}_{i\in\Y}$ (where $\{\Delta^n\}_{n\geq 0}$ is as in Example \ref{standardsimplicialMFset}).  Since $C_\bullet(X)=C^{\{\Delta^n\}_{n\geq 0}}_\bullet(X)$ by definition, we obtain a chain map $C_\bullet(X)\to C_\bullet^\Y(X)$.  The resulting map:
\begin{equation}\label{homologyYcomparison}
H_\bullet(X)\to H_\bullet^\Y(X)
\end{equation}
is independent of the choice of morphism $\{\Delta^n\}_{n\geq 0}\to\{M_i\}_{i\in\Y}$, as can be seen as follows.  There is an MF-set $\{\Delta^n\}_{n\geq 0}\sqcup\{\Delta^n\times[0,1]\}_{n\geq 0}\sqcup\{\Delta^n\}_{n\geq 0}$ where each $\{\Delta^n\}_{n\geq 0}$ is as in Example \ref{standardsimplicialMFset}, where $\Delta^n\times[0,1]$ is given the product stratification ($[0,1]$ stratified by $\{\{0\},\{1\},(0,1)\}$), and $\Delta^n\times\{0\}$ (resp.\ $\Delta^n\times\{1\}$) is identified with the first (resp.\ second) copy of $\Delta^n$.  Since $\Y$ is saturated, it follows that for any pair of morphisms $\{\Delta^n\}_{n\geq 0}\to\{M_i\}_{i\in\Y}$ there is a morphism $\{\Delta^n\}_{n\geq 0}\sqcup\{\Delta^n\times[0,1]\}_{n\geq 0}\sqcup\{\Delta^n\}_{n\geq 0}\to\{M_i\}_{i\in\Y}$ whose restriction to the two copies of $\{\Delta^n\}_{n\geq 0}$ are the two given morphisms.  From this data one easily constructs a chain homotopy between the two maps $C_\bullet(X)\to C_\bullet^\Y(X)$.  Hence the map \eqref{homologyYcomparison} is canonical.

Now let us show that \eqref{homologyYcomparison} is an isomorphism.  Fix a map $\{\Delta^n\}_{n\geq 0}\to\{M_i\}_{i\in\Y}$.  Fix triangulations $T_i$ of $M_i$ for which each $M_i^{\leq\s}$ is a union of simplices, which are compatible with the face identifications, and which restrict to the tautological triangulation of $\{\Delta^n\}_{n\geq 0}$.  Such triangulations may be constructed by induction.  By triangulation, we mean a triangulation in which each simplex is equipped with a total order on its set of vertices, compatible with its faces (i.e.\ a semisimplicial set rather than a simplicial complex).  Such triangluations induce a map of complexes $C_\bullet^\Y(X)\to C_\bullet(X)$, and the composition $C_\bullet(X)\to C_\bullet^\Y(X)\to C_\bullet(X)$ is clearly the identity map.  It suffices to show that the other composition is chain homotopic to the identity map.

Now let us define a new MF-set in terms of the map $\{\Delta^n\}_{n\geq 0}\to\{M_i\}_{i\in\Y}$ and the triangulations $T_i$.  Note that the stratification of $M_i$ by the face poset $\F(T_i)$ of $T_i$ refines the stratification by $\SSS_i$; in other words we have maps $M_i\to\F(T_i)\to\SSS_i$.  The objects of the new MF-set are $\{M_i\}_{i\in\Y}\sqcup\{M_i\times[0,1]\}_{i\in\Y}$, where $M_i\times[0,1]$ is stratified by the following strata:
\begin{align}
M_i^\s\times\{0\}&\quad\s\in\SSS_i\\
M_i^\s\times(0,1)&\quad\s\in\SSS_i\\
M_i^\ttt\times\{1\}&\quad\ttt\in\F(T_i)
\end{align}
Thus the poset of strata of $M_i\times[0,1]$ is $\SSS_i\sqcup\SSS_i\sqcup\F(T_i)$.  For the face identifications, $M_i\times\{0\}$ is identified tautologically with $M_i$, and each of the closed strata $M_i^{\leq\ttt}\times\{1\}$ ($\ttt\in\F(T_i)$) is identified with the corresponding $\Delta^n$ (considered as an object of $\{M_i\}_{i\in\Y}$ via the inclusion $\{\Delta^n\}_{n\geq 0}\to\{M_i\}_{i\in\Y}$).

Now we have a chain of inclusions $\{\Delta^n\}_{n\geq 0}\hookrightarrow\{M_i\}_{i\in\Y}\hookrightarrow\{M_i\}_{i\in\Y}\sqcup\{M_i\times[0,1]\}_{i\in\Y}$.  Since $\Y$ is saturated, there in fact exists a map backwards $\{M_i\}_{i\in\Y}\sqcup\{M_i\times[0,1]\}_{i\in\Y}\to\{M_i\}_{i\in\Y}$ (acting identically on $\{M_i\}_{i\in\Y}$).  Using this map, we obtain a chain map $C_\bullet^\Y(X)\to C_{\bullet+1}^\Y(X)$ (precompose chains with the projection $M_i\times[0,1]\to M_i$).  This map is a chain homotopy between the identity map and the composition $C_\bullet^\Y(X)\to C_\bullet(X)\to C_\bullet^\Y(X)$, so we are done.
\end{proof}

Equip $S^1$ with its standard PL structure, for which the group operations are PL.

\begin{definition}\label{SSMFdef}
An \emph{$S^1$-MF-set} is an MF-set $\Y$ along with a principal $S^1$-bundle $(ES^1)_\Y\to(BS^1)_\Y$ where $(BS^1)_\Y$ (and hence also $(ES^1)_\Y$) is an increasing union of compact polyhedra (we do not require $(ES^1)_\Y$ to be contractible), and for every $i\in\Y$, a pullback diagram:
\begin{equation}\label{SSMFstructuremaps}
\begin{tikzcd}
M_i\ar{r}\ar{d}&(ES^1)_\Y\ar{d}\\
N_i\ar{r}&(BS^1)_\Y
\end{tikzcd}
\end{equation}
with PL maps where $N_i$ (and hence also $M_i$) is a PL manifold, and where the stratification on $M_i$ is pulled back from $N_i$, such that these diagrams are compatible with the face identifications.  A map of $S^1$-MF-sets $\Y\to\Y'$ is said to be injective iff it is injective as a map of sets and the map $(ES^1)_\Y\to(ES^1)_{\Y'}$ is injective.

There is a forgetful functor from $S^1$-MF-sets to MF-sets, where we remember $M_i$ (of course, there is another natural forgetful functor remembering $N_i$, though we will never use it).  When speaking of a morphism $\Y\to\Y'$ where $\Y$ is an $S^1$-MF-set and $\Y'$ is an MF-set, we implicitly apply the forgetful functor to $\Y$.

The category of $S^1$-MF-sets has a natural symmetric monoidal structure: given $S^1$-MF-sets $\{M_i\}_{i\in\Y}$ and $\{M_j'\}_{j\in\Y'}$, we may define their product $\{M_i\times M_j'\}_{(i,j)\in\Y\times\Y'}$, which is again an $S^1$-MF-set, via the diagonal $S^1$-action on $M_i\times M_j'$ and $(ES^1)_{\Y\times\Y'}:=(ES^1)_\Y\times(ES^1)_{\Y'}$ with the diagonal action.  The forgetful functor from $S^1$-MF-sets to MF-sets is clearly a symmetric monoidal functor.  It also makes sense to take the product of an $S^1$-MF-set and an MF-set.

For an $S^1$-MF-set $\Y$, let $C^{S^1,\Y}_\bullet(X)$ (for $X$ an $S^1$-space) denote the complex generated by commuting diagrams of $S^1$-equivariant maps:
\begin{equation}\label{SSchaindiagram}
\begin{tikzcd}
M_i\ar{r}\ar{d}&X\times(ES^1)_\Y\ar{r}\ar{d}&(ES^1)_\Y\ar{d}\\
N_i\ar{r}&(X\times(ES^1)_\Y)/S^1\ar{r}&(BS^1)_\Y
\end{tikzcd}
\end{equation}
($i\in\Y$), where the outer square coincides with \eqref{SSMFstructuremaps}, with differential given by the obvious sum over all codimension one faces.  A generator \eqref{SSchaindiagram} resides in degree $\dim N_i$.  Note that there are natural compatible maps:
\begin{align}
\C_\bullet^{S^1,\Y}(X')&\to C_{\bullet+1}^\Y(X)\\
C_\bullet^\Y(X)\otimes\C_\bullet^{S^1,\Y'}(X')&\to C_\bullet^{S^1,\Y\times\Y'}(X\times X')\\
C_\bullet^{S^1,\Y}(X)\otimes\C_\bullet^{S^1,\Y'}(X')&\to C_{\bullet+1}^{S^1,\Y\times\Y'}(X\times X')
\end{align}
\end{definition}

\begin{definition}
An $S^1$-MF-set $\Y$ is called \emph{saturated} iff $(ES^1)_\Y$ is contractible and every pullback diagram:
\begin{equation}
\begin{tikzcd}
M\ar{r}\ar{d}&(ES^1)_\Y\ar{d}\\
N\ar{r}&(BS^1)_\Y
\end{tikzcd}
\end{equation}
where $N$ (and thus $M$) is a PL manifold with cells, along with strictly transitive identifications of the faces of positive codimension with elements of $\Y$ (as in Definition \ref{SSMFdef}), is isomorphic to some $i\in\Y$.

Note that the notions of saturation for MF-sets and $S^1$-MF-sets are different: an $S^1$-MF-set is \emph{never} saturated as an MF-set.
\end{definition}

Every $S^1$-MF-set $\Y$ embeds into a saturated $S^1$-MF-set $\Y_\infty$, which may be constructed (again, non-canonically) by first embedding $(ES^1)_\Y$ into something contractible, and then proceeding as in the case of MF-sets.

\begin{lemma}\label{saturatedcalculatesHSS}
Let $\Y$ be a saturated $S^1$-MF-set.  Then there is a canonical isomorphism between $S^1$-equivariant singular homology $H^{S^1}_\bullet(X)$ and $H^{S^1,\Y}_\bullet(X)$ (the homology of $C^{S^1,\Y}_\bullet(X)$).  Furthermore, the natural map $C_\bullet^{S^1,\Y}(X)\to C_{\bullet+1}^{\Y'}(X)$ induces the Gysin map on homology, for $\Y'$ a saturated MF-set.
\end{lemma}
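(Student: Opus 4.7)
My plan is to mirror the proof of Lemma~\ref{saturatedcalculatesH} in the $S^1$-equivariant setting, using $(X\times(ES^1)_\Y)/S^1$ as a concrete model for the Borel construction. Because the outer pullback square of \eqref{SSchaindiagram} is prescribed by the $S^1$-MF-set structure, a generator of $C_\bullet^{S^1,\Y}(X)$ is determined by its $S^1$-equivariant map $M_i\to X$, equivalently by a lift of the structural map $N_i\to(BS^1)_\Y$ to $(X\times(ES^1)_\Y)/S^1$. Since saturation of $\Y$ forces $(ES^1)_\Y$ to be contractible (and $S^1$ acts freely on it, being the total space of a principal bundle), $(X\times(ES^1)_\Y)/S^1$ computes $H^{S^1}_\bullet(X)$.

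To construct the comparison map, I introduce the auxiliary $S^1$-MF-set $\Delta^\Y$ whose objects index PL-simplicial maps $\Delta^n\to(BS^1)_\Y$, with $N=\Delta^n$ and $M$ the pullback of $(ES^1)_\Y$; then tautologically $C_\bullet^{S^1,\Delta^\Y}(X)=C_\bullet((X\times(ES^1)_\Y)/S^1)$. By saturation of $\Y$, I build a morphism of $S^1$-MF-sets $\Delta^\Y\to\Y$ by induction on dimension exactly as in Lemma~\ref{saturatedcalculatesH}, with the additional verification at each inductive step that the pulled-back $S^1$-bundle on the new simplex matches the one inherited from $(BS^1)_\Y$; this is automatic by functoriality of pullback. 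The resulting chain map $C_\bullet((X\times(ES^1)_\Y)/S^1)\to C_\bullet^{S^1,\Y}(X)$ is independent of choices up to homotopy via the standard cylinder argument, applied now in the $S^1$-MF-set category using the cylinder $N_i\times[0,1]$ with pullback $S^1$-bundle from the projection to $N_i$.

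To see the map is a quasi-isomorphism, I construct an inverse by choosing triangulations $T_i$ of each $N_i$ compatibly with the face identifications and such that each $N_i^{\leq\s}$ is a subcomplex; pulling back along $M_i\to N_i$ triangulates $M_i$ compatibly, so one obtains a map $C_\bullet^{S^1,\Y}(X)\to C_\bullet((X\times(ES^1)_\Y)/S^1)$ whose composition with the previous map is the identity. The other composition is chain-homotopic to the identity via a cylinder $S^1$-MF-set whose $N$-objects are $N_i\sqcup N_i\times[0,1]$ (with $N_i\times\{1\}$ subdivided via $T_i$), each equipped with the pullback bundle from $(BS^1)_\Y$; saturation of $\Y$ then produces the required retraction. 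The main technical obstacle here is the compatibility bookkeeping: every PL manifold produced by subdivision or by taking cylinders must carry a canonically specified pullback $S^1$-bundle so that it is genuinely an object of an $S^1$-MF-set, and each face identification and retraction must respect this bundle structure. This is functorial but requires careful setup.

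For the final claim, observe that the natural map $C_\bullet^{S^1,\Y}(X)\to C_{\bullet+1}^{\Y'}(X)$ (obtained by using saturation of $\Y'$ to realize each $M_i$ of $\Y$ as an object of $\Y'$, noting the degree shift $\dim M_i=\dim N_i+1$) simply forgets the middle column of \eqref{SSchaindiagram}, retaining only the $S^1$-equivariant map $M_i\to X$. Under the identifications from the first part of the lemma, this corresponds to the map $C_\bullet((X\times(ES^1)_\Y)/S^1)\to C_{\bullet+1}(X)$ that sends a singular simplex to its pullback along the principal $S^1$-bundle (triangulated via the chosen $T_i$) composed with projection to $X$, which is a standard chain-level model of the Gysin map $\pi^!$ coming from the principal bundle $X\times(ES^1)_\Y\to(X\times(ES^1)_\Y)/S^1$. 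Hence the induced map on homology is the Gysin map as claimed.
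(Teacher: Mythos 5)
Your proof follows essentially the same route as the paper's: introduce the auxiliary $S^1$-MF-set of PL maps $\Delta^n\to(BS^1)_\Y$ (your $\Delta^\Y$), use saturation to map it into $\Y$, build the reverse map by triangulating, and conclude with the cylinder homotopy argument.  The only genuine problem is the assertion that ``tautologically $C_\bullet^{S^1,\Delta^\Y}(X)=C_\bullet((X\times(ES^1)_\Y)/S^1)$''.  That is false: a generator of $C_\bullet^{S^1,\Delta^\Y}(X)$ is a singular simplex $\Delta^n\to(X\times(ES^1)_\Y)/S^1$ \emph{whose composition with the projection to $(BS^1)_\Y$ is PL}, whereas the right-hand side allows arbitrary continuous simplices.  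So what you actually have is a proper subcomplex inclusion, and the comparison map you go on to define is only defined on that subcomplex, not on all of $C_\bullet((X\times(ES^1)_\Y)/S^1)$.  To close the argument one needs an additional (PL/simplicial) approximation step showing that the inclusion of the subcomplex of ``PL-over-$(BS^1)_\Y$'' simplices into the full singular chain complex is a quasi-isomorphism; the paper notes exactly this, while the rest of its proof runs parallel to yours.  This is a small and fixable point, but as written the forward chain map is not well-defined on the domain you claim.

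On the positive side, your last paragraph about the Gysin map is a reasonable sketch of a step the paper leaves to the reader: the shifted map does come from the forgetful functor (composed with a saturation-induced morphism of MF-sets), and the chain-level identification with the Gysin map via pulling back the principal $S^1$-bundle over a singular simplex is the standard model.
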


\begin{proof}
All $S^1$-MF-sets in this proof will share the same $(ES^1)_\Y\to(BS^1)_\Y$, so we will omit the subscript $_\Y$ from the notation.

Let $\{\Delta^n\}_{n\geq 0,f:\Delta^n\to BS^1}$ denote the $S^1$-MF-set indexed by pairs $(n,f)$ consisting of an integer $n\geq 0$ and a PL map $f:\Delta^n\to BS^1$, where the stratifications and face identifications are as in Example \ref{standardsimplicialMFset}.  Now the complex $C^{S^1,\{\Delta^n\}_{n\geq 0,f:\Delta^n\to BS^1}}_\bullet(X)$ is freely generated by maps $\Delta^n\to(X\times ES^1)/S^1$ whose composition with the projection $(X\times ES^1)/S^1\to BS^1$ is PL.  A straightforward approximation argument shows that the inclusion of this complex into the complex generated by all maps $\Delta^n\to(X\times ES^1)/S^1$ is a quasi-isomorphism.  Hence there is a canonical isomorphism:
\begin{equation}
H^{S^1,\{\Delta^n\}_{n\geq 0,f:\Delta^n\to BS^1}}_\bullet(X)\xrightarrow\sim H_\bullet((X\times ES^1)/S^1)=H_\bullet^{S^1}(X)
\end{equation}

As in the proof of Lemma \ref{saturatedcalculatesH} there exists a morphism of $S^1$-MF-sets $\{\Delta^n\}_{n\geq 0,f:\Delta^n\to BS^1}\to\Y$ since $\Y$ is saturated.  This induces a chain map $C^{S^1,\{\Delta^n\}_{n\geq 0,f:\Delta^n\to BS^1}}_\bullet(X)\to C^{S^1,\Y}_\bullet(X)$, which as before induces a canonical map on homology:
\begin{equation}
H^{S^1,\{\Delta^n\}_{n\geq 0,f:\Delta^n\to BS^1}}_\bullet(X)\to H^{S^1,\Y}_\bullet(X)
\end{equation}
which is independent of the choice of morphism $\{\Delta^n\}_{n\geq 0,f:\Delta^n\to BS^1}\to\Y$.  It suffices to show that this map is an isomorphism.

Fix a map $\{\Delta^n\}_{n\geq 0,f:\Delta^n\to BS^1}\to\Y$, which we observe is necessarily injective.  Fix triangulations $T_i$ of $M_i$ as in the proof of Lemma \ref{saturatedcalculatesH}.  Such triangulations induce a map of complexes $C^{S^1,\Y}_\bullet(X)\to C^{S^1,\{\Delta^n\}_{n\geq 0,f:\Delta^n\to BS^1}}_\bullet(X)$.  The composition $C^{S^1,\{\Delta^n\}_{n\geq 0,f:\Delta^n\to BS^1}}_\bullet(X)\to C^{S^1,\Y}_\bullet(X)\to C^{S^1,\{\Delta^n\}_{n\geq 0,f:\Delta^n\to BS^1}}_\bullet(X)$ is clearly the identity map.  It thus suffices to show that the reverse composition is chain homotopic to the identity.

As in the proof of Lemma \ref{saturatedcalculatesH}, we construct a new $S^1$-MF-set, namely $\{M_i\}_{i\in\Y}\sqcup\{M_i\times[0,1]\}_{i\in\Y}$, from the inclusion $\{\Delta^n\}_{n\geq 0,f:\Delta^n\to BS^1}\hookrightarrow\{M_i\}_{i\in\Y}$ and the triangulations $T_i$.  There are inclusions $\{\Delta^n\}_{n\geq 0,f:\Delta^n\to BS^1}\hookrightarrow\{M_i\}_{i\in\Y}\hookrightarrow\{M_i\}_{i\in\Y}\sqcup\{M_i\times[0,1]\}_{i\in\Y}$.  The desired chain homotopy may then be constructed as in the proof of Lemma \ref{saturatedcalculatesH}.

The fact that the induced map $H^{S^1}_\bullet(X)\to H_{\bullet+1}(X)$ is the Gysin map is left to the reader.
\end{proof}

\subsubsection{$\sF$-modules valued in MF-sets and $S^1$-MF-sets}

Plugging the category of MF-sets into Definition \ref{flowobjectdef}, we may talk about $\sF$-modules valued in MF-sets.  In other words, an $\sF$-module MF-set is a collection of MF-sets $\Y(\sigma,p,q)$ along with product/face maps:
\begin{align}
\Y(\sigma|[0\ldots\hat k\ldots n],p,q)&\to\Y(\sigma,p,q)\\
\Y(\sigma|[0\ldots k],p,q)\times\Y(\sigma|[k\ldots n],q,r)&\to\Y(\sigma,p,r)
\end{align}
which are compatible in the sense of Definition \ref{flowobjectdef}.  Similarly, we may talk about $\sF$-modules of $S^1$-MF-sets.  We may also talk about morphisms from $\sF$-module $S^1$-MF-sets to $\sF$-module MF-sets using the forgetful functor described earlier.

\begin{proposition}\label{inductionformodels}
Let $\X/Z_\bullet$ be a flow category diagram.  There exists an $\sF$-module of MF-sets $\Y$ and an $\sF$-module of $S^1$-MF-sets $\Y_{S^1}$ (both supported inside $\supp\X$) along with a morphism $\Y_{S^1}\to\Y$, satisfying the following property.  For all $(\sigma,p,q)\in\supp\X$, both $\Y(\sigma,p,q)$ and $\Y_{S^1}(\sigma,p,q)$ are saturated, and the tautologous maps:
\begin{align}
\label{colimitYone}\colim_{\s\in\partial\SSS_\X(\sigma,p,q)}\Y(\sigma,p,q,\s)&\hookrightarrow\Y(\sigma,p,q)\\
\label{colimitYtwo}\colim_{\s\in\partial\SSS_\X(\sigma,p,q)}\Y_{S^1}(\sigma,p,q,\s)&\hookrightarrow\Y_{S^1}(\sigma,p,q)
\end{align}
are injective (and the colimits on the left exist).  Moreover, such $\Y_{S^1}\to\Y$ may be constructed by induction on $(\sigma,p,q)$, partially ordered by $\preceq_\X$ ($\preceq_\X$ is well-founded, see Definition \ref{cofibrantreplacements} or the proof of Proposition \ref{trivialKanfibration}).
\end{proposition}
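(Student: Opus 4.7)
The plan is to construct $\Y$, $\Y_{S^1}$, and the morphism $\Y_{S^1}\to\Y$ by induction on $(\sigma,p,q)\in\supp\X$ using the well-founded partial order $\preceq_\X$ (and set both to be empty outside $\supp\X$). Throughout the induction I will maintain the invariant that every structure map of $\Y$ and $\Y_{S^1}$ (as $\sF$-modules valued in MF-sets and $S^1$-MF-sets respectively) is injective. For minimal triples $(\sigma,p,q)\in\supp\X$, the poset $\partial\SSS_\X(\sigma,p,q)$ is empty, so the colimits in \eqref{colimitYone}--\eqref{colimitYtwo} are the empty MF-set and $S^1$-MF-set; I pick any saturated $\Y(\sigma,p,q)$ and $\Y_{S^1}(\sigma,p,q)$ together with a morphism between them, constructed by the joint saturation procedure described below starting from the empty datum.

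For the inductive step, suppose $\Y$, $\Y_{S^1}$, and the morphism have been built on all triples strictly preceding $(\sigma,p,q)$ with the invariant holding. For each $\s\in\partial\SSS_\X(\sigma,p,q)$, the product $\Y(\sigma,p,q,\s)$ is defined via the $\sF$-module product in MF-sets, and the structure maps $\Y(\sigma,p,q,\s')\to\Y(\sigma,p,q,\s)$ for $\s'\preceq\s$ are injective by the invariant. Since $\partial\SSS_\X(\sigma,p,q)$ is finite and all maps are injective, the colimit may be described concretely as the disjoint union $\coprod_\s \Y(\sigma,p,q,\s)$ modulo the equivalence relation generated by the structure maps; this is an MF-set because the identifications respect the PL-manifold-with-cells and face-identification data, a consequence of the $\sF$-module compatibility axioms. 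The parallel construction yields the $S^1$-MF-set colimit for $\Y_{S^1}$, and the previously constructed morphisms assemble into a morphism between the two colimits.

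To extend each colimit to a saturated object while preserving the morphism, I perform a joint dimension-by-dimension saturation analogous to the procedure following Definition \ref{saturateddef}. Fix once and for all a concrete contractible principal $S^1$-bundle realized as an increasing union of compact polyhedra. At stage $n=0,1,2,\ldots$, first add to $\Y_{S^1}(\sigma,p,q)$ a representative of each isomorphism class of pullback diagram as in Definition \ref{SSMFdef} with $\dim N_i=n$ and face identifications to already-added elements; for each such element ensure its underlying PL manifold with cells is in $\Y(\sigma,p,q)$, adding it if necessary with the face identifications forced by compatibility with the existing morphism, and define the value of the morphism on the new element accordingly. Then further augment $\Y(\sigma,p,q)$ with a representative of each remaining isomorphism class of PL manifold with cells of dimension $n$ and face identifications to already-present elements. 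Taking the union over all $n$ yields saturated $\Y(\sigma,p,q)$ and $\Y_{S^1}(\sigma,p,q)$ with an extended morphism; injectivity of \eqref{colimitYone}--\eqref{colimitYtwo}, and indeed of each individual $\Y(\sigma,p,q,\s)\to\Y(\sigma,p,q)$ (preserving the inductive invariant), is immediate from the construction since we added elements outside the image of the colimit but never identified distinct elements of it.

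The principal technical point to verify is that the equivalence relation in the concrete colimit description behaves correctly: that two elements from different factors $\Y(\sigma,p,q,\s_1)$ and $\Y(\sigma,p,q,\s_2)$ get identified exactly when forced by a common predecessor, and that the PL and face data on identified elements agree. This reduces to a direct combinatorial consequence of the $\sF$-module compatibility axioms applied to the inductive data, analogous to the three basic commutation identities underlying Definition \ref{flowobjectdef}, but the bookkeeping is the main place where care is required. The remainder of the proof is essentially formal.
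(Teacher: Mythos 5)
Your overall strategy matches the paper's: induct on $(\sigma,p,q)$ via $\preceq_\X$, form the boundary colimits, and then extend to saturated objects with a compatible morphism. Your ``joint dimension-by-dimension saturation'' is a workable variant of the paper's two-step procedure (first embed the $S^1$-colimit into a saturated $S^1$-MF-set $\Y_{S^1}(\sigma,p,q)$, then take the pushout of the $\Y$-colimit along the $\Y_{S^1}$-colimit into $\Y_{S^1}(\sigma,p,q)$ and saturate that); both produce an injection of the colimits with a morphism, and the difference is cosmetic.

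The genuine gap is in your treatment of the colimit step. You assert that because $\partial\SSS_\X(\sigma,p,q)$ is finite and the inductive invariant guarantees that all structure maps are injective, the colimit exists as an MF-set and the maps into it are injective, deferring the ``bookkeeping'' as a routine consequence of the $\sF$-module axioms. This does not follow from injectivity of the structure maps alone. Over a general finite poset, a diagram of sets with all structure maps injective need not have injective legs into the colimit (a diagram over the poset $\{a,b<c,d\}$ with $A=B=\{\ast\}$, $C=D=\{0,1\}$, and the four maps chosen so that $a$ and $b$ hit both elements of $C$ but only $0\in D$, forces $0_C\sim 1_C$ in the colimit). The paper's proof closes this by a nested induction on downward-closed $\T\subseteq\partial\SSS_\X(\sigma,p,q)$: for a maximal $\ttt_0\in\T$, it identifies $\SSS_\X(\sigma,p,q)^{\leq\ttt_0}$ with a product $\prod_i\SSS_\X(\sigma|[\cdots],p_{i-1},p_i)$ and observes that the relevant diagram is the $m$-cube $\prod_{i=1}^m[A_i\hookrightarrow B_i]$ with each $A_i\hookrightarrow B_i$ injective, so the map from the colimit over the cube-minus-maximal-vertex into the maximal vertex is injective; injectivity of the map from the colimit over $\T\setminus\ttt_0$ follows by induction, and then the colimit over $\T$ is a pushout of MF-sets along an injection, which exists by taking the colimit of underlying sets and extending the structure. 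This product/cube argument is what actually uses the $\sF$-module structure (not just the compatibility axioms in the abstract) and is the main technical content you need to supply.
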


Note the similarity of \eqref{colimitYone}--\eqref{colimitYtwo} with Definition \ref{Fmodulecofibrantdef}.  One should think of these conditions as being: ``$\Y$ and $\Y_{S^1}$ are both cofibrant''.

\begin{proof}
It suffices to perform the inductive step for a given $(\sigma,p,q)$.  First, we show that the colimits \eqref{colimitYone}--\eqref{colimitYtwo} exist, essentially by rewriting the proofs of Lemmas \ref{cofibrantinjectivelemma} and \ref{cofibranthaslotsofcofibrations} in the present context.  We actually show the more general statement that the colimits:
\begin{equation}\label{Tcolimit}
\colim_{\ttt\in\T}\Y(\sigma,p,q,\ttt)\qquad\colim_{\ttt\in\T}\Y_{S^1}(\sigma,p,q,\ttt)
\end{equation}
exist for any $\T\subseteq\partial\SSS_\X(\sigma,p,q)$ which is downward closed.  We show this by induction on the cardinality of $\T$.  If $\T=\varnothing$, existence is trivial.  For $\T$ nonempty, let $\ttt_0\in\T$ be any maximal element, and observe that \eqref{Tcolimit} can be written as the colimit of the diagram:
\begin{equation}\label{colimitdiagramnice}
\begin{tikzcd}[row sep = tiny]
&\Y(\sigma,p,q,\ttt_0)\\
\smash{\displaystyle\colim_{\begin{smallmatrix}\ttt\in\T\cr\ttt\prec\ttt_0\end{smallmatrix}}}\Y(\sigma,p,q,\ttt)\ar[hook]{ur}\ar{rd}\\
&\displaystyle\colim_{\ttt\in\T\setminus\ttt_0}\Y(\sigma,p,q,\ttt)
\end{tikzcd}
\end{equation}
Let us show that the top arrow is injective.  Write $\ttt_0$ in the form \eqref{SdataA}--\eqref{SdataB}.  Then we have:
\begin{equation}
\SSS_\X(\sigma,p,q)^{\leq\ttt_0}=\SSS_\X(\sigma|[j_{a_0}\ldots j_{a_1}],p_0,p_1)\times\cdots\times\SSS_\X(\sigma|[j_{a_{m-1}}\ldots j_{a_m}],p_{m-1},p_m)
\end{equation}
Now we consider the $m$-cubical diagram:
\begin{equation}
\prod_{i=1}^m\left[\colim_{\s\in\partial\SSS_\X(\sigma|[j_{a_{i-1}}\ldots j_{a_i}],p_{i-1},p_i)}\Y(\sigma|[j_{a_{i-1}}\ldots j_{a_i}],p_{i-1},p_i,\s)\hookrightarrow\Y(\sigma|[j_{a_{i-1}}\ldots j_{a_i}],p_{i-1},p_i)\right]
\end{equation}
Now the top arrow of \eqref{colimitdiagramnice} is simply the map to the maximal vertex of this $m$-cube from the colimit over the $m$-cube minus the maximal vertex (this is where we use the fact that $\T$ is downward closed).  This is injective for any cubical diagram of the form $\prod_{i=1}^m[A_i\hookrightarrow B_i]$ where each $A_i\hookrightarrow B_i$ is injective.  Now that the top arrow in \eqref{colimitdiagramnice} is injective, the existence of the colimit is clear (first take the colimit of underlying sets and then the rest of the structure extends in an obvious way), and hence the colimit in \eqref{Tcolimit} exists.  This reasoning applies equally well to $\Y_{S^1}$ (including $(ES^1)_\Y\to(BS^1)_\Y$).

Now that we have shown that the colimits on the left hand side of \eqref{colimitYone}--\eqref{colimitYtwo} exist, it remains to define $\Y(\sigma,p,q)$ and $\Y_{S^1}(\sigma,p,q)$.

We observed earlier that any $S^1$-MF-set injects into a saturated $S^1$-MF-set.  Let $\Y_{S^1}(\sigma,p,q)$ be any saturated $S^1$-MF-set with an injection \eqref{colimitYtwo}.  Note that with this definition, the product/face maps with target $\Y_{S^1}(\sigma,p,q)$ are obvious, as is their compatibility.

To define $\Y(\sigma,p,q)$, consider the following diagram:
\begin{equation}\label{biggluingcolimit}
\begin{tikzcd}
\displaystyle\colim_{\s\in\partial\SSS_\X(\sigma,p,q)}\Y(\sigma,p,q,\s)\\
\displaystyle\colim_{\s\in\partial\SSS_\X(\sigma,p,q)}\Y_{S^1}(\sigma,p,q,\s)\ar{u}\ar[hook]{r}&\Y_{S^1}(\sigma,p,q)
\end{tikzcd}
\end{equation}
The lower colimit is in the category of $S^1$-MF-sets.  It is clear from the inductive proof of its existence that this particular colimit commutes with the forgetful functor to MF-sets.  Hence we may equally well think of this colimit as taking place in the category of MF-sets, and with this perspective the definition of the vertical map is clear.  Now since the horizontal map is injective, the colimit of this diagram (in the category of MF-sets) clearly exists (first take the colimit in sets and then the rest of the structure is obvious).  We pick any saturated MF-set into which the colimit of \eqref{biggluingcolimit} embeds, and we call this saturated MF-set $\Y(\sigma,p,q)$.  Via the embedding of the upper colimit into $\Y(\sigma,p,q)$, it is tautological that the face/product maps with target $\Y(\sigma,p,q)$ exist and are compatible.  The map $\Y_{S^1}(\sigma,p,q)\to\Y(\sigma,p,q)$ is similarly tautological, as is the fact that it is compatible with the product/face maps.
\end{proof}

\subsubsection{Augmented virtual cochain complexes from $\sF$-module MF-sets and $S^1$-MF-sets}\label{SSaugmentedsec}

We now describe a modified version of the complexes from Definitions \ref{Gsheavesdef} and \ref{CEcomplexaugmented} and their $S^1$-equivariant versions using a choice of $\Y_{S^1}\to\Y$ as in Proposition \ref{inductionformodels}.

Let us give alternative definitions the complexes:
\begin{equation}\label{keycomplexes}
C^\bullet_\vir(\X\rel\partial;\A)^+(\sigma,p,q)\quad C^\bullet_\vir(\partial\X;\A)^+(\sigma,p,q)\quad C_\bullet(E;\A)^+(\sigma,p,q)
\end{equation}
The ``fixed fundamental cycles'' $[E_\alpha]\in C_\bullet(E;\alpha)$ still live in ordinary singular simplicial chains.  Now \eqref{keycomplexes} are defined in terms of the (relative) singular chains on certain spaces; the modification we make is just to use a different model of singular chains (depending on $(\sigma,p,q)$) which we now describe.  Our model of chains on a space $X$ is generated by the set of maps:
\begin{equation}\label{bigcomplex}
M_i\times\prod_{\alpha\in\coprod\Abar(\sigma',p',q')}\Delta^{i_\alpha}\to X
\end{equation}
where $i\in\Y(\sigma,p,q)$, $i_\alpha\in\ZZ_{\geq 0}$ (all but finitely many must be zero), and $\coprod\bar \A(\sigma',p',q')$ stands for:
\begin{equation}
\coprod_{\begin{smallmatrix}0\leq i_0<\cdots<i_m\leq n\cr(p',q')\in\PPP_{\sigma(i_0)}\times\PPP_{\sigma(i_m)}\cr\exists\ttt\in\SSS_\X(\sigma,p,q)\text{ containing }([i_0\ldots i_m],p',q')\end{smallmatrix}}\Abar(\sigma|[i_0\ldots i_m],p',q')
\end{equation}
(this is very similar to \eqref{coproductforA}). We define relative chains as usual: $C_\bullet(X;Y):=C_\bullet(X)/C_\bullet(Y)$ for $Y\subseteq X$.  It is a straightforward (though tedious) exercise to verify that with this definition of singular chains, the structure maps for the homotopy colimits used to construct \eqref{keycomplexes} are all well-defined and appropriately compatible.  We define the product $\times[E_\alpha]$ via the Eilenberg--Zilber subdivision of $\Delta^p\times\Delta^q$ (as in Remark \ref{chainfixing}) at the index $\alpha$.  It may also be verified in a similar manner that the product/face maps for \eqref{keycomplexes} as defined in Definitions \ref{Cproductface} and \ref{CEcomplexaugmented} make sense and are compatible given this model of chains.  For these maps, we use the product/face maps of $\Y$ along with Eilenberg--Zilber (separately at each index $\alpha$).

We also define $S^1$-equivariant versions of \eqref{keycomplexes}:
\begin{equation}\label{keycomplexesSS}
C^\bullet_{S^1,\vir}(\X\rel\partial;\A)^+(\sigma,p,q)\quad C^\bullet_{S^1,\vir}(\partial\X;\A)^+(\sigma,p,q)\quad C^{S^1}_\bullet(E;\A)^+(\sigma,p,q)
\end{equation}
using $C_{S^1,\vir}^\bullet$ in place of $C_\vir^\bullet$ (and $C_\bullet^{S^1}$ in place of $C_\bullet$).  The fixed fundamental cycles $[E_\alpha]$ again live in ordinary singular simplicial chains.  We use the following as our model for (relative) $S^1$-equivariant chains (for $C_{S^1,\vir}^\bullet$).  Our model of $S^1$-equivariant chains on an $S^1$-space $X$ is generated by the set of commuting diagrams of $S^1$-equivariant maps:
\begin{equation}\label{bigcomplexSS}
\begin{tikzcd}
M_i\times\prod_{\alpha\in\coprod\Abar(\sigma',p',q')}\Delta^{i_\alpha}\ar{d}\ar{r}&X\times(ES^1)_{\Y_{S^1}(\sigma,p,q)}\ar{d}\\
M_i\ar{r}&(ES^1)_{\Y_{S^1}(\sigma,p,q)}
\end{tikzcd}
\end{equation}
where $i\in\Y_{S^1}(\sigma,p,q)$, the vertical maps are the projections, the bottom horizontal map is the given structure map, and $\coprod\Abar(\sigma',p',q')$ is as before.  It is then a straightforward (though tedious) exercise to verify that with this definition of $S^1$-equivariant chains, the complexes \eqref{keycomplexesSS} are well-defined, have well-defined product/face maps, and that the obvious forgetful maps to their non-equivariant versions \eqref{keycomplexes} are well-defined and compatible with the product/face maps.

Now it remains only to show that our models of chains and $S^1$-equivariant chains described above do actually calculate singular homology and $S^1$-equivariant singular homology.\footnote{Technically speaking, we must also show that the pushforward maps, product maps, and Gysin maps have the expected action on homology, though this verification is safely left to the reader.}  Note that once we do this, we can use our alternative definitions of \eqref{keycomplexes} and \eqref{keycomplexesSS} in place of the originals in all the arguments of \S\ref{homologygroupssection}.

We show that our model of chains (generated by diagrams \eqref{bigcomplex}) calculates singular homology as follows.  Let us abbreviate $\Y=\Y(\sigma,p,q)$.  Our complex generated by maps \eqref{bigcomplex} is in fact a double complex with bigrading $p=\sum_\alpha i_\alpha$ and $q=\dim M_i$.  Now let us consider the associated spectral sequence.  We calculate the $E^1_{p,q}$ term as follows.  By definition $E^1_{p,q}$ is simply the homology of the complex with only the differentials decreasing $q$.  Clearly this breaks up as a direct sum over tuples $\{i_\alpha\in\ZZ_{\geq 0}\}$.  For a given tuple $\{i_\alpha\in\ZZ_{\geq 0}\}$, we must calculate the homology of the complex:
\begin{equation}\label{withDelta}
\bigoplus_{\begin{smallmatrix}i\in\Y\cr M_i\times\prod_\alpha\Delta^{i_\alpha}\to X\end{smallmatrix}}\ZZ
\end{equation}
We claim that this is the same as the homology of the complex:
\begin{equation}\label{withoutDelta}
\bigoplus_{\begin{smallmatrix}i\in\Y\cr M_i\to X\end{smallmatrix}}\ZZ
\end{equation}
There is a clearly a map $\eqref{withoutDelta}\to\eqref{withDelta}$ (precompose with the projection $M_i\times\prod_\alpha\Delta^{i_\alpha}\to M_i$), and picking a point $x\in\prod_\alpha\Delta^{i_\alpha}$ gives a map in the opposite direction.  The composition $\eqref{withoutDelta}\to\eqref{withDelta}\to\eqref{withoutDelta}$ is clearly the identity.  The other composition is chain homotopic to the identity by the following argument.  Since $\Y$ is saturated, we can choose a map $I:\Y\to\Y$ and isomorphisms $(M_{I(i)},\SSS_{I(i)})\xrightarrow\sim(M_i\times[0,1],\SSS_i\times\SSS)$ compatible with the face identifications, where $([0,1],\SSS)$ is the PL manifold $[0,1]$ with strata $\{\{0\},\{1\},(0,1)\}$, and $M_i\times\{0\}$, $M_i\times\{1\}$ have the obvious face identifications with $M_i$ (construct $I$ and the isomorphisms by induction on dimension).  Using this coherent choice of ``cylinder objects'', it is easy to define a chain homotopy between the identity map and the composition $\eqref{withDelta}\to\eqref{withoutDelta}\to\eqref{withDelta}$ (using the fact that $\prod_\alpha\Delta^{i_\alpha}$ is contractible).  It follows that the canonical map $\eqref{withoutDelta}\to\eqref{withDelta}$ is a quasi-isomorphism.  By Lemma \ref{saturatedcalculatesH}, the complex $\eqref{withoutDelta}$ calculates singular homology.  Hence we conclude that the $E^1$ term is:
\begin{equation}
E^1_{p,q}=\bigoplus_{\sum_\alpha i_\alpha=p}H_q(X)
\end{equation}
The differentials on the $E^1$ page are easy to understand (essentially we have $H_q(X)$ tensored with the restricted tensor product $\bigotimes_\alpha C_\bullet(\pt)$).  We conclude that:
\begin{equation}
E^2_{p,q}=\begin{cases}H_q(X)&p=0\cr 0&p\ne 0\end{cases}
\end{equation}
Hence there are no further differentials and we are done.

We now use a similar argument to show that our model of $S^1$-equivariant chains calculates $S^1$-equivariant singular homology.  Let us abbreviate $\Y_{S^1}=\Y_{S^1}(\sigma,p,q)$.  Our complex generated by maps \eqref{bigcomplexSS} is a double complex as before, and we consider the associated spectral sequence.  As before, $E^1_{p,q}$ splits up as a direct sum over tuples $\{i_\alpha\in\ZZ_{\geq 0}\}$.  For a given tuple $\{i_\alpha\in\ZZ_{\geq 0}\}$, the corresponding direct summand is the homology of the complex:
\begin{equation}
\bigoplus_{\begin{smallmatrix}&i\in\Y_{S^1}\cr M_i\times\prod_\alpha\Delta^{i_\alpha}&\to&X\times(ES^1)_{\Y_{S^1}}\cr\downarrow&&\downarrow\cr M_i&\to&(ES^1)_{\Y_{S^1}}\end{smallmatrix}}\ZZ
\end{equation}
As in the non-equivariant case, this is canonically quasi-isomorphic to:
\begin{equation}
\bigoplus_{\begin{smallmatrix}&i\in\Y_{S^1}\cr M_i&\to&X\times(ES^1)_{\Y_{S^1}}\cr\downarrow&&\downarrow\cr M_i&\to&(ES^1)_{\Y_{S^1}}\end{smallmatrix}}\ZZ
\end{equation}
Thus by Lemma \ref{saturatedcalculatesHSS} the $E^1$ term is:
\begin{equation}
E^1_{p,q}=\bigoplus_{\sum_\alpha i_\alpha=p}H_q^{S^1}(X)
\end{equation}
The differentials on the $E^1$ page are as before, and thus:
\begin{equation}
E^2_{p,q}=\begin{cases}H_q^{S^1}(X)&p=0\cr 0&p\ne 0\end{cases}
\end{equation}
so we are done.

\subsubsection{Floer-type homology groups from $\sF$-module MF-sets and $S^1$-MF-sets}\label{floerhomologyfromMFsets}

We now give a new definition of Floer-type homology groups identical to Definition \ref{defofhomologygroups} except using the chain models from \S\ref{SSaugmentedsec}.  Precisely, we define a new resolution $\tilde Z_\bullet\to Z_\bullet$ by including the data\footnote{For the reader concerned with set-theoretic issues: so that the collection of all possible choices of this data forms a set, one may add ``rigidifying data'' to the definition of an ($S^1$-)MF-set, e.g.\ require $\Y$ to be countable, equip $\Y$ with an injection into $\omega_1$ (the first uncountable ordinal), and equip each $M_i$ and $(ES^1)_\Y$ with an embedding into $\RR^\infty$ (note that this rigidifying data is not required to be compatible in any way with the rest of the structure, and thus it does not affect any of our previous arguments).} of $\sF$-module ($S^1$-)MF-sets $\Y_{S^1}\to\Y$ over $\Delta^n$ for $f^\ast\X$ satisfying the conclusion of Proposition \ref{inductionformodels}.  Now Proposition \ref{inductionformodels} implies that this new resolution is still a trivial Kan fibration, and thus the rest of the construction of Floer-type homology groups in \S\ref{homologygroupssection} applies as written.

\begin{remark}
These Floer-type homology groups are \emph{a priori} different from the ones constructed in \S\ref{homologygroupssection}.  However, they share all the same properties, and we certainly expect them to be canonically isomorphic (c.f.\ Remark \ref{independenceofchainmodel}).  We won't pursue this here, though, since it is not necessary for our intended application.
\end{remark}

\subsubsection{Localization}

The following localization result concerns the Floer-type homology groups from \S\ref{floerhomologyfromMFsets}.

\begin{theorem}[$S^1$-localization for $\HH(\X)$]\label{circlelocalizationhomology}
Let $\X/Z_\bullet$ be an $H$-equivariant flow category diagram with implicit atlas $\A$ and coherent orientations $\omega$, satisfying the hypotheses of Definition \ref{defofhomologygroups}.  Suppose that $S^1$ acts compatibly on this entire structure ($S^1$-actions on $\X(\sigma,p,q)$ and $S^1$-equivariant implicit atlases, so that all the relevant structure maps are $S^1$-equivariant).

Fix partitions (into disjoint closed subsets):
\begin{equation}\label{partitionsfreefixedII}
\X(\sigma,p,q)=\X(\sigma,p,q)^0\sqcup\X(\sigma,p,q)^1
\end{equation}
where $S^1$ acts almost freely on $\X(\sigma,p,q)^1$, and suppose that the product/face maps specialize to maps:
\begin{align}
\label{PFfirst}\X(\sigma|[0\ldots k],p,q)^0\times\X(\sigma|[k\ldots n],q,r)^0&\to\X(\sigma,p,r)^0\\
\X(\sigma|[0\ldots k],p,q)^0\times\X(\sigma|[k\ldots n],q,r)^1&\to\X(\sigma,p,r)^1\\
\X(\sigma|[0\ldots k],p,q)^1\times\X(\sigma|[k\ldots n],q,r)^0&\to\X(\sigma,p,r)^1\\
\X(\sigma|[0\ldots k],p,q)^1\times\X(\sigma|[k\ldots n],q,r)^1&\to\X(\sigma,p,r)^1\\
\label{PFface}\X(\sigma|[0\ldots\hat k\ldots n],p,q)^0&\to\X(\sigma,p,q)^0\\
\label{PFlast}\X(\sigma|[0\ldots\hat k\ldots n],p,q)^1&\to\X(\sigma,p,q)^1
\end{align}
Assume in addition that for any vertex $\sigma^0\in Z_\bullet$, we have $\X(\sigma^0,p,q)^0=\varnothing\implies\X(\sigma^i,p,q)^0=\varnothing$ for all $i$ and $p\ne q$.

Then there is a canonical isomorphism $\HH(\X)_\A=\HH(\X^0)_\A$, where $\X^0$ is the $H$-equivariant flow category diagram with product/face maps given by \eqref{PFfirst} and \eqref{PFface}, equipped with the implicit atlas obtained by removing $\X^1$ from every thickening.
\end{theorem}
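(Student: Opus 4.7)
The plan is to import the $S^1$-localization argument for virtual fundamental classes (Theorem \ref{fclocalization}) into the inductive framework of \S\ref{homologygroupssection}. Because $\X^0(\sigma,p,q)$ and $\X^1(\sigma,p,q)$ are clopen subsets, the augmented virtual cochain complexes split canonically as direct sums $C_\vir^\bullet(\X\rel\partial;\A)^+ = C_\vir^\bullet(\X^0\rel\partial;\A)^+ \oplus C_\vir^\bullet(\X^1\rel\partial;\A)^+$, and the hypotheses \eqref{PFfirst}--\eqref{PFlast} ensure that under the product/face maps the $\X^0$ summand is closed and the $\X^1$ summand is an ``ideal''. Consequently any system of chains decomposes as $\lambda = \lambda^0 + \lambda^1$ with $\mu^0$ built purely from $\lambda^0$-data, so the two pieces can be treated independently. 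The same splitting holds for the complexes $C_\bullet(E;\A)^+$ and their cofibrant replacements.

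First I would introduce an intermediate resolution $\tilde Z_\bullet^{\mathrm{sym}} \to Z_\bullet$ by imposing on the data (\ref{subatlaschoice})--(\ref{augmentationschoice}) the additional constraint that $\lambda^1$ arise via the comparison map \eqref{CSScomparerel} from an $S^1$-equivariant system of chains in an augmented version of $C_{S^1,\vir}^\bullet(\X^1\rel\partial;\A)^+$, and analogously that $\tilde\lambda^1$ be the Gysin image of an $S^1$-equivariant refinement of $QC_\bullet(E;\A)^+$ compatible with $[[E]]^1$. These $S^1$-equivariant augmented objects are constructed by running the MF-set machinery of \S\ref{SSaugmentedsec} on the $\X^1$ pieces using $S^1$-MF-sets in place of MF-sets, so that chain-level product/face maps exist and commute with forgetful maps to the non-equivariant complexes.

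Next I would verify that $\tilde Z_\bullet^{\mathrm{sym}} \to Z_\bullet$ is a trivial Kan fibration by rerunning the induction of Proposition \ref{trivialKanfibration} separately on the two summands. The $\X^0$-induction is unchanged. On the $\X^1$ side, the crucial step ``Choosing $\lambda_{\sigma,p,q}$'' invokes the $S^1$-equivariant Poincar\'e duality of Theorem \ref{Slocalfundamentaliso}, the purity of Proposition \ref{CpureSS}, and the $S^1$-equivariant version of the stratified homotopy $\K$-sheaf argument (Proposition \ref{stratifiedispurehomotopysheaf}); these apply because $\X^1(\sigma,p,q)$ is an almost free $S^1$-space with locally $S^1$-orientable atlas (the latter being automatic from the coherent orientations via Remark \ref{orientedthenSSoriented}). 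The inductive constructions of $[[E]]^1$ and $\tilde\lambda^1$ proceed as in Proposition \ref{trivialKanfibration} but within the $S^1$-equivariant world. With $\tilde Z_\bullet^{\mathrm{sym}} \to Z_\bullet$ shown to be a trivial Kan fibration, Lemma \ref{descentofdiagram} (whose degenerate-edge hypothesis is inherited from $\widetilde\HH(\X)$) lets us compute $\HH(\X)$ from the restriction $\widetilde\HH(\X)|_{\tilde Z_\bullet^{\mathrm{sym}}}$.

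Finally I would observe that the obvious forgetful map $\tilde Z_\bullet^{\mathrm{sym}} \to \tilde Z_\bullet^{\X^0}$ (projection onto $(\lambda^0,\tilde\lambda^0,[[E]]^0)$) is compatible with the projections to $Z_\bullet$, and that $\widetilde\HH(\X)|_{\tilde Z_\bullet^{\mathrm{sym}}}$ equals the pullback of $\widetilde\HH(\X^0)$ along this map. The only nontrivial check is the vanishing $[[E]]_{\sigma,p,q}(\tilde\lambda^1(\sigma,p,q))=0$ for triples with $\vdim \X(\sigma,p,q)=0$: by construction $\tilde\lambda^1$ is $\pi^!$ of an $S^1$-equivariant cycle, so via the commutative diagram \eqref{SSpushforwardcompatiblity} the matrix coefficient factors through $H^{S^1}_{\dim E_A-1}(E_A,E_A\setminus 0) \xrightarrow{\pi^!} H_{\dim E_A}(E_A,E_A\setminus 0)$, which vanishes by Lemma \ref{trivialactiongysin} since $S^1$ acts trivially on $E_A$. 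Descending to $Z_\bullet$ yields the canonical isomorphism $\HH(\X)_\A = \HH(\X^0)_\A$. The principal technical obstacle will be the $S^1$-equivariant refinement of \S\ref{SSaugmentedsec} (in particular, an $S^1$-MF-set analogue of Proposition \ref{inductionformodels} compatible with the induction on $\preceq_\X$, and a parallel construction of $S^1$-equivariant cofibrant replacements), which is a routine but tedious adaptation of the non-equivariant case.
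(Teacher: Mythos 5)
The proposal follows the same broad strategy as the paper (an intermediate resolution in which the $\X^1$-part of the system of chains is taken in the $S^1$-equivariant world, then a comparison of the $\Ndg$-diagrams), but there is one genuine gap, right at the start.

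You claim that ``because $\X^0(\sigma,p,q)$ and $\X^1(\sigma,p,q)$ are clopen subsets, the augmented virtual cochain complexes split canonically as direct sums $C_\vir^\bullet(\X\rel\partial;\A)^+ = C_\vir^\bullet(\X^0\rel\partial;\A)^+ \oplus C_\vir^\bullet(\X^1\rel\partial;\A)^+$.'' This is false as stated, and it is the lynchpin of everything downstream (the decomposition $\lambda=\lambda^0+\lambda^1$, the ``ideal'' structure under the product/face maps, etc.). The virtual cochain complexes are assembled from relative singular chains $C_\bullet(X_{I,J,A}, X_{I,J,A}\setminus X_{I,J,A}^K)$ on \emph{thickened} moduli spaces, and the hypothesis \eqref{partitionsfreefixedII} only partitions the moduli space $\X(\sigma,p,q)$ itself, not its thickenings $\X(\sigma,p,q)_I^{\leq\s}$. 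If $K=K^0\sqcup K^1$ is a disjoint compact decomposition, one has at best a quasi-isomorphism
\begin{equation*}
C_\bullet(X_{I,J,A}, X_{I,J,A}\setminus X_{I,J,A}^{K^0}) \oplus C_\bullet(X_{I,J,A}, X_{I,J,A}\setminus X_{I,J,A}^{K^1}) \xrightarrow{\ \sim\ } C_\bullet(X_{I,J,A}, X_{I,J,A}\setminus X_{I,J,A}^K)
\end{equation*}
(via Mayer--Vietoris), not an isomorphism of complexes, because $X_{I,J,A}$ can be connected with both $X_{I,J,A}^{K^0}$ and $X_{I,J,A}^{K^1}$ sitting inside it. Without a chain-level splitting you cannot define $\lambda^0$, $\lambda^1$, $\tilde\lambda^0$, $\tilde\lambda^1$ as genuine components of a system of chains, so the whole resolution $\tilde Z^{\mathrm{sym}}_\bullet$ is ill-defined.

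The missing idea is a preliminary reduction: replace the atlas $\A$ by the equivalent atlas $\tilde\A$ whose thickened moduli spaces are literally the disjoint union $[\X(\sigma,p,q)_I^{\leq\s}\setminus\X(\sigma,p,q)^1]\sqcup[\X(\sigma,p,q)_I^{\leq\s}\setminus\X(\sigma,p,q)^0]$. The proof of Lemma \ref{homologyopenreduction} (even though the comparison maps are no longer open embeddings) yields $\HH(\X)_\A=\HH(\X)_{\tilde\A}$. Only after this step do the thickened moduli spaces (and hence the cochain complexes, the support decomposition of $C_\bullet(E;\A)^+$, and the product/face maps at the thickened level) split, making your direct-sum decomposition and the rest of your argument available. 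Once you add that reduction, the remainder of your outline is in the right spirit; note though that the paper finds it cleaner to use a small zig-zag of intermediate resolutions with differing choices of the projection $C_\bullet(E;\A)^+\oplus C_{\bullet-1}^{S^1}(E;\A)^+\to C_\bullet(E;\A)^+$ (either $\id\oplus\pi^!$ or $\id\oplus 0$) rather than encoding everything in a single $\tilde Z^{\mathrm{sym}}_\bullet$ with a constraint that $\tilde\lambda^1$ be a Gysin image, precisely so that the vanishing of Lemma \ref{trivialactiongysin} becomes a clean compatibility check between two of the resolutions rather than a side condition to be verified by hand.
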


\begin{proof}
\textbf{Part I.}\  We begin by reducing to a situation in which the partition \eqref{partitionsfreefixedII} is extended to every thickened moduli space, so that the product/face maps on thickened moduli spaces also take the form \eqref{PFfirst}--\eqref{PFlast}.  Precisely, consider a new implicit atlas $\tilde\A$ (on the same index set $\A$) in which the thickened moduli spaces are:
\begin{equation}\label{newatlastrickSS}
\X(\sigma,p,q)_{\tilde I}^{\leq\s}:=[\X(\sigma,p,q)_I^{\leq\s}\setminus\X(\sigma,p,q)^1]\sqcup[\X(\sigma,p,q)_I^{\leq\s}\setminus\X(\sigma,p,q)^0]
\end{equation}
Now these thickened moduli spaces are by definition equipped with a partition $\X(\sigma,p,q)_{\tilde I}^{\leq\s}=(\X(\sigma,p,q)_{\tilde I}^{\leq\s})^0\sqcup(\X(\sigma,p,q)_{\tilde I}^{\leq\s})^1$ (namely the partition \eqref{newatlastrickSS}), and the natural product/face maps for the implicit atlas $\tilde\A$ take the form \eqref{PFfirst}--\eqref{PFlast}.

The thickened moduli spaces of $\tilde\A$ are equipped with natural maps to those of $\A$ (compatibile with the product/face maps); these maps are not open embeddings, but nevertheless the proof of Lemma \ref{homologyopenreduction} applies to give a canonical isomorphism $\HH(\X)_\A=\HH(\X)_{\tilde\A}$.  Thus it suffices to produce an isomorphism $\HH(\X)_{\tilde\A}=\HH(\X^0)_{\tilde\A^0}$, where $\tilde\A^0$ is the implicit atlas on $\X^0$ with thickened moduli spaces $(\X(\sigma,p,q)_{\tilde I}^{\leq\s})^0$.  From now on, we will work exclusively with the atlas $\tilde\A$, which we now rather abusively rename as $\A$.

\textbf{Part II.}\  We will define an isomorphism $\HH(\X)_\A=\HH(\X^0)_{\A^0}$, where $\A^0$ is the implicit atlas on $\X^0$ with thickened moduli spaces $(\X(\sigma,p,q)_I^{\leq\s})^0$.

Note that since we have coherent orientations $\omega$ and $S^1$ is connected, it follows that all the flow spaces are locally $S^1$-orientable (see Remark \ref{orientedthenSSoriented}), so we may use the $S^1$-localization machinery freely.

Let $\tilde Z_\bullet\to Z_\bullet$ denote the resolution (as in \S\ref{floerhomologyfromMFsets}) associated to $\X/Z_\bullet$ with the implicit atlas $\A$, and let $\tilde Z_\bullet^0\to Z_\bullet$ denote the resolution associated to $\X^0/Z_\bullet$ with the implicit atlas $\A^0$.

We will construct a diagram of the following shape:
\begin{equation}\label{differentresolutions}
\begin{tikzcd}[row sep = large, column sep = tiny]
\tilde Z_\bullet\ar{drr}&\tilde Z_\bullet^a\ar{l}\ar{r}\ar{dr}&\tilde Z_\bullet^b\ar{d}&\tilde Z_\bullet^c\ar{l}\ar{r}\ar{dl}&\tilde Z_\bullet^0\ar{dll}\\
&&Z_\bullet
\end{tikzcd}
\end{equation}
and natural diagrams $\widetilde\HH,\widetilde\HH^a,\widetilde\HH^b,\widetilde\HH^c,\widetilde\HH^0$ from $\tilde Z_\bullet,\tilde Z_\bullet^a,\tilde Z_\bullet^b,\tilde Z_\bullet^c,\tilde Z_\bullet^0$ to $\Ndg(\Ch_{R[[H]]})$ (respectively).  We will also construct natural isomorphisms relating these and their pullbacks under the horizontal maps in \eqref{differentresolutions}.  Finally, we will show that each of the vertical maps \eqref{differentresolutions} satisfies the conclusions of Propositions \ref{trivialKanfibration} and \ref{degenerateID}.  The desired result follows easily from these statements; let us now start on the proof.

We may consider $\A(\sigma,p,q)$ as a system of implicit atlases on $\X(\sigma,p,q)^0$ and on $\X(\sigma,p,q)^1$ separately.  Now consider the following maps (where the complexes on the left are defined analogously to those on the right as in \S\ref{SSaugmentedsec}):
\begin{align}
\label{partialSSpushforwardbigcomplex}C^\bullet_\vir(\partial\X^0,\A)^+&\oplus C^\bullet_{S^1,\vir}(\partial\X^1,\A)^+&&\to C^\bullet_\vir(\partial\X;\A)^+\\
\label{relpartialSSpushforwardbigcomplex}C^\bullet_\vir(\X^0\rel\partial,\A)^+&\oplus C^\bullet_{S^1,\vir}(\X^1\rel\partial,\A)^+&&\to C^\bullet_\vir(\X\rel\partial;\A)^+
\end{align}
We give $C^\bullet_\vir(\X^0\rel\partial,\A)^+\oplus C^\bullet_{S^1,\vir}(\X^1\rel\partial,\A)^+$ the structure of an $\sF$-module with product/face maps:
\begin{align}
\label{CSSproductfacesplitfirst}C^\bullet_\vir(\X^0\rel\partial;\A)^+(\cdots)\otimes C^\bullet_\vir(\X^0\rel\partial;\A)^+(\cdots)&\to C^\bullet_\vir(\partial\X^0;\A)^+(\cdots)\\
C^\bullet_\vir(\X^0\rel\partial;\A)^+(\cdots)\otimes C^\bullet_{S^1,\vir}(\X^1\rel\partial;\A)^+(\cdots)&\to C^\bullet_{S^1,\vir}(\partial\X^1;\A)^+(\cdots)\\
C^\bullet_{S^1,\vir}(\X^1\rel\partial;\A)^+(\cdots)\otimes C^\bullet_\vir(\X^0\rel\partial;\A)^+(\cdots)&\to C^\bullet_{S^1,\vir}(\partial\X^1;\A)^+(\cdots)\\
C^\bullet_{S^1,\vir}(\X^1\rel\partial;\A)^+(\cdots)\otimes C^\bullet_{S^1,\vir}(\X^1\rel\partial;\A)^+(\cdots)&\to C^\bullet_{S^1,\vir}(\partial\X^1;\A)^+(\cdots)\\
C^\bullet_\vir(\X^0\rel\partial;\A)^+(\cdots)&\to C^\bullet_\vir(\partial\X^0;\A)^+(\cdots)\\
\label{CSSproductfacesplitlast}C^\bullet_{S^1,\vir}(\X^1\rel\partial;\A)^+(\cdots)&\to C^\bullet_{S^1,\vir}(\partial\X^1;\A)^+(\cdots)
\end{align}
just as in Definition \ref{Cproductface} (and using the chain models from \S\ref{SSaugmentedsec}).  These are compatible with \eqref{partialSSpushforwardbigcomplex}--\eqref{relpartialSSpushforwardbigcomplex} and \eqref{Ccomposition}--\eqref{Csimplexextension}.

Next, we consider $C_\bullet(E;\A)^+\oplus C_{\bullet-1}^{S^1}(E;\A)^+$ (as in \S\ref{SSaugmentedsec}).  Now, there are \emph{two} natural maps:
\begin{equation}\label{CESSpushforward}
\begin{tikzcd}
C_\bullet(E;\A)^+\oplus C_{\bullet-1}^{S^1}(E;\A)^+\ar[yshift=0.5ex]{r}{\id\oplus{\pi^!}}\ar[yshift=-0.5ex]{r}[swap]{\id\oplus 0}&C_\bullet(E;\A)^+
\end{tikzcd}
\end{equation}
We give $C_\bullet(E;\A)^+\oplus C_{\bullet-1}^{S^1}(E;\A)^+$ the structure of an $\sF$-module with product/face maps of the shape \eqref{CSSproductfacesplitfirst}--\eqref{CSSproductfacesplitlast} just as in Definition \ref{CEcomplexaugmented}.  Then both maps \eqref{CESSpushforward} are maps of $\sF$-modules.  There is also a pushforward map:
\begin{equation*}%unnumbered
C^{\vdim\X+\bullet}_\vir(\X^0\rel\partial,\A)^+\oplus C^{\vdim\X+\bullet}_{S^1,\vir}(\X^1\rel\partial,\A)^+\to C_{-\bullet}(E;\A)^+\oplus C_{-\bullet-1}^{S^1}(E;\A)^+
\end{equation*}
which is a map of $\sF$-modules.

Now let us define $\tilde Z_\bullet^a$, $\tilde Z_\bullet^b$, and $\tilde Z_\bullet^c$.  We modify the definition of $\tilde Z_\bullet\to Z_\bullet$ (i.e.\ Definition \ref{Lambdadef} as amended in \S\ref{floerhomologyfromMFsets}) as follows.  For $\tilde Z_\bullet^a=\tilde Z_\bullet^c$ we replace Definition \ref{Lambdadef}(\ref{lambdachoice},\ref{liftschoice}) with (\ref{lambdachoiceSS},\ref{liftschoiceSS}) below, and for $\tilde Z_\bullet^b$ we replace Definition \ref{Lambdadef}(\ref{lambdachoice},\ref{liftschoice},\ref{augmentationschoice}) with (\ref{lambdachoiceSS},\ref{liftschoiceSS},\ref{augmentationschoiceSS}) below.
\begin{rlist}
\item\label{lambdachoiceSS}An $H$-invariant system of chains:
\begin{equation*}%%unnumbered
\lambda=\lambda^0\oplus\lambda^1\in C^\bullet_\vir(\X^0\rel\partial,\B)^+\oplus C^\bullet_{S^1,\vir}(\X^1\rel\partial,\B)^+
\end{equation*}
(degree $0$ and supported inside $\supp\X$) with the following property.  Note that $(\mu,\lambda)$ is a cycle in the mapping cone:
\begin{equation*}%%unnumbered
\begin{matrix}
C^\bullet_\vir(\partial f^\ast\X^0;\B)^+\cr\oplus\cr C^\bullet_{S^1,\vir}(\partial f^\ast\X^1;\B)^+
\end{matrix}
\longrightarrow
\begin{matrix}
C^\bullet_{\vir}(f^\ast\X^0\rel\partial;\B)^+\cr\oplus\cr C^\bullet_{S^1,\vir}(f^\ast\X^1\rel\partial;\B)^+
\end{matrix}
\end{equation*}
whose homology is identified with $\cH^\bullet(f^\ast\X^0;\oo_{f^\ast\X^0})\oplus\cH^\bullet((f^\ast\X^1)/S^1;\pi_\ast\oo_{f^\ast\X^1})$, which in degree zero simply equals $\cH^0(f^\ast\X;\oo_{f^\ast\X})$.  We require that the homology class of $(\mu,\lambda)$ equal $f^\ast\omega\in\cH^0(f^\ast\X;\oo_{f^\ast\X})$.
\item\label{liftschoiceSS}An $H$-invariant system of chains $\tilde\lambda$ for $Q[C_\bullet(E;\A)^+\oplus C_{\bullet-1}^{S^1}(E;\A)^+]$ (degree $\gr(q)-\gr(p)+\dim\sigma-1$ and supported inside $\supp\X$) whose image in $C_\bullet(E;\A)^+\oplus C_{\bullet-1}^{S^1}(E;\A)^+$ coincides with the image of $\lambda$.
\item\label{augmentationschoiceSS}An $H$-invariant map of $\sF$-modules $[[E]]:Q[C_\bullet(E;\A)^+_\ZZ\oplus C_{\bullet-1}^{S^1}(E;\A)^+_\ZZ]\to\ZZ$ which sends the fundamental class in $H_\bullet(E;\A)^+_\ZZ$ to $1$ and is zero on $H_{\bullet-1}^{S^1}(E;\A)^+_\ZZ$.
\end{rlist}
The maps $\tilde Z_\bullet\leftarrow\tilde Z_\bullet^a\to\tilde Z_\bullet^b\leftarrow\tilde Z_\bullet^c\to\tilde Z_\bullet^0$ are defined as follows:
\begin{equation}\label{defofZabcmaps}
\begin{tikzcd}
\tilde Z_\bullet&\ar{l}\tilde Z_\bullet^a\ar{r}&\tilde Z_\bullet^b&\ar{l}\tilde Z_\bullet^c\ar{r}&\tilde Z_\bullet^0\cr
\lambda&\ar[mapsto]{l}[swap]{\eqref{relpartialSSpushforwardbigcomplex}}\lambda&&\lambda\ar[mapsto]{r}{\lambda=\lambda^0\oplus\lambda^1}&\lambda^0\cr
\tilde\lambda&\ar[mapsto]{l}[swap]{Q[\id\oplus\pi^!]}\tilde\lambda&&\tilde\lambda\ar[mapsto]{r}{Q[\id\oplus 0]}&\tilde\lambda^0\cr
&{}[[E]]\ar[mapsto]{r}{\circ Q[\id\oplus\pi^!]}&{}[[E]]&\ar[mapsto]{l}[swap]{\circ Q[\id\oplus 0]}\null[[E]]
\end{tikzcd}
\end{equation}
Note that Lemma \ref{trivialactiongysin} shows that the map $\tilde Z^a\to\tilde Z^b$ above produces an $[[E]]$ satisfying (\ref{augmentationschoiceSS}).

Now $\widetilde\HH,\widetilde\HH^a,\widetilde\HH^b,\widetilde\HH^c,\widetilde\HH^0$ are defined via the matrix coefficients $c_{\sigma,p,q}$ defined as follows:
\begin{align*}%unnumbered
\text{for $\widetilde\HH$:}\quad c_{\sigma,p,q}&:=([[E]]_{\sigma,p,q}\otimes\id_R)(\tilde\lambda_{\sigma,p,q})\\
\text{for $\widetilde\HH^a$:}\quad c_{\sigma,p,q}&:=([[E]]_{\sigma,p,q}\otimes\id_R)(\id\oplus\pi^!)(\tilde\lambda_{\sigma,p,q})\\
\text{for $\widetilde\HH^b$:}\quad c_{\sigma,p,q}&:=([[E]]_{\sigma,p,q}\otimes\id_R)(\tilde\lambda_{\sigma,p,q})\\
\text{for $\widetilde\HH^c$:}\quad c_{\sigma,p,q}&:=([[E]]_{\sigma,p,q}\otimes\id_R)(\id\oplus 0)(\tilde\lambda_{\sigma,p,q})\\
\text{for $\widetilde\HH^0$:}\quad c_{\sigma,p,q}&:=([[E]]_{\sigma,p,q}\otimes\id_R)(\tilde\lambda_{\sigma,p,q})
\end{align*}
These matrix coefficients give rise to diagrams in $\Ndg(\Ch_{R[[H]]})$ since $[[E]]$ is a map of $\sF$-modules.  The isomorphisms between these diagrams and their pullbacks under the maps \eqref{defofZabcmaps} are evident.

It now remains to show that each of the maps $\tilde Z_\bullet^a,\tilde Z_\bullet^b,\tilde Z_\bullet^c\to\tilde Z_\bullet$ satisfies the conclusions of Propositions \ref{trivialKanfibration} and \ref{degenerateID}.  The proof of Proposition \ref{trivialKanfibration} applies to $\tilde Z_\bullet^a,\tilde Z_\bullet^b,\tilde Z_\bullet^c\to\tilde Z_\bullet$ as written (for $\tilde Z_\bullet^b$, the extension of $[[E]]$ step uses the fact that $H_\bullet(E;\A)\oplus H_{\bullet-1}^{S^1}(E;\A)$ is concentrated in degrees $\geq 0$ so that Lemma \ref{Hzeroofcofibrantcolimit} still applies).  The proof of Proposition \ref{degenerateID} also applies to $\tilde Z_\bullet^a,\tilde Z_\bullet^b,\tilde Z_\bullet^c\to\tilde Z_\bullet$ as written.  Now the uniqueness of descent from Lemma \ref{descentofdiagram} shows that there are canonical isomorphisms of the descents $\HH=\HH^a=\HH^b=\HH^c=\HH^0$.
\end{proof}

\section{Gromov--Witten invariants}\label{gromovwittensection}

In this section, we define Gromov--Witten invariants for a general smooth closed symplectic manifold $(X,\omega)$ (which we now fix).  This has been treated in the literature by Li--Tian \cite{litianII}, Fukaya--Ono \cite{fukayaono}, and Ruan \cite{ruan}.

More specifically, the main subject of this section is the construction of an implicit atlas on the moduli space of stable $J$-holomorphic maps $\Mbar_{g,n}^\beta(X)$; the same method also yields an implicit atlas on $\Mbar_{g,n}^\beta(X,J_{[0,1]})$ (the moduli space associated to a family of $J$ parameterized by $[0,1]$).  Hence the virtual fundamental class $[\Mbar_{g,n}^\beta(X)]^\vir$ (Definition \ref{fclassdefinition}) is defined, and the properties derived in \S\ref{fundamentalclasssection} show that its image in $H_\bullet(\Mbar_{g,n}\times X^n)$ is independent of $J$.  The necessary gluing results (which we isolate in Proposition \ref{GWgluingneeded}) are proved in Appendix \ref{gluingappendix}.

It would be interesting to use implicit atlases to prove that the invariants defined here satisfy the Kontsevich--Manin axioms \cite{kontsevichmanin}, as proved in Fukaya--Ono \cite{fukayaono} for their definition (to do this, one would need to show additional properties of the virtual fundamental class, e.g.\ as suggested in \S\ref{fundamentalclasssection}).

\subsection{Moduli space \texorpdfstring{$\Mbar_{g,n}^\beta(X)$}{Mbar\_g,n\textasciicircum beta(X)}}

Let us now fix a smooth almost complex structure $J$ on $X$ which is tamed by $\omega$.

\begin{definition}[Nodal curve of type $(g,n)$]
A \emph{nodal curve of type $(g,n)$} is a compact nodal Riemann surface $C$ of arithmetic genus $g$ along with an injective function $l:\{1,\ldots,n\}\to C$ (the ``$n$ marked points'') whose image is disjoint from the nodes of $C$.  An \emph{isomorphism $(C,l)\to(C',l')$ of curves of type $(g,n)$} is an isomorphism of Riemann surfaces $\iota:C\to C'$ such that $l'=\iota\circ l$.  We usually omit $l$ from the notation.  Such a curve is called \emph{stable} iff its automorphism group is finite.
\end{definition}

We denote by $\Mbar_{g,n}$ the Deligne--Mumford moduli space of stable nodal curves of type $(g,n)$, and we denote by $\Cbar_{g,n}\to\Mbar_{g,n}$ the universal family (which coincides with the ``forget the last marked point and stabilize'' map $\Mbar_{g,n+1}\to\Mbar_{g,n}$).  The moduli space $\Mbar_{g,n}$ is a compact complex analytic orbifold.

\begin{definition}[$J$-holomorphic map]
A \emph{$J$-holomorphic map of type $(g,n)$} is a pair $(C,u)$ where $C$ is a nodal curve of type $(g,n)$ and $u:C\to X$ is smooth and satisfies $\delbar u=0$.  An \emph{isomorphism $(C,u)\to(C',u')$ of $J$-holomorphic maps of type $(g,n)$} is an isomorphism $\iota:C\to C'$ of curves of type $(g,n)$ such that $u=u'\circ\iota$.  We say a $J$-holomorphic map is \emph{stable} iff its automorphism group (i.e.\ group of self-isomorphisms) is finite.
\end{definition}

\begin{definition}[Moduli space of stable maps; introduced by Kontsevich \cite{kontsevich}]
Let $\beta\in H_2(X,\ZZ)$.  We define $\Mbar_{g,n}^\beta(X)$ as the set of stable $J$-holomorphic maps of type $(g,n)$ for which $u_\ast[C]=\beta$.  We equip $\Mbar_{g,n}^\beta(X)$ with the Gromov topology, which is well-known to be compact Hausdorff.
\end{definition}

For completeness, let us recall the definition of the Gromov topology on $\Mbar_{g,n}^\beta(X)$.  A neighborhood base\footnote{A \emph{neighborhood} of a point $x$ in a topological space $X$ is a subset $N\subseteq X$ such that $x\in N^\circ$ (the interior).  A \emph{neighborhood base} at a point $x\in X$ is a collection of neighborhoods $\{N_\alpha\}$ of $x$ such that for every open $U\subseteq X$ containing $x$, there exists some $N_\alpha\subseteq U$.  A neighborhood base is necessarily nonempty and \emph{filtered}, i.e.\ for all $\alpha,\beta$, there exists $\gamma$ with $N_\gamma\subseteq N_\alpha\cap N_\beta$.  Conversely, given a set $X$ and for every $x\in X$ a nonempty filtered collection of subsets $\{N_\alpha^x\}$ each containing $x$, there is a unique topology on $X$ such that $\{N^x_\alpha\}$ is a neighborhood base at $x$ for all $x\in X$.} at a pair $(C,u)$ may be obtained as follows.  We choose some additional $\ell$ marked points on $C$ so that it has no automorphisms fixing these points, and we consider the graph $\Gamma_u\subseteq X\times\Cbar_{g,n+\ell}$ where $\Cbar_{g,n+\ell}\to\Mbar_{g,n+\ell}$ denotes the universal curve.  A neighborhood base at $(C,u)$ is obtained taking all $J$-holomorphic maps from curves in $\Mbar_{g,n+\ell}$ whose graph is close to $\Gamma_u$ in the Hausdorff topology (and forgetting the $\ell$ extra marked points).  Choosing different $\ell'$ marked points yields an equivalent neighborhood base.

\subsection{Implicit atlas on \texorpdfstring{$\Mbar_{g,n}^\beta(X)$}{Mbar\_g,n\textasciicircum beta(X)}}\label{gromovwittenimplicitatlassubsection}

We define an implicit atlas $A^\GW$ on $\Mbar_{g,n}^\beta(X)$, proceeding in several steps.  Note that the space on which we will define an implicit atlas is no longer denoted $X$, and this leads to a few (evident) notational differences from the earlier sections where we considered implicit atlases abstractly.

\begin{definition}[Index set $A^\GW$]\label{divisorthicken}
A \emph{(Gromov--Witten) thickening datum} $\alpha$ is a 6-tuple $(D_\alpha,r_\alpha,\Gamma_\alpha,\Mbar_\alpha,E_\alpha,\lambda_\alpha)$ where:
\begin{rlist}
\item$D_\alpha\to X$ is a compact smooth embedded codimension two submanifold with boundary.
\item$r_\alpha\geq 0$ is an integer such that $2g+n+r_\alpha>2$.
\item$\Gamma_\alpha$ is a finite group.
\item$\Mbar_\alpha$ is a smooth $\Gamma_\alpha$-manifold\footnote{Note that we do not assume $\Gamma_\alpha$ acts effectively on $\Mbar_\alpha$.  Indeed, $\Mbar_{1,1}$ and $\Mbar_{2,0}$ are ineffective orbifolds, so we must allow ineffective actions if we want $\Mbar_\alpha/\Gamma_\alpha$ to be isomorphic to an open subset of one of these spaces.} equipped with an isomorphism of orbifolds between $\Mbar_\alpha/\Gamma_\alpha$ and an open subset of $\Mbar_{g,n+r_\alpha}/S_{r_\alpha}$ (let $\Cbar_\alpha\to\Mbar_\alpha$ be the pullback\footnote{Pullback here means the orbifold fiber product (see Remark \ref{orbifoldfiberproductrmk}).  Note that the fibers of $\Cbar_\alpha\to\Mbar_\alpha$ are nodal curves of type $(g,n+r_\alpha)$ (not quotiented by their automorphism group).} of the universal family; the action of $\Gamma_\alpha$ on $\Mbar_\alpha$ lifts canonically to $\Cbar_\alpha$ by the universal property of the pullback).
\item$E_\alpha$ is a finitely generated $\RR[\Gamma_\alpha]$-module.
\item$\lambda_\alpha:E_\alpha\to C^\infty(\Cbar_\alpha\times X,\Omega^{0,1}_{\Cbar_\alpha/\Mbar_\alpha}\otimes_\CC TX)$ is a $\Gamma_\alpha$-equivariant linear map supported away from the nodes and marked points of the fibers of $\Cbar_\alpha\to\Mbar_\alpha$.
\end{rlist}
Let $A^{\GW}$ denote the set\footnote{The reader concerned with set theoretic issues may wish to add ``rigidifying data'' to the definition of a thickening datum as in Remark \ref{notsetremark}.} of all thickening datums.
\end{definition}

\begin{definition}[Transversality of smooth maps]
Let $u:C\to X$ be a smooth map from a nodal curve of type $(g,n)$, and let $D\subseteq X$ be a smooth codimension two submanifold with boundary.  We say \emph{$u$ is transverse to $D$} (written $u\pitchfork D$) iff $u^{-1}(\partial D)=0$ and $\forall\,p\in u^{-1}(D)$, the derivative $du:T_pC\to T_{u(p)}X/T_{u(p)}D$ is surjective and $p$ is neither a node nor a marked point of $C$.
\end{definition}

\begin{definition}[$I$-thickened $J$-holomorphic map]\label{thickendefs}
Let $I\subseteq A^\GW$ be a finite subset.  An \emph{$I$-thickened $J$-holomorphic map of type $(g,n)$} is a quadruple $(C,u,\{\phi_\alpha\}_{\alpha\in I},\{e_\alpha\}_{\alpha\in I})$ where:
\begin{rlist}
\item$C$ is a nodal curve of type $(g,n)$.
\item$u:C\to X$ is a smooth map such that $u\pitchfork D_\alpha$ with exactly $r_\alpha$ intersections for all $\alpha\in I$.
\item$\phi_\alpha:C\to\Cbar_\alpha$ an isomorphism between $C$ (with $r_\alpha$ extra marked points $u^{-1}(D_\alpha)$) and some fiber of $\Cbar_\alpha\to\Mbar_\alpha$.
\item$e_\alpha\in E_\alpha$.
\item The following \emph{$I$-thickened $\delbar$-equation} is satisfied:
\begin{equation}\label{complementeddelbar}
\delbar u+\sum_{\alpha\in I}\lambda_\alpha(e_\alpha)(\phi_\alpha,u)=0
\end{equation}
This equation takes place in $C^\infty(\tilde C,\Omega^{0,1}_{\tilde C}\otimes_\CC u^\ast TX)$.  Certainly $\delbar u$ is a smooth section of $\Omega^{0,1}_{\tilde C}\otimes_\CC u^\ast TX$ over $\tilde C$.  To interpret $\lambda_\alpha(e_\alpha)(\phi_\alpha,u)$ as such a section, consider the following composition:
\begin{equation}
C\xrightarrow{(\phi_\alpha,u)}\Cbar_\alpha\times X\xrightarrow{\lambda(e_\alpha)}\Omega^{0,1}_{\Cbar_\alpha/\Mbar_\alpha}\otimes_\CC TX
\end{equation}
This sends a point $p\in C$ to an element of $(\Omega^{0,1}_{\Cbar_\alpha/\Mbar_\alpha})_{\phi_\alpha(p)}\otimes T_{u(p)}X$, which we identify via $\phi_\alpha$ with $(\Omega^{0,1}_C\otimes_\CC u^\ast TX)_p$.  This is what we mean by $\lambda_\alpha(e_\alpha)(\phi_\alpha,u)$.
\end{rlist}
An \emph{isomorphism} $\iota:(C,u,\{\phi_\alpha\}_{\alpha\in I},\{e_\alpha\}_{\alpha\in I})\to(C',u',\{\phi_\alpha'\}_{\alpha\in I},\{e_\alpha'\}_{\alpha\in I})$ between two $I$-thickened $J$-holomorphic maps of type $(g,n)$ is an isomorphism $\iota:C\to C'$ of curves of $(g,n)$-type such that $u=u'\circ\iota$, $\phi_\alpha=\phi_\alpha'\circ\iota$, and $e_\alpha=e_\alpha'$.  We say an $I$-thickened $J$-holomorphic map is \emph{stable} iff its automorphism group (i.e.\ group of self-isomorphisms) is finite.
\end{definition}

\begin{definition}[Atlas data for $A^\GW$ on $\Mbar_{g,n}^\beta(X)$]\label{GWatlasdatadef}
We define $\Mbar_{g,n}^\beta(X)_I$ as the set of isomorphism classes of stable $I$-thickened $J$-holomorphic maps of type $(g,n)$ such that $u_\ast[C]=\beta$.  Equip $\Mbar_{g,n}^\beta(X)_I$ with the Gromov topology\footnote{When $I\ne\varnothing$, this topology is easy to define: it is simply given by using the Hausdorff distance on the image $C\subseteq X\times\prod_{\alpha\in I}\Cbar_\alpha$.} for $(u,\phi_\alpha)$ and with the obvious topology for $e_\alpha$.  It is clear by definition that $\Mbar_{g,n}^\beta(X)=\Mbar_{g,n}^\beta(X)_\varnothing$.

There is an evident action $\Gamma_I$ on $\Mbar_{g,n}^\beta(X)_I$, namely $\{g_\alpha\}_{\alpha\in I}\cdot(u,\{\phi_\alpha\}_{\alpha\in I},\{e_\alpha\}_{\alpha\in I})=(u,\{g_\alpha\cdot\phi_\alpha\}_{\alpha\in I},\{g_\alpha\cdot e_\alpha\}_{\alpha\in I})$ which works since $\lambda_\alpha(e_\alpha)(\phi_\alpha,u)=\lambda_\alpha(g_\alpha\cdot e_\alpha)(g_\alpha\cdot\phi_\alpha,u)$ by $\Gamma_\alpha$-equivariance.

There are evident maps $s_\alpha:\Mbar_{g,n}^\beta(X)_I\to E_\alpha$ simply picking out $e_\alpha$.  

For $I\subseteq J\subseteq A^\GW$, there is an obvious forgetful map $\psi_{IJ}:(s_{J\setminus I}|\Mbar_{g,n}^\beta(X)_J)^{-1}(0)\to\Mbar_{g,n}^\beta(X)_I$.  Let $U_{IJ}\subseteq\Mbar_{g,n}^\beta(X)_I$ consist of those elements such that $u\pitchfork D_\alpha$ with exactly $r_\alpha$ intersections for all $\alpha\in J\setminus I$ and such that adding these extra marked points makes $C$ isomorphic to a fiber of $\Cbar_\alpha\to\Mbar_\alpha$.  It's not too hard to see that this is an open set (using elliptic regularity).
\end{definition}

The compatibility axioms are all immediate, though let us justify the homeomorphism axiom.  First, we claim that the map $(s_{J\setminus I}|\Mbar_{g,n}^\beta(X)_J)^{-1}(0)/\Gamma_{J\setminus I}\to U_{IJ}\subseteq\Mbar_{g,n}^\beta(X)_I$ induced by $\psi_{IJ}$ is a bijection.  This is more or less clear: fixing an element of $U_{IJ}$, an inverse image must have $e_\alpha=0$ for $\alpha\in J\setminus I$, and by definition of $U_{IJ}$, there exists a suitable $\phi_\alpha$ which is clearly unique up to the action of $\Gamma_\alpha$ for $\alpha\in J\setminus I$.  That the topologies coincide can be checked directly from their definition.

\begin{definition}[Regular locus for $A^\GW$ on $\Mbar_{g,n}^\beta(X)$]\label{regularsubsetdefinition}
We define $\Mbar_{g,n}^\beta(X)_I^\reg\subseteq\Mbar_{g,n}^\beta(X)_I$.

Let $(C,u_0,\{\phi_\alpha\}_{\alpha\in I},\{e_\alpha\}_{\alpha\in I})\in\Mbar_{g,n}^\beta(X)_I$.  Roughly speaking, this point is contained in $\Mbar_{g,n}^\beta(X)_I^\reg$ iff it has trivial automorphism group and the ``vertical'' (i.e.\ from a fixed domain curve) linearized version of the $I$-thickened $\delbar$-equation \eqref{complementeddelbar} is surjective.  Let us now make this precise.

Consider the smooth Banach manifold $W^{k,p}(C,X)$ for some large integer $k$ and some $p\in(1,\infty)$.  Note that the condition $u\pitchfork D_\alpha$ with exactly $r_\alpha$ intersections making $C$ isomorphic to a fiber of $\Cbar_\alpha/\Mbar_\alpha$ (for all $\alpha\in I$) forms an open subset of $W^{k,p}(C,X)$ containing $u_0$.  Furthermore, there is a unique continuous choice of:
\begin{equation}\label{phiparametric}
C\times W^{k,p}(C,X)\to\Cbar_\alpha
\end{equation}
over $C$ times a neighborhood of $u_0$ which extends $\phi_\alpha$ on $C\times\{u_0\}$ (this holds because $\Mbar_\alpha\to\Mbar_{g,n+r_\alpha}/S_{r_\alpha}$ is an \'etale map of orbifolds).  One can also check that this map \eqref{phiparametric} is highly differentiable (this depends on $k$ being large).

Now we consider the smooth Banach bundle whose fiber over $(u,\{e_\alpha\}_{\alpha\in I})\in W^{k,p}(C,X)\times E_I$ is $W^{k-1,p}(\tilde C,\Omega^{0,1}_{\tilde C}\otimes_\CC u^\ast TX)$ (where $\tilde C$ is the normalization of $C$).  Now the left hand side of \eqref{complementeddelbar} (using \eqref{phiparametric} in place of $\phi_\alpha$) is a highly differentiable section of this bundle.  We say that $(C,u_0,\{\phi_\alpha\}_{\alpha\in I},\{e_\alpha\}_{\alpha\in I})\in\Mbar_{g,n}^\beta(X)_I^\reg$ iff the following two conditions hold:
\begin{rlist}
\item This section is transverse to the zero section at $(u_0,e_\alpha)$.
\item The automorphism group of $(C,u_0,\{\phi_\alpha\}_{\alpha\in I},\{e_\alpha\}_{\alpha\in I})$ is trivial.
\end{rlist}
It is an easy exercise in elliptic regularity to show that the first condition is independent of the choice of $(k,p)$ (as long as $k$ is sufficiently large so that the condition makes sense).
\end{definition}

Let:
\begin{equation}
\vdim\Mbar_{g,n}^\beta(X):=\dim\Mbar_{g,n}+(1-g)\dim X+2\langle c_1(X),\beta\rangle
\end{equation}

Let us now verify the transversality axioms.  Freeness of the action of $\Gamma_{J\setminus I}$ on $\psi_{IJ}^{-1}(X_I^\reg)$ follows from the fact that points in $X_I^\reg$ have trivial automorphism group.  The openness and submersion axioms follow from the following result, whose proof is given in Appendix \ref{gluingappendix}.

\begin{proposition}[Formal regularity implies topological regularity]\label{GWgluingneeded}
For all $I\subseteq J\subseteq A^\GW$, we have:
\begin{rlist}
\item\label{GWgluingopen}$\Mbar_{g,n}^\beta(X)^\reg_I\subseteq\Mbar_{g,n}^\beta(X)_I$ is an open subset.
\item\label{GWgluingsubmersion}The map $s_{J\setminus I}:\Mbar_{g,n}^\beta(X)_J\to E_{J\setminus I}$ is locally modeled on the projection:
\begin{equation}
\RR^{\vdim\Mbar_{g,n}^\beta(X)+\dim E_I}\times\RR^{\dim E_{J\setminus I}}\to\RR^{\dim E_{J\setminus I}}
\end{equation}
over $\psi_{IJ}^{-1}(\Mbar_{g,n}^\beta(X)_I^\reg))\subseteq\Mbar_{g,n}^\beta(X)_J$.
\item\label{GWgluingor}There is a canonical identification of the orientation local system $\oo_{\Mbar_{g,n}^\beta(X)_I^\reg}$ with $\oo_{E_I}$ (by the standard reduction to the canonical orientation of a complex linear Fredholm operator as in McDuff--Salamon \cite{mcduffsalamonJholquant,mcduffsalamonJholsymp}).
\end{rlist}
\end{proposition}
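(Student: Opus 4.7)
The plan is to establish all three assertions simultaneously by constructing, around each point $x_0 = (C_0, u_0, \{\phi_\alpha\}, \{e_\alpha\}) \in \psi_{IJ}^{-1}(\Mbar_{g,n}^\beta(X)_I^\reg) \subseteq \Mbar_{g,n}^\beta(X)_J$, an explicit local chart realizing the submersion model. Once such charts exist, openness of $\Mbar_I^\reg$ follows because regularity is an open condition on the linearization, the submersion statement is immediate from the shape of the chart, and the orientation identification falls out from the determinant line of the linearized $\delbar$-operator (which is complex-linear up to a compact perturbation). The argument naturally divides into two cases depending on whether $C_0$ is smooth or nodal, and the nodal case is where the real work lies.

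For the smooth-domain case, I would set up a standard Banach-manifold model. Fix a Teichm\"uller slice in $\Mbar_{g,n+\sum_\alpha r_\alpha}$ transverse to the reparameterization action at the stabilization of $C_0$; this parameterizes nearby complex structures on $C_0$ by a finite-dimensional manifold $\T$. Consider the Banach manifold $\B := \T \times W^{k,p}(C_0, X) \times E_J$ and the Banach bundle $\E$ whose fiber is $W^{k-1,p}(\tilde C_0, \Omega^{0,1}\otimes u^\ast TX)$. The $I$-thickened $\delbar$-equation together with the intersection conditions $u \pitchfork D_\alpha$ (which, near $x_0$, are cut out by the condition that the added marked points lie on the chosen slice representatives of fibers $\phi_\alpha$, and thus give finitely many transverse linear equations) defines a $C^k$ section $\sigma$ of $\E \oplus \bigoplus_\alpha (\text{normal to }D_\alpha)$. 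Regularity of $x_0$ says that $D\sigma$ restricted to $\ker(\text{projection to }E_{J \setminus I})$ is surjective with kernel of dimension $\vdim + \dim E_I$; hence the implicit function theorem yields a local chart of the submersion type claimed, and the orientation of the finite-dimensional local model is canonically the determinant line of $D\sigma$, which inherits the complex orientation.

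The main obstacle is the nodal case, where one must glue. Let the nodes of $C_0$ be $\{n_i\}$ with gluing parameters $\zeta = (\zeta_i) \in \prod_i \CC$ lying in a small polydisk. For each small $\zeta$ I would produce a pre-glued domain $C_\zeta$ (standard cylindrical neck construction), a pre-glued approximate map $u_\zeta^{\pre}: C_\zeta \to X$ interpolating $u_0$ across the necks, and pre-glued maps $\phi_{\alpha,\zeta}^{\pre}: C_\zeta \to \Cbar_\alpha$ (possible because, by definition of a thickening datum, the supports of $\lambda_\alpha$ are away from marked points and nodes, so we may arrange $u_0^{-1}(D_\alpha)$ to lie outside the neck regions). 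The linearized $I$-thickened $\delbar$-operator $L_\zeta$ at these pre-glued data admits a right inverse $Q_\zeta$ whose norm is uniformly bounded in $\zeta$; the standard proof uses that at $\zeta = 0$ the operator $L_0$ is surjective by the regularity hypothesis, and one transports a right inverse through the neck via cutoff-and-paste, absorbing the resulting error by a Neumann series. Newton iteration (quadratic convergence with the right choice of Sobolev norms, say $W^{k,p}$ with weights on the necks) then produces a unique genuine solution $(u_\zeta, \phi_{\alpha,\zeta}, e_{\alpha,\zeta})$ close to the pre-glued data for each $(\zeta, \text{Banach-slice perturbations})$. This gives a continuous injection from a neighborhood of $0$ in $\RR^{\vdim + \dim E_I} \times E_{J\setminus I}$ (the first factor combining gluing parameters, the Teichm\"uller slice, and the kernel of $L_0$ restricted to $E_I$-components) into $\Mbar_J$. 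The hardest step is showing this map is a homeomorphism onto a neighborhood: injectivity follows from the contraction estimate in Newton's method, but surjectivity (no missed nearby solutions) requires a Gromov-compactness argument showing any sequence in $\Mbar_J$ Gromov-converging to $x_0$ is eventually captured by the gluing construction, combined with the uniqueness part of the implicit function theorem applied after re-identifying the sequence with pre-glued data.

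Granted this gluing chart, all three conclusions follow uniformly: (i) openness because the chart exhibits a Euclidean neighborhood, and being regular is preserved under small perturbation since the linearization varies continuously and surjectivity is open; (ii) the submersion model is visible by construction, with $s_{J\setminus I}$ corresponding to projection onto the $E_{J\setminus I}$-factor; and (iii) the orientation on the chart comes from the determinant line bundle $\det L_\zeta$, which is canonically oriented because $L_\zeta$ differs from a complex-linear operator by a compact term (the $\lambda_\alpha$ contributions are finite rank), and the comparison $\det(L_\zeta) \otimes \oo_{E_I}^\vee \cong \oo_{\Mbar_I^\reg}$ is the standard Cauchy--Riemann determinant identification of McDuff--Salamon, extended trivially over the gluing and Banach-slice directions.
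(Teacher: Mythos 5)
Your proposal follows essentially the same pregluing/Newton-iteration strategy as the paper (Appendix \ref{gluingappendix}): cylindrical necks, weighted Sobolev norms, uniformly bounded approximate right inverse via cut-and-paste, Newton--Picard iteration, and a final Gromov-compactness argument for surjectivity; the appeal to a Teichm\"uller slice is just a mild variant of the paper's explicit $\CC^d\times\RR^{\dim\Mbar^d}$ coordinates. However, you assert without justification that the output of Newton iteration depends \emph{continuously} on the gluing parameters $\zeta$, and this is precisely where the paper spends its real effort. The issue is that the solutions $\kappa_{\zeta}$ live on \emph{different} curves $C_\zeta$ with incommensurable Sobolev norms, and there is no a priori reason the solution should vary continuously as $\zeta$ crosses the boundary stratum $\zeta_i = 0$. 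The paper handles this by rigging the right inverse $Q_{0,0}$ to have image $\ker L_0$ for an explicit evaluation-at-points map $L_0$ (and hence $\im Q_{\alpha,y}=\ker L_\alpha$ for the natural extension $L_\alpha$ to the glued curve); the paper then proves continuity (Proposition \ref{continuityofgluing}) by comparing the preglued candidate solution against the actual one \emph{inside} the common linear subspace $\ker L_\alpha$, and flags this choice of right inverse as ``the key ingredient.'' Without some device of this kind, your ``This gives a continuous injection'' is an unjustified leap. A related, smaller point: you also need a lemma to upgrade ``continuous, injective, locally surjective onto a neighborhood'' to ``local homeomorphism''; the paper does this cleanly via invariance of domain (Lemma \ref{injectiveHdorffhomeo}), and it is worth isolating because it is exactly why no compatibility between gluing charts need be checked. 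Your remarks on openness, the submersion model, and the orientation identification are correct as sketched and match the paper's reasoning.
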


Near points of $\Mbar_{g,n}^\beta(X)^\reg_I$ with smooth domain curve, the proposition follows from standard techniques (implicit function theorem, elliptic regularity, and an index theorem).  The real content of the proposition is that it holds near points with nodal domain curve.

Finally, let us verify the covering axiom.

\begin{lemma}\label{sortofsard}
Let $f:N^n\to M^m$ be a smooth map of smooth manifolds with $n\leq m$.  Then for every $p\in N$ with $df_p$ injective and every neighborhood $U$ of $f(p)$, there exists a codimension $n$ smooth submanifold with boundary $D\subseteq M$ contained in $U$ such that $f\pitchfork D$ and $f^{-1}(D)$ contains a point arbitrarily close to $p$.
\end{lemma}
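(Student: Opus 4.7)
The plan is to reduce to the immersion normal form and then use Sard's theorem to guarantee transversality globally. Since $df_p$ is injective with $n\leq m$, the local immersion theorem provides smooth charts $\phi\colon V\xrightarrow\sim\RR^n$ near $p$ (with $\phi(p)=0$) and $\psi\colon W\xrightarrow\sim\RR^m$ near $f(p)$ (with $\psi(f(p))=0$), which after shrinking satisfy $W\subseteq U$ and in which $f$ takes the standard form $(x_1,\ldots,x_n)\mapsto(x_1,\ldots,x_n,0,\ldots,0)$ on $\phi(V)$.

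The key auxiliary construction is the smooth map $g:=\pi_1\circ\psi\circ f\colon f^{-1}(W)\to\RR^n$, where $\pi_1\colon\RR^m\to\RR^n$ is projection onto the first $n$ coordinates. At $p$, this map agrees with $\phi$, so $dg_p$ is an isomorphism and $p$ is a regular point of $g$. By Sard's theorem the set of regular values of $g$ is dense in $\RR^n$, so I may choose a regular value $a\in\RR^n$ of $g$ arbitrarily close to $0$; by the implicit function theorem there is then a point $q\in V$ with $g(q)=a$, and $q\to p$ as $a\to 0$.

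Next, for sufficiently small $\delta>0$ one has $\{a\}\times\bar B_\delta^{m-n}(0)\subseteq\psi(W)$, so I define $D:=\psi^{-1}(\{a\}\times\bar B_\delta^{m-n}(0))\subseteq W\subseteq U$, a smooth codimension-$n$ submanifold with boundary of $M$ containing $f(q)$. Transversality $f\pitchfork D$ is then automatic: at every $q'\in f^{-1}(D)$ one has $g(q')=a$, so $dg_{q'}$ is surjective since $a$ is a regular value, which is precisely the statement that $\im(df_{q'})+T_{f(q')}D=T_{f(q')}M$ (as $T_{f(q')}D$ corresponds in coordinates to $\{0\}\times\RR^{m-n}=\ker\pi_1$). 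Since $q\in f^{-1}(D)$ can be made arbitrarily close to $p$ by taking $a$ arbitrarily close to $0$, this completes the argument.

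There is no real obstacle here; the only subtlety worth flagging is that transversality is required at \emph{every} preimage of $D$, not merely at the point near $p$, and this is exactly what Sard's theorem delivers via the regular-value criterion applied to $g$ on all of $f^{-1}(W)$.
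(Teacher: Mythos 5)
Your proof is correct and takes essentially the same approach as the paper's: the paper constructs a local projection $\pi$ from a neighborhood of $f(p)$ onto $T_pN$ and lets $D$ be the preimage of a Sard-regular value of $\pi\circ f$, which is precisely your $g=\pi_1\circ\psi\circ f$ and $D=\psi^{-1}(\{a\}\times\bar B_\delta^{m-n}(0))$ after unwinding the immersion normal form. You spell out the details (normal form, regular-value-implies-transversality identification, why the preimage is near $p$) that the paper compresses into two lines; the one point both proofs leave implicit is that $f^{-1}(\partial D)=\varnothing$, which can be arranged by choosing the radius $\delta$ generically since $g^{-1}(a)$ is $0$-dimensional.
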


\begin{proof}
After possibly shrinking $U$, choose a local projection $\pi:U\to T_pN$.  Then let $D$ be $\pi^{-1}$ of a regular value of $\pi\circ f$ (which exists by Sard's theorem).
\end{proof}

\begin{lemma}[Domain stabilization for stable $J$-holomorphic maps]\label{stabilizationlemma}
Let $u:C\to X$ be a stable $J$-holomorphic map.  Then there exists a smooth codimension two submanifold with boundary $D\subseteq X$ such that $u\pitchfork D$ and adding $u^{-1}(D)$ to $C$ as extra marked points makes $C$ stable.
\end{lemma}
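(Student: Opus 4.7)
The plan is to identify the unstable irreducible components of $C$ and add enough transverse preimages on each to stabilize them, building $D$ as a disjoint union of small local pieces produced by Lemma \ref{sortofsard}.

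Let $C_1, \ldots, C_N$ be the unstable irreducible components of $C$: these are genus-zero components carrying at most two special points (nodes or marked points), or genus-one components with no special points. For each such component the subgroup of $\Aut(C_i)$ fixing its special points pointwise is positive-dimensional, so the stability of $(C, u)$ forces $u|_{C_i}$ to be non-constant, since otherwise these automorphisms would extend (by the identity on other components) to an infinite subgroup of $\Aut(C, u)$. Let $s_i \in \{1, 2, 3\}$ be the minimum number of additional marked points needed to stabilize $C_i$. I would then choose distinct points $p_{i,1}, \ldots, p_{i,s_i} \in C_i$ that are neither nodes nor marked points of $C$, and are not critical points of the non-constant $J$-holomorphic map $u|_{C_i}$ (whose critical set is finite by unique continuation); then $du$ is injective at each $p_{i,j}$.

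Next, for each $(i,j)$ I would apply Lemma \ref{sortofsard} to $u|_{C_i}$ at $p_{i,j}$, yielding, inside an arbitrarily small neighborhood $U_{i,j}$ of $u(p_{i,j})$, a smooth codimension-two submanifold with boundary $D_{i,j} \subseteq U_{i,j}$ with $u|_{C_i} \pitchfork D_{i,j}$ and $u|_{C_i}^{-1}(D_{i,j}) \ne \varnothing$ near $p_{i,j}$. By shrinking the $U_{i,j}$ and perturbing each $D_{i,j}$ by a small translation inside a coordinate chart of $X$ centered at $u(p_{i,j})$, I would arrange that (a) the $U_{i,j}$ are pairwise disjoint; (b) $u \pitchfork D_{i,j}$ holds globally on $C$, not just on $C_i$; and (c) every point of $u^{-1}(D_{i,j})$ lying on $C_i$ is close to $p_{i,j}$ and avoids all preexisting special points of $C$. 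Setting $D := \bigsqcup_{i,j} D_{i,j}$ then gives $u \pitchfork D$, and $u^{-1}(D)$ contributes at least $s_i$ new marked points to each unstable component $C_i$, stabilizing it; since adding marked points to already-stable components never destabilizes, $C$ becomes stable after adjoining $u^{-1}(D)$.

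The main obstacle is condition (b): ensuring that a single choice of $D_{i,j}$ is transverse to $u$ on all of $C$ simultaneously. Because $C$ is compact, for $U_{i,j}$ small enough the preimage $u^{-1}(U_{i,j})$ is a finite disjoint union of small neighborhoods in $C$, on each of which $u$ is either constant or a smooth immersion (shrinking further if necessary, since critical points of a non-constant $J$-holomorphic map are isolated). A standard Sard argument over the finite-dimensional family of small parallel translates of $D_{i,j}$ within the coordinate chart then produces a translate transverse to $u$ on all of these neighborhoods; taking the translate small enough also achieves (a) and (c) simultaneously.
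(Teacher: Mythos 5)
Your proposal is correct and takes essentially the same approach as the paper: observe that $u$ must be non-constant on every unstable component (by stability of the map), find points of $du$-injectivity there, and invoke Lemma \ref{sortofsard} to produce a stabilizing divisor. The paper's own proof is terser, leaving implicit the bookkeeping you carry out (disjointness of the local pieces $D_{i,j}$, global transversality via small translations, and avoidance of preexisting special points), but the underlying argument is the same.
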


\begin{proof}
Let us observe that if $C_0\subseteq C$ is any unstable irreducible component, we must have $u:C_0\to X$ is nonconstant.  Hence since $u$ is $J$-holomorphic, it follows that there is a point on $C_0$ (which is not a node or marked point) where $du$ is injective.  It follows using Lemma \ref{sortofsard} that there exists $D\subseteq X$ such that $u\pitchfork D$ and adding $u^{-1}(D)$ to $C$ as marked points makes $C$ stable.
\end{proof}

\begin{lemma}[Covering axiom for $A^\GW$ on $\Mbar_{g,n}^\beta(X)$]\label{everythingcanbethickened}
We have:
\begin{equation}
\Mbar_{g,n}^\beta(X)=\bigcup_{I\subseteq A^\GW}\psi_{\varnothing I}((s_I|\Mbar_{g,n}^\beta(X)_I^\reg)^{-1}(0))
\end{equation}
\end{lemma}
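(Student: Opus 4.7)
Fix $(C,u)\in\Mbar_{g,n}^\beta(X)$. The plan is to construct a single thickening datum $\alpha\in A^\GW$ and an isomorphism class of stable $\{\alpha\}$-thickened $J$-holomorphic map $(C,u,\phi_\alpha,0)$ lying in $(s_\alpha|\Mbar_{g,n}^\beta(X)_{\{\alpha\}}^\reg)^{-1}(0)$ and mapping under $\psi_{\varnothing\{\alpha\}}$ to $(C,u)$.

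First, apply Lemma \ref{stabilizationlemma} to produce a smooth compact codimension two embedded submanifold with boundary $D_\alpha\subseteq X$ satisfying $u\pitchfork D_\alpha$ and such that declaring $u^{-1}(D_\alpha)$ to be extra marked points makes $C$ stable. Set $r_\alpha:=\#u^{-1}(D_\alpha)$ (so $2g+n+r_\alpha>2$) and $\Gamma_\alpha:=S_{r_\alpha}$. The resulting stable marked curve determines a point in $\Mbar_{g,n+r_\alpha}/S_{r_\alpha}$, and by taking an \'etale manifold presentation of an orbifold chart around this point I obtain a smooth $\Gamma_\alpha$-manifold $\Mbar_\alpha$ together with an isomorphism of orbifolds $\Mbar_\alpha/\Gamma_\alpha\xrightarrow\sim U$ onto an open neighborhood $U\subseteq\Mbar_{g,n+r_\alpha}/S_{r_\alpha}$, such that $C$, equipped with some labeling of $u^{-1}(D_\alpha)$ by $\{1,\ldots,r_\alpha\}$, is isomorphic as a marked curve to a fiber of $\Cbar_\alpha\to\Mbar_\alpha$; fix such an isomorphism $\phi_\alpha$.

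Next, construct $(E_\alpha,\lambda_\alpha)$ spanning the cokernel of the linearized $\delbar$-operator at $(C,u)$. This cokernel is finite-dimensional by standard Fredholm theory, and since smooth sections supported away from the nodes and marked points of $C$ are $W^{k-1,p}$-dense, I can choose a finite-dimensional subspace $F\subseteq C^\infty(\tilde C,\Omega^{0,1}_{\tilde C}\otimes_\CC u^\ast TX)$ of such sections whose image spans $\coker(D\delbar)$. Using cutoff functions, extend each element of $F$ to a smooth section of $\Omega^{0,1}_{\Cbar_\alpha/\Mbar_\alpha}\otimes_\CC TX$ over $\Cbar_\alpha\times X$, supported in a small neighborhood of $\phi_\alpha(C)\subseteq\Cbar_\alpha$ and away from nodes and marked points of fibers. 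Setting $E_\alpha:=\RR[\Gamma_\alpha]\otimes_\RR F$ with the left regular $\Gamma_\alpha$-action, and defining $\lambda_\alpha(g\otimes f)$ to be the $g$-translate of the chosen extension of $f$, yields a $\Gamma_\alpha$-equivariant linear map $\lambda_\alpha$ of the required form with $F\subseteq\{\lambda_\alpha(e)(\phi_\alpha,u):e\in E_\alpha\}$.

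Finally, $(C,u,\phi_\alpha,0)$ satisfies \eqref{complementeddelbar}, and its isomorphism class maps under $\psi_{\varnothing\{\alpha\}}$ to $(C,u)$. To verify that it lies in $\Mbar_{g,n}^\beta(X)_{\{\alpha\}}^\reg$, I check the two conditions of Definition \ref{regularsubsetdefinition}: condition (i) (transversality of the linearization) holds because the linearization of the thickened $\delbar$-equation at $(u,0)$ equals $D\delbar(v)+\lambda_\alpha(\delta e)(\phi_\alpha,u)$, whose image contains both $\mathrm{image}(D\delbar)$ and $F$ and thus all of $W^{k-1,p}$; condition (ii) (trivial automorphism group) is automatic for any nonempty $I$, since any self-isomorphism $\iota$ must satisfy $\phi_\alpha\circ\iota=\phi_\alpha$, which forces $\iota=\id$ as $\phi_\alpha$ is an isomorphism onto a specific fiber. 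The main technical subtlety is in the construction of $\Mbar_\alpha$: it requires keeping track of how $S_{r_\alpha}$ interacts with the automorphism group of the augmented stable marked curve in order to obtain a genuine $\Gamma_\alpha$-manifold presenting the relevant local orbifold chart, which is standard orbifold theory but somewhat delicate to write out explicitly.
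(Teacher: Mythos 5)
Your proof takes essentially the same approach as the paper: stabilize the domain with a divisor via Lemma \ref{stabilizationlemma}, pick an \'etale chart $\Mbar_\alpha$ around the resulting point of $\Mbar_{g,n+r_\alpha}/S_{r_\alpha}$, choose a finite-dimensional space of smooth $(0,1)$-forms supported away from nodes and marked points covering the cokernel, and induce up along the regular representation of $\Gamma_\alpha$ to get $(E_\alpha,\lambda_\alpha)$. The extra verification of the two regularity conditions is correct and slightly more explicit than the paper. One imprecision worth correcting: you assert that smooth sections supported away from the nodes and marked points are ``$W^{k-1,p}$-dense,'' but for the large $k$ fixed in Definition \ref{regularsubsetdefinition} and $p>2$ this is false (deleting a finite set of points is not negligible for high-regularity Sobolev spaces in dimension two, since such spaces embed in $C^0$). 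The paper avoids this by taking $k=1$, $p>2$ and using density in $W^{0,p}=L^p$, then noting that by elliptic regularity the cokernel of $D\delbar:W^{k,p}\to W^{k-1,p}$ is independent of $(k,p)$ with $kp>2$; you should phrase the density step at low regularity and transport the surjectivity.
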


\begin{proof}
Fix a point in $\Mbar_{g,n}^\beta(X)$ (that is, a curve $u:C\to X$).  We will construct $\alpha\in A^\GW$ so that this point is contained in $\psi_{\varnothing\{\alpha\}}((s_\alpha|\Mbar_{g,n}^\beta(X)_{\{\alpha\}}^\reg)^{-1}(0))$.

First, pick $D_\alpha\subseteq X$ satisfying the conclusion of Lemma \ref{stabilizationlemma}.

Now let $r_\alpha=\#u^{-1}(D_\alpha)$.  Adding $u^{-1}(D_\alpha)$ as extra marked points to $C$ gives a point in $\Mbar_{g,n+r_\alpha}/S_{r_\alpha}$, and we pick some local orbifold chart $\Mbar_\alpha/\Gamma_\alpha\to\Mbar_{g,n+r_\alpha}/S_{r_\alpha}$ covering this point.

Now let us consider the linearized $\delbar$ operator:
\begin{equation}\label{linearizeddelbarforthickening}
D\delbar(u,\cdot):C^\infty(C,u^\ast TX)\to C^\infty(\tilde C,\Omega^{0,1}_{\tilde C}\otimes_\CC u^\ast TX)
\end{equation}
($\tilde C$ being the normalization of $C$) where $C^\infty(C,u^\ast TX)\subseteq C^\infty(\tilde C,u^\ast TX)$ is the subspace of functions descending continuously to $C$.  If $(k,p)\in\ZZ_{\geq 1}\times(1,\infty)$ with $kp>2$, then we get a corresponding $D\delbar(u,\cdot)$ map $W^{k,p}\to W^{k-1,p}$ (the restriction on $(k,p)$ comes from the need to define $W^{k,p}(C,u^\ast TX)\subseteq W^{k,p}(\tilde C,u^\ast TX)$).  This operator is Fredholm; in particular its cokernel is finite-dimensional.

Suppose we have a finite-dimensional vector space $E_0$ equipped with a linear map:
\begin{equation}\label{lambdazero}
\tilde\lambda_0:E_0\to C^\infty(\tilde C,\Omega^{0,1}_{\tilde C}\otimes_\CC u^\ast TX)
\end{equation}
supported away from (the inverse image in $\tilde C$ of) the nodes and marked points of $C$.  Then the following conditions are equivalent (exercise using elliptic regularity):
\begin{rlist}
\item\eqref{lambdazero} is surjective onto the cokernel of $D\delbar(u,\cdot):C^\infty\to C^\infty$.
\item\eqref{lambdazero} is surjective onto the cokernel of $D\delbar(u,\cdot):W^{k,p}\to W^{k-1,p}$ for some $(k,p)$.
\item\eqref{lambdazero} is surjective onto the cokernel of $D\delbar(u,\cdot):W^{k,p}\to W^{k-1,p}$ for all $(k,p)$.
\end{rlist}
There exists such a pair $E_0$ and $\tilde\lambda_0$ satisfying these conditions since we may choose $k=1$ and $p>2$ and then remember that $C^\infty$ functions supported away from the nodes and marked points are dense in $W^{0,p}=L^p$.

Now pick any isomorphism to a fiber $\phi_\alpha:C\to\Cbar_\alpha$ and extend $\tilde\lambda_0$ to a map $\lambda_0:E_0\to C^\infty(\Cbar_\alpha\times X,\Omega^{0,1}_{\Cbar_\alpha/\Mbar_\alpha}\otimes_\CC TX)$ supported away from the nodes and marked points of the fibers.  Define $E_\alpha=E_0[\Gamma]$ and define $\lambda_\alpha:E_\alpha\to C^\infty(\Cbar_\alpha\times X,\Omega^{0,1}_{\Cbar_\alpha/\Mbar_\alpha}\otimes_\CC TX)$ by $\Gamma_\alpha$-linear extension from $\lambda_0$.  It is now clear by definition that $u:C\to X$ is covered by $\Mbar_{g,n}^\beta(X)_{\{\alpha\}}^\reg$, where $\alpha\in A^\GW$ is the element just constructed.
\end{proof}

We have now shown the following.

\begin{theorem}
$A^\GW$ is an implicit atlas on $\Mbar_{g,n}^\beta(X)$.
\end{theorem}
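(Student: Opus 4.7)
The plan is to verify each of the compatibility and transversality axioms in Definition \ref{implicitatlasdefinition} for the data assembled in \S\ref{gromovwittenimplicitatlassubsection}, relying on Proposition \ref{GWgluingneeded} (proved in Appendix \ref{gluingappendix}) and the preliminary lemmas already established. Most compatibility axioms are essentially tautological from Definition \ref{thickendefs} and Definition \ref{GWatlasdatadef}: the forgetful map $\psi_{IJ}$ drops the data $(\phi_\alpha,e_\alpha)$ for $\alpha\in J\setminus I$, and the equations $\psi_{IJ}\psi_{JK}=\psi_{IK}$, $s_I\psi_{IJ}=s_I$, $U_{IJ_1}\cap U_{IJ_2}=U_{I,J_1\cup J_2}$, $U_{II}=X_I$, and $\psi_{IJ}^{-1}(U_{IK})=U_{JK}\cap(s_{J\setminus I}|X_J)^{-1}(0)$ follow immediately from unwinding the definitions. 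The homeomorphism axiom has already been sketched right after Definition \ref{GWatlasdatadef}: bijectivity of $(s_{J\setminus I}|X_J)^{-1}(0)/\Gamma_{J\setminus I}\to U_{IJ}$ is clear, and the two topologies (each defined via Hausdorff convergence of graphs together with the Euclidean topology on the $e_\alpha$) match because the choice of $\phi_\alpha$ for $\alpha\in J\setminus I$ over $U_{IJ}$ depends continuously on the image curve up to the $\Gamma_{J\setminus I}$-action.

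Next I would handle the transversality axioms. For $\psi_{IJ}^{-1}(X_I^\reg)\subseteq X_J^\reg$: if the linearized $I$-thickened $\delbar$-operator is surjective at a point with $e_\alpha=0$ for $\alpha\in J\setminus I$, then adding the extra factors $\bigoplus_{\alpha\in J\setminus I}E_\alpha$ (with the corresponding $\lambda_\alpha$ terms) only enlarges the range of the operator, so surjectivity is preserved; the automorphism group can only shrink after adding more marked-point data, so triviality is also preserved. Freeness of the $\Gamma_{J\setminus I}$-action on $\psi_{IJ}^{-1}(X_I^\reg)$ is a direct consequence of the triviality of automorphism groups in $X_I^\reg$: any nontrivial $\Gamma_{J\setminus I}$-stabilizer would permute some $\phi_\alpha$ in a way that must be realized by a nontrivial automorphism of the underlying $I$-thickened map. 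The openness axiom and the submersion axiom are exactly Proposition \ref{GWgluingneeded}(\ref{GWgluingopen}) and (\ref{GWgluingsubmersion}). Finally, the covering axiom is Lemma \ref{everythingcanbethickened}.

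The single real obstacle is Proposition \ref{GWgluingneeded}, which packages all the hard analysis into one statement. Near a point of $X_I^\reg$ with smooth domain curve, it follows from the implicit function theorem on a Banach manifold together with elliptic regularity, exactly as in the classical transverse case. The substantive content, deferred to Appendix \ref{gluingappendix}, is the behavior near a point of $X_I^\reg$ whose domain curve is nodal: here one needs a topological gluing theorem producing a local model homeomorphic to $\RR^{\vdim\Mbar_{g,n}^\beta(X)+\dim E_J}$ in which $s_{J\setminus I}$ is the standard projection. Since we only require that this local model be topological (no smoothness of transition functions, no chosen gluing profile), standard gluing techniques suffice, and the orientation identification in (\ref{GWgluingor}) reduces to the complex linearity of the Cauchy--Riemann operator. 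Once Proposition \ref{GWgluingneeded} is granted, assembling the axioms above completes the proof.
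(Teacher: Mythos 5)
Your proof plan is correct and follows essentially the same route as the paper: unwind the compatibility axioms directly, justify the homeomorphism axiom via existence/uniqueness-up-to-$\Gamma$ of the added $\phi_\alpha$, derive freeness from triviality of automorphism groups on $X_I^\reg$, quote Proposition \ref{GWgluingneeded} for openness/submersion, and quote Lemma \ref{everythingcanbethickened} for covering. You are in fact slightly more explicit than the paper, which leaves the inclusion $\psi_{IJ}^{-1}(X_I^\reg)\subseteq X_J^\reg$ as an unstated consequence of the definitions while you correctly spell out the ``larger domain, same target'' reasoning for preservation of surjectivity.
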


\subsection{Definition of Gromov--Witten invariants}

\begin{definition}[Gromov--Witten invariants]
Fix nonnegative integers $g$ and $n$ with $2g+n>2$, and fix $\beta\in H_2(X;\ZZ)$.  By Proposition \ref{GWgluingneeded}, the virtual orientation sheaf induced by $A^\GW$ is canonically trivialized.  Thus the implicit atlas $A^\GW$ induces a virtual fundamental class $[\Mbar_{g,n}^\beta(X)]^\vir\in\cH^\bullet(\Mbar_{g,n}^\beta(X);\QQ)^\vee$.  We define the \emph{Gromov--Witten invariant}:
\begin{equation}
\GW_{g,n}^\beta(X)\in H_\bullet(\Mbar_{g,n}\times X^n;\QQ)
\end{equation}
as the pushforward of $[\Mbar_{g,n}^\beta(X)]^\vir$ under the tautological map $\Mbar_{g,n}^\beta(X)\to\Mbar_{g,n}\times X^n$.  This is well-defined by the next lemma.
\end{definition}

\begin{lemma}
$\GW_{g,n}^\beta(X)\in H_\bullet(\Mbar_{g,n}\times X^n)$ is independent of the choice of $J$.
\end{lemma}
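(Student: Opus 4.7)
The plan is to execute the standard cobordism argument using the implicit atlas framework developed above. Fix two $\omega$-tame almost complex structures $J_0$ and $J_1$ and choose a smooth path $J_{[0,1]} = \{J_t\}_{t\in[0,1]}$ of $\omega$-tame almost complex structures connecting them (the space of such $J$ is contractible, so this is always possible). I would then define the parameterized moduli space $\Mbar_{g,n}^\beta(X,J_{[0,1]})$ of stable $J_t$-holomorphic maps, i.e.\ triples $(t,C,u)$ with $u:C\to X$ a stable map satisfying $\bar\partial_{J_t}u=0$, equipped with the Gromov topology. Gromov compactness (allowing $t$ to vary) shows this space is compact Hausdorff, and it has a natural closed subset $\partial\Mbar_{g,n}^\beta(X,J_{[0,1]}):=\Mbar_{g,n}^\beta(X,J_0)\sqcup\Mbar_{g,n}^\beta(X,J_1)$ sitting over $\{0,1\}\subseteq[0,1]$.

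The first substantive step is to equip $\Mbar_{g,n}^\beta(X,J_{[0,1]})$ with an implicit atlas \emph{with boundary} $A^{\GW,[0,1]}$ on the same index set $A^\GW$, built by the obvious parameterized analogue of Definitions \ref{divisorthicken}--\ref{regularsubsetdefinition}: an $I$-thickened map is now a tuple $(t,C,u,\{\phi_\alpha\},\{e_\alpha\})$ satisfying the $t$-parameterized thickened $\bar\partial$-equation $\bar\partial_{J_t}u+\sum_{\alpha\in I}\lambda_\alpha(e_\alpha)(\phi_\alpha,u)=0$, and the regular locus is defined by surjectivity of the full linearization including the $\partial_t$ direction. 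One must verify the submersion and openness axioms (yielding that $\Mbar_{g,n}^\beta(X,J_{[0,1]})_I^\reg$ is a topological manifold with boundary of dimension $\vdim+1+\dim E_I$, with boundary corresponding exactly to $t\in\{0,1\}$), as well as the covering axiom. By construction, the restriction of $A^{\GW,[0,1]}$ to $t=0$ and $t=1$ recovers the atlases $A^\GW$ on $\Mbar_{g,n}^\beta(X,J_0)$ and $\Mbar_{g,n}^\beta(X,J_1)$ respectively. Orientability follows from the same complex-linear Fredholm argument of Proposition \ref{GWgluingneeded}(\ref{GWgluingor}), augmented by the canonical orientation of the extra $\RR$-factor (the parameter $t$).

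Granted this atlas, the argument concludes quickly. Let $f:\Mbar_{g,n}^\beta(X,J_{[0,1]})\to\Mbar_{g,n}\times X^n$ be the tautological (stabilize and forget) map, which is well-defined and continuous and restricts on each boundary component to the tautological map used in the definition of the Gromov--Witten invariant. By Lemma \ref{disjointunionfc} applied to the restriction of $A^{\GW,[0,1]}$ to $\partial\Mbar_{g,n}^\beta(X,J_{[0,1]})$, the virtual fundamental class of the boundary equals $[\Mbar_{g,n}^\beta(X,J_1)]^\vir-[\Mbar_{g,n}^\beta(X,J_0)]^\vir$ (with appropriate signs coming from the boundary orientation). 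By Lemma \ref{cobordismworks}, the pushforward to $\Mbar_{g,n}^\beta(X,J_{[0,1]})$ of $[\partial\Mbar_{g,n}^\beta(X,J_{[0,1]})]^\vir$ vanishes, and hence its further pushforward under $f_*$ vanishes as well. This yields $f_*[\Mbar_{g,n}^\beta(X,J_0)]^\vir=f_*[\Mbar_{g,n}^\beta(X,J_1)]^\vir$ in $H_\bullet(\Mbar_{g,n}\times X^n;\QQ)$, which is the desired independence.

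The main obstacle is the first step: verifying the submersion/openness axioms for $A^{\GW,[0,1]}$ near nodal domain curves, which amounts to a parameterized version of the gluing theorem proved in Appendix \ref{gluingappendix}. Away from the boundary $t\in\{0,1\}$ this is a routine $1$-parameter extension of Proposition \ref{GWgluingneeded}; at $t\in\{0,1\}$ one additionally needs the local model to incorporate an $\RR_{\geq 0}$ factor, which is built into Definition \ref{IAwboundary}. The covering axiom reduces, as in Lemma \ref{everythingcanbethickened}, to domain stabilization (Lemma \ref{stabilizationlemma}) plus the fact that a $(t,u)$-dependent obstruction space can always be arranged to surject onto $\coker D\bar\partial_{J_t}(u,\cdot)$, and the verification is otherwise formally identical to the unparameterized case. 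No new ideas beyond those already developed are required; the content is analytic and parallels Appendix \ref{gluingappendix}.
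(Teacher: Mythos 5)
Your proof proposal follows the same strategy as the paper's: choose a path $J_{[0,1]}$, form the parameterized moduli space $\Mbar_{g,n}^\beta(X,J_{[0,1]})$ with boundary $\Mbar_{g,n}^\beta(X,J_0)\sqcup\Mbar_{g,n}^\beta(X,J_1)$, note that the construction from \S\ref{gromovwittenimplicitatlassubsection} extends to give an implicit atlas with boundary restricting to $A^\GW$ on each boundary component, and then apply Lemmas \ref{disjointunionfc} and \ref{cobordismworks}. The paper is terser about verifying that the parameterized atlas actually satisfies the axioms, while you rightly flag that this requires a $1$-parameter version of the gluing results; but the logical structure and the key lemmas invoked are identical.
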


\begin{proof}
Let $J_0$ and $J_1$ be any two smooth $\omega$-tame almost complex structures on $X$.  We denote by $\Mbar_{g,n}^\beta(X,J_0)$ and $\Mbar_{g,n}^\beta(X,J_1)$ the corresponding moduli spaces of stable maps (denoted earlier by simply $\Mbar_{g,n}^\beta(X)$ when we considered just a fixed $J$).

There exists a smooth path of $\omega$-tame almost complex structures $J_{[0,1]}=\{J_t\}_{t\in[0,1]}$ connecting $J_0$ and $J_1$.  Let us consider the corresponding ``parameterized'' moduli space of stable maps $\Mbar_{g,n}^\beta(X,J_{[0,1]})$.  The construction from \S\ref{gromovwittenimplicitatlassubsection} gives an implicit atlas with boundary $A^\GW$ on $\Mbar_{g,n}^\beta(X,J_{[0,1]})$ whose restriction to $\partial\Mbar_{g,n}^\beta(X,J_{[0,1]}):=\Mbar_{g,n}^\beta(X,J_0)\sqcup\Mbar_{g,n}^\beta(X,J_1)$ agrees with $A^\GW$ on these spaces.  Hence it follows using Lemmas \ref{disjointunionfc} and \ref{cobordismworks} that $[\Mbar_{g,n}^\beta(X,J_0)]^\vir=[\Mbar_{g,n}^\beta(X,J_1)]^\vir$ in $\cH^\bullet(\Mbar_{g,n}^\beta(X,J_{[0,1]}))^\vee$, which is enough.
\end{proof}

\section{Hamiltonian Floer homology}\label{hamiltonianfloersec}

In this section, we define Hamiltonian Floer homology for a general closed symplectic manifold $M$ (which we now fix).  We also calculate Hamiltonian Floer homology using the $S^1$-localization idea of Floer, and we derive the Arnold conjecture from this calculation.  These results (in this generality) are originally due to Liu--Tian \cite{liutian}, Fukaya--Ono \cite{fukayaono}, and Ruan \cite{ruan}.  For a general introduction to Hamiltonian Floer homology, the reader may consult Salamon \cite{salamonnotes} (we assume some familiarity with the basic theory).

The main content of this section is the construction of implicit atlases on the relevant spaces of stable pseudo-holomorphic cylinders.  Once we do this, the definition from \S\ref{homologygroupssection} gives the desired homology groups.  We also construct $S^1$-equivariant implicit atlases on the moduli spaces for time-independent Hamiltonians.  This allows us to use the $S^1$-localization results of \S\ref{Slocalizationsection} to show that Hamiltonian Floer homology coincides with Morse homology (a standard corollary of which is the Arnold conjecture).  The necessary gluing results are stated in Propositions \ref{HFgluingneeded} and \ref{HFgluingSS} and are proved in Appendix \ref{gluingHF}.

It would be interesting to define this isomorphism as in Piunikhin--Salamon--Schwarz \cite{pss} using their moduli spaces of ``spiked disks'' (this route avoids the use of $S^1$-localization).

\subsection{Preliminaries}

\begin{definition}[Abelian cover of free loop space]
Let $L_0M$ denote the space of null-homotopic smooth maps $S^1\to M$, and let $\widetilde{L_0M}$ denote the space of such loops together with a homology class of bounding $2$-disk.  Then $\widetilde{L_0M}\to L_0M$ is a $\pi$-cover where $\pi=\im(\pi_2(M)\to H_2(M;\ZZ))$.\footnote{We could just as easily work on the smaller cover corresponding to the image of $\omega\oplus c_1(M):\pi_2(M)\to\RR\oplus\ZZ$.  The corresponding equivalence relation is that $f_1,f_2:D^2\to M$ with $f_1|_{S^1}=f_2|_{S^1}$ are equivalent iff $\omega$ and $c_1(M)$ both vanish on ``$f_1-f_2$''$\in\pi_2(M)$.}
\end{definition}

\begin{definition}[Hamiltonian flows]
For a smooth function $H:M\times S^1\to\RR$, let $X_H:M\times S^1\to TM$ denote the Hamiltonian vector field induced by $H$, and let $\phi_H:M\to M$ denote the time $1$ flow map of $X_H$.  A \emph{periodic orbit} of $H$ is a smooth function $\gamma:S^1\to M$ satisfying $\gamma'(t)=X_{H(t)}(\gamma(t))$.  Let $C^\infty(M\times S^1)^\reg\subseteq C^\infty(M\times S^1)$ denote those functions $H$ for which $\phi_H$ has non-degenerate fixed points.
\end{definition}

\begin{definition}[Simplicial sets of $H$ and $J$]\label{JHdef}
Define the simplicial set $H_\bullet(M)$ where $H_n(M)$ is the set of smooth functions $H:\Delta^n\to C^\infty(M\times S^1)$ which are constant near the vertices and send the vertices to $C^\infty(M\times S^1)^\reg$.  Define the simplicial set $J_\bullet(M)$ where $J_n(M)$ is the set of smooth functions $J:\Delta^n\to J(M)$ which are constant near the vertices ($J(M)$ is the space of smooth almost complex structures tamed by $\omega$) and which send the vertices to almost complex structures which are $\omega$-compatible.

It is easy to see that $H_\bullet(M)$ and $J_\bullet(M)$ are both \emph{contractible Kan complexes}.  A semisimplicial set $Z_\bullet$ is a contractible Kan complex iff every map $\partial\Delta^n\to Z_\bullet$ can be extended to a map $\Delta^n\to Z_\bullet$ for all $n\geq 0$ (where $\Delta^n$ is the semisimplicial $n$-simplex).

Let $\JH_\bullet(M)=J_\bullet(M)\times H_\bullet(M)$, which of course is also a contractible Kan complex.
\end{definition}

\begin{definition}[Standard Morse function on $\Delta^n$]\label{simplexflows}
For this definition, let us view the $n$-simplex $\Delta^n$ as:
\begin{equation}
\Delta^n=\{\underline x\in[0,1]^{n+1}:0=x_0\leq\cdots\leq x_n\leq 1\}
\end{equation}
The $i$th vertex of $\Delta^n$ is given by $x_{n-i}=0$ and $x_{n-i+1}=1$.  We now consider the Morse function on $\Delta^n$ given by $f(x):=\sum_{i=1}^n\cos\pi x_i$.  Its gradient:
\begin{equation}\label{simplexmorsefunctiongradient}
\nabla f(x)=\sum_{i=1}^n\pi\sin(\pi x_i)\frac\partial{\partial x_i}
\end{equation}
is tangent to the boundary of $\Delta^n$, and its critical points are precisely the vertices of $\Delta^n$, the index at vertex $i$ being $n-i$.  Note also that for any facet inclusion $\Delta^k\hookrightarrow\Delta^n$, the pushforward of $\nabla f$ is again $\nabla f$.

Let us consider the space $\F(\Delta^n)$ of broken Morse flow lines from vertex $0$ (index $n$) to vertex $n$ (index $0$).  This space is homeomorphic to a cube $[0,1]^{n-1}$, the factors $[0,1]$ being naturally indexed by the ``middle vertices'' $1,\ldots,n-1$ of $\Delta^n$.  A flow line is broken at a vertex $i\in\{1,\ldots,n-1\}$ iff the corresponding coordinate in $[0,1]^{n-1}$ equals $1$.  Note that for any simplex $\sigma$ there are canonical compatible product/face maps:
\begin{align}
\label{Fproduct}\F(\sigma|[0\ldots k])\times\F(\sigma|[k\ldots n])&\to\F(\sigma)\\
\label{Fface}\F(\sigma|[0\ldots\hat k\ldots n])&\to\F(\sigma)
\end{align}
See also \S\ref{simplexflowsorsec} for more details.  Adams \cite{adamscobar} considered spaces of paths on $\Delta^n$ with the same key properties (though it is important that our flow lines are smooth, whereas Adams' are not).
\end{definition}

\subsection{Moduli space of Floer trajectories}

Let us now define the flow category diagram (Definition \ref{semisimplicialfcdef}) which gives rise to Hamiltonian Floer homology.

\begin{definition}
For $H\in C^\infty(M\times S^1)^\reg$, let $\PPP_H\subseteq\widetilde{L_0M}$ denote the set of null-homotopic periodic orbits equipped with a homology class of bounding $2$-disk.
\end{definition}

In the following definition, the reader may prefer to focus on the cases $n=0$ (Floer trajectories relevant for the differential), $n=1$ (Floer trajectories relevant for the continuation maps), and $n=2$ (Floer trajectories relevant for the homotopies between continuation maps).

\begin{definition}[Floer trajectory]\label{floertrajdef}
Let $\sigma\in J_n(M)\times H_n(M)$ be an $n$-simplex; we denote by $H^\sigma:\Delta^n\times M\times S^1\to\RR$ and $J^\sigma:\Delta^n\to J(M)$ the corresponding smooth families.  Let $p\in\PPP_{H_0}$ and $q\in\PPP_{H_n}$ be periodic orbits, where $H_i=H^\sigma(i\in\Delta^n,\cdot,\cdot)$ is the Hamiltonian associated to the $i$th vertex of $\Delta^n$.  A \emph{Floer trajectory of type $(\sigma,p,q)$} is a triple $(C,\ell,u)$ where:
\begin{rlist}
\item$C$ is a nodal curve of type $(0,2)$.  Let us call the two marked points $x^-,x^+\in C$, and let $k=k(C)$ be the number of vertices (irreducible components of $C$) on the unique path from $x^-$ to $x^+$ in the dual graph of $C$.
\item$\ell:\coprod_{i=1}^k\RR\to\Delta^n$ is a broken Morse flow line from vertex $0$ to vertex $n$ (for the Morse function in Definition \ref{simplexflows}).  Let $0=v_0\leq\cdots\leq v_k=n$ be the corresponding sequence of vertices.  We allow $\ell$ to contain constant flow lines, i.e.\ we allow $v_i=v_{i+1}$.
\item\label{buildingdef}$u:C\to M\times S^1\times\coprod_{i=1}^k\RR$ is a smooth \emph{building} of type $(\sigma,p,q)$, by which we mean the following.  Let $C^\circ$ be $C$ punctured at $\{x^-,x^+\}$ and at the nodes corresponding to the edges in the unique path in the dual graph of $C$ from $x^-$ to $x^+$.  The connected components $\{C^\circ_1,\ldots,C^\circ_k\}$ are naturally ordered ($x^-$ on the $1$st component and $x^+$ on the $k$th component).  There must be periodic orbits $\{\gamma_0,\ldots,\gamma_k\}$ where $\gamma_i\in\PPP_{H_{v_i}}$ with $\gamma_0=p$ and $\gamma_k=q$.  Then the negative (resp.\ positive) end of $C^\circ_i$ must be asymptotic to $(\gamma_{i-1}(t),t)$ (resp.\ $(\gamma_i(t),t)$) (with multiplicity one).  We also require that $u$ have ``finite energy''.  In addition, $u|C_i^\circ$ must be in the correct homology class: the element of $\pi_2(M)$ obtained by gluing together $u|C_i^\circ$ (resolving any nodes of $C_i^\circ$) with the given disks bounding $\gamma_i$ and $\gamma_{i-1}$ must vanish in homology.
\item$u$ is pseudo-holomorphic with respect to the almost complex structure on $M\times S^1\times\coprod_{i=1}^k\RR$ defined as follows.  Use coordinates $(t,s)\in S^1\times\coprod_{i=1}^k\RR$.  Fix the standard almost complex structure on $S^1\times\coprod_{i=1}^k\RR$, namely $J_{S^1\times\coprod_{i=1}^k\RR}(\frac\partial{\partial s})=\frac\partial{\partial t}$.  Also fix the ($s$-dependent) almost complex structure $J^\sigma(\ell(s))$ on $M$.  We let $A:T[S^1\times\coprod_{i=1}^k\RR]\to TM$ be defined by $A(\frac\partial{\partial s})=X_{H^\sigma(\ell(s),t,\cdot)}$ and extended anti-holomorphically.  Now we use the following almost complex structure on $M\times S^1\times\coprod_{i=1}^k\RR$:
\begin{equation}\label{compositeJoncylinder}
J=\left(\begin{matrix}
J^\sigma(\ell(s))&A\cr
0&J_{S^1\times\coprod_{i=1}^k\RR}
\end{matrix}\right)
\end{equation}
Note that the projection $M\times S^1\times\coprod_{i=1}^k\RR\to S^1\times\coprod_{i=1}^k\RR$ is holomorphic.
\end{rlist}
An \emph{isomorphism} $\iota:(C,\ell,u)\to(C',\ell',u')$ of Floer trajectories is an isomorphism $\iota_1:C\to C'$ of curves of type $(0,2)$ and an isomorphism $\iota_2:\coprod_{i=1}^k\RR\to\coprod_{i=1}^k\RR$ (acting by translation on each factor and respecting the ordering of the terms; note that the existence of $\iota_1$ implies that $k(C)=k(C')$) such that $u=(\id_{M\times S^1}\times\iota_2^{-1})\circ u'\circ\iota_1$ and $\ell=\ell'\circ\iota_2$.  We say a Floer trajectory is \emph{stable} iff its automorphism group (i.e.\ group of self-isomorphisms) is finite.
\end{definition}

\begin{definition}
We define a $\pi$-equivariant flow category diagram $\Mbar/\JH_\bullet(M)$ as follows.
\begin{rlist}
\item For a vertex $(J,H)\in J_0(M)\times H_0(M)$, we let $\PPP_{(J,H)}=\PPP_H$.
\item The grading $\gr:\PPP\to\ZZ$ is the usual Conley--Zehnder index, and $\gr:\pi\to\ZZ$ is given by $\gr(h)=2\langle c_1(TM),h\rangle$.
\item The action $a:\PPP\to\RR$ is the usual symplectic action, and $a:\pi\to\RR$ is given by $a(h)=\langle\omega,h\rangle$.
\item We let $\Mbar(\sigma,p,q)$ be the set of stable Floer trajectories of type $(\sigma,p,q)$, equipped with the Gromov topology.  It is well-known that $\Mbar(\sigma,p,q)$ is compact Hausdorff.  The finiteness conditions required on $\Mbar(\sigma,p,q)$ also follow from Gromov compactness.  The product/face maps on $\Mbar(\sigma,p,q)$ are evident.
\item The action of $\pi$ on everything is clear.
\end{rlist}
\end{definition}

\subsection{Implicit atlas}\label{HFIAsubsec}

Let us now define an implicit atlas on the flow category diagram $\Mbar/\JH_\bullet(M)$ (recall Definition \ref{IAonflowcategorydiagramdef}).  This construction follows the same outline as the construction of an implicit atlas on the moduli space of stable maps in \S\ref{gromovwittenimplicitatlassubsection} (the main difference being that here there is more notation to keep track of).  Note that the flow category diagram on which we will define an implicit atlas is no longer denoted $\X/Z_\bullet$, and this leads to a few (evident) notational differences from \S\ref{homologygroupssection} where we considered implicit atlases on flow category diagrams abstractly.

\begin{definition}[Index set $A^\HF(\Delta^n)$]\label{implicitatlasonFloer}
A \emph{(Hamiltonian Floer) thickening datum} $\alpha$ on the simplex $\Delta^n$ is a quadruple $(D_\alpha,r_\alpha,E_\alpha,\lambda_\alpha)$ where:
\begin{rlist}
\item\label{implicitatlasFloerD}$D_\alpha\subseteq M\times S^1\times\Delta^n$ is a compact smooth submanifold with corners locally modeled on $\RR_{\geq 0}^N\times\RR^{N'}\subseteq\RR_{\geq 0}^N\times\RR^{N'+2}$ or $\RR_{\geq 0}^{N+1}\times\RR^{N'}\subseteq\RR_{\geq 0}^N\times\RR^{N'+3}$.  Let us denote by $\partial^\ess D_\alpha\subseteq\partial D_\alpha$ the closure of $\partial D_\alpha\setminus[M\times S^1\times\partial\Delta^n]$ (which is precisely the set of points with local model of the second type).
\item$r_\alpha\geq 1$ is an integer; let $\Gamma_\alpha=S_{r_\alpha}$.
\item$E_\alpha$ is a finitely generated $\RR[S_{r_\alpha}]$-module.
\item$\lambda_\alpha:E_\alpha\to C^\infty(\Cbar_{0,2+r_\alpha}\times M,\Omega^{0,1}_{\Cbar_{0,2+r_\alpha}/\Mbar_{0,2+r_\alpha}}\otimes_\RR TM)$ is an $S_{r_\alpha}$-equivariant linear map supported away from the nodes and marked points of the fibers of $\Cbar_{0,2+r_\alpha}\to\Mbar_{0,2+r_\alpha}$ (the universal family).
\end{rlist}
Let $A^\HF(\Delta^n)$ denote the set of all thickening datums on $\Delta^n$.
\end{definition}

We define a $\pi$-equivariant implicit atlas $\A$ on the flow category diagram $\Mbar/\JH_\bullet(M)$ as follows.  We set $\Abar(\sigma,p,q):=A^\HF(\sigma)$.  Let us now define the implicit atlas on $\Mbar(\sigma,p,q)^{\leq\s}$ with index set $\A(\sigma,p,q)^{\geq\s}$ (built from $\Abar(\sigma,p,q)$ as in \eqref{coproductforA}).

\begin{definition}[Atlas data for $\A(\sigma,p,q)^{\geq\s}$ on $\Mbar(\sigma,p,q)^{\leq\s}$]\label{IAonfloerthickeningsdef}
For $I\subseteq\A(\sigma,p,q)^{\geq\s}$, an \emph{$I$-thickened Floer trajectory of type $(\sigma,p,q)^{\leq\s}$} is a $5$-tuple $(C,\ell,u,\{\phi_\alpha\}_{\alpha\in I},\{e_\alpha\}_{\alpha\in I})$ where:
\begin{rlist}
\item$C$ is a nodal curve of type $(0,2)$.
\item$\ell:\coprod_{i=1}^k\RR\to\Delta^n$ is a broken flow line from vertex $0$ to vertex $n$.
\item\label{hfiacalphapart}$u:C\to M\times S^1\times\coprod_{i=1}^k\RR$ is a smooth building of type $(\sigma,p,q)$ (in the sense of Definition \ref{floertrajdef}(\ref{buildingdef})), with combinatorial type of $u$ belonging to $\SSS_{\Mbar}(\sigma,p,q)^{\leq\s}$.  Recall that by definition (see \eqref{coproductforA}), $\A(\sigma,p,q)^{\geq\s}$ is a disjoint union of various $\Abar(\sigma|[i_0\ldots i_n],p',q'):=A^\HF(\sigma|[i_0\ldots i_n])$.  Hence any given $\alpha\in I$ comes from one of these, say $\Abar(\sigma|[i_0^\alpha\ldots i_n^\alpha],p'_\alpha,q'_\alpha)$.  Let $C_\alpha\subseteq C$ denote the union of irreducible components corresponding to this triple, which exists because the combinatorial type of $u$ belongs to $\SSS_{\Mbar}(\sigma,p,q)^{\leq\s}$ (this $C_\alpha$ is a key notion for the present construction of an implicit atlas).
\item\label{transversalitypartIAfloer} For all $\alpha\in I$, we must have $u|_{C_\alpha}\pitchfork D_\alpha$ with exactly $r_\alpha$ intersections.  By $u|_{C_\alpha}\pitchfork D_\alpha$, we mean that under the map $(\id_{M\times S^1}\times\ell)\circ u:C_\alpha\to M\times S^1\times\Delta^{\sigma|[i_0\ldots i_m]}$, we have\footnote{The closure of the image $\overline{C_\alpha}$ is precisely the image of $C_{\alpha}$ union the asymptotic periodic orbits.} $\overline{C_\alpha}\cap\partial^\ess D_\alpha=\varnothing$ and for every point $p\in C_\alpha$ mapping to $D_\alpha$, the derivative $d((\id_{M\times S^1}\times\ell)\circ u):T_pC\to T_{u(p)}[M\times S^1\times\Delta^{\sigma|[i_0\ldots i_m]}]/T_{u(p)}D_\alpha$ is surjective and $p$ is not a node or marked point of $C$.
\item\label{hfiaxpxmpart}$\phi_\alpha:C_\alpha\to\Cbar_{0,2+r_\alpha}$ is an isomorphism with a fiber (where $C_\alpha$ is considered to have two marked points $x^-,x^+$ corresponding to $p_\alpha',q_\alpha'$ plus the $r_\alpha$ marked points $(u|C_\alpha)^{-1}(D_\alpha)$).
\item$e_\alpha\in E_\alpha$.
\item The following \emph{$I$-thickened $\delbar$-equation} is satisfied:
\begin{equation}\label{approxJholforHF}
\delbar u+\sum_{\alpha\in I}\lambda_\alpha(e_\alpha)(\phi_\alpha,u)=0
\end{equation}
where we use the almost complex structure on $M\times S^1\times\coprod_{i=1}^k\RR$ defined in \eqref{compositeJoncylinder}.  The term $\lambda_\alpha(e_\alpha)(\phi_\alpha,u)$ only makes sense over $C_\alpha$; we interpret it as zero over the rest of $C$.  Note that for \eqref{approxJholforHF}, we project $\lambda_\alpha$ onto $\Omega^{0,1}_{\Cbar_{0,2+r_\beta}/\Mbar_{0,2+r_\beta}}\otimes_\CC TM_{J^\sigma(\ell(s))}$.
\end{rlist}
An \emph{isomorphism} between two $I$-thickened Floer trajectories $(C,\ell,u,\{e_\alpha\},\{\phi_\alpha\})$ and $(C',\ell',u',\{e_\alpha'\},\{\phi_\alpha'\})$ is an isomorphism $\iota_1:C\to C'$ of curves of type $(0,2)$ and an isomorphism $\iota_2:\coprod_{i=1}^k\RR\to\coprod_{i=1}^k\RR$ (acting by translation on each factor and respecting the ordering of the terms; note that the existence of $\iota_1$ implies that $k(C)=k(C')$) such that $u=(\id_{M\times S^1}\times\iota_2^{-1})\circ u'\circ\iota_1$, $\ell=\ell'\circ\iota_2$, $\phi_\alpha=\phi_\alpha'\circ\iota_1$, and $e_\alpha=e_\alpha'$ for all $\alpha\in I$.  We say an $I$-thickened Floer trajectory is \emph{stable} iff its automorphism group (i.e.\ group of self-isomorphisms) is finite.

Let $\Mbar(\sigma,p,q)^{\leq\s}_I$ denote the set of stable $I$-thickened Floer trajectories of type $(\sigma,p,q)^{\leq\s}$, and equip it with the Gromov topology.  The actions of $\Gamma_I$ on the thickened moduli spaces, the functions $s_I$, the projections $\psi_{IJ}$, and the sets $U_{IJ}$ are all defined as in Definition \ref{GWatlasdatadef} from the Gromov--Witten setting.

The stratification of $\Mbar(\sigma,p,q)^{\leq\s}_I$ by $\SSS_{\Mbar}(\sigma,p,q)^{\leq\s}$ is evident.
\end{definition}

The compatibility axioms for $\A(\sigma,p,q)^{\geq\s}$ on $\Mbar(\sigma,p,q)^{\leq\s}$ are all immediate; the homeomorphism axiom can again be verified directly as in the Gromov--Witten setting.

The regular loci for $\A(\sigma,p,q)^{\geq\s}$ on $\Mbar(\sigma,p,q)^{\leq\s}$ are defined following Definition \ref{regularsubsetdefinition}, meaning that a $5$-tuple $(C,\ell,u,\{\phi_\alpha\}_{\alpha\in I},\{e_\alpha\}_{\alpha\in I})$ is regular iff it has trival automorphism group and the linearized operator (fixing $C$ and varying $u$, $\ell$, and $\{e_\alpha\}_{\alpha\in I}$) is surjective (see \S\ref{regularlocusdef} for more details).  Let $\vdim\Mbar(\sigma,p,q)^{\leq\s}:=\gr(q)-\gr(p)+\dim\sigma-1-\codim\s$.

Let us now discuss the (nontrivial) transversality axioms for $\A(\sigma,p,q)^{\geq\s}$ on $\Mbar(\sigma,p,q)^{\leq\s}$.  The openness and submersion axioms follow from the following result, whose proof is given in Appendix \ref{gluingHF}.

\begin{proposition}[Formal regularity implies topological regularity]\label{HFgluingneeded}
For all $I\subseteq J\subseteq\A(\sigma,p,q)^{\geq\s}$, we have:
\begin{rlist}
\item$(\Mbar(\sigma,p,q)^{\leq\s}_I)^\reg\subseteq\Mbar(\sigma,p,q)^{\leq\s}_I$ is an open subset.
\item The map $s_{J\setminus I}:\Mbar(\sigma,p,q)^{\leq\s}_J\to E_{J\setminus I}$ over the locus $\psi_{IJ}^{-1}((\Mbar(\sigma,p,q)_I^{\leq\s})^\reg)\subseteq\Mbar(\sigma,p,q)^{\leq\s}_J$ is locally modeled on the projection:
\begin{equation}
\RR^{\vdim\Mbar(\sigma,p,q)^{\leq\s}+\dim E_I}\times\RR^{\dim E_{J\setminus I}}\to\RR^{\dim E_{J\setminus I}}
\end{equation}
over the top stratum $\s\in\SSS_{\Mbar}(\sigma,p,q)^{\leq\s}$.  More generally, the local model (compatible with stratifications) is given by replacing the first factor on the left by $\RR_{\geq 0}^n\times\RR^{n'}$ stratified appropriately by $\SSS_{\Mbar}(\sigma,p,q)$.
\item There exist $\pi$-invariant coherent trivializations of the local systems $\oo_{(\Mbar(\sigma,p,q)^{\leq\s}_I)^\reg}\otimes\oo_{E_I}^\vee$ (in the sense of Definition \ref{cohordefabstract}).
\end{rlist}
\end{proposition}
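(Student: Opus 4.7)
The plan is to adapt the proof of Proposition \ref{GWgluingneeded} to the Hamiltonian Floer setting, with additional care taken to handle the simplex direction, the broken flow line $\ell$, and the cell-like stratification by $\SSS_{\Mbar}(\sigma,p,q)$. As in the Gromov--Witten case, the two substantive pieces are (a) an implicit function theorem argument on the top stratum and (b) a topological gluing theorem near boundary strata.

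First I would establish the statement over the top stratum $\s^\ttop$, namely over points with smooth domain curve and unbroken flow line. Fix such a point $x_0 = (C, \ell, u, \{\phi_\alpha\}, \{e_\alpha\}) \in (\Mbar(\sigma,p,q)_I)^\reg$. Choose a large $k$ and $p \in (1,\infty)$ with $(k-1)p > 2$, and consider the Banach manifold of tuples $(u,\ell,\{e_\alpha\}, \{e_\beta\})$ where $u \in W^{k,p}_\delta(C, M \times S^1 \times \coprod \RR)$ (with exponential weights $\delta$ at the punctures enforcing asymptotic convergence to the prescribed periodic orbits), $\ell$ ranges in the finite-dimensional space of (unbroken) flow lines on $\sigma$, and $e_\alpha \in E_\alpha$ ($\alpha \in I$), $e_\beta \in E_\beta$ ($\beta \in J\setminus I$). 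The map $\mathcal F$ sending such a tuple to $(\bar\partial u + \sum \lambda_\gamma(e_\gamma)(\phi_\gamma,u))$ together with the tangency/intersection constraints with the $D_\gamma$ is a Fredholm section of a Banach bundle; its linearization at $x_0$ is surjective by the regularity hypothesis. The implicit function theorem then produces the required local model $\RR^{\vdim\Mbar + \dim E_I} \times \RR^{\dim E_{J\setminus I}} \to \RR^{\dim E_{J\setminus I}}$; elliptic regularity upgrades $W^{k,p}$ solutions to smooth ones and shows the description is independent of $(k,p)$. Openness of the regular locus follows from the fact that surjectivity of a Fredholm operator is an open condition.

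The main obstacle is extending this picture across the boundary strata. Here I would carry out a pregluing/Newton iteration argument whose only novelty over the standard SFT-type gluing for Floer cylinders (as in Abouzaid, Hofer--Wysocki--Zehnder, or Fukaya--Oh--Ohta--Ono) is bookkeeping. The codimension-one degenerations come in two flavors: (i) smoothing a genuine node of $C$ lying outside the main $x^- \to x^+$ path (a bubble node), and (ii) unbreaking at a node on the main path, which simultaneously unbreaks the corresponding break of $\ell$ at the vertex $v \in \{1,\ldots,n-1\}$ of $\Delta^n$. In each case there is a one-parameter family of gluing parameters $t \in [0,\epsilon)$, and the key observation, which I would verify componentwise, is that the correspondence between the set $G(\s',\s)$ of product/face operations from Definition \ref{IAonflowcategorydiagramdef} and the gluing parameters is bijective. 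Pregluing each node/break gives an approximately $J$-holomorphic map $u_T$ depending on gluing parameters $T = \{t_g\}_{g \in G} \in [0,\epsilon)^G$; the standard quadratic estimates (valid here because the Hamiltonian perturbation term $X_H$ and the $\lambda_\alpha$ contributions enter only in lower-order parts of the linearization) produce a unique genuine solution $u(T)$ near $u_T$. The condition that the $I$-thickened marked points remain transverse intersections and that $u|_{C_\alpha}$ still factors through a fiber of $\Cbar_{0,2+r_\alpha}$ is preserved under small pregluing by the openness of transversality. This yields a chart $\RR_{\geq 0}^G \times \RR^{N'} \to (\Mbar(\sigma,p,q)^{\leq\s}_J)^\reg$ intertwining $s_{J\setminus I}$ with the projection to the $\RR^{\dim E_{J\setminus I}}$ factor, and matching stratifications via the tautological isomorphism $2^G \cong \SSS_{\Mbar}(\sigma,p,q)^{\s'\leq\cdot\leq\s}$ of Definition \ref{IAonflowcategorydiagramdef}. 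The hardest technical step will be showing that the pregluing/Newton correspondence is a topological homeomorphism onto an open subset (not merely a surjection), which reduces to a standard injectivity-of-gluing argument via energy concentration and the uniqueness part of the implicit function theorem.

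For the orientations, I would use the standard fact that the linearization $D\bar\partial_J$ of the Cauchy--Riemann operator with exponential weights at cylindrical ends is homotopic (through Fredholm operators) to a complex linear operator, hence has a canonical orientation. The full linearization includes the contributions from $\ell \in T_\ell(\text{flow lines}) \cong \RR^{n-1}$, from varying $e_\alpha$ in $E_\alpha$, and from the Hamiltonian correction; these are real and carry their own canonical orientations (from the standard orientation of $\Delta^n$ via the Morse flow, from the given $\RR$-structure on $E_I$, respectively). This produces a trivialization of $\oo_{(\Mbar(\sigma,p,q)^{\leq \s}_I)^\reg} \otimes \oo_{E_I}^\vee$, and compatibility with the product/face maps (hence coherence in the sense of Definition \ref{cohordefabstract}) follows from the multiplicativity of the index bundle under linear gluing, together with the standard sign computation relating orientations of the glued operator to orientations of the pieces. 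Invariance under $\pi$ is automatic since the $\pi$-action reshuffles only the bounding-disk homotopy class and does not enter the analytic linearization.
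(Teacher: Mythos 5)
Parts (i) and (ii) of your plan --- pregluing, Newton--Picard iteration, quadratic estimates, and the reduction of the stratification bookkeeping to the tautological $2^G \cong \SSS_{\Mbar}(\sigma,p,q)^{\s'\leq\cdot\leq\s}$ --- are essentially the approach the paper takes (Appendix \ref{gluingHF} sets up a model moduli space $\Mbar(M)$ with parameters $(\alpha,w,y,\kappa)$ for exactly the gluing/flow-line/complex-structure/kernel directions you describe, and then proves injectivity, continuity, and local surjectivity of the gluing map). One detail you treat as standard that the paper treats as the sticking point: continuity of the gluing map (Proposition \ref{continuityofgluingHF}) is not automatic, and the argument leans on choosing the right inverse $Q_{\alpha,w,y}$ so that $\im Q_{\alpha,w,y} = \ker L_\alpha$ for an explicit finite-rank evaluation map $L_\alpha$; without that control, the Newton limit $\kappa_{\alpha,w,y}$ need not vary continuously in $(\alpha,w,y,\kappa)$. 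Also, bubble nodes carry $\CC$-valued gluing parameters rather than $[0,\epsilon)$-valued ones; only main breaks contribute the $\RR_{\geq 0}$ directions that realize the stratification.

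Part (iii) has a genuine gap. You claim the Floer linearization is ``homotopic (through Fredholm operators) to a complex linear operator, hence has a canonical orientation.'' That is true for closed-surface $\delbar$-operators (the Gromov--Witten case, and this is exactly how Proposition \ref{GWgluingneeded}(iii) asserts a \emph{canonical} identification $\oo_{\Mbar^\reg_I} \cong \oo_{E_I}$), but it is false for cylinders with non-degenerate Floer asymptotics. The asymptotic operator $J\mathcal L_{X_H}$ is self-adjoint but not $J$-linear, so its spectrum is not symmetric under complex conjugation, and any homotopy to a complex linear operator would have to move the asymptotic operator --- which changes the Fredholm data and the determinant line. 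Consequently the determinant lines $\oo_{p,q}$ are \emph{not} canonically trivialized; they are only trivializable, and one must check that trivializations can be chosen coherently across gluings. This is exactly the content of the Floer--Hofer theory of coherent orientations, which the paper invokes and then reproves compatibility with the gluing map in Proposition \ref{gluingHForientations} (a substantial, non-formal argument tracing the Floer--Hofer gluing isomorphism $\oo_{p,q}\otimes\oo_{q,r}\to\oo_{p,r}$ through the Newton iteration). Notice that the statement of the Proposition itself only asserts the \emph{existence} of $\pi$-invariant coherent trivializations --- weaker than the canonical trivialization you would produce --- which should have been a signal that the Gromov--Witten orientation argument does not port over.
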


Finally, to verify the covering axiom, we use the general strategy from Lemma \ref{everythingcanbethickened} in the Gromov--Witten case.  To apply this in the present setting, we just need the following stabilization lemma to take the place of Lemma \ref{stabilizationlemma}.

\begin{lemma}[Domain stabilization for stable Floer trajectories]\label{stabilizationlemmaforHF}
Let $\ell:\coprod_{i=1}^k\RR\to\Delta^n$ and $u:C\to M\times S^1\times\coprod_{i=1}^k\RR$ be a point in $\Mbar(\sigma,p,q)$.  Then there exists $D\subseteq M\times S^1\times\Delta^n$ as in Definition \ref{implicitatlasonFloer}(\ref{implicitatlasFloerD}) such that $C\pitchfork D$ in the sense of Definition \ref{IAonfloerthickeningsdef}(\ref{transversalitypartIAfloer}) and so that adding the intersections to $C$ as extra marked points makes $C$ stable.
\end{lemma}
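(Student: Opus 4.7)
The plan is to adapt the proof of Lemma \ref{stabilizationlemma} from the Gromov--Witten setting to accommodate both trajectory components and bubble components of a Floer trajectory. Let $\{C_0\}$ denote the set of unstable irreducible components of $C$, each of which is a sphere carrying at most two special points (nodes or the marked points $x^\pm$). My strategy is, for each unstable $C_0$, to locate a point $p_0\in C_0$ at which $(\id_{M\times S^1}\times\ell)\circ u$ has injective differential into $M\times S^1\times\Delta^n$, and to place near $u(p_0)$ a small codimension two disk $D_{p_0}$ meeting $u$ transversally at a point close to $p_0$; the desired divisor will then be $D:=\bigsqcup_{C_0}D_{p_0}$.

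First I would produce such a point $p_0$ on every unstable $C_0$, chosen so that $(\id\times\ell)\circ u(p_0)$ lies in the interior of $M\times S^1\times\Delta^n$ and away from nodes, marked points, and the periodic orbit asymptotics. If $C_0$ is a spherical bubble off the path from $x^-$ to $x^+$ in the dual graph, then the holomorphic projection $u|_{C_0}\to S^1\times\coprod\RR$ is constant (as $S^2$ admits no non-constant holomorphic map to a cylinder), so $u|_{C_0}$ is effectively a $J^\sigma(\ell(s))$-holomorphic sphere in $M$ at some fixed finite value of $s$; stability of $(C,\ell,u)$ forces this sphere to be non-constant, giving an interior point $p_0$ with $du$ injective. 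If instead $C_0$ is a trajectory component carrying only two special points, then the holomorphic projection to $S^1\times\RR$ has degree one with cylindrical asymptotic behavior and therefore has injective differential at a generic point; I would pick such a point at finite $s$ strictly between the two endpoint vertices of the corresponding segment of $\ell$, so that $\ell(s)$ lies in the interior of $\Delta^n$.

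Next I would apply Lemma \ref{sortofsard} in an open neighborhood of $(\id\times\ell)\circ u(p_0)$ inside $M\times S^1\times(\Delta^n)^\circ$ (a boundaryless smooth manifold) to select, for each such $p_0$, a small smooth codimension two disk $D_{p_0}$ with boundary, contained entirely in this neighborhood, passing arbitrarily close to $u(p_0)$, and transverse to $u|_{C_0}$. After shrinking the disks I would arrange that they are pairwise disjoint, that each $D_{p_0}$ is disjoint from the periodic orbits $p,q$ (which sit over the vertices of $\Delta^n$), from the images under $(\id\times\ell)\circ u$ of all other irreducible components of $C$, and from all nodes and marked points, and finally that $\partial D_{p_0}$ is disjoint from $(\id\times\ell)\circ u(C)$. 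A further generic perturbation then ensures that $u$ is transverse to every $D_{p_0}$ on every component of $C$, not only on the particular unstable $C_0$. Setting $D:=\bigsqcup_{C_0}D_{p_0}$ gives a compact codimension two smooth submanifold with boundary whose local models at boundary and interior points are the two instances of Definition \ref{implicitatlasonFloer}(\ref{implicitatlasFloerD}) with $N=0$, so $D$ is a valid divisor. Transversality in the sense of Definition \ref{IAonfloerthickeningsdef}(\ref{transversalitypartIAfloer}) (taken with $C_\alpha=C$) follows from the construction; the condition $\overline C\cap\partial^{\ess}D=\varnothing$ holds because $\partial D$ avoids both $u(C)$ and the limiting periodic orbits; and each unstable $C_0$ acquires at least one new marked point, raising its number of special points to three and thereby stabilizing $C$.

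The main obstacle is the interaction between the corner geometry of $M\times S^1\times\Delta^n$, the transversality hypothesis of Definition \ref{IAonfloerthickeningsdef}(\ref{transversalitypartIAfloer}), and the periodic orbits living over $M\times S^1\times\partial\Delta^n$; the resolution is precisely to pick the auxiliary points $p_0$ strictly away from the cylindrical ends of the trajectory components, so that $\ell(s(p_0))$ lies in the interior of $\Delta^n$ and each $D_{p_0}$ can be taken small enough that its entire boundary is essential and disjoint from the Gromov-closure of $u(C)$.
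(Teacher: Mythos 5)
Your argument follows the paper's overall strategy---reduce to finding a point on each unstable component where $d((\id_{M\times S^1}\times\ell)\circ u)$ is injective, then place a small transverse divisor via a Sard-type lemma---but there are two genuine gaps. You confine $D$ to $M\times S^1\times(\Delta^n)^\circ$ by asserting that for interior values of $s$ the point $\ell(s)$ lies in the interior of $\Delta^n$. This is false for a general broken trajectory: a non-constant Morse flow line from vertex $v_{i-1}$ to vertex $v_i$ lies in the face $\Delta^{[v_{i-1}\ldots v_i]}$, which is a proper boundary stratum of $\Delta^n$ whenever $(v_{i-1},v_i)\ne(0,n)$, and a constant segment sits at a single vertex. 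Any such component of $C$ maps into $M\times S^1\times\partial\Delta^n$, so a divisor confined to the interior cannot meet it, let alone stabilize it. This is precisely why Definition \ref{implicitatlasonFloer}(\ref{implicitatlasFloerD}) allows $D$ to have corners adapted to $\partial\Delta^n$, and why the paper invokes a corner variant of Lemma \ref{sortofsard} rather than working in the interior.

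You also never address the unstable main component over which $\ell$ is constant, which Definition \ref{floertrajdef} explicitly allows ($v_{i-1}=v_i$): there is then no finite $s$ ``strictly between the two endpoint vertices'', and $\ell'\equiv 0$. In coordinates $(t,s)$ on $C_0\cong S^1\times\RR$ via the holomorphic projection, $d((\id\times\ell)\circ u)\cdot\frac{\partial}{\partial t}$ has nonzero $TS^1$ component, while $d((\id\times\ell)\circ u)\cdot\frac{\partial}{\partial s}$ has vanishing $TS^1$ and $T\Delta^n$ components, so injectivity somewhere forces the $TM$-component of $\frac{\partial u}{\partial s}$ to be nonzero somewhere, i.e.\ forces $u|_{C_0}$ not to be a trivial cylinder. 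Excluding the trivial cylinder requires invoking the stability of $(C,\ell,u)$: a trivial cylinder over a constant $\ell$ carries an $\RR$-family of automorphisms. You use stability to show unstable bubbles are non-constant but not for the analogous fact on main components; the paper's proof is organized around exactly this dichotomy (projection to $S^1\times\RR$ constant vs.\ non-constant) and handles the constant-$\ell$ scenario via stability.
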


\begin{proof}
As in the proof of Lemma \ref{stabilizationlemma}, we use (an appropriate variant for manifolds with corners of) Lemma \ref{sortofsard} (which we may also use to avoid the periodic orbits in question).  It thus suffices to show that for any unstable component $C_0$ of $C$, there exists a point $p\in C_0$ where $d((\id_{M\times S^1}\times\ell)\circ u):TC_0\to TM\times TS^1\times T\Delta^n$ injective.  To find such a point, we split into two cases.

First, suppose $u:C_0\to S^1\times\RR$ is constant.  Then $u:C_0\to M$ is a (nonconstant!) $J$-holomorphic sphere, and thus has a point of injectivity of $du$.

Second, suppose $u:C_0\to S^1\times\RR$ is not constant.  Then $u:C_0\to S^1\times\RR$ is an isomorphism; let us use $(t,s)\in S^1\times\RR$ as coordinates on $C_0$.  Now $\frac{\partial u}{\partial t}\in TM\times TS^1$ has nonzero coordinate in the $TS^1$ component (everywhere), and $\frac{\partial u}{\partial s}\in TM\times TS^1$ has zero coordinate in the $TS^1$ coordinate (everywhere).  Hence if $du$ is everywhere noninjective, we find that $\frac{\partial u}{\partial s}=0\in TM\times TS^1$ everywhere.  It follows that $u:C_0\to M\times S^1$ is independent of the $\RR$ coordinate, and thus is simply a trivial cylinder mapping onto a periodic orbit.  Now the stability condition (the automorphism group being finite) implies that the corresponding (piece of a) flow line $\ell:\RR\to\Delta^n$ is nontrivial.  It follows that we have the desired injectivity.
\end{proof}

\begin{theorem}\label{IAonFloerworks}
$\A$ is a $\pi$-equivariant implicit atlas on $\Mbar/\JH_\bullet(M)$.
\end{theorem}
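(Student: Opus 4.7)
The plan is to verify, in turn, that for every triple $(\sigma,p,q)$ and every $\s \in \SSS_{\Mbar}(\sigma,p,q)$ the data $\A(\sigma,p,q)^{\geq\s}$ on $\Mbar(\sigma,p,q)^{\leq\s}$ is an implicit atlas with boundary and cell-like stratification, and then to verify the three compatibility identifications (\ref{substratumC})--(\ref{productC}) together with $\pi$-equivariance. The atlas data (thickenings, Kuranishi maps, footprints, footprint maps, regular loci, stratifications) were defined in Definitions \ref{IAonfloerthickeningsdef} and thereafter; I will not re-list them, but all of the \emph{compatibility axioms} (the five identities $\psi_{IJ}\psi_{JK}=\psi_{IK}$, $s_I\psi_{IJ}=s_I$, \emph{etc.}) are immediate from the definitions (the thickened trajectories are parameterized by choosing extra data $(\phi_\alpha,e_\alpha)$, and $\psi_{IJ}$ is the forgetful map on this extra data). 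The homeomorphism axiom is checked exactly as in Definition \ref{GWatlasdatadef}: the footprint map descends to a continuous bijection between the quotient and $U_{IJ}$, and the topologies agree by direct comparison with the Gromov topology.

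For the transversality axioms: the openness and submersion axioms are the content of Proposition \ref{HFgluingneeded}, quoted as a black box. The covering axiom is verified by the same strategy as Lemma \ref{everythingcanbethickened}. Given a stable Floer trajectory $(C,\ell,u) \in \Mbar(\sigma,p,q)^\s$, apply Lemma \ref{stabilizationlemmaforHF} to each irreducible component $C_\beta$ lying over a sub-simplex of $\sigma$ which is not a vertex (these are the pieces where $u$ is not merely a trivial cylinder over a Morse flow line and could be unstable) to obtain a divisor $D_\alpha \subseteq M \times S^1 \times \sigma|[i_0\ldots i_m]$ stabilizing the domain; then enlarge $E_\alpha$ and pick a $\Gamma_\alpha$-equivariant $\lambda_\alpha$ surjecting onto the cokernel of the linearized operator, exactly as in the Gromov--Witten case. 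The cell-like stratification condition \ref{stratiscelllike} is satisfied because the local model near a point of $\Mbar(\sigma,p,q)^{\leq\s}_J$ of combinatorial type $\s' \preceq \s$ is $\RR_{\geq 0}^{G(\s',\s)} \times \RR^N \times \RR^{\dim E_{J \setminus I}}$, where each $\RR_{\geq 0}$-factor corresponds either to a gluing parameter for a node/broken trajectory or to opening up a broken Morse flow line in $\Delta^n$; this is exactly the local model specified in Definition \ref{IAonflowcategorydiagramdef}, and the orientation-line identification on normal bundles is standard (a broken-trajectory/gluing parameter carries a canonical orientation).

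The compatibility identifications are the part which requires the most care, though each is essentially a matter of definition-chasing. For (\ref{substratumC}), the restriction of the $\A(\sigma,p,q)^{\geq\s}$-thickened moduli space to the $\leq\s$ locus forgets precisely those thickening datums $\alpha$ whose associated subcurve $C_\alpha$ does not appear in the combinatorial type $\s$, which by construction of $\A(\sigma,p,q)^{\geq\ttt}$ gives the correct subatlas. For (\ref{faceC}), the key observation is that the thickened $\delbar$-equation \eqref{approxJholforHF} is written as a sum in which each $\lambda_\alpha(e_\alpha)(\phi_\alpha,u)$ is defined as zero outside its subcurve $C_\alpha$; consequently, restricting the index set to those $\alpha$ whose $C_\alpha$ lies in the face $\sigma|[0\ldots\hat k\ldots n]$ produces exactly the thickened moduli space for that face. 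For (\ref{productC}), splitting at an intermediate vertex $q$ gives a canonical decomposition of the building $u$ into two buildings $u_1$ on $C_{p,q}$ and $u_2$ on $C_{q,r}$; since the thickened $\delbar$-equation on $C$ decouples into a pair of thickened $\delbar$-equations on $C_{p,q}$ and $C_{q,r}$ (again because each $\lambda_\alpha(e_\alpha)$ is supported in its $C_\alpha$), we obtain the required identification with the product implicit atlas. Finally, $\pi$-equivariance is built in: $\pi$ acts by changing the capping disk, which is independent of all of the thickening data (divisors, obstruction spaces, and the maps $\phi_\alpha, \lambda_\alpha$), and the diffeomorphism induced on each thickened moduli space is compatible with all the structure maps in the obvious way.

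The main obstacle is the combinatorial verification of (\ref{productC}) (and to a lesser extent (\ref{faceC})): one must confirm that the bookkeeping of thickening datums, the decomposition of the universal curve, the choice of isomorphisms $\phi_\alpha : C_\alpha \to \Cbar_{0,2+r_\alpha}$, and the stratification all match up so that the product thickened moduli space on the left side of \eqref{systemofatlasesproduct} is equal (not merely canonically isomorphic) to the restriction of the thickened moduli space on the right. Once this is done carefully, the remaining axioms follow by quoting Proposition \ref{HFgluingneeded} and imitating the Gromov--Witten construction.
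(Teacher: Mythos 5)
Your plan takes essentially the same route as the paper: the paper's proof is a two-sentence wrap-up of the preceding subsection (atlas data and compatibility axioms immediate from definitions, homeomorphism axiom as in the Gromov--Witten case, openness/submersion from Proposition \ref{HFgluingneeded}, covering axiom from Lemma \ref{stabilizationlemmaforHF} via the strategy of Lemma \ref{everythingcanbethickened}, and the compatibility identifications (\ref{substratumC})--(\ref{productC}) "directly from the definition"). You have spelled out the same chain of verifications in more detail, and your emphasis on the combinatorial bookkeeping for (\ref{faceC}) and (\ref{productC}) — that the thickened $\delbar$-equation \eqref{approxJholforHF} is a sum of terms each supported on its own subcurve $C_\alpha$, hence restriction/decoupling is literal rather than merely canonical — is precisely the reason the paper is entitled to say "follow directly from the definition."
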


\begin{proof}
We have shown above that each individual $\A(\sigma,p,q)^{\geq\s}$ is an implicit atlas on $\Mbar(\sigma,p,q)^{\leq\s}$.  The required compatibility isomorphisms between these implicit atlases follow directly from the definition.
\end{proof}

\subsection{Definition of Hamiltonian Floer homology}

\begin{definition}[Hamiltonian Floer homology]
We have a $\pi$-equivariant flow category diagram $\Mbar/\JH_\bullet(M)$ equipped with an implicit atlas $\A$.  Moreover, Proposition \ref{HFgluingneeded} gives coherent orientations $\omega$.  Hence according to Definition \ref{defofhomologygroups}, we get a diagram $F\HH:\JH_\bullet(M)\to H^0(\Ch_{\QQ[[\pi]]})$ (the hypotheses of Definition \ref{defofhomologygroups} can be easily verified).  Since $\JH_\bullet(M)$ is a contractible Kan complex, this is really just a single object $F\HH^\bullet(M)\in H^0(\Ch_{\QQ[[\pi]]})$ which we call the \emph{Hamiltonian Floer homology of $M$}.
\end{definition}

\begin{remark}
The ring $\QQ[[\pi]]$ is the \emph{graded} completion of $\QQ[\pi]$, see Definition \ref{gradedcompletion}.
\end{remark}

\subsection{\texorpdfstring{$S^1$}{S{\textasciicircum}1}-invariant Hamiltonians}\label{SSham}

To calculate the Hamiltonian Floer homology $F\HH^\bullet(M)$ as defined above, we consider the case when $H$ is a (time-indepedent) Morse function on $M$.

Fix a smooth almost complex structure $J$ on $M$ compatible with $\omega$.  This induces a metric on $M$, so there is a notion of gradient flow line for smooth functions on $M$.

Let $H:M\to\RR$ be a Morse function for which the time $1$ Hamiltonian flow map of $H$ has non-degerate fixed points, all of which are critical points of $H$ (for example, $H=\epsilon\cdot H_0$ is such a function for any Morse function $H_0:M\to\RR$ and sufficiently small $\epsilon>0$).  Consider the inclusion $\ast\hookrightarrow\JH_\bullet(M)$ (where $\ast$ is the simplicial $0$-simplex, i.e.\ the simplicial set with a single $n$-simplex for all $n$) defined by mapping everything to the constant families of almost complex structures and Hamiltonians given by $J$ and $H$.  We will restrict attention to the (pullback) flow category diagram $\Mbar/\ast$.  With our assumptions on $H$, the set of generators is $\PPP=\crit(H)\times\pi$ canonically, and the grading on $\PPP$ is given by the Morse index on $\crit(H)$ plus $\gr:\pi\to\ZZ$.

Now there is a canonical $S^1$-action on the spaces of stable Floer trajectories in $\Mbar/\ast$ (postcompose $u$ with a rotation of $S^1$) which is compatible with the product/face maps (this action exists since $H$ is independent of the $S^1$-coordinate).  It follows that $\Mbar^{S^1}/\ast$ is also a flow category diagram (defined using the fixed locus $\Mbar(\sigma,p,q)^{S^1}\subseteq\Mbar(\sigma,p,q)$).

\begin{definition}[Morse trajectory]\label{morseTdef}
Let $\sigma\in\ast$ be the $n$-simplex, and let $p,q\in\PPP=\crit(H)\times\pi$.  A \emph{Morse trajectory of type $(\sigma,p,q)$} is a triple $(k,\ell,u)$ where:
\begin{rlist}
\item$k\geq 1$ is a positive integer.
\item$\ell:\coprod_{i=1}^k\RR\to\Delta^n$ is a broken Morse flow line from vertex $0$ to vertex $n$ (for the Morse function in Definition \ref{simplexflows}).  Let $0=v_0\leq\cdots\leq v_k=n$ be the corresponding sequence of vertices.  We allow $\ell$ to contain constant flow lines, i.e.\ we allow $v_i=v_{i+1}$.
\item$u:\coprod_{i=1}^k\RR\to M$ is a broken Morse flow line from $p$ to $q$ for the function $H$.  We allow $u$ to contain constant flow lines.
\item The $\pi$-components of $p$ and $q$ agree.
\end{rlist}
An \emph{isomorphism} $\iota:(k,\ell,u)\to(k',\ell',u')$ of Morse trajectories is an isomorphism $\iota:\coprod_{i=1}^k\RR\to\coprod_{i=1}^{k'}\RR$ (acting by translation on each factor and respecting the ordering of the terms; we require $k=k'$) such that $u=u'\circ\iota$ and $\ell=\ell'\circ\iota$.  We say a Morse trajectory is \emph{stable} iff its automorphism group (i.e.\ group of self-isomorphisms) is finite.

Let $\Mbar_\morse(\sigma,p,q)$ denote the space of stable Morse trajectories.
\end{definition}

\begin{proposition}[Formal regularity implies topological regularity]\label{morseassumptionMS}
Suppose $H$ is Morse--Smale.  Then with the stratification by $k$, the spaces $\Mbar_\morse(\sigma,p,q)$ are compact topological manifolds with corners.
\end{proposition}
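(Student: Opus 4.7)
The plan is to identify $\Mbar_\morse(\sigma,p,q)$ with the compactified moduli space of broken gradient trajectories for the Morse function $F := f + H$ on the compact manifold with corners $N := \Delta^n \times M$ (equipped with a product Riemannian metric), where $f$ is the standard Morse function on $\Delta^n$ from Definition \ref{simplexflows} and $n = \dim\sigma$. Under this identification, a piece $(\ell_i,u_i)$ in the sense of Definition \ref{morseTdef} corresponds to the two components of a single $F$-gradient trajectory $\RR \to N$, since the product metric yields $\nabla F = (\nabla f, \nabla H)$ and hence $\dot\ell_i = -\nabla f(\ell_i)$, $\dot u_i = -\nabla H(u_i)$ independently. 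The stability condition of Definition \ref{morseTdef} (for each $i$, at least one of $\ell_i$, $u_i$ non-constant) is exactly the condition that each piece be a non-constant $F$-trajectory; the $\RR^k$-equivalence matches the standard translation action on broken trajectories; the critical points of $F$ (products of vertices of $\Delta^n$ with elements of $\crit(H)\times\pi$) match the combinatorial data at break points; and the stratification by $k$ coincides with the classical stratification by number of pieces.

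Having made this identification, the proposition reduces to verifying that $F$ is Morse--Smale on $N$ in the sense appropriate for manifolds with corners --- namely, $\nabla F$ is tangent to every corner stratum, $F$ restricts to a Morse function on each such stratum, and within each stratum stable and unstable manifolds intersect transversely --- and then invoking the classical theorem that such Morse--Smale functions on compact manifolds with corners yield moduli spaces of broken trajectories which are compact topological manifolds with corners, stratified by the number of pieces. For Morse--Smaleness: tangency of $\nabla f = \sum_i \pi\sin(\pi x_i)\partial_{x_i}$ to every face of $\Delta^n$ is immediate from \eqref{simplexmorsefunctiongradient}, so $\nabla F$ is tangent to every face of $N$. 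Each corner stratum of $N$ has the form $F_0 \times M$ with $F_0 \subseteq \Delta^n$ a face, and on it $f|_{F_0}$ is (up to relabeling of coordinates and an additive constant) again a standard Morse function on a lower-dimensional simplex. Under the product metric the stable and unstable manifolds of any critical point $(v,c)$ factor as $W^{s/u}_{f|_{F_0}}(v) \times W^{s/u}_H(c)$, so transversality reduces to transversality in each factor: for the $\Delta^n$-factor this is direct from the explicit integration of $\dot x_j = \pi\sin(\pi x_j)$ (the coordinates evolve independently and monotonically), and for the $M$-factor it is the Morse--Smale hypothesis on $H$.

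The main obstacle will be the classical theorem invoked above. Though well-established on closed manifolds, the manifold-with-corners version requires care to handle gluing at boundary strata of various codimensions and to verify that the induced stratification matches the corner structure. The setting here is especially convenient because all corners of $N$ come from the $\Delta^n$-factor, where the flow is explicit and preserves each face, so local gluing charts near any broken trajectory (even one passing through a sequence of corner strata of increasing codimension) can be constructed via standard implicit-function-theorem arguments applied stratum by stratum. An alternative route would be to exploit the product decomposition of $N$ more directly: $\Mbar_\morse(\sigma,p,q)$ admits a natural proper map to $\F(\Delta^n) \cong [0,1]^{n-1}$ whose fibers involve classical manifold-with-corners moduli spaces of broken Morse trajectories for $H$ on $M$, from which the global manifold-with-corners structure may be assembled fiberwise.
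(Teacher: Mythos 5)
Your reduction is exactly the one the paper makes: identify $\Mbar_\morse(\sigma,p,q)$ with broken gradient trajectories for the Morse function $H+f$ on $M\times\Delta^n$, using the product metric so that the flow decouples into the $\Delta^n$-flow from Definition \ref{simplexflows} and the $H$-flow on $M$, and so that stability matches non-constancy of each piece. Your verification of the Morse--Smale properties on the product (tangency of $\nabla f$ to faces, independence of the coordinates $x_j$ under the flow, product decomposition of stable/unstable manifolds) is also correct.

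The issue is the step you yourself flag as ``the main obstacle.'' The theorem you invoke --- that a Morse--Smale function on a compact manifold with corners has compactified trajectory spaces which are topological manifolds with corners, stratified by breaking number --- is not folklore. It is a specific and technically substantial result of Wehrheim \cite{wehrheimMS}, and the paper's proof of the proposition is precisely a citation to that paper, both for the zero-simplex case and, via your reduction, for the general one. Without anchoring to a reference of that kind, your argument has a genuine gap: the implicit-function-theorem assertion in your penultimate sentence (``local gluing charts near any broken trajectory\ldots can be constructed via standard implicit-function-theorem arguments applied stratum by stratum'') is the content of the theorem, not a route around it; the subtlety is exactly in showing the gluing charts fit together to give the corner structure, and that the resulting manifold-with-corners structure is consistent across strata of all codimensions.

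The paper also handles the corner issue of $\Delta^n$ differently than you propose. Rather than developing a with-corners version of the Morse gluing theory, it observes that $f$ extends smoothly to a neighborhood of $\Delta^n\subseteq\RR^n$, so one may work in the boundaryless framework of \cite{wehrheimMS} and afterwards recognize $\Mbar_\morse(\sigma,p,q)$ inside the larger space; the tangency of $\nabla f$ to the faces (your observation) ensures the locus confined to $\Delta^n\times M$ acquires the correct corner strata. Finally, the paper's ``alternative route'' is not your fiberwise one over $\F(\Delta^n)\cong[0,1]^{n-1}$ --- which would itself require care since the $k$ in Definition \ref{morseTdef} may exceed the number of genuine breaks of $\ell$, so the fibers are not literally classical trajectory spaces --- but rather to impose a standard normal form on $H$ near its critical points, after which the gluing analysis becomes elementary.
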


For the case of the zero simplex $\sigma=\sigma^0$, this is proved by Wehrheim \cite{wehrheimMS}.  In fact, the general case also follows from \cite{wehrheimMS} since $\Mbar_\morse(\sigma,p,q)$ is the space of broken Morse flow lines on $M\times\Delta^n$ from $p\times 0$ to $q\times n$ for the Morse function $H+f$ (and $f$ is defined and smooth on a neighborhood of $\Delta^n\subseteq\RR^n$).  Alternatively, one may restrict to $H$ which have a particular normal form near each critical point in which case this result holds by more elementary arguments (see also \cite{wehrheimMS}).

\begin{lemma}\label{fixedismorse}
There is canonical homeomorphism $\Mbar(\sigma,p,q)^{S^1}=\Mbar_\morse(\sigma,p,q)$.
\end{lemma}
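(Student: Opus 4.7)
The plan is to exhibit an explicit homeomorphism by constructing maps in both directions. Fix a time-independent Hamiltonian $H$ as in the setup.

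First I would construct the map $\Phi: \Mbar_\morse(\sigma,p,q) \to \Mbar(\sigma,p,q)^{S^1}$. Given a stable Morse trajectory $(k, \ell, u_\morse)$, I would define a Floer trajectory $(C, \ell, u)$ where $C$ is the nodal curve with $k$ irreducible components forming a chain of $\P^1$'s from $x^-$ to $x^+$ (with the two marked points at the ends), and where $u|_{C_i^\circ}(t,s) := (u_\morse|_{\RR_i}(s), t, s)$ on each puncturing $C_i^\circ \cong S^1 \times \RR$. The verifications are all routine: the Floer equation for the almost complex structure \eqref{compositeJoncylinder} specializes, on $t$-independent maps $u_M$, to the gradient flow equation $\partial_s u_M = \pm\nabla H(u_M)$; $S^1$-invariance holds because rotation in the target $S^1$ is absorbed by rotation of the cylindrical domain, i.e.\ $R_\theta \circ u = u \circ \text{rot}_\theta$ where $\text{rot}_\theta(t,s) = (t+\theta, s)$; the asymptotic periodic orbits are the constant orbits sitting at the critical points of $H$; and stability of the Morse trajectory transfers directly to stability of the Floer trajectory, since any self-isomorphism of the trivial cylinder (in the sense of Definition \ref{floertrajdef}) is forced to be the identity on $t$-coordinates by $u_{S^1} = t$ and on $s$-coordinates by nonconstancy of $u_M$.

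Next I would construct the inverse $\Psi$ by showing that every $S^1$-fixed stable Floer trajectory $(C, \ell, u)$ is in the image of $\Phi$. The main claim is that $C$ is a chain of cylinders (no sphere bubbles) and each $u|_{C_i^\circ}$ is a trivial cylinder. The stability hypothesis gives that the stabilizer group $G := \{(\theta, \iota_1, \iota_2) : R_\theta \circ u = (\id \times \iota_2^{-1}) \circ u \circ \iota_1\}$ is a compact Lie group surjecting onto $S^1$ with finite kernel, so $G$ is itself an $S^1$. Hence we obtain a continuous $1$-parameter family of automorphisms $\iota_1^\theta \in \Aut(C)$. On any cylindrical component, holomorphicity of the projection $u_{S^1 \times \RR}$ together with the asymptotic conditions forces $u_{S^1}(t,s) = t$ and $u_\RR(t,s) = s$; then the cylinder automorphism $\iota_1^\theta$ is forced to be the rotation $(t,s) \mapsto (t+\theta, s)$. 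This fixed-point-free rotation on the cylinder is incompatible with preserving any attaching node, so no sphere bubbles can be present. The same cylinder identity combined with the derived relation $u_M(t+\theta, s) = u_M(t,s)$ for all $\theta$ forces $u_M$ to be $t$-independent, and the Floer equation then reduces to the gradient flow equation. Reading off the underlying Morse flow lines gives $\Psi(C, \ell, u)$.

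The compositions $\Phi \circ \Psi$ and $\Psi \circ \Phi$ are the identity by construction. Continuity of $\Phi$ is immediate from the explicit formula for trivial cylinders; continuity of $\Psi$ follows because Gromov convergence of Floer trajectories of the trivial-cylinder form is equivalent to $C^\infty_{\mathrm{loc}}$-convergence of the underlying Morse flow data (plus convergence of breaking parameters). Combined with compactness of both sides, this yields a homeomorphism.

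The main obstacle is the exclusion of sphere bubbles (and, inductively, trees of bubbles). The cleanest approach is the fixed-point argument sketched above, noting that nodes on a cylinder component must be fixed by $\iota_1^\theta$, while the derived form of $\iota_1^\theta$ on cylinders is a free rotation. A secondary subtlety is that a component might have $\ell_i$ constant at a vertex of $\Delta^n$ but $u_\morse|_{\RR_i}$ nonconstant (or vice versa); stability of the combined trajectory rules out the fully constant case but both mixed cases occur and must be checked to correspond correctly under $\Phi$.
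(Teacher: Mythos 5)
Your proposal is correct and takes essentially the same approach as the paper's (much terser) proof: both directions reduce to the observation that $S^1$-invariance forces each $C_i^\circ$ to be a smooth cylinder with $u$ independent of the $S^1$-coordinate, hence a Morse flow line. Your expanded justification for the ``no nodes'' step---that the induced one-parameter family $\iota_1^\theta\in\Aut(C)$ acts as a fixed-point-free rotation on the cylinder and therefore cannot fix an attaching point---is exactly what the paper's assertion leaves implicit.
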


\begin{proof}
Suppose a stable Floer trajectory $(C,\ell,u)$ is $S^1$-invariant.  Then each of the components $\{C_1^\circ,\ldots,C_k^\circ\}$ must be smooth (no nodes) and hence isomorphic to $S^1\times\RR$.  Using the holomorphic projection $M\times S^1\times\coprod_{i=1}^k\RR\to S^1\times\coprod_{i=1}^k\RR$, we get holomorphic identifications $C_i^\circ=S^1\times\RR$.  Since $(C,\ell,u)$ is $S^1$-invariant, the function $u:C_i^\circ\to M$ must be independent of the $S^1$-coordinate, and hence (examining the $\delbar$-equation) is simply a Morse flow line of $H$.  Running this argument in reverse, we also see that every stable Morse trajectory gives rise to a stable Floer trajectory which is $S^1$-invariant.
\end{proof}

We will also need the following deeper fact (and henceforth we assume that $H$ is defined as in Lemma \ref{morseistransverse}):

\begin{lemma}\label{morseistransverse}
Fix a Morse function $H_0:M\to\RR$ whose gradient flow is Morse--Smale and suppose $H=\epsilon\cdot H_0$ with $\epsilon>0$ sufficiently small.  Then $\Mbar(\sigma,p,q)^{S^1}\subseteq\Mbar(\sigma,p,q)$ is open and cut out transversally (meaning $\Mbar(\sigma,p,q)^{S^1}\subseteq\Mbar(\sigma,p,q)^\reg$).  Hence we have a partition into disjoint closed subsets:
\begin{equation}
\Mbar(\sigma,p,q)=\Mbar(\sigma,p,q)^{S^1}\sqcup[\Mbar(\sigma,p,q)\setminus\Mbar(\sigma,p,q)^{S^1}]
\end{equation}
\end{lemma}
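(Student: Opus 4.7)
The strategy is to analyze the linearized Floer operator at an $S^1$-invariant stable trajectory by Fourier decomposition in the $S^1$-direction, and to exploit that for $H = \epsilon H_0$ the Hessian is $O(\epsilon)$, whereas the non-zero Fourier modes of $\partial_t$ have spectral gap $\geq 2\pi$.

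First I would reduce the statement to a purely local question about a single smooth component. By Lemma \ref{fixedismorse}, an $S^1$-invariant stable Floer trajectory $(C,\ell,u)$ has smooth $C$ with irreducible components $C_i^\circ = S^1 \times \RR$, and $u|_{C_i^\circ}$ factors through the projection to $\RR$ to give a gradient flow line of $H$ on $M$. Using the explicit form of the almost complex structure \eqref{compositeJoncylinder}, the linearized Floer operator along $u|_{C_i^\circ}$, acting on sections $\xi \in W^{k,p}(S^1 \times \RR, u^*TM)$, takes the schematic form
\begin{equation*}
D_u\xi \;=\; \nabla_s \xi \,+\, J\,\nabla_t \xi \,-\, \nabla_{\xi}\nabla H(u)\,,
\end{equation*}
together with an analogous contribution for the derivative of $\ell$; the $\ell$-piece can be treated identically and I will suppress it. Because $u$ is $t$-independent, the operator commutes with the $S^1$-action, so it preserves the Fourier decomposition $\xi(s,t) = \sum_{k \in \ZZ} \xi_k(s) e^{2\pi i k t}$.

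The second step is the mode-by-mode analysis. The zero-mode operator is
\begin{equation*}
D_0\xi_0 \;=\; \partial_s \xi_0 \,-\, \epsilon\,\mathrm{Hess}(H_0)(u)\,\xi_0,
\end{equation*}
which is precisely the linearization of the (negative) gradient flow of $H = \epsilon H_0$. Because the gradient flow of $H_0$ is Morse--Smale, so is that of $\epsilon H_0$, and $D_0$ is surjective with kernel of dimension equal to the index difference, matching the dimension of the corresponding stratum in $\Mbar_\morse(\sigma,p,q)$ given by Proposition \ref{morseassumptionMS}. For $k \neq 0$, the asymptotic operator at a critical point $x$ is $A_k = 2\pi k J - \epsilon\,\mathrm{Hess}(H_0)(x)$; for $\epsilon$ sufficiently small (depending only on $H_0$ and $J$), the spectrum of $A_k$ stays in a neighborhood of $\pm 2\pi k$ and in particular is bounded away from $0$ uniformly in $k\neq 0$ and $x\in\crit(H_0)$. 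A standard argument (conjugation by $e^{sA_k}$, or an energy estimate using the uniform spectral gap) then shows $D_k\colon W^{k,p} \to W^{k-1,p}$ is an isomorphism for every $k \neq 0$, provided $\epsilon$ is small. Putting the modes together, $D_u$ is surjective and $\ker D_u$ consists entirely of $S^1$-invariant sections. The same argument handles the linearized operator across a broken trajectory and the variation of $\ell$, since the reasoning is carried out stratum by stratum, and the $\epsilon$-smallness threshold can be taken uniform because $M$ and $\crit(H_0)$ are both finite.

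Transversality is now immediate: at every $S^1$-invariant point of $\Mbar(\sigma,p,q)$, the linearized operator is surjective (in the sense of Definition \ref{regularsubsetdefinition} and \S\ref{regularlocusdef}), so $\Mbar(\sigma,p,q)^{S^1} \subseteq \Mbar(\sigma,p,q)^\reg$. For openness, I apply Proposition \ref{HFgluingneeded}: near such a regular point, $\Mbar(\sigma,p,q)^\reg$ is a topological manifold (with corners) whose tangent space is canonically $\ker D_u$. But we have just shown $\ker D_u$ consists of $S^1$-invariant sections, so $\ker D_u$ coincides with the tangent space to the submanifold $\Mbar_\morse(\sigma,p,q) = \Mbar(\sigma,p,q)^{S^1}$ provided by Proposition \ref{morseassumptionMS} and Lemma \ref{fixedismorse}. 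Thus the inclusion $\Mbar(\sigma,p,q)^{S^1}\hookrightarrow\Mbar(\sigma,p,q)^\reg$ is an inclusion of topological manifolds (with corners) of the same dimension, hence an open embedding. Since $\Mbar(\sigma,p,q)^{S^1}$ is also closed in $\Mbar(\sigma,p,q)$ (being the fixed locus of a continuous compact group action on a Hausdorff space), we obtain the clopen decomposition claimed in the lemma.

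The main obstacle is the uniform invertibility of the non-zero Fourier modes $D_k$: one must verify that the spectral gap argument works uniformly over all strata (including broken trajectories) and over all simplices $\sigma$, and that the threshold on $\epsilon$ can be chosen independently of these choices. This reduces to a uniform bound on $\|\mathrm{Hess}(H_0)\|$ and on the norm of the parametrizing Morse function $f$ on $\Delta^n$ from Definition \ref{simplexflows}, both of which are manifestly finite.
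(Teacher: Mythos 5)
Your overall strategy is sound and essentially underlies the paper's proof, but the paper handles both halves by citation/black-box where you give direct arguments, and there is a small but genuine gap at the openness step.

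For transversality, your Fourier-mode spectral-gap argument is exactly the content of Salamon--Zehnder \cite[Theorem 7.3(1)]{salamonzehnder}, which the paper simply cites for the case of the $0$-simplex and then extends to higher $\sigma$ by observing that the linearized operators over $\sigma^i$ (for a constant family of $J,H$) are assembled out of those over $\sigma^0$. Your unrolled spectral argument is correct and buys you the extra conclusion (which the paper does not bother to record) that $\ker D_u$ consists entirely of $S^1$-invariant sections; you should, however, keep in mind that the spectral-gap estimate for $A_k = 2\pi k J - \epsilon\,\mathrm{Hess}(H_0)$ genuinely uses $\omega$-compatibility of $J$ (so that $J$ is an isometry for the relevant metric), which holds here since $J$ is compatible at the vertices of $\sigma$ and constant.

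For openness, your final step --- ``an inclusion of topological manifolds (with corners) of the same dimension, hence an open embedding'' --- is not a triviality; it is Brouwer's invariance of domain, and in the corner setting it requires that the corner stratification of the subspace be induced from that of the ambient space. This is exactly the content of Lemma \ref{cornersinvarianceofdomain}, which the paper proves and invokes here, and which you do not cite. Moreover, your intermediate claim that $\ker D_u$ ``coincides with the tangent space to $\Mbar_\morse(\sigma,p,q)$'' is an implicit assertion that the gluing chart for $\Mbar(\sigma,p,q)^\reg$, restricted to $S^1$-invariant data, reproduces the chart for $\Mbar_\morse(\sigma,p,q)$; the paper explicitly flags this as something that ``can likely be seen by following closely the gluing argument'' but then sidesteps by the invariance-of-domain route precisely because verifying the chart compatibility in detail is not needed. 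If you want to keep your phrasing you should either carry out that chart comparison, or (better) drop the tangent-space identification and simply note that the two manifolds-with-corners have the same (Morse index = Conley--Zehnder index) dimension and compatible stratifications, then apply Lemma \ref{cornersinvarianceofdomain}.
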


\begin{proof}
Salamon--Zehnder \cite[p1342, Theorem 7.3(1)]{salamonzehnder} show that $\Mbar(\sigma^0,p,q)^{S^1}\subseteq\Mbar(\sigma^0,p,q)^\reg$ for the $0$-simplex $\sigma^0$ and sufficiently small $\epsilon>0$.  Since we only consider constant families of $J$ and $H$, the linearized operators for the higher $\sigma^i$ can be written in terms of the linearized operators for $\sigma^0$, and it follows that $\Mbar(\sigma,p,q)^{S^1}\subseteq\Mbar(\sigma,p,q)^\reg$ for all $\sigma$.

Now we have $\Mbar_\morse(\sigma,p,q)=\Mbar(\sigma,p,q)^{S^1}\subseteq\Mbar(\sigma,p,q)^\reg$.  It remains to show that this is an open inclusion.  This can likely be seen by following closely the gluing argument used to prove Propositions \ref{HFgluingneeded} and \ref{morseassumptionMS}, however we can argue directly as follows.  Both $\Mbar_\morse(\sigma,p,q)$ and $\Mbar(\sigma,p,q)^\reg$ are topological manifolds with corners, and the stratification for the former is the pullback of the stratification for the latter.  Their dimensions are dictated by the Morse index and the Conley--Zehnder index respectively, which in this case coincide.  Hence we are done by Lemma \ref{cornersinvarianceofdomain} below.
\end{proof}

\begin{lemma}\label{cornersinvarianceofdomain}
Let $M$ be a topological manifold with corners and $K\subseteq M$ a closed subset.  Suppose that the restriction of the corner stratification on $M$ induces a topological manifold with corners structure on $K$ of the same dimension.  Then $K\subseteq M$ is open.
\end{lemma}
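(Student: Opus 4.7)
The plan is to prove that every $p \in K$ has an $M$-neighborhood contained in $K$, which is equivalent to openness. Fix such a $p$ and let $k$ be its depth in the corner stratification of $M$. Choose a chart of $M$ near $p$ homeomorphic to $\RR_{\geq 0}^k \times \RR^{n-k}$, sending $p$ to the origin; let $F_1,\ldots,F_k$ be the $k$ local codim-one faces of $M$ through $p$. The hypothesis that the induced stratification makes $K$ a topological manifold with corners of dimension $n$ forces $p$ to have depth exactly $k$ in $K$ as well, and moreover its $k$ local codim-one faces are precisely $K \cap F_1,\ldots,K \cap F_k$ (otherwise the induced stratification would not match the corner stratification of $K$).

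The core idea is to reduce to Brouwer's classical invariance of domain via iterated doubling. Shrink to a small compact polyhedral neighborhood $V \cong [0,\epsilon]^k \times [-\epsilon,\epsilon]^{n-k}$ of $p$, and double $V$ along $F_1$ to obtain a chart of depth $k-1$; simultaneously double $K \cap V$ along its matching face $K \cap F_1$. This is a well-defined topological operation precisely because $K \cap F_1$ is the codim-one face of $K$ corresponding to $F_1$, and the doubled $K$ remains a topological $n$-manifold with corners of depth $k-1$. Iterating $k$ times along $F_1,\ldots,F_k$, we obtain an $n$-cube $DV \cong [-\epsilon,\epsilon]^n$ containing the image of $p$ in its interior, and a closed subset $DK \subseteq DV$ that is a topological $n$-manifold without boundary in a neighborhood of the image of $p$.

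At this point Brouwer's invariance of domain applies: a closed subset of an $n$-manifold which is itself an $n$-manifold is open. So $DK$ is open in $DV$ near the image of $p$. This openness descends to the original picture since every point of $V$ near $p$ has a $DV$-neighborhood, and the intersection of a $DV$-neighborhood contained in $DK$ with $V$ is a $V$-neighborhood contained in $K$. Hence $K$ contains an $M$-neighborhood of $p$, completing the proof.

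The one nontrivial point is verifying that the iterated doubling is well-defined for $K$ and produces a genuine topological $n$-manifold — this is exactly where the hypothesis that $K$'s manifold-with-corners structure is given by restriction of $M$'s stratification enters. Once this is in place, everything else is a direct appeal to classical invariance of domain.
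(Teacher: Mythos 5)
Your proof is correct and takes essentially the same approach as the paper: reflect $K$ across the corner hyperplanes inside the reflected ambient chart so that it becomes a topological $n$-manifold near $p$ and then appeal to Brouwer's invariance of domain. The only difference is cosmetic — the paper performs the reflection across all $m$ boundary hyperplanes in a single step and simply asserts that the resulting $\tilde K$ is a manifold, whereas you carry out the doubling one face at a time, which makes that assertion slightly more explicit.
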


\begin{proof}
The question is local on $M$, so we may assume that $M=\RR^n\times\RR_{\geq 0}^m$.  Let $\tilde K\subseteq\RR^n\times\RR^m$ be obtained by reflecting $K$ across the last $m$ coordinate hyperplanes.  The hypotheses then imply that $\tilde K$ is a manifold of dimension $n+m$.  Now we have $\tilde K\subseteq\RR^{n+m}$ is open by Brouwer's ``invariance of domain''.  This is enough.
\end{proof}

\subsection{\texorpdfstring{$S^1$}{S{\textasciicircum}1}-equivariant implicit atlas}\label{SSequivariantFloeratlassec}

We constructed in \S\ref{HFIAsubsec} an implicit atlas $\A$ on $\Mbar/\JH_\bullet(M)$, and thus in particular on $\Mbar/\ast$.  In this subsection, we modify this construction to define another implicit atlas $\B^{S^1}$ on $\Mbar/\ast$, one which is $S^1$-equivariant in the sense that the $S^1$-action on $\Mbar(\sigma,p,q)$ extends canonically to all the thickenings $\Mbar(\sigma,p,q)^{\leq\s}_I$ in $\B^{S^1}$.  The key technical step is to perform domain stabilization with $S^1$-invariant divisors (Lemma \ref{stabilizationlemmaforHFSS}).

Let us first motivate the definition of $\B^{S^1}$ by describing a ``first attempt'' at defining an $S^1$-equivariant implicit atlas on $\Mbar/\ast$.  We consider the subatlas $\A^{S^1}\subseteq\A$ consisting of those thickening datums $\alpha$ for which $D_\alpha$ is $S^1$-invariant.  Now there is clearly a canonical $S^1$-action on the thickenings $\Mbar(\sigma,p,q)^{\leq\s}_I$ (postcomposition of $u$ with a rotation of $S^1$) for $I\subseteq\A^{S^1}(\sigma,p,q)^{\geq\s}$.\footnote{Note that we do not need to put any restrictions on $\lambda_\alpha$.}  Now $\A^{S^1}\subseteq\A$ forms an implicit atlas if and only if it satisfies the covering axiom.  However, the covering axiom for $\A^{S^1}$ fails: we cannot stabilize the domains of Morse flow lines (points of $\Mbar^{S^1}$) using $S^1$-invariant divisors $D_\alpha$ (more generally, we cannot stabilize the domain of any broken trajectory containing a Morse flow line).

To fix this issue, we first modify the definition of $\A$ to allow Morse components of Floer trajectories which do not get stabilized.

\begin{definition}[Implicit atlas $\B$ on $\Mbar/\ast$]\label{modifiedatlasforMorse}
We define an implicit atlas $\B$ on $\Mbar/\ast$ as follows.  On the level of index sets, we define $\B:=\A$.  However, we modify the definition of an $I$-thickened Floer trajectory as follows.  We require that when $C_\alpha$ is considered with the $r_\alpha$ extra marked points $(u|C_\alpha)^{-1}(D_\alpha)$, the only unstable components are mapped by $u$ to Morse flow lines; let $C_\alpha\to C_\alpha^\st$ be the map contracting all such unstable components.  Now instead of $\phi_\alpha:C_\alpha\to\Cbar_{0,2+r_\alpha}$, we use $\phi_\alpha:C_\alpha^\st\to\Cbar_{0,2+r_\alpha}$.

The rest of the atlas data is defined analogously with that of $\A$ without any serious difference.  Note, however, that to verify that the locus $U_{IJ}\subseteq X_I$ is open, we must appeal to Lemma \ref{morseistransverse}.
\end{definition}

Note that the thickened moduli spaces for $\A$ are open subsets of those for $\B$, so the covering axiom for $\A$ implies the covering axiom for $\B$.  Now to verify that $\B$ is an implicit atlas, everything is the same as for $\A$ except for the openness and submersion axioms, which follow from the following result (identical to Proposition \ref{HFgluingneeded}), whose proof is given in Appendix \ref{gluingHF}.

\begin{proposition}[Formal regularity implies topological regularity]\label{HFgluingSS}
For all $I\subseteq J\subseteq\B(\sigma,p,q)^{\geq\s}$, we have:
\begin{rlist}
\item$(\Mbar(\sigma,p,q)^{\leq\s}_I)^\reg\subseteq\Mbar(\sigma,p,q)^{\leq\s}_I$ is an open subset.
\item The map $s_{J\setminus I}:\Mbar(\sigma,p,q)^{\leq\s}_J\to E_{J\setminus I}$ over the locus $\psi_{IJ}^{-1}((\Mbar(\sigma,p,q)_I^{\leq\s})^\reg)\subseteq\Mbar(\sigma,p,q)^{\leq\s}_J$ is locally modeled on the projection:
\begin{equation}
\RR^{\vdim\Mbar(\sigma,p,q)^{\leq\s}+\dim E_I}\times\RR^{\dim E_{J\setminus I}}\to\RR^{\dim E_{J\setminus I}}
\end{equation}
over the top stratum $\s\in\SSS_{\Mbar}(\sigma,p,q)^{\leq\s}$.  More generally, the local model (compatible with stratifications) is given by replacing the first factor on the left by $\RR_{\geq 0}^n\times\RR^{n'}$ stratified appropriately by $\SSS_{\Mbar}(\sigma,p,q)$.
\item There exist $\pi$-invariant coherent trivializations of the local systems $\oo_{(\Mbar(\sigma,p,q)^{\leq\s}_I)^\reg}\otimes\oo_{E_I}^\vee$ (in the sense of Definition \ref{cohordefabstract}), agreeing by restriction with those for $\A$, and coinciding with the usual orientations from Morse theory on $\Mbar_\morse(\sigma,p,q)=\Mbar(\sigma,p,q)^{S^1}\subseteq\Mbar(\sigma,p,q)^\reg$.
\end{rlist}
\end{proposition}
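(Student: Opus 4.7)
The plan is to reduce Proposition \ref{HFgluingSS} to the combination of Proposition \ref{HFgluingneeded} (which already handles the purely Floer portion and extends readily to the atlas $\B$ over configurations with no Morse components) together with Proposition \ref{morseassumptionMS} and Lemma \ref{morseistransverse} (which together handle the purely Morse portion). The key observation is that the only difference between $\A$ and $\B$ is that in $\B$ we permit unstable components of $C_\alpha$ mapped to Morse flow lines of $H$, and on such components the formal linearized operator is automatically surjective by Lemma \ref{morseistransverse}.

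First I would classify the irreducible components of a fixed stable thickened Floer trajectory $(C,\ell,u,\{\phi_\alpha\},\{e_\alpha\})$ into \emph{Morse components}, on which $u$ factors through a gradient flow line of $H$ composed with a projection to $M$, and \emph{Floer components}, which are everything else. By Lemma \ref{morseistransverse}, the locus of Morse components is both open in $C$ and cut out transversally by the vertical linearized operator, and by Proposition \ref{morseassumptionMS} the moduli of broken Morse flow lines on a single component of $\coprod_i \RR$ carries the structure of a compact topological manifold with corners of the expected dimension, stratified by the number of breakings. Then I would construct a local chart on $\Mbar(\sigma,p,q)^{\leq\s}_J$ near such a point as the fiber product of (a) a gluing chart for the Floer sub-building, provided by (the argument of) Proposition \ref{HFgluingneeded} applied to the sub-atlas supported on Floer components, and (b) the manifold-with-corners chart on the Morse sub-building provided by Proposition \ref{morseassumptionMS}. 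The normal gluing parameters along Floer-Floer nodes and the breaking parameters along Morse-Morse flow line breakings assemble into the expected $\RR_{\geq 0}^n \times \RR^{n'}$ local model, stratified correctly by $\SSS_{\Mbar}(\sigma,p,q)$.

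The openness claim (i) and the submersion claim (ii) then follow from this product decomposition together with the openness of the Morse locus in Lemma \ref{morseistransverse}, since small deformations cannot convert a Morse component into a Floer component or vice versa. For the orientation claim (iii), I would use the standard reduction of $\oo_{X_I^\reg} \otimes \oo_{E_I}^\vee$ to the determinant line of the full linearized operator, which decomposes as a tensor product of (a) the complex-linear Fredholm determinant on Floer components (carrying the canonical complex orientation as in Proposition \ref{HFgluingneeded}) and (b) the Morse-theoretic determinant on Morse components (oriented by chosen orderings of unstable manifolds of critical points, pulled back from the $\pi$-cover consistently). Coherence under product/face maps reduces to the block-diagonal structure of the linearized operator, and agreement with $\A$ on the open overlap (no Morse components) is immediate; agreement with the standard Morse orientations on $\Mbar(\sigma,p,q)^{S^1}$ is built into the definition.

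The main obstacle I expect is properly handling \emph{mixed nodes}, i.e.\ nodes of $C$ joining a Floer component to a Morse component. At such a node one must show that the Floer gluing construction from Appendix \ref{gluingHF}, when one end of the gluing neck is asymptotic to a periodic orbit that is actually a critical point of $H$ treated as a constant loop, degenerates smoothly to (a suitable reparametrization of) a Morse flow line breaking. This requires uniform control of the gluing estimates as the almost complex structure becomes \emph{compatible} rather than merely tame and as the Hamiltonian $H=\epsilon H_0$ becomes $S^1$-invariant with $\epsilon$ small (the regime provided by Lemma \ref{morseistransverse}), so that the Floer linearization near a Morse component is a genuine small perturbation of the Morse linearization. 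Once this uniformity is in place, the product-of-charts argument above delivers the required local model, and the stratification and orientation refinements follow by inspection.
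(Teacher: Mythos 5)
Your proposal takes a genuinely different route from the paper, and it contains a gap that, as far as I can tell, you have not closed. The paper does not decompose the domain into Morse and Floer parts, and it does not invoke Proposition \ref{morseassumptionMS} as a chart-building ingredient; instead it reduces Propositions \ref{HFgluingneeded} and \ref{HFgluingSS} to one concrete gluing statement for a local model space $\Mbar(M)$ (\S\ref{hfgluingsetup}) and runs a single uniform Newton--Picard argument over the whole broken building. The unstable main components on which $u$ is a Morse flow line are treated exactly like any other component: they are $S^1$-invariant Floer cylinders, stabilized by the auxiliary hypersurface $H \subseteq M$ chosen in \S\ref{choosingDH}, not parked in a separate Morse chart. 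The only places the proof for $\B$ differs from that for $\A$ are (a) the remark that Lemma \ref{morseistransverse} is needed to identify a neighborhood of the basepoint in $\Mbar(M)$ with a neighborhood in $\Mbar(\sigma,\gamma^-,\gamma^+)^{\leq\s}_I$, and to justify the openness of $U_{IJ}$ in Definition \ref{modifiedatlasforMorse}, and (b) for claim (iii), a citation to Floer \cite{floersfhs} for the fact that the Floer--Hofer coherent orientations restrict to the standard Morse ones on the $S^1$-fixed locus. No fiber product of a Floer chart with a Morse chart appears anywhere.

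The gap is exactly the mixed-node issue you flag at the end, and the resolution you sketch does not repair it. When a Morse component $C^{(i)}$ and a Floer component $C^{(i+1)}$ meet at a break, the submersion statement (ii) requires a gluing parameter normal to the stratum in which that break persists; unbreaking such a node produces a single \emph{stable} Floer cylinder that does not split into a Morse part and a Floer part, so this parameter belongs neither to the Floer sub-building chart (whose gluing parameters are internal to that sub-building) nor to the manifold-with-corners chart from Proposition \ref{morseassumptionMS} (whose breaking parameters are internal to the Morse sub-building). Your assertion that ``small deformations cannot convert a Morse component into a Floer component or vice versa'' is accurate \emph{within} a fixed stratum, but the chart must also cover the normal gluing directions, and precisely there the Morse/Floer decomposition is destroyed. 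In other words, your plan does supply the ``$\RR_{\geq 0}$'' factors at Floer--Floer and Morse--Morse breaks, but not at Morse--Floer breaks, so it does not yield the local model $\RR_{\geq 0}^n \times \RR^{n'}$ of (ii) in general. The paper's single unified gluing theorem, which never distinguishes the two kinds of component, is what supplies these missing directions.
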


Thus $\B$ is an implicit atlas on $\Mbar/\ast$.

\begin{definition}[$S^1$-equivariant implicit atlas $\B^{S^1}$ on $\Mbar/\ast$]
Let $\B^{S^1}\subseteq\B$ consist of those thickening datums $\alpha$ for which $D_\alpha$ is $S^1$-invariant.  There is a canonical $S^1$-action on the thickenings $\Mbar(\sigma,p,q)^{\leq\s}_I$ (postcomposition of $u$ with a rotation of $S^1$) for $I\subseteq\B^{S^1}(\sigma,p,q)^{\geq\s}$, and this $S^1$-action is compatible with the rest of the structure.
\end{definition}

To verify that $\B^{S^1}\subseteq\B$ is an implicit atlas, we just need to verify the covering axiom.  We follow the usual proof of the covering axiom as in Lemma \ref{everythingcanbethickened} and use the fact that Morse components are already cut out transversally (Lemma \ref{morseistransverse}).  To complete the proof, we just need Lemma \ref{stabilizationlemmaforHFSS} below, which says that for any stable Floer trajectory, we can stabilize the domain using $S^1$-invariant divisors (except, of course, for any irreducible components mapping to Morse flow lines).

\begin{lemma}[$S^1$-equivariant  domain stabilization for stable Floer trajectories of $\Mbar/\ast$]\label{stabilizationlemmaforHFSS}
Let $\ell:\coprod_{i=1}^k\RR\to\Delta^n$ and $u:C\to M\times S^1\times\coprod_{i=1}^k\RR$ be a point in $\Mbar(\sigma,p,q)$.  Then there exists $D\subseteq M\times S^1\times\Delta^n$ as in Definition \ref{implicitatlasonFloer}(\ref{implicitatlasFloerD}) which is $S^1$-invariant with $C\pitchfork D$ in the sense of Definition \ref{IAonfloerthickeningsdef}(\ref{transversalitypartIAfloer}) and so that adding these intersections to $C$ as extra marked points makes $C$ stable, except for irreducible components $S^1\times\RR\subseteq C$ on which $u$ is independent of the $S^1$-coordinate (``Morse flow lines'').
\end{lemma}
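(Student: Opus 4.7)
The plan is to adapt the proof of Lemma \ref{stabilizationlemmaforHF} while enforcing $S^1$-equivariance. Any $S^1$-invariant submanifold $D\subseteq M\times S^1\times\Delta^n$ of the form prescribed by Definition \ref{implicitatlasonFloer}(\ref{implicitatlasFloerD}) must be of the form $D=D'\times S^1$ for some codimension-$2$ submanifold with corners $D'\subseteq M\times\Delta^n$. Let $\bar u:=(u_M,\ell\circ\pi_{\coprod\RR})\circ u:C\to M\times\Delta^n$, where $u_M$ denotes the $M$-component of $u$. Then $u\pitchfork(D'\times S^1)$ in the sense of Definition \ref{IAonfloerthickeningsdef}(\ref{transversalitypartIAfloer}) at a non-nodal, non-marked point $p\in C$ is equivalent to $\bar u$ being transverse to $D'$ at $p$ in $M\times\Delta^n$. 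Applying a corners variant of Lemma \ref{sortofsard} in $M\times\Delta^n$ (with arbitrary extension in the $S^1$-direction), the construction of $D'$ reduces to exhibiting, for each irreducible component $C_0\subseteq C$ that is unstable after the existing marks and is not a Morse flow line, a point on $C_0$ (away from nodes and marked points) where $d\bar u$ is injective.

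The case analysis parallels Lemma \ref{stabilizationlemmaforHF}. If the projection $u|_{C_0}\to S^1\times\RR$ is constant, then $C_0$ is a sphere component and $u|_{C_0}\to M$ is a nonconstant $J^\sigma$-holomorphic map, hence has a dense set of injective points of $du_M$; since the $\Delta^n$-component of $\bar u$ is constant on $C_0$, injectivity of $du_M$ implies injectivity of $d\bar u$. Otherwise $C_0\cong S^1\times\RR$, mapping as the identity onto one $\RR$-factor, and $u=(u_M(t,s),t,s)$. If $u_M$ is $t$-independent, then by the lemma's definition $C_0$ is a Morse flow line and is in the exception list. If $u_M$ depends on $t$, then at a point where $\partial_t u_M\neq 0$ the vectors $\partial_t\bar u=(\partial_t u_M,0)$ and $\partial_s\bar u=(\partial_s u_M,\ell'(s))$ in $TM\oplus T\Delta^n$ are linearly independent provided either $\ell'(s)\neq 0$ or $du_M$ is itself injective at $(t,s)$: the first condition holds at generic $(t,s)$ when $\ell$ is nonconstant on the relevant $\RR$-factor, and when $\ell$ is constant on that factor we invoke standard somewhere-injective-points results applied to $u_M$ viewed as a nontrivial solution of Floer's equation for the fixed time-independent Hamiltonian at that vertex.

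A key additional requirement absent from Lemma \ref{stabilizationlemmaforHF} is that $D'$ must be chosen to avoid entirely the $\bar u$-image of every Morse flow line component of $C$: if $C_0$ is a Morse flow line and some $p\in C_0$ has $\bar u(p)\in D'$, then $u^{-1}(D'\times S^1)\cap C_0$ contains the full $S^1$-orbit through $p$, precluding transversality in the sense of Definition \ref{IAonfloerthickeningsdef}(\ref{transversalitypartIAfloer}). This is achievable because the $\bar u$-images of Morse flow line components are contained in a closed subset of $M\times\Delta^n$ of positive codimension (they are finite unions of broken gradient trajectories of $H$ in $M$ fibered over trajectories in $\Delta^n$, hence $1$-dimensional, of codimension $\geq 2$ since $\dim M\geq 2$), so a generic small perturbation of $D'$ will miss them while still passing through the prescribed injective points on the other components. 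The main technical obstacle is the constant-$\ell$ subcase of the cylinder analysis, namely verifying that a nontrivial Floer cylinder for a time-independent Hamiltonian has a point of injectivity of $du_M$; this follows from standard density results for somewhere-injective points of pseudo-holomorphic curves (of Floer--Hofer--Salamon type). The remainder of the argument is essentially routine: one assembles the divisors for the various components, applies the Baire-category perturbation just described, and obtains a single $S^1$-invariant $D$ with all the required properties.
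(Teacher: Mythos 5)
Your outline parallels the paper's proof closely in its top-level structure: drop the $S^1$-factor and seek $D'\subseteq M\times\Delta^n$; reduce to exhibiting, on each non-Morse unstable component, a point where $d((\id_M\times\ell)\circ u)$ is injective; treat sphere components and cylinder components separately; and within the cylinder case split according to whether $\ell$ is constant on the corresponding $\RR$-factor.

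The gap is exactly in the constant-$\ell$, $t$-dependent cylinder subcase, which you correctly flag as ``the main technical obstacle'' but then discharge by invoking ``standard somewhere-injective-points results of Floer--Hofer--Salamon type'' without stating them or checking they apply. This is where the paper does something genuinely different: it gives a self-contained foliation argument. Suppose $du$ has rank $\le 1$ everywhere on the cylinder; on the open set where the rank is exactly $1$, $u$ factors locally as $u=w\circ r$ with $w$ a nonsingular curve in $M$, and plugging this into $\partial_tu+J\partial_su=\nabla H$ forces $dr$ to be a function of the value of $r$ alone, so the leaves $r=\mathrm{const}$ are straight lines; a global argument on $S^1\times\RR$ then shows all leaves have slope zero, i.e.\ $u$ is $t$-independent and hence a Morse flow line. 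The contrapositive is precisely the statement you need. Your appeal to a density-of-injective-points theorem is problematic on two counts. First, those results concern injectivity of the map $u$, not rank-2 of $du$, and the transfer is not automatic. Second, and more fundamentally, for a time-independent Hamiltonian the Morse flow lines themselves are nonconstant Floer cylinders with $du$ of rank $\le 1$ everywhere, so whatever result you invoke must already distinguish $t$-dependent cylinders from gradient trajectories --- but that distinction is exactly what the foliation argument establishes. Your word ``nontrivial'' is thus either circular (if it means ``$t$-dependent,'' you still need the argument) or false (if it means ``nonconstant,'' Morse flow lines are a counterexample).

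Your additional worry that $D'$ must avoid the $\bar u$-image of every Morse flow line component is correct in substance (an $S^1$-invariant $D$ meeting a Morse flow line would do so along an $S^1$-orbit, at which $\partial_t$ lies in $TD$, so the quotient map cannot be surjective), but the Baire-category perturbation you propose is unnecessary. As in Lemma \ref{sortofsard}, one takes $D'$ to be $\pi^{-1}$ of a regular value of $\pi\circ\bar u$ for a suitable local projection $\pi$; since $d\bar u$ has rank $\le 1$ on Morse flow line components, their images consist entirely of critical values of $\pi\circ\bar u$ and are therefore missed automatically by Sard.
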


\begin{proof}
Instead of finding an $S^1$-invariant $D\subseteq M\times S^1\times\Delta^n$, we find a $D\subseteq M\times\Delta^n$ (which is clearly equivalent) and we ignore the $S^1$ factor in the codomain of $u$.  As in Lemma \ref{stabilizationlemmaforHF}, it suffices to show that for any unstable component $C_0\subseteq C$, either $C_0$ is a Morse flow line or there exists a point (other than a node) where $d((\id_M\times\ell)\circ u):TC_0\to TM\times T\Delta^n$ is injective.

If the projection $C_0\to S^1\times\RR$ is constant, then $u:C_0\to M$ is a (nonconstant!) $J$-holomorphic sphere and we are done as in Lemma \ref{stabilizationlemmaforHF}.  Hence it suffices to treat the case when $u:C_0\to S^1\times\RR$ is not constant.  Thus $u:C_0\to S^1\times\RR$ is an isomorphism, so we can use $(t,s)\in S^1\times\RR$ as coordinates on $C_0$.  Now we split into two cases.

First, suppose $\ell:\RR\to\Delta^n$ is not constant.  If $u:C_0\to M$ is independent of the $S^1$ coordinate, then it is a Morse flow line, and we do not need to stabilize.  Otherwise, there is a point where $\frac{\partial u}{\partial t}\ne 0$, and since $\ell:\RR\to\Delta^n$ has nonvanishing derivative everywhere, it follows that $du$ is injective at this point.

Second, suppose $\ell:\RR\to\Delta^n$ is constant.  Then our map $u:C_0\to M$ satisfies:
\begin{equation}\label{pseudoholoncylinderformorseHamiltonian}
\frac{\partial u}{\partial t}+J\circ\frac{\partial u}{\partial s}=\nabla H
\end{equation}
Certainly $u:C_0\to M$ is not constant; otherwise it would be unstable (infinite automorphism group).  Thus $du$ is nonzero somewhere.  If $du$ has rank two somewhere, then we are done.  Thus let us suppose that this is not the case and show that $u|C_0$ is independent of the $t$ coordinate (and thus is a Morse flow line).  Thus there exists some open set $U\subseteq C_0=S^1\times\RR$ where $du$ has rank $1$.  Inside $U$, we have $\ker du\subseteq TC_0=T(S^1\times\RR)$ is an (integrable!) $1$-dimensional distribution, so $U$ is equipped with a $1$-dimensional foliation and $u$ is constant on the leaves.  Thus we have (locally) a factorization $u:S^1\times\RR\xrightarrow r(-\epsilon,\epsilon)\xrightarrow wM$, the leaves of the foliation being given by $r^{-1}(\delta)$ for $\delta\in(-\epsilon,\epsilon)$.  Now \eqref{pseudoholoncylinderformorseHamiltonian} becomes:
\begin{equation}
\Bigl(\frac{\partial r}{\partial t}+\frac{\partial r}{\partial s}\cdot J\Bigr)\cdot w'(r(s,t))=(\nabla H)(w(r(s,t)))
\end{equation}
Since $du$ has rank $1$, we know that $w'(r(s,t))\ne 0$.  Hence the value of $r(s,t)$ determines the value of its derivative uniquely, i.e.\ $dr$ is constant along the leaves $r^{-1}(\delta)$ of the foliation.  It follows that the foliation is (locally) linear(!) and that we can follow any leaf infinitely in both directions and it never exits $U$ (since $dr=0$ outside $U$).  Now if any leaf had nonzero slope, it would force $u:C_0\to M$ to be constant, a contradiction.  Thus all leaves have slope zero; in other words $u$ is independent of the $S^1$-coordinate over $U=S^1\times U'$.  But now we see that $U'=\RR$, since if $U'$ had boundary, it would imply that we have a Morse flow line reaching a critical point in finite time.  Thus $u$ is (globally) a Morse flow line.
\end{proof}

\subsection{Calculation of Hamiltonian Floer homology and the Arnold conjecture}

Arnold conjectured that the minimal number of fixed points of a non-degenerate Hamiltonian symplectomorphism $M\to M$ enjoys a lower bound similar to the minimal number of critical points of a Morse function on $M$ (known as the Morse number of $M$).  It remains an open problem to obtain a sharp bound on the minimal number of symplectic fixed points, though much progress has been made.

Arnold's conjecture was proved for surfaces by Eliashberg \cite{eliashbergsurfaces} and for tori by Conley--Zehnder \cite{conleyzehnder}.  The existence of at least one fixed point was shown by Gromov \cite[p331, 2.3$B_4'$]{gromov} under the assumption $\omega|_{\pi_2(M)}=0$.

A major breakthrough was made by Floer \cite{floersfhs}, who introduced Hamiltonian Floer homology and showed (under some assumptions) that it is isomorphic to singular homology.  Floer's work provides a lower bound on the number of symplectic fixed points of the type predicted by Arnold.  Indeed, if Hamiltonian Floer homology can be defined and shown to be isomorphic to singular homology, then we get a lower bound towards the Arnold conjecture of the form:
\begin{equation}\label{mingenerators}
\min\left\{\rk D_\bullet\,\middle|\,(D_\bullet,d)\;\begin{matrix}\text{a free differential graded $\Lambda$-module}\hfill\cr\text{homotopy equivalent to }C_\bullet(M;\Lambda)\hfill\end{matrix}\right\}
\end{equation}
where $\Lambda$ is the Novikov ring:
\begin{equation}
\Lambda:=\ZZ[\im(\pi_2(M)\xrightarrow{\omega\oplus 2c_1(M)}\RR\oplus\ZZ)]^\wedge
\end{equation}
completed with respect to $\omega$ and graded by $2c_1$ (note that if the grading on $\Lambda$ is nontrivial, a ``differential graded $\Lambda$-module'' is not the same as a ``complex of $\Lambda$-modules'').  One can also adjoin $\pi_1(M)$ to the coefficient ring (as in Fukaya \cite{fukayagroupring} or Abouzaid \cite{abouzaidnearby}) to obtain a sharper lower bound in \eqref{mingenerators}, and furthermore the methods of Sullivan \cite{sullivangroupring} allow one (at least in many cases) to replace ``homotopy equivalent'' in \eqref{mingenerators} with ``simple homotopy equivalent''.  Note that for $\Lambda=\ZZ[\pi_1(M)]$ and ``simple homotopy equivalent'' in place of ``homotopy equivalent'', the lower bound \eqref{mingenerators} is precisely the stable Morse number of $M$ (see Damian \cite[p424, Corollary 2.6]{damian}).

Floer's original work \cite{floersfhs} covered the case of monotone symplectic manifolds (i.e.\ $\omega=\lambda c_1$ on $\pi_2(M)$ for some $\lambda>0$), and the work of Hofer--Salamon \cite{hofersalamon} and Ono \cite{ono} extended this to semi-positive symplectic manifolds (i.e.\ there do not exist classes $A\in\pi_2(M)$ with $\omega(A)>0$ and $3-n\leq c_1(A)<0$).  The case of general symplectic manifolds is due to Liu--Tian \cite{liutian}, Fukaya--Ono \cite{fukayaono}, and Ruan \cite{ruan}, using virtual techniques (which require rational coefficients) to resolve lack of transversality.  We reprove their results below using the VFC machinery developed in this paper.

In the following result, we use the definition of Floer-type homology groups from \S\ref{floerhomologyfromMFsets}.

\begin{theorem}\label{HFcalculation}
$F\HH^\bullet(M)$ is isomorphic to $H^\bullet(M;\ZZ)\otimes\QQ[[\pi]]$ as modules over $\QQ[[\pi]]$.
\end{theorem}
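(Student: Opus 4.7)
The plan is to exploit the contractibility of $\JH_\bullet(M)$ together with the $S^1$-equivariant atlas of \S\ref{SSequivariantFloeratlassec} to reduce the calculation to Morse homology. Since $\JH_\bullet(M)$ is a contractible Kan complex, Lemma \ref{descentofdiagram} implies that $F\HH^\bullet(M)$ is canonically computed as $\HH(\Mbar/\ast)$ at any vertex; I would pick the vertex $(J,\epsilon H_0)$ with $H_0$ a Morse--Smale function and $\epsilon>0$ small enough for Lemma \ref{morseistransverse} to apply. The pullback flow category diagram $\Mbar/\ast$ inherits an atlas $\A|_\ast$ whose thickenings sit as open subsets of those of the enlarged atlas $\B$ of Definition \ref{modifiedatlasforMorse}, so Lemma \ref{homologyopenreduction} identifies $\HH(\Mbar)_{\A|_\ast}=\HH(\Mbar)_\B$, and since $\B^{S^1}\subseteq\B$ is a subatlas, Lemma \ref{homologysameassubatlas} further gives $\HH(\Mbar)_\B=\HH(\Mbar)_{\B^{S^1}}$.

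Next I would apply the $S^1$-localization Theorem \ref{circlelocalizationhomology} to the partition
\begin{equation*}
\Mbar(\sigma,p,q)\;=\;\Mbar(\sigma,p,q)^{S^1}\,\sqcup\,\bigl[\Mbar(\sigma,p,q)\setminus\Mbar(\sigma,p,q)^{S^1}\bigr].
\end{equation*}
Its hypotheses are to be checked in order: the fixed locus is closed (automatic) and open (Lemma \ref{morseistransverse}); the $S^1$-action on its complement is almost free, its stabilizers being proper closed subgroups of $S^1$ of order dividing the multiplicities of the underlying periodic orbits and therefore invertible in $\QQ$; concatenation of trajectories is $S^1$-invariant iff every factor is, giving compatibility with product and face maps; and the degeneracy implication $\X(\sigma^0,p,q)^0=\varnothing\Rightarrow\X(\sigma^i,p,q)^0=\varnothing$ follows from Lemma \ref{fixedismorse} via the product structure of $\Mbar_\morse(\sigma^i,p,q)$ in its $\Delta^i$ and $M$ factors. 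Theorem \ref{circlelocalizationhomology} then yields $F\HH^\bullet(M)=\HH(\Mbar_\morse/\ast)$, with the atlas on the right obtained by intersecting $\B^{S^1}$ with $\Mbar_\morse$.

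To evaluate this Morse-theoretic side, I would appeal to Proposition \ref{iftransversejustcountzero}: by Proposition \ref{morseassumptionMS} each $\Mbar_\morse(\sigma,p,q)$ is a compact topological manifold with corners of virtual dimension $\gr(q)-\gr(p)+\dim\sigma-1$, and by Lemma \ref{morseistransverse} it is already contained in the regular locus. The resulting diagram has matrix coefficients equal to signed counts of $0$-dimensional strata with the coherent orientations of Proposition \ref{HFgluingSS}, which that proposition ensures coincide with the standard Morse orientations. Over $\sigma^0$ these counts recover the Morse differential of $H_0$; since each Morse cylinder bounds a trivial disk and hence represents $0\in\pi$, the complex splits as $C^\bullet_\morse(H_0;\ZZ)\otimes_\ZZ\QQ[[\pi]]$. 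Over $\sigma^1$ the only $0$-dimensional strata occur when $p=q$, where $\Mbar_\morse(\sigma^1,p,p)$ is the singleton $\F(\Delta^1)\times\{\mathrm{const}_p\}$, producing the identity map and matching Proposition \ref{degenerateID}; for $i\geq 2$ the higher-$\sigma^i$ matrix coefficients assemble into null-homotopies which vanish after descent to $H^0(\Ch_{\QQ[[\pi]]})$. Taking cohomology identifies $F\HH^\bullet(M)$ with $H^\bullet(M;\ZZ)\otimes_\ZZ\QQ[[\pi]]$ as required.

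The main technical difficulty lies in hypothesis (ii) of Theorem \ref{circlelocalizationhomology}: one must ensure that stable non-$S^1$-invariant Floer trajectories admit no accidental large finite rotational symmetries, so that $\#(S^1)_x$ is bounded by a quantity invertible in $\QQ$. The obvious finite symmetries come from multiple-covered periodic orbits, and one must check that these are the only sources. Combined with the orientation-matching portion of Proposition \ref{HFgluingSS}, these orientation and isotropy-group verifications are the technical core of the argument; everything else is a formal manipulation of the VFC package.
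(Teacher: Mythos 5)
Your proposal follows essentially the same route as the paper's proof: restrict to a time-independent Morse vertex, pass from $\A$ to $\B$ to $\B^{S^1}$ via Lemmas \ref{homologyopenreduction} and \ref{homologysameassubatlas}, invoke the $S^1$-localization Theorem \ref{circlelocalizationhomology} with the partition furnished by Lemma \ref{morseistransverse}, and identify the surviving $S^1$-fixed flow category with the Morse flow category via Lemma \ref{fixedismorse}, Proposition \ref{morseassumptionMS}, Proposition \ref{iftransversejustcountzero}, and the orientation-matching clause of Proposition \ref{HFgluingSS}.

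One correction to your final paragraph: what you call the ``main technical difficulty'' is not a difficulty. Once a trajectory in $\Mbar(\sigma,p,q)\setminus\Mbar(\sigma,p,q)^{S^1}$ is known to have a \emph{proper} closed subgroup of $S^1$ as its stabilizer (which is automatic, since an $S^1$-fixed stable Floer trajectory is Morse by Lemma \ref{fixedismorse}), that stabilizer is finite, and \emph{every} positive integer is invertible in $\QQ$. There is no bound to establish on the size of the isotropy groups and no need to rule out ``accidental large rotational symmetries''; the whole reason to use $\QQ$ coefficients is that the magnitude of a finite stabilizer is irrelevant. What does need care is the content you correctly identified elsewhere: that the fixed locus is open and cut out transversally (Lemma \ref{morseistransverse}), that the partition is compatible with product/face maps, and that the coherent orientations of Proposition \ref{HFgluingSS} restrict to the standard Morse orientations.
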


\begin{proof}
We use the setup of \S\ref{SSham}--\ref{SSequivariantFloeratlassec}.

The homology groups associated to the flow category diagram $\Mbar/\ast$ and the implicit atlas $\A$ are by definition $F\HH^\bullet(M)$.  Now, as we observed previously, the thickened moduli spaces of $\A$ are open subsets of those of $\B$, so by Lemma \ref{homologyopenreduction}, $F\HH^\bullet(M)$ may also be defined using the implicit atlas $\B$ on $\Mbar/\ast$.  Now $\B^{S^1}\subseteq\B$ is a subatlas, so by Lemma \ref{homologysameassubatlas} it may also be used to define $F\HH^\bullet(M)$.

Thus let us restrict attention to the atlas $\B^{S^1}$ on $\Mbar/\ast$.  Recall that by Lemma \ref{morseistransverse}, there is a partition into closed subsets:
\begin{equation}
\Mbar(\sigma,p,q)=\Mbar(\sigma,p,q)^{S^1}\sqcup[\Mbar(\sigma,p,q)\setminus\Mbar(\sigma,p,q)^{S^1}]
\end{equation}
Now we apply $S^1$-localization to $\B^{S^1}$ on $\Mbar/\ast$ in the form of Theorem \ref{circlelocalizationhomology}, which applies since $S^1$ acts with finite stabilizers on $\Mbar(\sigma,p,q)\setminus\Mbar(\sigma,p,q)^{S^1}$ and our coefficient group is $\QQ$.  It follows that $F\HH^\bullet(M)$ may be defined using the flow category diagram $\Mbar^{S^1}/\ast$ with the implicit atlas obtained from $\B^{S^1}$ by removing $\Mbar\setminus\Mbar^{S^1}$ from every thickening.

All the flow spaces of $\Mbar^{S^1}/\ast$ are cut out transversally by Lemma \ref{morseistransverse}, so by Proposition \ref{iftransversejustcountzero}, the homology groups of $\Mbar^{S^1}/\ast$ can be defined by simply counting the $0$-dimensional flow spaces according to the orientations $\omega$.  Now $\Mbar^{S^1}$ coincides with the Morse flow category diagram $\Mbar_\morse/\ast$ of $H$ by Lemma \ref{fixedismorse} (with the same orientations by Proposition \ref{HFgluingSS}), and this gives the desired isomorphism.
\end{proof}

\begin{remark}
One expects to be able show that the isomorphism in Theorem \ref{HFcalculation} is canonical by considering continuation maps and chain homotopies associated to families of Morse functions.
\end{remark}

\begin{corollary}[Arnold conjecture]
Let $H:M\times S^1\to\RR$ be a smooth function whose time $1$ Hamiltonian flow $\phi_H:M\to M$ has non-degenerate fixed points.  Then $\#\Fix\phi_H\geq\dim H_\bullet(M;\QQ)$ (in fact, we may replace $\Fix\phi_H$ with those fixed points whose associated periodic orbit is null-homotopic in $M$).
\end{corollary}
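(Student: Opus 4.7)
The plan is to deduce the Arnold bound by combining Theorem \ref{HFcalculation} with a Morse-type rank inequality applied to the Floer chain complex.

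First I would identify the underlying rank of the chain complex computing $F\HH^\bullet(M)$. Fix a vertex $(J,H)\in\JH_0(M)$, using the given Hamiltonian (which lies in $C^\infty(M\times S^1)^\reg$ by the non-degeneracy hypothesis) together with any compatible $\omega$-tame $J$. By the construction of \S\ref{homologygroupssection}, the underlying graded $\QQ[[\pi]]$-module of the Floer complex is $\QQ[[\PPP_H]]$, where $\PPP_H$ is the set of null-homotopic periodic orbits of $H$ together with a homology class of bounding disk. The group $\pi$ acts freely on $\PPP_H$ by twisting the bounding disk, and the quotient is canonically in bijection with the set of null-homotopic fixed points of $\phi_H$ (non-degeneracy ensures this set is finite). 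Picking a bounding disk for each null-homotopic orbit identifies $\QQ[[\PPP_H]]$ with a free graded $\QQ[[\pi]]$-module of rank equal to the number of such fixed points.

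Next, by Theorem \ref{HFcalculation} the homology of this chain complex is isomorphic to $H^\bullet(M;\ZZ)\otimes\QQ[[\pi]]$, which is free of total rank $\dim_\QQ H_\bullet(M;\QQ)$ as a $\QQ[[\pi]]$-module (using Poincar\'e duality to pass between $H^\bullet$ and $H_\bullet$). The final step is the standard rank inequality: for any bounded chain complex of finitely generated free modules over a commutative Noetherian ring, the total rank of the chain modules is at least the total rank of the homology. Applied to $\QQ[[\pi]]$ (which is Noetherian since $\pi$ is a finitely generated abelian group), this delivers the bound $\#\{\text{null-homotopic fixed points of }\phi_H\}\geq\dim_\QQ H_\bullet(M;\QQ)$, which is the refined form of the corollary.

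The main obstacle has already been surmounted in the proof of Theorem \ref{HFcalculation}; the corollary itself is essentially formal. The one place I would pause for care is the rank inequality when $\pi$ has torsion and $\QQ[[\pi]]$ is therefore not a domain. In that case I would pass to the fraction field of $\QQ[[\pi]]/\mathfrak{p}$ for a minimal prime $\mathfrak{p}$; over this field the elementary identity $\dim C_i=\dim H_i+\dim B_i+\dim B_{i-1}$ and summation yield the inequality, and the total rank of the homology is preserved by this localization because the homology is a priori free over $\QQ[[\pi]]$.
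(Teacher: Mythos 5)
Your overall strategy coincides with the paper's: identify the rank of the Floer chain module as the number of null-homotopic fixed points, use Theorem \ref{HFcalculation} to compute the homology rank, and close with a rank inequality over $\QQ[[\pi]]$. The paper packages the last step as Lemma \ref{homologyhassmallerrank} (split the free homology off $\ker d$ to get an injection of free modules $H\hookrightarrow M$) followed by Lemma \ref{freesubhassmallerrank} (an injection $R^{\oplus A}\hookrightarrow R^{\oplus B}$ over \emph{any} commutative ring forces $A\leq B$).

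The one soft spot in your version is the appeal to $\QQ[[\pi]]$ being Noetherian. This is not justified by $\pi$ being finitely generated abelian --- $\QQ[[\pi]]$ is a Novikov-type completion, and such rings are generically \emph{not} Noetherian (e.g.\ when $a:\pi\to\RR$ has dense image, one gets valuation-type rings of non-discrete rank, which fail Noetherianity). The paper sidesteps this deliberately: the proof of Lemma \ref{freesubhassmallerrank} first replaces $R$ by the finitely generated $\ZZ$-subalgebra generated by the matrix entries of the injection (over which injectivity persists), localizes at a height-zero prime of \emph{that} ring to get an Artin local ring, and concludes by a length count. Your localize-at-a-minimal-prime move is the right instinct, but it has to be run over the finitely generated subring, not over $\QQ[[\pi]]$ itself, to avoid the Noetherianness hypothesis. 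Secondly, you phrase the rank inequality in terms of a ``bounded chain complex of finitely generated free modules,'' which doesn't quite match the object at hand: the Floer complex is a single finitely generated free differential graded $\QQ[[\pi]]$-module over a \emph{graded} ring, and when $\gr:\pi\to\ZZ$ is nontrivial it is not a bounded complex of $\QQ[[\pi]]$-modules in the usual sense. The paper's Lemma \ref{homologyhassmallerrank} is formulated precisely to accommodate this (a free module with an endomorphism $d$ satisfying $d^2=0$, no grading assumptions). Your UCT step does go through once the homology is known free (the $\Tor$ obstructions vanish), but it's an extra layer of machinery; the splitting-into-$\ker d$ route is more economical and avoids the question of whether the relevant universal coefficient spectral sequence applies to the unbounded d.g.\ module.
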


\begin{proof}
Pick any $\omega$-compatible almost complex structure $J$ and consider the vertex $(J,H)\in\JH_\bullet(M)$.  Over this vertex, pick any complex $F\CC^\bullet(M)$ which calculates $F\HH^\bullet(M)$.  Then $F\CC^\bullet(M)$ is a free $\QQ[[\pi]]$-module of rank:
\begin{equation}
\rk_{\QQ[[\pi]]}F\CC^\bullet(M)\leq\#\Fix\phi_H
\end{equation}
(its rank equals the number of null-homotopic periodic orbits).  By Theorem \ref{HFcalculation}, the homology $F\HH^\bullet(M)$ is free of rank:
\begin{equation}
\rk_{\QQ[[\pi]]}F\HH^\bullet(M)=\dim H_\bullet(M;\QQ)
\end{equation}
Now by definition, there is a $\QQ[[\pi]]$-linear boundary map $d:F\CC^\bullet(M)\to F\CC^{\bullet+1}(M)$ and by definition $\rk_{\QQ[[\pi]]}F\HH^\bullet(M)=\ker d/\im d$.  Now apply Lemma \ref{homologyhassmallerrank} to conclude that:
\begin{equation}
\rk_{\QQ[[\pi]]}F\HH^\bullet(M)\leq\rk_{\QQ[[\pi]]}F\CC^\bullet(M)
\end{equation}
Thus we are done.
\end{proof}

\subsection{A little commutative algebra}

\begin{lemma}\label{homologyhassmallerrank}
Let $M$ be a free module over a commutative ring $R$, and let $d:M\to M$ satisfy $d^2=0$.  If $H=\ker d/\im d$ is free, then $\rk H\leq\rk M$.
\end{lemma}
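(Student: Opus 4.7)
The plan is to produce an injection $H \hookrightarrow M$ of free $R$-modules and then invoke the classical fact that such an injection forces $\rk H \leq \rk M$.

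First I would exploit freeness (hence projectivity) of $H$: the short exact sequence
\begin{equation*}
0 \to \im d \to \ker d \to H \to 0
\end{equation*}
admits a splitting $s: H \hookrightarrow \ker d$, and composing $s$ with the inclusion $\ker d \hookrightarrow M$ yields an $R$-linear injection $H \hookrightarrow M$ between free $R$-modules. This is the only place the hypothesis ``$H$ is free'' is used (beyond guaranteeing that $\rk H$ is well defined).

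Next I would reduce to the classical statement: if $R^h \hookrightarrow R^n$ is an injective $R$-module map with $R$ commutative, then $h \leq n$. This follows from McCoy's theorem, which says that a matrix $A$ over a commutative ring represents an injection iff the ideal $I_h(A)$ generated by its $h \times h$ minors has zero annihilator: if $h > n$ the matrix has no such minors, so $I_h(A) = 0$ and $\operatorname{Ann}(I_h(A)) = R \ne 0$ (the case $R = 0$ being trivial), contradicting injectivity. The case of possibly infinite ranks reduces to the finite case in the standard way: any finitely generated free submodule of $H$ injects into some finitely generated free summand of $M$, so taking suprema over finite free submodules of $H$ yields the cardinality inequality $\rk H \leq \rk M$.

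There is no real obstacle here; both steps are standard. One could alternatively avoid quoting McCoy by a direct determinantal argument (producing a nonzero kernel vector of an $n\times(n+1)$ submatrix via signed maximal minors, then bootstrapping when all such minors vanish) or by localizing at a suitable prime of $R$ and passing to a residue field.
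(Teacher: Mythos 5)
Your proof is correct and follows the same structure as the paper's: both use freeness of $H$ to split the surjection $\ker d \twoheadrightarrow H$, obtaining an injection $H \hookrightarrow \ker d \subseteq M$ of free modules, and then invoke the classical fact that an injection of free modules over a nonzero commutative ring cannot decrease rank. The only divergence is in how that classical fact is justified. The paper isolates it as a separate lemma and proves it by reducing to the case where $R$ is a finitely generated $\ZZ$-algebra, localizing at a height-zero prime to obtain an Artinian local ring, and then using the length homomorphism $K_0(R)\to\ZZ$ applied to the short exact sequence $0 \to R^{\oplus A} \to R^{\oplus B} \to \coker \to 0$. You instead quote McCoy's theorem on injectivity in terms of the annihilator of the ideal of maximal minors. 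Both are standard and yield the same conclusion; the McCoy route is perhaps more self-contained and elementary (avoiding $K_0$ and dimension theory), while the paper's localization route fits naturally into a commutative-algebra toolkit and generalizes readily to other invariance-of-rank arguments. You even note the localize-to-a-residue-field alternative, which is essentially the paper's strategy.
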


\begin{proof}
Since $H$ is free, it is projective, so the surjection $\ker d\twoheadrightarrow H$ has a section $H\hookrightarrow\ker d\subseteq M$.  Hence there is an injection $H\hookrightarrow M$.  Now use Lemma \ref{freesubhassmallerrank}.
\end{proof}

\begin{lemma}\label{freesubhassmallerrank}
Let $\phi:R^{\oplus A}\hookrightarrow R^{\oplus B}$ be an inclusion of free modules over a commutative ring $R$.  Then $A\leq B$.
\end{lemma}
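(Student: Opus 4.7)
I plan to reduce to the case that $A$ and $B$ are both finite, and then dispatch the finite case via McCoy's theorem.  Throughout I assume $R\neq 0$ (the statement otherwise fails, e.g.\ $A=\{*\}$, $B=\varnothing$); the subcase $A$ finite, $B$ infinite is automatic.

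\textbf{Reduction to the finite case.}  If $B$ is finite, then for any finite $A_0\subseteq A$ the restriction $\phi|_{R^{\oplus A_0}}$ factors through $R^{\oplus B_0}$ for the finite set $B_0=\bigcup_{a\in A_0}\supp\phi(e_a)\subseteq B$, so the finite case applied to this restriction gives $|A_0|\leq |B|$ and forces $A$ to be finite.  When both $A$ and $B$ are infinite, the assignment $a\mapsto\supp\phi(e_a)$ maps $A$ into the set of finite subsets of $B$, which has cardinality $|B|$; if $|A|>|B|$ then cardinal pigeonhole yields $A'\subseteq A$ with $|A'|=|A|$ on which this map is constant, producing an injection $R^{\oplus A'}\hookrightarrow R^{\oplus S}$ for a single finite $S\subseteq B$, which contradicts the finite case applied to any finite subset of $A'$ of size greater than $|S|$.

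\textbf{First step of the finite case.}  Suppose $\phi:R^n\hookrightarrow R^m$ is injective with $n>m$; after precomposing with the inclusion of the first $m+1$ coordinates, I may assume $n=m+1$.  Let $v_1,\ldots,v_{m+1}\in R^m$ be the columns of the matrix of $\phi$ and $V_i$ the $m\times m$ submatrix omitting column $i$.  The Cramer-type identity
\begin{equation*}
\sum_{i=1}^{m+1}(-1)^i(\det V_i)\,v_i=0,
\end{equation*}
proved coordinate-by-coordinate as the cofactor expansion along the duplicated last row of an auxiliary $(m+1)\times(m+1)$ matrix with two equal rows, shows that $((-1)^i\det V_i)_i\in\ker\phi$, so injectivity forces every $\det V_i$ to vanish.

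\textbf{The main obstacle and its resolution.}  The key remaining difficulty is to convert the vanishing of all $m\times m$ minors of $\phi$ into a genuine nonzero element of $\ker\phi$.  The naive adjugate iteration using $V_i\cdot\operatorname{adj}(V_i)=0$ only handles minors that use all $m$ rows and does not iterate smoothly to smaller minors, because kernel elements of proper submatrices of $\phi$ cannot in general be extended to kernel elements of $\phi$ by zeros.  The clean way to finish, which I plan to invoke, is McCoy's theorem: for an $m\times n$ matrix $A$ over a commutative ring, $A$ is injective iff $\operatorname{ann}_R(I_{\min(m,n)}(A))=0$, where $I_r(A)$ is the ideal generated by $r\times r$ minors.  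The easy direction -- if $Av=0$ then $(\det B)v=0$ for every $n\times n$ submatrix $B$, by Cramer's rule -- is immediate; for the converse I would include the standard sketch: given nonzero $r\in\operatorname{ann}(I_n(A))$, choose $k$ maximal with $r\cdot I_k(A)\neq 0$, fix a $k\times k$ submatrix with minor $\delta$ satisfying $r\delta\neq 0$ and an additional column, and let $v\in R^n$ have coordinates equal to the signed $k\times k$ minors of the resulting $k\times(k+1)$ block on these $k+1$ columns (and zero elsewhere); maximality of $k$ forces $r\cdot Av=0$ (unused-row entries of $Av$ are $(k+1)\times(k+1)$ minors of $A$, killed by $r$) while the distinguished coordinate of $rv$ is $\pm r\delta\neq 0$.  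Applied with $n=m+1>m$, there are no $n\times n$ minors so $I_n(\phi)=0$, hence $\operatorname{ann}(I_n(\phi))=R\neq 0$, and McCoy delivers the nonzero element of $\ker\phi$ contradicting injectivity.
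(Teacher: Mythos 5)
Your proof is correct, but it takes a genuinely different route from the paper's, so let me compare.  The paper reduces to the case that $R$ is a finitely generated $\ZZ$-algebra (the subring generated by the matrix entries), localizes at a height-zero prime to land in an Artin local ring, and concludes by comparing lengths in the short exact sequence $0\to R^{\oplus A}\to R^{\oplus B}\to\coker\phi\to 0$.  You instead handle the finite case by determinantal ideals via McCoy's rank theorem, after a cardinal-arithmetic reduction.  Your approach is more elementary (no Noetherianity, dimension theory, or $K_0$), and it also transparently covers the infinite case, which the paper's reduction does not directly address (if $A\times B$ is infinite, the subring generated by the matrix entries need not be finitely generated, though in the paper's intended application both ranks are finite).

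Three small corrections to tighten the write-up.  First, your statement of McCoy's theorem should read $\operatorname{ann}_R(I_n(A))=0$, not $\operatorname{ann}_R(I_{\min(m,n)}(A))=0$; the latter is false, e.g.\ for the $1\times 2$ matrix $\begin{pmatrix}1&0\end{pmatrix}$ one has $I_{\min}=I_1=R$ with zero annihilator, yet the map is not injective.  You do apply it correctly with $I_n$ at the end, so this is only a slip in the quoted statement.  Second, in the infinite case the cardinal pigeonhole cannot always produce $A'\subseteq A$ with $|A'|=|A|$ (fails if $|A|$ is singular of cofinality $\leq|B|$), but you only need $A'$ infinite, which does follow: if every fiber of $a\mapsto\supp\phi(e_a)$ were finite, then $|A|\leq\aleph_0\cdot|\mathcal P_{\mathrm{fin}}(B)|=|B|$.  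Third, your \textquotedblleft first step\textquotedblright\ (the Cramer identity forcing every $\det V_i=0$, i.e.\ $I_m(\phi)=0$) is superfluous once you invoke McCoy via $I_n(\phi)=0$, which holds trivially when $n=m+1>m$; you could delete it, or alternatively keep it and invoke McCoy with $I_m$ instead, since the nontrivial direction of McCoy works for any $r$ annihilating a determinantal ideal as long as it annihilates the maximal-size one.
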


\begin{proof}
This is a standard yet tricky exercise.  We recall one of the (many) standard proofs.

The map $\phi$ is described by some matrix of $A\times B$ elements of $R$.  Certainly the kernel of this matrix remains zero over the subring of $R$ generated by its entries.  Thus we may assume without loss of generality that $R$ is a finitely generated $\ZZ$-algebra.  Now localize at a prime ideal $\mathfrak p\subset R$ of height zero.  Localization is exact, so again $\phi$ remains injective.  Thus we may assume without loss of generality that $R$ is a local Noetherian ring of dimension zero, and thus $R$ is an Artin local ring \cite[p90, Theorem 8.5]{atiyahmacdonald}.  Since $R$ is Artinian, all finitely generated modules have finite length \cite[pp76--77, Propositions 6.5 and 6.8]{atiyahmacdonald}, and hence there is a length homomorphism $K_0(R)\to\ZZ$ \cite[p77, Proposition 6.9]{atiyahmacdonald} (which is clearly an isomorphism since $R$ is local).  It thus follows from the short exact sequence $0\to R^{\oplus A}\xrightarrow\phi R^{\oplus B}\to\coker\phi\to 0$ that $A\leq B$.
\end{proof}

\appendix

\section{Homological algebra}\label{homologicalalgebrasection}

In this appendix, we collect some useful facts concerning sheaves, homotopy sheaves, and their \v Cech cohomology.  We assume the reader is familiar with most elementary aspects of sheaves.  Many of the results in this appendix are also elementary, though for completeness we give most proofs as we do not know of a good reference.

In \S\ref{sheafintro}, we recall presheaves and sheaves.  In \S\ref{homotopysheafintro}, we introduce homotopy sheaves.  In \S\ref{pullbackintro}, we list standard pushforward and pullback operations on (homotopy) sheaves.  In \S\ref{cechsection}, we introduce and prove basic properties of \v Cech cohomology.  In \S\ref{cechsimplesection}, we introduce the central notion of a pure homotopy sheaf.  In \S\ref{poincarelefschetzsection}, we prove a version of Poincar\'e--Lefschetz duality using pure homotopy sheaves.  In \S\ref{hocolimintro}, we introduce a certain relevant type of homotopy colimit.  In \S\ref{cechsimplegluingsection}, we prove an easy lemma about homotopy colimits of pure homotopy sheaves.  In \S\ref{steenrodhomologysec}, we review the definition of Steenrod homology.

\begin{convention}
In this appendix, by \emph{space} we mean locally compact Hausdorff space.
\end{convention}

\begin{convention}\label{complexconventions}
By a \emph{complex} $C^\bullet$ we mean a $\ZZ$-graded object $\bigoplus_{i\in\ZZ}C^i$ (in some abelian category) along with a degree $1$ endomorphism $d$ with $d^2=0$.  The \emph{homology} of a complex $C^\bullet$ is denoted $H^\bullet C^\bullet$ (defined by $H^iC^\bullet:=\ker(C^i\xrightarrow dC^{i+1})/\im(C^{i-1}\xrightarrow dC^i)$).  A complex is called \emph{acyclic} iff its homology vanishes.  A map of complexes $f:A^\bullet\to B^\bullet$ is a called a \emph{quasi-isomorphism} iff it induces an isomorphism on homology.  We will often use the fact that a map of complexes is a quasi-isomorphism iff its mapping cone is acyclic.

The \emph{shift} of a complex $C^\bullet[n]$ is defined by $(C^\bullet[n])^i:=C^{i+n}$.  We use the \emph{truncation functors} defined by:
\begin{equation*}
(\tau_{\geq i}C^\bullet)^j:=\begin{cases}C^j&j>i\cr\coker(C^{i-1}\xrightarrow dC^i)&j=i\cr 0&j<i\end{cases}\qquad
(\tau_{\leq i}C^\bullet)^j:=\begin{cases}0&j>i\cr\ker(C^i\xrightarrow dC^{i+1})&j=i\cr C^j&j<i\end{cases}
\end{equation*}
Given a sequence of maps of complexes $A_0^\bullet\to\cdots\to A_n^\bullet$ such that adjacent maps compose to zero, we denote by $[A_0^\bullet\to\cdots\to A_n^{\bullet-n}]$ the associated total complex of this double complex.  For example, $f:A^\bullet\to B^\bullet$ denotes a map, and $[A^\bullet\xrightarrow fB^{\bullet-1}]$ denotes its mapping cone.
\end{convention}

\begin{convention}\label{signconventions}
We fix sign conventions by making everything $\ZZ/2$-graded and always using the super tensor product $\otimes$ (namely, where the isomorphism $A\otimes B\xrightarrow\sim B\otimes A$ is given by $a\otimes b\mapsto(-1)^{\left|a\right|\left|b\right|}b\otimes a$ and where $(f\otimes g)(a\otimes b):=(-1)^{\left|g\right|\left|a\right|}f(a)\otimes g(b)$).  We fix $\Hom(A,B)\otimes A\to B$ as given by $f\otimes a\mapsto f(a)$.  Complexes are $(\ZZ,\ZZ/2)$-bigraded; differentials are always odd and chain maps are always even.  Note that the $\ZZ/2$-grading of a complex is often, but not always, the reduction modulo $2$ of the $\ZZ$-grading.
\end{convention}

\begin{convention}
Direct limits and inverse limits always take place over directed sets.
\end{convention}

\subsection{Presheaves and sheaves}\label{sheafintro}

\begin{definition}[Presheaf and $\K$-presheaf\footnote{Terminology ``$\K$-'' following Lurie \cite[Definition 7.3.4.1]{luriehtt}.}]
Let $X$ be a space.  A \emph{presheaf} (resp.\ \emph{$\K$-presheaf}) on $X$ is a contravariant functor from the category of open (resp.\ compact) sets of $X$ to the category of abelian groups.  A morphism of presheaves is simply a natural transformation of functors.  The category of presheaves (resp.\ $\K$-presheaves) is denoted $\Prshv X$ (resp.\ $\Prshv_\K X$).
\end{definition}

\begin{definition}[Stalk]
For a presheaf $\F$, let $\F_p:=\varinjlim_{p\in U}\F(U)$, and for a $\K$-presheaf $\F$ let $\F_p:=\F(\{p\})$.  In both cases we say $\F_p$ is the \emph{stalk} of $\F$ at $p$.
\end{definition}

\begin{definition}[Sheaf]
A \emph{sheaf} is a presheaf $\F$ satisfying the following condition:
\begin{equation*}
\tag{Sh}\label{opensheafproperty}0\to\F\Bigl(\bigcup_{\alpha\in A}U_\alpha\Bigr)\to\prod_{\alpha\in A}\F(U_\alpha)\to\prod_{\alpha,\beta\in A}\F(U_\alpha\cap U_\beta)\quad\text{ is exact }\forall\text{ }\{U_\alpha\subseteq X\}_{\alpha\in A}
\end{equation*}
The category of sheaves on a space $X$ is denoted $\Shv X$ (a full subcategory of $\Prshv X$, meaning a morphism between sheaves is the same as a morphism of the corresponding presheaves).
\end{definition}

\begin{definition}[$\K$-sheaf]
A \emph{$\K$-sheaf} is a $\K$-presheaf $\F$ satisfying the following three conditions:\footnote{A similar set of axioms appears in Lurie \cite[Definition 7.3.4.1]{luriehtt}.}
\begin{align*}
\tag{Sh$_\K$1}\label{sheafvanishing}\F(\varnothing)&=0\\
\tag{Sh$_\K$2}\label{sheafproperty}0\to\F(K_1\cup K_2)\to\F(K_1)\oplus\F(K_2)\to\F(K_1\cap K_2)&\text{ is exact }\forall\text{ }K_1,K_2\subseteq X\\
\tag{Sh$_\K$3}\label{sheafcontinuity}\varinjlim_{\begin{smallmatrix}K\subseteq U\cr U\textrm{ open}\end{smallmatrix}}\F(\overline U)\to\F(K)&\text{ is an isomorphism }\forall\text{ }K\subseteq X
\end{align*}
The category of $\K$-sheaves on a space $X$ is denoted $\Shv_\K X$ (a full subcategory of $\Prshv_\K X$).
\end{definition}

\begin{remark}\label{nbhdintersectiongood}
It is always the case that $K=\bigcap_{\begin{smallmatrix}K\subseteq U\cr U\textrm{ open}\end{smallmatrix}}\overline U$ for compact $K\subseteq X$.
\end{remark}

\begin{definition}\label{opencptrelationfunctors}
We define functors:
\begin{equation}
\begin{tikzcd}\Prshv_\K X\ar[yshift=0.5ex]{r}{\alpha_\ast}&\ar[yshift=-0.5ex]{l}{\alpha^\ast}\Prshv X\end{tikzcd}
\end{equation}
by the formulas:
\begin{align}
\label{alphaast}(\alpha^\ast\F)(K)&:=\varinjlim_{\begin{smallmatrix}K\subseteq U\cr U\textrm{ open}\end{smallmatrix}}\F(U)\\
\label{alphaupperast}(\alpha_\ast\F)(U)&:=\varprojlim_{\begin{smallmatrix}K\subseteq U\cr K\textrm{ compact}\end{smallmatrix}}\F(K)
\end{align}
It is easy to see that there is an adjunction $\Hom_{\Prshv_\K X}(\alpha^\ast\F,\G)=\Hom_{\Prshv X}(\F,\alpha_\ast\G)$ (giving an element of either $\Hom$-set is the same as giving a compatible system of maps $\F(U)\to\G(K)$ for pairs $K\subseteq U$).  
\end{definition}

\begin{lemma}\label{intersectioncofinal}
Let $K_1,\ldots,K_n\subseteq X$ be compact.  Then $\{U_1\cap\cdots\cap U_n\}_{\text{open }U_i\supseteq K_i}$ forms a cofinal system of neighborhoods of $K_1\cap\cdots\cap K_n$.
\end{lemma}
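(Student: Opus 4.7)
The plan is to reduce to the case $n=2$ by induction, and then handle that case using the fact that disjoint compact subsets of a Hausdorff space can be separated by disjoint open sets.

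I would proceed by induction on $n$. The case $n=1$ is trivial (take $U_1 = V$). For $n \geq 3$, given an open neighborhood $V$ of $K_1 \cap \cdots \cap K_n$, note that $K_1 \cap K_2$ is again compact (intersection of compact sets in a Hausdorff space). Applying the inductive hypothesis to the $n-1$ compact sets $K_1 \cap K_2, K_3, \ldots, K_n$ and the open neighborhood $V$ of their intersection, I would obtain an open $W \supseteq K_1 \cap K_2$ and open $U_i \supseteq K_i$ for $i \geq 3$ with $W \cap U_3 \cap \cdots \cap U_n \subseteq V$. It then suffices to find open $U_1 \supseteq K_1$ and $U_2 \supseteq K_2$ with $U_1 \cap U_2 \subseteq W$, which is the case $n=2$ applied to the open neighborhood $W$ of $K_1 \cap K_2$.

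For the base case $n=2$: given $V$ open with $K_1 \cap K_2 \subseteq V$, the sets $K_1 \setminus V$ and $K_2 \setminus V$ are compact and disjoint. By the standard fact that disjoint compact subsets of a Hausdorff space can be separated by disjoint open sets, I can choose open $W_1 \supseteq K_1 \setminus V$ and $W_2 \supseteq K_2 \setminus V$ with $W_1 \cap W_2 = \varnothing$. Setting $U_i := V \cup W_i$, one checks immediately that $K_i \subseteq U_i$ (since $K_i = (K_i \cap V) \cup (K_i \setminus V) \subseteq V \cup W_i$) and that $U_1 \cap U_2 = V \cup (V \cap W_1) \cup (V \cap W_2) \cup (W_1 \cap W_2) = V$, so in particular $U_1 \cap U_2 \subseteq V$, completing the proof.

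I do not anticipate any real obstacle here: the argument is essentially a routine point-set topology exercise, and the only nontrivial input is the separation of disjoint compact sets in a Hausdorff space (which itself reduces, via compactness, to separating a point from a compact set, and then to the defining property of Hausdorffness).
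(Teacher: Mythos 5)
Your proof is correct and takes essentially the same route as the paper: reduce to $n=2$ by induction, then separate. The paper first reduces to the case of compact $X$ and invokes normality of compact Hausdorff spaces, whereas you separate the disjoint compact sets $K_1\setminus V$ and $K_2\setminus V$ directly using Hausdorffness, which avoids the WLOG reduction -- a minor simplification of the same idea.
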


\begin{proof}
We may assume without loss of generality that $X$ is compact.  By induction, it suffices to treat the case $n=2$.  The rest is an exercise (use the fact that a compact Hausdorff space is normal).
\end{proof}

\begin{lemma}\label{opencptshvequivalence}
We have:
\begin{equation}
\begin{tikzcd}\Shv_\K X\ar[yshift=0.5ex]{r}{\alpha_\ast}&\ar[yshift=-0.5ex]{l}{\alpha^\ast}\Shv X\end{tikzcd}
\end{equation}
and this is an equivalence of categories.\footnote{A similar result appears in Lurie \cite[Corollary 7.3.4.10]{luriehtt}.}
\end{lemma}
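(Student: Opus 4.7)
The plan is to establish three claims: (a) the functor $\alpha^\ast$ restricts to a functor $\Shv X\to\Shv_\K X$; (b) the functor $\alpha_\ast$ restricts to a functor $\Shv_\K X\to\Shv X$; (c) the unit $\eta\colon\F\to\alpha_\ast\alpha^\ast\F$ on $\Shv X$ and the counit $\epsilon\colon\alpha^\ast\alpha_\ast\G\to\G$ on $\Shv_\K X$ are isomorphisms. Given the adjunction already recorded in Definition \ref{opencptrelationfunctors}, these together yield the equivalence.

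For (a), I would verify the three $\K$-sheaf axioms for $\alpha^\ast\F$. The vanishing axiom is immediate because $\varnothing$ itself is an open neighborhood of $\varnothing$ and $\F(\varnothing)=0$ for any sheaf. The Mayer--Vietoris axiom follows by passing to the direct limit over pairs $(U_1,U_2)$ of open neighborhoods of $(K_1,K_2)$ in the ordinary Mayer--Vietoris sequence for $\F$; exactness is preserved by filtered colimits, and Lemma \ref{intersectioncofinal} identifies the limit of the $\F(U_1\cap U_2)$ terms with $(\alpha^\ast\F)(K_1\cap K_2)$. For the continuity axiom, local compactness of $X$ lets me restrict the colimit defining $(\alpha^\ast\F)(K)$ to precompact open neighborhoods, after which interchanging the two nested colimits identifies $\varinjlim_{K\subseteq U}(\alpha^\ast\F)(\overline U)$ with $(\alpha^\ast\F)(K)$.

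For (b), which I expect to be the main obstacle, I need to show that $\alpha_\ast\F$ satisfies \eqref{opensheafproperty} for an arbitrary open cover $\{U_\alpha\}$ of an open set $U$. Unwinding the definition, a compatible family of sections $(s_\alpha)$ on the $U_\alpha$'s amounts to compatible assignments $K'\mapsto s_\alpha(K')\in\F(K')$ for each compact $K'\subseteq U_\alpha$. To glue these into a section of $\alpha_\ast\F$ over $U$, I must produce a coherent $s(K)\in\F(K)$ for each compact $K\subseteq U$. Given such $K$, local compactness and normality of $X$ yield a finite collection of precompact open sets $V_1,\dots,V_n$ covering $K$ with $\overline{V_i}\subseteq U_{\alpha_i}$; then the compact pieces $K_i:=K\cap\overline{V_i}$ cover $K$ with each $K_i\subseteq U_{\alpha_i}$. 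An iterated application of the $\K$-sheaf Mayer--Vietoris axiom \eqref{sheafproperty} to these finitely many compact sets glues the $s_{\alpha_i}(K_i)$ to a unique $s(K)\in\F(K)$, and uniqueness propagates coherence as $K$ varies. Injectivity of $(\alpha_\ast\F)(U)\to\prod_\alpha(\alpha_\ast\F)(U_\alpha)$ follows from the same finite-gluing argument applied to differences. The delicate point here is the passage from the purely \emph{finite} combinatorics available in \eqref{sheafproperty} to an \emph{arbitrary} open cover, which is precisely why local compactness is indispensable.

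For (c), I would use local compactness and the continuity axiom \eqref{sheafcontinuity}. The counit on a $\K$-sheaf $\G$ is the map
\begin{equation*}
\varinjlim_{K\subseteq U}\varprojlim_{K'\subseteq U\text{ compact}}\G(K')\longrightarrow\G(K),
\end{equation*}
and restricting to precompact $U$ with $\overline U\subseteq U'$, the inverse limit projects to $\G(\overline U)$; the resulting map $\varinjlim_{K\subseteq U\subseteq\overline U}\G(\overline U)\to\G(K)$ is then an isomorphism by \eqref{sheafcontinuity}, and a short diagram chase promotes this to the claim that $\epsilon$ itself is an isomorphism. Dually, for a sheaf $\F$, the unit $\F(U)\to\varprojlim_{K\subseteq U}\varinjlim_{K\subseteq V\subseteq U}\F(V)$ is an isomorphism: injectivity uses the sheaf condition for the cover of $U$ by precompact open subsets, and surjectivity is obtained by patching compatible germs along such a cover via the sheaf axiom for $\F$. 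Together these establish the equivalence of categories.
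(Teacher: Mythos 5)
Your proof is correct and takes essentially the same route as the paper's: verify the three $\K$-sheaf axioms for $\alpha^\ast\F$ (using Lemma \ref{intersectioncofinal} and exactness of filtered colimits for \eqref{sheafproperty}), verify \eqref{opensheafproperty} for $\alpha_\ast\F$ by reducing an arbitrary open cover to finite compact covers via local compactness and an inductive extension of \eqref{sheafproperty}, and check that unit and counit are isomorphisms using \eqref{sheafcontinuity} and the sheaf condition for precompact open covers. Where the paper packages the $\alpha_\ast\F$ verification as an inverse limit of finite compact exact sequences and appeals to left exactness of $\varprojlim$, you carry out the same finite gluing by hand; this is the same argument in a different register.
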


\begin{proof}
Suppose $\F$ is a sheaf, and let us verify $\alpha^\ast\F$ is a $\K$-sheaf.  Axiom \eqref{sheafvanishing} is clear (take $A=\varnothing$ in \eqref{opensheafproperty}).  Axiom \eqref{sheafproperty} follows from \eqref{opensheafproperty} and Lemma \ref{intersectioncofinal} since direct limits are exact.  Axiom \eqref{sheafcontinuity} is clear from the definition also.

Suppose $\F$ is a $\K$-sheaf, and let us verify that $\alpha_\ast\F$ is a sheaf.  Let us first observe that (by induction using \eqref{sheafvanishing} and \eqref{sheafproperty}) if $\{K_\alpha\subseteq X\}_{\alpha\in A}$ is any collection of compact sets, all but finitely many of which are empty, then the following is exact:
\begin{equation}\label{compactfiniteexact}
0\to\F\Bigl(\bigcup_{\alpha\in A}K_\alpha\Bigr)\to\prod_{\alpha\in A}\F(K_\alpha)\to\prod_{\alpha,\beta\in A}\F(K_\alpha\cap K_\beta)
\end{equation}
Let us now verify axiom \eqref{opensheafproperty} for $\alpha_\ast\F$ for open sets $\{U_\alpha\subseteq X\}_{\alpha\in A}$.  Certainly \eqref{opensheafproperty} is the inverse limit of \eqref{compactfiniteexact} over all collections of compact subsets $\{K_\alpha\subseteq U_\alpha\}_{\alpha\in A}$ for which $K_\alpha=\varnothing$ except for finitely many $\alpha\in A$.  This is sufficient since inverse limit is left exact.

Now to see that the adjoint pair $\alpha^\ast\dashv\alpha_\ast$ is actually an equivalence of categories, it suffices to show that the natural morphisms $\F\to\alpha_\ast\alpha^\ast\F$ and $\alpha^\ast\alpha_\ast\G\to\G$ are isomorphisms.  In other words, we must show that following natural maps are isomorphisms:
\begin{align}
\label{Funit}\F(U)&\to\varprojlim_{\begin{smallmatrix}K\subseteq U\cr K\textrm{ compact}\end{smallmatrix}}\varinjlim_{\begin{smallmatrix}K\subseteq U'\cr U'\textrm{ open}\end{smallmatrix}}\F(U')\\
\label{Gunit}\varinjlim_{\begin{smallmatrix}K\subseteq U\cr U\textrm{ open}\end{smallmatrix}}\varprojlim_{\begin{smallmatrix}K'\subseteq U\cr K'\textrm{ compact}\end{smallmatrix}}\G(K')&\to\G(K)
\end{align}
The right hand side of \eqref{Funit} can be thought of as the inverse limit of $\F(U')$ over all open $U'\subseteq U$ with $\overline{U'}\subseteq U$ and $\overline{U'}$ compact.  It is easy to see that the map from $\F(U)$ to this inverse limit is an isomorphism if $\F$ is a sheaf.  The left hand side of \eqref{Gunit} can be identified with the left hand side of \eqref{sheafcontinuity}, and thus the map to $\G(K)$ is an isomorphism if $\F$ is a $\K$-sheaf.
\end{proof}

\begin{convention}
In view of the canonical equivalence of categories $\Shv X=\Shv_\K X$ from Lemma \ref{opencptshvequivalence}, we use the single word ``sheaf'' for an object of either category.
\end{convention}

\begin{lemma}\label{anycompactintersectionworksSH}
Let $\{K_\alpha\}_{\alpha\in A}$ be a filtered directed system of compact subsets of $X$ (i.e.\ for all $\alpha,\beta\in A$ there exists $\gamma\in A$ with $K_\gamma\subseteq K_\alpha\cap K_\beta$).  Then for any $\F$ satisfying \eqref{sheafcontinuity}, the following is an isomorphism:
\begin{equation}
\varinjlim_{\alpha\in A}\F(K_\alpha)\to\F\Bigl(\bigcap_{\alpha\in A}K_\alpha\Bigr)
\end{equation}
\end{lemma}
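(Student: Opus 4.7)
The plan is to reduce the statement to a cofinality argument using the continuity axiom \eqref{sheafcontinuity} applied both at $K := \bigcap_{\alpha \in A} K_\alpha$ and at the individual $K_\alpha$. The key geometric input is the following neighborhood claim: for every open $U \supseteq K$, there exists some $\alpha \in A$ with $K_\alpha \subseteq U$. To prove this I would fix any $\alpha_0 \in A$ and restrict attention to $\{\alpha : K_\alpha \subseteq K_{\alpha_0}\}$, which is still filtered and cofinal in $A$ by assumption. The closed subsets $\{K_\alpha \setminus U\}_\alpha$ of the compact space $K_{\alpha_0}$ have total intersection $K \setminus U = \varnothing$, so by the finite intersection property some finite subfamily has empty intersection, and filteredness then collapses this to a single $K_\alpha$ with $K_\alpha \setminus U = \varnothing$.

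Granting the claim, surjectivity is routine: given $s \in \F(K) = \varinjlim_{K \subseteq U} \F(\overline{U})$, pick a representative $t \in \F(\overline{U})$, use the claim to choose $\alpha$ with $K_\alpha \subseteq U$, and consider $t|_{K_\alpha} \in \F(K_\alpha)$; by compatibility of restriction maps its image in $\F(K)$ is $t|_K = s$.

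For injectivity, suppose $s_\alpha \in \F(K_\alpha)$ has zero image in $\F(K)$. The idea is that we cannot directly apply cofinality to $s_\alpha$ alone, so first I would apply continuity at $K_\alpha$ to lift $s_\alpha$ to an honest element $\tilde{s}_\alpha \in \F(\overline{V})$ for some open $V \supseteq K_\alpha$. Then $\tilde{s}_\alpha|_K = s_\alpha|_K = 0$ inside $\F(K) = \varinjlim_{K \subseteq W} \F(\overline{W})$, so continuity at $K$ furnishes an open $W \subseteq V$ containing $K$ on which $\tilde{s}_\alpha$ already restricts to zero. By the claim combined with filteredness, there exists $\beta$ with $K_\beta \subseteq K_\alpha \cap W$, and then
\[
s_\alpha|_{K_\beta} \;=\; \tilde{s}_\alpha|_{K_\beta} \;=\; (\tilde{s}_\alpha|_{\overline{W}})|_{K_\beta} \;=\; 0,
\]
showing $[s_\alpha] = 0$ in the direct limit.

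The only real content is the neighborhood claim; once that is in hand, everything else is a mechanical unwinding of the continuity axiom. The one mild subtlety occurs in injectivity, where $s_\alpha$ itself does not a priori live over any open set, so one must pass through an extension $\tilde s_\alpha \in \F(\overline V)$ before the cofinality of the $K_\alpha$ among neighborhoods of $K$ can be deployed.
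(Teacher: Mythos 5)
Your proof is correct and rests on exactly the same key fact as the paper's — that every open $U\supseteq K$ contains some $K_\alpha$, established by the same compactness/finite-intersection argument inside a fixed $K_{\alpha_0}$. The paper then packages the conclusion as a cofinality chase comparing $\varinjlim_\alpha\varinjlim_{K_\alpha\subseteq U}\F(\overline U)$, $\varinjlim_{\exists\alpha:K_\alpha\subseteq U}\F(\overline U)$, and $\varinjlim_{K\subseteq U}\F(\overline U)$, whereas you unwind the same content into explicit surjectivity and injectivity checks (applying \eqref{sheafcontinuity} once at $K_\alpha$ and once at $K$); these are equivalent presentations of the same argument.
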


\begin{proof}
Write $K:=\bigcap_{\alpha\in A}K_\alpha$.  Now consider the diagram:
\begin{equation}
\begin{tikzcd}
\varinjlim\limits_{\alpha\in A}\varinjlim\limits_{\begin{smallmatrix}K_\alpha\subseteq U\cr U\textrm{ open}\end{smallmatrix}}\F(\overline U)\ar{r}\ar{d}&
\varinjlim\limits_{\begin{smallmatrix}\exists\alpha\in A:K_\alpha\subseteq U\cr U\textrm{ open}\end{smallmatrix}}\F(\overline U)\ar{r}&
\varinjlim\limits_{\begin{smallmatrix}K\subseteq U\cr U\textrm{ open}\end{smallmatrix}}\F(\overline U)\ar{d}\\
\varinjlim\limits_{\alpha\in A}\F(K_\alpha)\ar{rr}&&\F(K)
\end{tikzcd}
\end{equation}
The vertical maps are both isomorphisms by \eqref{sheafcontinuity}.  The first horizontal map is an isomorphism since $\{K_\alpha\}_{\alpha\in A}$ is filtered.  It thus remains to show that if $K\subseteq U$ and $U$ is open, then there exists $\alpha\in A$ such that $K_\alpha\subseteq U$.  Since $K\subseteq U$, we have $(X\setminus U)\cap K=\varnothing$, so $\bigcap_{\alpha\in A}(X\setminus U)\cap K_\alpha=\varnothing$.  This is a filtered directed system of compact sets, and so the intersection being empty implies that one of the terms $(X\setminus U)\cap K_\alpha$ is empty, so $K_\alpha\subseteq U$ as desired.
\end{proof}

\subsection{Homotopy sheaves}\label{homotopysheafintro}

\begin{convention}
Let $\F^\bullet$ be a complex of ($\K$-)presheaves.  The notions from Convention \ref{complexconventions} are applied ``objectwise'', that is, to each complex $\F^\bullet(U)$ (resp.\ $F^\bullet(K)$) individually.  For example, the homology $H^i\F^\bullet$ is again a ($\K$-)presheaf, and a map of ($\K$-)presheaves $f:\F^\bullet\to\G^\bullet$ is a quasi-isomorphism iff it induces an isomorphism of ($\K$-)presheaves $H^i\F^\bullet\to H^i\G^\bullet$ for all $i$.  The homology ($\K$-)presheaves $H^i\F^\bullet$ should not be confused with the various flavors of \v Cech (hyper)cohomology $\cH^i(X;\F^\bullet)$ we introduce in \S\ref{cechsection}.
\end{convention}

\begin{definition}[Homotopy sheaf]
A \emph{homotopy sheaf} is a complex of presheaves $\F^\bullet$ satisfying the following condition:
\begin{equation*}
\tag{hSh}\label{openhomotopysheafproperty}\hspace{-20pt}\biggl[\F^\bullet\Bigl(\bigcup_{\alpha\in A}U_\alpha\Bigr)\to\prod_{\alpha\in A}\F^{\bullet-1}(U_\alpha)\to\prod_{\alpha,\beta\in A}\F^{\bullet-2}(U_\alpha\cap U_\beta)\to\cdots\biggr]\text{ is acyclic }\forall\text{ }\{U_\alpha\subseteq X\}_{\alpha\in A}
\end{equation*}
The category of homotopy sheaves on a space $X$ is denoted $\hShv X$ (morphisms are morphisms of complexes of presheaves).
\end{definition}

\begin{remark}
The definition above is given for illustrative purposes only.  It is probably only a good definition for $\F^\bullet$ which is bounded below, and it would perhaps be better to impose \eqref{openhomotopysheafproperty} for all \emph{hypercovers} of open subsets $U\subseteq X$.
\end{remark}

\begin{example}\label{flasquehomotopysheafresolution}
A sheaf $\F$ is \emph{flasque} iff all restriction maps $\F(U)\to\F(U')$ are surjective.  It is easy to see that any bounded below complex of flasque sheaves $\F^\bullet$ is a homotopy sheaf.  For example, let $\F^\bullet(U):=C^\bullet(U)$ be the presheaf of singular cochain complexes.  Then the sheafification of $\F^\bullet$ is a complex of flasque sheaves, and thus is a homotopy sheaf.  One can show (using barycentric subdivision) that the map from $\F^\bullet$ to its sheafification is a quasi-isomorphism, and hence $\F^\bullet$ is a homotopy sheaf as well.
\end{example}

\begin{definition}[Homotopy $\K$-sheaf]\label{homotopyKsheaf}
A \emph{homotopy $\K$-sheaf} is a complex of $\K$-presheaves $\F^\bullet$ satisfying the following three conditions:
\begin{align*}
\tag{hSh$_\K$1}\label{homotopysheafacyclic}\F^\bullet(\varnothing)&\text{ is acyclic}\\
\tag{hSh$_\K$2}\label{homotopysheafproperty}\hspace{-10pt}\bigl[\F^\bullet(K_1\cup K_2)\to\F^{\bullet-1}(K_1)\oplus\F^{\bullet-1}(K_2)\to\F^{\bullet-2}(K_1\cap K_2)\bigr]&\text{ is acyclic }\forall\text{ }K_1,K_2\subseteq X\\
\tag{hSh$_\K$3}\label{homotopysheafcontinuity}\smash{\varinjlim_{\begin{smallmatrix}K\subseteq U\cr U\textrm{ open}\end{smallmatrix}}}\F^\bullet(\overline U)\to\F^\bullet(K)&\text{ is a quasi-isomorphism }\forall\text{ }K\subseteq X
\end{align*}
The category of homotopy $\K$-sheaves on a space $X$ is denoted $\hShv_\K X$ (morphisms are morphisms of complexes of $\K$-presheaves).
\end{definition}

\begin{remark}
Note that \eqref{homotopysheafproperty} gives rise to a ``Mayer--Vietoris'' long exact sequence in cohomology.
\end{remark}

\begin{example}\label{softhomotopysheafresolution}
A $\K$-sheaf $\F$ is \emph{soft} iff all restriction maps $\F(K)\to\F(K')$ are surjective.  It is easy to see that any complex of soft $\K$-sheaves $\F^\bullet$ is a homotopy $\K$-sheaf.
\end{example}

\begin{remark}\label{homotopyKnotsheavesequivalencermk}
In analogy with Definition \ref{opencptrelationfunctors}, we expect there are functors:
\begin{equation}
\begin{tikzcd}\hShv_\K X\ar[yshift=0.5ex]{r}{R\alpha_\ast}&\ar[yshift=-0.5ex]{l}{\alpha^\ast}\hShv X\end{tikzcd}
\end{equation}
where $\alpha^\ast$ is the direct limit \eqref{alphaast} and $R\alpha_\ast$ is the \emph{homotopy} (or \emph{derived}) version $R\varprojlim$ of the inverse limit \eqref{alphaupperast} (the naive inverse limit functor $\alpha_\ast$ is the ``wrong'' functor since inverse limit is not exact).

In analogy with Lemma \ref{opencptshvequivalence}, it seems likely\footnote{A similar result appears in Lurie \cite[Corollary 7.3.4.10]{luriehtt}.} that there is an adjunction $\alpha^\ast\dashv R\alpha_\ast$ which is an equivalence (in the sense of model categories or $\infty$-categories), though perhaps only after restricting to the (possibly better-behaved) subcategory of homotopy ($\K$-)sheaves which are bounded below.
\end{remark}

\begin{remark}
Guided by the needs of the rest of the paper, we proceed to focus on homotopy $\K$-sheaves rather than on homotopy sheaves.
\end{remark}

\begin{lemma}\label{qihomotopysheaf}
Properties \eqref{homotopysheafacyclic}--\eqref{homotopysheafcontinuity} are preserved by quasi-isomorphisms.
\end{lemma}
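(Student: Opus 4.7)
The plan is to treat each of the three axioms separately, in each case reducing to the basic principle that quasi-isomorphisms are preserved by the relevant homological operations (truncation, iterated mapping cones, and filtered colimits). Let $f:\F^\bullet\to\G^\bullet$ be a quasi-isomorphism of complexes of $\K$-presheaves, so that by definition $f_K:\F^\bullet(K)\to\G^\bullet(K)$ is a quasi-isomorphism for every compact $K\subseteq X$; the claim is that if $\F^\bullet$ satisfies one of \eqref{homotopysheafacyclic}--\eqref{homotopysheafcontinuity}, then so does $\G^\bullet$ (and vice versa, since the roles of $\F^\bullet$ and $\G^\bullet$ are symmetric by the two-out-of-three property).

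For \eqref{homotopysheafacyclic}, the claim is immediate: $\F^\bullet(\varnothing)\to\G^\bullet(\varnothing)$ is a quasi-isomorphism by hypothesis, so one side is acyclic iff the other is.

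For \eqref{homotopysheafproperty}, I would observe that the Mayer--Vietoris complex $[\F^\bullet(K_1\cup K_2)\to\F^{\bullet-1}(K_1)\oplus\F^{\bullet-1}(K_2)\to\F^{\bullet-2}(K_1\cap K_2)]$ is the total complex of a bicomplex with three columns, and the natural map to the corresponding complex built from $\G^\bullet$ is a map of bicomplexes which is a quasi-isomorphism on each column. Since each column is bounded (three terms), this total complex is obtained by two iterated mapping cones, and taking the mapping cone of a quasi-isomorphism yields a quasi-isomorphism (e.g.\ by the five-lemma applied to the long exact sequence of the cone, or by a trivial spectral-sequence argument). Hence the total complex for $\F^\bullet$ is quasi-isomorphic to that for $\G^\bullet$, and the acyclicity transfers.

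For \eqref{homotopysheafcontinuity}, I would use the commutative square whose horizontal arrows are the comparison maps $\varinjlim_{U\supseteq K}\F^\bullet(\overline U)\to\F^\bullet(K)$ and its $\G^\bullet$-counterpart, and whose vertical arrows are induced by $f$. The right vertical arrow $\F^\bullet(K)\to\G^\bullet(K)$ is a quasi-isomorphism by hypothesis. The left vertical arrow is a quasi-isomorphism because the indexing system of open neighborhoods of $K$ is directed (Lemma \ref{intersectioncofinal}), and filtered colimits of abelian groups are exact, hence commute with homology. Given that the top arrow is a quasi-isomorphism (the hypothesis that $\F^\bullet$ satisfies \eqref{homotopysheafcontinuity}), two-out-of-three forces the bottom arrow to be a quasi-isomorphism, which is the statement that $\G^\bullet$ satisfies \eqref{homotopysheafcontinuity}.

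The only mildly subtle step is the directedness/exactness argument for \eqref{homotopysheafcontinuity}; the other two are purely formal. I do not anticipate any obstacle—this lemma is essentially a bookkeeping verification.
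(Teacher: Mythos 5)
Your proposal is correct and follows essentially the same route as the paper. The paper dispatches \eqref{homotopysheafacyclic} and \eqref{homotopysheafcontinuity} in one word (``trivial''), which you unpack exactly as intended (for \eqref{homotopysheafcontinuity} via exactness of filtered colimits and two-out-of-three); for \eqref{homotopysheafproperty} the paper applies the Mayer--Vietoris total complex to the mapping cone $[\F^\bullet\to\G^{\bullet-1}]$ and observes it is acyclic by a finite filtration argument, which is precisely your ``iterated mapping cone of a quasi-isomorphism is a quasi-isomorphism,'' phrased dually. (One tiny quibble: directedness of the neighborhood system of $K$ is not really what Lemma \ref{intersectioncofinal} asserts, but it is immediate and the paper's standing Convention already assumes direct limits are over directed sets, so this is cosmetic.)
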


\begin{proof}
For \eqref{homotopysheafacyclic} and \eqref{homotopysheafcontinuity} this is trivial.  For \eqref{homotopysheafproperty}, suppose $\F^\bullet\xrightarrow\sim\G^\bullet$ is a quasi-isomorphism.  Now the sequence \eqref{homotopysheafproperty} applied to the mapping cone $[\F^\bullet\to\G^{\bullet-1}]$ is certainly acyclic (since it has a finite filtration whose associated graded is acyclic).  Thus \eqref{homotopysheafproperty} holds for $\F^\bullet$ iff it holds for $\G^\bullet$.
\end{proof}

\begin{lemma}[Extensions of homotopy $\K$-sheaves are homotopy $\K$-sheaves]\label{associatedgradedhomotopysheaf}
Let $\F^\bullet$ be a complex of $\K$-presheaves.  If $\F^\bullet$ has a finite filtration whose associated graded is a homotopy $\K$-sheaf, then $\F^\bullet$ is a homotopy $\K$-sheaf.
\end{lemma}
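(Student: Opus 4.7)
The plan is to proceed by induction on the length of the filtration. Given a filtration $0 = \F_0^\bullet \subseteq \cdots \subseteq \F_n^\bullet = \F^\bullet$ with homotopy $\K$-sheaf associated graded, the inductive step reduces immediately to the following claim: if $0 \to \A^\bullet \to \F^\bullet \to \B^\bullet \to 0$ is a short exact sequence of complexes of $\K$-presheaves with $\A^\bullet$ and $\B^\bullet$ both homotopy $\K$-sheaves, then $\F^\bullet$ is a homotopy $\K$-sheaf. So from now on I verify each of (hSh$_\K$1)--(hSh$_\K$3) for $\F^\bullet$ under this hypothesis.

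Each axiom will follow by the same mechanism: apply a suitable exact operation to the given short exact sequence of complexes to obtain a new short exact sequence of complexes, then invoke the associated long exact sequence in cohomology together with the assumed acyclicity/quasi-isomorphism on $\A^\bullet$ and $\B^\bullet$ to deduce the statement for $\F^\bullet$. Concretely: for (hSh$_\K$1), the operation is evaluation at $\varnothing$, and acyclicity of $\A^\bullet(\varnothing)$ and $\B^\bullet(\varnothing)$ forces acyclicity of $\F^\bullet(\varnothing)$. For (hSh$_\K$3), one applies the functor $\varinjlim_{K \subseteq U, U\text{ open}}(-)(\overline U)$, which is exact since it is a directed colimit of an exact sequence of abelian groups at each compact set; then the morphism of long exact sequences comparing this colimit to evaluation at $K$, together with the five lemma and the homotopy $\K$-sheaf property for $\A^\bullet$ and $\B^\bullet$, gives that $\varinjlim \F^\bullet(\overline U) \to \F^\bullet(K)$ is a quasi-isomorphism.

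The most bookkeeping-intensive step is (hSh$_\K$2). For fixed $K_1, K_2 \subseteq X$, apply $\A^\bullet$, $\F^\bullet$, $\B^\bullet$ each to the Mayer--Vietoris diagram $(K_1 \cup K_2, K_1, K_2, K_1 \cap K_2)$ and take total complexes (with signs fixed by Convention \ref{signconventions}); one obtains a short exact sequence of total complexes, since short exactness is preserved termwise and total complex is exact in this sense. The outer two total complexes are acyclic by (hSh$_\K$2) for $\A^\bullet$ and $\B^\bullet$, so the long exact sequence in cohomology forces the middle total complex to be acyclic as well. I do not anticipate any real obstacle: all three verifications are formal consequences of the long exact sequence attached to an extension, and the entire argument is essentially a triple application of the five lemma (or its direct unrolling).
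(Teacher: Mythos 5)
Your proposal is correct and takes essentially the same approach as the paper. The paper compresses the argument into one line by invoking the principle ``a complex with a finite filtration whose associated graded is acyclic is itself acyclic,'' applied directly to each of the three complexes appearing in (hSh$_\K$1)--(hSh$_\K$3) (evaluation at $\varnothing$, the Mayer--Vietoris total complex, and the mapping cone of the continuity map), all of which inherit filtrations with the appropriate associated graded; your version unrolls that same fact into an induction on filtration length, reducing to a single extension and applying the long exact sequence and five lemma, which is the standard unpacking of the same principle.
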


\begin{proof}
This follows from the fact that if a complex $C^\bullet$ has a finite filtration whose associated graded is acyclic then $C^\bullet$ is itself acyclic.
\end{proof}

\begin{lemma}[Lowest nonzero homology $\K$-presheaf of a homotopy $\K$-sheaf is a $\K$-sheaf]\label{lowervanishingimpliessheaf}
If $\F^\bullet$ is a homotopy $\K$-sheaf and $H^{-1}\F^\bullet=0$, then $H^0\F^\bullet$ is a $\K$-sheaf.
\end{lemma}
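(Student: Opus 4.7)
The plan is to verify the three axioms \eqref{sheafvanishing}, \eqref{sheafproperty}, \eqref{sheafcontinuity} for $H^0\F^\bullet$ directly from the corresponding three axioms \eqref{homotopysheafacyclic}, \eqref{homotopysheafproperty}, \eqref{homotopysheafcontinuity} of the homotopy $\K$-sheaf $\F^\bullet$, exploiting the vanishing hypothesis $H^{-1}\F^\bullet=0$ in the one place where it is needed.

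First I would handle \eqref{sheafvanishing}: since $\F^\bullet(\varnothing)$ is acyclic by \eqref{homotopysheafacyclic}, its $H^0$ vanishes, so $H^0\F^\bullet(\varnothing)=0$. Next I would handle \eqref{sheafcontinuity}: by \eqref{homotopysheafcontinuity} the map $\varinjlim_{K\subseteq U\text{ open}}\F^\bullet(\overline U)\to\F^\bullet(K)$ is a quasi-isomorphism, and since filtered direct limits are exact (hence commute with the homology functor), passing to $H^0$ yields the required isomorphism $\varinjlim_{K\subseteq U\text{ open}}H^0\F^\bullet(\overline U)\xrightarrow\sim H^0\F^\bullet(K)$.

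The substantive step is \eqref{sheafproperty}. By \eqref{homotopysheafproperty}, the total complex of the double complex
\begin{equation*}
\F^\bullet(K_1\cup K_2)\to\F^\bullet(K_1)\oplus\F^\bullet(K_2)\to\F^\bullet(K_1\cap K_2)
\end{equation*}
(placed in columns $0,1,2$) is acyclic. The standard spectral sequence of this double complex, or equivalently the long exact sequence obtained by breaking it into two short exact sequences of complexes, yields a Mayer--Vietoris long exact sequence
\begin{equation*}
\cdots\to H^{i-1}\F^\bullet(K_1\cap K_2)\to H^i\F^\bullet(K_1\cup K_2)\to H^i\F^\bullet(K_1)\oplus H^i\F^\bullet(K_2)\to H^i\F^\bullet(K_1\cap K_2)\to\cdots
\end{equation*}
Setting $i=0$ and invoking the hypothesis $H^{-1}\F^\bullet(K_1\cap K_2)=0$ gives exactness of
\begin{equation*}
0\to H^0\F^\bullet(K_1\cup K_2)\to H^0\F^\bullet(K_1)\oplus H^0\F^\bullet(K_2)\to H^0\F^\bullet(K_1\cap K_2),
\end{equation*}
which is precisely \eqref{sheafproperty}.

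The only mild obstacle is spelling out carefully that the acyclicity of the three-term total complex yields the claimed long exact sequence; this is a routine double-complex argument (filter by column, so that the $E_1$ page is $H^\bullet$ of the columns and the $E_2$ page must vanish by acyclicity of the total complex, forcing the asserted long exact sequence). With that in hand the three axioms are all verified and $H^0\F^\bullet$ is a $\K$-sheaf.
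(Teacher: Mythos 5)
Your proof is correct and follows exactly the same line of reasoning as the paper's (verify \eqref{sheafvanishing} and \eqref{sheafcontinuity} directly; extract the Mayer--Vietoris long exact sequence from \eqref{homotopysheafproperty} and use $H^{-1}\F^\bullet=0$ to get left-exactness). You have simply spelled out the steps that the paper leaves implicit.
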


\begin{proof}
Properties \eqref{sheafvanishing} and \eqref{sheafcontinuity} follow directly from \eqref{homotopysheafacyclic} and \eqref{homotopysheafcontinuity}.  To show \eqref{sheafproperty} for $H^0\F^\bullet$, use the long exact sequence induced by \eqref{homotopysheafproperty} and the vanishing of $H^{-1}\F^\bullet(K_1\cap K_2)$.
\end{proof}

\begin{lemma}\label{anycompactintersectionworks}
Let $\{K_\alpha\}_{\alpha\in A}$ be a filtered directed system of compact subsets of $X$ (i.e.\ for all $\alpha,\beta\in A$ there exists $\gamma\in A$ with $K_\gamma\subseteq K_\alpha\cap K_\beta$).  Then for any $\F^\bullet$ satisfying \eqref{homotopysheafcontinuity}, the following is a quasi-isomorphism:
\begin{equation}
\varinjlim_{\alpha\in A}\F^\bullet(K_\alpha)\to\F^\bullet\Bigl(\bigcap_{\alpha\in A}K_\alpha\Bigr)
\end{equation}
\end{lemma}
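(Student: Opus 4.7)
The plan is to adapt directly the proof of Lemma \ref{anycompactintersectionworksSH} to the complex-valued setting, replacing ``isomorphism'' with ``quasi-isomorphism'' everywhere and using the fact that filtered direct limits are exact (and therefore commute with homology and preserve quasi-isomorphisms). Writing $K:=\bigcap_{\alpha\in A}K_\alpha$, I would consider the same commutative diagram of complexes
\begin{equation*}
\begin{tikzcd}
\varinjlim\limits_{\alpha\in A}\varinjlim\limits_{\begin{smallmatrix}K_\alpha\subseteq U\cr U\textrm{ open}\end{smallmatrix}}\F^\bullet(\overline U)\ar{r}\ar{d}&
\varinjlim\limits_{\begin{smallmatrix}\exists\alpha\in A:K_\alpha\subseteq U\cr U\textrm{ open}\end{smallmatrix}}\F^\bullet(\overline U)\ar{r}&
\varinjlim\limits_{\begin{smallmatrix}K\subseteq U\cr U\textrm{ open}\end{smallmatrix}}\F^\bullet(\overline U)\ar{d}\\
\varinjlim\limits_{\alpha\in A}\F^\bullet(K_\alpha)\ar{rr}&&\F^\bullet(K)
\end{tikzcd}
\end{equation*}
and argue that each arrow except possibly the bottom one is a quasi-isomorphism, so the bottom is as well.

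First I would handle the two vertical arrows. The right vertical arrow is a quasi-isomorphism directly by \eqref{homotopysheafcontinuity}. The left vertical arrow is obtained by applying $\varinjlim_{\alpha\in A}$ to the collection of quasi-isomorphisms $\varinjlim_{K_\alpha\subseteq U}\F^\bullet(\overline U)\to\F^\bullet(K_\alpha)$ (one for each $\alpha$), so since filtered direct limits are exact they commute with taking homology and preserve quasi-isomorphisms. Next, the top-left horizontal arrow is an isomorphism at the chain level since $\{K_\alpha\}_{\alpha\in A}$ is filtered and direct limits commute (equivalently, one can interchange the order of the iterated direct limit). Finally, the top-right horizontal arrow is an isomorphism at the chain level once I verify that the two indexing posets of open neighborhoods are cofinal in each other: every open $U$ containing some $K_\alpha$ obviously contains $K$, and conversely for every open $U\supseteq K$ there exists some $\alpha$ with $K_\alpha\subseteq U$. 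This last cofinality is the same compactness argument as in Lemma \ref{anycompactintersectionworksSH}: the collection $\{(X\setminus U)\cap K_\alpha\}_{\alpha\in A}$ is a filtered family of compact sets with empty intersection, hence some member is already empty.

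Chasing the diagram then yields that the bottom horizontal map is a quasi-isomorphism, which is the desired conclusion. There is no serious obstacle here—the only point requiring any care is the exactness of filtered direct limits, which ensures that a filtered direct limit of quasi-isomorphisms is a quasi-isomorphism; everything else is a verbatim translation of the sheaf-theoretic proof.
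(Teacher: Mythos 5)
Your proof is correct and matches the paper's approach exactly: the paper's proof reads simply ``Same as for Lemma \ref{anycompactintersectionworksSH},'' and your write-up carries out precisely that translation, correctly identifying the one new ingredient (exactness of filtered colimits, hence preservation of quasi-isomorphisms) needed to make the left vertical arrow work.
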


\begin{proof}
Same as for Lemma \ref{anycompactintersectionworksSH}.
\end{proof}

\subsection{Pushforward, exceptional pushforward, and pullback}\label{pullbackintro}

\begin{definition}[Pushforward of ($\K$-)presheaves]\label{presheafpushforward}
Let $f:X\to Y$ be a map of spaces.  We define functors:
\begin{rlist}
\item$f_\ast:\Prshv X\to\Prshv Y$ by $(f_\ast\F)(U):=\F(f^{-1}(U))$.
\item$f_\ast:\Prshv_\K X\to\Prshv_\K Y$ by $(f_\ast\F)(K):=\F(f^{-1}(K))$ (if $f$ is proper).
\end{rlist}
(the action of $f_\ast$ on morphism spaces is obvious).
\end{definition}

\begin{lemma}\label{properpullbackfinalneighborhood}
Let $f:X\to Y$ be proper.  Then $\{f^{-1}(U)\}_{K\subseteq U}$ is a cofinal system of neighborhoods of $f^{-1}(K)$ for any compact $K\subseteq X$.
\end{lemma}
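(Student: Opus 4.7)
The statement is to be read with $K\subseteq Y$ compact (so that $f^{-1}(K)\subseteq X$ makes sense and matches Definition \ref{presheafpushforward}); the claim is that every open neighborhood $V$ of $f^{-1}(K)$ in $X$ contains some preimage $f^{-1}(U)$ with $U$ an open neighborhood of $K$ in $Y$.

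The plan is to reduce cofinality to the fact that proper maps to (locally compact) Hausdorff spaces are closed. Concretely, fix compact $K\subseteq Y$ and an arbitrary open neighborhood $V\subseteq X$ of $f^{-1}(K)$. First, I set $C:=X\setminus V$, which is closed in $X$ and by construction satisfies $C\cap f^{-1}(K)=\varnothing$, i.e.\ $f(C)\cap K=\varnothing$.

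Second, I invoke the standard fact that a proper map $f:X\to Y$ between locally compact Hausdorff spaces (which is our standing convention in this appendix) is a closed map. Hence $f(C)$ is closed in $Y$. Defining $U:=Y\setminus f(C)$ gives an open subset of $Y$ containing $K$ (since $f(C)$ is disjoint from $K$), and the inclusion $f^{-1}(U)\subseteq X\setminus C=V$ is immediate. This exhibits $f^{-1}(U)$ as a neighborhood of $f^{-1}(K)$ sitting inside the arbitrary $V$, which is exactly cofinality.

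There is no real obstacle: the only nontrivial ingredient is the closedness of proper maps in the locally compact Hausdorff setting, which is a standard point-set lemma (it can be proved directly by showing that for $y\notin f(C)$ one can separate $y$ from the compact fiber $f^{-1}(y)$ using a precompact open neighborhood and then use properness to push down to an open neighborhood of $y$ disjoint from $f(C)$). Given our convention that all spaces are locally compact Hausdorff, this comes for free and the proof of the lemma is a one-line consequence.
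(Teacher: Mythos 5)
Your argument is correct, and you were right to read the statement as asserting cofinality for compact $K\subseteq Y$ (the "$K\subseteq X$" in the paper is a typo; $f^{-1}(K)$ and ``$U\supseteq K$ open in $Y$'' only make sense for $K\subseteq Y$).

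The paper leaves this as an exercise with the hint ``use the fact that a compact Hausdorff space is normal,'' which points to a slightly different packaging: choose a precompact open $W_0\supseteq K$ using local compactness of $Y$, observe that $f^{-1}(\overline{W_0})\setminus V$ is compact so its image is compact and disjoint from $K$, separate these two disjoint compact sets in the Hausdorff space $Y$ by open sets (the normality-type step), and intersect with $W_0$. You instead reduce the whole thing to the single fact that a proper map between locally compact Hausdorff spaces is closed, after which $U:=Y\setminus f(X\setminus V)$ works in one line. The two routes are closely related---the standard proof of closedness of proper maps in the locally compact Hausdorff setting is itself essentially the same local-compactness-plus-Hausdorff argument as the normality route---but your version outsources the point-set work to a named, citable lemma, which makes the proof cleaner at the cost of invoking a fact the paper does not otherwise state. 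Either is fine given the paper's standing convention that ``space'' means locally compact Hausdorff; just be aware that closedness of proper maps genuinely uses local compactness of the target, so the hypothesis is not superfluous.
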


\begin{proof}
Exercise (use the fact that a compact Hausdorff space is normal).
\end{proof}

\begin{lemma}\label{pushforwardandsheafproperties}
$f_\ast$ preserves \eqref{opensheafproperty}, \eqref{openhomotopysheafproperty}, \eqref{sheafvanishing}--\eqref{sheafcontinuity}, and \eqref{homotopysheafacyclic}--\eqref{homotopysheafcontinuity}.
\end{lemma}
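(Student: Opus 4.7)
The statement amounts to four pairs of verifications (sheaf/homotopy sheaf; open/compact versions), and all four are essentially formal consequences of the fact that $f^{-1}$ commutes with unions and intersections, together with properness in the $\K$-case. My plan is to dispatch them in roughly increasing order of subtlety.

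First, for the open-set axioms \eqref{opensheafproperty} and \eqref{openhomotopysheafproperty}, the argument is immediate: given any collection $\{U_\alpha \subseteq Y\}$, the collection $\{f^{-1}(U_\alpha) \subseteq X\}$ is a collection of open subsets of $X$ (since $f$ is continuous), and $f^{-1}\bigl(\bigcup U_\alpha\bigr) = \bigcup f^{-1}(U_\alpha)$, $f^{-1}(U_\alpha \cap U_\beta) = f^{-1}(U_\alpha) \cap f^{-1}(U_\beta)$, and similarly for higher intersections. Unfolding the definition $(f_\ast \F^\bullet)(U) = \F^\bullet(f^{-1}(U))$, the \v Cech-style complex for $f_\ast \F^\bullet$ on $\{U_\alpha\}$ is literally equal to the corresponding complex for $\F^\bullet$ on $\{f^{-1}(U_\alpha)\}$, so exactness/acyclicity transfers. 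For the empty-set axioms \eqref{sheafvanishing} and \eqref{homotopysheafacyclic}, $f^{-1}(\varnothing) = \varnothing$ gives the result instantly. For the two-set axioms \eqref{sheafproperty} and \eqref{homotopysheafproperty}, exactly the same $f^{-1}$-computation works, using additionally that $f$ is proper so that $f^{-1}(K_i)$ and $f^{-1}(K_1 \cap K_2) = f^{-1}(K_1) \cap f^{-1}(K_2)$ are compact.

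The only step requiring more than bookkeeping is the continuity axioms \eqref{sheafcontinuity} and \eqref{homotopysheafcontinuity}. Fix a compact $K \subseteq Y$. I need to show that the natural map
\begin{equation*}
\varinjlim_{\begin{smallmatrix}K \subseteq U\cr U \text{ open}\end{smallmatrix}} \F^\bullet(f^{-1}(\overline U)) \longrightarrow \F^\bullet(f^{-1}(K))
\end{equation*}
is an isomorphism (resp.\ quasi-isomorphism). Since $Y$ is locally compact Hausdorff, I may restrict to a cofinal subsystem of open $U$ with $\overline U$ compact; then by properness each $f^{-1}(\overline U)$ is compact. The system $\{f^{-1}(\overline U)\}$ is filtered (pull back the intersection of two such $U$'s), and by Lemma \ref{properpullbackfinalneighborhood} combined with Remark \ref{nbhdintersectiongood} its intersection is exactly $f^{-1}(K)$. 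Applying Lemma \ref{anycompactintersectionworksSH} (respectively Lemma \ref{anycompactintersectionworks}) to $\F^\bullet$ on $X$ then gives the claim.

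The main, very mild, obstacle is the continuity axiom: one has to be a little careful that passing from ``open $U \supseteq K$'' to ``$f^{-1}(\overline U)$'' really does produce a cofinal filtered system of compact subsets of $X$ whose intersection is $f^{-1}(K)$, rather than something strictly larger. This is exactly what the combination of local compactness of $Y$, properness of $f$, and Lemma \ref{properpullbackfinalneighborhood} delivers, so no serious analysis is needed.
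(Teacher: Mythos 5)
Your proof is correct and lands on the same essential observation the paper makes: all the axioms except continuity transfer formally because $f^{-1}$ commutes with unions and intersections (with properness guaranteeing compactness of preimages where needed), and the continuity axioms \eqref{sheafcontinuity}, \eqref{homotopysheafcontinuity} are where the real content lies. The paper's own proof is a one-liner that just says to "use Lemma \ref{properpullbackfinalneighborhood}," whereas you route through Lemmas \ref{anycompactintersectionworksSH} and \ref{anycompactintersectionworks}: you check the family $\{f^{-1}(\overline U)\}$ is a filtered system of compact subsets of $X$ with intersection $f^{-1}(K)$ and invoke those lemmas. This is arguably the cleaner packaging, since it sidesteps having to compare $f^{-1}(\overline U)$ with $\overline{f^{-1}(U)}$ (which are \emph{not} equal in general; one only has $\overline{f^{-1}(U)} \subseteq f^{-1}(\overline U)$), a point that would need a small extra shrinking argument if one tried to deduce cofinality among closed neighborhoods of $f^{-1}(K)$ directly from Lemma \ref{properpullbackfinalneighborhood}. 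One tiny imprecision: in the step where you compute the intersection, the citation of Lemma \ref{properpullbackfinalneighborhood} is superfluous — the identity $\bigcap_U f^{-1}(\overline U) = f^{-1}\bigl(\bigcap_U \overline U\bigr) = f^{-1}(K)$ follows just from $f^{-1}$ commuting with intersections together with Remark \ref{nbhdintersectiongood} (and cofinality of the compact-closure subsystem). This does not affect the correctness of the argument.
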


\begin{proof}
These are trivial except for \eqref{sheafcontinuity} and \eqref{homotopysheafcontinuity}, which use Lemma \ref{properpullbackfinalneighborhood}.
\end{proof}

\begin{definition}[Pushforward of ($\K$-)sheaves and homotopy $\K$-sheaves]\label{pushforwardofHsheavesworks}
Let $f:X\to Y$ be a map of spaces.  By Lemma \ref{pushforwardandsheafproperties}, Definition \ref{presheafpushforward} gives rise to functors:
\begin{rlist}
\item$f_\ast:\Shv X\to\Shv Y$.
\item$f_\ast:\hShv X\to\hShv Y$.
\item$f_\ast:\Shv_\K X\to\Shv_\K Y$ (if $f$ is proper).
\item$f_\ast:\hShv_\K X\to\hShv_\K Y$ (if $f$ is proper).
\end{rlist}
(the action on morphism spaces is induced from the $f_\ast$ at the level of (complexes of) \mbox{($\K$-)presheaves}).
% using \mbox to prevent bad line break

It is easy to see that $f_\ast$ commutes with the equivalence $\Shv X=\Shv_\K X$ (if $f$ is proper).
\end{definition}

\begin{definition}[Pullback and exceptional pushforward of sheaves]
Let $f:X\to Y$ be a map of spaces.  We define:
\begin{rlist}
\item$f^\ast:\Shv Y\to\Shv X$ the standard sheaf pullback (namely $f^\ast\F$ is the sheafification of the presheaf $U\mapsto\varinjlim_{f(U)\subseteq V}\F(V)$).
\item$f_!:\Shv X\to\Shv Y$ by $(f_!\F)(U)\subseteq(f_\ast\F)(U)$ being the subspace of sections which vanish in a neighborhood of $Y\setminus X$ (if $f$ is the inclusion of an open set).
\end{rlist}
\end{definition}

\begin{definition}[Pullback of homotopy $\K$-sheaves]
Let $f:X\to Y$ be an injective map of spaces.  We define:
\begin{rlist}
\item$f^\ast:\hShv_\K Y\to\hShv_\K X$ by $(f^\ast\F^\bullet)(K):=\F^\bullet(f(K))$.
\end{rlist}
We check the properties: \eqref{homotopysheafacyclic} is clear, and \eqref{homotopysheafproperty} follows since $f$ is injective.  For \eqref{homotopysheafcontinuity}, use Remark \ref{nbhdintersectiongood}, the injectivity of $f$, and Lemma \ref{anycompactintersectionworks}.
\end{definition}

\subsection{\texorpdfstring{\v C}{C}ech cohomology}\label{cechsection}

We introduce various flavors of \v Cech (hyper)cohomology relevant to our situation.

\begin{remark}\label{coveringscategoryrmk}
A \emph{refinement} of a cover $\{U_\alpha\}_{\alpha\in A}$ is a cover $\{U_\beta\}_{\beta\in B}$ along with a map $f:B\to A$ such that $U_\beta\subseteq U_{f(\alpha)}$.  Refinements are the morphisms used in the directed systems used to define (all flavors of) \v Cech cohomology.  Different refinements $\{U_\alpha\}_{\alpha\in A}\to\{U_\beta\}_{\beta\in B}$ induce different maps on \v Cech complexes, but they all agree after passing to cohomology (more precisely, the ``space'' of such refinements is contractible or empty).  In particular, it follows that the directed systems used in defining \v Cech cohomology are \emph{filtered}.
\end{remark}

\begin{remark}
The empty covering is a final object in the category of coverings of $\varnothing$, so we always have $\cH^\bullet(\varnothing;-)=0$.
\end{remark}

\subsubsection{\ldots of sheaves}

\begin{definition}[$\cH^\bullet$ and $\cH^\bullet_c$ of sheaves]\label{cechdefnI}
Let $\F$ be a sheaf on a space $X$.  We define the \v Cech cohomology:
\begin{equation}\label{cechcomplexI}
\cH^\bullet(X;\F):=\varinjlim_{\begin{smallmatrix}X=\bigcup_{\alpha\in A}U_\alpha\cr\text{open cover}\end{smallmatrix}}H^\bullet\biggl[\bigoplus_{p\geq 0}\prod_{\begin{smallmatrix}S\subseteq A\cr\left|S\right|=p+1\end{smallmatrix}}\F\Bigl(\bigcap_{\alpha\in S}U_\alpha\Bigr)[-p]\biggr]
\end{equation}
with the standard \v Cech differential.\footnote{Technically speaking, so that the signs in the differential can be defined canonically, we should really tensor each term of the direct product with $(\mathbb Zo_1\oplus\mathbb Zo_2)/(o_1+o_2)\cong\mathbb Z$ where $o_1,o_2$ denote the two orientations of the $p$-simplex on vertex set $S$.}  For any compact $K\subseteq X$, define $\cH^\bullet_K(X;\F)$ (\v Cech cohomology with supports in $K$) via \eqref{cechcomplexI} except replacing every instance of $\F(U)$ with $\ker[\F(U)\to\F(U\setminus K)]$.  We let $\cH^\bullet_c(X;\F):=\varinjlim_{K\subseteq X}\cH^\bullet_K(X;\F)$ (\v Cech cohomology with compact supports).
\end{definition}

\begin{lemma}[$\cH^\bullet$ on a compact space needs only finite open covers]
If $X$ is compact, then the natural map $\cH^\bullet_\mathrm{fin}(X;\F)\to\cH^\bullet(X;\F)$ is an isomorphism, where the left hand side is defined as in \eqref{cechcomplexI} except using only finite open covers.
\end{lemma}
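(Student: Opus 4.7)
The plan is to show that the directed system of finite open covers is cofinal in the directed system of all open covers; since direct limits over filtered categories are invariant under passage to a cofinal subsystem, the claim follows immediately.

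Concretely, given any open cover $\{U_\alpha\}_{\alpha\in A}$ of the compact space $X$, the plan is to use compactness of $X$ to extract a finite subset $A'\subseteq A$ such that $\{U_\alpha\}_{\alpha\in A'}$ is still a cover. This finite subcover qualifies as a refinement in the sense of Remark \ref{coveringscategoryrmk}: the inclusion $A'\hookrightarrow A$ is a map of index sets, and tautologically $U_\alpha\subseteq U_\alpha$ for each $\alpha\in A'$. Hence the refinement relation linking an arbitrary cover to some finite cover is realized literally by a subcover.

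The next step is to read off the cofinality statement: every open cover of $X$ admits a refinement by a finite open cover. Since the category of (finite) open covers is already filtered (cf.\ Remark \ref{coveringscategoryrmk}), the finite open covers form a filtered cofinal subcategory of all open covers. Consequently, passing to the direct limit of the \v Cech complexes in \eqref{cechcomplexI}, the natural map
\begin{equation*}
\varinjlim_{\text{finite open covers}}H^\bullet[\cdots]\;\longrightarrow\;\varinjlim_{\text{all open covers}}H^\bullet[\cdots]
\end{equation*}
is an isomorphism, which is exactly $\cH^\bullet_\mathrm{fin}(X;\F)\xrightarrow\sim\cH^\bullet(X;\F)$.

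There is essentially no obstacle here: the only substantive input is compactness of $X$, used to extract the finite subcover, and the standard fact that filtered direct limits are preserved under restriction to a cofinal subsystem. No sheaf axiom on $\F$ is even needed; the statement holds for $\F$ merely a presheaf.
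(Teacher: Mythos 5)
Your proof is correct and follows the same approach as the paper: the key point is exactly that every open cover of a compact space admits a finite subcover as a refinement, so finite covers are cofinal and the direct limits agree.
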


\begin{proof}
Since $X$ is compact, it follows that finite open covers are cofinal (every open cover has a finite refinement).
\end{proof}

\begin{lemma}[$\cH_c^\bullet=\cH^\bullet$ on compact space]
If $X$ is compact, then the natural map $\cH_c^\bullet(X;\F)\to\cH^\bullet(X;\F)$ is an isomorphism.
\end{lemma}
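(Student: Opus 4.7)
The plan is to exploit the fact that $X$ itself is a compact subset of $X$, so that $X$ appears as a (necessarily terminal) object in the directed system $\{K\subseteq X : K\text{ compact}\}$ used to define $\cH^\bullet_c(X;\F)$. Since this directed system is filtered and admits a terminal element, the direct limit defining $\cH_c^\bullet(X;\F)$ collapses to the single group $\cH^\bullet_X(X;\F)$, and the natural map $\cH^\bullet_c(X;\F)\to\cH^\bullet(X;\F)$ factors as
\[
\cH^\bullet_c(X;\F)\ \xleftarrow{\ \sim\ }\ \cH^\bullet_X(X;\F)\ \longrightarrow\ \cH^\bullet(X;\F).
\]
Thus it suffices to show that the second arrow is an isomorphism.

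To see this, recall that $\cH^\bullet_X(X;\F)$ is computed by the Čech complex \eqref{cechcomplexI} with each term $\F(U)$ replaced by $\ker[\F(U)\to\F(U\setminus X)]=\ker[\F(U)\to\F(\varnothing)]$. By the sheaf axiom \eqref{sheafvanishing} (i.e.\ $\F(\varnothing)=0$), this kernel is just $\F(U)$ itself, so the Čech complex computing $\cH^\bullet_X(X;\F)$ is literally identical to the one computing $\cH^\bullet(X;\F)$ for each open cover. Passing to the direct limit over open covers gives $\cH^\bullet_X(X;\F)=\cH^\bullet(X;\F)$, and combining with the previous identification completes the proof. There is no real obstacle here; the entire content is the two observations that $X\subseteq X$ is a terminal compact subset and that $\F(\varnothing)=0$.
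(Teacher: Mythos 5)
Your proof is correct and is simply the spelled-out version of what the paper dismisses as ``Trivial'': $X$ is terminal among its compact subsets, and $\ker[\F(U)\to\F(U\setminus X)]=\ker[\F(U)\to\F(\varnothing)]=\F(U)$, so the two \v Cech complexes coincide cover-by-cover. (One tiny slip: $\F(\varnothing)=0$ here follows from the ordinary sheaf axiom \eqref{opensheafproperty} with $A=\varnothing$, not from the $\K$-sheaf axiom \eqref{sheafvanishing} which you cite, though of course the content is identical.)
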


\begin{proof}
Trivial.
\end{proof}

\begin{definition}[Pullback on $\cH^\bullet$ and $\cH^\bullet_c$]\label{pullbackdefn}
Let $f:X\to Y$ be a map of spaces.  An open cover of $Y$ pulls back to give an open cover of $X$, and this gives an identification of the \v Cech complex for the cover of $Y$ with coefficients in $f_\ast\F$ with the \v Cech complex for the cover of $X$ with coefficients in $\F$.  Hence we get natural maps:
\begin{rlist}
\item$f^\ast:\cH^\bullet(Y;f_\ast\F)\to\cH^\bullet(X;\F)$ for $\F\in\Shv X$.
\item$f^\ast:\cH^\bullet_c(Y;f_\ast\F)\to\cH^\bullet_c(X;\F)$ for $\F\in\Shv X$ (if $f$ is proper).
\end{rlist}
\end{definition}

\begin{lemma}[$\cH_c^\bullet$ commutes with $f_!$]\label{cptcechembeddingproperty}
Let $f:X\hookrightarrow Y$ be the inclusion of an open subset.  Then there is a natural isomorphism $f_!:\cH^\bullet_c(X;\F)\to\cH^\bullet_c(Y;f_!\F)$.
\end{lemma}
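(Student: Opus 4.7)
The plan is to produce a canonical isomorphism of \v Cech cochain complexes for a compatible choice of open covers, then pass to the colimit over compact $K\subseteq X$. The central observation is the following identification: for any compact $K\subseteq X$ (hence also $K\subseteq Y$) and any open $V\subseteq Y$, there is a natural equality
\begin{equation*}
\ker\bigl[(f_!\F)(V)\to(f_!\F)(V\setminus K)\bigr]=\ker\bigl[\F(V\cap X)\to\F((V\cap X)\setminus K)\bigr].
\end{equation*}
Indeed, $(f_!\F)(V)$ consists of sections $s\in\F(V\cap X)$ vanishing in a neighborhood of $Y\setminus X$; if in addition $s$ vanishes in a neighborhood of $V\setminus K$, then the condition near $Y\setminus X$ is automatic since $Y\setminus X\subseteq Y\setminus K$. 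Conversely, any section of $\F$ on $V\cap X$ vanishing near $(V\cap X)\setminus K$ in particular vanishes near $V\setminus X$ inside $V\cap X$, hence extends by zero to a section of $f_!\F$ on $V$ in the indicated kernel.

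Given this identification, the main construction goes as follows. For any open cover $\mathcal U=\{U_\alpha\}_{\alpha\in A}$ of $X$, let $\bar{\mathcal U}=\{U_\alpha\}_{\alpha\in A}\cup\{Y\setminus K\}$, which is an open cover of $Y$ because $K\subseteq X$. Any nonempty intersection involving the extra index lies inside $Y\setminus K$, so by the identification above its contribution to $\check C^\bullet(\bar{\mathcal U};(f_!\F)_K)$ vanishes. The remaining terms, indexed by subsets $S\subseteq A$, are canonically identified with $\check C^\bullet(\mathcal U;\F_K)$. This yields an isomorphism of \v Cech cochain complexes that is compatible with refinements of $\mathcal U$ and with enlarging $K$, and so induces a natural map $\cH^\bullet_c(X;\F)\to\cH^\bullet_c(Y;f_!\F)$.

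For the inverse direction, restriction to $X$ gives, for any open cover $\mathcal V=\{V_\beta\}_{\beta\in B}$ of $Y$, a cover $\{V_\beta\cap X\}_{\beta\in B}$ of $X$ together with a map $\check C^\bullet(\mathcal V;(f_!\F)_K)\to\check C^\bullet(\{V_\beta\cap X\};\F_K)$ via the same identification. The composition ``extend, then restrict'' is the identity already at the level of a fixed cover of $X$. For the other composition, ``restrict, then extend'' sends $\mathcal V$ to $\{V_\beta\cap X\}_{\beta\in B}\cup\{Y\setminus K\}$, and both this cover and $\mathcal V$ are refined by $\{V_\beta\cap X\}_{\beta\in B}\cup\{V_\beta\setminus K\}_{\beta\in B}$, so the two elements agree in the filtered colimit over open covers.

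The only real subtlety is the cofinality/common-refinement bookkeeping comparing covers of $Y$ with covers of $X$ extended by $Y\setminus K$; once the key identification is in hand, this is routine. No analytic or gluing input is needed.
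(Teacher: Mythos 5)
Your key identification, namely that for $K\subseteq X$ compact one has $(f_!\F)_K(V)=\F_K(V\cap X)$, is correct and is exactly the content behind the paper's pair of mutually inverse maps ``pull back the cover'' and ``add $Y\setminus K$ and extend by zero'' (yielding $\cH^\bullet_K(X;\F)\cong\cH^\bullet_K(Y;f_!\F)$ for $K\subseteq X$ compact). But this only produces an isomorphism $\cH^\bullet_c(X;\F)\cong\varinjlim_{K\subseteq X\text{ compact}}\cH^\bullet_K(Y;f_!\F)$, whereas $\cH^\bullet_c(Y;f_!\F)$ is by definition the colimit over \emph{all} compact $K\subseteq Y$. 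Since $X$ is open in $Y$, the compacts of $X$ are not cofinal among compacts of $Y$, so the map from the smaller colimit to the larger one is not automatically an isomorphism. Your ``inverse'' map is only defined for $K\subseteq X$; the identification genuinely fails for $K\not\subseteq X$ (a section of $\F(V\cap X)$ vanishing on $(V\cap X)\setminus K$ need not vanish near $V\cap(Y\setminus X)$, so it need not extend by zero). Your closing remark that the remaining bookkeeping ``is routine'' misidentifies where the real difficulty lies: it is not the cover comparison but the passage from supports in $X$ to supports in $Y$.

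The missing ingredient, which is the actual content of the paper's proof, is a support-shrinking argument: given a \v Cech cochain $\beta$ for $f_!\F$ with supports in a compact $K\subseteq Y$ subordinate to a cover of $Y$, one must exhibit a refinement on which $\beta$ has supports in a compact $K'\subseteq K\cap X$. The paper arranges for only finitely many $U_1,\ldots,U_n$ in the cover to meet $K$, shrinks to $V_i$ with $\overline{V_i}\subseteq U_i$ compact and $\{V_i\}$ still covering $K$, and observes that $\supp\bigl(\beta_I|_{\bigcap_{i\in I}V_i}\bigr)\subseteq\supp\beta_I\cap\bigcap_{i\in I}\overline{V_i}$, which is compact (a closed subset of a compact) and contained in $X$ (since $\supp\beta_I\subseteq X$ for a section of $f_!\F$). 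Without this shrinking, $\supp\beta_I$ is closed only in $\bigcap_{i\in I}U_i$ rather than in $Y$, so it need not be compact, and the required cofinality does not follow. You are otherwise on the same route as the paper; this one step needs to be supplied.
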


\begin{proof}
For $K\subseteq X$, there are natural maps:
\begin{equation}\label{trivialKequivalence}
\begin{tikzcd}
\cH^\bullet_K(Y;f_!\F)\ar[yshift=0.5ex]{r}{f^\ast}&\ar[yshift=-0.5ex]{l}{f_!}\cH^\bullet_K(X;\F)
\end{tikzcd}
\end{equation}
(for $f^\ast$: pull back the open cover) (for $f_!$: add $Y\setminus K$ to the open cover and extend by zero).  It is easy to see that $f_!$ and $f^\ast$ are inverses.  The desired map $f_!:\cH^\bullet_c(X;\F)\to\cH^\bullet_c(Y;f_!\F)$ is defined as the composition:
\begin{equation}
\varinjlim_{K\subseteq X}\cH^\bullet_K(X;\F)\overset{\eqref{trivialKequivalence}}=\varinjlim_{K\subseteq X}\cH^\bullet_K(Y;f_!\F)\to\varinjlim_{K\subseteq Y}\cH^\bullet_K(Y;f_!\F)
\end{equation}
We must show that the second map is an isomorphism; to see this, it suffices to show that the following is an isomorphism for all $K\subseteq Y$:
\begin{equation}\label{supportcompactinXorY}
\varinjlim_{K'\subseteq X\cap K}\cH^\bullet_{K'}(Y;f_!\F)\to\cH^\bullet_K(Y;f_!\F)
\end{equation}
We claim that for any \v Cech cochain $\beta$ for $f_!\F$ with supports in $K$ subordinate to an open cover of $Y$, there is a refinement on which the restriction of $\beta$ has supports in some $K'\subseteq X\cap K$.  It follows from the claim (using a cofinality argument) that \eqref{supportcompactinXorY} is an isomorphism.  To prove the claim, argue as follows.

First, choose a refinement for which only finitely many open sets $U_1,\ldots,U_n$ intersect $K$ and for which the remaining open sets cover $Y\setminus K$.  Pick open sets $V_i\subseteq U_i$ which cover $K$ and for which $\overline{V_i}$ is compact and $\overline{V_i}\subseteq U_i$ (this is always possible).

Now $\beta$ is a finite collection $\{\beta_I\in(f_!\F)(\bigcap_{i\in I}U_i)\}_{\varnothing\ne I\subseteq\{1,\ldots,n\}}$.  We have:
\begin{equation}\label{restrictionsupport}
\supp\Bigl(\beta_I|\bigcap_{i\in I}V_i\Bigr)\subseteq\supp\beta_I\cap\bigcap_{i\in I}\overline{V_i}
\end{equation}
Since $\supp\beta_I\subseteq\bigcap_{i\in I}U_i$ is (relatively) closed and $\bigcap_{i\in I}\overline{V_i}\subseteq\bigcap_{i\in I}U_i$ is compact, it follows that the right hand side is compact.  Also, we have $\supp\beta_I\subseteq X$ (by definition of $f_!\F$).  Hence the right hand side of \eqref{restrictionsupport} is a compact subset of $X$.  It follows that the restriction of $\beta$ to the refinement obtained by replacing $U_i$ with $V_i$ for $i=1,\ldots,n$ has support in a compact subset of $X$.
\end{proof}

\begin{lemma}[$f^\ast$ is an isomorphism if $f$ has finite fibers]\label{finitemap}
Let $f:X\to Y$ be proper with finite fibers.  Then $f^\ast:\cH^\bullet(Y;f_\ast\F)\to\cH^\bullet(X;\F)$ is an isomorphism, as is $f^\ast:\cH^\bullet_c(Y;f_\ast\F)\to\cH^\bullet_c(X;\F)$.
\end{lemma}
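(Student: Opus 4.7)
The idea is to exploit the ``topologically finite'' nature of a proper map with finite fibers: locally on $Y$, $f$ decomposes as a disjoint union of proper maps to small open sets. First I would establish this local form. For each $y\in Y$ with $f^{-1}(y)=\{x_1,\dots,x_k\}$, Hausdorffness of $X$ yields pairwise disjoint open $W_i\ni x_i$; then since $f$ is closed (being proper), the set $f(X\setminus\bigsqcup_i W_i)$ is closed in $Y$ and misses $y$, so there is an open $V_y\ni y$ with $f^{-1}(V_y)\subseteq\bigsqcup_i W_i$. Consequently $f^{-1}(V_y)=\bigsqcup_i(W_i\cap f^{-1}(V_y))$ is a disjoint union of opens, each mapping properly to $V_y$.

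Next, given an arbitrary open cover $\mathcal U=\{U_\alpha\}_{\alpha\in A}$ of $X$, I would perform this construction with the refinement that each $W_{i,y}$ additionally satisfies $W_{i,y}\subseteq U_{\alpha_{i,y}}$ for some $\alpha_{i,y}\in A$ (choose the $\alpha_{i,y}$ first, then shrink $W_{i,y}$ inside $U_{\alpha_{i,y}}$). The resulting family $\mathcal V=\{V_y\}_{y\in Y}$ is an open cover of $Y$, and for any $(y_0,\dots,y_p)$ one has a disjoint decomposition
\[
f^{-1}(V_{y_0}\cap\dots\cap V_{y_p})=\bigsqcup_{(i_0,\dots,i_p)}\bigcap_{j=0}^p\bigl(W_{i_j,y_j}\cap f^{-1}(V_{y_j})\bigr),
\]
with each piece contained in $U_{\alpha_{i_0,y_0}}\cap\dots\cap U_{\alpha_{i_p,y_p}}$. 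Using the sheaf property of $\F$ on this decomposition, I would construct a cochain map $r_\mathcal U\colon\check C^\bullet(\mathcal U;\F)\to\check C^\bullet(\mathcal V;f_\ast\F)=\check C^\bullet(f^{-1}(\mathcal V);\F)$ by sending $\omega$ to the cochain whose $(y_0,\dots,y_p)$-value restricts, on each piece, to $\omega_{\alpha_{i_0,y_0},\dots,\alpha_{i_p,y_p}}$.

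The key observation is that the common refinement $\mathcal U^\ast=\{W_{i,y}\cap f^{-1}(V_y)\}_{y,i}$ simultaneously refines $\mathcal U$ (via $(y,i)\mapsto\alpha_{i,y}$) and $f^{-1}(\mathcal V)$ (via $(y,i)\mapsto y$), and the two induced refinement maps into $\check C^\bullet(\mathcal U^\ast;\F)$ satisfy: the map from $\check C^\bullet(f^{-1}(\mathcal V);\F)$ precomposed with $r_\mathcal U$ equals on the nose the map from $\check C^\bullet(\mathcal U;\F)$. This directly yields surjectivity of $f^\ast$: any class represented by $\omega\in\check C^\bullet(\mathcal U;\F)$ is also represented by $r_\mathcal U(\omega)\in\check C^\bullet(\mathcal V;f_\ast\F)$. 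A parallel argument handles injectivity: if $\eta\in\check C^\bullet(\mathcal V;f_\ast\F)$ is a cocycle with $\eta|_{\mathcal U}=d\beta$ in $\check C^\bullet(\mathcal U;\F)$ for some refinement $\mathcal U$ of $f^{-1}(\mathcal V)$, applying $r_\mathcal U$ to $\beta$ exhibits $\eta$ as a coboundary in $\check C^\bullet(\mathcal V';f_\ast\F)$ for a suitable $\mathcal V'$ refining $\mathcal V$.

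For the compactly supported version I would observe that $r_\mathcal U$ sends sections with support in $K\subseteq X$ to sections with support in $f(K)\subseteq Y$, which is compact since $f$ is proper, so the entire argument runs identically at the level of $\cH^\bullet_{f(K)}$ and passes to the direct limit over $K$. The main obstacle will be the combinatorial bookkeeping: verifying that $r_\mathcal U$ is genuinely a chain map (which reduces to checking that the disjoint decompositions above interact correctly with \v Cech face operations) and that the many arbitrary choices entering the construction either cancel out or produce homotopic chain maps, so that $r_\mathcal U$ really provides the chain-level inverse to $f^\ast$ advertised above.
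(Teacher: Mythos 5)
Your argument is correct and is essentially the paper's proof unwound. The cover $\mathcal{U}^\ast=\{W_{i,y}\cap f^{-1}(V_y)\}$ you construct is what the paper calls a \emph{partition} of the pulled-back cover $f^{-1}(\mathcal{V})$ (a refinement in which each open set is replaced by a finite disjoint decomposition), your second paragraph is precisely the paper's cofinality claim (which it deduces from Lemma \ref{properpullbackfinalneighborhood} plus finiteness of fibers, the same properness-gives-closedness argument you run by hand), and the paper then concludes directly from cofinality together with the fact that a partition induces an isomorphism of \v Cech cochain complexes because $\F$ is a sheaf, which sidesteps the explicit inverse chain map $r_\mathcal{U}$ and the separate surjectivity/injectivity checks you sketch.
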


\begin{proof}
Given an open cover containing $U=U_1\sqcup\cdots\sqcup U_n$ (finite disjoint union), we get a refinement by replacing $U$ with $\{U_1,\ldots,U_n\}$.  A \emph{partition} of an open cover is a refinement obtained by doing such a replacement on some (possibly infinitely many) open sets of the cover.  Note that a partition induces an isomorphism on \v Cech cochains since $\F$ is a sheaf.

We claim that partitions of pullbacks of open covers of $Y$ form a cofinal system of open covers of $X$.  This is clear using Lemma \ref{properpullbackfinalneighborhood} and the fact that $f$ has finite fibers.  It follows from this cofinality that $f^\ast$ is an isomorphism.
\end{proof}

\begin{lemma}[$\cH^\bullet$ and $\cH^\bullet_c$ commute with finite quotients]\label{cechcohomologyfinitequotient}
Let $X$ be a space and let $\pi:X\to X/\Gamma$ be the quotient map under a finite group action.  Let $\F$ be any sheaf of $\ZZ[\frac 1{\#\Gamma}]$-modules on $X/\Gamma$.  Then the following maps are all isomorphisms:
\begin{align*}%%unnumbered
\cH^\bullet(X/\Gamma;\F)\to\cH^\bullet(X/\Gamma;(\pi_\ast\pi^\ast\F)^\Gamma)\to\cH^\bullet(X/\Gamma;\pi_\ast\pi^\ast\F)^\Gamma\to\cH^\bullet(X;\pi^\ast\F)^\Gamma\\
\cH^\bullet_c(X/\Gamma;\F)\to\cH^\bullet_c(X/\Gamma;(\pi_\ast\pi^\ast\F)^\Gamma)\to\cH^\bullet_c(X/\Gamma;\pi_\ast\pi^\ast\F)^\Gamma\to\cH^\bullet_c(X;\pi^\ast\F)^\Gamma
\end{align*}
(here we note that by functoriality, $\pi^\ast\F$ is $\Gamma$-equivariant, and thus $\Gamma$ acts on $\pi_\ast\pi^\ast\F$).
\end{lemma}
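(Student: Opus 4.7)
The plan is to show that each of the three maps in each row is an isomorphism, using a different idea for each, and then observe that the compactly supported case is identical.

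First, I would show that $\F\to(\pi_\ast\pi^\ast\F)^\Gamma$ is an isomorphism of sheaves on $X/\Gamma$. This is a local statement, so it suffices to verify it on stalks. Fix $[x]\in X/\Gamma$. A neighborhood base at $[x]$ in the quotient topology is given by sets of the form $\pi(V)$ for $V\subseteq X$ a $\Gamma_x$-invariant open neighborhood of the orbit $\Gamma x$ (using that $\pi$ is open and that the orbit is finite, together with normality). For $V$ sufficiently small, $\pi^{-1}(\pi(V))$ is the disjoint union $\bigsqcup_{g\Gamma_x\in\Gamma/\Gamma_x}gV$, so $(\pi_\ast\pi^\ast\F)(\pi(V))=(\pi^\ast\F)(\pi^{-1}(\pi(V)))=\prod_{g\Gamma_x}(\pi^\ast\F)(gV)$, and taking $\Gamma$-invariants projects onto a single factor, naturally identified with $(\pi^\ast\F)(V)=\F(\pi(V))$ by the definition of $\pi^\ast$ together with the fact that $\pi(V)$ is open. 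Passing to the direct limit over $V$ shows $\F_{[x]}\to((\pi_\ast\pi^\ast\F)^\Gamma)_{[x]}$ is an isomorphism. Once this is established, the first map in each row is immediately an isomorphism.

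Second, I would show the map $\cH^\bullet(X/\Gamma;(\pi_\ast\pi^\ast\F)^\Gamma)\to\cH^\bullet(X/\Gamma;\pi_\ast\pi^\ast\F)^\Gamma$ (and the $\cH_c^\bullet$ analogue) is an isomorphism. Since $\#\Gamma$ is invertible in the ring of coefficients, the averaging idempotent $e:=\frac{1}{\#\Gamma}\sum_{g\in\Gamma}g$ acts on any sheaf $\G$ of $\ZZ[\frac{1}{\#\Gamma}][\Gamma]$-modules and provides a natural splitting $\G=\G^\Gamma\oplus(1-e)\G$. In particular, for $\G=\pi_\ast\pi^\ast\F$, the inclusion $\G^\Gamma\hookrightarrow\G$ is a direct summand of sheaves. \v{C}ech cohomology commutes with (finite or arbitrary) direct sums of sheaves because the \v{C}ech complex does, so applying $\cH^\bullet(X/\Gamma;-)$ (or $\cH^\bullet_c(X/\Gamma;-)$) to the splitting and extracting the $\Gamma$-invariant summand identifies $\cH^\bullet$ of the invariants with the invariants of $\cH^\bullet$.

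Third, since $\pi:X\to X/\Gamma$ is proper with finite fibers, Lemma \ref{finitemap} applies and gives isomorphisms $\cH^\bullet(X/\Gamma;\pi_\ast\pi^\ast\F)\xrightarrow\sim\cH^\bullet(X;\pi^\ast\F)$ and $\cH^\bullet_c(X/\Gamma;\pi_\ast\pi^\ast\F)\xrightarrow\sim\cH^\bullet_c(X;\pi^\ast\F)$. These isomorphisms are $\Gamma$-equivariant (the $\Gamma$-action on the \v{C}ech complex of $X/\Gamma$ with coefficients in $\pi_\ast\pi^\ast\F$ is pulled back from the geometric $\Gamma$-action on the \v{C}ech complex of $X$ via the refinement map), so passing to $\Gamma$-invariants gives the third isomorphism.

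The only slightly subtle step is the local computation in the first paragraph, where one has to be careful about the nontrivial stabilizers and use that $X$ is locally compact Hausdorff together with the openness of $\pi$ to produce the required neighborhood base of $\Gamma$-invariant opens that split as disjoint unions of translates; everything else is formal. The compactly supported case requires no additional argument since every step above preserves the support condition (the averaging idempotent preserves compact support, and Lemma \ref{finitemap} already addresses $\cH_c^\bullet$).
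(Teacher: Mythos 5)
Your proof is correct and follows the same three-step strategy as the paper: the first map is handled by a stalk computation, the second by the invertibility of $\#\Gamma$ (you phrase it via the averaging idempotent $e$; the paper phrases it via exactness of $(-)^\Gamma$ on $\ZZ[\frac{1}{\#\Gamma}]$-modules — these are the same observation), and the third by Lemma \ref{finitemap}. One small imprecision in the stalk computation: with $V$ a $\Gamma_x$-invariant neighborhood of $x$ (you wrote ``of the orbit $\Gamma x$'', which would not yield a disjoint union of translates), the $\Gamma$-invariants of $\prod_{g\Gamma_x}(\pi^\ast\F)(gV)$ is $(\pi^\ast\F)(V)^{\Gamma_x}$, not $(\pi^\ast\F)(V)$, and moreover $(\pi^\ast\F)(V)\neq\F(\pi(V))$ when $\Gamma_x\neq 1$ (that equality is itself a local instance of the lemma you are proving); both discrepancies vanish after passing to the colimit over $V$, since $\Gamma_x$ acts trivially on the stalk $\F_{[x]}$, so the final conclusion on stalks is unaffected.
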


\begin{proof}
Isomorphism one: the natural map $\F\to(\pi_\ast\pi^\ast)^\Gamma$ is in fact an isomorphism of sheaves (check on stalks).  Isomorphism two: obvious since taking $\Gamma$-invariants is exact on $\ZZ[\frac 1{\#\Gamma}]$-modules.  Isomorphism three: use Lemma \ref{finitemap}, which applies since $\pi$ is automatically proper.
\end{proof}

\subsubsection{\ldots of complexes of \texorpdfstring{$\K$}{K}-presheaves}

\begin{definition}[$\cH^\bullet$ of complexes of $\K$-presheaves]\label{cechdefnII}
Let $\F^\bullet$ be a complex of $\K$-presheaves on a compact space $X$.  We define:
\begin{equation}\label{cechcomplexII}
\cH^\bullet(X;\F^\bullet):=\varinjlim_{\begin{smallmatrix}X=\bigcup_{i=1}^nK_i\cr\text{finite compact cover}\end{smallmatrix}}H^\bullet\biggl[\bigoplus_{p\geq 0}\bigoplus_{1\leq i_0<\cdots<i_p\leq n}\F^{\bullet-p}\Bigl(\bigcap_{j=0}^pK_{i_j}\Bigr)\biggr]
\end{equation}
with the standard \v Cech differential (plus the internal differential of $\F^\bullet$).  We also define $\cH^\bullet(X;\F)$ for any $\K$-presheaf $\F$ by viewing it as a complex concentrated in degree zero.
\end{definition}

\begin{lemma}[Two definitions of $\cH^\bullet$ on $\Shv X=\Shv_\K X$ agree]\label{cHsameforopencompactcovers}
Let $\F$ be a sheaf on a compact space $X$.  Then there is a natural isomorphism $\cH^\bullet(X;\F)\to\cH^\bullet(X;\alpha^\ast\F)$ ($\alpha^\ast\F$ is a $\K$-sheaf; c.f.\ Definition \ref{opencptrelationfunctors}).
\end{lemma}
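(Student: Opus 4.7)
The plan is to exhibit both sides as filtered direct limits of the same diagram of \v Cech complexes associated to pairs consisting of a finite compact cover refined by a finite open cover.

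First I would construct the comparison map. Since $X$ is compact, finite open covers are cofinal among all open covers, so $\cH^\bullet(X;\F)$ may be computed using only finite open covers $\{U_i\}_{i=1}^n$. Given such a cover, normality of $X$ (compact Hausdorff) lets me shrink it to an open cover $\{V_i\}_{i=1}^n$ with $\overline{V_i}\subseteq U_i$, so that $\{K_i\}:=\{\overline{V_i}\}$ is a finite compact cover. For every $\{i_0<\cdots<i_p\}$ we have $\bigcap_jK_{i_j}\subseteq\bigcap_jU_{i_j}$, giving a canonical map $\F(\bigcap_jU_{i_j})\to(\alpha^\ast\F)(\bigcap_jK_{i_j})$, and these assemble into a chain map between the two \v Cech complexes that is compatible with refinement (different choices of the shrinking $\{V_i\}$ induce the same map on cohomology by the usual simplicial cofinality argument recalled in Remark \ref{coveringscategoryrmk}). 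This defines a natural map $\cH^\bullet(X;\F)\to\cH^\bullet(X;\alpha^\ast\F)$.

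Next I would show it is an isomorphism by rewriting the target as a filtered direct limit. Fix a finite compact cover $\{K_i\}_{i=1}^n$. By Lemma \ref{intersectioncofinal}, intersections of the form $\bigcap_jU_{i_j}$ with $U_i\supseteq K_i$ open are cofinal among open neighborhoods of $\bigcap_jK_{i_j}$, so
\begin{equation*}
(\alpha^\ast\F)\Bigl(\bigcap_jK_{i_j}\Bigr)\;=\;\varinjlim_{\{U_i\supseteq K_i\}_i}\F\Bigl(\bigcap_jU_{i_j}\Bigr),
\end{equation*}
where the direct limit is taken over the filtered poset of systems of open neighborhoods $\{U_i\supseteq K_i\}$ ordered by reverse inclusion. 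Since filtered direct limits are exact, they commute with the formation of the \v Cech total complex and with taking cohomology, giving
\begin{equation*}
\check H^\bullet(\{K_i\};\alpha^\ast\F)\;=\;\varinjlim_{\{U_i\supseteq K_i\}}\check H^\bullet(\{U_i\};\F).
\end{equation*}
Taking the further direct limit over all finite compact covers $\{K_i\}$ yields $\cH^\bullet(X;\alpha^\ast\F)$ as a double filtered direct limit indexed by pairs $(\{K_i\},\{U_i\})$ with $K_i\subseteq U_i$ open.

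Finally, I would check the cofinality statement needed to conclude. The system of pairs $(\{K_i\}_{i=1}^n,\{U_i\}_{i=1}^n)$ with $K_i\subseteq U_i$ maps to the system of finite open covers by $(\{K_i\},\{U_i\})\mapsto\{U_i\}$; conversely, any finite open cover $\{U_i\}$ admits a shrinking $\{K_i=\overline{V_i}\}$ as in the construction of the comparison map, producing such a pair. These two passages are inverse up to refinement in each indexing system, so the induced map on direct limits is an isomorphism. The main obstacle is nothing conceptual but rather the bookkeeping of refinement maps: one must verify that the shrinking step does not depend on choices in a way that affects the cohomology class, which amounts to the standard observation (Remark \ref{coveringscategoryrmk}) that the ``space of refinements'' between two covers is contractible, together with the exactness of filtered direct limits used above.
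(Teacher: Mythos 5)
Your proof is correct and follows the same strategy as the paper's: construct the comparison map by shrinking finite open covers to finite compact covers (using normality of compact Hausdorff spaces), identify $(\alpha^\ast\F)(\bigcap_j K_{i_j})$ as a filtered colimit of $\F(\bigcap_j U_{i_j})$ via Lemma \ref{intersectioncofinal}, and conclude by exactness of filtered colimits plus the cofinality of the interleaved index systems. The paper's version is terser but uses exactly the same key lemma and cofinality reasoning.
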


\begin{proof}
Let us denote by $\cH^\bullet(X;\F;\{U_\alpha\}_{\alpha\in A})$ (resp.\ $\cH^\bullet(X;\alpha^\ast\F;\{K_i\}_{i=1}^n)$) the argument of the direct limit \eqref{cechcomplexI} (resp.\ \eqref{cechcomplexII}).

Since $X$ is compact, every open cover has a finite compact refinement.  This gives a map $\cH^\bullet(X;\F)\to\cH^\bullet(X;\alpha^\ast\F)$.  To show that this map is an isomorphism, it suffices to show that for any fixed finite compact cover $\{K_i\}_{i=1}^n$, the following is an isomorphism:
\begin{equation}
\varinjlim_{\begin{smallmatrix}X=\bigcup_{\alpha\in A}U_\alpha\cr\text{open cover refined by }\{K_i\}_{i=1}^n\end{smallmatrix}}\cH^\bullet(X;\F;\{U_\alpha\}_{\alpha\in A})\to\cH^\bullet(X;\alpha^\ast\F;\{K_i\}_{i=1}^n)
\end{equation}
By a cofinality argument, we can change the directed system on the left side to be open covers $\{U_i\}_{i=1}^n$ with $U_i\supseteq K_i$.  Hence it suffices to show that the following is an isomorphism:
\begin{equation}
\varinjlim_{\{U_i\supseteq K_i\}_{i=1}^n}\cH^\bullet(X;\F;\{U_i\}_{i=1}^n)\to\cH^\bullet(X;\alpha^\ast\F;\{K_i\}_{i=1}^n)
\end{equation}
This is clear from the definition of $\alpha^\ast$ and from Lemma \ref{intersectioncofinal}.
\end{proof}

\begin{lemma}[$\cH^\bullet$ preserves quasi-isomorphisms]\label{qicech}
If $\F^\bullet\to\G^\bullet$ is a quasi-isomorphism of complexes of $\K$-presheaves, then the induced map $\cH^\bullet(X;\F^\bullet)\to\cH^\bullet(X,\G^\bullet)$ is an isomorphism.
\end{lemma}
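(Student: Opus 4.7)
The plan is to check the claim cover-by-cover, then pass to the direct limit. For a fixed finite compact cover $\{K_i\}_{i=1}^n$ of $X$, write $\cC^\bullet(X;\F^\bullet;\{K_i\})$ for the total complex appearing inside the direct limit in \eqref{cechcomplexII}, and similarly for $\G^\bullet$. First, I would observe that the hypothesis ``$\F^\bullet\to\G^\bullet$ is a quasi-isomorphism of complexes of $\K$-presheaves'' means, by our convention, that for every compact $K\subseteq X$ the induced map $\F^\bullet(K)\to\G^\bullet(K)$ is a quasi-isomorphism of complexes of $R$-modules.

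Next I would note a crucial finiteness: for a fixed cover of cardinality $n$, the index $p$ in \eqref{cechcomplexII} only ranges over $0,\ldots,n-1$, so the total complex is a \emph{finite} direct sum of the complexes $\F^{\bullet-p}(\bigcap_{j=0}^p K_{i_j})$ equipped with the \v Cech differential. Filter $\cC^\bullet(X;\F^\bullet;\{K_i\})$ by the $p$-degree; this is a bounded filtration whose associated graded is precisely $\bigoplus_p\bigoplus_{i_0<\cdots<i_p}\F^{\bullet-p}(\bigcap_j K_{i_j})[-p]$, with trivial \v Cech differential and only the internal differential of $\F^\bullet$. The same description applies to $\G^\bullet$, and by hypothesis the map on associated gradeds is a quasi-isomorphism. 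A standard spectral sequence argument (or, equivalently, induction on the finitely many filtration steps, using the five-lemma on the long exact sequences of the filtration quotients) then shows that $\cC^\bullet(X;\F^\bullet;\{K_i\})\to\cC^\bullet(X;\G^\bullet;\{K_i\})$ is itself a quasi-isomorphism.

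Finally, the directed system of finite compact covers is filtered (Remark \ref{coveringscategoryrmk}), and filtered direct limits in the category of $R$-modules are exact and therefore commute with the formation of homology. Applying $\varinjlim$ to the quasi-isomorphisms constructed cover by cover thus yields the desired isomorphism $\cH^\bullet(X;\F^\bullet)\xrightarrow\sim\cH^\bullet(X;\G^\bullet)$.

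There is essentially no serious obstacle: the proof is a bookkeeping exercise combining objectwise quasi-isomorphism, a bounded filtration on a finite \v Cech complex, and exactness of filtered colimits. The only mild subtlety worth flagging is that the $p$-filtration must be finite on each fixed cover, which is exactly why I would insist on working one finite cover at a time before passing to the direct limit — this avoids any convergence issue that would arise if one tried to treat all covers simultaneously via an unbounded spectral sequence.
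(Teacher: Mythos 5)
Your proof is correct and takes essentially the same route as the paper: the key device in both is the finite $p$-filtration on the \v Cech complex for a fixed finite compact cover, together with exactness of filtered colimits. The paper merely streamlines by first passing to the mapping cone $[\F^\bullet\to\G^{\bullet-1}]$ and its associated long exact sequence, reducing the claim to showing $\cH^\bullet(X;\F^\bullet)=0$ when $\F^\bullet$ is objectwise acyclic, which is the same filtration argument you carry out directly on the comparison map.
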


\begin{proof}
There is clearly a long exact sequence:
\begin{equation}\label{cechmappingconeles}
\cdots\to\cH^{\bullet-1}(X;\G^\bullet)\to\cH^\bullet(X;[\F^\bullet\to\G^{\bullet-1}])\to\cH^\bullet(X;\F^\bullet)\to\cH^\bullet(X;\G^\bullet)\to\cdots
\end{equation}
Hence it suffices to show that if $\F^\bullet$ is acyclic then $\cH^\bullet(X;\F^\bullet)=0$.  This is true because then each \v Cech complex has a finite filtration whose associated graded is acyclic.
\end{proof}

\begin{lemma}[Hypercohomology spectral sequence]\label{hypercohomologyss}
Let $\F^\bullet$ be a bounded below complex of $\K$-presheaves.  Then there is a convergent spectral sequence $E_1^{p,q}=\cH^q(X;\F^p)\Rightarrow\cH^{p+q}(X;\F^\bullet)$.
\end{lemma}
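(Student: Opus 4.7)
The plan is to construct, for each finite compact cover of $X$, the usual first-quadrant spectral sequence of the associated \v Cech double complex, and then pass to the direct limit over refinements. Fix a finite compact cover $\mathcal K=\{K_i\}_{i=1}^n$ of $X$, and form the double complex
$$C^{p,q}(\mathcal K):=\bigoplus_{1\leq i_0<\cdots<i_q\leq n}\F^p\Bigl(\bigcap_{j=0}^q K_{i_j}\Bigr),$$
whose horizontal differential is induced by the internal differential of $\F^\bullet$ and whose vertical differential is the standard \v Cech differential. By \eqref{cechcomplexII}, the total complex of $C^{\bullet,\bullet}(\mathcal K)$ is precisely the complex whose cohomology is $\cH^\bullet(X;\F^\bullet;\mathcal K)$, the argument of the direct limit \eqref{cechcomplexII} indexed by the cover $\mathcal K$.

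Next I would filter $C^{\bullet,\bullet}(\mathcal K)$ by the internal index $p$. The $d_0$ differential is then the \v Cech differential, so taking homology column by column gives
$$E_1^{p,q}(\mathcal K)=\cH^q(X;\F^p;\mathcal K),$$
and the resulting spectral sequence abuts to the cohomology of the total complex, i.e.\ to $\cH^{p+q}(X;\F^\bullet;\mathcal K)$. Since refinements of covers induce maps of double complexes preserving this filtration, the entire construction is functorial in $\mathcal K$, and passing to the direct limit---which is exact on filtered directed systems and therefore commutes both with the filtration quotients and with cohomology at each page---produces the asserted spectral sequence $E_1^{p,q}=\cH^q(X;\F^p)\Rightarrow\cH^{p+q}(X;\F^\bullet)$.

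The one nontrivial point is convergence. The boundedness-below hypothesis on $\F^\bullet$ ensures that $C^{p,q}(\mathcal K)=0$ for $p$ sufficiently small; together with the first-quadrant condition $q\geq 0$, this means that for each total degree $n$ only finitely many filtration steps are nonzero, so at each bidegree the spectral sequence stabilizes after finitely many pages. This convergence passes through the filtered direct limit over covers. Everything else is routine double-complex bookkeeping.
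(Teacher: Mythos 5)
Your proof is correct and it is exactly the argument the paper intends: filter the \v Cech double complex for a fixed finite compact cover by the internal degree, read off $E_1^{p,q}=\cH^q(X;\F^p;\mathcal K)$, note convergence from boundedness below, and pass to the filtered direct limit over covers (using that filtered colimits are exact and that refinement maps, while not unique on chains, agree from $E_1$ onward by Remark \ref{coveringscategoryrmk}). The paper dispenses with this in one line as ``the spectral sequence of the \v Cech double complex''; you have simply spelled out the same construction.
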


\begin{proof}
This is just the spectral sequence of the \v Cech double complex.
\end{proof}

\begin{proposition}[A homotopy $\K$-sheaf calculates its own $\cH^\bullet$]\label{homotopysheafcechquasiisomorphism}
If $\F^\bullet$ satisfies \eqref{homotopysheafacyclic} and \eqref{homotopysheafproperty}, then the canonical map $H^\bullet\F^\bullet(X)\to\cH^\bullet(X;\F^\bullet)$ is an isomorphism.
\end{proposition}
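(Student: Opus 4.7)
The plan is to reduce the proposition to a statement about a single finite cover. By Definition \ref{cechdefnII}, $\cH^\bullet(X;\F^\bullet)$ is a filtered direct limit (Remark \ref{coveringscategoryrmk}) over finite compact covers of $X$, and cohomology commutes with filtered direct limits. Hence it suffices to prove the stronger inductive claim that for any finite collection $K_1,\ldots,K_n$ of compact subsets of $X$, the natural augmentation
\begin{equation*}
\F^\bullet\Bigl(\bigcup_{i=1}^n K_i\Bigr)\longrightarrow\check C^\bullet(\{K_i\};\F^\bullet)
\end{equation*}
is a quasi-isomorphism, and then specialize to the case $\bigcup_{i=1}^n K_i=X$.

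I will argue by induction on $n$. The base case $n=0$ is exactly \eqref{homotopysheafacyclic}. For the inductive step with $n\geq 1$, set $L:=K_1\cup\cdots\cup K_{n-1}$ and split $\check C^\bullet(\{K_i\}_{i=1}^n;\F^\bullet)$ as a graded module according to whether the indexing subset $S\subseteq\{1,\ldots,n\}$ contains $n$. Inspection of the Čech differential shows that the $n\in S$ part is a subcomplex $Q^\bullet$ while the $n\notin S$ part is the quotient, which is precisely $I^\bullet:=\check C^\bullet(\{K_i\}_{i=1}^{n-1};\F^\bullet)$. Re-indexing by $S':=S\setminus\{n\}$ (now allowed to be empty) identifies $Q^\bullet$ with the mapping cone $[\F^\bullet(K_n)\to\check C^{\bullet-1}(\{K_i\cap K_n\}_{i=1}^{n-1};\F^\bullet)]$, the map in the cone being presheaf restriction.

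Applying the induction hypothesis to the families $\{K_i\}_{i=1}^{n-1}$ (covering $L$) and $\{K_i\cap K_n\}_{i=1}^{n-1}$ (covering $L\cap K_n$) gives quasi-isomorphisms $I^\bullet\simeq\F^\bullet(L)$ and $Q^\bullet\simeq[\F^\bullet(K_n)\to\F^{\bullet-1}(L\cap K_n)]$, the latter being the mapping cone of the restriction. The short exact sequence $0\to Q^\bullet\to\check C^\bullet(\{K_i\}_{i=1}^n;\F^\bullet)\to I^\bullet\to 0$, combined with the observation that the associated connecting homomorphism corresponds under these identifications to the restriction $\F^\bullet(L)\to\F^\bullet(L\cap K_n)$ (landing in the $\F^\bullet(L\cap K_n)$ summand of $Q^\bullet[1]$), identifies $\check C^\bullet(\{K_i\}_{i=1}^n;\F^\bullet)$ up to quasi-isomorphism with the Mayer--Vietoris total complex $[\F^\bullet(L)\oplus\F^\bullet(K_n)\to\F^{\bullet-1}(L\cap K_n)]$. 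By \eqref{homotopysheafproperty} applied to the pair $(L,K_n)$, this complex is quasi-isomorphic to $\F^\bullet(L\cup K_n)=\F^\bullet(\bigcup_{i=1}^n K_i)$, and tracing through the identifications shows that the resulting quasi-isomorphism is the augmentation map.

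The main obstacle will be the explicit combinatorial identification of $Q^\bullet$ as the claimed mapping cone and of the connecting homomorphism as the restriction $\F^\bullet(L)\to\F^\bullet(L\cap K_n)$; the Čech degree shift produced by deletion of the index $n$ from $S$, together with the alternating signs in the Čech differential, make this the most error-prone bookkeeping step. Once these identifications are in hand, \eqref{homotopysheafproperty} finishes the argument cleanly, and compatibility with the augmentation is automatic by naturality.
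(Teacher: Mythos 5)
Your proof is correct, and it takes a genuinely different route through the inductive step. The paper also reduces to finite covers and inducts on $n$ with \eqref{homotopysheafproperty} as the engine, but the combinatorics differ: the paper compares the \v Cech complexes for the covers $\{K_1\cup K_2,K_3,\ldots,K_n\}$ and $\{K_1,K_2,K_3,\ldots,K_n\}$ via the refinement map, and shows that map is a quasi-isomorphism by filtering its mapping cone according to which of $K_3,\ldots,K_n$ appear in the index set (each associated graded piece is either trivially acyclic or a Mayer--Vietoris complex). You instead split $\cC^\bullet(\{K_i\}_{i=1}^n;\F^\bullet)$ itself along the dichotomy $n\in S$ versus $n\notin S$, identify the subcomplex as a mapping cone of an augmentation and the quotient as a smaller \v Cech complex, apply the induction hypothesis to both, and then glue with \eqref{homotopysheafproperty}. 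Both are valid; your decomposition is natural and avoids the filtration-of-the-cone trick, but it pays for this at the end: because you compute $\cC^\bullet(\{K_i\}_{i=1}^n;\F^\bullet)$ up to quasi-isomorphism rather than comparing two \v Cech complexes directly, you must separately verify (as you flag) that the resulting quasi-isomorphism is the augmentation map from $\F^\bullet(X)$. In the paper's formulation this compatibility is automatic, since the refinement map commutes with the augmentation by naturality, which is precisely why the inductive claim $\F^\bullet(X)\to\cC^\bullet(X;\F^\bullet;K_1,\ldots,K_n)$ transports along the refinement without further argument. (A minor point: your base case $n=0$ subsumes the paper's separate treatment of $X=\varnothing$ via \eqref{homotopysheafacyclic}, which is a small tidiness gain.)
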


\begin{proof}
We prove that $\F^\bullet(X)\to\cC^\bullet(X;\F^\bullet;K_1,\ldots,K_n)$ (the right hand side denotes \v Cech complex for the finite compact cover $X=K_1\cup\cdots\cup K_n$) is a quasi-isomorphism by induction on $n$.  The base case $n=1$ is obvious since $\cC^\bullet(X;\F^\bullet;X)=\F^\bullet(X)$ by definition.  For the inductive step, it suffices to show that the natural map $\cC^\bullet(X;\F^\bullet;K_1\cup K_2,K_3,\ldots,K_n)\to\cC^\bullet(X;\F^\bullet;K_1,K_2,K_3,\ldots,K_n)$ is a quasi-isomorphism.  We will show that the mapping cone is acyclic; to see this, let us filter it according to how many of the $K_3,\ldots,K_n$ are chosen among $i_0,\ldots,i_p$.  This is a finite filtration, so it suffices to show that the associated graded is acyclic.  The associated graded is a direct sum of complexes of the form $[\F^\bullet(K)\to\F^{\bullet-1}(K)]$ (which is obviously acyclic) and $[\F^\bullet(K\cap(K_1\cup K_2))\to\F^{\bullet-1}(K\cap K_1)\oplus\F^{\bullet-1}(K\cap K_2)\to\F^{\bullet-2}(K\cap K_1\cap K_2)]$ (which is acyclic by \eqref{homotopysheafproperty}).

The above argument works when $X\ne\varnothing$; if $X=\varnothing$, use \eqref{homotopysheafacyclic}.
\end{proof}

\begin{lemma}[A $\K$-sheaf calculates its own $\cH^0$]\label{sheafcechHzero}
If $\F$ satisfies \eqref{sheafvanishing} and \eqref{sheafproperty}, then the canonical map $\F(X)\to\cH^0(X;\F)$ is an isomorphism.
\end{lemma}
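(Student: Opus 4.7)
The plan is to directly unwind the definition of $\cH^0(X;\F)$ and identify it with $\F(X)$. Because the directed system of finite compact covers appearing in \eqref{cechcomplexII} is filtered (Remark \ref{coveringscategoryrmk}) and filtered direct limits preserve exactness, it suffices to show that for every finite compact cover $X = K_1 \cup \cdots \cup K_n$ the augmented \v Cech sequence
$$0 \to \F(X) \to \bigoplus_{i=1}^n \F(K_i) \to \bigoplus_{1 \leq i < j \leq n} \F(K_i \cap K_j)$$
is exact, where the first map is $s \mapsto (s|_{K_i})_i$ and the second is the usual \v Cech differential. Once this is established, $H^0$ of the \v Cech complex equals $\F(X)$ for every finite compact cover, and passing to the direct limit gives $\cH^0(X;\F) = \F(X)$ as desired.

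I would prove this exactness by induction on $n$. The case $n=0$ is \eqref{sheafvanishing} (which forces $\F(X) = \F(\varnothing) = 0$), the case $n=1$ is trivial, and the case $n=2$ is precisely the content of \eqref{sheafproperty}. For the inductive step, I would set $Y := K_1 \cup \cdots \cup K_{n-1}$ and combine two applications of the inductive hypothesis with \eqref{sheafproperty} applied to the pair $(Y, K_n)$. Given a \v Cech $0$-cocycle $(s_1, \ldots, s_n)$, the inductive hypothesis applied to the cover $\{K_i\}_{i=1}^{n-1}$ of $Y$ produces a unique $s' \in \F(Y)$ with $s'|_{K_i} = s_i$ for $i < n$; the cocycle condition, together with the injectivity (uniqueness) half of the inductive hypothesis applied to the cover $\{K_i \cap K_n\}_{i=1}^{n-1}$ of $Y \cap K_n$, then forces $s'|_{Y \cap K_n} = s_n|_{Y \cap K_n}$; and finally \eqref{sheafproperty} applied to $(Y, K_n)$ produces the desired global section $s \in \F(X)$ with $s|_{K_i} = s_i$ for all $i$. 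Injectivity of the first map in the augmented sequence is the easier half and follows from the same induction using only the injectivity contained in \eqref{sheafproperty}.

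The only real subtlety lies in the bookkeeping of the compatibility step of the inductive argument, which requires invoking both the existence and the uniqueness parts of the inductive hypothesis at the lower level $n-1$ (once on $Y$, once on $Y \cap K_n$). No deeper obstacle is expected, since the lemma is essentially a direct unfolding of the $\K$-sheaf axioms \eqref{sheafvanishing} and \eqref{sheafproperty}; in particular, note that \eqref{sheafcontinuity} is not needed anywhere in this argument.
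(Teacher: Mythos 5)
Your proof is correct. Both you and the paper reduce to a single finite compact cover (via filteredness of the directed system) and then induct on the number of cover elements, using the two-element axiom \eqref{sheafproperty} as the engine. The paper's proof is by reference to Proposition \ref{homotopysheafcechquasiisomorphism}, whose inductive step merges $K_1$ and $K_2$ into $K_1\cup K_2$ and shows the two \v Cech complexes are quasi-isomorphic by filtering the mapping cone; you instead peel off $K_n$, apply the inductive hypothesis twice (existence on $Y=K_1\cup\cdots\cup K_{n-1}$, uniqueness on $Y\cap K_n$), and then glue across the pair $(Y,K_n)$ by \eqref{sheafproperty}. The two inductions are close in spirit, but yours is a direct patching argument specialized to degree $0$ rather than a filtration of a mapping cone, which is arguably the cleaner route here. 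One small remark worth noting explicitly (you rely on it implicitly when you say the compatibility ``forces'' $s'|_{Y\cap K_n}=s_n|_{Y\cap K_n}$): the uniqueness half of the inductive hypothesis on $Y\cap K_n$ is what lets you check this equality componentwise against the cover $\{K_i\cap K_n\}_{i<n}$, where the cocycle condition supplies the needed agreement. Your observation that \eqref{sheafcontinuity} is never used is correct and matches the hypotheses of the lemma.
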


\begin{proof}
Use induction as in the proof of Proposition \ref{homotopysheafcechquasiisomorphism}.
\end{proof}

\begin{lemma}[$\cH^\bullet$ is determined by stalks I]\label{localvanishingcechzero}
Let $\F$ satisfy \eqref{sheafcontinuity}.  If the stalks of $\F$ vanish, then $\cH^\bullet(X;\F)=0$.
\end{lemma}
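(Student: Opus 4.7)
The plan is to show that any \v Cech $q$-cocycle $\xi=(\xi_S)$ for a finite compact cover $\{K_1,\ldots,K_n\}$ of $X$ pulls back to the zero cochain under some refinement; since $\cH^\bullet(X;\F)$ is by definition a filtered colimit over such covers (Definition \ref{cechdefnII}), this will immediately yield $\cH^\bullet(X;\F)=0$, uniformly in the cohomological degree.

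The key preliminary step is the following local vanishing: for any compact $L\subseteq X$, any section $\xi\in\F(L)$, and any $p\in L$, there exists a compact neighborhood $N$ of $p$ in $X$ with $\xi|_{N\cap L}=0$. To see this, I will observe that the filtered system $\{N\cap L\}_N$ as $N$ ranges over compact neighborhoods of $p$ in $X$ has intersection $\{p\}$, so since $\F$ satisfies \eqref{sheafcontinuity} I may apply Lemma \ref{anycompactintersectionworksSH} to obtain $\varinjlim_N\F(N\cap L)=\F(\{p\})=0$, whence the image of $\xi$ vanishes at some finite stage.

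With this in hand, for each $p\in X$ I will let $I(p):=\{i:p\in K_i\}$ and construct a compact neighborhood $N_p$ of $p$ in $X$ satisfying two properties: \textbf{(a)} $N_p\cap K_i=\varnothing$ for every $i\notin I(p)$, which is achievable because each such $K_i$ is closed in $X$ and omits $p$ and there are only finitely many; and \textbf{(b)} $\xi_S|_{N_p\cap\bigcap_{i\in S}K_i}=0$ for every $(q+1)$-subset $S\subseteq I(p)$, which is achievable by applying the preliminary step once per such $S$ and intersecting the finitely many resulting neighborhoods. Property (a) forces $N_p=\bigcup_{i\in I(p)}(N_p\cap K_i)$. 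By compactness of $X$ I extract a finite subcover $X=N_{p_1}^\circ\cup\cdots\cup N_{p_m}^\circ$, and take the refined cover $\{K'_{(\ell,i)}:=N_{p_\ell}\cap K_i\}_{\ell,\,i\in I(p_\ell)}$ with refinement map $(\ell,i)\mapsto i$.

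To verify that the pullback of $\xi$ is the zero cochain, I will check that for any $(q+1)$-tuple $T=\{(\ell_0,i_0),\ldots,(\ell_q,i_q)\}$ with distinct $i_a$'s, setting $S:=\{i_0,\ldots,i_q\}$, the restriction $\xi_S|_{\bigcap_aK'_{(\ell_a,i_a)}}$ vanishes: either some $i_a\notin I(p_{\ell_0})$, in which case $\bigcap_aK'_{(\ell_a,i_a)}\subseteq N_{p_{\ell_0}}\cap K_{i_a}=\varnothing$ by (a) and the restriction is automatic, or else $S\subseteq I(p_{\ell_0})$ and the restriction vanishes by (b) since $\bigcap_aK'_{(\ell_a,i_a)}\subseteq N_{p_{\ell_0}}\cap\bigcap_{i\in S}K_i$. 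The hard part will be arranging the refinement's combinatorics in the presence of compact sets $K_i$ that lack ``interior-covering'' overlaps --- a point $p$ can lie in $K_i$ without any $X$-neighborhood of $p$ being contained in $K_i$, so one cannot simply take $N_p$ inside a single chart. The workaround is to let $N_p$ be a compact neighborhood in $X$ unconstrained to any individual $K_i$ and then subdivide by intersection with each $K_i$ for $i\in I(p)$, with condition (a) doing double duty: it ensures both that this subdivision still covers $N_p$ and that ``cross-terms'' $(\ell_0,i_0)$ with $i_a\notin I(p_{\ell_0})$ contribute empty intersections.
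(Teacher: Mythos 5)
Your proof is correct and follows essentially the same route as the paper: vanishing of stalks plus the continuity axiom (which you access through Lemma \ref{anycompactintersectionworksSH}, whereas the paper invokes it via a direct-limit diagram) give compact neighborhoods on which the relevant sections die, and compactness of $X$ produces a refinement of the cover killing the cochain. The paper packages the bookkeeping more abstractly by first reducing to the claim that each single $\alpha\in\F(K)$ is annihilated by $\F(K)\to\bigoplus_i\F(K\cap K_i)$ for a suitable finite compact cover (leaving the common-refinement step implicit), while you build the refinement $\{N_{p_\ell}\cap K_i\}$ explicitly and check the pullback tuple by tuple, with property (a) playing the role that the paper's reduction makes unnecessary; both proofs also tacitly use $\F(\varnothing)=0$ when dismissing the empty-intersection terms.
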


\begin{proof}
It suffices to show that for all $\alpha\in\F(K)$, there exists a finite compact cover $X=\bigcup_{i=1}^nK_i$ such that $\F(K)\to\bigoplus_{i=1}^n\F(K\cap K_i)$ annihilates $\alpha$.  We consider the commutative diagram:
\begin{equation}
\begin{CD}
\varinjlim_{K\subseteq U}\F(\overline U)@>>>\prod_{p\in K}\varinjlim_{p\in U}\F(\overline U)\cr
@VVV@VVV\cr
\F(K)@>>>\prod_{p\in K}\F_p
\end{CD}
\end{equation}
where the vertical maps are both isomorphisms.  Now the vanishing of $\F_p$ and a compactness argument shows that there are finitely many open sets $U_i\subseteq X$ covering $K$ such that $\alpha$ vanishes in $\F(K\cap\overline{U_i})$ for all $i$.  Thus the compact cover $X=(X\setminus\bigcup_{i=1}^nU_i)\cup\overline{U_1}\cup\cdots\cup\overline{U_n}$ has the desired properties.
\end{proof}

\begin{lemma}[$\cH^\bullet$ is determined by stalks II]\label{cechlocalglobalvanishingcomplex}
Let $\F^\bullet$ satisfy \eqref{homotopysheafcontinuity} and $H^i\F^\bullet=0$ for $i<<0$.  If $\F^\bullet$ has acyclic stalks, then $\cH^\bullet(X;\F^\bullet)=0$.
\end{lemma}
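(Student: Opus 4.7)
My plan is to reduce the vanishing of $\cH^\bullet(X;\F^\bullet)$ to the corresponding vanishing statement for each cohomology $\K$-presheaf $H^k\F^\bullet$ (given by the already-proven single-presheaf case, Lemma \ref{localvanishingcechzero}), and then propagate this through a filtration by good truncations.

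First, since $H^i\F^\bullet=0$ for $i<N$ (some $N$), the natural map $\F^\bullet\to\tau_{\geq N}\F^\bullet$ is a quasi-isomorphism; by Lemmas \ref{qicech} and \ref{qihomotopysheaf} we may replace $\F^\bullet$ with $\tau_{\geq N}\F^\bullet$, which is bounded below, still satisfies \eqref{homotopysheafcontinuity}, and still has acyclic stalks. Next, each cohomology $\K$-presheaf $H^k\F^\bullet$ satisfies \eqref{sheafcontinuity} (direct limits commute with $H^k$ by exactness) and has vanishing stalks $(H^k\F^\bullet)_p=H^k(\F^\bullet_p)=0$ (by the acyclic-stalk hypothesis). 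Lemma \ref{localvanishingcechzero} then yields $\cH^\bullet(X;H^k\F^\bullet)=0$ for every $k$.

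To propagate this to $\F^\bullet$, I filter by the good truncations $\tau_{\leq k}\F^\bullet$. For each $k$ the inclusion $\tau_{\leq k-1}\F^\bullet\hookrightarrow\tau_{\leq k}\F^\bullet$ fits into a short exact sequence of complexes of $\K$-presheaves whose quotient $Q_k$ is quasi-isomorphic to $H^k\F^\bullet[-k]$ (direct computation: $Q_k$ has $\im(\F^{k-1}\to\F^k)$ in degree $k-1$ and $\ker(\F^k\to\F^{k+1})$ in degree $k$, with $H^{k-1}Q_k=0$ and $H^kQ_k=H^k\F^\bullet$). Applying the \v Cech construction termwise and passing to the direct limit over finite compact covers produces a long exact sequence in $\cH^\bullet$; combined with Lemma \ref{qicech} and the vanishing $\cH^{\bullet-k}(X;H^k\F^\bullet)=0$, induction on $k$ starting from $k<N$ (where $\tau_{\leq k}\F^\bullet=0$) gives $\cH^\bullet(X;\tau_{\leq k}\F^\bullet)=0$ for all $k$.

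Finally, $\F^\bullet=\varinjlim_k\tau_{\leq k}\F^\bullet$ objectwise, and $\cH^\bullet$ commutes with filtered direct limits of complexes of $\K$-presheaves: for any fixed finite compact cover the \v Cech complex is a \emph{finite} direct sum of values of the coefficient presheaf, so filtered direct limits commute with its formation, and then they commute with cohomology and with the outer direct limit over covers. Hence $\cH^\bullet(X;\F^\bullet)=\varinjlim_k\cH^\bullet(X;\tau_{\leq k}\F^\bullet)=0$. The main subtlety is the last step: interchanging the filtered direct limit over $k$ with the double direct limit (over covers, then cohomology) defining $\cH^\bullet$. This relies crucially on the finiteness of the compact covers in Definition \ref{cechdefnII}, which ensures the relevant direct sum is finite and thus commutes with filtered colimits.
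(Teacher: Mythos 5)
Your proof is correct, but it takes a mirror-image route to the paper's. The paper truncates from below: it shows by induction on $i$ that $\cH^\bullet(X;\F^\bullet)\to\cH^\bullet(X;\tau_{\geq i}\F^\bullet)$ is an isomorphism for every $i$ (the inductive step being, as in your argument, that $\cH^\bullet(X;H^{i-1}\F^\bullet)=0$ via Lemma \ref{localvanishingcechzero}), and then concludes immediately because $\cH^j(X;\tau_{\geq i}\F^\bullet)$ vanishes trivially for $j<i$ — the \v Cech complex of $\tau_{\geq i}\F^\bullet$ is concentrated in degrees $\geq i$. You instead truncate from above by $\tau_{\leq k}$, show $\cH^\bullet(X;\tau_{\leq k}\F^\bullet)=0$ by the analogous induction, and then pass to the filtered colimit over $k$. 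Both inductive steps run on the same engine (the two-term quotient/kernel complex between successive truncations is quasi-isomorphic to a shift of $H^k\F^\bullet$, handled by Lemmas \ref{localvanishingcechzero} and \ref{qicech}). The difference is purely in the termination mechanism: the paper's choice of direction makes the final step tautological, whereas yours needs the additional observation that $\cH^\bullet$ commutes with filtered colimits of coefficient complexes. You do justify that correctly — the \v Cech complex for a fixed finite compact cover is a finite direct sum, so filtered colimits pass through it and through cohomology, and then through the outer direct limit over covers. So your proof buys nothing over the paper's but is not wrong; the paper's is slightly more economical because the low-degree vanishing of $\cH^j(X;\tau_{\geq i}\F^\bullet)$ is free.
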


\begin{proof}
We show that for all $i$, the map $\cH^\bullet(X;\F^\bullet)\to\cH^\bullet(X;\tau_{\geq i}\F^\bullet)$ is an isomorphism (this is sufficient since $\cH^j(X;\tau_{\geq i}\F^\bullet)=0$ for $j<i$).  We proceed by induction on $i$.

Since $H^i\F^\bullet=0$ for $i<<0$, we have that $\F^\bullet\to\tau_{\geq i}\F^\bullet$ is a quasi-isomorphism, so $\cH^\bullet(X;\F^\bullet)\to\cH^\bullet(X;\tau_{\geq i}\F^\bullet)$ is an isomorphism for $i<<0$ (Lemma \ref{qicech}).  Thus we have the base case of the induction.  For the inductive step, it suffices to show that $\cH^\bullet(X;H^{i-1}\F^\bullet)=0$.  This follows from Lemma \ref{localvanishingcechzero}.
\end{proof}

\begin{proposition}[$\cH^\bullet$ is determined by stalks III]\label{cechdeterminedbystalks}
Let $\F^\bullet,\G^\bullet$ satisfy \eqref{homotopysheafcontinuity} and $H^i\F^\bullet=H^i\G^\bullet=0$ for $i<<0$.  If $\F^\bullet\to\G^\bullet$ induces a quasi-isomorphism on stalks, then it induces an isomorphism $\cH^\bullet(X;\F^\bullet)\to\cH^\bullet(X;\G^\bullet)$.
\end{proposition}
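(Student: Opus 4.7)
The plan is to reduce to Lemma \ref{cechlocalglobalvanishingcomplex} by passing to the mapping cone, following the same general strategy used in the proof of Lemma \ref{qicech}. Specifically, let $\mathcal{H}^\bullet := [\F^\bullet \to \G^{\bullet-1}]$ denote the mapping cone of the given map $\F^\bullet \to \G^\bullet$. The long exact sequence \eqref{cechmappingconeles} then reduces the claim to showing that $\cH^\bullet(X; \mathcal{H}^\bullet) = 0$.

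To get this vanishing from Lemma \ref{cechlocalglobalvanishingcomplex}, I would verify its three hypotheses for $\mathcal{H}^\bullet$. First, the bounded-below condition $H^i \mathcal{H}^\bullet = 0$ for $i \ll 0$ follows from the corresponding condition on $\F^\bullet$ and $\G^\bullet$ via the long exact sequence of the mapping cone applied objectwise. Second, $\mathcal{H}^\bullet$ satisfies \eqref{homotopysheafcontinuity}: the functor $K \mapsto \mathcal{H}^\bullet(K)$ is literally the mapping cone (in the category of complexes of $\K$-presheaves) of $\F^\bullet \to \G^{\bullet-1}$, and since filtered direct limits commute with the formation of mapping cones and preserve quasi-isomorphisms, the property \eqref{homotopysheafcontinuity} for $\F^\bullet$ and $\G^\bullet$ transfers to $\mathcal{H}^\bullet$. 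Third, the stalks $\mathcal{H}^\bullet_p = [\F^\bullet_p \to \G^{\bullet-1}_p]$ are acyclic precisely because $\F^\bullet_p \to \G^\bullet_p$ is a quasi-isomorphism by hypothesis.

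With these three hypotheses verified, Lemma \ref{cechlocalglobalvanishingcomplex} gives $\cH^\bullet(X; \mathcal{H}^\bullet) = 0$, and then \eqref{cechmappingconeles} immediately yields that $\cH^\bullet(X; \F^\bullet) \to \cH^\bullet(X; \G^\bullet)$ is an isomorphism.

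There is no serious obstacle here; the proof is essentially a formal consequence of what has already been established. The only point requiring mild care is the stability of \eqref{homotopysheafcontinuity} under mapping cones, but this is routine since both the direct limit on the left and the chain-level mapping cone on the right are computed termwise, and a termwise quasi-isomorphism of double complexes induces a quasi-isomorphism on the total complex.
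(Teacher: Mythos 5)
Your proposal is correct and matches the paper's proof exactly: the paper also forms the mapping cone $[\F^\bullet\to\G^{\bullet-1}]$, applies Lemma \ref{cechlocalglobalvanishingcomplex} to it, and concludes via the long exact sequence \eqref{cechmappingconeles}. The paper states this in one line, leaving the verification of the three hypotheses implicit, whereas you spell them out; the content is the same.
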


\begin{proof}
Recall the long exact sequence \eqref{cechmappingconeles} and apply Lemma \ref{cechlocalglobalvanishingcomplex} to the mapping cone $[\F^\bullet\to\G^{\bullet-1}]$.
\end{proof}

\begin{corollary}[A map of homotopy $\K$-sheaves being a quasi-isomorphism can be checked on stalks]\label{homotopysheafcheckqionstalks}
Let $\F^\bullet\to\G^\bullet$ be a map of homotopy $\K$-sheaves which satisfy $H^i\F^\bullet=H^i\G^\bullet=0$ for $i<<0$.  Then $\F^\bullet\to\G^\bullet$ is a quasi-isomorphism iff $\F^\bullet_p\to\G^\bullet_p$ is a quasi-isomorphism for all $p\in X$.
\end{corollary}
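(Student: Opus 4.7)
The forward direction is essentially by definition: a quasi-isomorphism of complexes of $\K$-presheaves means an objectwise quasi-isomorphism, and the stalk at $p$ is by definition $\F^\bullet(\{p\})$. For the reverse direction, I need to show that $\F^\bullet(K)\to\G^\bullet(K)$ is a quasi-isomorphism for every compact $K\subseteq X$.

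The strategy is to reduce to the \v Cech side and invoke Proposition \ref{cechdeterminedbystalks}. First I would observe that the restrictions $\F^\bullet|_K$ and $\G^\bullet|_K$ (defined by $K'\mapsto\F^\bullet(K')$ for compact $K'\subseteq K$) satisfy all three homotopy $\K$-sheaf axioms: \eqref{homotopysheafacyclic} and \eqref{homotopysheafproperty} transfer immediately, and for \eqref{homotopysheafcontinuity} one uses that a compact neighborhood of $K'\subseteq K$ inside $K$ is the intersection with $K$ of a compact neighborhood inside $X$ (again via Remark \ref{nbhdintersectiongood} and the fact that $K$ is closed). Then by Proposition \ref{homotopysheafcechquasiisomorphism}, the canonical maps $H^\bullet\F^\bullet(K)\to\cH^\bullet(K;\F^\bullet|_K)$ and $H^\bullet\G^\bullet(K)\to\cH^\bullet(K;\G^\bullet|_K)$ are isomorphisms, and these identifications are natural in the map $\F^\bullet\to\G^\bullet$.

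Finally I would apply Proposition \ref{cechdeterminedbystalks} to the restricted map $\F^\bullet|_K\to\G^\bullet|_K$: its hypotheses are the \eqref{homotopysheafcontinuity} axiom just verified, the lower-boundedness of cohomology which transfers from the ambient hypothesis, and the stalk quasi-isomorphism (the stalks of the restriction at $p\in K$ are the same as the stalks of the original at $p$). The conclusion gives $\cH^\bullet(K;\F^\bullet|_K)\xrightarrow\sim\cH^\bullet(K;\G^\bullet|_K)$, and combining with the identifications above yields $H^\bullet\F^\bullet(K)\xrightarrow\sim H^\bullet\G^\bullet(K)$, as desired. No step here is really an obstacle; the only slightly delicate point is the bookkeeping verification that restriction to a compact subspace preserves the three homotopy $\K$-sheaf axioms so that the two propositions cited genuinely apply in the form needed.
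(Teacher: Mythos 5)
Your proposal is correct and takes essentially the same approach as the paper: compare $H^\bullet\F^\bullet(K)$ with $\cH^\bullet(K;\F^\bullet|_K)$ via Proposition \ref{homotopysheafcechquasiisomorphism}, then apply Proposition \ref{cechdeterminedbystalks} on the \v Cech side. The paper states this compactly as a single commutative square, leaving implicit the verification that restriction to $K$ preserves the homotopy $\K$-sheaf axioms (which the paper has in any case already recorded in the construction of the pullback functor $f^\ast:\hShv_\K Y\to\hShv_\K X$ for injective $f$), so your extra bookkeeping is sound but not a departure from the paper's route.
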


\begin{proof}
For any $K\subseteq X$, we have a commutative diagram:
\begin{equation}
\begin{CD}
H^\bullet\F^\bullet(K)@>\sim>>\cH^\bullet(K;\F^\bullet)\cr
@VVV@VVV\cr
H^\bullet\G^\bullet(K)@>\sim>>\cH^\bullet(K;\G^\bullet)
\end{CD}
\end{equation}
The rows are isomorphisms by Proposition \ref{homotopysheafcechquasiisomorphism}.  If $\F^\bullet_p\to\G^\bullet_p$ is a quasi-isomorphism, then the right vertical map is a quasi-isomorphism by Proposition \ref{cechdeterminedbystalks}.
\end{proof}

\begin{lemma}[Long exact sequence for $\cH^\bullet$]\label{cechles}
Let $\F,\G,\HHH$ satisfy \eqref{sheafcontinuity}.  If $0\to\F\to\G\to\HHH\to 0$ is exact on stalks, then it induces a long exact sequence on $\cH^\bullet$.
\end{lemma}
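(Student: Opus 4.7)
The plan is to deduce the long exact sequence by combining two prior results: the mapping cone long exact sequence \eqref{cechmappingconeles} and the ``$\cH^\bullet$ is determined by stalks'' principle (Proposition \ref{cechdeterminedbystalks}). The key observation is that the short exact sequence $0\to\F\to\G\to\HHH\to 0$ (exact on stalks) exhibits $\F$ as the ``kernel'' of the map of complexes $\G\to\HHH$ in a derived sense, even though the sequence may fail to be exact on the $\K$-presheaf level.

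More precisely, I would first apply the sequence \eqref{cechmappingconeles} to the map of complexes $\G\to\HHH$ (both concentrated in degree $0$). This yields the long exact sequence
\begin{equation*}
\cdots\to\cH^{\bullet-1}(X;\HHH)\to\cH^\bullet(X;[\G\to\HHH[-1]])\to\cH^\bullet(X;\G)\to\cH^\bullet(X;\HHH)\to\cdots
\end{equation*}
where $[\G\to\HHH[-1]]$ denotes the two-term complex with $\G$ in degree $0$ and $\HHH$ in degree $1$. Next, I would identify $\cH^\bullet(X;[\G\to\HHH[-1]])$ with $\cH^\bullet(X;\F)$ by constructing the evident map of complexes $\F\to[\G\to\HHH[-1]]$ (the inclusion $\F\hookrightarrow\G$ landing in degree $0$) and invoking Proposition \ref{cechdeterminedbystalks}.

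To apply Proposition \ref{cechdeterminedbystalks} I must verify two hypotheses. Boundedness below of cohomology is immediate since both complexes live in degrees $0$ and $1$. Property \eqref{homotopysheafcontinuity} holds for $\F$ because for a complex concentrated in a single degree, \eqref{homotopysheafcontinuity} reduces to \eqref{sheafcontinuity}, which is assumed; it holds for $[\G\to\HHH[-1]]$ because $\varinjlim$ over the filtered system $\{\overline U\}_{K\subseteq U\text{ open}}$ is exact and hence commutes with the formation of the two-term complex, and the componentwise isomorphisms to $[\G(K)\to\HHH(K)[-1]]$ (by \eqref{sheafcontinuity} for $\G$ and $\HHH$) give a quasi-isomorphism of the total complexes. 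The quasi-isomorphism on stalks $\F_p\xrightarrow{\sim}[\G_p\to\HHH_p[-1]]$ is exactly the assumption of stalkwise exactness of the original sequence.

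Substituting the resulting isomorphism $\cH^\bullet(X;\F)\xrightarrow\sim\cH^\bullet(X;[\G\to\HHH[-1]])$ into the mapping cone long exact sequence above produces the desired long exact sequence
\begin{equation*}
\cdots\to\cH^{\bullet-1}(X;\HHH)\to\cH^\bullet(X;\F)\to\cH^\bullet(X;\G)\to\cH^\bullet(X;\HHH)\to\cdots
\end{equation*}
There is no serious obstacle here; the only care needed is in verifying the \eqref{homotopysheafcontinuity} hypothesis for the two-term complex $[\G\to\HHH[-1]]$, which relies on the exactness of filtered direct limits.
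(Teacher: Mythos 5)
Your argument is correct and leads to the desired conclusion, but it is organized differently from the paper's proof. The paper packages the whole short exact sequence into the three-term total complex $[\F\to\G[-1]\to\HHH[-2]]$, observes that stalkwise exactness makes this complex stalkwise acyclic, applies Lemma \ref{cechlocalglobalvanishingcomplex} to conclude $\cH^\bullet(X;[\F\to\G[-1]\to\HHH[-2]])=0$, and then extracts the long exact sequence by inspecting the (three-column) hypercohomology spectral sequence of Lemma \ref{hypercohomologyss} converging to zero. You instead work with the two-term mapping cone $[\G\to\HHH[-1]]$, invoke the mapping cone long exact sequence \eqref{cechmappingconeles} which is already built into the proof of Lemma \ref{qicech}, and then identify $\cH^\bullet(X;[\G\to\HHH[-1]])$ with $\cH^\bullet(X;\F)$ via the natural inclusion and Proposition \ref{cechdeterminedbystalks}. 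Both routes ultimately rest on the same ``determined by stalks'' mechanism (Lemma \ref{cechlocalglobalvanishingcomplex}, of which Proposition \ref{cechdeterminedbystalks} is the corollary), so they are close in spirit; the trade-off is that your version substitutes an explicit mapping-cone triangle for the paper's spectral-sequence bookkeeping, which is arguably a bit more transparent (no need to unwind a three-column $E_1$ page converging to zero), at the cost of the small extra step of checking \eqref{homotopysheafcontinuity} for the two-term complex and verifying the stalk quasi-isomorphism $\F_p\xrightarrow{\sim}[\G_p\to\HHH_p[-1]]$. Your verification of those hypotheses is correct: exactness of filtered colimits gives \eqref{homotopysheafcontinuity} for the cone, boundedness is trivial, and stalkwise exactness of the short exact sequence gives exactly the required quasi-isomorphism on stalks.
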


\begin{proof}
By Lemma \ref{cechlocalglobalvanishingcomplex}, we have $\cH^\bullet(X;[\F\to\G[-1]\to\HHH[-2]])=0$.  Now inspection of the hypercohomology spectral sequence (Lemma \ref{hypercohomologyss}) gives the desired long exact sequence.
\end{proof}

\begin{definition}[Pullback on $\cH^\bullet$]\label{pullbackcHKprshv}
Let $f:X\to Y$ be a map of compact spaces.  A finite compact cover of $Y$ pulls back to give a finite compact cover of $X$, and this gives an identification of the \v Cech complex for the cover of $Y$ with coefficients in $f_\ast\F^\bullet$ with the \v Cech complex for the cover of $X$ with coefficients in $\F^\bullet$.  Hence we get a natural map:
\begin{rlist}
\item$f^\ast:\cH^\bullet(Y;f_\ast\F^\bullet)\to\cH^\bullet(X;\F^\bullet)$ for $\F^i\in\Prshv_\K X$.
\end{rlist}
\end{definition}

\subsection{Pure homotopy \texorpdfstring{$\K$}{K}-sheaves}\label{cechsimplesection}

By Proposition \ref{homotopysheafcechquasiisomorphism}, a homotopy $\K$-sheaf can be thought of as a \emph{resolution} (that is, its global sections computes the cohomology of some complex of sheaves).  In this section, we introduce the notion of a \emph{pure} homotopy $\K$-sheaf, which may be thought of as a resolution of a sheaf (as opposed to a complex of sheaves).  More specifically, a pure homotopy $\K$-sheaf $\F^\bullet$ ``is'' a resolution of $H^0\F^\bullet$ (which by Lemma \ref{cechsimplebasic} is always a sheaf).

\begin{definition}\label{purehomotopyKsheaf}
We say that a homotopy $\K$-sheaf $\F^\bullet$ on $X$ is \emph{pure} iff:
\begin{rlist}
\item(Stalk cohomology) $H^i\F^\bullet_p=0$ for $i\ne 0$ and all $p\in X$.
\item(Weak vanishing) $H^i\F^\bullet=0$ for $i<<0$ locally on $X$ (meaning that for all $p\in X$, there exists an open set $U\subseteq X$ containing $p$ and an integer $N>-\infty$ such that $H^i\F^\bullet(K)=0$ for all $K\subseteq U$ and $i\leq N$).
\end{rlist}
\end{definition}

\begin{remark}
It would be nice to know whether the stalk cohomology condition implies the weak vanishing condition in general (it would be much easier to check purity).
\end{remark}

\begin{lemma}\label{cechsimplebasic}
Let $\F^\bullet$ be a pure homotopy $\K$-sheaf.  Then:
\begin{rlist}
\item(Strong vanishing) $H^i\F^\bullet=0$ for $i<0$.
\item$H^0\F^\bullet$ is a $\K$-sheaf.
\end{rlist}
\end{lemma}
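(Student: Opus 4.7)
The plan is to reduce (ii) to (i) via Lemma \ref{lowervanishingimpliessheaf}: once we establish that $H^{-1}\F^\bullet = 0$, that lemma immediately gives that $H^0\F^\bullet$ is a $\K$-sheaf. So the content lies in proving (i), namely that $H^i\F^\bullet(K) = 0$ for $i < 0$ and every compact $K \subseteq X$.

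Fixing such a $K$, I would use Proposition \ref{homotopysheafcechquasiisomorphism} to identify $H^i\F^\bullet(K) = \cH^i(K;\F^\bullet)$, and then analyze the right side via the tautological short exact sequence
\[0 \to \tau_{\leq -1}\F^\bullet \to \F^\bullet \to \tau_{\geq 0}\F^\bullet \to 0\]
of complexes of $\K$-presheaves. Applying the \v Cech double complex functor (which is exact because filtered direct limits are exact) yields a long exact sequence in $\cH^\bullet(K;\cdot)$. Since $\tau_{\geq 0}\F^\bullet$ is concentrated in degrees $\geq 0$, its \v Cech double complex lives in $p,q \geq 0$, and hence $\cH^i(K;\tau_{\geq 0}\F^\bullet) = 0$ for $i < 0$ by direct inspection. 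Thus it suffices to show $\cH^\bullet(K;\tau_{\leq -1}\F^\bullet) = 0$.

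For this vanishing I plan to invoke Lemma \ref{cechlocalglobalvanishingcomplex}, applied to $\tau_{\leq -1}\F^\bullet$ viewed as a complex of $\K$-presheaves on $K$. The stalks of $\tau_{\leq -1}\F^\bullet$ are acyclic by the stalk purity of $\F^\bullet$, and \eqref{homotopysheafcontinuity} holds for $\tau_{\leq -1}\F^\bullet$ by a degreewise check on cohomology (using that $H^i\tau_{\leq -1}\F^\bullet = H^i\F^\bullet$ for $i \leq -1$ and vanishes otherwise, combined with \eqref{homotopysheafcontinuity} for $\F^\bullet$). The hard part will be verifying the remaining hypothesis: that there exists a single $N$ with $H^i\F^\bullet(K') = 0$ for all $K' \subseteq K$ and all $i \leq N$. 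This does not follow directly from the weak vanishing, which only provides bounds pointwise.

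To bridge this gap, I would cover $K$ by finitely many open neighborhoods $U_{p_1},\ldots,U_{p_n}$ supplied by the weak vanishing condition, with respective bounds $N_{p_j}$, set $N := \min_j N_{p_j}$, and choose a compact refinement $\{K_j \subseteq U_{p_j}\}_{j=1}^n$ covering $K$ (which exists since $X$ is locally compact Hausdorff). For any compact $K' \subseteq K$, Proposition \ref{homotopysheafcechquasiisomorphism} applied to the cover $\{K' \cap K_j\}_{j=1}^n$ identifies $H^\bullet\F^\bullet(K')$ with the total cohomology of the associated \v Cech double complex. Since each intersection $K' \cap K_{j_0} \cap \cdots \cap K_{j_p}$ is contained in $U_{p_{j_0}}$, the $E_1$ page (filtered by internal degree $q$) vanishes for $q \leq N$, forcing $H^i\F^\bullet(K') = 0$ for all $i \leq N$ uniformly in $K' \subseteq K$. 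With this uniform vanishing established, Lemma \ref{cechlocalglobalvanishingcomplex} applies to $\tau_{\leq -1}\F^\bullet$ on $K$ and completes the proof.
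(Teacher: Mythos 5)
Your proof is correct in substance, but it takes a more circuitous route than the paper, and has one small imprecision worth noting.

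The decomposition of the problem is the same as the paper's: (ii) follows from (i) via Lemma~\ref{lowervanishingimpliessheaf}, and (i) requires first establishing a \emph{uniform} vanishing bound and then bootstrapping from there. Your uniform-bound argument (cover $K$ by finitely many $U_{p_j}$ from the weak vanishing hypothesis, pass to a compact refinement $K_j$, and inspect the $E_1$ page of the \v Cech spectral sequence for the cover $\{K'\cap K_j\}$, noting that each multi-intersection sits inside some $U_{p_{j_0}}$) is the honest content of the paper's one-line parenthetical ``from \eqref{homotopysheafproperty}, compactness, and weak vanishing''; it is the same iterated Mayer--Vietoris, just repackaged as a spectral sequence. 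Where you diverge is in the bootstrap: the paper runs a bare induction on $i$, observing that $H^{i-1}\F^\bullet=0$ forces $H^i\F^\bullet$ to be a $\K$-sheaf (Lemma~\ref{lowervanishingimpliessheaf}), which, having vanishing stalks, must itself vanish. You instead feed $\tau_{\leq-1}\F^\bullet$ into Lemma~\ref{cechlocalglobalvanishingcomplex} and then conclude via a long exact sequence in \v Cech hypercohomology. This works, but note that the proof of Lemma~\ref{cechlocalglobalvanishingcomplex} \emph{is} exactly the truncation induction the paper runs directly here; your route routes the argument through $\cH^\bullet$ and Proposition~\ref{homotopysheafcechquasiisomorphism}, which is strictly more machinery than needed for a statement that is purely about the $\K$-presheaf $\F^\bullet$ itself.

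One technical slip: the sequence $0\to\tau_{\leq-1}\F^\bullet\to\F^\bullet\to\tau_{\geq 0}\F^\bullet\to 0$ is not a short exact sequence of complexes (exactness fails in degrees $-1$ and $0$ unless $d\colon\F^{-1}\to\F^0$ is zero). The correct short exact sequence is $0\to\tau_{\leq-1}\F^\bullet\to\F^\bullet\to\F^\bullet/\tau_{\leq-1}\F^\bullet\to 0$; the quotient is quasi-isomorphic to $\tau_{\geq 0}\F^\bullet$, so by Lemma~\ref{qicech} you get the same identification of the third term's $\cH^\bullet$, and the rest of your argument goes through. Your remaining verifications (that truncation commutes with filtered colimits for checking \eqref{homotopysheafcontinuity}, the acyclicity of stalks from purity, and the degree-count for $\cH^i(K;\tau_{\geq 0}\F^\bullet)$ with $i<0$) are all fine.
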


\begin{proof}
By Lemma \ref{lowervanishingimpliessheaf}, strong vanishing implies that $H^0\F^\bullet$ is a $\K$-sheaf.  Now let us prove strong vanishing.  By restricting to a compact subset, it suffices to treat the case when the underlying space is compact.  Now from \eqref{homotopysheafproperty}, compactness, and weak vanishing, it follows that $H^i\F^\bullet=0$ for $i<<0$.  Now let us prove strong vanishing by induction on $i<0$ (we have just proven the base case).  For the inductive step, observe that $H^i\F^\bullet$ is a sheaf by the induction hypothesis ($H^{i-1}\F^\bullet=0$) and Lemma \ref{lowervanishingimpliessheaf}, and thus $H^i\F^\bullet_p=0\implies H^i\F^\bullet=0$.
\end{proof}

\begin{proposition}[$\cH^\bullet$ of a pure homotopy $\K$-sheaf]\label{cechcohomologypurehomotopysheaf}
Let $\F^\bullet$ be a pure homotopy $\K$-sheaf.  Then there is a canonical isomorphism:
\begin{equation}
H^\bullet\F^\bullet(X)=\cH^\bullet(X;H^0\F^\bullet)
\end{equation}
More generally, let $[\F^\bullet_0\to\F^{\bullet-1}_1\to\cdots\to\F^{\bullet-n}_n]$ be a complex of $\K$-presheaves where each $\F^\bullet_i$ is a pure homotopy $\K$-sheaf.  Then there is a canonical isomorphism:
\begin{equation}\label{cechsimplecanonicaliso}
H^\bullet[\F^\bullet_0(X)\to\cdots\to\F^{\bullet-n}_n(X)]=\cH^\bullet(X;[H^0\F^\bullet_0\to\cdots\to(H^0\F^\bullet_n)[-n]])
\end{equation}
\end{proposition}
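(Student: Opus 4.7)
The plan is to deduce both statements from the stalks-to-global principle of Proposition \ref{cechdeterminedbystalks}, using the canonical truncation $\tau_{\leq 0}$ as an intermediary. I will first settle the single pure homotopy $\K$-sheaf case and then bootstrap to the general complex by totalizing.

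For the single pure homotopy $\K$-sheaf $\F^\bullet$, I consider the canonical zigzag $\F^\bullet \leftarrow \tau_{\leq 0}\F^\bullet \to H^0\F^\bullet$ in the category of complexes of $\K$-presheaves. Purity ($H^i\F^\bullet_p = 0$ for $i \neq 0$) makes the left arrow a quasi-isomorphism on stalks, while strong vanishing from Lemma \ref{cechsimplebasic} ($H^i\F^\bullet = 0$ globally for $i < 0$) makes the right arrow a global quasi-isomorphism, since its kernel is a globally acyclic complex of $\K$-presheaves. The truncation functor $\tau_{\leq 0}$ preserves quasi-isomorphisms and commutes with filtered direct limits (both by the exactness of filtered colimits), so the continuity property \eqref{homotopysheafcontinuity} of $\F^\bullet$ passes to $\tau_{\leq 0}\F^\bullet$, whose homology is additionally concentrated in degree zero. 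Applying Proposition \ref{cechdeterminedbystalks} to the left arrow and Lemma \ref{qicech} to the right arrow chains into $\cH^\bullet(X; \F^\bullet) = \cH^\bullet(X; \tau_{\leq 0}\F^\bullet) = \cH^\bullet(X; H^0\F^\bullet)$, and combining with $H^\bullet\F^\bullet(X) = \cH^\bullet(X; \F^\bullet)$ from Proposition \ref{homotopysheafcechquasiisomorphism} closes the case.

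For the general statement I mirror the argument at the level of total complexes. Set $\G^\bullet := [\F^\bullet_0 \to \cdots \to \F^{\bullet-n}_n]$, $\widetilde\G^\bullet := [\tau_{\leq 0}\F^\bullet_0 \to \cdots \to \tau_{\leq 0}\F^{\bullet-n}_n]$, and $\HHH^\bullet := [H^0\F^\bullet_0 \to \cdots \to (H^0\F^\bullet_n)[-n]]$ (the latter two are well-defined by functoriality of $\tau_{\leq 0}$ and $H^0$). Filtering each total complex by its outer index and noting that the filtration is finite with associated-graded maps being the single-sheaf quasi-isomorphisms from the previous paragraph, the induced morphisms $\widetilde\G^\bullet \to \G^\bullet$ and $\widetilde\G^\bullet \to \HHH^\bullet$ are a stalkwise quasi-isomorphism and a global quasi-isomorphism of $\K$-presheaves, respectively; continuity \eqref{homotopysheafcontinuity} and bounded-below homology are inherited termwise through the finite totalization. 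By Lemma \ref{associatedgradedhomotopysheaf}, $\G^\bullet$ is a homotopy $\K$-sheaf, so Proposition \ref{homotopysheafcechquasiisomorphism} identifies $H^\bullet\G^\bullet(X)$---which by definition is the left-hand side of \eqref{cechsimplecanonicaliso}---with $\cH^\bullet(X; \G^\bullet)$, and Proposition \ref{cechdeterminedbystalks} and Lemma \ref{qicech} together supply the identification $\cH^\bullet(X; \G^\bullet) = \cH^\bullet(X; \HHH^\bullet)$. The main subtlety requiring care is that $\tau_{\leq 0}\F^\bullet$ (and hence $\widetilde\G^\bullet$) need not be a full homotopy $\K$-sheaf---in particular, Mayer--Vietoris \eqref{homotopysheafproperty} can fail after truncation---but only the continuity axiom \eqref{homotopysheafcontinuity} is required for Proposition \ref{cechdeterminedbystalks}, and that property survives truncation for the reasons already noted.
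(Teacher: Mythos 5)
Your proof is correct and takes essentially the same route as the paper: both proceed by a zigzag through a truncation of the complex, using Proposition~\ref{homotopysheafcechquasiisomorphism} to pass to \v Cech hypercohomology, then applying Lemma~\ref{qicech} on the leg governed by strong vanishing (Lemma~\ref{cechsimplebasic}) and Proposition~\ref{cechdeterminedbystalks} on the leg governed by purity. The only difference is cosmetic: you truncate with $\tau_{\leq 0}$, writing $\F^\bullet \leftarrow \tau_{\leq 0}\F^\bullet \to H^0\F^\bullet$, whereas the paper truncates with $\tau_{\geq 0}$, writing $\F^\bullet \to \tau_{\geq 0}\F^\bullet \leftarrow H^0\F^\bullet$; this swaps which arrow carries the stalkwise argument and which carries the global one, but the inputs, the cited lemmas, and the verification that truncation preserves continuity \eqref{homotopysheafcontinuity} (while Mayer--Vietoris may fail) are the same in both.
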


\begin{proof}
The isomorphism \eqref{cechsimplecanonicaliso} is defined as the composition of the following isomorphisms:
\begin{equation}\label{isosforpurehtpysheafcech}
\begin{matrix}
H^\bullet[\F^\bullet_0(X)\to\cdots\to\F^{\bullet-n}_n(X)]\cr
\downarrow\cr
\cH^\bullet(X;[\F^\bullet_0\to\cdots\to\F^{\bullet-n}_n])\cr
\downarrow\cr
\cH^\bullet(X;[\tau_{\geq 0}\F^\bullet_0\to\cdots\to\tau_{\geq n}\F^{\bullet-n}_n])\cr
\uparrow\cr
\cH^\bullet(X;[H^0\F^\bullet_0\to\cdots\to H^n\F^{\bullet-n}_n])
\end{matrix}
\end{equation}
The maps are isomorphisms for the following reasons.  Map one: $[\F^\bullet_0\to\cdots\to\F^{\bullet-n}_n]$ is a homotopy $\K$-sheaf (Lemma \ref{associatedgradedhomotopysheaf}), and a homotopy $\K$-sheaf calculates its own $\cH^\bullet$ (Proposition \ref{homotopysheafcechquasiisomorphism}).  Map two: the map of coefficient $\K$-presheaves is a quasi-isomorphism (Lemma \ref{cechsimplebasic}) and thus induces an isomorphism on $\cH^\bullet$ (Lemma \ref{qicech}).  Map three: the map of coefficient $\K$-presheaves is a quasi-isomorphism on stalks (by purity), and thus induces an isomorphism on $\cH^\bullet$ (Proposition \ref{cechdeterminedbystalks}).
\end{proof}

\begin{lemma}[Checking purity on a cover]\label{openclosedpurity}
Let $\F^\bullet$ be a homotopy $\K$-sheaf.  Write $X=U\cup Z$ with $U$ open and $Z$ closed, and suppose that $i^\ast\F^\bullet$ and $j^\ast\F^\bullet$ are both pure ($i:Z\hookrightarrow X$ and $j:U\hookrightarrow X$).  Then $\F^\bullet$ is pure.
\end{lemma}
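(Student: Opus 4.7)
The plan is to verify the two defining conditions of purity for $\F^\bullet$ on $X$. For the stalk cohomology condition, note that for any $p\in X$ the stalk $\F^\bullet_p=\F^\bullet(\{p\})$ coincides tautologically with $(j^\ast\F^\bullet)_p$ if $p\in U$ and with $(i^\ast\F^\bullet)_p$ if $p\in Z$, so $H^i\F^\bullet_p=0$ for $i\ne 0$ follows immediately from the assumed purity of the pullbacks. For weak vanishing I shall in fact prove the stronger assertion that $H^i\F^\bullet(K)=0$ for every compact $K\subseteq X$ and every $i<0$, which implies weak vanishing globally with $V=X$ and $N=-1$. The starting point is Lemma~\ref{cechsimplebasic} applied to the pure homotopy $\K$-sheaves $j^\ast\F^\bullet$ and $i^\ast\F^\bullet$: this yields strong vanishing on each piece, namely $H^i\F^\bullet(K')=0$ for $i<0$ whenever $K'$ is a compact subset of $U$ or of $Z$.

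Fix a compact $K\subseteq X$. For each open neighborhood $W$ of $K\cap Z$ in $X$, set $K_Z^W:=K\cap\overline W$ and $K_U^W:=K\setminus W$; both are compact, their union is $K$, and crucially, because $K\cap Z\subseteq W$, both $K_U^W$ and the overlap $K_Z^W\cap K_U^W=K\cap\partial W$ lie entirely in $U$. The Mayer--Vietoris sequence arising from \eqref{homotopysheafproperty} therefore collapses for $i<0$: strong vanishing on $U$ kills the $H^i\F^\bullet(K_U^W)$ and $H^{i-1}\F^\bullet(K\cap\partial W)$ terms, leaving an injection $H^i\F^\bullet(K)\hookrightarrow H^i\F^\bullet(K_Z^W)$ induced by restriction.

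As $W$ ranges over open neighborhoods of $K\cap Z$ in $X$ (using local compactness and Hausdorffness of $X$ to separate $K\cap Z$ from any point of $K\setminus Z$), the compacts $\{K_Z^W\}$ form a filtered directed system whose intersection is exactly $K\cap Z$. By continuity \eqref{homotopysheafcontinuity}, in the form of Lemma~\ref{anycompactintersectionworks}, $\varinjlim_W\F^\bullet(K_Z^W)\xrightarrow\sim\F^\bullet(K\cap Z)$, so $\varinjlim_W H^i\F^\bullet(K_Z^W)=H^i\F^\bullet(K\cap Z)=0$ for $i<0$ by strong vanishing on $Z$. The compatible family of injections above then induces an injection $H^i\F^\bullet(K)\hookrightarrow\varinjlim_W H^i\F^\bullet(K_Z^W)=0$ (if $\alpha$ dies in the colimit it dies at some finite stage, but each $\iota_W$ is injective), forcing $H^i\F^\bullet(K)=0$. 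The main subtlety is engineering the Mayer--Vietoris decomposition so that both $K_U^W$ and $K\cap\partial W$ avoid $Z$; this is exactly what forces $W$ to range over open neighborhoods of \emph{all} of $K\cap Z$, while still allowing it to shrink freely toward $K\cap Z$ in the directed system.
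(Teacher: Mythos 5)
Your proof is correct and takes essentially the same approach as the paper: both prove strong vanishing by combining Mayer--Vietoris \eqref{homotopysheafproperty} for the decomposition of $K$ into a compact piece near $Z$ and a compact piece inside $U$ (which together with strong vanishing for $j^\ast\F^\bullet$, via Lemma~\ref{cechsimplebasic}, yields an injection into the cohomology of the near-$Z$ piece), and then continuity via Lemma~\ref{anycompactintersectionworks} together with strong vanishing for $i^\ast\F^\bullet$ to kill the target in the colimit. The only cosmetic differences are that you parameterize by open neighborhoods of $K\cap Z$ while the paper uses open neighborhoods of $Z$, and you organize the argument as ``injection for each $W$, then pass to the colimit'' rather than ``fix $\alpha$, find a single $V$ where its restriction dies, then conclude.''
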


\begin{proof}
It suffices to show the weak vanishing property for $\F^\bullet$.  Let $\alpha\in H^i\F^\bullet(K)$ be arbitrary with $i<0$.  By Remark \ref{nbhdintersectiongood} and Lemma \ref{anycompactintersectionworks}, the following is a quasi-isomorphism:
\begin{equation}
\varinjlim_{\begin{smallmatrix}Z\subseteq V\cr V\textrm{ open}\end{smallmatrix}}\F^\bullet(K\cap\overline V)\to\F^\bullet(K\cap Z)
\end{equation}
Since $H^i\F^\bullet(K\cap Z)=0$ by strong vanishing for $i^\ast\F^\bullet$, we see that the image of $\alpha$ in $H^i\F^\bullet(K\cap\overline V)$ vanishes for some open $V\supseteq Z$.  Now applying \eqref{homotopysheafproperty} to $K=(K\cap\overline V)\cup(K\setminus V)$, we see that the vanishing of the image of $\alpha$ in $H^i\F^\bullet(K\cap\overline V)$ implies that $\alpha$ ``comes from'' the cohomology of $[\F^\bullet(K\setminus V)\to\F^{\bullet-1}(K\cap\overline V\cap(K\setminus V))$].  On the other hand, this latter group vanishes in degrees $i<0$ by strong vanishing for $j^\ast\F^\bullet$.  Thus $\alpha=0$, so we have even shown strong vanishing for $\F^\bullet$.
\end{proof}

\subsection{Poincar\'e--Lefschetz duality}\label{poincarelefschetzsection}

We prove a version of Poincar\'e duality for arbitrary closed subsets of a topological manifold.  This proof is a good illustration of the tools we have developed concerning pure homotopy $\K$-sheaves (which arise naturally in the proof).

We observed in Example \ref{flasquehomotopysheafresolution} that $U\mapsto C^\bullet(U)$ is a homotopy sheaf on any space (and it should be thought of as a resolution of the constant sheaf).  To prove Poincar\'e duality for a topological manifold $M$ of dimension $n$, we will show that $K\mapsto C_{\dim M-\bullet}(M,M\setminus K)$ is a pure homotopy $\K$-sheaf and calculate its $H^0$ as $\oo_M$ (in other words, it should be thought of as a resolution of the orientation sheaf of $M$).

\begin{convention}
Throughout this paper, we make no second countability or paracompactness assumptions on manifolds (topological or smooth).
\end{convention}

\begin{definition}[Orientation sheaf of manifold]\label{manordef}
Let $M$ be a topological manifold.  We let $\oo_M$ denote the \emph{orientation sheaf}\footnote{Note that the fundamental class lies in homology twisted by $\oo_M^\vee$.} of $M$ (the corresponding $\K$-sheaf is defined by $\oo_M(K):=H_{\dim M}(M,M\setminus K)$); it is locally isomorphic to the constant sheaf $\underline\ZZ$ (and following Convention \ref{signconventions}, it has parity $\dim M\in\ZZ/2$).  Now let $M$ be a topological manifold with boundary, and define orientation sheaves on $M$:
\begin{align}
\oo_M&:=j_*\oo_{M\setminus\partial M}\\
\oo_{M\rel\partial}&:=j_!\oo_{M\setminus\partial M}
\end{align}
where $j:M\setminus\partial M\hookrightarrow M$.  Then $\oo_M$ is locally isomorphic to the constant sheaf $\underline\ZZ$, and there is a sequence of sheaves which is exact on stalks:
\begin{equation}
0\to\oo_{M\rel\partial}\to\oo_M\to i_\ast\oo_{\partial M}\to 0
\end{equation}
where $i:\partial M\hookrightarrow M$, and the second map comes from the boundary map in the long exact sequence of the pair $(M,\partial M)$.
\end{definition}

\begin{lemma}[Homotopy $\K$-sheaf axioms for singular chains]\label{singularchainshomotopyKsheaf}
Let $X$ be a topological space.  Then we have:
\begin{rlist}
\item\label{scacyc}$C_\bullet(X,X)$ is acyclic.
\item\label{scpatch}Let $A,B\subseteq X$ be closed.  Then the complex:
\begin{equation*}%%unnumbered
\bigl[C_\bullet(X,X\setminus(A\cup B))\to C_{\bullet+1}(X,X\setminus A)\oplus C_{\bullet+1}(X,X\setminus B)\to C_{\bullet+2}(X,X\setminus(A\cap B))\bigr]
\end{equation*}
is acyclic.
\item\label{sccont}Let $K=\bigcap_{\alpha\in A}K_\alpha$ where $\{K_\alpha\}_{\alpha\in A}$ is a family of closed subsets of $X$ which is \emph{filtered} in the sense that for all $\alpha_1,\alpha_2\in A$, there exists $\beta\in A$ with $K_\beta\subseteq K_{\alpha_1}\cap K_{\alpha_2}$.  Then $\varinjlim_{\alpha\in A}C_\bullet(X,X\setminus K_\alpha)\to C_\bullet(X,X\setminus K)$ is a quasi-isomorphism.
\end{rlist}
\end{lemma}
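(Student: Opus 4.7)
Part (\ref{scacyc}) is immediate from the definition $C_\bullet(X,X)=C_\bullet(X)/C_\bullet(X)=0$, so no argument is needed there.

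For part (\ref{scpatch}), the plan is to reduce to the classical Mayer--Vietoris short exact sequence via the small-chains quasi-isomorphism. Setting $U := X\setminus A$ and $V := X\setminus B$, so that $U\cap V = X\setminus(A\cup B)$ and $U\cup V = X\setminus(A\cap B)$, let $C_\bullet^{U,V}(U\cup V) \subseteq C_\bullet(U\cup V)$ denote the subcomplex of chains expressible as sums of chains in $U$ with chains in $V$. The standard barycentric subdivision argument shows this inclusion is a quasi-isomorphism, and inclusion--exclusion gives a short exact sequence
\begin{equation*}
0 \to C_\bullet(U\cap V) \to C_\bullet(U)\oplus C_\bullet(V) \to C_\bullet^{U,V}(U\cup V) \to 0.
\end{equation*}
Comparing this with the tautological sequence $0\to C_\bullet(X) \to C_\bullet(X)^{\oplus 2} \to C_\bullet(X) \to 0$ via the evident inclusions, I would take column-wise cokernels to obtain (by the nine lemma) another short exact sequence
\begin{equation*}
0 \to C_\bullet(X, X\setminus(A\cup B)) \to C_\bullet(X, X\setminus A) \oplus C_\bullet(X, X\setminus B) \to C_\bullet(X)/C_\bullet^{U,V}(U\cup V) \to 0.
\end{equation*}
The canonical surjection $C_\bullet(X)/C_\bullet^{U,V}(U\cup V) \twoheadrightarrow C_\bullet(X, X\setminus(A\cap B))$ has kernel $C_\bullet(U\cup V)/C_\bullet^{U,V}(U\cup V)$, which is acyclic by the small chains lemma, so this surjection is a quasi-isomorphism. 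Any short exact sequence of complexes has acyclic total complex (the cone of the injection being quasi-isomorphic to the third term), and this acyclicity is preserved under replacing the third term by a quasi-isomorphic complex, giving the desired conclusion.

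Part (\ref{sccont}) is essentially formal once one verifies the identification $\varinjlim_\alpha C_\bullet(X\setminus K_\alpha) = C_\bullet(X\setminus K)$ as subcomplexes of $C_\bullet(X)$. Each singular simplex $\sigma \colon \Delta^n \to X\setminus K$ has compact image, and this image is covered by the filtered family of open subsets $\{X\setminus K_\alpha\}$ whose union is $X\setminus K$; hence it is contained in finitely many of them, and by the filtered hypothesis in a single $X\setminus K_\alpha$. Since filtered colimits of abelian groups are exact, the quotient $C_\bullet(X)/(\cdot)$ commutes with the colimit, yielding $\varinjlim_\alpha C_\bullet(X, X\setminus K_\alpha) = C_\bullet(X, X\setminus K)$ as an actual isomorphism of complexes, which is \emph{a fortiori} a quasi-isomorphism.

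The only step with real content is the small-chains quasi-isomorphism underlying Mayer--Vietoris in (\ref{scpatch}); this is classical and I would quote it without reproof. Everything else is routine bookkeeping with short exact sequences, the nine lemma, and exactness of filtered colimits, so I do not anticipate any genuine obstacle.
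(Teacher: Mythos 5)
Your proof is correct and follows essentially the same route as the paper's: all three parts hinge on the same inputs (the definition of relative chains for (i), the small-chains/Mayer--Vietoris quasi-isomorphism for (ii), and compactness of the standard simplex plus exactness of filtered colimits for (iii)). The only minor stylistic difference is in (ii), where the paper packages the bookkeeping as a $3\times 3$ double complex with acyclic columns and then reads off that acyclicity of the first two rows forces acyclicity of the third, whereas you form the cokernel short exact sequence via the nine lemma and then substitute a quasi-isomorphic third term; these are formally equivalent manipulations.
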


\begin{proof}
Statement (\ref{scacyc}) is obvious.

Statement (\ref{scpatch}) can be deduced from Mayer--Vietoris using a form of the nine lemma as we now explain.  Let us write $U:=X\setminus A$ and $V:=X\setminus B$.  Now consider the following total complex:
\begin{equation}
\begin{CD}
C_\bullet(U\cap V)@>>>C_{\bullet+1}(U)\oplus C_{\bullet+1}(V)@>>>C_{\bullet+2}(U\cup V)\cr
@VVV@VVV@VVV\cr
C_{\bullet+1}(X)@>>>C_{\bullet+2}(X)\oplus C_{\bullet+2}(X)@>>>C_{\bullet+3}(X)\cr
@VVV@VVV@VVV\cr
C_{\bullet+2}(X,U\cap V)@>>>C_{\bullet+3}(X,U)\oplus C_{\bullet+3}(X,V)@>>>C_{\bullet+4}(X,U\cup V)\cr
\end{CD}
\end{equation}
The columns are acyclic (by definition of relative chains), and hence the total complex is acyclic as well.  The first row is acyclic by Mayer--Vietoris, and the second row is obviously acyclic.  Thus the third row is acyclic, as needed.

Statement (\ref{sccont}) is true because the map is in fact an isomorphism on the chain level.  It is clearly surjective; to show injectivity we must show that a singular chain on $X$ which is disjoint from $K$ is in fact disjoint from $K_\alpha$ for some $\alpha\in A$.  This follows because the standard $n$-simplex is compact.
\end{proof}

\begin{lemma}[Poincar\'e--Lefschetz duality]\label{poincareduality}
Let $M$ be a topological manifold of dimension $n$ with boundary.  Let $i:X\hookrightarrow M$ be a closed subset.  Let $N\subseteq\partial M$ (closed) be a tamely embedded codimension zero submanifold with boundary.\footnote{In other words, $N\subseteq\partial M$ is a closed subset which locally looks like either $\varnothing\subseteq\RR^{n-1}$, $\RR_{\geq 0}\times\RR^{n-2}\subseteq\RR^{n-1}$, or $\RR^{n-1}\subseteq\RR^{n-1}$.}  Then there is a canonical isomorphism:
\begin{equation}\label{poincaredualitygoal}
H^\bullet[C_{n-1-\bullet}(N,N\setminus X)\to C_{n-\bullet}(M,M\setminus X)]=\cH^\bullet_c(X;i^\ast j_!j^\ast\oo_M)
\end{equation}
where $j:M^\circ\cup N^\circ\hookrightarrow M$.  In particular, specializing to $N=\varnothing$, there is a canonical isomorphism:
\begin{equation}
H_{n-\bullet}(M,M\setminus X)=\cH^\bullet_c(X;i^\ast\oo_{M\rel\partial M})
\end{equation}
\end{lemma}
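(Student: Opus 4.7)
The plan is to exhibit the left-hand side of \eqref{poincaredualitygoal} as $\varinjlim_K H^\bullet\F^\bullet(K)$ for a pure homotopy $\K$-sheaf $\F^\bullet$ on $X$ whose zeroth cohomology sheaf is $i^\ast j_!j^\ast\oo_M$, and then apply Proposition \ref{cechcohomologypurehomotopysheaf} on each compact $K\subseteq X$ before passing to the direct limit. Concretely, for compact $K\subseteq X$ set
\[
\F^\bullet(K):=\bigl[C_{n-1-\bullet}(N,N\setminus K)\to C_{n-\bullet}(M,M\setminus K)\bigr],
\]
with the chain map induced by the inclusion $N\hookrightarrow M$. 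Lemma \ref{singularchainshomotopyKsheaf} verifies the three homotopy $\K$-sheaf axioms for each of the two constituent complexes, and Lemma \ref{associatedgradedhomotopysheaf} then shows $\F^\bullet$ is a homotopy $\K$-sheaf on $X$; part (iii) of Lemma \ref{singularchainshomotopyKsheaf} in addition identifies $\varinjlim_K\F^\bullet(K)$ with the complex on the left of \eqref{poincaredualitygoal}.

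The heart of the argument is purity (Definition \ref{purehomotopyKsheaf}). Weak vanishing follows from the elementary fact that for a topological $n$-manifold $L$ with boundary and any compact $K$ contained in a Euclidean chart, $H_k(L,L\setminus K)=0$ for $k>\dim L$, by excision and the computation of $H_\bullet$ of $\RR^n$ or $\RR^n_{\geq 0}$ relative to the complement of a compact set; applied to $M$ and $N$ this gives $H^i\F^\bullet(K)=0$ for $i<0$ whenever $K$ sits inside a local chart for $(M,\partial M,N)$. For the stalk analysis I would case-split on which of the four strata contains $p\in X$, using the tame local models of $(M,\partial M,N)$: at $p\in M^\circ$ the $N$-factor vanishes and the $M$-factor contributes $H_n(M,M\setminus p)=(\oo_M)_p$ through the mapping-cone shift; at $p\in\partial M\setminus N$ the $N$-factor still vanishes while the $M$-factor becomes acyclic at a boundary point of $M$; at $p\in N^\circ$ the $M$-factor is acyclic and the $N$-factor contributes $H_{n-1}(N,N\setminus p)=(\oo_N)_p$; at $p\in\partial N$ both factors are acyclic. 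These stalks match those of $i^\ast j_!j^\ast\oo_M$ once $(\oo_N)_p$ is identified with $(\oo_M)_p$ at $p\in N^\circ$ via the boundary orientation convention of Definition \ref{manordef} and Remark \ref{bdryorrmk}.

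With purity in hand, Proposition \ref{cechcohomologypurehomotopysheaf} applied on each compact $K\subseteq X$ yields $H^\bullet\F^\bullet(K)=\cH^\bullet(K;(i^\ast j_!j^\ast\oo_M)|_K)$, and taking $\varinjlim_K$ gives \eqref{poincaredualitygoal}, the right-hand side being $\cH^\bullet_c(X;\,\cdot\,)$ by the standard presentation of compactly supported \v Cech cohomology as a direct limit over compact subsets (compare Lemma \ref{cptcechembeddingproperty}). The main obstacle will be upgrading the stalkwise matching of the previous paragraph to a globally defined sheaf isomorphism $H^0\F^\bullet\xrightarrow{\sim} i^\ast j_!j^\ast\oo_M$: one must check that the contributions from the $N$-factor over $N^\circ$ and from the $M$-factor over $M^\circ$ patch consistently in a neighborhood of any $p\in N^\circ$, which amounts to tracking signs through the boundary orientation map $\oo_M\to i_\ast\oo_{\partial M}$ and the shift in the mapping cone so that the resulting iso is genuinely functorial rather than only pointwise.
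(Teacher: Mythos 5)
Your setup (the $\K$-presheaf $\F^\bullet(K)=[C_{n-1-\bullet}(N,N\setminus K)\to C_{n-\bullet}(M,M\setminus K)]$, the verification of the homotopy $\K$-sheaf axioms via Lemmas \ref{singularchainshomotopyKsheaf} and \ref{associatedgradedhomotopysheaf}, the four-case stalk computation, and the weak vanishing) is all sound and is exactly what the paper does. The gap is in how you pass from this to the statement when $X$ is not compact. You propose to apply Proposition \ref{cechcohomologypurehomotopysheaf} over each compact $K\subseteq X$ and then ``pass to the direct limit,'' asserting that $\varinjlim_K H^\bullet\F^\bullet(K)$ recovers the left-hand side and $\varinjlim_K\cH^\bullet(K;\,\cdot\,)$ recovers $\cH^\bullet_c(X;\,\cdot\,)$. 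Neither claim holds. The $\K$-presheaf restriction maps $\F^\bullet(K)\to\F^\bullet(K')$ run from larger to smaller compacts, so a filtered colimit of $\F^\bullet(K)$ converges to a stalk, not to $[C_\bullet(N,N\setminus X)\to C_\bullet(M,M\setminus X)]$; there simply is no natural map $\F^\bullet(K)\to\F^\bullet(K')$ for $K\subseteq K'$ over which to take your intended limit. Likewise $\cH^\bullet_c(X;\F)=\varinjlim_K\cH^\bullet_K(X;\F)$ is a direct limit of cohomologies \emph{with supports in} $K$ (Definition \ref{cechdefnI}), not of cohomologies \emph{of} $K$: already for $X=\RR$ and $\F=\underline\ZZ$ one has $\cH^1_c(\RR;\underline\ZZ)=\ZZ$ while $\cH^1(K;\underline\ZZ)=0$ for every compact $K$, and $\cH^0_c(\RR;\underline\ZZ)=0$ while $\cH^0(K;\underline\ZZ)\neq 0$. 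The citation of Lemma \ref{cptcechembeddingproperty} does not support this either, as that lemma concerns extension by zero along open embeddings.

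The paper avoids all of this by building $\F^\bullet$ as a $\K$-presheaf on the one-point compactification $X^+$ of $X$ (with $\F^\bullet(K)$ defined using $K\cap X$). Then $\F^\bullet(X^+)$ is literally the left-hand side of \eqref{poincaredualitygoal}, Proposition \ref{cechcohomologypurehomotopysheaf} applies directly on the compact space $X^+$ to give $\cH^\bullet(X^+;H^0\F^\bullet)$, the stalk at $\infty\in X^+$ vanishes so $H^0\F^\bullet=f_!\,i^\ast j_!j^\ast\oo_M$ for $f:X\hookrightarrow X^+$, and now Lemma \ref{cptcechembeddingproperty} is exactly what converts $\cH^\bullet(X^+;f_!(-))$ into $\cH^\bullet_c(X;-)$. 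You should replace your limit argument by this compactification step; everything else in your proposal then goes through unchanged.
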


\begin{proof}
Let $X^+$ be the one-point compactification of $X$.  Define a complex of $\K$-presheaves $\F^\bullet$ on $X^+$:
\begin{equation}\label{purehomotopyKsheafinPD}
\F^\bullet(K):=[C_{n-1-\bullet}(N,N\setminus K)\to C_{n-\bullet}(M,M\setminus K)]
\end{equation}
(where on the right hand side by $K$ we really mean $K\cap X$).  Applying Lemmas \ref{singularchainshomotopyKsheaf} and \ref{associatedgradedhomotopysheaf}, we see that $\F^\bullet$ is a homotopy $\K$-sheaf.

We claim that $\F^\bullet$ is a pure homotopy $\K$-sheaf.  Certainly the homology of $\F^\bullet$ is bounded below, because the singular chain complex of a topological manifold has homology bounded above and $\F^\bullet$ is built out of these.  Now, it is easy to calculate:
\begin{equation}\label{stalkhomologyofPDpure}
H^\bullet\F^\bullet_p=\begin{cases}\ZZ&p\in M^\circ\cup N^\circ\cr 0&p\notin M^\circ\cup N^\circ\end{cases}
\end{equation}
concentrated in degree zero (see \cite[p231 \S3.3]{hatcher} for the special case $N=\partial M=\varnothing$).  Hence $\F^\bullet$ is a pure homotopy $\K$-sheaf.  In fact, it is not hard to see (using the adjunction $j_!\dashv j^\ast$) that \eqref{stalkhomologyofPDpure} lifts to an isomorphism of sheaves $f_!i^\ast j_!j^\ast\oo_M\to H^0\F^\bullet$ where $f:X\hookrightarrow X^+$.

Now we conclude:
\begin{equation*}
H^\bullet\F^\bullet(X^+)\overset{\text{Prop \ref{cechcohomologypurehomotopysheaf}}}=\cH^\bullet(X^+;f_!i^\ast j_!j^\ast\oo_M)\overset{\text{Lem \ref{cptcechembeddingproperty}}}=\cH^\bullet_c(X;i^\ast j_!j^\ast\oo_M)
\end{equation*}
Now observe that $H^\bullet\F^\bullet(X^+)$ is the left hand side of \eqref{poincaredualitygoal}.
\end{proof}

\begin{remark}[Why homotopy $\K$-sheaves instead of homotopy sheaves?]
The naive modification of \eqref{purehomotopyKsheafinPD} substituting open $U\subseteq X$ in place of compact $K\subseteq X$ does not yield a homotopy sheaf.  To get a homotopy sheaf, one could apply the proposed functor $R\alpha_\ast:\hShv_\K X\to\hShv X$ from Remark \ref{homotopyKnotsheavesequivalencermk} to the homotopy $\K$-sheaf \eqref{purehomotopyKsheafinPD}.  It is somewhat easier, though, to just work directly in the setting of homotopy $\K$-sheaves (which has some advantages, for example stalks of $\K$-presheaves are easier to define/understand).  It is for this reason that throughout this paper we work with homotopy $\K$-sheaves instead of homotopy sheaves.
\end{remark}

\subsection{Homotopy colimits}\label{hocolimintro}

We make common use of the following type of ``homotopy diagram'' and its corresponding ``homotopy colimit''.  

\begin{definition}[Homotopy diagram]
Let $\SSS$ be a finite poset.  A \emph{homotopy diagram over $\SSS$} is a collection of complexes $\{A^\bullet_{\s,\ttt}\}_{\s\preceq\ttt\in\SSS}$ equipped with compatible maps $A^\bullet_{\s,\ttt}\to A^\bullet_{\s',\ttt'}$ for $\s\preceq\s'\preceq\ttt'\preceq\ttt$ (meaning that $A^\bullet_{\s,\ttt}\to A^\bullet_{\s',\ttt'}\to A^\bullet_{\s'',\ttt''}$ and $A^\bullet_{\s,\ttt}\to A^\bullet_{\s'',\ttt''}$ agree).
\end{definition}

\begin{definition}[Homotopy colimit]\label{homotopycolimit}
Let $\SSS$ be a finite poset and let $\{A^\bullet_{\s,\ttt}\}_{\s\preceq\ttt\in\SSS}$ be a homotopy diagram over $\SSS$.  We define the \emph{homotopy colimit}:
\begin{equation}
\hocolim_{\s\preceq\ttt\in\SSS}A^\bullet_{\s,\ttt}:=\bigoplus_{p\geq 0}\bigoplus_{\s_0\prec\cdots\prec\s_p\in\SSS}A^{\bullet+p}_{\s_0,\s_p}
\end{equation}
with differential (decreasing $p$) given by the alternating sum over forgetting one of the $\s_i$ (plus the internal differential).\footnote{This can be interpreted as the complex of simplicial chains on the nerve of $\SSS$ using a coefficient system determined by $\{A^\bullet_{\s,\ttt}\}$.}  Loosely speaking, we are ``gluing together'' the $\{A^\bullet_{\s,\s}\}_{\s\in\SSS}$ along the ``morphisms'' $A^\bullet_{\s,\s}\leftarrow A^\bullet_{\s,\ttt}\to A^\bullet_{\ttt,\ttt}$ for $\s\preceq\ttt$.
\end{definition}

\begin{lemma}[Terminal object for $\hocolim$]\label{comboretraction}
Let $\SSS$ be a finite poset with unique maximal element $\s^\ttop$, and let $\{A^\bullet_{\s,\ttt}\}_{\s\preceq\ttt\in\SSS}$ be a homotopy diagram with the property that every map $A^\bullet_{\s,\ttt}\to A^\bullet_{\s,\ttt'}$ ($\s\preceq\ttt'\preceq\ttt$) is a quasi-isomorphism.  Then the natural inclusion $A^\bullet_{\s^\ttop,\s^\ttop}\to\hocolim_{\s\preceq\ttt\in\SSS}A^\bullet_{\s,\ttt}$ is a quasi-isomorphism.
\end{lemma}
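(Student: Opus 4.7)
The plan is to exhibit the inclusion as the bottom stage of a filtration whose successive quotients are acyclic (except for the bottom one, which coincides with $A^\bullet_{\s^\ttop,\s^\ttop}$). Fix a linear extension $\s^{(1)}=\s^\ttop\succ\s^{(2)}\succ\cdots\succ\s^{(N)}$ of $\SSS$ (so $\s^{(i)}\succ\s^{(j)}$ forces $i<j$), and define $F_k\subseteq\hocolim_{\s\preceq\ttt}A^\bullet_{\s,\ttt}$ to be the subcomplex spanned by those chains $\s_0\prec\cdots\prec\s_p$ with $\s_0\in\{\s^{(1)},\ldots,\s^{(k)}\}$. The external differential $\partial_i$ either fixes $\s_0$ (for $i\geq 1$) or replaces it with $\s_1\succ\s_0$ (for $i=0$), which only pushes it further down the list, so $F_k$ is indeed a subcomplex. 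Since $\s^\ttop$ has no strict successors, the only chain starting at $\s^\ttop$ is the length-zero chain $(\s^\ttop)$, and hence $F_1=A^\bullet_{\s^\ttop,\s^\ttop}$ is precisely the image of the tautological inclusion. The lemma therefore reduces to showing $F_k/F_{k-1}$ is acyclic for every $k\geq 2$.

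Fix $k\geq 2$ and write $\s_*:=\s^{(k)}\ne\s^\ttop$. The quotient $F_k/F_{k-1}$ consists of the chains $\s_*=\s_0\prec\s_1\prec\cdots\prec\s_p$ in $\SSS^{\succeq\s_*}$ with coefficient $A^{\bullet+p}_{\s_*,\s_p}$, equipped with the differential $\sum_{i=1}^p(-1)^i\partial_i$ (plus the internal differentials), since the $\partial_0$ summand lands in $F_{k-1}$. To prove acyclicity I would filter this complex further by the simplicial length $p$ and analyze the resulting $E^1$ page. By hypothesis each structure map $A^\bullet_{\s_*,\s_p}\to A^\bullet_{\s_*,\s_*}$ is a quasi-isomorphism, so passing to internal cohomology canonically identifies every $H^\bullet A^\bullet_{\s_*,\s_p}$ with $H^\bullet A^\bullet_{\s_*,\s_*}$. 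The functoriality of the homotopy diagram forces $\partial_p^\ast$ to become the identity under this identification, while the interior $\partial_i^\ast$ ($1\leq i<p$) are already the identity on chains. Consequently $E^1_\bullet$ factors as $H^\bullet A^\bullet_{\s_*,\s_*}\otimes C_\bullet$, where $C_\bullet$ is the free complex generated by chains in $\SSS^{\succeq\s_*}$ starting at $\s_*$, with differential $\sum_{i=1}^p(-1)^i\partial_i^\mathrm{simp}$.

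The key combinatorial input is that $C_\bullet$ is (up to a degree shift) the augmented simplicial chain complex of the order complex of $\SSS^{\succ\s_*}$. Because $\s_*\ne\s^\ttop$, this subposet contains $\s^\ttop$ as its unique maximum, so its order complex is the simplicial cone with cone point $\s^\ttop$ and is therefore contractible; its augmented chain complex is consequently acyclic. Being a bounded acyclic complex of free modules, $C_\bullet$ is contractible, so $H^\bullet A^\bullet_{\s_*,\s_*}\otimes C_\bullet$ vanishes, $E^1_\bullet=0$, and $F_k/F_{k-1}$ is acyclic, as required. The main point that will need to be handled carefully is the identification of $\partial_p^\ast$ with the identity on $E^1$; this reduces to the commutativity of the composition $A^\bullet_{\s_*,\s_p}\to A^\bullet_{\s_*,\s_{p-1}}\to A^\bullet_{\s_*,\s_*}$ with the direct map $A^\bullet_{\s_*,\s_p}\to A^\bullet_{\s_*,\s_*}$, which is part of the functoriality built into the homotopy diagram.
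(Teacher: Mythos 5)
Your proof is correct, but it takes a genuinely different route from the paper's. The paper filters $\hocolim$ in a single step, by the number of entries among $\s_0,\ldots,\s_p$ that differ from $\s^\ttop$; since a strict chain can contain $\s^\ttop$ at most once (and only as $\s_p$), the $j$-th graded piece for $j\geq 1$ splits, for each chain $\s_0\prec\cdots\prec\s_{j-1}$ of non-$\s^\ttop$ elements, as the mapping cone of the structure map $A^{\bullet+1}_{\s_0,\s^\ttop}\to A^\bullet_{\s_0,\s_{j-1}}$, which is acyclic directly from the hypothesis (taking $\ttt=\s^\ttop$, $\ttt'=\s_{j-1}$). No spectral sequence and no combinatorics are needed. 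You instead filter by the initial vertex $\s_0$ along a linear extension and then analyze each quotient $F_k/F_{k-1}$ via a second filtration by simplicial degree $p$: your $E^1$ page is $H^\bullet A^\bullet_{\s_*,\s_*}$ tensored with the (shifted) augmented chain complex of the order complex of $\SSS^{\succ\s_*}$, which is the cone on $\s^\ttop$ and hence contractible, so $E^2=0$. This is sound — the identification of the face maps with the identity uses the strict functoriality of the homotopy diagram, the cone argument needs $\s^\ttop\succ\s_*$ (which holds since $\SSS$ is finite with unique maximum and $\s_*\neq\s^\ttop$), and the tensor product with a bounded split-exact complex of free modules indeed vanishes — but it requires two nested filtrations plus a combinatorial detour where the paper needs one finite filtration and one observation about mapping cones. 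Your version is more work but arguably more conceptual: it isolates the contractibility of the poset above $\s_*$ as the combinatorial engine, which is the classical "nerve of a cone" argument and may generalize more transparently; the paper's argument is shorter and stays entirely inside homological algebra.
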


\begin{proof}
We filter $\hocolim_{\mathfrak s\preceq\mathfrak t\in\SSS}A^\bullet_{\mathfrak s,\mathfrak t}$ by the number of $\mathfrak s_0,\ldots,\mathfrak s_p$ which are not equal to $\mathfrak s^\ttop$.  The zeroth associated graded piece is the subcomplex $A^\bullet_{\mathfrak s^\ttop,\mathfrak s^\ttop}$, so it suffices to show that all the other associated graded pieces are acyclic.  Each of these is a direct sum of mapping cones $[A^{\bullet+1}_{\mathfrak s,\mathfrak s^\ttop}\to A^\bullet_{\mathfrak s,\mathfrak t}]$, which are acyclic by assumption.
\end{proof}

\begin{lemma}[$\hocolim$ preserves quasi-isomorphisms]\label{simplicialchainsquasiisomorphism}
Fix a finite poset $\SSS$ and let $\{A^\bullet_{\mathfrak s,\mathfrak t}\}_{\mathfrak s\preceq\mathfrak t\in\SSS}$ and $\{B^\bullet_{\mathfrak s,\mathfrak t}\}_{\mathfrak s\preceq\mathfrak t\in\SSS}$ be homotopy diagrams over $\SSS$.  Suppose that there are compatible quasi-isomorphisms $A^\bullet_{\mathfrak s,\mathfrak t}\to B^\bullet_{\mathfrak s,\mathfrak t}$.  Then the induced map $\hocolim_{\mathfrak s\preceq\mathfrak t\in\SSS}A^\bullet_{\mathfrak s,\mathfrak t}\to\hocolim_{\mathfrak s\preceq\mathfrak t\in\SSS}B^\bullet_{\mathfrak s,\mathfrak t}$ is a quasi-isomorphism.
\end{lemma}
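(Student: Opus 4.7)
The plan is to reduce the assertion to the statement that the homotopy colimit of an acyclic homotopy diagram (over a finite poset) is itself acyclic, and then prove the latter by a filtration argument.

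First, I would form the objectwise mapping cone $C^\bullet_{\s,\ttt} := [A^\bullet_{\s,\ttt}\to B^{\bullet-1}_{\s,\ttt}]$. The compatibility of the quasi-isomorphisms $A^\bullet_{\s,\ttt}\to B^\bullet_{\s,\ttt}$ with the structure maps of the two homotopy diagrams ensures that $\{C^\bullet_{\s,\ttt}\}_{\s\preceq\ttt\in\SSS}$ inherits the structure of a homotopy diagram over $\SSS$, and since each $A^\bullet_{\s,\ttt}\to B^\bullet_{\s,\ttt}$ is a quasi-isomorphism, each $C^\bullet_{\s,\ttt}$ is acyclic. Inspecting Definition \ref{homotopycolimit}, the formation of $\hocolim$ is an additive operation that commutes termwise with mapping cones; thus there is a canonical identification
\begin{equation*}
\Bigl[\hocolim_{\s\preceq\ttt\in\SSS}A^\bullet_{\s,\ttt}\longrightarrow\hocolim_{\s\preceq\ttt\in\SSS}B^{\bullet-1}_{\s,\ttt}\Bigr]=\hocolim_{\s\preceq\ttt\in\SSS}C^\bullet_{\s,\ttt}.
\end{equation*}
Hence it suffices to show that the right-hand side is acyclic.

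For this, I would filter $\hocolim_{\s\preceq\ttt\in\SSS}C^\bullet_{\s,\ttt}$ by the ``chain length'' index $p$ appearing in Definition \ref{homotopycolimit}; concretely, let $F_n$ be the subspace generated by those summands $C^{\bullet+p}_{\s_0,\s_p}$ with $p\leq n$. Because the differential of the homotopy colimit is an alternating sum of ``forget one $\s_i$'' maps (which strictly decrease $p$) plus the internal differentials of the $C^\bullet_{\s_0,\s_p}$ (which preserve $p$), each $F_n$ is indeed a subcomplex. The associated graded piece $F_n/F_{n-1}$ carries only the internal differential, and so is the direct sum over strict chains $\s_0\prec\cdots\prec\s_n$ of the shifted complexes $C^{\bullet+n}_{\s_0,\s_n}$, each of which is acyclic by construction. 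Since $\SSS$ is finite there are no strict chains of length exceeding $\#\SSS$, so the filtration $F_0\subseteq F_1\subseteq\cdots$ terminates; a bounded filtration with acyclic associated graded forces the total complex to be acyclic, which completes the argument.

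The proof is essentially routine once the mapping-cone reduction is made; the only point requiring minor care is checking that the ``hocolim of mapping cones'' identification respects the sign conventions of Convention \ref{signconventions} (so that the differential on $\hocolim C^\bullet_{\s,\ttt}$ really is the differential of the mapping cone of the induced map), which is a direct unwinding of definitions.
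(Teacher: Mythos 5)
Your proposal is correct and matches the paper's own proof: both reduce to the statement that $\hocolim$ of an objectwise-acyclic homotopy diagram is acyclic via the mapping-cone reduction, and both then use the finite filtration by chain length $p$ whose associated graded is acyclic. Yours simply spells out the filtration and sign-checking in more detail.
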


\begin{proof}
Since the functor $\hocolim_{\mathfrak s\preceq\mathfrak t\in\SSS}$ commutes with the formation of mapping cones, it suffices to show that if each $A^\bullet_{\mathfrak s,\mathfrak t}$ is acyclic, then so is $\hocolim_{\mathfrak s\preceq\mathfrak t\in\SSS}A^\bullet_{\mathfrak s,\mathfrak t}$.  This holds since in this case it has a finite filtration whose associated graded is acyclic.
\end{proof}

\begin{definition}[Tensor product of homotopy diagrams]\label{simplicialproduct}
Let $\SSS$ and $\T$ be two finite posets, and let $\{A^\bullet_{\mathfrak s,\mathfrak s'}\}$ and $\{B^\bullet_{\mathfrak t,\mathfrak t'}\}$ be homotopy diagrams over $\SSS$ and $\T$ respectively.  Their tensor product $\{A^\bullet_{\mathfrak s,\mathfrak s'}\otimes B^\bullet_{\mathfrak t,\mathfrak t'}\}$ is naturally a homotopy diagram over $\SSS\times\T$.  Now there is a natural morphism:
\begin{equation}\label{tensorproductdirectlimitmorphism}
\hocolim_{\mathfrak s\preceq\mathfrak s'\in\SSS}A^\bullet_{\mathfrak s,\mathfrak s'}\otimes\hocolim_{\mathfrak t\preceq\mathfrak t'\in\T}B^\bullet_{\mathfrak t,\mathfrak t'}\to\hocolim_{\mathfrak s\times\mathfrak t\preceq\mathfrak s'\times\mathfrak t'\in\SSS\times\T}A^\bullet_{\mathfrak s,\mathfrak s'}\otimes B^\bullet_{\mathfrak t,\mathfrak t'}
\end{equation}
To define this morphism, we simply observe that the nerve of $\SSS\times\T$ is the standard simplicial subdivision of the product of the nerves of $\SSS$ and $\T$, and this is covered by a morphism of coefficient systems.\footnote{In fact, from this perspective one easily sees that \eqref{tensorproductdirectlimitmorphism} is always a quasi-isomorphism.}
\end{definition}

\subsection{Homotopy colimits of pure homotopy \texorpdfstring{$\K$}{K}-sheaves}\label{cechsimplegluingsection}

We introduce a gluing construction for pure homotopy $\K$-sheaves.  The relevance of this Lemma \ref{gluingcechsimple} is best understood in its (only) intended application, namely Lemma \ref{Cpure} (which is best understood in context).

\begin{lemma}[$\hocolim$ preserves homotopy $\K$-sheaves]\label{pihomotopysheaf}
Let $\SSS$ be a finite poset and let $\{\F^\bullet_{\mathfrak s,\mathfrak t}\}_{\mathfrak s\preceq\mathfrak t\in\SSS}$ be a homotopy diagram of homotopy $\K$-sheaves.  Then $\hocolim_{\mathfrak s\preceq\mathfrak t\in\SSS}\F^\bullet_{\mathfrak s,\mathfrak t}$ is a homotopy $\K$-sheaf.
\end{lemma}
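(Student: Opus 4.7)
\medskip

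The plan is to verify the three axioms \eqref{homotopysheafacyclic}, \eqref{homotopysheafproperty}, and \eqref{homotopysheafcontinuity} for $\F^\bullet := \hocolim_{\mathfrak s\preceq\mathfrak t\in\SSS}\F^\bullet_{\mathfrak s,\mathfrak t}$ by reducing each to the corresponding axiom for the individual $\F^\bullet_{\mathfrak s,\mathfrak t}$. The key observation that makes all three reductions work is that, because $\SSS$ is finite, the defining expression
\begin{equation*}
\hocolim_{\mathfrak s\preceq\mathfrak t\in\SSS}A^\bullet_{\mathfrak s,\mathfrak t}=\bigoplus_{p\geq 0}\bigoplus_{\mathfrak s_0\prec\cdots\prec\mathfrak s_p\in\SSS}A^{\bullet+p}_{\mathfrak s_0,\mathfrak s_p}
\end{equation*}
is a \emph{finite} direct sum of shifts of the $A^\bullet_{\mathfrak s,\mathfrak t}$, equipped with a finite filtration (by $p$) whose associated graded is the obvious direct sum. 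Consequently, $\hocolim$ commutes with the formation of total complexes of finite diagrams and with filtered direct limits, and it carries levelwise quasi-isomorphisms to quasi-isomorphisms (the latter by Lemma \ref{simplicialchainsquasiisomorphism}).

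For \eqref{homotopysheafacyclic}, each $\F^\bullet_{\mathfrak s,\mathfrak t}(\varnothing)$ is acyclic, so the filtration of $\F^\bullet(\varnothing)$ by $p$ has acyclic associated graded, hence $\F^\bullet(\varnothing)$ is acyclic. For \eqref{homotopysheafproperty}, the Mayer--Vietoris complex
\begin{equation*}
\bigl[\F^\bullet(K_1\cup K_2)\to\F^{\bullet-1}(K_1)\oplus\F^{\bullet-1}(K_2)\to\F^{\bullet-2}(K_1\cap K_2)\bigr]
\end{equation*}
is the $\hocolim$ over $\mathfrak s\preceq\mathfrak t\in\SSS$ of the corresponding Mayer--Vietoris complex for $\F^\bullet_{\mathfrak s,\mathfrak t}$ (since formation of the three-term total complex, being built out of finite direct sums, commutes with $\hocolim$); each of these is acyclic by hypothesis, and hence so is the $\hocolim$ by the same filtration argument as in the previous step (or equivalently by Lemma \ref{simplicialchainsquasiisomorphism}).

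For \eqref{homotopysheafcontinuity}, we use the fact that filtered direct limits commute with the finite direct sum defining $\hocolim$, yielding
\begin{equation*}
\varinjlim_{\begin{smallmatrix}K\subseteq U\cr U\text{ open}\end{smallmatrix}}\F^\bullet(\overline U)
=\hocolim_{\mathfrak s\preceq\mathfrak t\in\SSS}\;\varinjlim_{\begin{smallmatrix}K\subseteq U\cr U\text{ open}\end{smallmatrix}}\F^\bullet_{\mathfrak s,\mathfrak t}(\overline U).
\end{equation*}
The continuity axiom for each $\F^\bullet_{\mathfrak s,\mathfrak t}$ gives a quasi-isomorphism from each term in the inner diagram to $\F^\bullet_{\mathfrak s,\mathfrak t}(K)$, and these quasi-isomorphisms are compatible with the structure maps of the homotopy diagram, so Lemma \ref{simplicialchainsquasiisomorphism} promotes them to a quasi-isomorphism of the $\hocolim$'s, giving \eqref{homotopysheafcontinuity} for $\F^\bullet$.

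There is no real obstacle here; the only point deserving some care is the commutation of $\hocolim$ with the three elementary operations involved (direct limits, finite direct sums, and formation of short total complexes), all of which are immediate from finiteness of $\SSS$ and the explicit formula for $\hocolim$.
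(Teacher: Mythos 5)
Your proof is correct and uses essentially the same idea as the paper: the $p$-filtration on the homotopy colimit has associated graded a finite direct sum of shifts of the $\F^\bullet_{\mathfrak s,\mathfrak t}$, and a finite filtration with homotopy-$\K$-sheaf associated graded yields a homotopy $\K$-sheaf. The paper packages this as a one-line appeal to Lemma~\ref{associatedgradedhomotopysheaf} (extensions of homotopy $\K$-sheaves are homotopy $\K$-sheaves), whereas you unfold that reduction axiom by axiom; the content is the same.
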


\begin{proof}
The associated graded of the $p$-filtration on $\hocolim_{\mathfrak s\preceq\mathfrak t\in\SSS}\F^\bullet_{\mathfrak s,\mathfrak t}$ is a homotopy $\K$-sheaf by assumption; now use Lemma \ref{associatedgradedhomotopysheaf}.
\end{proof}

\begin{lemma}[Gluing pure homotopy $\K$-sheaves]\label{gluingcechsimple}
Let $A$ be a finite set.  Let $\{U_I\}_{I\subseteq A}$ be an open cover of a space $X$ satisfying:
\begin{rlist}
\item\label{nestedintersectionOK}$U_I\cap U_K\subseteq U_J$ for $I\subseteq J\subseteq K$.
\item\label{nonnestedintersectionbig}$U_I\cap U_{I'}\subseteq U_{I\cup I'}$ for $I,I'\subseteq A$.
\end{rlist}
Let $\G$ be a sheaf on $X$ and let $\G_{IJ}:=(j_{IJ})_!(j_{IJ})^\ast\G$ where $j_{IJ}:U_I\cap U_J\hookrightarrow X$.  By property (\ref{nestedintersectionOK}), this gives rise to a homotopy diagram $\{\G_{IJ}\}_{I\subseteq A}$ over $2^A$ of sheaves on $X$.

Let $\{\F_{IJ}^\bullet\}_{I\subseteq J\subseteq A}$ be a homotopy diagram over $2^A$ of pure homotopy $\K$-sheaves on $X$, and suppose we give a compatible system of isomorphisms $\G_{IJ}\xrightarrow\sim H^0\F^\bullet_{IJ}$.  Then $\F^\bullet:=\hocolim_{I\subseteq J\subseteq A}\F^\bullet_{IJ}$ is a pure homotopy $\K$-sheaf and there is a canonical induced isomorphism $\G\xrightarrow\sim H^0\F^\bullet$.
\end{lemma}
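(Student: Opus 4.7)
The plan is to verify two claims: that $\F^\bullet$ is a homotopy $\K$-sheaf, and that $\F^\bullet$ is pure with $H^0\F^\bullet$ canonically isomorphic to $\G$. The first is immediate from Lemma \ref{pihomotopysheaf}. For the second, I will compute the stalks of $\F^\bullet$ via a spectral sequence, reducing the calculation to the contractibility of a certain order complex. For each $x\in X$, set $S_x:=\{I\subseteq A:x\in U_I\}$, regarded as a subposet of $2^A$. Hypothesis (\ref{nonnestedintersectionbig}) implies $S_x$ is closed under binary union, and hypothesis (\ref{nestedintersectionOK}) implies $S_x$ is order-convex (if $I\subseteq J\subseteq K$ with $I,K\in S_x$, then $J\in S_x$). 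Since $\{U_I\}$ covers $X$, the poset $S_x$ is nonempty, hence has a unique maximum element $I_x^{\ttop}:=\bigcup_{I\in S_x}I\in S_x$.

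Next I compute $(\F^\bullet)_x=\hocolim_{I\subseteq J}(\F^\bullet_{IJ})_x$ using the spectral sequence of the $p$-filtration on the hocolim. By purity of each $\F^\bullet_{IJ}$ (Lemma \ref{cechsimplebasic}) together with the given $H^0\F^\bullet_{IJ}=\G_{IJ}$, the $E_1$ page vanishes off the antidiagonal $p+q=0$, and
\begin{equation*}
E_1^{p,-p}=\bigoplus_{\substack{I_0\subsetneq\cdots\subsetneq I_p\\ I_0,I_p\in S_x}}\G_x,
\end{equation*}
since $(\G_{I_0,I_p})_x=\G_x$ precisely when $I_0,I_p\in S_x$ and vanishes otherwise. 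By convexity of $S_x$, the condition $I_0,I_p\in S_x$ forces all intermediate $I_i\in S_x$, so these chains are exactly the $p$-simplices of the order complex $N(S_x)$ of the poset $S_x$, and $E_1^{p,-p}=\G_x\otimes C_p(N(S_x))$. After verifying that the structure maps $(\G_{I_0,I_p})_x\to(\G_{I_1,I_p})_x$ (forgetting an endpoint) reduce to $\id_{\G_x}$ whenever all indices lie in $S_x$, one identifies $d_1$ with $\id_{\G_x}\otimes\partial$ for $\partial$ the simplicial boundary, so $E_2^{p,-p}=\G_x\otimes H_p(N(S_x))$. Since $S_x$ has a maximum, $N(S_x)$ is contractible (a cone with apex $I_x^{\ttop}$), giving $H_0(N(S_x))=\ZZ$ and $H_p(N(S_x))=0$ for $p>0$. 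The spectral sequence degenerates at $E_2$ (single nonzero column), yielding $(H^0\F^\bullet)_x=\G_x$ and $(H^i\F^\bullet)_x=0$ for $i\neq 0$.

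Weak vanishing is automatic: $|A|<\infty$ bounds the chain length in the hocolim, and there are only finitely many (pure) $\F^\bullet_{IJ}$, whose cohomologies are locally uniformly bounded below, so $\F^\bullet$ has locally uniformly bounded-below cohomology. This completes the verification of purity. For the canonical isomorphism $\G\xrightarrow\sim H^0\F^\bullet$: connectedness of each $N(S_x)$ supplies a canonical generator of $H_0(N(S_x))=\ZZ$, hence a canonical stalkwise iso $\G_x\cong(H^0\F^\bullet)_x$; by naturality in $x$ these glue into a map of sheaves $\G\to H^0\F^\bullet$ (both sides being sheaves, the right by Lemma \ref{cechsimplebasic}), which is an iso by construction. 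The main technical obstacle will be the bookkeeping for the $d_1$ differential: one must confirm with care that the structure maps on $\G_{IJ}$ combine with the signs in the hocolim differential to give exactly the simplicial boundary of $N(S_x)$, and that contributions from chains with $I_0\not\in S_x$ or $I_p\not\in S_x$ (on which the stalks vanish) interact correctly with the filtration so as not to contaminate the spectral sequence computation.
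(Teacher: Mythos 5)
Your proposal is correct and follows essentially the same approach as the paper: the paper likewise filters the homotopy colimit by $p$, reduces the stalk computation to the contractibility of the nerve of $(2^A)_p := \{I : p \in U_I\}$ (which has a maximum element and is order-convex by exactly the two observations you make), and concludes that $H^\bullet\F^\bullet_p$ is $\G_p$ concentrated in degree zero. The one place the paper is more explicit than your sketch is the final step: rather than gluing stalkwise isomorphisms by ``naturality in $x$,'' it constructs the sheaf map $\G \to H^0\F^\bullet$ over each $U_I$ as the composition $\G_{II} \xrightarrow{\sim} H^0\F^\bullet_{II} \to H^0\F^\bullet$ (inclusion of the $p=0$ subcomplex of the hocolim) and then checks on stalks that these agree on overlaps, which sidesteps the need to argue that a family of stalkwise isomorphisms assembles into a morphism of sheaves.
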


\begin{proof}
Certainly $\F^\bullet$ is a homotopy $\K$-sheaf by Lemma \ref{pihomotopysheaf}.  Since $A$ is finite, it is easy to see that $H^i\F^\bullet_{IJ}$ being bounded below implies that $H^i\F^\bullet$ is bounded below.

Now let us calculate $\F^\bullet_p$ using the spectral sequence associated to the $p$-filtration.  The $E_1$ term is concentrated along the $q=0$ row since $(H^q\F^\bullet_{IJ})_p$ is concentrated in degree zero; thus there are no further differentials after the $E_1$ page.  On the $E_1$ page the differentials coincide precisely with the differentials in the definition of $\hocolim_{I\subseteq J\subseteq A}(\G_{IJ})_p$ (regarding $\G_{IJ}$ as complexes concentrated in degree zero).  Hence we have an isomorphism:
\begin{equation}\label{FGiso}
H^\bullet\F^\bullet_p=H^\bullet\hocolim_{I\subseteq J\subseteq A}(\G_{IJ})_p
\end{equation}
To calculate the right hand side of \eqref{FGiso}, let us start with the trivial observation that:
\begin{equation}
(\G_{IJ})_p=\begin{cases}\G_p&p\in U_I\cap U_J\cr 0&p\notin U_I\cap U_J\end{cases}
\end{equation}
Now consider $(2^A)_p:=\{I\subseteq A:p\in U_I\}$, which satisfies the following properties:
\begin{rlist}
\item$(2^A)_p$ has a maximal element (restatement of (\ref{nonnestedintersectionbig})).
\item$I,K\in(2^A)_p$ implies $J\in(2^A)_p$ for $I\subseteq J\subseteq K$ (restatement of (\ref{nestedintersectionOK})).
\end{rlist}
These two properties imply that $\hocolim_{I\subseteq J\subseteq A}(\G_{IJ})_p$ is simply $\G_p$ tensored with the simplicial chain complex of the nerve of $(2^A)_p$ (which is contractible).  Hence \eqref{FGiso} gives a canonical isomorphism of stalks $H^0\F^\bullet_p=\G_p$ (and thus in particular $\F^\bullet$ is pure).

Now it remains to construct a canonical isomorphism of sheaves $\G\xrightarrow\sim H^0\F^\bullet$.  Over the open set $U_I$, we define this isomorphism to be the composition $\G_{II}\xrightarrow\sim H^0\F_{II}^\bullet\to H^0\F^\bullet$ (the second map being the inclusion of a $p=0$ subcomplex in the homotopy colimit).  At any point $p\in U_I$, this specializes to the isomorphism of stalks $\G_p=H^0\F^\bullet_p$ defined earlier.  Hence since $\{U_I\}_{I\subseteq A}$ is an open cover of $X$, these isomorphisms patch together to give the desired global isomorphism.
\end{proof}

\subsection{Steenrod homology}\label{steenrodhomologysec}

Steenrod homology $\sH_\bullet$ is a homology theory for compact Hausdorff spaces.  It is characterized uniquely by a certain set of axioms due to Berikashvili \cite{berikashviliI,berikashviliII} (see also Goldfarb \cite[p355, \S7.4]{goldfarb} and Inassaridze \cite{inassaridze}); a simpler axiomatic characterization of Steenrod homology on compact metrizable spaces is due to Milnor \cite{milnor}.  Note that it follows as usual from these axioms that Steenrod homology coincides with singular homology on finite CW-complexes.  Steenrod homology is due to Steenrod for compact metrizable spaces \cite{steenrodreg} and was later generalized to compact Hausdorff spaces (some sources include Edwards--Hastings \cite{edwardshastings}, Hastings \cite{hastings}, Carlsson--Pedersen \cite{carlssonpedersen}, and Goldfarb \cite{goldfarb}).  Another reference is Marde\v si\'c's book \cite{mardesic} (note that Marde\v si\'c studies the more general theory of strong homology, which coincides with Steenrod homology on compact spaces).

\subsubsection{\v Cech cochains}

The following definition is due to Carlsson--Pedersen \cite{carlssonpedersen}.  The idea of using open covers indexed by the points of the space being covered goes back at least to Godement \cite[II \S 5.8]{godement}, and was also used by Friedlander \cite{friedlander} in the context of \'etale homotopy theory.

\begin{definition}[Rigid open cover]
A \emph{rigid open cover} of a compact space $X$ consists of open sets $\{U_x\subseteq X\}_{x\in X}$ such that $x\in U_x$, $\overline{\{x:U_x=U\}}\subseteq U$, and $\#\{U:U=U_x\text{ for some }x\}<\infty$.  Warning: even if $U_x=U_y$ for some $x\ne y$, they are still different elements of the cover; in particular, the nerve of a rigid cover is always infinite unless $X$ is finite.
\end{definition}

For doing \v Cech theory, the category of rigid open covers is technically more convenient than the usual cateory of open covers (c.f.\ Remark \ref{coveringscategoryrmk}).  Specifically, the collection of rigid covers forms a set, and there is at most one morphism between any pair of rigid covers (we only consider refinements which act as the identity on the index set $X$).

\begin{definition}[$\cC^\bullet$ and $\cC^\bullet_c$ of sheaves; c.f.\ Definition \ref{cechdefnI}]
Let $\F$ be a sheaf.  We define the \v Cech cochains:
\begin{equation}\label{cechcomplexIderived}
\cC^\bullet(X;\F):=\varinjlim_{\begin{smallmatrix}\{U_x\subseteq X\}_{x\in X}\cr\text{rigid open cover}\end{smallmatrix}}\bigoplus_{p\geq 0}\prod_{\begin{smallmatrix}S\subseteq X\cr\left|S\right|=p+1\end{smallmatrix}}\F\Bigl(\bigcap_{x\in S}U_x\Bigr)[-p]
\end{equation}
with the standard \v Cech differential.  For any compact $K\subseteq X$, define $\cC^\bullet_K(X;\F)$ (\v Cech cochains with supports in $K$) via \eqref{cechcomplexIderived} except replacing every instance of $\F(U)$ with $\ker[\F(U)\to\F(U\setminus K)]$.  We let $\cC^\bullet_c(X;\F):=\varinjlim_{K\subseteq X}\cC^\bullet_K(X;\F)$ (\v Cech cochains with compact supports).

Clearly the homology of $\cC^\bullet(X;\F)$ (resp.\ $\cC^\bullet_K(X;\F)$, $\cC^\bullet_c(X;\F)$) is $\cH^\bullet(X;\F)$ (resp.\ $\cH^\bullet_K(X;\F)$, $\cH^\bullet_c(X;\F)$).
\end{definition}

\begin{definition}[$\cC^\bullet$ of complexes of $\K$-presheaves; c.f.\ Definition \ref{cechdefnII}]
Let $\F^\bullet$ be a complex of $\K$-presheaves on a compact space $X$.  We define:
\begin{equation}\label{cechcomplexIIderived}
\cC^\bullet(X;\F^\bullet):=\varinjlim_{\begin{smallmatrix}\{U_x\subseteq X\}_{x\in X}\cr\text{rigid open cover}\end{smallmatrix}}\bigoplus_{p\geq 0}\prod_{\begin{smallmatrix}S\subseteq X\cr\left|S\right|=p+1\end{smallmatrix}}\F^{\bullet-p}\Bigl(\bigcap_{x\in S}\overline{U_x}\Bigr)
\end{equation}
with the standard \v Cech differential (plus the internal differential of $\F^\bullet$).  We also define $\cC^\bullet(X;\F)$ for any $\K$-presheaf $\F$ by viewing it as a complex concentrated in degree zero.

The homology of $\cC^\bullet(X;\F^\bullet)$ is $\cH^\bullet(X;\F^\bullet)$, as can be seen by applying the arguments from the proof of Lemma \ref{cHsameforopencompactcovers}.  Note that for a sheaf $\F$, the complexes \eqref{cechcomplexIderived} and \eqref{cechcomplexIIderived} are canonically isomorphic.
\end{definition}

\begin{definition}[Pullback on $\cC^\bullet$ and $\cC^\bullet_c$; c.f.\ Definitions \ref{pullbackdefn}, \ref{pullbackcHKprshv}]\label{cechcochainspullback}
Let $f:X\to Y$ be a map of spaces.  A rigid open cover $\{U_y\}_{y\in X}$ pulls back to a rigid open cover $\{f^{-1}(U_{f(x)})\}_{x\in X}$, and this gives natural maps:
\begin{rlist}
\item$f^\ast:\cC^\bullet(Y;f_\ast\F)\to\cC^\bullet(X;\F)$ for $\F\in\Prshv X$.
\item$f^\ast:\cC^\bullet_c(Y;f_\ast\F)\to\cC^\bullet_c(X;\F)$ for $\F\in\Shv X$ (if $f$ is proper).
\item$f^\ast:\cC^\bullet(Y;f_\ast\F^\bullet)\to\cC^\bullet(X;\F^\bullet)$ for $\F^i\in\Prshv_\K X$.
\end{rlist}
whose action on homology coincide with the maps $f^\ast$ defined earlier.
\end{definition}

\subsubsection{Derived inverse limits}

\begin{definition}[Derived inverse limit $\Rlim$]
Let $\{C^\bullet_\lambda\}_{\lambda\in\Lambda}$ be an inverse system of complexes.  We define:
\begin{equation}
\Rlim_{\lambda\in\Lambda}C^\bullet_\lambda:=\prod_{q\geq 0}\prod_{\lambda_0\leq\ldots\leq\lambda_q}C^{\bullet-q}_{\lambda_0}
\end{equation}
with differential obtained by viewing this as cochains on the nerve of $\Lambda$ with a particular coefficient system.  See Marde\v si\'c \cite[\S 17]{mardesic} for more details on and basic properties of $\Rlim$.
\end{definition}

\begin{definition}[Derived functors $\varprojlim^i$]
Let $\{A_\lambda\}_{\lambda\in\Lambda}$ be an inverse system of abelian groups.  We define:
\begin{equation}\label{limidef}
\mathop{\textstyle\varprojlim^i}\limits_{\lambda\in\Lambda}A_\lambda:=H^i\Bigl[\Rlim_{\lambda\in\Lambda}A_\lambda\Bigr]
\end{equation}
(viewing $A_\lambda$ as an inverse system of complex concentrated in degree zero).  The inverse limit functor $\varprojlim$ from inverse systems of abelian groups indexed by $\Lambda$ to abelian groups is left exact, and $\varprojlim^i$ are its right derived functors (see Marde\v si\'c \cite[Corollary 11.47]{mardesic}).  See Marde\v si\'c \cite[\S\S 11--15]{mardesic} for more details on and basic properties of $\varprojlim^i$.
\end{definition}

\begin{lemma}[Cofinality for $\varprojlim^i$ and $\Rlim$]\label{cofinalityforRlim}
Let $f:\Lambda'\to\Lambda$ be weakly increasing ($\lambda_1\leq\lambda_2\implies f(\lambda_1)\leq f(\lambda_2)$) and cofinal ($f(\Lambda')\subseteq\Lambda$ cofinal).  Then the following natural map is an isomorphism:
\begin{equation}
\mathop{\textstyle\varprojlim^i}\limits_{\lambda\in\Lambda}A_\lambda\xrightarrow\sim\mathop{\textstyle\varprojlim^i}\limits_{\lambda'\in\Lambda'}A_{f(\lambda')}
\end{equation}
More generally, the following natural map is a quasi-isomorphism:
\begin{equation}
\Rlim_{\lambda\in\Lambda}C^\bullet_\lambda\xrightarrow\sim\Rlim_{\lambda'\in\Lambda'}C^\bullet_{f(\lambda')}
\end{equation}
\end{lemma}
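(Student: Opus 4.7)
The first claim (isomorphism on $\varprojlim^i$) is an immediate consequence of the second by applying the second claim to the inverse systems of complexes concentrated in degree zero and then taking $H^i$ (see equation \eqref{limidef}), so the plan concentrates on the $\Rlim$ quasi-isomorphism. By definition, $\Rlim_{\lambda \in \Lambda} C^\bullet_\lambda$ is the cochain complex of the nerve $N(\Lambda)$ with coefficients in the contravariant functor $\lambda \mapsto C^\bullet_\lambda$ evaluated at the leading vertex, and the map of the lemma is pullback along the simplicial map $N(f)\colon N(\Lambda') \to N(\Lambda)$. The goal is to show that its mapping cone $M^\bullet$ is acyclic.

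The key geometric input is the following contractibility statement: for every $\lambda \in \Lambda$, the set $\Lambda'_{\geq \lambda} := \{\lambda' \in \Lambda' : f(\lambda') \geq \lambda\}$ is a nonempty directed set, and hence has contractible nerve. Nonemptiness follows from cofinality of $f$, and directedness follows because if $\lambda'_3$ is a common upper bound in $\Lambda'$ of $\lambda'_1, \lambda'_2 \in \Lambda'_{\geq \lambda}$, then $f(\lambda'_3) \geq f(\lambda'_1) \geq \lambda$ by the weakly increasing hypothesis. Equivalently, $N(\Lambda'_{\geq \lambda}) \to \mathrm{pt}$ is a weak equivalence, so the cochain complex of $N(\Lambda'_{\geq \lambda})$ with constant coefficients in any abelian group is concentrated (and equal to that group) in degree zero.

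My plan is to filter $M^\bullet$ according to the value of the leading vertex $\lambda_0$ of each simplex of $N(\Lambda)$ that appears, and to analyze the associated graded. Up to a degree shift, the graded piece at a fixed $\lambda \in \Lambda$ is identified with the cochain complex of the cone on $N(\Lambda'_{\geq \lambda}) \to \mathrm{pt}$ tensored with the constant coefficient system $C^\bullet_\lambda$: the $\Lambda$-simplices contribute a copy of $C^\bullet_\lambda$, while the $\Lambda'$-simplices contribute cochains on $N(\Lambda'_{\geq \lambda})$ valued in $C^\bullet_{f(\lambda'_0)}$, but along the filtration the leading coefficient is effectively $C^\bullet_\lambda$. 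By the contractibility above, each graded piece is acyclic, and the spectral sequence of the filtration then gives acyclicity of $M^\bullet$ and hence the desired quasi-isomorphism.

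The main obstacle will be managing the convergence of this spectral sequence, since $\Rlim$ is built from \emph{infinite} products: acyclicity of the associated graded does not automatically imply acyclicity of the total complex. I expect this can be handled by arranging that the filtration is complete Hausdorff and that the acyclicity of each graded piece holds termwise at every fixed cohomological degree (so that the convergent spectral sequence of a complete filtration applies). Alternatively, the entire cofinality statement is essentially standard, and the argument can be imported from Marde\v si\'c \cite[\S\S 11--15]{mardesic}, which develops precisely the derived inverse limit machinery used here.
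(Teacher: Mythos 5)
The paper's own proof is simply a citation to Marde\v si\'c (\cite[p291, Theorem 14.9 and p349, Corollary 17.18]{mardesic}), so your closing sentence lands exactly where the paper does, and your reduction of the $\varprojlim^i$ claim to the $\Rlim$ claim via \eqref{limidef} is correct.

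However, the filtration sketch you offer is not completable as written, and the gap runs deeper than the spectral-sequence convergence point you flag at the end. The poset $\Lambda$ is merely directed, not totally (let alone well-) ordered. One can check that for each $\lambda$, the cochains supported on simplices with leading vertex $\leq\lambda$ form a subcomplex $F_\lambda$ of $\Rlim$: the face $d_0$ replaces the leading vertex $\mu_0$ by $\mu_1\geq\mu_0$, and if $\mu_1\leq\lambda$ then also $\mu_0\leq\lambda$, so the vanishing condition is preserved. But the subcomplexes $F_\lambda$ for incomparable $\lambda$ are not nested, so this is not a $\ZZ$- or ordinal-indexed filtration and there is no standard ``spectral sequence of a filtration'' to run -- unless $\Lambda$ happens to admit an order-preserving map to the ordinals (equivalently, is well-founded), which is not among the hypotheses of the lemma. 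Even granting well-foundedness, $\Rlim$ is built from possibly uncountable products, and the convergence theory of a transfinite complete-Hausdorff filtration requires more than degreewise acyclicity of the graded pieces. Your observation that $\Lambda'_{\geq\lambda}$ is nonempty and directed (hence has contractible nerve) is the right geometric input, but the standard way to deploy it is an explicit chain homotopy, or a comparison to $\Rlim$ over the comma poset $\{(\lambda,\lambda'):\lambda\leq f(\lambda')\}$ whose two projections have contractible fibers -- which is essentially what Marde\v si\'c does -- rather than a filtration by leading vertex.
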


\begin{proof}
For $\textstyle\varprojlim^i$, see \cite[p291, Theorem 14.9]{mardesic}, and for $\Rlim$, see \cite[p349, Corollary 17.18]{mardesic}.
\end{proof}

\begin{lemma}\label{quasiisomorphismforRlim}
If $C_\lambda^\bullet\xrightarrow\sim D_\lambda^\bullet$ is a morphism of inverse systems which is a level-wise quasi-isomorphism (i.e.\ for every $\lambda$), then the natural map:
\begin{equation}
\Rlim_{\lambda\in\Lambda}C^\bullet_\lambda\xrightarrow\sim\Rlim_{\lambda\in\Lambda}D^\bullet_\lambda
\end{equation}
is a quasi-isomorphism.
\end{lemma}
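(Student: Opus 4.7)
The plan is to reduce the statement to showing that $\Rlim$ preserves acyclicity, which is a more manageable claim. Set $E_\lambda^\bullet := [C_\lambda^\bullet \to D_\lambda^{\bullet-1}]$, the mapping cone of the given morphism. By hypothesis each $E_\lambda^\bullet$ is acyclic. Since the formula
\[
\Rlim_{\lambda\in\Lambda}C^\bullet_\lambda = \prod_{q\geq 0}\prod_{\lambda_0\leq\ldots\leq\lambda_q}C^{\bullet-q}_{\lambda_0}
\]
is manifestly additive in the complex $C_\lambda^\bullet$ (for fixed inverse system shape), and since the differential on $\Rlim$ respects direct sums of complexes, there is a natural identification
\[
\Rlim_{\lambda\in\Lambda}E^\bullet_\lambda \;=\; \Bigl[\Rlim_{\lambda\in\Lambda}C^\bullet_\lambda \to \Rlim_{\lambda\in\Lambda}D^{\bullet-1}_\lambda\Bigr].
\]
Hence the map $\Rlim C^\bullet_\lambda \to \Rlim D^\bullet_\lambda$ is a quasi-isomorphism if and only if $\Rlim E^\bullet_\lambda$ is acyclic, and the lemma reduces to the assertion: \emph{if each $E_\lambda^\bullet$ is acyclic, then $\Rlim E^\bullet_\lambda$ is acyclic}.

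To prove this reduced claim, I would invoke the standard hypercohomology-type spectral sequence
\[
E_2^{p,q} \;=\; \mathop{\textstyle\varprojlim^p}\limits_{\lambda\in\Lambda} H^q E^\bullet_\lambda \;\Longrightarrow\; H^{p+q}\Rlim_{\lambda\in\Lambda} E^\bullet_\lambda,
\]
which arises from the $q$-filtration built into the definition of $\Rlim$; this is a standard construction from Marde\v si\'c's book (cited in the excerpt for related facts about $\Rlim$ and $\textstyle\varprojlim^i$). Acyclicity of each $E_\lambda^\bullet$ makes every $E_2^{p,q}$ vanish, so the spectral sequence collapses and $\Rlim E^\bullet_\lambda$ is acyclic as required.

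The main potential obstacle is \emph{convergence} of the spectral sequence: for unbounded complexes, the associated filtration on $\Rlim$ is neither exhaustive nor Hausdorff in any naive sense, so one must check that weak convergence suffices to conclude vanishing from the collapse of the $E_2$-page. A clean way to avoid this issue is to argue directly: filter $\Rlim E_\lambda^\bullet$ by the subcomplexes $F^p = \prod_{q\geq p}\prod_{\lambda_0\leq\cdots\leq\lambda_q}E_{\lambda_0}^{\bullet-q}$ and observe that each associated graded piece $F^p/F^{p+1}$ is a product of shifted copies of individual $E_{\lambda_0}^\bullet$'s, hence acyclic; then apply a Mittag--Leffler-style argument together with the explicit description of $\Rlim$ to lift acyclicity through the filtration. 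Alternatively, one can simply quote Marde\v si\'c directly (the statement that $\Rlim$ sends level-wise quasi-isomorphisms to quasi-isomorphisms is proved in \cite{mardesic} as a basic property of the derived inverse limit), which is the cleanest route for the expository purposes of this appendix.
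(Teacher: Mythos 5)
Your proposal is correct. In fact, the paper's own proof is exactly your final suggestion: it simply cites Marde\v si\'c \cite[p348, Theorem 17.16]{mardesic} and moves on, so you correctly identified the intended route. Your more detailed argument is a valid self-contained alternative worth recording: the reduction via the mapping cone is clean because $\Rlim$ is defined by a product formula and hence commutes with forming cones, so the lemma reduces to showing $\Rlim$ preserves acyclicity. You are right to be wary of na\"ive spectral sequence convergence for unbounded complexes; the key observation rescuing the argument is that the filtration $F^p=\prod_{q\geq p}\prod_{\lambda_0\leq\cdots\leq\lambda_q}E_{\lambda_0}^{\bullet-q}$ is \emph{complete} (the total complex is literally $\varprojlim_p F^0/F^p$, and the transition maps $F^0/F^{p+1}\to F^0/F^p$ are degreewise split surjective), so acyclicity of each graded piece $F^p/F^{p+1}$ propagates to $F^0/F^p$ by induction, and then the Milnor/Mittag--Leffler exact sequence $0\to\textstyle\varprojlim^1 H^{n-1}(F^0/F^p)\to H^n(\Rlim E^\bullet_\lambda)\to\varprojlim H^n(F^0/F^p)\to 0$ kills both outer terms and gives the claim. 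That is precisely the ``lift acyclicity through the filtration'' step you sketched, and it is sound. The trade-off is clear: the citation is shortest for an appendix of this kind, while your direct argument makes the appendix more self-contained at the cost of a half page of homological bookkeeping.
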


\begin{proof}
See \cite[p348, Theorem 17.16]{mardesic}.
\end{proof}

\subsubsection{Steenrod chains}

\begin{definition}[due to Chogoshvili \cite{chogoshvili}]
A partition $X=\bigsqcup_{i=1}^nE_i$ shall mean an \emph{unordered} partition into finitely many disjoint nonempty subsets $E_1,\ldots,E_n\subseteq X$; there is an associated finite closed covering $X=\bigcup_{i=1}^n\overline{E_i}$, whose nerve is denoted $N(X;\{\overline{E_i}\}_{i=1}^n)$ (the simplicial complex with vertices $\{1,\ldots,n\}$ where $S\subseteq\{1,\ldots,n\}$ spans a simplex iff $\bigcap_{i\in S}\overline{E_i}\ne\varnothing$).  A partition $\{F_j\}_{j=1}^m$ refines $\{E_i\}_{i=1}^n$ iff for all $j$, there exists a (necessarily unique) $i$ with $F_j\subseteq E_i$.  For a refinement $\{E_i\}_{i=1}^n\to\{F_j\}_{j=1}^m$, there is an associated map of nerves $N(X;\{\overline{F_j}\})\to N(X;\{\overline{E_i}\})$.
\end{definition}

For doing \v Cech theory, Chogoshvili's construction has exceptionally nice properties.  The collection of partitions is a set, and there is at most one morphism between any pair of partitions.  The poset of partitions is cofinite (a given partition refines only finitely many other partitions).  Each nerve $N(X;\{\overline{E_i}\})$ is a finite simplicial complex, and the transition maps $N(X;\{\overline{F_j}\})\to N(X;\{\overline{E_i}\})$ are all surjective.  Also, any pair of partitions has a minimal common refinement.  Given a map $f:X\to Y$, a partition $\{E_i\}$ of $Y$ pulls back to a partition $\{f^{-1}(E_i)\}$ of $X$, and there is an associated map on nerves $N(X;\{\overline{f^{-1}(E_i)}\})\to N(Y,\{\overline{E_i}\})$.

\begin{lemma}\label{inverselimithassamecech}
Let $X$ be compact.  There is a natural map:
\begin{equation}\label{chogoshvilinervecomparison}
\varprojlim_{X=\bigsqcup_{i=1}^nE_i}N(X;\{\overline{E_i}\})\to X
\end{equation}
which induces an isomorphism on \v Cech cohomology.
\end{lemma}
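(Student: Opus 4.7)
The plan is to first construct the natural map $\phi\colon Y\to X$ where $Y:=\varprojlim_P N(X;\{\overline{E_i}\})$, then show it induces an isomorphism on \v Cech cohomology by identifying both sides with $\varinjlim_P H^\bullet(N_P)$ via compatible natural maps. To define $\phi$, let $y=(y_P)_P\in Y$, and for each partition $P=\{E_i\}_{i=1}^{n_P}$ let $S_P(y)\subseteq\{1,\dots,n_P\}$ denote the support of $y_P$ (the minimal simplex containing it) and set $K_P(y):=\bigcap_{i\in S_P(y)}\overline{E_i}\subseteq X$. Since each refinement map on nerves sends a vertex $j$ of $N_{P'}$ to a vertex $i(j)$ of $N_P$ with $F_j\subseteq E_{i(j)}$, the $K_P(y)$ form a filtered family of non-empty compact subsets with $K_{P'}(y)\subseteq K_P(y)$. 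By the finite intersection property $\bigcap_P K_P(y)$ is non-empty, and it is a single point: if $x\neq x'$ both lay in the intersection, normality of $X$ would produce open $U$ with $x\in U$ and $x'\notin\overline U$; the partition $\{\overline U,X\setminus\overline U\}$ would then force both vertices to lie in the support of $y_{P_0}$ (else $K_{P_0}$ misses $x$ or $x'$), giving $K_{P_0}\subseteq\overline U\cap(X\setminus U)$, contradicting $x\in U$. Define $\phi(y)$ to be this unique point; continuity of $\phi$ follows because any open neighborhood $V$ of $\phi(y)$ contains some $K_P(y)$ (by compactness of $\bigcap_P K_P(y)=\{\phi(y)\}$), so $V$ contains $\phi$ of the open simplicial neighborhood in $Y$ given by the open star of $y_P$ in $N_P$.

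To identify $\cH^\bullet(X)=\varinjlim_P H^\bullet(N_P)$, use that on a normal space the direct limit of nerve cohomologies over finite open covers agrees with that over finite closed covers: given a finite closed cover $\{F_i\}$ one produces open $V_i\supseteq F_i$ with the same nerve by normality, and conversely any open cover has a closed shrinking. The partition-closure covers $\{\overline{E_i}\}$ are cofinal among finite closed covers in the sense relevant for nerve cohomology: given a finite closed cover $\{F_i\}_{i=1}^n$, the partition into Boolean atoms $A_S:=\bigcap_{i\in S}F_i\cap\bigcap_{j\notin S}(X\setminus F_j)$ (discarding empties) admits, for any choice function $i(S)\in S$, a simplicial map $N(\{\overline{A_S}\})\to N(\{F_i\})$; any two such choices yield contiguous (hence homotopic) simplicial maps, so after passing to cohomology the map is canonical. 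Similarly, $\cH^\bullet(Y)=\varinjlim_P H^\bullet(N_P)$ because $Y$ embeds as a closed subset of $\prod_P N_P$, and any finite open cover of $Y$ refines to the pullback of a finite open cover of some finite subproduct $\prod_{P\in S}N_P$; the common refinement $P_S$ of the partitions in $S$ produces a single partition whose nerve dominates this subproduct, so the pulled-back covers give nerves of the form $N_P$ and thus $\cH^\bullet(Y)$ is the claimed direct limit (using $\cH^\bullet(N_P)=H^\bullet(N_P)$ for the finite complex $N_P$). Compatibility of $\phi^\ast$ with the two identifications is immediate from the identity $\phi(y)\in K_P(y)$, which ensures that the pullback via $\phi$ of a \v Cech cocycle on $X$ subordinate to $\{\overline{E_i}\}$ agrees with the pullback via the projection $p_P\colon Y\to N_P$ of its representative on $N_P$.

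The main obstacle is the identification $\cH^\bullet(Y)=\varinjlim_P H^\bullet(N_P)$: showing cofinality of the pulled-back covers requires care because $Y$ is in general neither metrizable nor a CW complex, and we cannot directly invoke the familiar continuity statement $\cH^\bullet(\varprojlim X_\alpha)=\varinjlim\cH^\bullet(X_\alpha)$ without some argument tailored to inverse limits with surjective bonding maps. A secondary subtlety is that $\phi$ need not be injective (for instance, multiple sequences of simplices can have carriers shrinking to the same point), so the content of the lemma is that this failure is invisible to \v Cech cohomology; this is precisely what our comparison via the intermediate direct limit encodes. We expect both subtleties to yield to standard continuity results for \v Cech cohomology on inverse systems of compact Hausdorff spaces (as in Marde\v si\'c's treatment), combined with the cofinite, surjective-bonding-map structure of the partition inverse system noted in the paragraph following the definition of partitions.
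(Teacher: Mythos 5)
Your argument is correct in outline and gives a genuinely different proof from the paper's. The paper constructs the map via the inverse limit of the correspondences $C_P\subseteq X\times N_P$, identifies its fiber over $p\in X$ as an inverse limit of full simplices with surjective bonding maps (hence \v Cech-acyclic, by the same continuity statement you invoke), and then concludes by the Leray spectral sequence for the proper map $Y\to X$. You instead bypass the spectral sequence entirely: you compute $\cH^\bullet(Y)$ by continuity as $\varinjlim_P H^\bullet(N_P)$, and compute $\cH^\bullet(X)$ as the same direct limit by appealing to a Dowker-type comparison (\v Cech cohomology via finite \emph{closed} covers, with partition covers cofinal among them via Boolean atoms), then match the two via the inclusion $p_P^{-1}(\operatorname{st}(i))\subseteq\phi^{-1}(\overline{E_i})$, which follows from $\phi(y)\in K_P(y)$. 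So the trade is essentially: the paper uses Leray (for sheaf cohomology, plus its agreement with \v Cech on compact Hausdorff spaces) and avoids the Dowker theorem; you use Dowker and avoid Leray. Both are legitimate. Your direct construction of $\phi$ (and the clean normality argument that $\bigcap_P K_P(y)$ is a singleton) is also a nice addition, since the paper waves this off as ``not hard to check.'' Two places in your sketch deserve more care if written out fully: (i) the Dowker step --- producing an open swelling $\{V_i\supseteq\overline{E_i}\}$ with the \emph{same} nerve requires the standard but nontrivial normality argument (shrink the complementary open cover, take complements), and one must verify that the resulting open-cover and closed-cover direct systems interleave compatibly; and (ii) the compatibility step, where the relevant diagram should be traced on the cochain level (the cover $\{p_P^{-1}(\operatorname{st}(i))\}$ refines $\{\phi^{-1}(V_i)\}$ via the identity on indices, so the restriction map on nerves is an inclusion of subcomplexes of $N_P$ and the cocycle values literally agree). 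Neither is a gap in the idea, just places where ``immediate'' is doing some work.
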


\begin{proof}
There is a natural correspondence $C\subseteq X\times N(X;\{\overline E_i\})$, whose fiber over $x\in X$ is the complete simplex on $\{i:x\in\overline{E_i}\}$.  Now it is not hard to check that the inverse limit of these correspondences maps bijectively (and thus homeomorphically) to $\varprojlim_{X=\bigsqcup_{i=1}^nE_i}N(X;\{\overline{E_i}\})$, thus giving rise to the desired map \eqref{chogoshvilinervecomparison}.

Now the inverse image of $p\in X$ under \eqref{chogoshvilinervecomparison} is an inverse limit of complete simplices with surjective transition maps (``the complete simplex on a profinite set''), and this has the \v Cech cohomology of a point.  Indeed, for any inverse system of compact spaces $\{X_\alpha\}$, the natural map $\varinjlim_\alpha\cH^\bullet(X_\alpha)\to\cH^\bullet(\varprojlim_\alpha X_\alpha)$ is an isomorphism (since open covers of $\varprojlim_\alpha X_\alpha$ pulled back from some $X_\alpha$ are cofinal among all open covers).  Thus it follows from the Leray spectral sequence that \eqref{chogoshvilinervecomparison} induces an isomorphism on \v Cech cohomology.
\end{proof}

\begin{definition}[Steenrod chains and homology]\label{steenrodhomdef}
Let $X$ be compact.  We define:
\begin{equation}\label{steenrodchainsRlimeqn}
\sC_\bullet(X):=\Rlim_{X=\bigsqcup_{i=1}^nE_i}\bigoplus_{p\geq 0}\bigoplus_{\begin{smallmatrix}S\subseteq\{1,\ldots,n\}\cr\left|S\right|=p+1\cr\bigcap_{i\in S}\overline{E_i}\ne\varnothing\end{smallmatrix}}\ZZ[-p]
\end{equation}
For a map $f:X\to Y$, there is an induced pushforward map $f_\ast:\sC_\bullet(X)\to\sC_\bullet(Y)$.  We denote by $\sH_\bullet(X;\F)$ the homology of $\sC_\bullet(X;\F)$, and we define relative homology $\sH_\bullet(X,Y;\F)$ as the homology of the relevant mapping cone.

More generally, let $\F$ be a locally constant sheaf of abelian groups on $X$.  Fix a partition $X=\bigsqcup_{i=1}^nE_i^0$ and trivializations of $\F|_{\overline{E_i^0}}$.  This gives rise to a local system $\F$ on $N(\{\overline{E_i}\}_{i=1}^n)$ for any partition $\{E_i\}$ refining $\{E_i^0\}$.  We now define $\sC_\bullet(X;\F)$ as in \eqref{steenrodchainsRlimeqn}, restricted to partitions refining $\{E_i^0\}$ and using the coefficient system on the nerve induced by $\F$ and the fixed trivializations of $\F|_{\overline{E_i^0}}$.  Since any two choices of trivializations of $\F|_{\overline{E_i}}$ become isomorphic (in the sense that that their sets of ``constant sections'' coincide) after pulling back to some common refinement, it follows that $\sC_\bullet(X;\F)$ is well-defined up to essentially unique quasi-isomorphism (by Lemma \ref{cofinalityforRlim}).  For a map $f:X\to Y$ and $\F\to f^\ast\G$, there is a pushforward map $\sC_\bullet(X;\F)\to\sC_\bullet(Y;\G)$.
\end{definition}

Although not logically necessary for our purposes, we now argue that Steenrod homology as given in Definition \ref{steenrodhomdef} coincides with the definition from Marde\v si\'c \cite{mardesic} (at least for constant coefficient systems).  First of all, note that (a special case of) Marde\v si\'c's definition of strong homology is that if a compact space $X$ is an inverse limit of compact polyhedra $X_\alpha$ over a cofinite index set, then:
\begin{equation}
\sH_\bullet(X)=H_\bullet\Rlim_\alpha C_\bullet(X_\alpha)
\end{equation}
(see \cite[p379, \S19.1]{mardesic}, and note that such an inverse system is a ``cofinite polyhedral resolution'' of its inverse limit \cite[p103, \S6]{mardesic}).  In particular, $\sC_\bullet(X)$ as defined in \eqref{steenrodchainsRlimeqn} computes the Steenrod homology of $\varprojlim_{X=\bigsqcup_{i=1}^nE_i}N(X;\{\overline{E_i}\})$ (using the morphism of inverse systems $C_\bullet^\mathrm{simp}(N(X;\{\overline{E_i}\}))\to C_\bullet^\mathrm{sing}(N(X;\{\overline{E_i}\}))$ given by barycentric subdivision, which is a level-wise quasi-isomorphism and thus induces a quasi-isomorphism on derived inverse limits by Lemma \ref{quasiisomorphismforRlim}).  Finally, note that the map \eqref{chogoshvilinervecomparison} induces an isomorphism on strong homology by Lemma \ref{inverselimithassamecech} and \cite[p446, Theorem 21.15]{mardesic}.

\begin{lemma}[Steenrod homology is the derived dual of \v Cech cohomology]\label{steenrodisderiveddualofcech}
Let $\F$ be a locally constant sheaf whose stalks are finitely generated free $R$-modules, and let $M$ be an $R$-module.  Then there is a natural isomorphism:
\begin{equation}\label{strongcechdualiso}
\sC_\bullet(X;\Hom(\F,M))=R\Hom_{D(R)}(\cC^\bullet(X;\F),M)
\end{equation}
in the derived category $D(R)$.  The same holds for relative (co)chains of a closed subspace $Y\subseteq X$, and the isomorphisms are compatible with the relevant exact triangles relating $Y$, $X$ and $(X,Y)$.
\end{lemma}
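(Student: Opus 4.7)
The plan is to compute both sides as derived limits/colimits indexed by a common cofinal system, namely the poset of finite partitions of $X$ refining a fixed initial partition $\{E_i^0\}$ over which $\F$ has been trivialized (which exists since $\F$ is locally constant and $X$ is compact). For each such partition $\{E_i\}$ with nerve $N = N(X; \{\overline{E_i}\})$, the trivialization data makes $\F$ into a local system on $N$ with finitely generated free stalks, and we obtain a bounded complex $C^\bullet(N;\F)$ of finitely generated free $R$-modules together with its chain dual $C_\bullet(N; \Hom(\F,M))$.

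The first step will be to identify $\cC^\bullet(X;\F)$, up to canonical quasi-isomorphism, with the filtered colimit $\varinjlim_{\{E_i\}} C^\bullet(N;\F)$ as the partition varies. The starting point is Lemma \ref{cHsameforopencompactcovers}, which allows computation of \v Cech cohomology via finite compact covers. One then has to check that the compact covers $\{\overline{E_i}\}$ arising from partitions are cofinal among finite compact covers (which is standard) and that for sufficiently fine partitions the value $\F(\bigcap \overline{E_{i_j}})$ reduces via the trivialization to a fixed combinatorially determined $R$-module, so that the \v Cech \emph{cochain} complex for such a cover coincides with $C^\bullet(N;\F)$ on the nose. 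The second step is formal: apply $R\Hom_R(-,M)$ to this colimit presentation. Since each $C^\bullet(N;\F)$ is a bounded complex of finitely generated free $R$-modules and filtered colimits preserve cofibrancy in complexes, we obtain
\begin{equation*}
R\Hom_{D(R)}(\cC^\bullet(X;\F), M) \;\cong\; \Rlim_{\{E_i\}}\, R\Hom_R(C^\bullet(N;\F), M) \;\cong\; \Rlim_{\{E_i\}}\, C_\bullet(N; \Hom(\F,M)),
\end{equation*}
where the last isomorphism is the standard finite-dimensional duality identifying the $R$-dual of the cochain complex of a finite simplicial complex with coefficients in a locally free local system with the chain complex in the dual local system. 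By Definition \ref{steenrodhomdef} and Lemma \ref{cofinalityforRlim} (applied to the cofinal inclusion of partitions refining $\{E_i^0\}$ into all partitions), this final $\Rlim$ is canonically quasi-isomorphic to $\sC_\bullet(X; \Hom(\F,M))$, completing the absolute case.

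For the relative case $(X,Y)$, the same argument applies using partitions of $X$ which restrict to partitions of $Y$ (these are cofinal among partitions of $X$), and both sides of the claimed identity sit in exact triangles relating $X$, $Y$, and $(X,Y)$. Compatibility with these exact triangles is automatic because $R\Hom(-,M)$, $\Rlim$, and the term-by-term simplicial duality all preserve cofiber sequences of complexes. The naturality of all the identifications in $\F$ and $M$ then gives the asserted isomorphism in $D(R)$.

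The main obstacle I anticipate is the rigorous matching in the first step: transporting the colimit-over-rigid-open-covers presentation of $\cC^\bullet(X;\F)$ into a colimit indexed by partitions, with their associated closed covers. One must produce a zigzag of canonical quasi-isomorphisms between \v Cech cochains for a rigid open cover, those for a compact cover $\{\overline{E_i}\}$, and the purely combinatorial cochain complex $C^\bullet(N;\F)$ of the nerve, and check that these assemble into a morphism of filtered diagrams. The key technical input is that for locally constant $\F$ with finitely generated free stalks, the restriction maps $\F(U) \to \F(\overline{E_{i_0}} \cap \cdots \cap \overline{E_{i_p}})$ stabilize as $U$ shrinks over neighborhoods of the intersection, and coincide with the local-system identification determined by the trivializations over $\{E_i^0\}$; this is where the freeness hypothesis on the stalks is genuinely used.
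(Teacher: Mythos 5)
Your outline is in the right spirit — dualize a presentation of $\cC^\bullet(X;\F)$ indexed by partitions — but the central step, the identity
\begin{equation*}
R\Hom_{D(R)}(\cC^\bullet(X;\F), M) \;\cong\; \Rlim_{\{E_i\}}\, R\Hom_R(C^\bullet(N;\F), M),
\end{equation*}
is not justified as written, and the stated reason (``filtered colimits preserve cofibrancy in complexes'') is false. A filtered colimit of finitely generated free modules is flat but generally not projective (e.g.\ $\QQ$ over $\ZZ$), so the plain colimit $\varinjlim_{\{E_i\}} C^\bullet(N;\F)$ is not cofibrant, and $\Hom(\varinjlim, M)$ does not compute $R\Hom$. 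Worse: even if the plain colimit \emph{were} K-projective, applying $\Hom(-,M)$ to it would yield the \emph{plain} inverse limit $\varprojlim\Hom(C^\bullet(N;\F),M)$, not the Roos complex $\Rlim$. So your argument would land on the wrong thing even granting the false premise. The derived nature of the limit on the Steenrod side has to come from the K-projective replacement on the \v Cech side, and this is exactly where the real work lies.

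The fix, which is what the paper actually does, is to replace the plain colimit $\varinjlim_{\{E_i\}} C^\bullet(N;\F)$ by the \emph{homotopy} colimit — concretely, a bar construction indexed by chains of nested partitions $\{E_i\}\le\{E_i^{(0)}\}\le\cdots\le\{E_i^{(q)}\}$. This homotopy colimit is quasi-isomorphic to the plain colimit (filteredness), but unlike the plain colimit it admits a K-projectivity argument: the paper filters it over a well-ordered exhaustion of the poset of partitions by downward closed subsets, checks each successor step is a degreewise split injection with K-projective cokernel (a bounded above complex of frees), and invokes Spaltenstein's criterion. Note this cannot be shortcut by the usual ``bounded above complex of projectives'' observation, since the bar construction is unbounded. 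Once K-projectivity is established, one observes that $\Hom$ of this bar complex \emph{is}, on the nose, the $\Rlim$ over partitions appearing in the definition of $\sC_\bullet(X;\Hom(\F,M))$ — the direct sum over chains of partitions dualizes to the direct product over chains, i.e.\ precisely the Roos complex. Your proposal should be revised to build and verify K-projectivity of this specific model rather than appeal to cofibrancy of the plain colimit; the zigzag from rigid open covers to partitions that you correctly flag as an obstacle is also in the paper, but it is the easier half of the argument.
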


This result is very similar to Milnor \cite[p92, Definition, Lemma 5]{milnor} and Marde\v si\'c \cite[p446, Theorem 21.15]{mardesic}.  Note also that morally, the above result should be thought of as following trivially from some derived universal property of the form $\Rlim R\Hom(A_\alpha^\bullet,B^\bullet)=R\Hom(\varinjlim A_\alpha^\bullet,B^\bullet)$ (hopefully this provides some motivation for the proof below).

\begin{proof}
Let us relate $\cC^\bullet(X;\F)$ as defined using rigid open covers to a version using partitions.  Specifically, we consider the following quasi-isomorphisms:
\begin{align}
\cC^\bullet(X;\F)=\varinjlim_{\begin{smallmatrix}\{U_x\subseteq X\}_{x\in X}\cr\text{rigid open cover}\end{smallmatrix}}&\bigoplus_{p\geq 0}\prod_{\begin{smallmatrix}S\subseteq X\cr\left|S\right|=p+1\end{smallmatrix}}\F\Bigl(\bigcap_{x\in S}U_x\Bigr)[-p]\\
&\quad\uparrow\cr
\varinjlim_{\begin{smallmatrix}X=\bigsqcup_{i=1}^nE_i\cr\overline{E_i}\subseteq U_i\end{smallmatrix}}&\bigoplus_{p\geq 0}\prod_{\begin{smallmatrix}S\subseteq\{1,\ldots,n\}\cr\left|S\right|=p+1\end{smallmatrix}}\F\Bigl(\bigcap_{i\in S}U_i\Bigr)[-p]\\
&\quad\downarrow\cr
\label{cechbychogoshvili}\varinjlim_{X=\bigsqcup_{i=1}^nE_i}&\bigoplus_{p\geq 0}\prod_{\begin{smallmatrix}S\subseteq\{1,\ldots,n\}\cr\left|S\right|=p+1\end{smallmatrix}}\F\Bigl(\bigcap_{i\in S}\overline{E_i}\Bigr)[-p]\\
&\quad\uparrow\cr
\label{homotopycolimitofchogoshvili}\bigoplus_{q\geq 0}\bigoplus_{\{E_i^{(0)}\}_{i=1}^{n_0}<\cdots<\{E_i^{(q)}\}_{i=1}^{n_q}}&\bigoplus_{p\geq 0}\prod_{\begin{smallmatrix}S\subseteq\{1,\ldots,n_0\}\cr\left|S\right|=p+1\end{smallmatrix}}\F\Bigl(\bigcap_{i\in S}\overline{E_i^{(0)}}\Bigr)[q-p]\\
&\quad\uparrow\downarrow\cr
\label{almosthomotopycolimitofchogoshvilicoverstodualize}\bigoplus_{q\geq 0}\bigoplus_{\{E_i^{(0)}\}_{i=1}^{n_0}\leq\cdots\leq\{E_i^{(q)}\}_{i=1}^{n_q}}&\bigoplus_{p\geq 0}\prod_{\begin{smallmatrix}S\subseteq\{1,\ldots,n_0\}\cr\left|S\right|=p+1\end{smallmatrix}}\F\Bigl(\bigcap_{i\in S}\overline{E_i^{(0)}}\Bigr)[q-p]
\end{align}
The first map is induced by associating to a partition $\{E_i\}$ and $\overline{E_i}\subseteq U_i$ the rigid open cover assigning $U_i$ to $x\in E_i$, and pulling back along the induced map on nerves; it can be seen to be a quasi-isomorphism by a filtration argument (basically the Leray spectral sequence).  The second map is a quasi-isomorphism by Lemma \ref{intersectioncofinal} and the definition of $\alpha^\ast$.  The third map is defined as the tautological map on $q=0$ direct summands and zero for $q>0$; it can be seen to be a quasi-isomorphism by expressing the domain (resp.\ codomain) as the direct limit over partitions $\{E_i\}$ of the corresponding complex (resp.\ direct limit) restricted to partitions refined by $\{E_i\}$ and observing that for any fixed $\{E_i\}$ the map is a quasi-isomorphism by Lemma \ref{comboretraction}.  Finally, the last pair of maps consists of the natural inclusion ($\downarrow$) and the retraction annihilating any component with $\{E_i^{(j)}\}_{i=1}^{n_j}=\{E_i^{(j+1)}\}_{i=1}^{n_{j+1}}$ for some $j$ ($\uparrow$); one can see that both are quasi-isomorphisms by filtering by $\{E_i\}$ (as before) and then by the number of distinct $\{E_i^{(j)}\}_{i=1}^{n_j}$.

For any partition $\{E_i\}$ of $X$ and collection of trivializations of $\F|_{\overline E_i}$, there is a sub-$\K$-presheaf $\F^\pre\subseteq\F$ whose sections over $K$ are those sections of $\F$ whose restrictions to $K\cap\overline{E_i}$ are constant with respect to the specified trivializations.  The complex \eqref{cechbychogoshvili} calculates \v Cech cohomology \eqref{cechcomplexII} by a cofinality argument, and the inclusion $\F^\pre\to\F$ induces an isomorphism on stalks and thus on \v Cech cohomology by Lemma \ref{cechdeterminedbystalks}.  The third map above remains a quasi-isomorphism for $\F^\pre$ (for the same reason), and furthermore we may restrict \eqref{almosthomotopycolimitofchogoshvilicoverstodualize} to partitions refining the fixed $\{E_i\}$ by Lemma \ref{cofinalityforRlim}.

The conclusion of the above discussion is that we have thus constructed a canonical (and functorial) quasi-isomorphism between $\cC^\bullet(X;\F)$ and:
\begin{equation}
\label{homotopycolimitofchogoshvilicoverstodualize}\bigoplus_{q\geq 0}\bigoplus_{\{E_i\}\leq\{E_i^{(0)}\}_{i=1}^{n_0}\leq\cdots\leq\{E_i^{(q)}\}_{i=1}^{n_q}}\bigoplus_{p\geq 0}\prod_{\begin{smallmatrix}S\subseteq\{1,\ldots,n_0\}\cr\left|S\right|=p+1\end{smallmatrix}}\F^\pre\Bigl(\bigcap_{i\in S}\overline{E_i^{(0)}}\Bigr)[q-p]
\end{equation}
(for any fixed partition $\{E_i\}$ and trivializations of $\F|_{\overline{E_i}}$ giving rise to $\F^\pre\subseteq\F$).  Note that $\sC_\bullet(X,\Hom(\F,M))$ is precisely $\Hom(\eqref{homotopycolimitofchogoshvilicoverstodualize},M)$.  Thus it suffices to show that this particular $\Hom$ is in fact the $R\Hom$.

Recall that $R\Hom$ in $D(R)$ may be computed using a K-projective resolution of the first argument (see Spaltenstein \cite{spaltenstein}), where a complex $P^\bullet$ is called K-projective iff $\Hom^\bullet(P^\bullet,M^\bullet)$ is acyclic for every acyclic complex $M^\bullet$ (note that a bounded above complex of projective modules is K-projective).  Thus it suffices to show that \eqref{homotopycolimitofchogoshvilicoverstodualize} is K-projective.

We consider subsets $\wp$ of the poset of partitions of $X$ with the property that any partition refined by a partition in $\wp$ also lies in $\wp$.  Since the poset of partitions is cofinite, we may use Zorn's lemma to choose a directed system $\{\wp_i\}_{i\in I}$ of such collections, indexed by a well-ordered set $I$, with the following properties:
\begin{rlist}
\item$\wp_0=\varnothing$, where $0\in I$ is the least element.
\item$\left|\wp_i\setminus\wp_{i-1}\right|=1$ if $i\in I$ has a predecessor $i-1$.
\item$\wp_i=\bigcup_{i'<i}\wp_{i'}$ if $i\in I$ has no predecessor.
\end{rlist}
Now for $i\in I$, let $\eqref{homotopycolimitofchogoshvilicoverstodualize}_i\subseteq\eqref{homotopycolimitofchogoshvilicoverstodualize}$ denote the subcomplex generated by restricting to partitions in $\wp_i$.  It follows that:
\begin{rlist}
\item$\eqref{homotopycolimitofchogoshvilicoverstodualize}_0=0$, where $0\in I$ is the least element.
\item$\eqref{homotopycolimitofchogoshvilicoverstodualize}_{i-1}\to\eqref{homotopycolimitofchogoshvilicoverstodualize}_i$ is injective and component-wise split with K-projective cokernel if $i\in I$ has a predecessor $i-1$ (this holds since the cokernel is a bounded above complex of free modules).
\item$\varinjlim_{i'<i}\eqref{homotopycolimitofchogoshvilicoverstodualize}_{i'}\to\eqref{homotopycolimitofchogoshvilicoverstodualize}_i$ is an isomorphism if $i\in I$ has no predecessor.
\end{rlist}
It follows that $\eqref{homotopycolimitofchogoshvilicoverstodualize}=\varinjlim_i\eqref{homotopycolimitofchogoshvilicoverstodualize}_i$ is K-projective by Spaltenstein \cite[p131, 2.8 Corollary]{spaltenstein}.

Since the construction of the isomorphism \eqref{strongcechdualiso} was given by a functorial chain-level construction, it can be checked that it is natural, applies in the relative case, and is compatibile with exact triangles.
\end{proof}

\section{Gluing for implicit atlases on Gromov--Witten moduli spaces}\label{gluingappendix}

In this appendix, we provide the gluing analysis used in \S\ref{gromovwittensection} to verify that the implicit atlases constructed there satisfy the openness and submersion axioms (specifically, we prove Proposition \ref{GWgluingneeded}).

The gluing theorem we prove here is of a very standard sort which has been treated (in different related settings) many times over in the literature, and so we make no claim of originality in this appendix.  The methods used here are based on our partial understanding of the treatments of gluing in Abouzaid \cite{abouzaid} and McDuff--Salamon \cite{mcduffsalamonJholsymp}, as well as conversations with Abouzaid, Ekholm, Hofer, and Mazzeo (we also thank the anonymous referee for their comments).  We understand that there is work in progress of McDuff--Wehrheim proving a similar result (keeping track of more smoothness) in their (very similar) setting of Kuranishi atlases \cite{mcduffwehrheim,mcduffnotes}.  We also refer the reader to Ekholm--Smith \cite{ekholmsmithrevisited}, Fukaya--Oh--Ohta--Ono \cite{foootechnicaldetails}, and Hofer--Wysocki--Zehnder \cite{polyfoldGW} for related gluing results.

The essential content of our result is that, for a certain moduli space of holomorphic curves $\Mbar(X)$, the regular locus $\Mbar(X)^\reg\subseteq\Mbar(X)$ (i.e.\ the locus where a certain linearized $\delbar$-operator is surjective) is a topological manifold.  We prove this by constructing local manifold charts covering $\Mbar(X)^\reg$ (since being a topological manifold is a property rather than extra structure, we need not address any question of compatibility between different local charts or of their compatibility with any auxiliary group action).

\subsection{Setup and main result}\label{setupforgluing}

We use new notation, unrelated to (and simpler than) that from \S\ref{gromovwittensection}.

Fix a smooth almost complex manifold:
\begin{equation}
(X,J)
\end{equation}
Fix codimension two submanifolds with boundary:
\begin{equation}
D,D_1,\ldots,D_r\subseteq X
\end{equation}
(let $D^\circ:=D\setminus\partial D$, and similarly for $D_i$).  Fix a smooth manifold $\Mbar$ equipped with a smooth \'etale map $\Mbar\to\Mbar_{g,n+\ell}$\footnote{We give $\Mbar_{g,n+\ell}$ the standard smooth structure from its structure as a complex analytic orbifold.  Every point in a smooth orbifold $M$ is in the image of a smooth \'etale map $M'\to M$.  Indeed, $M$ is covered by open sets each diffeomorphic to $\RR^n/G$ for some finite group $G\to GL_n(\RR)$, and the map $\RR^n\to\RR^n/G\hookrightarrow M$ is smooth \'etale.} (in the orbifold sense).  Denote by $\Cbar_{g,n+\ell}\to\Mbar_{g,n+\ell}$ the universal family, and define a family of curves $\Cbar\to\Mbar$ via the following pullback square:
\begin{equation}
\begin{CD}
\Cbar@>>>\Cbar_{g,n+\ell}\cr
@VVV@VVV\cr
\Mbar@>>>\Mbar_{g,n+\ell}
\end{CD}
\end{equation}
Let $\Cbar^\circ\subseteq\Cbar$ denote the open subset where the fiber is smooth.  Fix a finite dimensional vector space $E$ and a linear map:
\begin{equation}\label{lambdafix}
\lambda:E\to C^\infty(\overbrace{\Cbar^\circ\underset\Mbar\times\cdots\underset\Mbar\times\Cbar^\circ}^{r\text{ times}}{}\underset\Mbar\times\Cbar^\circ\times X,\Omega^{0,1}_{\Cbar^\circ/\Mbar}\otimes_\CC TX)
\end{equation}
Here $\Omega^{0,1}_{\Cbar^\circ/\Mbar}$ is the $(0,1)$-part of the complexified vertical cotangent bundle $T^\ast_{\Cbar^\circ/\Mbar}\otimes_\RR\CC$ of (the last factor of) $\Cbar^\circ\to\Mbar$, and $C^\infty$ means smooth sections.  We require that $\lambda$ be zero in a neighborhood of the nodes $\Cbar\setminus\Cbar^\circ$ of the last $\Cbar^\circ$ factor.

For a nodal curve $C$, we denote by $\tilde C$ its normalization, i.e.\ the unique smooth compact Riemann surface equipped with a map $\tilde C\to C$ which identifies points in pairs to form the nodes of $C$.  A function on $C$ being smooth means (by definition) that its pullback to $\tilde C$ is smooth.

Fix a homology class $\beta\in H_2(X;\ZZ)$.  We are interested in the following moduli space:
\begin{equation}\label{Mbarfirstdef}
\Mbar(X):=\left\{\begin{matrix}s\in\Mbar\hfill\cr u:C_s\to X\hfill\cr x_i\in C_s\quad 1\leq i\leq r\hfill\cr e\in E\hfill\end{matrix}\;\middle|\;\begin{matrix}u\text{ smooth and }u_\ast[C_s]=\beta\hfill\cr u(p_{n+i})\in D^\circ\quad\hfill(1\leq i\leq\ell)\cr u(x_i)\in D_i^\circ\text{ and }u\pitchfork D_i\text{ at }x_i\quad\hfill(1\leq i\leq r)\cr\delbar u+\lambda(e)(x_1,\ldots,x_r,\cdot,u(\cdot))=0\hfill\end{matrix}\right\}
\end{equation}
We let $C_s$ denote the fiber of $\Cbar\to\Mbar$ over $s$, with marked points $p_1,\ldots,p_{n+\ell}\in C_s$.  Here $u\pitchfork D_i$ at $x_i$ means (by definition) that $x_i$ is not a node of $C_s$ and the map $T_{x_i}C_s\oplus T_{u(x_i)}D_i\xrightarrow{du\oplus\id}T_{u(x_i)}X$ is surjective.  Now $\lambda(e)(x_1,\ldots,x_r,\cdot,u(\cdot))$ is a smooth section of $\Omega^{0,1}_{\tilde C_s}\otimes_\CC u^\ast TX$ over $C_s$ supported away from the nodes; hence the last equation makes sense.  We give $\Mbar(X)$ the topology of uniform convergence (i.e.\ using the Hausdorff distance between graphs $\subseteq\Cbar\times X$ to compare maps $u$).

We spend this appendix studying $\Mbar(X)$, though the only reason to care about $\Mbar(X)$ itself is as an intermediate tool for proving the desired result for $\Mbar^\beta_{g,n}(X)_I$.  Note that $\Mbar(X)$ is not necessarily compact, however this will be irrelevant since we are only interested in its local properties.

Fix a subspace $E'\subseteq E$.

Let us now define the ``$E'$-regular locus'' $\Mbar(X)^\reg\subseteq\Mbar(X)$ (which for simplicity we will just call the ``regular locus'').  Fix $(s_0,u_0,\{x_i^0\},e_0)\in\Mbar(X)$; we will describe when $(s_0,u_0,\{x_i^0\},e_0)\in\Mbar(X)^\reg$.  We consider the smooth Banach manifold:
\begin{equation}
\B:=\Bigl\{(u,e)\in W^{k,p}(C_{s_0},X)\times E\Bigm|u(p_{n+i})\in D^\circ\;(1\leq i\leq\ell)\Bigr\}
\end{equation}
Over $\B$, we consider the smooth Banach bundle $\E$ whose fiber over a map $u:C_{s_0}\to X$ is $W^{k-1,p}(\tilde C_{s_0},\Omega^{0,1}_{\tilde C_{s_0}}\otimes_\CC u^\ast TX)\oplus E/E'$.  Now suppose $k$ is large; then there are unique continuous functions:
\begin{equation}\label{intersectionsfunction}
x_i:\B\to C_{s_0}\quad(1\leq i\leq r)
\end{equation}
defined in a neighborhood of $(u_0,e_0)\in\B$, which coincide with $x_i^0$ at $(u_0,e_0)$, and for which $u(x_i(u))\in D_i^\circ$.  Moreover, \eqref{intersectionsfunction} are ``highly differentiable'', by which we mean that for all $\ell<\infty$, the function \eqref{intersectionsfunction} is of class $C^\ell$ provided $k\geq k_0(\ell)$.  In the present situation, we can of course be more precise: $x_i$ is $C^\ell$ as a function of $u\in C^\ell$ (by the implicit function theorem), so \eqref{intersectionsfunction} is $C^\ell$ as long as $W^{k,p}\hookrightarrow C^\ell$ (which holds iff $(k-\ell)p>2$).  It follows that:
\begin{equation}\label{firstdelbarsection}
(u,e)\mapsto\left[\delbar u+\lambda(e)(x_1,\ldots,x_r,\cdot,u(\cdot))\right]\oplus e
\end{equation}
is a highly differentiable section of $\E$ over $\B$ (the only nonsmoothness comes from how the $x_i$'s depend on $u$).  Assume $1<p<\infty$.  We say $(s_0,u_0,\{x_i^0\},e_0)\in\Mbar(X)^\reg$ iff \eqref{firstdelbarsection} is transverse to the zero section at $(u_0,e_0)$.  It is an easy exercise using elliptic regularity to see that this notion is independent of $(k,p)$ (as long as $k$ is sufficiently large so that the condition makes sense).

For a topological manifold $M$, let $\oo_M$ denote its orientation sheaf (whose fiber at a point $p\in M$ is canonically $H_{\dim M}(M,M\setminus p;\ZZ)$).  For a vector space $E$, let $\oo_E$ denote its orientation module (canonically $H_{\dim E}(E,E\setminus 0;\ZZ)$).  The main result of this appendix is:

\begin{theorem}\label{mainresult}
In the above setup (from the beginning of \S\ref{setupforgluing} until here), we have:
\begin{rlist}
\item\label{openness}$\Mbar(X)^\reg\subseteq\Mbar(X)$ is open.
\item\label{manifold}$\Mbar(X)^\reg$ is a topological manifold of dimension $\dim\Mbar_{g,n+\ell}+(1-g)\dim X+2\langle c_1(X),\beta\rangle+\dim E-2\ell$.
\item\label{submersion}The projection $\Mbar(X)^\reg\to E/E'$ is a topological submersion, i.e.\ locally modeled on a projection $\RR^n\times\RR^m\to\RR^n$.
\item\label{orientation}The orientation sheaf $\oo_{\Mbar(X)^\reg}$ is canonically isomorphic to $\oo_E\otimes\bigotimes_{i=1}^\ell\oo_{u(p_{n+i})^\ast N_{D/X}}^\vee$.
\end{rlist}
\end{theorem}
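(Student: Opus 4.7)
The plan is to construct, near every $p_0 = (s_0, u_0, \{x_i^0\}, e_0) \in \Mbar(X)^\reg$, a local chart for $\Mbar(X)^\reg$ realized as a continuous map $\Phi$ from an open subset of a finite-dimensional vector space into $\Mbar(X)$, arranged as a topological product with $E/E'$ as a factor. Once such charts exist and cover $\Mbar(X)^\reg$, parts (\ref{openness})--(\ref{submersion}) are immediate; I will handle the orientation statement (\ref{orientation}) at the end via the canonical orientation of complex-linear perturbations of the linearized $\delbar$-operator. The chart construction splits into two cases according to whether $C_{s_0}$ is smooth or nodal. The smooth case is immediate: the section \eqref{firstdelbarsection}, promoted to depend also on the parameter $s \in \Mbar$, is a highly differentiable Fredholm section near $p_0$; by the regularity hypothesis it is transverse to zero, so its zero set is a highly differentiable manifold of the correct dimension which submerses onto $E/E'$. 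The rest of the plan is devoted to the nodal case, which contains all the analytic content.

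For the nodal case I would carry out the standard pre-gluing plus Newton iteration procedure. Choose small exponential weights $\delta > 0$ and work in weighted Sobolev spaces $W^{k,p,\delta}$ on the punctured normalization $\tilde C_0 \setminus \{\mathrm{nodes}\}$ using cylindrical-end coordinates near each node. The nodes of $C_{s_0}$ correspond to a subset of the coordinates on $\Mbar$ near $s_0$ which serve as gluing parameters $\zeta$; for each small $(s,\zeta)$ and each element of $T_{s_0}\Mbar$, a cutoff-and-paste procedure produces an approximate solution $u^{\mathrm{pre}}_s$ of the thickened $\delbar$-equation on $C_s$, satisfying the incidence conditions up to small error. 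The linearization $D_{p_0}$ of the full parameterized equation, from $T_{s_0}\Mbar \oplus W^{k,p,\delta}(C_0, u_0^*TX)|_{\mathrm{incidence}} \oplus E$ to $W^{k-1,p,\delta}(\tilde C_0, \Omega^{0,1}\otimes u_0^*TX) \oplus E/E'$, is surjective by the regularity hypothesis; fix a right inverse $Q_{p_0}$.

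The two key analytic lemmas are (a) uniform existence of right inverses $Q_s^{\mathrm{pre}}$ at the pre-glued data whose operator norms are bounded as $(s,\zeta) \to (s_0, 0)$, proved by the standard breaking argument that decomposes a putative cokernel element into a piece on each irreducible component and identifies it with a cokernel element of $D_{p_0}$; and (b) a quadratic estimate for the nonlinear error $\delbar u^{\mathrm{pre}} + \lambda(e)(\cdot)$. Given these, Banach contraction produces a unique small correction $\eta \in (\ker D_{p_0})^\perp$ making $u^{\mathrm{pre}} + \eta$ an honest element of $\Mbar(X)$, yielding a continuous map $\Phi$ from a neighborhood of zero in $T_{s_0}\Mbar \oplus \ker D_{p_0}|_{\mathrm{map\ piece}} \oplus E$ into $\Mbar(X)$.

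The main obstacle will be showing that $\Phi$ is a local homeomorphism onto a neighborhood of $p_0$ in $\Mbar(X)^\reg$. Injectivity is built into the uniqueness clause of the contraction principle, and openness of $\Mbar(X)^\reg$ inside $\Mbar(X)$ follows from continuity of the linearization together with openness of surjectivity for Fredholm operators. Surjectivity of $\Phi$ onto a neighborhood requires the standard \emph{surjectivity of gluing} argument: given any sequence $p_n \to p_0$ in $\Mbar(X)$, Gromov compactness lets me reparameterize so that $p_n$ is $W^{k,p,\delta}$-close to some $u^{\mathrm{pre}}_{s_n,\zeta_n}$ with $(s_n,\zeta_n) \to (s_0, 0)$, whereupon the uniqueness clause of the contraction principle identifies $p_n$ with $\Phi(s_n,\zeta_n,\xi_n,e_n)$ for large $n$; continuity of $\Phi^{-1}$ is then automatic from the Gromov topology. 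Decoupling $e \in E$ as a free parameter of the contraction yields the topological product structure with $E/E'$, giving (\ref{submersion}). For (\ref{orientation}), I would homotope $D_{p_0}$ through Fredholm operators (deforming the almost complex structure off $u_0^*TX$) to a complex-linear operator, which carries a canonical complex orientation; pulling this through the index formula contributes the factor $\oo_E$ from the explicit $E$-summand of $D_{p_0}$ and factors $\oo_{u(p_{n+i})^*N_{D/X}}^\vee$ from the $\ell$ codimension-two evaluation constraints at the marked points.
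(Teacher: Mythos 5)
Your overall architecture matches the paper's: pregluing in weighted Sobolev spaces, Newton--Picard contraction, injectivity from uniqueness, surjectivity via Gromov compactness and exponential decay on long necks, orientation via complex-linear deformation. The smooth-domain case is fine, and your outline of the analytic lemmas is essentially the right list. There is, however, one genuine gap that the paper treats as a substantial step and your proposal takes for granted.

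You write that the construction yields a continuous map $\Phi$ and that ``continuity of $\Phi^{-1}$ is then automatic from the Gromov topology,'' but in fact the continuity of the gluing map $\Phi$ \emph{itself} as the gluing parameter $\alpha\to 0$ (i.e.\ as the domain degenerates) is not automatic and requires real work. The issue is that the correction $\eta$ is defined by a contraction in the affine subspace $\kappa_\alpha+\im Q_{\alpha}$, and one must compare corrections over nearby $C_\alpha$ and $C_{\alpha'}$ -- different Banach spaces -- to verify $\Phi$ is continuous. You propose to put the correction in $(\ker D_{p_0})^\perp$; with that choice there is no obvious way to make the comparison uniform. The paper resolves this by choosing the right inverse $Q_{0,0}$ to have image equal to $\ker L_0$ for a \emph{fixed} finite collection of pointwise evaluation constraints $L_0$ (rather than the $L^2$-orthogonal complement), so that $\im Q_{\alpha,y}=\ker L_\alpha$ for the same evaluation constraints transported to $C_\alpha$; this stable, geometrically characterized complement is what makes the continuity proof (Proposition~\ref{continuityofgluing}) go through, via a ``pregluing $\xi$ from $C_\alpha$ to $C_{\alpha_i}$'' argument. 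Your proposal should either adopt a similar structured choice of right inverse and explain why $\Phi$ is continuous across strata, or explicitly flag this step; without it, the claim that $\Phi$ is a local homeomorphism is incomplete. Relatedly, you say openness of $\Mbar(X)^\reg$ ``follows from openness of surjectivity for Fredholm operators'' -- but since the domain Banach space itself changes as $\alpha$ varies, this cannot be invoked directly; openness instead follows once one has the gluing chart, as in the paper's Lemma~\ref{injectiveHdorffhomeo}.
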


Let us now explain how Theorem \ref{mainresult} implies Proposition \ref{GWgluingneeded}.  Proposition \ref{GWgluingneeded} is a local statement, so let us prove it in a neighborhood of a specific point $(C,u,\{\phi_\alpha\}_{\alpha\in I},\{e_\alpha\}_{\alpha\in I})\in\Mbar^\beta_{g,n}(X)_I^\reg$.  We will construct data $(X,J,D,D_1,\ldots,D_r,\Mbar,E,\lambda,E')$ as in the above setup and a point $(s,u,\{x_i\},e)\in\Mbar(X)$.  It will be clear from the construction that there is a natural homeomorphism between a small neighborhood of this point in $\Mbar(X)$ and a small neighborhood of the given point in $\Mbar_{g,n}^\beta(X)_I$, and using this we will infer Proposition \ref{GWgluingneeded} from Theorem \ref{mainresult}.

We claim that every unstable irreducible component of $C$ has a point where $du$ is injective.  If $I=\varnothing$, this follows from the domain stabilization step Lemma \ref{everythingcanbethickened}, and if $I\ne\varnothing$, then picking any $\alpha\in I$, this follows from the fact that $u\pitchfork D_\alpha$ and adding the intersections as marked points makes $C$ stable.  It follows from the claim that we may pick $D\subseteq X$ with $u\pitchfork D$ such that adding these intersections as extra marked points makes $C$ stable, and none of these points are nodes or marked points.  We take $\ell$ to be the \emph{minimum} number of points in $u^{-1}(D)$ necessary to stabilize $C$, and we fix an ordered $\ell$-tuple of such points, adding them as new marked points $p_{n+1},\ldots,p_{n+\ell}\in C$, so now $C$ is a curve of type $(g,n+\ell)$.  Now we let $r=\sum_{\alpha\in I}r_\alpha$, we let $D_1,\ldots,D_r\subseteq X$ consist of $r_\alpha$ copies of $D_\alpha$ (union over $\alpha\in I$), and we let the $x_i$'s be the intersection points $u^{-1}(D_\alpha)$ (union over $\alpha\in I$).  For the purposes of the present argument, let us reindex $\{x_i\}_{1\leq i\leq r}$ as $\{x^\alpha_i\}_{\alpha\in I,1\leq i\leq r_\alpha}$.  Now since $C$ is a curve of type $(g,n+\ell)$, it corresponds to a point in $\Mbar_{g,n+\ell}$.  Choose an \'etale map $\Mbar\to\Mbar_{g,n+\ell}$ covering this point, and choose a point $s\in\Mbar$ and an isomorphism $\iota:C\to C_s$.  Now for every $\alpha\in I$, there is a unique map:
\begin{equation}
\tilde\phi_\alpha:\overbrace{\Cbar^\circ\underset\Mbar\times\cdots\underset\Mbar\times\Cbar^\circ}^{r_\alpha\text{ times}}{}\underset\Mbar\times\Cbar^\circ\to\Cbar_\alpha
\end{equation}
defined in a neighborhood of $\{x_1^\alpha\}\times\cdots\times\{x_{r_\alpha}^\alpha\}\times C_s$ so that $\tilde\phi_\alpha(x^\alpha_1,\ldots,x^\alpha_{r_\alpha},\iota(\cdot))=\phi_\alpha(\cdot)$ and $\tilde\phi_\alpha(y_1,\ldots,y_{r_\alpha},\cdot)$ classifies the curve in the last factor after forgetting the last $\ell$ marked points and adding $y_1,\ldots,y_{r_\alpha}$ as marked points (this follows from the fact that $\Cbar_\alpha\to\Mbar_{g,n+r_\alpha}/S_{r_\alpha}$ is \'etale).  We let $E:=E_I=\bigoplus_{\alpha\in I}E_\alpha$, and we define:
\begin{equation*}
\lambda\biggl(\bigoplus_{\alpha\in I}e_\alpha\biggr)(\{y_i^\alpha\}_{\alpha\in I,1\leq i\leq r_\alpha},\cdot,\cdot)=\sum_{\alpha\in I}\lambda_\alpha(e_\alpha)(\tilde\phi_\alpha(y_1^\alpha,\ldots,y_{r_\alpha}^\alpha,\cdot),\cdot)
\end{equation*}
in a neighborhood of $\{x_i^\alpha\}_{\alpha\in I,1\leq i\leq r_\alpha}\times C_s\times X$ (and then we simply cut it off to be zero elsewhere).  Finally, we observe that with this definition, a neighborhood of $(s,u\circ\iota^{-1},\{x^\alpha_i\}_{\alpha\in I,1\leq i\leq r_\alpha},\bigoplus_{\alpha\in I}e_\alpha)\in\Mbar(X)$ coincides with a neighborhood of $(C,u,\{\phi_\alpha\}_{\alpha\in I},\{e_\alpha\}_{\alpha\in I})\in\Mbar^\beta_{g,n}(X)_I$ (note that the point $(C,u,\{\phi_\alpha\}_{\alpha\in I},\{e_\alpha\}_{\alpha\in I})\in\Mbar^\beta_{g,n}(X)_I^\reg$ has trivial automorphism group by definition of $\Mbar^\beta_{g,n}(X)_I^\reg$).

For $E'=E$, under this identification of a small open set in $\Mbar^\beta_{g,n}(X)_I$ and a small open set in $\Mbar(X)$, we have $\Mbar(X)^\reg\subseteq\Mbar^\beta_{g,n}(X)_I^\reg$ (for this it is important that we chose $\ell$ as the \emph{minimum} number of points necessary to stabilize $C$).  It follows that Theorem \ref{mainresult}(\ref{openness},\ref{orientation}) implies Proposition \ref{GWgluingneeded}(\ref{GWgluingopen},\ref{GWgluingor}) (note that over this small open set, we have an identification $\oo_{u(p_{n+i})^\ast N_{D/X}}=\oo_{T_{p_{n+i}}C_s}$ and that the latter is canonically trivial using the complex structure on $C_s$).  Taking the above construction with $J$ in place of $I$ and setting $E':=E_I\subseteq E_J=E$, we get Proposition \ref{GWgluingneeded}(\ref{GWgluingsubmersion}) from Theorem \ref{mainresult}(\ref{manifold},\ref{submersion}) as well.

\subsection{Local model for resolution of a node}

The rest of this appendix is devoted to the proof of Theorem \ref{mainresult}.  We now fix $(s_0,u_0,\{x_i^0\},e_0)\in\Mbar(X)^\reg$, and we prove the desired statements (\ref{openness})--(\ref{orientation}) in a neighborhood of this point.

Our first task is to fix a nice local coordinate system on $\Mbar$ near $s_0$.  Let $d$ be the number of nodes of $C_0:=C_{s_0}$.

On each side of each node of $C_0$, fix a ``cylindrical end'', that is, a map:
\begin{equation}\label{cylindricalends}
[0,\infty)\times S^1\to C_0
\end{equation}
which is a biholomorphism onto some small neighborhood $D^2\setminus 0$ of the node.  We use coordinates $(s,t)\in[0,\infty)\times S^1$, which is given the standard holomorphic structure $z=e^{s+it}$.

Let $\Mbar^d\subseteq\Mbar$ denote the locus of curves with exactly $d$ nodes.  Pick a smooth family of smooth almost complex structures $j_y$ on $C_0$ parameterized by $y\in\RR^{\dim\Mbar^d}$, where $j_0$ is the given almost complex structure on $C_0$, which is constant over the cylindrical ends \eqref{cylindricalends}, and such that the induced map $\RR^{\dim\Mbar^d}\to\Mbar^d$ is a diffeomorphism onto its image.

Now consider the following procedure, which takes a ``gluing parameter'' $\alpha=e^{-6S+i\theta}\in\CC$ and two copies of the standard end $[0,\infty)\times S^1\sqcup[0,\infty)\times S^1$.  We first truncate both ends, leaving just the subset $[0,6S]\times S^1\sqcup[0,6S]\times S^1$.  We then identify $(s,t)$ in the first piece with $(6S-s,\theta-t)=(s',t')$ in the second piece.  We call the resulting cylinder a ``neck''.

Now given gluing parameters\footnote{In the present construction, and in many other constructions/definitions to come later, certain expressions, equalities, etc. only make sense or only hold if the gluing parameters $\alpha_i\in\CC$ are sufficiently close to zero (i.e.\ $\left|\alpha_i\right|\leq\epsilon$ for some $\epsilon>0$ depending only on data which we have previously fixed).  We will often leave this assumption implicit, since we only care about what happens for $\alpha$ in a neighborhood of $0\in\CC^d$ anyway.  The same goes for $y\in\RR^{\dim\Mbar^d}$.} $\alpha=(\alpha_1,\ldots,\alpha_d)\in\CC^d$, we may perform the gluing operation above on $C_0$, using the chosen cylindrical ends \eqref{cylindricalends}.  We call the resulting curve $C_\alpha$, and it is equipped with cylindrical ends (corresponding to those $\alpha_i=0$) and necks (corresponding to those $\alpha_i\ne 0$):
\begin{align}
\label{Gend}[0,\infty)\times S^1\to C_\alpha\\
\label{Gneck}[0,6S_i]\times S^1\to C_\alpha
\end{align}
In each neck, we have coordinates $(s,t)\in[0,6S_i]\times S^1$ and coordinates $(s',t')\in[0,6S_i]\times S^1$, which satisfy $s+s'=6S_i$ and $t+t'=\theta_i$.  The curve $C_\alpha$ is also equipped with $n+\ell$ marked points $p_1,\ldots,p_{n+\ell}\in C_\alpha$ coming from the given $p_1,\ldots,p_{n+\ell}\in C_0$.  Since $j_y$ is constant over the cylindrical ends, it descends to give an almost complex structure on $C_\alpha$.

Now since $\Mbar\to\Mbar_{g,n+\ell}$ is \'etale, there is an induced map:
\begin{align}
\CC^d\times\RR^{\dim\Mbar^d}&\to\Mbar\\
(\alpha,y)&\mapsto(C_\alpha,j_y,p_1,\ldots,p_{n+\ell})
\end{align}
which is a local diffeomorphism near zero.  Using these local coordinates, we may alternatively define $\Mbar(X)$ as:
\begin{equation}\label{modulispace}
\Mbar(X):=\left\{\begin{matrix}\alpha\in\CC^d\hfill\cr y\in\RR^{\dim\Mbar^d}\hfill\cr u:C_\alpha\to X\hfill\cr x_i\in C_\alpha\quad 1\leq i\leq r\hfill\cr e\in E\hfill\end{matrix}\;\middle|\;\begin{matrix}u\text{ smooth and }u_\ast[C_\alpha]=\beta\hfill\cr u(p_{n+i})\in D^\circ\quad\hfill(1\leq i\leq\ell)\cr u(x_i)\in D_i^\circ\text{ and }u\pitchfork D_i\text{ at }x_i\quad\hfill(1\leq i\leq r)\cr\delbar_yu+\lambda(e)(\alpha,y,x_1,\ldots,x_r,\cdot,u(\cdot))=0\hfill\end{matrix}\right\}
\end{equation}
and this coincides with the definition \eqref{Mbarfirstdef} in a neighborhood of $(s_0,u_0,\{x_i^0\},e_0)=(0,0,u_0,\{x_i^0\},e_0)$.  For the purposes of \eqref{modulispace}, $\lambda$ denotes the function:
\begin{equation*}%%unnumbered
\lambda:E\to C^\infty(\CC^d\times\RR^{\dim\Mbar^d}\times\overbrace{C_0\times\cdots\times C_0}^{r\text{ times}}{}\times C_0\times X,\Hom_\RR(T\tilde C_0,TX))
\end{equation*}
for which $\lambda(\cdot)(\cdot,y,\ldots)$ lands in $\Omega^{0,1}_{\tilde C_0,j_y}\otimes_\CC TX\subseteq\Hom_\RR(T\tilde C_0,TX)$, defined in terms of the old $\lambda$ via:
\begin{multline*}%%unnumbered
\lambda^\mathrm{new}(e)(\alpha,y,x_1,\ldots,x_r,p,x):=\\
\lambda^\mathrm{old}(e)(x_1\in(C_\alpha,j_y)\subseteq\Cbar,\ldots,x_r\in (C_\alpha,j_y)\subseteq\Cbar,p\in(C_\alpha,j_y)\subseteq\Cbar,x)
\end{multline*}
We assume that the cylindrical ends were chosen disjoint from the support of $\lambda^\mathrm{old}$, so we can make sense of $\lambda^\mathrm{new}$ as giving sections on $C_\alpha$.

Now to prove the main result, it suffices to study the local structure of $\Mbar(X)$ (defined as in \eqref{modulispace}) near the basepoint:
\begin{equation}
(0,0,u_0:C_0\to X,\{x_i^0\},e_0)
\end{equation}
which we are assuming lies in $\Mbar(X)^\reg$.

\subsection{Pregluing}

Let $\exp:TX\to X$ denote the exponential map of some Riemannian metric on $X$ for which $D$ is totally geodesic.  Let $\nabla$ denote any $J$-linear\footnote{Given any connection $\nabla^0$, the connection $\nabla_XY:=\frac 12(\nabla^0_XY-J(\nabla^0_X(JY)))$ is $J$-linear.} connection on $TX$ (equivalently, a connection for which $\nabla J=0$).  Let $\PT_{x\to y}:T_xX\to T_yX$ denote parallel transport via $\nabla$ along the shortest geodesic between $x$ and $y$ (we will only use this notation when it may be assumed that $x$ and $y$ are very close in $X$); note that $\PT_{x\to y}$ is $J$-linear.

Fix a smooth function $\chi:\RR\to[0,1]$ satisfying:
\begin{equation}
\chi(x)=\begin{cases}0&x\leq 0\cr 1&x\geq 1\end{cases}
\end{equation}

\begin{definition}[Flattening]
For $\alpha\in\CC^d$, we define the ``flattened'' map $u_{0|\alpha}:C_0\to X$ as follows.  Away from the ends, $u_{0|\alpha}$ coincides with $u_0$.  Over an end $[0,\infty)\times S^1$, we define it as follows:
\begin{equation}
u_{0|\alpha}(s,t):=\begin{cases}
u_0(s,t)&s\leq S-1\cr
\exp_{u_0(n)}\left[\chi(S-s)\cdot\exp_{u_0(n)}^{-1}u_0(s,t)\right]&S-1\leq s\leq S\cr
u_0(n)&S\leq s
\end{cases}
\end{equation}
where $n\in C_0$ denotes the corresponding node.
\end{definition}

\begin{definition}[Pregluing]
For $\alpha\in\CC^d$, we define the ``preglued'' map $u_\alpha:C_\alpha\to X$ as follows.  Away from the necks, $u_\alpha$ coincides with $u_0$.  Over a neck $[0,6S]\times S^1$, we define it as follows:
\begin{equation}
u_\alpha(s,t):=\begin{cases}
u_0(s,t)&s\leq S-1\cr
\exp_{u_0(n)}\left[\chi(S-s)\cdot\exp_{u_0(n)}^{-1}u_0(s,t)\right]&S-1\leq s\leq S\cr
u_0(n)&S\leq s\leq 5S\cr
\exp_{u_0(n)}\left[\chi(S-s')\cdot\exp_{u_0(n)}^{-1}u_0(s',t')\right]&5S\leq s\leq 5S+1\cr
u_0(s',t')&5S+1\leq s
\end{cases}
\end{equation}
($u_\alpha$ should be thought of as the ``descent'' of $u_{0|\alpha}$ from $C_0$ to $C_\alpha$).
\end{definition}

\begin{definition}[Pregluing vector fields]
For $\xi\in C^\infty(C_0,u_0^\ast TX)$, we define:
\begin{equation}
\xi_\alpha\in C^\infty(C_\alpha,u_\alpha^\ast TX)
\end{equation}
as follows.  Away from the necks, $\xi_\alpha$ coincides with $\xi$.  Over a neck $[0,6S]\times S^1$, we define it as follows:
\begin{equation}\label{kernelpregluingformula}
\xi_\alpha(s,t):=\begin{cases}
\xi(s,t)&s\leq S-1\cr
\PT_{u_0(s,t)\to u_\alpha(s,t)}\left[\xi(s,t)\right]&S-1\leq s\leq S\cr
\PT_{u_0(s,t)\to u_\alpha(s,t)}\left[\xi(s,t)\right]\cdot(1-\chi(s-S))+\chi(s-S)\cdot \xi(n)&S\leq s\leq S+1\cr
\xi(n)&S+1\leq s\leq 5S-1\cr
\PT_{u_0(s',t')\to u_\alpha(s',t')}\left[\xi(s',t')\right]\cdot(1-\chi(s'-S))+\chi(s'-S)\cdot \xi(n)&5S-1\leq s\leq 5S\cr
\PT_{u_0(s',t')\to u_\alpha(s',t')}\left[\xi(s',t')\right]&5S\leq s\leq 5S+1\cr
\xi(s',t')&5S+1\leq s
\end{cases}
\end{equation}
\end{definition}

\subsection{Weighted Sobolev norms}

Fix $k\in\ZZ_{\geq 6}$ and fix $\delta\in(0,1)$.  Fix a metric on $TX$ for the purposes of defining Sobolev norms of sections; this could be the same as the metric used to define $\exp$, though there is no need for it to be.

We denote by $W^{k,2}(C_\alpha,u_\alpha^\ast TX)$ the space of sections of $u_\alpha^\ast TX$ over $C_\alpha$ whose pullback to $\tilde C_\alpha$ is $W^{k,2}$ (since $W^{k,2}\hookrightarrow C^0$ as $k\geq 6$, this is the same as $W^{k,2}$ sections over $\tilde C_\alpha$ which coincide on each pair of points identified in $\tilde C_\alpha\to C_\alpha$, where $\tilde C_\alpha$ denotes the normalization of $C_\alpha$).  Similarly we denote by $W^{k-1,2}(\tilde C_\alpha,\Omega^{0,1}_{\tilde C_\alpha,j_y}\otimes_\CC u_\alpha^\ast TX)$ the space of sections of class $W^{k-1,2}$.  Note that both $W^{k,2}$ and $W^{k-1,2}$ refer to spaces of functions on the compact Riemann surface $\tilde C_\alpha$.  The specific choice of metric on $C_\alpha$ used to define the norms $\left\|\cdot\right\|_{k,2}$ and $\left\|\cdot\right\|_{k-1,2}$ will be of little importance (they will matter only up to commensurability).

We now define certain weighted Sobolev spaces $W^{k,2,\delta}$ and $W^{k-1,2,\delta}$ on the possibly non-compact Riemann surface $C_\alpha$ minus the nodes.  The specific choice of norms $\left\|\cdot\right\|_{k,2,\delta}$ and $\left\|\cdot\right\|_{k-1,2,\delta}$ (not just their commensurability classes) will be of great importance.

To make calculations in ends/necks easier, for each node $n\in C_0$ we fix the local trivialization of $TX$ given by parallel transport:
\begin{equation}\label{localPTcoords}
\PT_{u_0(n)\to p}:T_{u_0(n)}X\to T_pX
\end{equation}
over $p$ contained in a small ball centered at $u_0(n)$.  We assume that the ends \eqref{cylindricalends} were chosen small enough so that $u_0$ maps each end into a compact subset of the small ball associated to the corresponding node.

\begin{definition}
For $\alpha\in\CC^d$, we define a weighted Sobolev space:
\begin{equation}
W^{k,2,\delta}(C_0,u_{0|\alpha}^\ast TX)
\end{equation}
using the following norm.  We use the usual $(k,2)$ norm away from the ends.  Over an end $[0,\infty)\times S^1\subseteq C_0$, we consider the local trivialization \eqref{localPTcoords}, in which a section of $u_{0|\alpha}^\ast TX$ simply becomes a function $\xi:[0,\infty)\times S^1\to T_{u_0(n)}X$.  In terms of this function, the contribution from the end to the norm squared is:
\begin{equation}\label{endnorm}
\left|\xi(n)\right|^2+\int_{[0,\infty)\times S^1}\biggl[\left|\xi(s,t)-\xi(n)\right|^2+\sum_{j=1}^k\left|D^j\xi(s,t)\right|^2\biggr]e^{2\delta s}\;ds\,dt
\end{equation}
The derivatives of $\xi$ are measured with respect to the standard metric on $[0,\infty)\times S^1$.

By Sobolev embedding $W^{2,2}\hookrightarrow C^0$ in two dimensions, in any end we have a uniform bound on $\left|\xi(s,t)-\xi(n)\right|e^{\delta s}$ and $\left|D^j\xi(s,t)\right|e^{\delta s}$ ($1\leq j\leq k-2$) linear in $\left\|\xi\right\|_{k,2,\delta}$.
\end{definition}

\begin{definition}
For $\alpha\in\CC^d$, we define a weighted Sobolev space:
\begin{equation}
W^{k,2,\delta}(C_\alpha,u_\alpha^\ast TX)
\end{equation}
using the following norm.  We use the usual $(k,2)$ norm away from the ends/necks.  Over an end, the contribution to the norm squared is \eqref{endnorm}.  Over a neck $[0,6S]\times S^1\subseteq C_\alpha$, we again think of a section as a function $\xi:[0,6S]\times S^1\to T_{u_0(n)}X$, and the contribution to the norm squared is:
\begin{equation*}
\biggl|\int_{S^1}\xi(3S,t)\,dt\biggr|^2+\int_{[0,6S]\times S^1}\biggl[\Bigl|\xi(s,t)-\int_{S^1}\xi(3S,t)\,dt\Bigr|^2+\sum_{j=1}^k\left|D^j\xi(s,t)\right|^2\biggr]e^{2\delta\min(s,6S-s)}\;ds\,dt
\end{equation*}
Of course, for fixed $\alpha\in\CC^d$, this norm is equivalent to not weighting the necks, though the two norms are not uniformly equivalent as $\alpha\to 0$.  The weight in the necks is important since the key point is to establish certain estimates which are uniform as $\alpha\to 0$.

By Sobolev embedding $W^{2,2}\hookrightarrow C^0$ in two dimensions, in any neck we have a uniform bound on $\left|\xi(s,t)-\int_{S^1}\xi(3S,t)\,dt\right|e^{\delta\min(s,6S-s)}$ and $\left|D^j\xi(s,t)\right|e^{\delta\min(s,6S-s)}$ ($1\leq j\leq k-2$) linear in $\left\|\xi\right\|_{k,2,\delta}$.
\end{definition}

\begin{definition}
For $\alpha\in\CC^d$, we define a weighted Sobolev space:
\begin{equation}
W^{k-1,2,\delta}(\tilde C_\alpha,\Omega^{0,1}_{\tilde C_\alpha,j_y}\otimes_\CC u_\alpha^\ast TX)
\end{equation}
as follows.  We use the usual $(k-1,2)$ norm away from the ends/necks.  Over an end or neck, we trivialize $(T\tilde C_\alpha,j_y)$ over $\CC$ by the basis vector $\frac\partial{\partial s}$, and we trivialize $TX$ via \eqref{localPTcoords}, and hence the section is simply a function $\eta$ from the end/neck to $T_{u_0(n)}X$.  In terms of this function, the contribution to the norm squared is (for end/neck respectively):
\begin{align}
\int_{[0,\infty)\times S^1}&\sum_{j=0}^{k-1}\left|D^j\eta(s,t)\right|^2e^{2\delta s}\;ds\,dt\\
\int_{[0,6S]\times S^1}&\sum_{j=0}^{k-1}\left|D^j\eta(s,t)\right|^2e^{2\delta\min(s,6S-s)}\;ds\,dt
\end{align}
By Sobolev embedding $W^{2,2}\hookrightarrow C^0$ in two dimensions, in any end (resp.\ neck) we have a uniform bound on $\left|D^j\eta(s,t)\right|e^{\delta s}$ (resp.\ $\left|D^j\eta(s,t)\right|e^{\delta\min(s,6S-s)}$) ($1\leq j\leq k-3$) linear in $\left\|\eta\right\|_{k,2,\delta}$.
\end{definition}

\subsection{Based \texorpdfstring{$\delbar$-section}{del bar section} \texorpdfstring{$\F_{\alpha,y}$}{F\_alpha,y} and linearized operator \texorpdfstring{$D_{\alpha,y}$}{D\_alpha,y}}

Fix a norm on $E$.  On direct sums of normed spaces we use the direct sum norm $\left\|a\oplus b\right\|:=\left\|a\right\|+\left\|b\right\|$.

We consider the following partially defined function:
\begin{gather*}%%unnumbered
\mathcal F_{\alpha,y}:C^\infty(C_\alpha,u_\alpha^\ast TX)_D\oplus E\to C^\infty(\tilde C_\alpha,\Omega^{0,1}_{\tilde C_\alpha,j_y}\otimes_\CC u_\alpha^\ast TX)\\
\xi\mapsto\PT_{\exp_{u_\alpha}\xi\to u_\alpha}\left[\delbar_y\exp_{u_\alpha}\xi+\lambda(e_0+\proj_E\xi)(\alpha,y,x_1,\ldots,x_r,\cdot,(\exp_{u_\alpha}\xi)(\cdot))\right]
\end{gather*}
This function $\F_{\alpha,y}$ is defined for $\xi$ in a $C^1$-neighborhood of zero; for these $\xi$ we define $x_i=x_i(\xi)$ as in \eqref{intersectionsfunction}.  The subscript $D$ indicates restriction to sections which are tangent to $D$ at $p_{n+1},\ldots,p_{n+\ell}$.  It follows that for $\xi$ contained in a $C^0$-neighborhood of zero, $\exp_{u_\alpha}\xi$ sends $p_{n+1},\ldots,p_{n+\ell}$ to $D^\circ$.

Now we observe that $\mathcal F_{\alpha,y}$ extends continuously to a map:
\begin{equation}
\mathcal F_{\alpha,y}:W^{k,2,\delta}(C_\alpha,u_\alpha^\ast TX)_D\oplus E\to W^{k-1,2,\delta}(\tilde C_\alpha,\Omega^{0,1}_{\tilde C_\alpha,j_y}\otimes_\CC u_\alpha^\ast TX)
\end{equation}
which is defined for $\left\|\xi\right\|_{k,2,\delta}\leq c'$ (some $c'>0$).  Moreover, this map is highly differentiable (see \eqref{intersectionsfunction}--\eqref{firstdelbarsection} and the surrounding discussion; recall we have fixed $k\geq 6$).  We denote by:
\begin{equation}
D_{\alpha,y}:W^{k,2,\delta}(C_\alpha,u_\alpha^\ast TX)_D\oplus E\to W^{k-1,2,\delta}(\tilde C_\alpha,\Omega^{0,1}_{\tilde C_\alpha,j_y}\otimes_\CC u_\alpha^\ast TX)
\end{equation}
the derivative of $\F_{\alpha,y}$ at zero.

Let $T_\nabla(X,Y):=\nabla_XY-\nabla_YX-[X,Y]$ denote the torsion of $\nabla$.  Let $(\cdot)^{0,1}_y$ denote the projection $\Hom_\RR(T\tilde C_\alpha,u_\alpha^\ast TX)\to\Omega^{0,1}_{\tilde C_\alpha,j_y}\otimes_\CC u_\alpha^\ast TX$, so $(V)^{0,1}_y:=\frac 12(V+J\circ V\circ j_y)$.

\begin{lemma}\label{linearizedformula}
The linearized operator $D_{\alpha,y}$ is given by:
\begin{align}\label{formulaforlinearizedoperator}
D_{\alpha,y}\xi&=\Bigl(\nabla\xi+T_\nabla(\xi,du_\alpha)\Bigr)^{0,1}_y\cr
&\quad+\sum_{i=1}^r\frac{d[\lambda(e_0)]}{dx_i}(\alpha,y,x_1,\ldots,x_r,\cdot,u_\alpha(\cdot))(-\proj_{TC_\alpha}\xi(x_i))\cr
&\quad+\nabla_\xi[\lambda(e_0)](\alpha,y,x_1,\ldots,x_r,\cdot,u_\alpha(\cdot))\cr
&\quad+\lambda(\proj_E\xi)(\alpha,y,x_1,\ldots,x_r,\cdot,u_\alpha(\cdot))
\end{align}
where $\proj:u_\alpha^\ast TX\to TC_\alpha$ denotes the projection associated to the splitting $T_{u_\alpha(x_i)}X=T_{u_\alpha(x_i)}D_i\oplus T_{x_i}C_\alpha$, and $\nabla_\xi[\lambda(e_0)]$ means covariant derivative in the direction of $\xi$ along the $X$ factor.
\end{lemma}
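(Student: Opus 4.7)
The plan is to linearize $\mathcal F_{\alpha,y}$ term by term at $\xi=0$, exploiting the fundamental observation that, for any bundle-valued function $F$ on $X$,
$$\frac{d}{dt}\bigg|_{t=0}\PT_{\exp_p(tv)\to p}\bigl[F(\exp_p(tv))\bigr]=\nabla_v F(p),$$
so the parallel-transport prefactor in $\mathcal F_{\alpha,y}$ automatically converts ordinary variations into covariant ones. With this in mind the computation splits cleanly into four independent contributions, matching the four lines of the stated formula.

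First I would handle the pure $\delbar$-part $\xi\mapsto\PT_{\exp_{u_\alpha}\xi\to u_\alpha}[\delbar_y\exp_{u_\alpha}\xi]$. Its derivative at $\xi=0$ is the standard expression $(\nabla\xi+T_\nabla(\xi,du_\alpha))^{0,1}_y$; the torsion term appears precisely because we have only required $\nabla$ to be $J$-linear rather than Levi-Civita, and the computation (which uses $\nabla J=0$ throughout) is routine and is carried out in McDuff--Salamon \cite{mcduffsalamonJholsymp}.

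Next I would apply the chain rule to the $\lambda$-part, viewing it as a function of $\xi$ through three distinct channels: the linear argument $e_0+\proj_E\xi\in E$, the intersection points $x_i=x_i(\xi)$, and the evaluation point $\exp_{u_\alpha}\xi(\cdot)$ on $X$. Linearity in $E$ produces the final line $\lambda(\proj_E\xi)(\ldots)$; variation in the $X$-slot, combined with the parallel transport, produces $\nabla_\xi[\lambda(e_0)](\ldots)$ by the covariant-derivative identity above; and the $x_i$-channel produces the sum $\sum_i\frac{d[\lambda(e_0)]}{dx_i}\cdot\dot x_i(0)$ appearing on the second line.

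The only computation requiring real care is the derivative $\dot x_i(0):=\frac{d}{dt}\big|_{t=0}x_i(t\xi)$. Here $x_i(\xi)$ is implicitly defined by the constraint $(\exp_{u_\alpha}\xi)(x_i(\xi))\in D_i$; differentiating at $t=0$ modulo $T_{u_\alpha(x_i^0)}D_i$ yields
$$du_\alpha|_{x_i^0}\cdot\dot x_i(0)+\xi(x_i^0)\equiv 0\pmod{T_{u_\alpha(x_i^0)}D_i}.$$
The transversality hypothesis $u_\alpha\pitchfork D_i$ gives the splitting $T_{u_\alpha(x_i^0)}X=T_{u_\alpha(x_i^0)}D_i\oplus T_{x_i^0}C_\alpha$ used to define $\proj$, and inverting the equation via this splitting produces exactly $\dot x_i(0)=-\proj_{TC_\alpha}\xi(x_i^0)$, matching the sign and factor in the second line. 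Summing the four contributions yields the stated identity on smooth sections, which then extends by continuity to the weighted Sobolev completion. There is no deep obstacle here: the main subtlety is bookkeeping, namely tracking the three independent ways $\xi$ enters the expression and verifying that the parallel-transport prefactor correctly promotes each ordinary derivative to a covariant one.
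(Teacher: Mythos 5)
Your proposal is correct and follows essentially the same route as the paper's (terse) proof: both decompose $\mathcal F_{\alpha,y}$ into the $\delbar$-part and the $\lambda$-part, linearize each one, note that the parallel-transport prefactor promotes ordinary derivatives to covariant derivatives, and use the identity $\dot x_i(0)=-\proj_{TC_\alpha}\xi(x_i)$ (which the paper merely asserts and you correctly derive by implicit differentiation of the constraint $(\exp_{u_\alpha}\xi)(x_i(\xi))\in D_i$ through the splitting). The only cosmetic difference is that the paper splits the $\lambda$-contribution as $\lambda(e_0)$ plus $\lambda(\proj_E\xi)$ while you run a single three-channel chain rule, but these are the same computation.
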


\begin{proof}
The first term $\bigl(\nabla\xi+T_\nabla(\xi,du_\alpha)\bigr)^{0,1}_y$ comes from differentiating:
\begin{equation}
\PT_{\exp_{u_\alpha}\xi\to u_\alpha}\left[\delbar_y\exp_{u_\alpha}\xi\right]
\end{equation}
The second two terms come from differentiating:
\begin{equation}
\PT_{\exp_{u_\alpha}\xi\to u_\alpha}\left[\lambda(e_0)(\alpha,y,x_1,\ldots,x_r,\cdot,(\exp_{u_\alpha}\xi)(\cdot))\right]
\end{equation}
where we use the fact that $\frac d{dt}x_i(\exp_{u_\alpha}(t\xi))=-\proj_{TC_\alpha}\xi(x_i)$.  The last term comes from differentiating:
\begin{equation}
\PT_{\exp_{u_\alpha}\xi\to u_\alpha}\left[\lambda(\proj_E\xi)(\alpha,y,x_1,\ldots,x_r,\cdot,(\exp_{u_\alpha}\xi)(\cdot))\right]
\end{equation}
We leave the calculations to the reader.
\end{proof}

\begin{lemma}\label{fredholmandsamekernelcokernel}
Consider the following commutative square:
\begin{equation}\label{fredholmsquare}
\begin{tikzcd}
W^{k,2,\delta}(C_\alpha,u_\alpha^\ast TX)_D\oplus E\ar{r}{D_{\alpha,y}}&W^{k-1,2,\delta}(\tilde C_\alpha,\Omega^{0,1}_{\tilde C_\alpha,j_y}\otimes_\CC u_\alpha^\ast TX)\\
W^{k,2}(C_\alpha,u_\alpha^\ast TX)_D\oplus E\ar[hook]{u}\ar{r}{D_{\alpha,y}}&W^{k-1,2}(\tilde C_\alpha,\Omega^{0,1}_{\tilde C_\alpha,j_y}\otimes_\CC u_\alpha^\ast TX)\ar[hook]{u}
\end{tikzcd}
\end{equation}
Both horizontal operators are Fredholm, and the induced map from the kernel (resp.\ cokernel) of the bottom horizontal map to the kernel (resp.\ cokernel) of the top horizontal map is an isomorphism.
\end{lemma}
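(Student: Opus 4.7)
The plan is to treat both horizontal maps in \eqref{fredholmsquare} as Fredholm perturbations of a model Cauchy--Riemann operator on a Riemann surface with cylindrical ends (when $\alpha=0$) or a nodeless surface with long necks (when $\alpha\neq 0$), and to identify the kernels and cokernels by exponential decay at the ends. The formula from Lemma~\ref{linearizedformula} shows that $D_{\alpha,y}$ splits as the usual $\delbar$-type operator $\xi\mapsto(\nabla\xi+T_\nabla(\xi,du_\alpha))^{0,1}_y$, which is elliptic with real linear symbol, plus three lower-order terms coming from $\lambda$; the first two of these are supported in a fixed compact set away from the necks and ends (since $\lambda$ vanishes in a neighborhood of the nodes and we chose the cylindrical ends small), and the third is the bounded finite-rank map $e\mapsto\lambda(e)$ plugged in at $(\alpha,y,u_\alpha)$. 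Thus the lower-order perturbations are uniformly bounded as $\alpha\to 0$ and will not affect Fredholm indices or the exponential decay analysis.

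First I would establish Fredholmness of both rows. When all $\alpha_i\neq 0$, the domain $C_\alpha$ is a compact smooth Riemann surface; the two norms $\|\cdot\|_{k,2,\delta}$ and $\|\cdot\|_{k,2}$ are equivalent (non-uniformly in $\alpha$) on spaces over a compact manifold, so the top and bottom rows of \eqref{fredholmsquare} literally coincide as Fredholm operators, and both statements of the lemma are immediate. The real content is the case where some $\alpha_i=0$, so that $C_\alpha$ has cylindrical ends at the corresponding nodes. Here I would invoke the standard Lockhart--McOwen/Schwarz-type Fredholm theory for translation-invariant Cauchy--Riemann operators on cylinders with exponential weights: the asymptotic operator at a node, written in the trivialization \eqref{localPTcoords}, is $\partial_s+J_{u_0(n)}\partial_t$ acting on $T_{u_0(n)}X$-valued functions on $\RR_{\geq 0}\times S^1$, whose ``asymptotic spectrum'' on $L^2(S^1)$ is $2\pi\ZZ$. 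Since $\delta\in(0,1)\subset(0,2\pi)$, the weight $\delta$ avoids this spectrum, so the model operator is Fredholm between the weighted Sobolev spaces we have defined (the constant-subtraction in the definition of $\|\cdot\|_{k,2,\delta}$ on the source precisely captures the kernel $T_{u_0(n)}X$ of the asymptotic operator, i.e.\ the $0$-eigenmode). Fredholmness on $W^{k,2}$ is even easier (no weight needed on a cylindrical-end Riemann surface with $u_\alpha$ asymptotically constant). Combined with the compactly-supported lower-order perturbations from $\lambda$ (which are relatively compact) and the finite-dimensional summand $E$, both horizontal arrows in \eqref{fredholmsquare} are Fredholm.

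The main obstacle, and the heart of the argument, is showing that the vertical inclusions induce isomorphisms on kernel and cokernel. For kernels: the inclusion $W^{k,2,\delta}\hookrightarrow W^{k,2}$ is injective on kernels by definition, so I only need surjectivity. Given $\xi\oplus e$ in the $W^{k,2}$-kernel, in each end the component $\xi$ satisfies an elliptic equation whose leading part is $\partial_s+J\partial_t$ with lower-order terms which decay (since $u_\alpha$ is eventually constant equal to the node $u_0(n)$ and $\lambda$ vanishes there). A standard Fourier decomposition in $t\in S^1$ and an ODE/Gronwall argument on each non-zero Fourier mode show that $\xi(s,t)-\xi(n)$ decays like $e^{-2\pi s}$ in $L^\infty$, hence belongs to $W^{k,2,\delta}$ for any $\delta<2\pi$; together with the fact that $\xi$ is smooth on the compact part by elliptic regularity, this gives $\xi\in W^{k,2,\delta}$. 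For cokernels, I would argue by duality: elements of $\coker$ are represented by $W^{-(k-1),2}$ (resp.\ $W^{-(k-1),2,-\delta}$) sections in the kernel of the formal adjoint $D_{\alpha,y}^\ast$, which has the same asymptotic structure. Its kernel elements are $(0,1)$-form-valued and the analogous Fourier/ODE argument now gives exponential decay of every mode (there is no ``constant mode'' for $(0,1)$-forms on $S^1$ in the relevant trivialization, because $\partial_s+J\partial_t$ when dualized becomes $-\partial_s+J\partial_t$ and the $0$-mode of the adjoint is forced to zero by integrability at $s=\infty$). Hence adjoint-kernel elements automatically lie in the dual of $W^{k-1,2,\delta}$, which identifies the two cokernels. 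Elliptic regularity (applied iteratively, and for $k-1$ large as we assumed $k\geq 6$) promotes $L^2$-type decay to the full $W^{k,2,\delta}$ resp.\ $W^{k-1,2,\delta}$ statements.
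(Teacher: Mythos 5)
You have the direction of the vertical inclusion backwards, and this propagates into a genuine gap. The inclusion in \eqref{fredholmsquare} is $W^{k,2}\hookrightarrow W^{k,2,\delta}$ (for $\delta<1$), from the unweighted space into the weighted one: a function smooth on $D^2$ pulls back under $z=e^{-s-it}$ to something whose derivatives decay like $e^{-s}$, hence lies in $W^{k,2,\delta}$ since $\delta<1$. The reverse containment fails for $k\ge 2$ — e.g.\ $|z|^{\delta'}$ with $\delta<\delta'<1$ is in $W^{k,2,\delta}$ but not in $W^{2,2}(D^2)$. So your claim that ``the inclusion $W^{k,2,\delta}\hookrightarrow W^{k,2}$ is injective on kernels by definition'' is simply false, and the surjectivity you then prove — that a $W^{k,2}$-kernel element decays and hence lies in $W^{k,2,\delta}$ — is the \emph{trivial} containment that already follows from the inclusion. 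The nontrivial statement, which you do not address, is the reverse: if $\xi\in W^{k,2,\delta}$ and $D_{\alpha,y}\xi\in W^{k-1,2}$, then $\xi$ has enough regularity across the nodes to lie in $W^{k,2}$. This single elliptic-regularity-across-the-node statement is what simultaneously gives surjectivity on kernels and injectivity on cokernels; the remaining two directions (kernel injectivity, cokernel surjectivity) are soft and follow from the inclusion and density.

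Your cokernel argument by duality inherits the same directional confusion, and is also considerably heavier than needed. The paper handles both nontrivial directions at once with a short distribution-theoretic argument: a $\xi\in W^{k,2,\delta}$ extends continuously across the node (that is what the $|\xi(n)|^2$ term in the weighted norm and the decay of $\xi-\xi(n)$ encode), so distributionally on the compact $\tilde C_\alpha$ one has $D_{\alpha,y}\xi=\eta+\epsilon$ with $\epsilon$ supported at the node points; the boundedness of $\xi$ and $\eta$ gives $|\langle\epsilon,\varphi\rangle|\le c\|\varphi\|_{1,1}$, and since $W^{1,1}\not\hookrightarrow C^0$ in dimension two no nontrivial finitely-supported distribution obeys such a bound, hence $\epsilon=0$ and interior elliptic regularity finishes. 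Your Fourier-mode/ODE approach could in principle be made to work in the correct direction — kernel elements in $W^{k,2,\delta}$ have only the decaying Fourier modes at each end because the growing ones are ruled out by the finite weighted norm, and the decaying modes go like $e^{-|n|s}$, giving $W^{k,2}$ regularity on $D^2$ — but you would have to be careful about the exponentially small perturbation of the asymptotic operator by $u_\alpha$, and the constants: the asymptotic eigenvalues in the paper's conventions are $\ZZ$ (powers $z^n$ on $D^2\setminus 0$), not $2\pi\ZZ$, which is exactly why the paper requires $\delta\in(0,1)$. As written, your argument proves nothing beyond what the inclusion already gives, and must be redirected before it constitutes a proof.
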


Let us remark on the reason for the vertical inclusions in \eqref{fredholmsquare}.  For the leftmost vertical inclusion, one just needs to check that for a function $f:D^2\to\RR$, the $W^{k,2,\delta}$ norm of its pullback to $[0,\infty)\times S^1$ is bounded linearly by its $W^{k,2}$ norm on $D^2$.  For the rightmost vertical inclusion, one checks the same property for $1$-forms.  In both of these calculations we use the fact that $\delta<1$.

\begin{proof}
The operator $D_{\alpha,y}$ equals $(\nabla\xi)^{0,1}_y$ plus compact terms (by Lemma \ref{linearizedformula}), so it is equivalent to show that $\xi\mapsto(\nabla\xi)^{0,1}_y$ is Fredholm (and we may forget about $E$).  Here $\nabla$ is really the pullback of $\nabla$ via $u_\alpha$.  Thus it suffices to show that if $V$ is a smooth complex vector bundle over a nodal Riemann surface $C$ equipped with a smooth connection $\nabla$, then the horizontal maps:
\begin{equation}
\begin{tikzcd}[column sep = large]
W^{k,2,\delta}(C,V)\ar{r}{\xi\mapsto(\nabla\xi)^{0,1}}&W^{k-1,2,\delta}(\tilde C,\Omega^{0,1}_{\tilde C}\otimes_\CC V)\\
W^{k,2}(C,V)\ar[hook]{u}\ar{r}{\xi\mapsto(\nabla\xi)^{0,1}}&W^{k-1,2}(\tilde C,\Omega^{0,1}_{\tilde C}\otimes_\CC V)\ar[hook]{u}
\end{tikzcd}
\end{equation}
are Fredholm.  For the bottom map, the Fredholm property is standard.  Indeed, $W^{k,2}(C,V)$ lies inside $W^{k,2}(\tilde C,V)$ as a closed subspace of finite codimension, and $\nabla^{0,1}:W^{k,2}(\tilde C,V)\to W^{k-1,2}(\tilde C,\Omega^{0,1}_{\tilde C}\otimes_\CC V)$ is Fredholm by elliptic regularity on the compact Riemann surface $\tilde C$.  For the top map, the Fredholm property follows from results of Lockhart--McOwen \cite{lockhartmcowen} concerning the Fredholmness (in weighted Sobolev spaces) of elliptic operators on manifolds with cylindrical ends.  Specifically, $W^{k,2,\delta}(C,V)$ lies inside $W^{k,2,\delta}(\tilde C,V)$ as a closed subspace of finite codimension, and the map $\nabla^{0,1}:W^{k,2,\delta}(\tilde C,V)\to W^{k-1,2,\delta}(\tilde C,\Omega^{0,1}_{\tilde C}\otimes_\CC V)$ is Fredholm by the theory of Lockhart--McOwen \cite{lockhartmcowen}.  This uses the fact that the weight $\delta\in(0,1)$ is not an eigenvalue of the asymptotic linearized operator in any end (the set of eigenvalues in any end is precisely $\ZZ$, corresponding to the powers $\{z^n\}_{n\in\ZZ}$ on $D^2\setminus 0$).  Let us also remark that both of these Fredholmness statements are easier at $p=2$ than for general $p\in(1,\infty)$, since for $p=2$ one can use Fourier analysis (after localizing the problem with a partition of unity).

Now it remains to show that the induced maps on kernel and cokernel are isomorphisms.  The map on kernels is obviously injective, and the map on cokernels is easily seen to be surjective (use the fact that the image of the top horizontal map is closed and the fact that the rightmost vertical inclusion is dense).

For the surjectivity of the map on kernels and the injectivity of the map on cokernels, it suffices to show that if $D_{\alpha,y}\xi=\eta$ for $\xi\in W^{k,2,\delta}$ and $\eta\in W^{k-1,2}$, then $\xi\in W^{k,2}$.  To show $\xi\in W^{k,2}$, it suffices to argue locally on one side of a given node.

Certainly (the vector field part of) $\xi$ extends continuously to $C_\alpha$ (by definition of $W^{k,2,\delta}$), and this continuous extension satisfies $D_{\alpha,y}\xi=\eta+\epsilon$ where $\epsilon$ is a distribution on $\tilde C_\alpha$ valued in $\Omega^{0,1}_{\tilde C_\alpha,j_y}\otimes_\CC u_\alpha^\ast TX$ and supported inside the (inverse images of the) nodes.  By elliptic regularity, it suffices to show that $\epsilon=0$.

\newlength\arrowwidthawesome
\settowidth{\arrowwidthawesome}{$\nrightarrow$}
Now for smooth test functions $\varphi$ supported inside a small neighborhood of the support of $\epsilon$, we have:
\begin{equation}
\langle\epsilon,\varphi\rangle=\langle D_{\alpha,y}\xi-\eta,\varphi\rangle=\langle\xi,D_{\alpha,y}^\ast\varphi\rangle-\langle\eta,\varphi\rangle
\end{equation}
We thus obtain the bound $\left|\langle\epsilon,\varphi\rangle\right|\leq c\left\|\varphi\right\|_{1,1}$ since $D_{\alpha,y}^\ast$ is a first order operator and $\xi,\eta$ are bounded (as they are continuous).  On the other hand, $\epsilon$ is supported at a finite set of points, so it is a linear combination of $\delta$-functions and their derivatives.  Hence $\epsilon$ does not satisfy the bound $\left|\langle\epsilon,\varphi\rangle\right|\leq c\left\|\varphi\right\|_{1,1}$ (recall that $W^{1,1}\hookrightarrow\hspace{-\arrowwidthawesome}\nrightarrow C^0$ since we are in two dimensions) unless $\epsilon=0$, as desired.
\end{proof}

We denote the kernel of $D_{0,0}$ (whose meaning is unambiguous by Lemma \ref{fredholmandsamekernelcokernel}) by:
\begin{equation}
K:=\ker D_{0,0}\subseteq C^\infty(C_0,u_0^\ast TX)_D\oplus E
\end{equation}
Note that our assumption $(0,0,u_0,\{x_i^0\},e_0)\in\Mbar(X)^\reg$ is equivalent to saying that $D_{0,0}$ is surjective and $K\twoheadrightarrow E/E'$ is surjective.

\subsection{Pregluing estimates}

Fix norms on $K$, $\CC^d$, and $\RR^{\dim\Mbar^d}$.

\begin{lemma}[Estimate for map pregluing]\label{mappregluingissmall}
We have:
\begin{equation}
\left\|\delbar_yu_\alpha+\lambda(e_0)(\alpha,y,x_1^0,\ldots,x_r^0,\cdot,u_\alpha(\cdot))\right\|_{k-1,2,\delta}\leq c\cdot\left[\left|y\right|+\sum_{i=1}^de^{-(1-\delta)S_i}\right]
\end{equation}
uniformly over $(\alpha,y)$ in a neighborhood of zero, for $c<\infty$ depending on data which has been previously fixed.
\end{lemma}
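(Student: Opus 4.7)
The plan is to decompose $C_\alpha$ into three disjoint regions and estimate the weighted norm of the error on each separately. The three regions are: (a) the ``body'' away from all ends and necks, on which $u_\alpha = u_0$ and the weight $e^{\delta\cdot}$ is bounded; (b) the cylindrical ends (those where $\alpha_i=0$), on which again $u_\alpha = u_0$ but the weight is unbounded; (c) the necks $[0,6S_i]\times S^1$ (those where $\alpha_i \neq 0$), which carry the genuine pregluing error. Observe that the $\lambda$-term lives only in region (a) by the support assumption on $\lambda$ near the nodes, and that the choice of cutoff ensures $u_\alpha$ is \emph{flat} (equal to $u_0(n)$) on the middle portion $[S_i,5S_i]\times S^1$ of each neck, so $\delbar u_\alpha$ is supported in the two cutoff strips $[S_i-1,S_i]\times S^1$ and $[5S_i,5S_i+1]\times S^1$.

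On regions (a) and (b), both $\delbar_y u_0-\delbar_0 u_0$ and $\lambda(e_0)(\alpha,y,\ldots)-\lambda(e_0)(0,0,\ldots)$ vanish to first order at the base point $(0,0)$, so a Taylor expansion (with smooth dependence on $\alpha,y$) gives a bound of the form $c(\left|\alpha\right|+\left|y\right|)$ in any fixed Sobolev norm. Since $\left|\alpha_i\right|=e^{-6S_i}$ and $1-\delta<6$, we have $\left|\alpha\right|\leq c\sum_i e^{-(1-\delta)S_i}$, which is of the desired form. On region (b) the weight $e^{2\delta s}$ is harmless since $\delbar_0 u_0+\lambda(e_0)(0,0,\ldots)=0$ exactly there, and $\delbar_y-\delbar_0$ is a bounded tensor field (differing by $O(\left|y\right|)$) which we can estimate without any issue because $du_0$ itself decays exponentially in the end (so the weight $e^{\delta s}$ is absorbed).

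The heart of the argument is region (c). The key analytic input is the standard exponential decay estimate for finite-energy $J$-holomorphic maps near a puncture (removable singularities, cf.\ McDuff--Salamon): in the local trivialization \eqref{localPTcoords} we have
\begin{equation}
\left|D^j(u_0(s,t)-u_0(n))\right|\leq c_j e^{-s}\qquad 0\leq j\leq k,
\end{equation}
uniformly on each end of $C_0$. Combining this with the explicit pregluing formula and the $C^\infty$-bounds on the cutoff function $\chi$, a direct calculation shows $\left|D^j\delbar u_\alpha(s,t)\right|\leq c_j' e^{-s}$ throughout the cutoff region $[S_i-1,S_i]\times S^1$, and symmetrically $\leq c_j' e^{-s'}$ on the other side. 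Since in these strips $\min(s,6S_i-s)=s$ (resp.\ $s'$), integration against the weight $e^{2\delta\min(s,6S_i-s)}$ produces
\begin{equation}
\int_{S_i-1}^{S_i}e^{-2s}\cdot e^{2\delta s}\,ds\;\leq\;c\,e^{-2(1-\delta)S_i},
\end{equation}
and taking square roots and summing over the (finitely many) necks yields the contribution $\sum_i e^{-(1-\delta)S_i}$. Summing the bounds from (a), (b), and (c) gives the lemma.

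The only real subtlety is the exponential decay input at the nodes; every other step is a routine pointwise estimate in the cutoff region combined with bookkeeping of the weight $\delta\in(0,1)$, where it is essential that $\delta<1$ so that the weight does not defeat the intrinsic $e^{-s}$ decay.
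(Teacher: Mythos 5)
Your proof is correct and follows the same decomposition into body, ends, and necks as the paper, with the same key exponential-decay input in the cutoff strips of the necks and the same weight bookkeeping yielding $e^{-(1-\delta)S_i}$. Two minor remarks on your write-up: first, the decay estimate $\left|D^j(u_0(s,t)-u_0(n))\right|\leq c_j e^{-s}$ does not need a removable-singularity theorem for finite-energy $J$-holomorphic maps --- in this setup $u_0$ is \emph{a priori} a smooth map on the closed nodal curve $C_0$, so the decay is nothing more than smoothness of $u_0$ at $z=0$ together with the change of coordinates $z=e^{-s-it}$ (this is how the paper phrases it); second, on your region (b) the expression is in fact \emph{identically zero}, because the family $j_y$ is chosen to be constant over the cylindrical ends (so $\delbar_y=\delbar_0$ there) and $\lambda$ is supported away from the ends, which makes the estimate on $(\delbar_y-\delbar_0)u_0$ superfluous. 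Your inclusion of the $O(\left|\alpha\right|)$ contribution in the body, coming from the $\alpha$-dependence of $\lambda$ through the classifying map, and its absorption into $\sum_i e^{-(1-\delta)S_i}$ using $1-\delta<6$, is actually a bit more careful than the paper's one-line assertion of a $c_j\left|y\right|$ pointwise bound there.
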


Data which has been previously fixed includes $(s_0,u_0,\{x_i^0\},e_0)$, the cylindrical ends, $j_y$, the metrics used to define $\exp$ and Sobolev norms, $k$, $\delta$, the cutoff function $\chi$, the norms on $E$, $K$, $\CC^d$, $\RR^{\dim\Mbar^d}$, etc.

\begin{proof}
Away from the ends/necks of $C_\alpha$, the expression is zero when $y=0$ (since $(0,0,u_0,\{x_i^0\},e_0)\in\Mbar(X)$) and more generally it follows easily that its $j$th derivative is bounded pointwise by $c_j\left|y\right|$.  In the ends of $C_\alpha$, the expression is zero.

In a given neck $[0,6S]\times S^1\subseteq C_\alpha$, argue as follows.  Recall that $\lambda$ vanishes in the ends/necks, so we just have to bound $\delbar_yu_\alpha$.  Now $\delbar_yu_\alpha$ is only nonzero over $[S-1,S]\cup[5S,5S+1]$; by symmetry we may just work over $[S-1,S]$.  Now since $u_0$ is smooth, it and all its derivatives decay exponentially in any cylindrical end $[0,\infty)\times S^1$ (since every end is holomorphically conjugate to the map $[0,\infty)\times S^1\to D^2$ given by $z=e^{-s-it}$).  Precisely, we have the following pointwise bound over $[S-1,S]$:
\begin{equation}\label{uzeroexponentialdecay}
\left|D^ju_0(s,t)\right|\leq c_je^{-S}
\end{equation}
(derivatives with respect to $s$ and $t$).  It follows from the definition of $u_\alpha$ that the same bound holds for $u_\alpha$.  Thus the contribution of $[S-1,S]$ to the norm of $\delbar_yu_\alpha$ is bounded by a constant times $e^{S\delta}e^{-S}$.
\end{proof}

\begin{lemma}[Estimate for kernel pregluing]\label{pregluingKestimate}
For all $\kappa\in K$, we have:
\begin{equation}
\left\|D_{\alpha,y}\kappa_\alpha\right\|_{k-1,2,\delta}\leq c\cdot\left[\left|y\right|+\sum_{i=1}^de^{-(1-\delta)S_i}\right]\left\|\kappa\right\|
\end{equation}
uniformly over $(\alpha,y)$ in a neighborhood of zero, for $c<\infty$ depending on data which has been previously fixed.
\end{lemma}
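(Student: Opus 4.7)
The strategy is to decompose $C_\alpha$ into regions and exploit the structure of the pregluing $\kappa_\alpha$ and the fact that $D_{0,0}\kappa=0$. Specifically, partition $C_\alpha$ into (i) the compact region $\Omega\subseteq C_\alpha$ away from all ends and necks, (ii) the cylindrical ends of $C_\alpha$ (corresponding to $\alpha_i=0$), (iii) the ``central'' portion $[S_i+1,5S_i-1]\times S^1$ of each neck, and (iv) the transition regions $([S_i-1,S_i+1]\cup[5S_i-1,5S_i+1])\times S^1$ of each neck. Regions (ii) and (iii) will contribute nothing; region (i) will contribute $(|y|+|\alpha|)\|\kappa\|$; region (iv) will contribute $\sum_ie^{-(1-\delta)S_i}\|\kappa\|$. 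Since $|\alpha_i|=e^{-6S_i}\leq e^{-(1-\delta)S_i}$, these bounds combine to give the claimed estimate.

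First I would dispose of the ``trivial'' regions. Over any cylindrical end of $C_\alpha$, we have $\kappa_\alpha=\kappa$, $u_\alpha=u_0$, and moreover $j_y=j_0$ (since $j_y$ is constant on the ends by construction) and $\lambda=0$ (by assumption on $\lambda$), so the formula of Lemma~\ref{linearizedformula} gives $D_{\alpha,y}\kappa_\alpha=D_{0,0}\kappa=0$ there. Over the central portion $[S_i+1,5S_i-1]$ of each neck, both $u_\alpha\equiv u_0(n_i)$ and $\kappa_\alpha\equiv\kappa(n_i)$ are constant in the local parallel transport trivialization \eqref{localPTcoords}, and $\lambda=0$ in the neck, so again $D_{\alpha,y}\kappa_\alpha=0$ pointwise. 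Over the compact region $\Omega$, $\kappa_\alpha=\kappa$ and $u_\alpha=u_0$, hence $D_{\alpha,y}\kappa_\alpha-D_{0,0}\kappa$ consists only of the differences coming from $j_y-j_0$ (acting on $(\nabla\kappa+T_\nabla(\kappa,du_0))$) and from $\lambda(e_0)(\alpha,y,\cdots)-\lambda(e_0)(0,0,\cdots)$ (and its covariant derivative in $\xi$). By smoothness of $\lambda$ and of $y\mapsto j_y$, this difference is bounded pointwise (with all derivatives through order $k-1$) by $C(|y|+|\alpha|)$ times an expression in $\kappa$ and its derivatives up to order $k$; since $K$ is finite-dimensional, these are uniformly bounded by $C\|\kappa\|$.

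The main step is the estimate on the four transition rectangles $[S_i-1,S_i+1]\times S^1$ and $[5S_i-1,5S_i+1]\times S^1$ in each neck. The key input is that elements $\kappa\in K=\ker D_{0,0}$ are smooth on the compact nodal curve $C_0$: pulled back to the normalization $\tilde C_0$, each $\kappa$ is smooth and its value at each preimage of the node $n_i$ equals the common value $\kappa(n_i)$. Writing $\kappa$ in the local parallel transport trivialization \eqref{localPTcoords}, this gives the pointwise bounds
\begin{equation}
|\kappa(s,t)-\kappa(n_i)|+\sum_{j=1}^{k}|D^j\kappa(s,t)|\leq C\,e^{-s}\,\|\kappa\|
\end{equation}
in each cylindrical end, paralleling \eqref{uzeroexponentialdecay} for $u_0$ (by finite-dimensionality of $K$, all norms on $K$ are equivalent, so the constant only depends on previously fixed data). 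Inspection of the pregluing formula \eqref{kernelpregluingformula} shows that on the transition rectangles, $\kappa_\alpha-\kappa(n_i)$ is a cutoff-weighted combination of $\PT[\kappa-\kappa(n_i)]$ and $0$, and similarly for its derivatives. Combined with the analogous estimate $|D^ju_\alpha(s,t)|\leq C\,e^{-s}$ from the proof of Lemma~\ref{mappregluingissmall}, and the fact that $\lambda=0$ in the necks, Lemma~\ref{linearizedformula} yields the pointwise bound
\begin{equation}
\sum_{j=0}^{k-1}|D^jD_{\alpha,y}\kappa_\alpha(s,t)|\leq C\,e^{-s}\,\|\kappa\|
\end{equation}
(with $s$ replaced by $6S_i-s$ on the other side) throughout the transition regions. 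Integrating $|D^jD_{\alpha,y}\kappa_\alpha|^2$ against the weight $e^{2\delta\min(s,6S_i-s)}$ over these rectangles gives a contribution bounded by $C\,(e^{-S_i}\|\kappa\|)^2e^{2\delta S_i}=C\,e^{-2(1-\delta)S_i}\|\kappa\|^2$, so the $W^{k-1,2,\delta}$ contribution of the $i$th neck is $\leq C\,e^{-(1-\delta)S_i}\|\kappa\|$.

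The hard part is really just bookkeeping: once one observes that $j_y$ is constant on the ends and that $\lambda$ vanishes in the ends/necks, the only nonvanishing contributions come from the transition regions, and there the decay estimate $|\kappa-\kappa(n_i)|=O(e^{-s})$ together with the weight $e^{\delta s}$ produces the advertised $e^{-(1-\delta)S_i}$. Combining the three nonzero contributions yields
\begin{equation}
\|D_{\alpha,y}\kappa_\alpha\|_{k-1,2,\delta}\leq C\Bigl(|y|+|\alpha|+\sum_{i=1}^de^{-(1-\delta)S_i}\Bigr)\|\kappa\|\leq c\Bigl(|y|+\sum_{i=1}^de^{-(1-\delta)S_i}\Bigr)\|\kappa\|,
\end{equation}
where the last inequality uses $|\alpha_i|=e^{-6S_i}\leq e^{-(1-\delta)S_i}$.
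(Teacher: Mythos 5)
Your proof is correct and follows the same decomposition of $C_\alpha$ (compact region, cylindrical ends, central neck portions, transition rectangles) and the same key observations as the paper: vanishing of $D_{\alpha,y}\kappa_\alpha$ in the ends and central necks, and the $O(e^{-S})$ pointwise decay of $\kappa_\alpha$ in the transition rectangles beating the $e^{\delta S}$ weight. The one place you are more careful than the paper's own proof: away from the ends/necks you track the $O(|\alpha|)$ contribution coming from the $\alpha$-dependence of $\lambda^\mathrm{new}$ (the paper quotes only $c|y|\|\kappa\|$), and you correctly absorb it via $|\alpha_i|=e^{-6S_i}\leq e^{-(1-\delta)S_i}$, so the final bound is unchanged.
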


\begin{proof}
Away from the ends/necks, the norm is bounded by $c\cdot\left|y\right|\left\|\kappa\right\|$.  In the ends, the expression is zero.  In the necks, argue as follows.

The expression is only nonzero over $[S-1,S+1]\cup[5S-1,5S+1]$; by symmetry we may just work over $[S-1,S+1]$.  Then since $u_0$ and $\kappa$ are smooth, we have (by the same reasoning used to justify \eqref{uzeroexponentialdecay}) the following pointwise bound over $[S-1,S+1]$:
\begin{equation}
\left|D^j\kappa_\alpha(s,t)\right|\leq c_je^{-S}\left\|\kappa\right\|
\end{equation}
(derivatives with respect to $s$ and $t$, in local coordinates \eqref{localPTcoords}).  Thus the overall contribution of $[S-1,S+1]$ to the total norm of the expression is bounded by a constant times $e^{S\delta}e^{-S}\left\|\kappa\right\|$.
\end{proof}

\subsection{Approximate right inverse}

Recall that by assumption, the linearized operator:
\begin{equation}
D_{0,0}:W^{k,2,\delta}(C_0,u_0^\ast TX)_D\oplus E\to W^{k-1,2,\delta}(\tilde C_0,\Omega^{0,1}_{\tilde C_0,j_0}\otimes_\CC u_0^\ast TX)
\end{equation}
is surjective (even if we replace $E$ with $E'$).  We now proceed to fix a bounded right inverse:
\begin{equation}\label{chosenrightinverse}
Q_{0,0}:W^{k-1,2,\delta}(\tilde C_0,\Omega^{0,1}_{\tilde C_0,j_0}\otimes_\CC u_0^\ast TX)\to W^{k,2,\delta}(C_0,u_0^\ast TX)_D\oplus E'
\end{equation}
whose image admits a simple description (the description of the image will be important for the proof that the gluing map is continuous (Proposition \ref{continuityofgluing})).  Fix a collection of points $q_i\in C_0$ ($1\leq i\leq h$) not contained in any of the cylindrical ends, subspaces $V_i\subseteq T_{u_0(q_i)}X$, and a subspace $E''\subseteq E'$ so that the natural evaluation map:
\begin{equation}
L_0:K\xrightarrow\sim\biggl(\bigoplus_{i=1}^hT_{u_0(q_i)}X/V_i\biggr)\oplus E/E''
\end{equation}
is an isomorphism (such choices exist since $K\twoheadrightarrow E/E'$ is surjective and elements of $K$ satisfy unique continuation).  Now we can consider the same evaluation map on the larger space:
\begin{equation}\label{lineartofixinverse}
L_0:W^{k,2,\delta}(C_0,u_0^\ast TX)_D\oplus E\to W:=\biggl(\bigoplus_{i=1}^hT_{u_0(q_i)}X/V_i\biggr)\oplus E/E''
\end{equation}
Since $L_0$ sends $K=\ker D_{0,0}$ isomorphically to $W$, it follows that the restriction of $D_{0,0}$ to $\ker L_0$ is an isomorphism of Banach spaces.  Hence there is a unique right inverse:
\begin{equation*}%%unnumbered
Q_{0,0}:W^{k-1,2,\delta}(\tilde C_0,\Omega^{0,1}_{\tilde C_0,j_0}\otimes_\CC u_0^\ast TX)\to\ker L_0\subseteq W^{k,2,\delta}(C_0,u_0^\ast TX)_D\oplus E
\end{equation*}
and it is bounded.  Since $E''\subseteq E'$, $\ker L_0$ is in fact contained in the right hand side of \eqref{chosenrightinverse}.  We fix once and for all this $Q_{0,0}$.

\begin{definition}[Approximate right inverse $T_{\alpha,y}$]
We define a map:
\begin{equation}
T_{\alpha,y}:W^{k-1,2,\delta}(\tilde C_\alpha,\Omega^{0,1}_{\tilde C_\alpha,j_y}\otimes_\CC u_\alpha^\ast TX)\to W^{k,2,\delta}(C_\alpha,u_\alpha^\ast TX)_D\oplus E
\end{equation}
as the composition:
\begin{equation}
T_{\alpha,y}:={\operatorname{glue}}\circ\PT\circ\id\circ Q_{0,0}\circ I_y\circ\PT\circ\operatorname{break}
\end{equation}
of maps in the following diagram, to be defined below:
\begin{equation}\label{biginversediagram}
\begin{tikzcd}
W^{k,2,\delta}(C_\alpha,u_\alpha^\ast TX)_D\oplus E\ar{r}{D_{\alpha,y}}&W^{k-1,2,\delta}(\tilde C_\alpha,\Omega^{0,1}_{\tilde C_\alpha,j_y}\otimes_\CC u_\alpha^\ast TX)\ar{d}{\mathrm{break}}\\
W^{k,2,\delta}(C_0,u_{0|\alpha}^\ast TX)_D\oplus E\ar{u}{\mathrm{glue}}\ar{r}{D_{0|\alpha,y}}&W^{k-1,2,\delta}(\tilde C_0,\Omega^{0,1}_{\tilde C_0,j_y}\otimes_\CC u_{0|\alpha}^\ast TX)\ar[leftrightarrow]{d}{\PT}\\
W^{k,2,\delta}(C_0,u_0^\ast TX)_D\oplus E\ar[leftrightarrow]{u}{\PT}\ar{r}{D_{0,y}}&W^{k-1,2,\delta}(\tilde C_0,\Omega^{0,1}_{\tilde C_0,j_y}\otimes_\CC u_0^\ast TX)\ar[leftrightarrow]{d}{I_y}\\
W^{k,2,\delta}(C_0,u_0^\ast TX)_D\oplus E\ar[equals]{u}{\id}\ar[yshift=0.5ex]{r}{D_{0,0}}&\ar[yshift=-0.5ex]{l}{Q_{0,0}}W^{k-1,2,\delta}(\tilde C_0,\Omega^{0,1}_{\tilde C_0,j_0}\otimes_\CC u_0^\ast TX)
\end{tikzcd}
\end{equation}
($D_{0|\alpha,y}$ denotes the linearized operator at $u_{0|\alpha}$).

We fix once and for all a smooth family of $(j_0,j_y)$-linear identifications $I_y:(T\tilde C_0,j_0)\to(T\tilde C_0,j_y)$ which are the identity over the ends/necks.  This gives rise to the bottom right vertical map in \eqref{biginversediagram}.

We define the middle vertical maps in \eqref{biginversediagram} using parallel transport (equivalently, using the local trivializations defined by \eqref{localPTcoords}).

We define the map:
\begin{equation}
W^{k-1,2,\delta}(\tilde C_\alpha,\Omega^{0,1}_{\tilde C_\alpha,j_y}\otimes_\CC u_\alpha^\ast TX)\xrightarrow{\mathrm{break}}W^{k-1,2,\delta}(\tilde C_0,\Omega^{0,1}_{\tilde C_0,j_0}\otimes_\CC u_{0|\alpha}^\ast TX)
\end{equation}
as follows.  Fix a smooth function $\bar\chi:\RR\to[0,1]$ such that:
\begin{equation}
\bar\chi(x)=\begin{cases}1&x\leq-1\cr0&x\geq+1\end{cases}\qquad\bar\chi(x)+\bar\chi(-x)=1
\end{equation}
Let $\eta\in W^{k-1,2,\delta}(\tilde C_\alpha,\Omega^{0,1}_{\tilde C_\alpha,j_y}\otimes_\CC u_\alpha^\ast TX)$.  Away from the ends with $\alpha\ne 0$, $\operatorname{break}(\eta)$ is simply $\eta$.  In any particular end $[0,\infty)\times S^1\subseteq C_0$ with $\alpha\ne 0$, we define:
\begin{equation}
\operatorname{break}(\eta)(s,t):=\begin{cases}
\eta(s,t)&s\leq 3S-1\cr
\bar\chi(s-3S)\cdot\eta(s,t)&3S-1\leq s\leq 3S+1\cr
0&3S+1\leq s
\end{cases}
\end{equation}
(noting the corresponding neck $[0,6S]\times S^1\subseteq C_\alpha$).

We define the map:
\begin{equation}
W^{k,2,\delta}(C_0,u_{0|\alpha}^\ast TX)_D\xrightarrow{\mathrm{glue}}W^{k,2,\delta}(C_\alpha,u_\alpha^\ast TX)_D
\end{equation}
Let $\xi\in W^{k,2,\delta}(C_0,u_{0|\alpha}^\ast TX)_D$.  Away from the necks of $C_\alpha$, $\operatorname{glue}(\xi)$ is simply $\xi$.  In any particular neck $[0,6S]\times S^1\subseteq C_\alpha$, we define:
\begin{equation}
\operatorname{glue}(\xi)(s,t):=\begin{cases}
\xi(s,t)&s\leq 2S\cr
\xi(n)+\chi(4S-s)\cdot[\xi(s,t)-\xi(n)]+\chi(4S-s')\cdot[\xi(s',t')-\xi(n)]&2S\leq s\leq 4S\cr
\xi(s',t')&4S\leq s
\end{cases}
\end{equation}
(noting the corresponding ends $(s,t)\in[0,\infty)\times S^1\subseteq C_0$ and $(s',t')\in[0,\infty)\times S^1\subseteq C_0$).  Note that the cutoff of $\xi(s,t)$ occurs around $4S\in[0,6S]$, where the weight $e^{2\delta\min(s,6S-s)}$ is much smaller than the weight $e^{2\delta s}$ at $4S\in[0,\infty)$.  We will see in the proof of Lemma \ref{approxtop} that this makes it easy to derive the desired estimates on $\delbar\mathrm{glue}(\xi)$.  This trick was explained to us by Abouzaid and attributed to Fukaya--Oh--Ohta--Ono \cite{foootechnicaldetails}.
\end{definition}

Let us make the elementary observation that the definition of $L_0$ extends perfectly well to give an analogous bounded linear map:
\begin{equation}\label{newLonpreglued}
L_\alpha:W^{k,2,\delta}(C_\alpha,u_\alpha^\ast TX)_D\oplus E\to W
\end{equation}
Since $\im Q_{0,0}\subseteq\ker L_0$, it follows from the definition of $T_{\alpha,y}$ that $\im T_{\alpha,y}\subseteq\ker L_\alpha$ as well.  This is a key ingredient in the proof that the gluing map is continuous (Proposition \ref{continuityofgluing}).

\begin{lemma}\label{approxbottom}
Let:
\begin{equation}
\begin{CD}
X@>D>>Y\cr
@AGAA@VVBV\cr
X'@>D'>>Y'
\end{CD}
\end{equation}
denote the bottom square in \eqref{biginversediagram}.  Then for $\xi\in X'$ and $\eta\in Y$ with $D'\xi=B\eta$, we have:
\begin{equation}
\left\|DG\xi-\eta\right\|\leq c\cdot\left\|\xi\right\|\left|y\right|
\end{equation}
uniformly over $(\alpha,y)$ in a neighborhood of zero, for $c<\infty$ depending on data which has been previously fixed.
\end{lemma}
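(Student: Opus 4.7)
The plan is to establish Lemma \ref{approxbottom} by direct computation using the explicit formula for $D_{\alpha,y}$ from Lemma \ref{linearizedformula}, exploiting the smooth dependence of all data on $y$ and the fact that $I_0 = \id$.

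First I would identify the maps in the bottom square: $D = D_{0,y}$, $D' = D_{0,0}$, $G = \id$, and $B$ is the map induced by $I_y$ on $(0,1)$-forms, namely $B:\Omega^{0,1}_{\tilde C_0, j_y}\otimes u_0^\ast TX \to \Omega^{0,1}_{\tilde C_0, j_0}\otimes u_0^\ast TX$ given by precomposition with $I_y$ (this is the natural map since $I_y$ is $(j_0,j_y)$-linear). The hypothesis $D'\xi = B\eta$ then determines $\eta = B^{-1}(D_{0,0}\xi)$, and the assertion becomes
\begin{equation}
\bigl\| D_{0,y}\xi \;-\; B^{-1}(D_{0,0}\xi)\bigr\|_{k-1,2,\delta} \;\leq\; c\,\|\xi\|_{k,2,\delta}\,|y|.
\end{equation}

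Next I would use Lemma \ref{linearizedformula} to expand both $D_{0,y}\xi$ and $D_{0,0}\xi$ into four terms and compare them termwise. For the first term $\bigl(\nabla\xi + T_\nabla(\xi,du_0)\bigr)^{0,1}_{y}$, the point is that for $W:=\nabla\xi + T_\nabla(\xi,du_0)$ one has the algebraic identity $W^{0,1}_y - B^{-1}(W^{0,1}_0) = \frac 12 J\circ W\circ(j_y - j_0) + (I_y^{-1})^\ast W^{0,1}_0 - W^{0,1}_0$ at a pointwise level, which is bounded pointwise by a constant times $(|j_y - j_0| + |I_y - \id|)\cdot|W|$. Since the smooth family $j_y$ was chosen with $j_0$ the given complex structure and $I_y$ may be arranged so that $I_0 = \id$, both $|j_y - j_0|$ and $|I_y - \id|$ (together with their derivatives up through order $k-1$) are bounded by $c|y|$ uniformly on $C_0$; combined with the bound $\|W\|_{k-1,2,\delta} \leq c\|\xi\|_{k,2,\delta}$ this yields the desired estimate on the first term. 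For the three remaining terms (those involving $\lambda$ and the intersection functions $x_i$), one simply uses smoothness of $\lambda(\cdot)(0,y,\ldots)$ in $y$ together with smoothness of $y\mapsto x_i$ to get pointwise bounds of the form $c\,|y|(\|\xi\|_{C^0}+|\proj_E\xi|)$ on the relevant differences, and the weighted Sobolev estimate follows because these terms are supported on a fixed compact subset of $C_0$ away from the ends (where $\lambda$ vanishes).

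The lemma is essentially bookkeeping, so I do not expect a serious obstacle; the only subtle point is matching up the $I_y$-identification with the $(0,1)_y$-projection in the leading term, which reduces to the algebraic identity above together with the observation that the weighted norm of $\nabla\xi$ is controlled by $\|\xi\|_{k,2,\delta}$ (the first-order nature of $D_{0,y} - B^{-1}D_{0,0}$ means we lose no derivatives).
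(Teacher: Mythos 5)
Your proposal is correct and follows the same route as the paper: the paper's proof is simply the one-line assertion $\left\|D_{0,y}-I_y\circ D_{0,0}\right\|\leq c\left|y\right|$ with ``calculation left to the reader,'' and you are filling in that calculation via Lemma \ref{linearizedformula}. One small slip: the algebraic identity should read $W^{0,1}_y - B^{-1}(W^{0,1}_0) = \tfrac 12 J\circ W\circ(j_y - j_0) + W^{0,1}_0 - (I_y^{-1})^\ast W^{0,1}_0$ (the last two terms have the opposite sign from what you wrote, since the left-hand side is $(W^{0,1}_y - W^{0,1}_0) + (W^{0,1}_0 - B^{-1}W^{0,1}_0)$); this has no effect on the pointwise bound you deduce, so the argument goes through.
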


\begin{proof}
In simpler terms, we have $\left\|D_{0,y}-I_y\circ D_{0,0}\right\|\leq c\cdot\left|y\right|$ (calculation left to the reader) and this trivially implies the claimed statement.
\end{proof}

\begin{lemma}\label{approxmiddle}
Let:
\begin{equation}
\begin{CD}
X@>D>>Y\cr
@AGAA@VVBV\cr
X'@>D'>>Y'
\end{CD}
\end{equation}
denote the middle square in \eqref{biginversediagram}.  Then for $\xi\in X'$ and $\eta\in Y$ with $D'\xi=B\eta$, we have:
\begin{equation}
\left\|DG\xi-\eta\right\|\leq c\cdot\left\|\xi\right\|\sum_{i=1}^de^{-S_i}
\end{equation}
uniformly over $(\alpha,y)$ in a neighborhood of zero, for $c<\infty$ depending on data which has been previously fixed.
\end{lemma}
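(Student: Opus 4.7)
The key observation is that the parallel-transport maps $G = \PT$ and $B = \PT^{-1}$ in the middle square are supported in the cylindrical ends of $C_0$, since $u_{0|\alpha} = u_0$ outside these ends. Consequently $D_{0|\alpha,y}\PT(\xi) - \PT(D_{0,y}\xi)$ vanishes outside the ends, and it suffices to estimate the difference one end at a time. For each end $[0,\infty) \times S^1$ associated to a node $n$, I would trivialize $TX$ in a neighborhood of $u_0(n)$ via the parallel transport chart \eqref{localPTcoords} (this trivialization was already chosen in defining the weighted Sobolev norms). In this chart, sections of $u_0^\ast TX$, $u_{0|\alpha}^\ast TX$, and their $\Omega^{0,1}$-valued analogues all become $T_{u_0(n)}X$-valued functions, and the two $\PT$ maps become linear bundle automorphisms which are small perturbations of the identity.

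The plan is then to derive pointwise estimates of order $e^{-2s}$ on the discrepancy between the two operators. In the chart above, $\Gamma(u_0(n)) = 0$, so $|\Gamma(p)| = O(|p - u_0(n)|)$. The standard exponential decay of $u_0$ in the end gives $|D^j(u_0(s,t) - u_0(n))| = O(e^{-s})$ for all $j \le k$, and by construction the flattening $u_{0|\alpha}$ satisfies the same bounds (with $u_{0|\alpha} \equiv u_0(n)$ for $s \ge S$). Combining these with Leibniz and the chain rule, the pullback connection terms $\Gamma(u_0)\, du_0$ and $\Gamma(u_{0|\alpha})\, du_{0|\alpha}$ are each $O(e^{-2s})$ pointwise, and similarly for all derivatives up to order $k-1$. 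A similar estimate holds for the correction $\PT_{u_0(s,t)\to u_{0|\alpha}(s,t)} - \id$ in the chosen chart, as it is bounded by the curvature of $\nabla$ times the area of the curvilinear triangle with vertices $u_0(n), u_0(s,t), u_{0|\alpha}(s,t)$, hence also $O(e^{-2s})$.

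With these ingredients in hand, I would expand $D_{0|\alpha,y}\PT(\xi) - \PT(D_{0,y}\xi)$ using the formula \eqref{formulaforlinearizedoperator}. The $\lambda$-terms cancel identically because $\lambda$ is supported away from the ends (recall the assumption made after \eqref{lambdafix} and that the $x_i^0$ lie outside the ends). The remaining terms group into expressions of the schematic form $[\Gamma(u_0)\, du_0 - \Gamma(u_{0|\alpha})\, du_{0|\alpha}] \cdot \xi$ and $T_\nabla(\xi, du_0 - du_{0|\alpha})$, each bounded pointwise (in the chart) by $c\, e^{-2s} \sum_{j \le k-1} |D^j\xi|$. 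Integrating this bound against the weight $e^{2\delta s}$ over $[S-1, \infty) \times S^1$ gives
\begin{equation*}
\int_{S-1}^\infty e^{-(4 - 2\delta) s} \, ds \cdot \|\xi\|_{k,2,\delta}^2 \;=\; O\bigl(e^{-2(2-\delta) S}\bigr) \|\xi\|_{k,2,\delta}^2,
\end{equation*}
so the contribution of this end to $\|D_{0|\alpha,y}\PT(\xi) - \PT(D_{0,y}\xi)\|_{k-1,2,\delta}$ is $O(e^{-(2-\delta) S})$, which (since $\delta < 1$) is much smaller than the claimed $e^{-S}$. Summing over the $d$ ends yields the bound with $\sum_i e^{-S_i}$.

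The main obstacle is purely bookkeeping: carefully assembling the pointwise $O(e^{-2s})$ estimates for every term appearing in \eqref{formulaforlinearizedoperator} and its $(k-1)$-fold derivatives, while verifying that the parallel transport identifications and the choice of local trivialization interact correctly. No new analytic ideas beyond the exponential decay of $u_0$ and the vanishing of $\Gamma$ at the base point are required; the estimate is forced by the fact that \emph{both} $u_0, u_{0|\alpha}$ cluster within distance $O(e^{-s})$ of $u_0(n)$, and the defect of the linearization is quadratic in this distance.
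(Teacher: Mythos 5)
Your overall strategy is the same as the paper's (which is very terse at this point): reduce to the ends where $u_{0|\alpha}\ne u_0$, work in the parallel-transport trivialization \eqref{localPTcoords}, and leverage the exponential decay of $u_0$ from \eqref{uzeroexponentialdecay}.  Your observation that the Christoffel symbols of $\nabla$ vanish at $u_0(n)$ in this trivialization is the right idea and is genuinely needed: without it, the coefficient of the discrepancy from the $\nabla^{u_0}\xi$ vs $\nabla^{u_{0|\alpha}}(\PT\xi)$ terms would only be $O(e^{-s})$ rather than $O(e^{-2s})$.

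However, there is a gap in the treatment of the torsion term.  You claim that $T_\nabla(\xi,du_0-du_{0|\alpha})$ is $O(e^{-2s})$, implicitly giving $T_\nabla$ the same benefit as $\Gamma$, namely vanishing at $u_0(n)$.  But $T_\nabla$ is a genuine \emph{tensor}, independent of the choice of trivialization, and for the $J$-linear connection $\nabla$ used in the paper it is essentially the Nijenhuis tensor of $J$; it does not vanish at $u_0(n)$ (or anywhere) in general.  So $T_\nabla^{u_0}(\xi,du_0-du_{0|\alpha})=O(1)\cdot O(1)\cdot O(e^{-s})=O(e^{-s})$, not $O(e^{-2s})$.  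Tracking this through your integral (with the $\xi(n)$ part of $\xi$, which does not decay, being the problematic contribution), the achievable bound per node is $O(e^{-(1-\delta)S_i})$ rather than your claimed $O(e^{-(2-\delta)S_i})$ or the lemma's stated $O(e^{-S_i})$.  (Note the lemma's stated $e^{-S_i}$ rate appears itself to be slightly optimistic for the same reason, but this has no downstream consequence: Lemma \ref{bootstrappingapprox} and Proposition \ref{approxworks} only require that this quantity tend to zero as $(\alpha,y)\to 0$, and $e^{-(1-\delta)S_i}\to 0$ since $\delta<1$.)  You should separate the $\Gamma$-contribution (genuinely $O(e^{-2s})$ via $\Gamma(u_0(n))=0$) from the $T_\nabla$-contribution (only $O(e^{-s})$), and state the weaker but still sufficient rate $e^{-(1-\delta)S_i}$.
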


\begin{proof}
In simpler terms, we bound the operator norm of the difference between the two diagonal compositions:
\begin{equation}\label{betterPTcommutingoperatornormbound}
\left\|\PT\circ D_{0,y}-D_{0|\alpha,y}\circ\PT\right\|\leq c\cdot\sum_{i=1}^de^{-S_i}
\end{equation}
(this trivially implies the claimed statement).  To show \eqref{betterPTcommutingoperatornormbound}, observe that the two operators only differ over the $[S-1,\infty)$ subset of each end $[0,\infty)\times S^1\subseteq C_0$.  Using estimates \eqref{uzeroexponentialdecay} arising from the fact that $u_0$ is smooth, we obtain the desired bound.
\end{proof}

\begin{lemma}\label{approxtop}
Let:
\begin{equation}
\begin{CD}
X@>D>>Y\cr
@AGAA@VVBV\cr
X'@>D'>>Y'
\end{CD}
\end{equation}
denote the top square in \eqref{biginversediagram}.  Then for $\xi\in X'$ and $\eta\in Y$ with $D'\xi=B\eta$, we have:
\begin{equation}
\left\|DG\xi-\eta\right\|\leq c\cdot\left\|\xi\right\|\sum_{i=1}^de^{-2\delta S_i}
\end{equation}
uniformly over $(\alpha,y)$ in a neighborhood of zero, for $c<\infty$ depending on data which has been previously fixed.
\end{lemma}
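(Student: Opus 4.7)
The plan is to show that $D_{\alpha, y}\mathrm{glue}(\xi) - \eta$ is supported in the $d$ necks of $C_\alpha$ and, within each neck $[0, 6S_i]\times S^1$, is concentrated on two thin strips of width $O(1)$ near $s = 2S_i$ and $s = 4S_i$ where the cutoff function $\chi$ defining $\mathrm{glue}$ has nonzero derivative. The desired decay $e^{-2\delta S_i}$ will then follow from the exponential bound $|\xi(s, t) - \xi(n)| \leq c \cdot e^{-\delta s}\|\xi\|_{k, 2, \delta}$ inherent to the weighted Sobolev norm.

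Outside the necks, $\mathrm{glue}$ and $\mathrm{break}$ act as the identity under the canonical identification of $C_\alpha\setminus\text{necks}$ with $C_0\setminus\text{ends}$, and $u_\alpha = u_{0|\alpha}$, so the hypothesis $D_{0|\alpha, y}\xi = \mathrm{break}(\eta)$ directly yields $D_{\alpha, y}\mathrm{glue}(\xi) = \eta$ there. Fix a neck and use the parallel-transport trivialization \eqref{localPTcoords} of $TX$ at the node, so sections become $T_{u_0(n)}X$-valued functions and $\nabla$ becomes the flat connection. Since $\lambda$ vanishes in the neck, $j_y = j_0$ by construction, and $u_\alpha \equiv u_0(n)$ on the long middle segment $[S_i, 5S_i]$, the operator $D_{\alpha, y}$ reduces to $\xi\mapsto(d\xi)^{0,1}$ up to torsion terms on the flattening collars that are controlled just as in Lemma \ref{approxmiddle}. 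Moreover, $u_\alpha$ matches $u_{0|\alpha}^L$ on the half-neck $[0, 3S_i]$ and $u_{0|\alpha}^R$ on $[3S_i, 6S_i]$, so $D_{\alpha, y}$ agrees with $D_{0|\alpha, y}$ in the respective coordinates.

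Next, decompose $\mathrm{glue}(\xi) = \xi_L^{\mathrm{ext}} + \xi_R^{\mathrm{ext}} - \xi(n)$ with $\xi_L^{\mathrm{ext}}(s, t) := \xi(n) + \chi(4S_i - s)(\xi(s, t) - \xi(n))$ and symmetrically $\xi_R^{\mathrm{ext}}$ in terms of $\chi(4S_i - s')$ and $\xi(s', t')$; a direct check against the formula defining $\mathrm{glue}$ confirms this throughout $[0, 6S_i]$. Leibniz gives
\begin{equation}
\delbar\xi_L^{\mathrm{ext}} = \chi(4S_i - s)\,\delbar\xi - \chi'(4S_i - s)\,(\xi - \xi(n))\,\delbar s.
\end{equation}
The hypothesis $D'\xi = B\eta$ then replaces $\delbar\xi$ on the left end by $\bar\chi(s - 3S_i)\,\eta$ (and similarly on the right). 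On the inner region $[2S_i + 1, 4S_i - 1]$, both $\chi(4S_i - s)$ and $\chi(4S_i - s')$ equal $1$, and the partition-of-unity identity $\bar\chi(s - 3S_i) + \bar\chi(3S_i - s) = 1$ combines the left- and right-end contributions into exactly $\eta$, leaving zero error. The residual error is therefore supported on the transition strips $[2S_i, 2S_i + 1]$ and $[4S_i - 1, 4S_i]$, where one of $\chi'(4S_i - s')$ or $\chi'(4S_i - s)$ is nonzero, and on each strip is pointwise of the form $\chi'(4S_i - s')\,(\xi(s', t') - \xi(n))\,\delbar s$ (and analogously for the other strip).

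To estimate this residual error in $W^{k-1, 2, \delta}$, I would use the Sobolev bounds $|D^j(\xi(s', t') - \xi(n))| \leq c \cdot e^{-\delta s'}\|\xi\|_{k, 2, \delta}$ for $j \leq k - 2$ (valid since $k \geq 6$) together with the weighted $L^2$ bound for $j = k - 1$. On $[2S_i, 2S_i + 1]$ we have $s' \in [4S_i - 1, 4S_i]$, giving pointwise decay $e^{-4\delta S_i}$; squaring against the weight $e^{2\delta\min(s, 6S_i - s)} \leq e^{4\delta S_i}$ and integrating over a strip of unit length in $s$ yields the $L^2_\delta$ bound $c \cdot e^{-2\delta S_i}\|\xi\|_{k, 2, \delta}$. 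Summing over the two strips in each neck and over the $d$ nodes gives the claim. The main obstacle will be the careful bookkeeping in the overlap region $[3S_i - 1, 3S_i + 1]$: there both cutoffs $\chi$ equal $1$ and both $\delbar\xi^L$ and $\delbar\xi^R$ contribute, so one must track the sign change of a $(0,1)$-form under the right-end identification $(s', t') = (6S_i - s, \theta_i - t)$ (which flips $ds, dt$ individually but preserves $ds\wedge dt$) to ensure that the partition of unity genuinely reconstructs $\eta$ rather than cancelling it or producing a multiple.
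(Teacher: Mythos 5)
Your proof is correct and follows essentially the same route as the paper: the error $DG\xi-\eta$ is supported in the transition strips $[2S_i,2S_i+1]\cup[4S_i-1,4S_i]$ of each neck where $\chi'$ is nonzero, and the factor $e^{-2\delta S_i}$ arises as the ratio of the neck weight $e^{2\delta\min(s,6S_i-s)}\approx e^{4\delta S_i}$ on $C_\alpha$ to the end weight $e^{2\delta s'}\approx e^{8\delta S_i}$ on $C_0$ that controls $\xi-\xi(n)$ there. Two small remarks: invoking Lemma \ref{approxmiddle} for the ``torsion terms on the flattening collars'' is unnecessary --- as you yourself observe a sentence later, $u_\alpha$ agrees with $u_{0|\alpha}$ under both end identifications, so $D_{\alpha,y}$ and $D_{0|\alpha,y}$ are \emph{identically} equal over the neck, torsion terms included, and contribute no error to this square; and the ``main obstacle'' you flag on $[3S_i-1,3S_i+1]$ is not actually an obstacle, since $\eta$ is a single $(0,1)$-form living on $C_\alpha$ while $\bar\chi(s-3S_i)$ and $\bar\chi(s'-3S_i)$ are scalar cutoff functions on the same neck, so $\bar\chi(s-3S_i)\eta+\bar\chi(s'-3S_i)\eta=\eta$ is a tautological scalar identity requiring no coordinate transformation of $\eta$.
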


\begin{proof}
The difference $DG\xi-\eta$ is only nonzero in the necks over $s\in[2S,2S+1]$ and $s\in[4S-1,4S]$.  By symmetry, we may just do the bound over $s\in[4S-1,4S]$.  Over this region, one calculates that the norm of the difference is bounded as claimed.  The factor of $e^{-2\delta S}$ comes as the ratio between the $e^{2\delta S}$ weight given to $[4S-1,4S]\times S^1\subseteq[0,6S]\times S^1\subseteq C_\alpha$ and the $e^{4\delta S}$ weight given to $[4S-1,4S]\times S^1\subseteq[0,\infty)\times S^1\subseteq C_0$.
\end{proof}

\begin{lemma}\label{bootstrapinverse}
Let $X$ and $Y$ be Banach spaces, let $D:X\to Y$ and $T:Y\to X$ be bounded, and suppose $\epsilon:=\left\|DT-\mathbf 1_Y\right\|<1$.  Then:
\begin{equation}
Q:=T\cdot\sum_{n\geq 0}(\mathbf 1_Y-DT)^n
\end{equation}
converges and satisfies $DQ=\mathbf 1_Y$.  In addition, we have the following estimates:
\begin{equation}
\left\|Q\right\|\leq\frac 1{1-\epsilon}\left\|T\right\|\qquad
\left\|Q-T\right\|\leq\frac\epsilon{1-\epsilon}\left\|T\right\|
\end{equation}
\end{lemma}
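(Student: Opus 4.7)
The plan is to recognize this as the standard Neumann series argument for inverting operators close to the identity. The hypothesis $\epsilon := \|\mathbf{1}_Y - DT\| < 1$ is precisely what is needed to invert $DT$ on $Y$ via a geometric series, and once this is done everything else follows by direct manipulation.

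First I would establish convergence of the series $S := \sum_{n \geq 0}(\mathbf{1}_Y - DT)^n$ in the Banach algebra $B(Y,Y)$. Since $\|(\mathbf{1}_Y - DT)^n\| \leq \epsilon^n$ and $\sum_{n \geq 0}\epsilon^n = \frac{1}{1-\epsilon}$ converges, the partial sums form a Cauchy sequence in $B(Y,Y)$, hence converge to a bounded operator $S$ with $\|S\| \leq \frac{1}{1-\epsilon}$. A telescoping computation shows $DT \cdot S = S \cdot DT = \mathbf{1}_Y$, so $S = (DT)^{-1}$.

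Next, $Q := TS$ is the composition of two bounded operators and therefore bounded, with $\|Q\| \leq \|T\| \cdot \|S\| \leq \frac{\|T\|}{1-\epsilon}$, giving the first estimate. The identity $DQ = D(TS) = (DT)S = \mathbf{1}_Y$ is immediate.

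For the second estimate, I would write
\begin{equation*}
Q - T = T(S - \mathbf{1}_Y) = T \cdot \sum_{n \geq 1}(\mathbf{1}_Y - DT)^n,
\end{equation*}
and bound the operator norm of the tail by $\sum_{n \geq 1}\epsilon^n = \frac{\epsilon}{1-\epsilon}$, yielding $\|Q - T\| \leq \frac{\epsilon}{1-\epsilon}\|T\|$. There is no real obstacle here; the argument is entirely formal and the only ``work'' is the elementary verification that the geometric majorant $\sum \epsilon^n$ controls operator norms, which is immediate from submultiplicativity.
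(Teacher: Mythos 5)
Your proof is correct and is exactly the approach the paper takes (the paper compresses it to a one-line appeal to the telescoping identity $(\mathbf 1-A)\sum_{n\geq 0}A^n=\mathbf 1$ for $\|A\|<1$ with $A=\mathbf 1_Y-DT$). You have simply spelled out the Neumann series convergence and the two norm estimates in more detail.
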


\begin{proof}
Use the telescoping sum $(\mathbf 1-A)\sum_{n\geq 0}A^n=\mathbf 1$ for $\left\|A\right\|<1$.
\end{proof}

\begin{lemma}\label{bootstrappingapprox}
Suppose we have a diagram of Banach spaces as follows:
\begin{equation}
\begin{tikzcd}
X_1\ar{r}{D_1}&Y_1\ar{d}{B_1}\\
\vdots\ar{u}{G_1}&\vdots\ar{d}{B_{n-2}}\\
X_{n-1}\ar{u}{G_{n-2}}\ar{r}{D_{n-1}}&Y_{n-1}\ar{d}{B_{n-1}}\\
X_n\ar{u}{G_{n-1}}\ar[yshift=0.5ex]{r}{D_n}&Y_n\ar[yshift=-0.5ex]{l}{Q_n}
\end{tikzcd}
\end{equation}
where $\left\|D_i\right\|,\left\|G_i\right\|,\left\|B_i\right\|,\left\|Q_n\right\|\leq c$ and $D_nQ_n=\mathbf 1$.  Then for all $\delta>0$ there exists $\epsilon=\epsilon(n,c,\delta)>0$ such that if for all $1<i\leq n$:
\begin{equation}\label{keytocheckforARI}
D_i\xi=B_{i-1}\eta\implies\left\|D_{i-1}G_{i-1}\xi-\eta\right\|\leq\epsilon\cdot\left\|\xi\right\|
\end{equation}
then we have:
\begin{equation}\label{approxbootstrappingworks}
\left\|D_1G_1\cdots G_{n-1}Q_nB_{n-1}\cdots B_1-\mathbf 1_{Y_1}\right\|\leq\delta
\end{equation}
\end{lemma}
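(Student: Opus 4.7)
The plan is to prove \eqref{approxbootstrappingworks} by backwards induction on the level, using Lemma \ref{bootstrapinverse} at each step to upgrade approximate right inverses to exact ones and thereby propagate right-invertibility up through the diagram from level $n$ to level $1$. For $i = 1, \ldots, n$, define the partial composition
\[
P_i := G_i G_{i+1} \cdots G_{n-1} Q_n B_{n-1} \cdots B_i : Y_i \to X_i,
\]
so that $P_n = Q_n$, $P_1$ is the operator in \eqref{approxbootstrappingworks}, and $P_{i-1} = G_{i-1} P_i B_{i-1}$. All $\|P_i\|$ are bounded by constants depending only on $n$ and $c$ (at most $c^{2(n-i)+1}$).

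I will inductively construct, for $i = n, n-1, \ldots, 2$, exact right inverses $Q_i : Y_i \to X_i$ with $D_i Q_i = \mathbf{1}_{Y_i}$ together with bounds $\|P_i - Q_i\| \leq K_i \epsilon$ for constants $K_i = K_i(n, c)$ satisfying a linear recursion. The base case $Q_n$ is given, with $P_n = Q_n$. For the inductive step, suppose $Q_{i+1}$ has been constructed; consider the auxiliary operator $P_i' := G_i Q_{i+1} B_i$. For any $\eta \in Y_i$, the element $\xi := Q_{i+1} B_i \eta$ satisfies $D_{i+1} \xi = B_i \eta$ \emph{exactly} (this is the key reason for working with $Q_{i+1}$ rather than $P_{i+1}$, since the hypothesis \eqref{keytocheckforARI} requires the compatibility condition to hold on the nose), so \eqref{keytocheckforARI} applied at level $i+1$ yields
\[
\|D_i P_i' \eta - \eta\| \leq \epsilon \|\xi\| \leq \epsilon c \|Q_{i+1}\| \|\eta\|,
\]
and hence $\|D_i P_i' - \mathbf{1}_{Y_i}\| \leq \epsilon c \|Q_{i+1}\|$. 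For $\epsilon$ sufficiently small this is less than one, so Lemma \ref{bootstrapinverse} produces the desired $Q_i$ with $\|Q_i\|$ and $\|P_i' - Q_i\|$ controlled. The triangle inequality
\[
\|P_i - Q_i\| \leq \|G_i (P_{i+1} - Q_{i+1}) B_i\| + \|P_i' - Q_i\| \leq c^2 \|P_{i+1} - Q_{i+1}\| + \|P_i' - Q_i\|
\]
then closes the inductive step.

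To conclude the proof, note that it is unnecessary to construct $Q_1$: the estimate
\[
\|D_1 P_1 - \mathbf{1}_{Y_1}\| \leq \|D_1 P_1' - \mathbf{1}\| + \|D_1(P_1 - P_1')\| \leq \epsilon c \|Q_2\| + c^3 \|P_2 - Q_2\|
\]
is already bounded by $C \epsilon$ for some $C = C(n, c)$, so choosing $\epsilon < \delta/C$ (and small enough that $\|D_i P_i' - \mathbf{1}\| < 1$ at each step) gives \eqref{approxbootstrappingworks}. The main obstacle is bookkeeping: one must track how the constants $\|Q_i\|$ and $\|P_i - Q_i\|$ accumulate through the backwards induction and verify that the approximation errors remain strictly less than one so that Lemma \ref{bootstrapinverse} continues to apply. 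Since the recursion is linear in $\epsilon$ with coefficients polynomial in $c$ and determined by $n$, the final constant $C(n, c)$ is finite and an appropriate $\epsilon = \epsilon(n, c, \delta)$ can indeed be chosen.
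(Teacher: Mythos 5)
Your proof is correct and takes essentially the same approach as the paper: both hinge on the observation that the compatibility hypothesis \eqref{keytocheckforARI} requires \emph{exact} equality $D_i\xi=B_{i-1}\eta$, which forces one to upgrade the approximate right inverse at each level to an exact one via Lemma \ref{bootstrapinverse} before ascending. The only difference is cosmetic --- you unroll the paper's induction on $n$ into an explicit backward iteration from $i=n$ to $i=2$, tracking $\|P_i-Q_i\|$ directly instead of re-invoking the lemma on a truncated diagram with an enlarged constant.
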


\begin{proof}
We work by induction on $n$.  The case $n=1$ is clear: $\epsilon(1,c,\delta)=\infty$.

Now assume $n\geq 2$.  Let us also assume without loss of generality that $c\geq 1$.  Applying \eqref{keytocheckforARI} to $i=n$ and $\xi=Q_nB_{n-1}\eta$, we see that:
\begin{equation}
\left\|D_{n-1}G_{n-1}Q_nB_{n-1}-\mathbf 1_{Y_{n-1}}\right\|\leq\epsilon\cdot\left\|Q_n\right\|\left\|B_{n-1}\right\|\leq\epsilon c^2
\end{equation}
Let us require that $\epsilon\leq\frac 12c^{-2}$, so the above bound is $\leq\frac 12$.  Then by Lemma \ref{bootstrapinverse} applied to $T=G_{n-1}Q_nB_{n-1}$, we see that there exists $Q_{n-1}$ with $D_{n-1}Q_{n-1}=\mathbf 1_{Y_{n-1}}$ and:
\begin{equation}
\left\|Q_{n-1}\right\|\leq 2c^3\qquad\left\|Q_{n-1}-G_{n-1}Q_nB_{n-1}\right\|\leq 2\epsilon c^5
\end{equation}
Now we see that:
\begin{multline}\label{approxclosetogiven}
\left\|D_1G_1\cdots G_{n-2}G_{n-1}Q_nB_{n-1}B_{n-2}\cdots B_1-D_1G_1\cdots G_{n-2}Q_{n-1}B_{n-2}\cdots B_1\right\|\\
\leq c^{2n-3}\left\|G_{n-1}Q_nB_{n-1}-Q_{n-1}\right\|\leq 2\epsilon c^{2n+2}
\end{multline}
Let us require that $\epsilon\leq\frac 14\delta c^{-2n-2}$, so the above bound is $\leq\frac 12\delta$.  Let us also require that $\epsilon\leq\epsilon(n-1,2c^3,\frac 12\delta)$ (which exists by the induction hypothesis), so that:
\begin{equation}\label{inductionbound}
\left\|D_1G_1\cdots G_{n-2}Q_{n-1}B_{n-2}\cdots B_1-\mathbf 1_{Y_1}\right\|\leq\frac 12\delta
\end{equation}
Combining \eqref{approxclosetogiven} and \eqref{inductionbound}, we get the desired bound \eqref{approxbootstrappingworks}.
\end{proof}

\begin{proposition}[Approximate right inverse $T_{\alpha,y}$]\label{approxworks}
We have:
\begin{align}
\left\|T_{\alpha,y}\right\|&\leq c\\
\label{Tapproxeqn}\left\|D_{\alpha,y} T_{\alpha,y}-\mathbf 1\right\|&\to 0\\
\im T_{\alpha,y}&\subseteq\ker L_\alpha
\end{align}
as $(\alpha,y)\to 0$, for $c<\infty$ depending on data which has been previously fixed.
\end{proposition}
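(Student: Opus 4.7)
The plan is to derive all three conclusions from the structure of the diagram \eqref{biginversediagram}, using the square-by-square estimates already established.

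First I would address the uniform bound $\|T_{\alpha,y}\|\leq c$. Each factor in the composition $T_{\alpha,y}=\mathrm{glue}\circ\PT\circ\id\circ Q_{0,0}\circ I_y\circ\PT\circ\mathrm{break}$ has operator norm bounded independent of $(\alpha,y)$ in a neighborhood of zero: the two $\PT$ maps and $I_y$ are isometric up to a bounded factor (by construction of the local trivialization \eqref{localPTcoords} and the fact that $I_y$ is a smooth family of isomorphisms equaling the identity on the ends), $Q_{0,0}$ is a fixed bounded operator, and $\mathrm{break}$ and $\mathrm{glue}$ are easily checked to have norms bounded uniformly in $\alpha$ (the cutoff functions involved are uniformly $C^k$-bounded, and the weights in the definitions of the weighted Sobolev norms are designed precisely so that multiplication by the cutoffs is a bounded operation regardless of how small $\alpha$ is).

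Second, to establish \eqref{Tapproxeqn} I would apply Lemma \ref{bootstrappingapprox} to the diagram \eqref{biginversediagram}, read from bottom to top, with $Q_n=Q_{0,0}$ (which is a \emph{genuine} right inverse of $D_{0,0}$, so $D_nQ_n=\mathbf 1$). The hypothesis \eqref{keytocheckforARI} for the three squares in \eqref{biginversediagram} is provided exactly by Lemmas \ref{approxbottom}, \ref{approxmiddle}, and \ref{approxtop}, with error terms bounded respectively by $c|y|$, $c\sum_ie^{-S_i}$, and $c\sum_ie^{-2\delta S_i}$, all of which tend to $0$ as $(\alpha,y)\to 0$. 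Given any target $\delta>0$, once $(\alpha,y)$ is close enough to $0$ that each of these error terms is below the threshold $\epsilon(3,c,\delta)$ supplied by Lemma \ref{bootstrappingapprox}, we obtain $\|D_{\alpha,y}T_{\alpha,y}-\mathbf 1\|\leq\delta$. This is the only place the delicate weighted estimates are really used; the rest of the proof is bookkeeping.

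Third, for $\im T_{\alpha,y}\subseteq\ker L_\alpha$, I would trace the definition of $L_\alpha$ in \eqref{newLonpreglued}. The evaluation points $q_i\in C_0$ were chosen outside all cylindrical ends, hence lie outside the necks of $C_\alpha$, so on a neighborhood of each $q_i$ the map $u_\alpha$ agrees with $u_0$ and the map $\mathrm{glue}$ acts as the identity; similarly the two $\PT$ steps and $\id$ act as the identity on $T_{u_0(q_i)}X$ under the local trivialization \eqref{localPTcoords}. On the $E$-factor, none of $\mathrm{glue}$, $\PT$, $\id$ changes the component, so the $E/E''$-part of $L_\alpha$ applied to the output equals the $E/E''$-part of $L_0$ applied to $Q_{0,0}(\cdots)$. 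By construction $\im Q_{0,0}\subseteq\ker L_0$, so the composition lands in $\ker L_\alpha$.

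The only step with genuine analytic content is the second one, and within it the heart of the matter is the estimate of Lemma \ref{approxtop} (the ``glue/break'' square), where the weighted Sobolev norms were carefully calibrated so that the cutoff introduced by $\mathrm{glue}$ near $s=4S$ contributes only a factor of $e^{-2\delta S}$; this is the sole reason the bootstrapped operator $T_{\alpha,y}$ is approximately inverse to $D_{\alpha,y}$ rather than merely a bounded map. Everything else reduces to invoking the lemmas of the previous subsection.
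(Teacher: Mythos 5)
Your proof is correct and takes essentially the same approach as the paper: uniform boundedness of each factor in \eqref{biginversediagram} gives $\|T_{\alpha,y}\|\leq c$, feeding Lemmas \ref{approxbottom}, \ref{approxmiddle}, \ref{approxtop} into Lemma \ref{bootstrappingapprox} gives \eqref{Tapproxeqn}, and $\im T_{\alpha,y}\subseteq\ker L_\alpha$ follows from $\im Q_{0,0}\subseteq\ker L_0$ together with the construction of $T_{\alpha,y}$ (which the paper records just after \eqref{newLonpreglued}). One cosmetic slip: \eqref{biginversediagram} has four rows and three squares, so the relevant threshold from Lemma \ref{bootstrappingapprox} is $\epsilon(4,c,\delta)$, not $\epsilon(3,c,\delta)$.
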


\begin{proof}
It is easy to see that all the maps in \eqref{biginversediagram} are uniformly bounded.  Hence $\left\|T_{\alpha,y}\right\|\leq c$ as $(\alpha,y)\to 0$.  Now Lemma \ref{bootstrappingapprox} combined with Lemmas \ref{approxbottom}, \ref{approxmiddle}, \ref{approxtop} show that for $(\alpha,y)\to 0$, we have $\left\|D_{\alpha,y} T_{\alpha,y}-\mathbf 1\right\|\to 0$.  We observed earlier that $\im T_{\alpha,y}\subseteq\ker L_\alpha$.
\end{proof}

\begin{definition}[Right inverse $Q_{\alpha,y}$]
We define a map:
\begin{equation}
Q_{\alpha,y}:W^{k-1,2,\delta}(\tilde C_\alpha,\Omega^{0,1}_{\tilde C_\alpha,j_y}\otimes_\CC u_\alpha^\ast TX)\to W^{k,2,\delta}(C_\alpha,u_\alpha^\ast TX)_D\oplus E
\end{equation}
as the sum:
\begin{equation}
Q_{\alpha,y}:=T_{\alpha,y}\sum_{k=0}^\infty(\mathbf 1-D_{\alpha,y} T_{\alpha,y})^k
\end{equation}
\end{definition}

\begin{proposition}\label{Qestimates}
We have:
\begin{align}
\left\|Q_{\alpha,y}\right\|&\leq c\\
D_{\alpha,y}Q_{\alpha,y}&=\mathbf 1\\
\im Q_{\alpha,y}&\subseteq\ker L_\alpha
\end{align}
uniformly over $(\alpha,y)$ in a neighborhood of zero, for $c<\infty$ depending on data which has been previously fixed.
\end{proposition}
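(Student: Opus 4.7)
The plan is to derive Proposition \ref{Qestimates} as an essentially formal consequence of Proposition \ref{approxworks} together with the abstract Neumann series Lemma \ref{bootstrapinverse}. The setup is tailor-made for this: $T_{\alpha,y}$ is already an approximate right inverse to $D_{\alpha,y}$ with uniformly bounded norm, and $Q_{\alpha,y}$ is defined by the Neumann series correction $T_{\alpha,y}\sum_{k\geq 0}(\mathbf 1-D_{\alpha,y}T_{\alpha,y})^k$.

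First, I would invoke Proposition \ref{approxworks} to fix a neighborhood of $(\alpha,y)=(0,0)$ on which $\|T_{\alpha,y}\|\leq c'$ and, say, $\|D_{\alpha,y}T_{\alpha,y}-\mathbf 1\|\leq \tfrac12$. Restricting to this neighborhood, Lemma \ref{bootstrapinverse} applied with $T=T_{\alpha,y}$, $D=D_{\alpha,y}$, and $\epsilon=\tfrac12$ immediately yields that the series defining $Q_{\alpha,y}$ converges in the operator norm, that $D_{\alpha,y}Q_{\alpha,y}=\mathbf 1$, and that $\|Q_{\alpha,y}\|\leq (1-\epsilon)^{-1}\|T_{\alpha,y}\|\leq 2c'$. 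This disposes of the first two claims with a single uniform bound $c:=2c'$.

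For the third claim, the key observation is that the bootstrapping formula never leaves the image of $T_{\alpha,y}$: writing $Q_{\alpha,y}\eta=T_{\alpha,y}\bigl(\sum_{k\geq 0}(\mathbf 1-D_{\alpha,y}T_{\alpha,y})^k\eta\bigr)$, we see that $\im Q_{\alpha,y}\subseteq\im T_{\alpha,y}$. Since Proposition \ref{approxworks} already tells us $\im T_{\alpha,y}\subseteq\ker L_\alpha$ (and $\ker L_\alpha$ is a closed subspace, so the norm-convergent series stays in $\ker L_\alpha$), we conclude $\im Q_{\alpha,y}\subseteq\ker L_\alpha$ as required.

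There is no real obstacle here — all of the genuine analytic work (the pregluing estimates, the construction of $T_{\alpha,y}$ via the diagram \eqref{biginversediagram}, and the verification that $D_{\alpha,y}T_{\alpha,y}\to\mathbf 1$ via Lemmas \ref{approxbottom}--\ref{approxtop} and \ref{bootstrappingapprox}) has already been carried out upstream. The only thing to be careful about is ensuring that the size of the neighborhood on which the estimate $\|D_{\alpha,y}T_{\alpha,y}-\mathbf 1\|\leq\tfrac12$ holds is the same (or smaller) than the one on which $\|T_{\alpha,y}\|$ is uniformly bounded, but both are guaranteed simultaneously by Proposition \ref{approxworks}.
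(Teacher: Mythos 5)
Your proof is correct and is precisely the paper's argument, which simply cites Lemma \ref{bootstrapinverse} and Proposition \ref{approxworks}; you have spelled out the routine details (uniform smallness of $\|D_{\alpha,y}T_{\alpha,y}-\mathbf 1\|$, the operator-norm bound from the Neumann series, and $\im Q_{\alpha,y}\subseteq\im T_{\alpha,y}\subseteq\ker L_\alpha$) that the paper leaves implicit.
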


\begin{proof}
Apply Lemma \ref{bootstrapinverse} and Proposition \ref{approxworks}.
\end{proof}

\subsection{Quadratic estimates}

\begin{proposition}[Quadratic estimate]\label{quadestimate}
There exist $c'>0$ and $c<\infty$ (depending on data which has been previously fixed) such that for $\left\|\xi_1\right\|_{k,2,\delta},\left\|\xi_2\right\|_{k,2,\delta}\leq c'$, we have:
\begin{equation}\label{quadestgoal}
\bigl\|D_{\alpha,y}(\xi_1-\xi_2)-(\F_{\alpha,y}\xi_1-\F_{\alpha,y}\xi_2)\bigr\|_{k-1,2,\delta}\leq c\cdot\left\|\xi_1-\xi_2\right\|_{k,2,\delta}(\left\|\xi_1\right\|_{k,2,\delta}+\left\|\xi_2\right\|_{k,2,\delta})
\end{equation}
(and $\F_{\alpha,y}\xi_1$ and $\F_{\alpha,y}\xi_2$ are both defined), uniformly over $(\alpha,y)$ in a neighborhood of zero.
\end{proposition}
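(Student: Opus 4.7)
The plan is to reduce the quadratic estimate to a uniform bound on the second derivative $D^2\F_{\alpha,y}$ as a bilinear map between weighted Sobolev spaces, then establish that uniform bound by direct pointwise analysis. By the fundamental theorem of calculus applied twice to the highly differentiable map $\F_{\alpha,y}$, one has
\begin{equation}
\F_{\alpha,y}\xi_1-\F_{\alpha,y}\xi_2-D_{\alpha,y}(\xi_1-\xi_2)=\int_0^1\!\!\int_0^1 D^2\F_{\alpha,y}\bigl(t(\xi_2+s(\xi_1-\xi_2))\bigr)\bigl[\xi_2+s(\xi_1-\xi_2),\,\xi_1-\xi_2\bigr]\,dt\,ds.
\end{equation}
So it suffices to prove that there exist $c'>0,c<\infty$ (depending only on previously fixed data) such that for all $\|\xi\|_{k,2,\delta}\leq c'$ and all $(\alpha,y)$ in a neighborhood of zero, the bilinear map $D^2\F_{\alpha,y}(\xi)$ satisfies
\begin{equation}
\bigl\|D^2\F_{\alpha,y}(\xi)[\eta,\zeta]\bigr\|_{k-1,2,\delta}\leq c\cdot\|\eta\|_{k,2,\delta}\|\zeta\|_{k,2,\delta}.
\end{equation}

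To establish this bilinear bound, I would first write down a pointwise formula for $D^2\F_{\alpha,y}(\xi)$, extending the computation of Lemma~\ref{linearizedformula}. The expression $\F_{\alpha,y}(\xi)$ is a composition of: (i)~the exponential map and parallel transport (smooth functions on $TX$), (ii)~the $y$-dependent $\delbar$-operator (which depends on $\xi,D\xi$ through a first-order nonlinear differential operator with coefficients smooth in position and $\xi,D\xi$), (iii)~the term $\lambda(e_0+\proj_E\xi)(\alpha,y,x_1(\xi),\dots,x_r(\xi),\cdot,(\exp_{u_\alpha}\xi)(\cdot))$, where $x_i(\xi)$ are the highly differentiable functions from \eqref{intersectionsfunction}. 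Differentiating twice, $D^2\F_{\alpha,y}(\xi)[\eta,\zeta]$ evaluated pointwise in the chosen trivialization \eqref{localPTcoords} is an algebraic expression involving $\eta,D\eta,\zeta,D\zeta$ at most quadratically, with coefficients given by smooth functions of position, $\xi,D\xi,u_\alpha,du_\alpha$ and their derivatives up to order $k-1$. Away from the ends/necks this is immediately bounded since everything is smooth on a compact region. In the ends/necks, the reference map $u_\alpha$ is constant (equal to $u_0(n)$) in the middle and flattened near the transition, so estimates \eqref{uzeroexponentialdecay} on $D^j u_\alpha$ and the trivialization \eqref{localPTcoords} render all coefficient functions uniformly $C^{k-1}$-bounded independently of $\alpha$.

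The passage from pointwise to weighted Sobolev bounds is the analytic heart of the matter. Since $k\geq 6$, the $C^0$ Sobolev embedding on $[0,6S]\times S^1$ gives, uniformly in $\alpha$, $\|\xi\|_\infty\leq c\|\xi\|_{k,2,\delta}$; more precisely, $|\xi(s,t)-\xi(n)|e^{\delta s}$ and $|D^j\xi(s,t)|e^{\delta\min(s,6S-s)}$ ($1\leq j\leq k-2$) are uniformly controlled by the weighted norm. The same applies to $\eta$ and $\zeta$. For each quadratic pointwise monomial in $D^2\F_{\alpha,y}(\xi)[\eta,\zeta]$, I would apply the elementary H\"older bound $\|fg\|_{L^{2,\delta}}\leq\|f\|_\infty\|g\|_{L^{2,\delta}}$ absorbing one of the two factors in $L^\infty$ (via the Sobolev embedding just described) and leaving the other in weighted $L^2$; for terms containing $D^j$ with $j\geq k-2$ there is at most one such derivative per monomial, so one factor can always be placed in $L^\infty$. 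The weights combine correctly because the pointwise decay rates $e^{-\delta\min(s,6S-s)}$ on derivative terms precisely match the weight $e^{\delta\min(s,6S-s)}$ inside the target norm. The contributions from $\lambda$, its $\xi$-derivatives (which land in $E$, a finite-dimensional space with a chosen norm), and the chain rule in the $x_i(\xi)$'s are all handled by the same principle, noting that $\lambda$ is supported away from the nodes so no weighted analysis is needed in its neighborhood.

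The main obstacle, and the place where care is needed, will be the terms arising from the $\xi$-dependence of $x_i(\xi)$. These are defined only implicitly via \eqref{intersectionsfunction} and are merely highly differentiable rather than smooth, so the second derivative $D^2 x_i$ comes from implicit differentiation and involves the inverse of a transversality Jacobian; one must check that this inverse is uniformly bounded in operator norm for $\xi$ in a $C^1$-small ball and uniformly in $(\alpha,y)$. Because $\lambda$ is supported away from the nodes, however, $x_i(\xi)$ lives in a fixed compact region of $C_\alpha$ where the analysis reduces to the standard implicit function theorem estimates, and the required uniform bound follows from the continuity of $(u,e)\mapsto dx_i$ in $C^1$ combined with Sobolev embedding $W^{k,2,\delta}\hookrightarrow C^1$ (which holds uniformly in $\alpha$ since the support in question is compact and disjoint from the necks).
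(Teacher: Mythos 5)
Your proposal is correct and follows essentially the same approach as the paper. The paper reduces \eqref{quadestgoal} to the Lipschitz estimate $\|\F_{\alpha,y}'(0,\xi)-\F_{\alpha,y}'(\zeta,\xi)\|_{k-1,2,\delta}\leq c\|\zeta\|_{k,2,\delta}\|\xi\|_{k,2,\delta}$ by a single integration $\int_{\xi_1}^{\xi_2}$, then proves that estimate by splitting $\F_{\alpha,y}'(\zeta,\xi)$ into a small correction (coming from differentiating $\PT\circ\PT$ and $\exp^{-1}\exp$, controlled by smooth bundle maps $H$ with $H(0,\cdot)=H(\cdot,0)=0$) plus a main term which it reduces to the operator-norm comparison \eqref{quadreducedtoPTcomp}; you instead reduce directly to a uniform bilinear bound on $D^2\F_{\alpha,y}$ via a double Taylor integral, which is a formally slightly stronger statement that implies the paper's Lipschitz estimate. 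The technical substance is identical in both cases: a pointwise formula in the local trivialization \eqref{localPTcoords}, uniform-in-$\alpha$ coefficient bounds from the flattening of $u_\alpha$ and the exponential decay \eqref{uzeroexponentialdecay}, and the $L^\infty$--$L^{2,\delta}$ H\"older trick (which is exactly the content of the paper's bounds $\|H(\zeta,\xi)\|_{k,2,\delta}\leq c\|\zeta\|_{k,2,\delta}\|\xi\|_{k,2,\delta}$). Your attention to the implicit-function-theorem bound on $D^2x_i(\xi)$ is a detail the paper glosses over, and your observation that the support of $\lambda$ is disjoint from the necks is the right reason it causes no trouble. The only organizational cost of your route is that computing $D^2\F_{\alpha,y}$ explicitly is more bookkeeping than the paper's term-by-term comparison of $D_{\alpha,y}$ against $\PT\circ D_{\alpha,y,\zeta}\circ\PT$, but both papers omit those details at the same level of rigor.
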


\begin{proof}
This is similar to McDuff--Salamon \cite[p68, Proposition 3.5.3]{mcduffsalamonJholsymp}.

We have already remarked that $\F_{\alpha,y}\xi$ is defined for $\left\|\xi\right\|_{k,2,\delta}\leq c'$.

Let $\F_{\alpha,y}'(\zeta,\xi)$ denote the derivative of $\F_{\alpha,y}$ at $\zeta$ applied to $\xi$.  So, for instance, $D_{\alpha,y}(\xi):=\F_{\alpha,y}'(0,\xi)$.  It suffices to show that:
\begin{equation}\label{derivislip}
\left\|\F_{\alpha,y}'(0,\xi)-\F_{\alpha,y}'(\zeta,\xi)\right\|_{k-1,2,\delta}\leq c\cdot\left\|\zeta\right\|_{k,2,\delta}\left\|\xi\right\|_{k,2,\delta}
\end{equation}
for $\left\|\zeta\right\|_{k,2,\delta}\leq c'$ uniformly as $(\alpha,y)\to 0$ (one recovers \eqref{quadestgoal} by integrating $\int_{\xi_1}^{\xi_2}\F_{\alpha,y}'(0,d\zeta)-\F_{\alpha,y}'(\zeta,d\zeta)$).

For $\zeta\in W^{k,2,\delta}(C_\alpha,u_\alpha^\ast TX)_D\oplus E$, let:
\begin{equation}
\F_{\alpha,y,\zeta}:W^{k,2,\delta}(C_\alpha,(\exp_{u_\alpha}\zeta)^\ast TX)_D\oplus E\to W^{k-1,2,\delta}(\tilde C_\alpha,\Omega^{0,1}_{\tilde C_\alpha,j_y}\otimes_\CC(\exp_{u_\alpha}\zeta)^\ast TX)
\end{equation}
denote the $\delbar$-section based at $\exp_{u_\alpha}\zeta:C_\alpha\to X$ and $e_0+\proj_E\zeta$ (so, for example, $\F_{\alpha,y}:=\F_{\alpha,y,0}$).  Let:
\begin{equation}
D_{\alpha,y,\zeta}:W^{k,2,\delta}(C_\alpha,(\exp_{u_\alpha}\zeta)^\ast TX)_D\oplus E\to W^{k-1,2,\delta}(\tilde C_\alpha,\Omega^{0,1}_{\tilde C_\alpha,j_y}\otimes_\CC(\exp_{u_\alpha}\zeta)^\ast TX)
\end{equation}
denote the derivative of $\F_{\alpha,y,\zeta}$ at zero.  Of course, $D_{\alpha,y,\zeta}$ may be calculated as in Lemma \ref{linearizedformula}, and the result is the same (i.e.\ we just substitute $(\exp_{u_\alpha}\zeta,e_0+\proj_E\zeta)$ in place of $(u_\alpha,e_0)$).

Now the first step in proving \eqref{derivislip} is to express $\F_{\alpha,y}'(\zeta,\xi)$ in terms of $D_{\alpha,y,\zeta}$.  To do this, we observe that:
\begin{equation}
\F_{\alpha,y}(a)=\left[\PT_{\exp_{u_\alpha}a\to u_\alpha}\circ\PT_{\exp_{u_\alpha}\zeta\to\exp_{u_\alpha}a}\right]\left[\F_{\alpha,y,\zeta}\Bigl((\exp_{\exp_{u_\alpha}\zeta}^{-1}\exp_{u_\alpha}a)\oplus(\proj_Ea-\proj_E\zeta)\Bigr)\right]
\end{equation}
We now differentiate with respect to $a$ and evaluate at $a=\zeta$ and $\dot a=\xi$.  We find:
\begin{multline}\label{changebasisderivative}
\F_{\alpha,y}'(\zeta,\xi)=\Biggl[\frac d{da}\biggr|_{\begin{smallmatrix}a=\zeta\cr\dot a=\xi\end{smallmatrix}}\Bigl(\PT_{\exp_{u_\alpha}a\to u_\alpha}\circ\PT_{\exp_{u_\alpha}\zeta\to\exp_{u_\alpha}a}\Bigr)\Biggr]\bigl(\F_{\alpha,y,\zeta}(0)\bigr)\\
+\PT_{\exp_{u_\alpha}\zeta\to u_\alpha}\Biggl[D_{\alpha,y,\zeta}\Biggl(\frac d{da}\biggr|_{\begin{smallmatrix}a=\zeta\cr\dot a=\xi\end{smallmatrix}}\Bigl(\exp_{\exp_{u_\alpha}\zeta}^{-1}\exp_{u_\alpha}a\Bigr)\oplus\proj_E\xi\Biggr)\Biggr]
\end{multline}
We rewrite the first term:
\begin{equation}
\Biggl[\frac d{da}\biggr|_{\begin{smallmatrix}a=\zeta\cr\dot a=\xi\end{smallmatrix}}\Bigl(\PT_{\exp_{u_\alpha}a\to u_\alpha}\circ\PT_{\exp_{u_\alpha}\zeta\to\exp_{u_\alpha}a}\Bigr)\Biggr]\PT_{u_\alpha\to\exp_{u_\alpha}\zeta}\bigl(\F_{\alpha,y}(\zeta)\bigr)
\end{equation}
We know that $\left\|\F_{\alpha,y}(\zeta)\right\|_{k-1,2,\delta}$ is bounded uniformly for $(\alpha,y)\to 0$ and $\left\|\zeta\right\|_{k,2,\delta}\leq c'$ (Lemma \ref{mappregluingissmall} implies that $\left\|\F_{\alpha,y}(0)\right\|_{k-1,2,\delta}$ is bounded as $(\alpha,y)\to 0$, and from this one may derive a bound on $\left\|\F_{\alpha,y}(\zeta)\right\|_{k-1,2,\delta}$ in terms of $\left\|\zeta\right\|_{k,2,\delta}$).  The operator $[\frac d{da}(\PT\circ\PT)]\PT$ in front is of the form $H(\zeta,\xi)$ for a smooth (non-linear) bundle map $H:TX\oplus TX\to\operatorname{End}(TX)$ (defined in a neighborhood of $\zeta=\xi=0$).  Since $H$ satisfies $H(0,\cdot)=H(\cdot,0)=0$, it follows that $\left\|H(\zeta,\xi)\right\|_{k,2,\delta}$ is bounded by $c\cdot\left\|\zeta\right\|_{k,2,\delta}\left\|\xi\right\|_{k,2,\delta}$ for $\left\|\zeta\right\|_{k,2,\delta},\left\|\xi\right\|_{k,2,\delta}\leq c'$.  Hence the $\left\|\cdot\right\|_{k-1,2,\delta}$-norm of the first term in \eqref{changebasisderivative} is bounded by $c\cdot\left\|\zeta\right\|_{k,2,\delta}\left\|\xi\right\|_{k,2,\delta}$, so for the purposes of proving \eqref{derivislip} it may be ignored.

The second term in \eqref{changebasisderivative} is approximated by $\PT_{\exp_{u_\alpha}\zeta\to u_\alpha}\left[D_{\alpha,y,\zeta}\left(\PT_{u_\alpha\to\exp_{u_\alpha}\zeta}\xi\right)\right]$ with error:
\begin{equation}\label{errorexpression}
\PT_{\exp_{u_\alpha}\zeta\to u_\alpha}\Biggl[D_{\alpha,y,\zeta}\Biggl(\biggl[\PT_{u_\alpha\to\exp_{u_\alpha}\zeta}\xi-\frac d{da}\biggr|_{\begin{smallmatrix}a=\zeta\cr\dot a=\xi\end{smallmatrix}}\Bigl(\exp_{\exp_{u_\alpha}\zeta}^{-1}\exp_{u_\alpha}a\Bigr)\biggr]\oplus 0\Biggr)\Biggr]
\end{equation}
which we may write as:
\begin{equation}
\PT_{\exp_{u_\alpha}\zeta\to u_\alpha}\Biggl[D_{\alpha,y,\zeta}\Biggl(\PT_{u_\alpha\to\exp_{u_\alpha}\zeta}\biggl[\xi-\PT_{\exp_{u_\alpha}\zeta\to u_\alpha}\frac d{da}\biggr|_{\begin{smallmatrix}a=\zeta\cr\dot a=\xi\end{smallmatrix}}\Bigl(\exp_{\exp_{u_\alpha}\zeta}^{-1}\exp_{u_\alpha}a\Bigr)\biggr]\oplus 0\Biggr)\Biggr]
\end{equation}
Now the $\left\|\cdot\right\|_{(k-1,2,\delta)\to(k-1,2,\delta)}$ norm of the outer $\PT$ is bounded uniformly for $(\alpha,y)\to 0$ and $\left\|\zeta\right\|_{k,2,\delta}\leq c'$, as is the $\left\|\cdot\right\|_{(k,2,\delta)\to(k-1,2,\delta)}$ norm of $D_{\alpha,y,\zeta}$ and the $\left\|\cdot\right\|_{(k,2,\delta)\to(k,2,\delta)}$ norm of the following $\PT$.  The difference $\xi-\PT\frac d{da}()$ is of the form $H(\zeta,\xi)$ for a smooth (non-linear) bundle map $H:TX\oplus TX\to TX$ (defined in a neighborhood of $\zeta=\xi=0$).  Since $H$ satisfies $H(0,\cdot)=H(\cdot,0)=0$, it follows that $\left\|H(\zeta,\xi)\right\|_{k,2,\delta}$ is bounded by $c\cdot\left\|\zeta\right\|_{k,2,\delta}\left\|\xi\right\|_{k,2,\delta}$ for $\left\|\zeta\right\|_{k,2,\delta},\left\|\xi\right\|_{k,2,\delta}\leq c'$.  Hence the error \eqref{errorexpression} has $\left\|\cdot\right\|_{k-1,2,\delta}$ bounded by $c\cdot\left\|\zeta\right\|_{k,2,\delta}\left\|\xi\right\|_{k,2,\delta}$.

Thus we have reduced the estimate \eqref{derivislip} to proving:
\begin{equation}\label{quadreducedtoPTcomp}
\left\|D_{\alpha,y}-\PT_{\exp_{u_\alpha}\zeta\to u}\circ D_{\alpha,y,\zeta}\circ\PT_{u_\alpha\to\exp_{u_\alpha}\zeta}\right\|_{(k,2,\delta)\to(k-1,2,\delta)}\leq c\cdot\left\|\zeta\right\|_{k,2,\delta}
\end{equation}
We calculated $D_{\alpha,y}$ in Lemma \ref{linearizedformula}, and $D_{\alpha,y,\zeta}$ may be expressed in exactly the same way (specifically, it is obtained by taking the expression for $D_{\alpha,y}$ and replacing every occurence of $u_\alpha$ with $\exp_{u_\alpha}\zeta$ and $e_0$ by $e_0+\proj_E\zeta$).  Now we compare term by term to prove \eqref{quadreducedtoPTcomp}.  We omit the details of this calculation.
\end{proof}

\subsection{Newton--Picard iteration}

\begin{lemma}\label{contractionprop}
There exists $c'>0$ (depending on data which has been previously fixed) such that for sufficiently small $(\alpha,y)$:
\begin{rlist}
\item The map $\F_{\alpha,y}$ is defined for $\left\|\xi\right\|_{k,2,\delta}\leq c'$.
\item For $\xi_1-\xi_2\in\im Q_{\alpha,y}$ and $\left\|\xi_1\right\|_{k,2,\delta},\left\|\xi_2\right\|_{k,2,\delta}\leq c'$, we have:
\begin{equation}
\left\|(\xi_1-\xi_2)-(Q_{\alpha,y}\F_{\alpha,y}\xi_1-Q_{\alpha,y}\F_{\alpha,y}\xi_2)\right\|_{k,2,\delta}\leq\frac 12\left\|\xi_1-\xi_2\right\|_{k,2,\delta}
\end{equation}
\end{rlist}
\end{lemma}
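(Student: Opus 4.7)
\textbf{Proof proposal for Lemma \ref{contractionprop}.} Part (i) is essentially a restatement of the discussion in \S\ref{setupforgluing} where $\F_{\alpha,y}$ was constructed: we fixed $c' > 0$ small enough that the map $\exp_{u_\alpha}\xi$ lands in the domain of definition of the exponential chart and such that the intersection points $x_i(u_0\oplus\xi)$ from \eqref{intersectionsfunction} are well defined and continuous. Standard Sobolev embedding $W^{k,2}\hookrightarrow C^0$ (valid since $k \geq 6$) guarantees such a $c'$ exists uniformly as $(\alpha,y)\to 0$; this is essentially repeating a bound already implicit in the setup.

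For part (ii), the strategy is to use the fact that $\xi_1 - \xi_2$ lies in the image of the right inverse $Q_{\alpha,y}$ together with the quadratic estimate (Proposition \ref{quadestimate}). Since $\xi_1 - \xi_2 \in \im Q_{\alpha,y}$, we may write $\xi_1 - \xi_2 = Q_{\alpha,y}\eta$ for some $\eta$. Because $D_{\alpha,y}Q_{\alpha,y} = \mathbf{1}$ by Proposition \ref{Qestimates}, applying $D_{\alpha,y}$ to both sides gives $\eta = D_{\alpha,y}(\xi_1 - \xi_2)$. Hence I can rewrite the quantity of interest as
\begin{equation*}
(\xi_1 - \xi_2) - Q_{\alpha,y}(\F_{\alpha,y}\xi_1 - \F_{\alpha,y}\xi_2) = Q_{\alpha,y}\bigl[D_{\alpha,y}(\xi_1 - \xi_2) - (\F_{\alpha,y}\xi_1 - \F_{\alpha,y}\xi_2)\bigr].
\end{equation*}

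Now I apply the uniform bound $\|Q_{\alpha,y}\|_{(k-1,2,\delta)\to(k,2,\delta)} \leq c$ from Proposition \ref{Qestimates} together with the quadratic estimate \eqref{quadestgoal} to obtain
\begin{equation*}
\bigl\|(\xi_1 - \xi_2) - Q_{\alpha,y}(\F_{\alpha,y}\xi_1 - \F_{\alpha,y}\xi_2)\bigr\|_{k,2,\delta} \leq c \cdot c \cdot \|\xi_1 - \xi_2\|_{k,2,\delta}(\|\xi_1\|_{k,2,\delta} + \|\xi_2\|_{k,2,\delta}).
\end{equation*}
Under the hypothesis $\|\xi_1\|_{k,2,\delta}, \|\xi_2\|_{k,2,\delta} \leq c'$ this is bounded by $2c^2 c' \|\xi_1 - \xi_2\|_{k,2,\delta}$.

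The final step is to shrink $c'$: replacing the $c'$ obtained in part (i) by $\min(c', 1/(4c^2))$ where $c$ is the maximum of the constants from Propositions \ref{quadestimate} and \ref{Qestimates}, we get the factor $1/2$ as required. There is no serious obstacle here -- the proof is essentially a bookkeeping exercise chaining the bound on $\|Q_{\alpha,y}\|$ with the quadratic estimate, both of which hold uniformly for $(\alpha,y)$ in a neighborhood of zero. The only subtlety is ordering the choices: one first fixes $c$ from the previously established uniform bounds, then shrinks $c'$ depending on $c$, and finally shrinks the neighborhood of $(\alpha,y) = (0,0)$ so that Propositions \ref{quadestimate} and \ref{Qestimates} apply with these constants.
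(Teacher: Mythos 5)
Your proof is correct and takes essentially the same approach as the paper: you use the identity $\xi_1-\xi_2 = Q_{\alpha,y}D_{\alpha,y}(\xi_1-\xi_2)$ (valid since $\xi_1-\xi_2 \in \im Q_{\alpha,y}$ and $D_{\alpha,y}Q_{\alpha,y}=\mathbf 1$), factor out $Q_{\alpha,y}$, apply the quadratic estimate and the uniform bound on $\|Q_{\alpha,y}\|$, and shrink $c'$ to obtain the contraction constant $\tfrac 12$. The paper compresses the last step to ``this is enough,'' whereas you spell out the order of quantifiers, which is a helpful clarification but not a different argument.
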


\begin{proof}
The first assertion has been shown earlier.  For the second, write:
\begin{multline}
\left\|(\xi_1-\xi_2)-(Q_{\alpha,y}\F_{\alpha,y}\xi_1-Q_{\alpha,y}\F_{\alpha,y}\xi_2)\right\|_{k,2,\delta}\\
=\left\|Q_{\alpha,y}D_{\alpha,y}(\xi_1-\xi_2)-(Q_{\alpha,y}\F_{\alpha,y}\xi_1-Q_{\alpha,y}\F_{\alpha,y}\xi_2)\right\|_{k,2,\delta}\\
\leq\left\|Q_{\alpha,y}\right\|\left\|D_{\alpha,y}(\xi_1-\xi_2)-(\F_{\alpha,y}\xi_1-\F_{\alpha,y}\xi_2)\right\|_{k-1,2,\delta}\\
\leq c\cdot\left\|Q_{\alpha,y}\right\|\left\|\xi_1-\xi_2\right\|_{k,2,\delta}(\left\|\xi_1\right\|_{k,2,\delta}+\left\|\xi_2\right\|_{k,2,\delta})
\end{multline}
by Proposition \ref{quadestimate}.  Since $\left\|Q_{\alpha,y}\right\|$ is uniformly bounded, this is enough.
\end{proof}

\begin{proposition}[Newton--Picard iteration]\label{newtoniteration}
There exists $c'>0$ (depending on data which has been previously fixed) so that for $(\alpha,y,\kappa\in K)$ sufficiently small, there exists a unique $\kappa_{\alpha,y}\in W^{k,2,\delta}(C_\alpha,u_\alpha^\ast TX)_D\oplus E$ satisfying:
\begin{align}
\kappa_{\alpha,y}&\in\kappa_\alpha+\im Q_{\alpha,y}\\
\left\|\kappa_{\alpha,y}\right\|_{k,2,\delta}&\leq c'\\
\F_{\alpha,y}\kappa_{\alpha,y}&=0
\end{align}
\end{proposition}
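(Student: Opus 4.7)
\textbf{Proof proposal for Proposition \ref{newtoniteration}.} The plan is a standard Newton--Picard / contraction mapping argument applied to the Banach space affine map
\begin{equation*}
\Phi_{\alpha,y,\kappa}(\xi) := \xi - Q_{\alpha,y}\F_{\alpha,y}(\xi)
\end{equation*}
defined on the closed ball $B_{c'} := \{\xi \in \kappa_\alpha + \im Q_{\alpha,y} : \|\xi\|_{k,2,\delta} \leq c'\}$. Note that $\Phi_{\alpha,y,\kappa}$ preserves the affine subspace $\kappa_\alpha + \im Q_{\alpha,y}$ (since $Q_{\alpha,y}\F_{\alpha,y}(\xi)\in \im Q_{\alpha,y}$), so $\Phi_{\alpha,y,\kappa}$ indeed maps $B_{c'}$ into $\kappa_\alpha + \im Q_{\alpha,y}$. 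A fixed point of $\Phi_{\alpha,y,\kappa}$ satisfies $Q_{\alpha,y}\F_{\alpha,y}(\xi)=0$, hence $\F_{\alpha,y}(\xi)=0$ because $D_{\alpha,y}Q_{\alpha,y}=\mathbf{1}$ (Proposition \ref{Qestimates}) implies $Q_{\alpha,y}$ is injective.

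The first main step is to estimate the initial error $\Phi_{\alpha,y,\kappa}(\kappa_\alpha) - \kappa_\alpha = -Q_{\alpha,y}\F_{\alpha,y}(\kappa_\alpha)$. Writing
\begin{equation*}
\F_{\alpha,y}(\kappa_\alpha) = \F_{\alpha,y}(0) + D_{\alpha,y}\kappa_\alpha + \bigl[\F_{\alpha,y}(\kappa_\alpha) - \F_{\alpha,y}(0) - D_{\alpha,y}\kappa_\alpha\bigr],
\end{equation*}
I bound the three pieces as follows. The first term has $\|\F_{\alpha,y}(0)\|_{k-1,2,\delta} \leq c[|y| + \sum_i e^{-(1-\delta)S_i}]$ by Lemma \ref{mappregluingissmall} (up to the harmless discrepancy between $x_i^0$ and $x_i(0)$, controlled by the implicit function theorem). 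The second term is bounded by Lemma \ref{pregluingKestimate} as $\|D_{\alpha,y}\kappa_\alpha\|_{k-1,2,\delta} \leq c[|y| + \sum_i e^{-(1-\delta)S_i}]\|\kappa\|$. The third term is bounded by the quadratic estimate Proposition \ref{quadestimate} (applied with $\xi_1 = \kappa_\alpha$, $\xi_2 = 0$) as $c \|\kappa_\alpha\|_{k,2,\delta}^2 \leq c'' \|\kappa\|^2$. Combining with the uniform bound $\|Q_{\alpha,y}\| \leq c$ gives
\begin{equation*}
\|\Phi_{\alpha,y,\kappa}(\kappa_\alpha) - \kappa_\alpha\|_{k,2,\delta} \leq c_1 \bigl[|y| + \textstyle\sum_i e^{-(1-\delta)S_i} + \|\kappa\|^2 \bigr] =: \epsilon(\alpha,y,\kappa),
\end{equation*}
which tends to $0$ as $(\alpha,y,\kappa)\to 0$.

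The second main step is to apply contraction. By Lemma \ref{contractionprop}, for $(\alpha,y)$ sufficiently small and $\xi_1,\xi_2 \in B_{c'}$ with $\xi_1-\xi_2 \in \im Q_{\alpha,y}$, we have $\|\Phi_{\alpha,y,\kappa}(\xi_1) - \Phi_{\alpha,y,\kappa}(\xi_2)\|_{k,2,\delta} \leq \tfrac12\|\xi_1-\xi_2\|_{k,2,\delta}$. To ensure $\Phi_{\alpha,y,\kappa}$ maps $B_{c'}$ into itself, it suffices (by iterating this contraction inequality starting from $\kappa_\alpha$) to check that $\|\kappa_\alpha\|_{k,2,\delta} + 2\epsilon(\alpha,y,\kappa) \leq c'$, which holds for $(\alpha,y,\kappa)$ sufficiently small since $\|\kappa_\alpha\|_{k,2,\delta} \leq c\|\kappa\|$ uniformly (a direct consequence of the kernel pregluing formula). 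The Banach fixed point theorem then produces a unique fixed point $\kappa_{\alpha,y} \in B_{c'}$, and by the preceding remark this is the desired zero of $\F_{\alpha,y}$. Uniqueness among all $\xi$ satisfying the three bulleted conditions follows from the contraction property on $B_{c'}$.

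The main obstacle, such as there is one, is bookkeeping: one must verify that the small-parameter thresholds delivered by Lemmas \ref{mappregluingissmall}, \ref{pregluingKestimate}, \ref{contractionprop}, and Proposition \ref{Qestimates} can all be chosen compatibly (they can, since each is uniform as $(\alpha,y)\to 0$ and depends only on previously fixed data), and that the $c'$ governing Lemma \ref{contractionprop} may be taken large enough to accommodate the initial datum $\kappa_\alpha$ yet small enough for the quadratic estimate to yield contraction constant $\tfrac12$. No new analytic input beyond the estimates already assembled is required.
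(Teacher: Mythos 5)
Your proof is correct and follows essentially the same route as the paper's: a Newton--Picard contraction on the affine slice $\{\xi\in\kappa_\alpha+\im Q_{\alpha,y}:\|\xi\|_{k,2,\delta}\leq c'\}$, with the self-mapping property reduced to the smallness of $\|Q_{\alpha,y}\F_{\alpha,y}(\kappa_\alpha)\|$, which you (as does the paper) bound by combining Proposition \ref{quadestimate} with $(\xi_1,\xi_2)=(\kappa_\alpha,0)$, Lemma \ref{mappregluingissmall}, and Lemma \ref{pregluingKestimate}. The only cosmetic difference is that you spell out the three-term decomposition of $\F_{\alpha,y}(\kappa_\alpha)$ and the injectivity-of-$Q_{\alpha,y}$ step explicitly, which the paper leaves implicit.
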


\begin{proof}
In fact, we will show that $\kappa_{\alpha,y}$ is given explicitly as the limit of the Newton iteration:
\begin{align}
\xi_0&:=\kappa_\alpha\\
\xi_n&:=\xi_{n-1}-Q_{\alpha,y}\F_{\alpha,y}\xi_{n-1}
\end{align}
By Lemma \ref{contractionprop}, the map $\xi\mapsto\xi-Q_{\alpha,y}\F_{\alpha,y}\xi$ is a $\frac 12$-contraction mapping when restricted to:
\begin{equation}\label{domainofcontraction}
\{\xi\in\kappa_\alpha+\im Q_{\alpha,y}:\left\|\xi\right\|_{k,2,\delta}\leq c'\}
\end{equation}
To finish the proof, it suffices to show that (for sufficiently small $(\alpha,y,\kappa)$) \eqref{domainofcontraction} is nonempty and is mapped to itself by $\xi\mapsto\xi-Q_{\alpha,y}\F_{\alpha,y}\xi$.

We know that $\left\|\kappa_\alpha\right\|_{k,2,\delta}\to 0$ as $\kappa\to 0$ (uniformly in $(\alpha,y)$), so \eqref{domainofcontraction} is nonempty.  By using Proposition \ref{quadestimate} with $(\xi_1,\xi_2)=(0,\kappa_\alpha)$ and Lemmas \ref{mappregluingissmall} and \ref{pregluingKestimate}, we conclude that:
\begin{equation}\label{kernelclosetoholomorphic}
\left\|\F_{\alpha,y}\kappa_\alpha\right\|_{k-1,2,\delta}\to 0
\end{equation}
as $(\alpha,y,\kappa)\to 0$.  Since the operator norm of $Q_{\alpha,y}$ is bounded uniformly as $(\alpha,y)\to 0$, we see that $\kappa_\alpha$ is almost fixed by $\xi\mapsto\xi-Q_{\alpha,y}\F_{\alpha,y}\xi$ as $(\alpha,y,\kappa)\to 0$.  It then follows from the contraction property that $\xi\mapsto\xi-Q_{\alpha,y}\F_{\alpha,y}\xi$ maps \eqref{domainofcontraction} to itself.
\end{proof}

\subsection{Gluing}

\begin{definition}[Gluing map]
We define:
\begin{align}
u_{\alpha,y,\kappa}&:=\exp_{u_\alpha}\kappa_{\alpha,y}\\
e_{\alpha,y,\kappa}&:=e_0+\proj_E\kappa_{\alpha,y}
\end{align}
where $\kappa_{\alpha,y}$ is as in Proposition \ref{newtoniteration}, and we consider the map:
\begin{align}\label{gluingmap}
\CC^d\times\RR^{\dim\Mbar^d}\times K&\to\Mbar(X)\\
(\alpha,y,\kappa)&\mapsto(\alpha,y,u_{\alpha,y,\kappa},e_{\alpha,y,\kappa})
\end{align}
(with $\{x_i\}$ understood).  It follows from the definition that \eqref{gluingmap} commutes with the projection from both sides to $\Mbar\times E/E'$.
\end{definition}

\begin{lemma}
The gluing map \eqref{gluingmap} maps sufficiently small $(\alpha,y,\kappa)$ to $\Mbar(X)^\reg$.
\end{lemma}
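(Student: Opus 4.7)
The plan is to show that the linearized operator at the glued solution $(u_{\alpha,y,\kappa},e_{\alpha,y,\kappa})$ is surjective onto $W^{k-1,2,\delta}$ when the $E$-component is restricted to $E'$. This is precisely the condition (after unwinding the definition of the $E'$-regular locus) for the section \eqref{firstdelbarsection} to be transverse to zero at $(u_{\alpha,y,\kappa},e_{\alpha,y,\kappa})$; the remaining transversality and intersection-count conditions cutting out $\Mbar(X)$ are open conditions that hold at the basepoint and so persist for small $(\alpha,y,\kappa)$.

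The central mechanism will be to transport $Q_{\alpha,y}$ along the exponential segment from $u_\alpha$ to $u_{\alpha,y,\kappa}$ and then correct back to a true right inverse. Concretely, denote the linearized operator at $(u_{\alpha,y,\kappa},e_{\alpha,y,\kappa})$ by $D_{\alpha,y,\kappa_{\alpha,y}}$ (in the notation from the proof of Proposition \ref{quadestimate}) and set
\begin{equation*}
\tilde Q_{\alpha,y,\kappa}:=(\PT_{u_\alpha\to u_{\alpha,y,\kappa}}\oplus\id_E)\circ Q_{\alpha,y}\circ\PT_{u_{\alpha,y,\kappa}\to u_\alpha}.
\end{equation*}
By Proposition \ref{Qestimates}, $Q_{\alpha,y}$ maps into $\ker L_\alpha\subseteq W^{k,2,\delta}\oplus E'$ (since $Q_{0,0}$ does, and the operators in \eqref{biginversediagram} leave the $E$-component alone), so $\tilde Q_{\alpha,y,\kappa}$ likewise lands in $W^{k,2,\delta}(C_\alpha,u_{\alpha,y,\kappa}^\ast TX)_D\oplus E'$.

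The key input is the estimate \eqref{quadreducedtoPTcomp} established inside the proof of Proposition \ref{quadestimate}: for $\zeta$ small,
\begin{equation*}
\bigl\|D_{\alpha,y}-\PT_{\exp_{u_\alpha}\zeta\to u_\alpha}\circ D_{\alpha,y,\zeta}\circ\PT_{u_\alpha\to\exp_{u_\alpha}\zeta}\bigr\|_{(k,2,\delta)\to(k-1,2,\delta)}\leq c\cdot\|\zeta\|_{k,2,\delta}.
\end{equation*}
Applying this with $\zeta=\kappa_{\alpha,y}$ and using $D_{\alpha,y}Q_{\alpha,y}=\mathbf{1}$ together with the uniform bound on $\|Q_{\alpha,y}\|$ from Proposition \ref{Qestimates}, we get
\begin{equation*}
\|D_{\alpha,y,\kappa_{\alpha,y}}\tilde Q_{\alpha,y,\kappa}-\mathbf{1}\|\leq c\cdot\|Q_{\alpha,y}\|\,\|\kappa_{\alpha,y}\|_{k,2,\delta},
\end{equation*}
which tends to zero as $(\alpha,y,\kappa)\to 0$ by Proposition \ref{newtoniteration} and the remark at the start of its proof (which gives $\|\kappa_{\alpha,y}\|_{k,2,\delta}\to 0$ via Lemmas \ref{mappregluingissmall} and \ref{pregluingKestimate}). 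Once this norm is below $1$, Lemma \ref{bootstrapinverse} with $T=\tilde Q_{\alpha,y,\kappa}$ produces a bounded right inverse $Q_{\alpha,y,\kappa}=\tilde Q_{\alpha,y,\kappa}\sum_{n\geq 0}(\mathbf{1}-D_{\alpha,y,\kappa_{\alpha,y}}\tilde Q_{\alpha,y,\kappa})^n$, whose image still lies in $W^{k,2,\delta}\oplus E'$ since this subspace is closed under the relevant operations.

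There is no serious obstacle: each piece of the argument above either quotes a result already proved in the appendix or is a direct application of the bootstrap Lemma \ref{bootstrapinverse}. The one point that requires care is verifying that the $E'$-constraint is preserved throughout, which amounts to tracing through the construction of $Q_{0,0}$ and noting that every step of the chain $T_{\alpha,y}\rightsquigarrow Q_{\alpha,y}\rightsquigarrow\tilde Q_{\alpha,y,\kappa}\rightsquigarrow Q_{\alpha,y,\kappa}$ leaves the $E$-component inside $E'$; this is ultimately what makes the gluing map land in the \emph{$E'$-}regular locus rather than merely in the $E$-regular locus, and hence is what will permit the submersion conclusion (\ref{submersion}) of Theorem \ref{mainresult} in the subsequent argument.
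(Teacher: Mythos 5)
Your argument is correct and takes essentially the same approach as the paper's one-line proof, which invokes \eqref{derivislip} with $\zeta=\kappa_{\alpha,y}$ to see that $Q_{\alpha,y}$ is an approximate right inverse at the glued solution and then applies Lemma \ref{bootstrapinverse}; you merely unpack this by substituting the essentially equivalent sub-estimate \eqref{quadreducedtoPTcomp} and inserting the parallel-transport conjugation explicitly. Your tracking of the $E'$-constraint, which follows from $\im Q_{0,0}\subseteq\ker L_0\subseteq W^{k,2,\delta}(C_0,u_0^\ast TX)_D\oplus E''\subseteq W^{k,2,\delta}(C_0,u_0^\ast TX)_D\oplus E'$ being preserved through the chain of operators, is a useful point that the paper leaves implicit.
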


\begin{proof}
This is true since $Q_{\alpha,y}$ gives an approximate right inverse at $(u_{\alpha,y,\kappa},e_{\alpha,y,\kappa})$ (use \eqref{derivislip} with $\zeta=\kappa_{\alpha,y}$).
\end{proof}

Let $K_\alpha\subseteq C^\infty(C_\alpha,u_\alpha^\ast TX)_D\oplus E$ denote the image of $\kappa\mapsto\kappa_\alpha$.  It is clear by definition that $K\to K_\alpha$ is an isomorphism, and the respective $W^{k,2,\delta}$ norms are uniformly commensurable.  It is also clear that the following commutes:
\begin{equation}\label{Lonkernelisgood}
\begin{tikzcd}[column sep = tiny]
K\ar{dr}[swap]{L_0}\ar{rr}{\kappa\mapsto\kappa_\alpha}&&K_\alpha\ar{dl}{L_\alpha}\\
&W
\end{tikzcd}
\end{equation}
(all maps being isomorphisms).  Since $\im Q_{\alpha,y}\subseteq\ker L_\alpha$, it follows in particular that $\im Q_{\alpha,y}\cap K_\alpha=0$.  On the other hand, an index calculation shows that $\ind D_{\alpha,y}=\ind D_{0,0}$ (note that by Lemma \ref{fredholmandsamekernelcokernel}, it suffices to calculate their indices as operators $W^{k,2}\to W^{k-1,2}$ on $\tilde C_\alpha$ and $\tilde C_0$ respectively, and this is a standard calculation as in McDuff--Salamon \cite{mcduffsalamonJholsymp}).  Both are surjective, and hence we have $\dim\coker Q_{\alpha,y}=\dim\ker D_{\alpha,y}=\dim\ker D_{0,0}=\dim K=\dim K_\alpha$.  It follows that $\im Q_{\alpha,y}=\ker L_\alpha$ and that:
\begin{equation}\label{QKtoall}
\im Q_{\alpha,y}\oplus K_\alpha\xrightarrow\sim W^{k,2,\delta}(C_\alpha,u_\alpha^\ast TX)_D\oplus E
\end{equation}
is an isomorphism of Banach spaces.  We claim that in fact the two norms are uniformly commensurable as $(\alpha,y)\to 0$.  The map written is clearly uniformly bounded, so we just need to show the same for its inverse.  It suffices to show that the projection from the right hand side to $K_\alpha$ is uniformly bounded, but this is nothing other than $L_\alpha$ (clearly uniformly bounded) composed with the inverse of the isomorphism in \eqref{Lonkernelisgood} (also uniformly bounded).

\begin{lemma}\label{gluingisinjective}
The map \eqref{gluingmap} is injective in a neighborhood of zero.
\end{lemma}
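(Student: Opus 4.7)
The plan is to peel off the three coordinates one at a time. Suppose $(\alpha_1,y_1,\kappa_1)$ and $(\alpha_2,y_2,\kappa_2)$ have the same image under the gluing map. The first two coordinates of the target in $\Mbar(X)$ are literally $\alpha$ and $y$, so immediately $\alpha_1=\alpha_2=\alpha$ and $y_1=y_2=y$. Thus we reduce to proving that, for every sufficiently small fixed $(\alpha,y)$, the map $\kappa\mapsto(u_{\alpha,y,\kappa},e_{\alpha,y,\kappa})$ is injective on a neighborhood of $0\in K$.

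Next I would reduce equality of images in $\Mbar(X)$ to equality of the Newton--Picard solutions $\kappa_{\alpha,y,i}\in W^{k,2,\delta}(C_\alpha,u_\alpha^\ast TX)_D\oplus E$. Indeed, agreement of the outputs gives $\exp_{u_\alpha}\kappa_{\alpha,y,1}=\exp_{u_\alpha}\kappa_{\alpha,y,2}$ as maps $C_\alpha\to X$ and $\proj_E\kappa_{\alpha,y,1}=\proj_E\kappa_{\alpha,y,2}$. By Proposition \ref{newtoniteration} both $\kappa_{\alpha,y,i}$ have $W^{k,2,\delta}$-norm bounded by $c'$; since $k\geq 6$ we have $W^{k,2,\delta}\hookrightarrow C^0$ with a uniform bound, so both sections take values in a region on which $\exp$ is a pointwise diffeomorphism. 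Hence $\exp$-injectivity forces the tangent-vector parts to agree pointwise (and thus in $W^{k,2,\delta}$, by elliptic regularity / the smoothness already in hand), and the $E$-parts also agree, giving $\kappa_{\alpha,y,1}=\kappa_{\alpha,y,2}$.

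Finally I would use the decomposition \eqref{QKtoall} to pass back to $K$. By construction $\kappa_{\alpha,y,i}\in\kappa_{i,\alpha}+\im Q_{\alpha,y}$, where $\kappa\mapsto\kappa_\alpha$ is the isomorphism $K\xrightarrow\sim K_\alpha$. Therefore $\kappa_{1,\alpha}-\kappa_{2,\alpha}\in\im Q_{\alpha,y}\cap K_\alpha$, which is zero by the discussion following \eqref{Lonkernelisgood} (namely $\im Q_{\alpha,y}\subseteq\ker L_\alpha$, while $L_\alpha:K_\alpha\xrightarrow\sim W$ is an isomorphism). Hence $\kappa_{1,\alpha}=\kappa_{2,\alpha}$, so $\kappa_1=\kappa_2$.

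The only subtle point, and so the place to be careful, is choosing a single neighborhood of the origin on which all three ingredients hold simultaneously: Newton--Picard produces the unique $\kappa_{\alpha,y}$ (Proposition \ref{newtoniteration}); the pointwise image of $u_\alpha$ composed with $\exp$ of any vector with $W^{k,2,\delta}$-norm $\leq c'$ lies in an $\exp$-injective neighborhood of $u_\alpha(C_\alpha)$; and the algebraic splitting \eqref{QKtoall} together with $\im Q_{\alpha,y}\cap K_\alpha=0$ persists. Each of these is open in $(\alpha,y)$, so their common domain of validity is again an open neighborhood of $0$, which is all that is required.
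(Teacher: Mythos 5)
Your proposal is correct and follows the same three-step route as the paper's proof: read off $(\alpha,y)$, use injectivity of $\exp$ (valid since the $W^{k,2,\delta}$-bound $c'$ keeps the $C^0$-norm below the injectivity radius) to identify the Newton--Picard outputs, and then use $K_\alpha\cap\im Q_{\alpha,y}=0$ to conclude. One small cosmetic remark: the closing appeal to ``openness in $(\alpha,y)$'' is the wrong abstraction — what actually makes a single neighborhood work is that the constants in Propositions \ref{Qestimates} and \ref{newtoniteration} are uniform as $(\alpha,y)\to 0$, which is exactly what the paper leans on when it notes that $\|\kappa_{\alpha,y}\|$ tends to zero uniformly.
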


\begin{proof}
Suppose that:
\begin{equation}
(\alpha,y,u_{\alpha,y,\kappa},e_{\alpha,y,\kappa})=(\alpha',y',u_{\alpha',y',\kappa'},e_{\alpha',y',\kappa'})
\end{equation}
We see immediately that $(\alpha,y)=(\alpha',y')$.  Now we see that:
\begin{align}
\exp_{u_\alpha}\kappa_{\alpha,y}&=\exp_{u_\alpha}(\kappa')_{\alpha,y}\\
\proj_E\kappa_{\alpha,y}&=\proj_E(\kappa')_{\alpha,y}
\end{align}
Since the norms of $\kappa_{\alpha,y}$ and $\kappa_{\alpha,y}'$ go to zero as $(\alpha,y,\kappa,\kappa')\to 0$ (in $W^{k,2,\delta}$, and hence in $C^0$) and the injectivity radius of the exponential map is fixed, we see that $\kappa_{\alpha,y}=(\kappa')_{\alpha,y}$.  It follows that $\kappa_\alpha-(\kappa')_\alpha\in\im Q_{\alpha,y}$, but since $K_\alpha\cap\im Q_{\alpha,y}=0$ we conclude $\kappa_\alpha=(\kappa')_\alpha$ and hence $\kappa=\kappa'$.
\end{proof}

\begin{proposition}\label{continuityofgluing}
The map \eqref{gluingmap} is continuous in a neighborhood of zero.
\end{proposition}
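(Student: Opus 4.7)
The plan is to combine Gromov compactness with the uniqueness clause of Proposition \ref{newtoniteration}. Fix a point $(\alpha_\ast, y_\ast, \kappa_\ast)$ in the domain of the gluing map and a convergent sequence $(\alpha_n, y_n, \kappa_n) \to (\alpha_\ast, y_\ast, \kappa_\ast)$; abbreviate $u_n := u_{\alpha_n, y_n, \kappa_n}$ and $e_n := e_{\alpha_n, y_n, \kappa_n}$, and similarly with $\ast$ in place of $n$. Since every subsequence of $(u_n, e_n)$ admits a further Gromov-convergent subsequence (the homology class $\beta$ is fixed, $e_n$ is bounded so that $\lambda(e_n)$ is a uniformly bounded inhomogeneous term, and the images lie in a fixed compact region), it suffices to show that every such subsequential limit $(u_\infty, e_\infty) \in \Mbar(X)$ equals $(u_\ast, e_\ast)$. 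Continuity of the projection $\Mbar(X) \to \Mbar \times E/E'$ together with the fact that the gluing map commutes with this projection pins down the $\Mbar$-component of $(u_\infty, e_\infty)$ as $(\alpha_\ast, y_\ast)$ and $[e_\infty] = [e_\ast] \in E/E'$; in particular $u_\infty$ has domain $C_{\alpha_\ast}$.

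The main analytic step is to upgrade the Gromov limit to an exponential correction of the preglued map: $u_\infty = \exp_{u_{\alpha_\ast}} \kappa_\infty$ for some $\kappa_\infty \in W^{k,2,\delta}(C_{\alpha_\ast}, u_{\alpha_\ast}^\ast TX)_D \oplus E$ with $\left\|\kappa_\infty\right\|_{k,2,\delta} \le c'$, obtained as a weak $W^{k,2,\delta}$ limit of $\kappa_{\alpha_n, y_n}$ after transport to a common reference model over $C_{\alpha_\ast}$. The key ingredients are the uniform bound $\left\|\kappa_{\alpha_n, y_n}\right\|_{k,2,\delta} \le c'$ from Proposition \ref{newtoniteration}, together with the fact that $u_{\alpha_n}$ is approximately constant at the node value within any neck, with exponential rates compatible with the weight $\delta \in (0,1)$. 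These ingredients rule out energy concentration in any long neck or bubble, so that a standard weak-compactness extraction produces $\kappa_\infty$.

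Finally I would verify that $\kappa_\infty$ satisfies the three characterizing conditions of $\kappa_{\alpha_\ast, y_\ast}$ from Proposition \ref{newtoniteration}: (i) $\F_{\alpha_\ast, y_\ast}(\kappa_\infty) = 0$ by passing to the limit in $\F_{\alpha_n, y_n}(\kappa_{\alpha_n, y_n}) = 0$; (ii) $L_{\alpha_\ast}(\kappa_\infty) = L_0(\kappa_\ast)$, obtained by passing to the limit in $L_{\alpha_n}(\kappa_{\alpha_n, y_n}) = L_0(\kappa_n)$ and using that the evaluation points $q_i$ defining $L$ lie away from the cylindrical ends so that $L_{\alpha_n}$ depends continuously on $\alpha_n$; and (iii) the norm bound above. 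Recalling that $\im Q_{\alpha_\ast, y_\ast} = \ker L_{\alpha_\ast}$, condition (ii) is equivalent to $\kappa_\infty$ lying in the correct coset, and the uniqueness of the Newton--Picard fixed point then identifies $\kappa_\infty = \kappa_{\alpha_\ast, y_\ast}$, yielding $(u_\infty, e_\infty) = (u_\ast, e_\ast)$.

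The hardest part will be the no-bubbling and weak-limit step when some components of $\alpha_n$ tend to zero while others stay bounded away, so that some necks of $C_{\alpha_n}$ grow unboundedly while the rest of the curve converges smoothly to an open piece of $C_{\alpha_\ast}$. The weighted Sobolev framework with weight $\delta \in (0,1)$ is designed precisely for this: the weight forces $W^{k,2,\delta}$ corrections to decay exponentially into long necks, and combined with the pregluing estimate of Lemma \ref{mappregluingissmall} this controls the deviation of $u_n$ from the nearly trivial $u_{\alpha_n}$ in those necks and makes the weak limit in $W^{k,2,\delta}$ over $C_{\alpha_\ast}$ well-defined.
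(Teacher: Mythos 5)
Your proposal takes a genuinely different route from the paper's. You run the argument "backward'': take the sequence $(\alpha_n,y_n,\kappa_n)\to(\alpha_\ast,y_\ast,\kappa_\ast)$, extract a subsequential limit of the glued maps $u_n$ by compactness, and then identify it with $u_{\alpha_\ast,y_\ast,\kappa_\ast}$ by verifying the three uniqueness conditions of Proposition \ref{newtoniteration}. The paper runs the argument "forward'': after first reducing to fixed $\kappa$ (a step you would also need), it explicitly \emph{preglues} the known correction $\xi=\kappa_{\alpha,y}-\kappa_\alpha$ from $C_\alpha$ to $C_{\alpha_i}$ via a cutoff/reparametrization map, observes that the result lies in $\kappa_{\alpha_i}+\ker L_{\alpha_i}=\kappa_{\alpha_i}+\im Q_{\alpha_i,y_i}$ (the identity $\im Q=\ker L$ is the same key fact you exploit), shows it is an almost-solution of $\F_{\alpha_i,y_i}=0$ via weighted estimates, and then invokes the contraction mapping to conclude $\|\xi_i-\xi_{\alpha_i}\|_{k,2,\delta}\to 0$. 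Both approaches hinge on the same two ingredients --- the geometric characterization $\im Q_{\alpha,y}=\ker L_\alpha$ and the uniqueness in the Newton--Picard iteration --- but in quite different packaging.

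The trade-off is exactly what you flag as "the hardest part.'' Your direction requires showing that a sequence of corrections $\kappa_{\alpha_n,y_n}$, bounded in the weighted norm on curves $C_{\alpha_n}$ whose necks are growing without bound, has a weak limit lying in $W^{k,2,\delta}(C_{\alpha_\ast},u_{\alpha_\ast}^\ast TX)$ with norm $\le c'$, i.e.\ a correction on a long neck must "de-glue'' into two admissible corrections on the two ends of the nodal limit, with the norm and the coset constraint preserved. That comparison of weighted norms (in particular reconciling the $|\xi(n)|^2$ term at the node with the $|\int_{S^1}\xi(3S_n,t)\,dt|^2$ term at the center of the neck) is not automatic, and this is precisely the delicate direction. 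The paper's forward pregluing is the easy direction: gluing two end corrections into a neck correction is an explicit cutoff, and the error estimate (Lemma \ref{approxtop}) gains a factor $e^{-2\delta S}$ from the weight mismatch. So the paper gets a quantitative rate of convergence with no compactness machinery, while your argument is softer but shifts the analytic burden onto a degeneration/weak-limit step that would need to be fleshed out carefully (an argument closely related to what the paper does in Proposition \ref{gluingsurjective}, using Proposition \ref{neckestimate}). Your outline is sound, but as written, this step is asserted rather than proved; the paper's choice of direction is not cosmetic --- it avoids exactly that difficulty.
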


\begin{proof}
The key ingredient in this proof is our precise control of the image of the right inverse $Q_{\alpha,y}$ (specifically, that $\im Q_{\alpha,y}=\ker L_\alpha$).

Suppose $(\alpha_i,y_i,\kappa_i)\to(\alpha,y,\kappa)$ is a convergent net.\footnote{We could restrict to sequences rather than nets since $\CC^d\times\RR^{\dim\Mbar^d}\times K$ is first countable.  However, this would not make the argument any simpler.}  We will show that:
\begin{equation}
(u_{\alpha_i,y_i,\kappa_i},e_{\alpha_i,y_i,\kappa_i})\to(u_{\alpha,y,\kappa},e_{\alpha,y,\kappa})
\end{equation}

First, we claim that $\left\|(\kappa_i)_{\alpha_i,y_i}-\kappa_{\alpha_i,y_i}\right\|_\infty\to 0$.  In fact, we will show the stronger statement that $\left\|(\kappa_i)_{\alpha_i,y_i}-\kappa_{\alpha_i,y_i}\right\|_{k,2,\delta}\to 0$.  Since the Newton iteration converges uniformly, it suffices to show that $\bigl\|(\kappa_i)_{\alpha_i,y_i}^{n}-\kappa_{\alpha_i,y_i}^{n}\bigr\|_{k,2,\delta}\to 0$ for all $n\geq 0$, where $\kappa_{\alpha,y}^{n}$ denotes the $n$th step of the Newton iteration converging to $\kappa_{\alpha,y}$.  It is easy to verify this for $n=0$:
\begin{equation}
\left\|(\kappa_i)_{\alpha_i,y_i}^{0}-\kappa_{\alpha_i,y_i}^{0}\right\|_{k,2,\delta}=\left\|(\kappa_i-\kappa)_{\alpha_i}\right\|_{k,2,\delta}\to 0
\end{equation}
The inductive step follows from the fact that $Q_{\alpha_i,y_i}$ is uniformly bounded and $\F_{\alpha_i,y_i}$ is uniformly Lipschitz (e.g.\ as a consequence of Proposition \ref{quadestimate}).  Now from the claim, we see that it suffices to show that:
\begin{equation}
(u_{\alpha_i,y_i,\kappa},e_{\alpha_i,y_i,\kappa})\to(u_{\alpha,y,\kappa},e_{\alpha,y,\kappa})
\end{equation}

Recall that by definition:
\begin{align}
u_{\alpha,y,\kappa}&=\exp_{u_\alpha}\kappa_{\alpha,y}&\kappa_{\alpha,y}&=\kappa_\alpha+\xi&\text{for some }\xi&\in\im Q_{\alpha,y}\\
u_{\alpha_i,y_i,\kappa}&=\exp_{u_{\alpha_i}}\kappa_{\alpha_i,y_i}&\kappa_{\alpha_i,y_i}&=\kappa_{\alpha_i}+\xi_i&\text{for some }\xi_i&\in\im Q_{\alpha_i,y_i}
\end{align}
Now we define $\xi_{\alpha_i}\in W^{k,2,\delta}(C_{\alpha_i},u_{\alpha_i}^\ast TX)_D\oplus E$ by ``pregluing'' $\xi$ from $C_\alpha$ to $C_{\alpha_i}$ as follows.  Note that we may assume without loss of generality that at the nodes where $\alpha\ne 0$, we also have $\alpha_i\ne 0$.  Away from the ends/necks of $C_{\alpha_i}$, we set $\xi_{\alpha_i}=\xi$.  Over the ends of $C_{\alpha_i}$, note there is a corresponding end of $C_\alpha$, so we may also simply set $\xi_{\alpha_i}=\xi$ over the ends of $C_{\alpha_i}$.  Over the necks of $C_{\alpha_i}$ for which $\alpha=0$, we define $\xi_{\alpha_i}$ via \eqref{kernelpregluingformula} (note that this is reasonable since $\xi$ is smooth on $C_{\alpha}$).  Over the necks of $C_{\alpha_i}$ for which $\alpha\ne 0$ we define $\xi_{\alpha_i}$ as:
\begin{equation}\label{xitoIpregluingforcontinuous}
\xi_{\alpha_i}(s,t):=\PT_{u_{\alpha}(f_i(s),t)\to u_{\alpha_i}(s,t)}[\xi(f_i(s),t))]
\end{equation}
where $f_i:[0,6S_i]\to[0,6S]$ is defined as follows:
\begin{align}
f_i(s):=&\begin{cases}
s&s\leq S-2\cr
s-3S_i+3S&3S_i-2S+2\leq s\leq 3S_i+2S-2\cr
s-6S_i+6S&6S_i-S+2\leq s\cr
\end{cases}\\
f_i([S-2,3S_i-2S+2])&\subseteq[S-2,S+2]\\
f_i([3S_i+2S-2,6S_i-S+2])&\subseteq[5S-2,5S+2]
\end{align}
and is smooth with absolutely bounded derivatives of all orders.  More informally, $f_i$ is smooth and matches up $[0,S-2],[S+2,5S-2],[5S+2,6S]\subseteq[0,6S]$ with corresponding intervals of the same length inside $[0,6S_i]$, symmetrically.

Now the $C^0$-distance between:
\begin{align}
u_{\alpha,y,\kappa}=\exp_{u_\alpha}(\kappa_\alpha+\xi)&:C_\alpha\to X\quad\text{and}\cr
\exp_{u_{\alpha_i}}(\kappa_{\alpha_i}+\xi_{\alpha_i})&:C_{\alpha_i}\to X
\end{align}
goes to zero (this is easy to see from \eqref{kernelpregluingformula} and \eqref{xitoIpregluingforcontinuous}).  Hence it suffices to show that $\left\|\xi_i-\xi_{\alpha_i}\right\|_\infty\to 0$.  In fact, we will prove the stronger statement that:
\begin{equation}\label{correctedgetsbetter}
\left\|\xi_i-\xi_{\alpha_i}\right\|_{k,2,\delta}\to 0
\end{equation}

First, we claim that we may assume that:
\begin{equation}\label{pregluedisntbig}
\limsup_i\left\|\xi_{\alpha_i}\right\|_{k,2,\delta}
\end{equation}
is arbitrarily small by taking $(\alpha,y,\kappa)$ sufficiently close to zero.  We know $\|F_{\alpha,y}\kappa_\alpha\|_{k-1,2,\delta}\to 0$ as $(\alpha,y,\kappa)\to 0$ by \eqref{kernelclosetoholomorphic}.  Now since the Newton iteration is uniformly convergent and $Q_{\alpha,y}$ is uniformly bounded, it follows that $\left\|\xi\right\|_{k,2,\delta}\to 0$ as $(\alpha,y,\kappa)\to 0$.  We may easily bound $\|\xi_{\alpha_i}\|_{k,2,\delta}$ in terms of $\|\xi\|_{k,2,\delta}$ and the desired claim follows.

Now since the $\limsup$ \eqref{pregluedisntbig} can be assumed to be arbitrarily small, we may, in particular, assume that for sufficiently large $i$, $\|\kappa_{\alpha_i}+\xi_{\alpha_i}\|_{k,2,\delta}\leq c'$ for the constant $c'>0$ from Proposition \ref{contractionprop} for which \eqref{domainofcontraction} is a domain of contraction.  By construction, the fact that $\xi\in\im Q_{\alpha,y}=\ker L_\alpha$ implies that $\xi_{\alpha_i}\in\ker L_{\alpha_i}=\im Q_{\alpha_i,y_i}$.  Hence $\kappa_{\alpha_i}+\xi_{\alpha_i}$ lies in the domain of contraction \eqref{domainofcontraction} for $(\alpha_i,y_i,\kappa)$ where the Newton iteration applies.  By definition, we have $\F_{\alpha_i,y_i}(\kappa_{\alpha_i}+\xi_i)=0$ and $\kappa_{\alpha_i}+\xi_i$ is in the same domain of contraction.  Since the Newton iteration is a $\frac 12$-contraction on this domain and $Q_{\alpha_i,y_i}$ is uniformly bounded, to show \eqref{correctedgetsbetter}, it suffices to show that:
\begin{equation}\label{pregluedcorrectionalmosthol}
\left\|\F_{\alpha_i,y_i}(\kappa_{\alpha_i}+\xi_{\alpha_i})\right\|_{k-1,2,\delta}\to 0
\end{equation}

To prove \eqref{pregluedcorrectionalmosthol}, first recall that $\F_{\alpha,y}(\kappa_\alpha+\xi)=0$.

Away from the ends/necks of $C_{\alpha_i}$, the expression $\F_{\alpha_i,y_i}(\kappa_{\alpha_i}+\xi_{\alpha_i})$ differs from $\F_{\alpha,y}(\kappa_\alpha+\xi)=0$ only in terms of the complex structure $j_{y_i}$ in place of $j_y$.  Clearly this difference goes to zero as $y_i\to y$.

Over the ends of $C_{\alpha_i}$, the expression $\F_{\alpha_i,y_i}(\kappa_{\alpha_i}+\xi_{\alpha_i})$ coincides with $\F_{\alpha,y}(\kappa_\alpha+\xi)=0$.

Over the necks of $C_{\alpha_i}$, we bound $\F_{\alpha_i,y_i}(\kappa_{\alpha_i}+\xi_{\alpha_i})$ as follows.  Fix a neck $[0,6S_i]\subseteq C_{\alpha_i}$.  If $\alpha\ne 0$ for this neck, then the desired estimate follows easily from the definition of $\xi_{\alpha_i}$, $\kappa_{\alpha_i}$, and $u_{\alpha_i}$, and the fact that $\F_{\alpha,y}(\kappa_\alpha+\xi)=0$.  If $\alpha=0$ for this neck, then we argue as follows.  The expression $\F_{\alpha_i,y_i}(\kappa_{\alpha_i}+\xi_{\alpha_i})$ is only nonzero over $[S_i-1,S_i+1]\cup[5S_i-1,5S_i+1]$; by symmetry we may just bound it over $[S_i-1,S_i+1]$.  As in the proof of Lemma \ref{mappregluingissmall}, we win because of the weights: the expression in question has derivatives bounded by $e^{-S}$ and is weighted by $e^{\delta S}$.
\end{proof}

\subsection{Surjectivity of gluing}

We identify $\RR^{2n}=\CC^n$ via $z_j=x_j+iy_j$.  Let $g_\std$ denote the standard metric on $\RR^{2n}$, and let $J_\std$ denote the standard almost complex structure on $\CC^n$.

The following proposition is well-known, and is essentially contained in McDuff--Salamon \cite{mcduffsalamonJholsymp}.

\begin{proposition}[A priori estimate on long pseudo-holomorphic necks]\label{neckestimate}
For all $\mu<1$, there exists $\epsilon>0$ with the following property.  Let $J$ be a smooth almost complex structure on $B^{2n}(1)\subseteq\RR^{2n}$ satisfying $\left\|J-J_\std\right\|_{C^2}\leq\epsilon$ (measured with respect to $g_\std$).  There exists $c_k<\infty$ depending only on $\left\|J\right\|_{C^k}$ with the following property.  Let $u:[-R,R]\times S^1\to B^{2n}(1)$ be $J$-holomorphic.  If:
\begin{equation}
\int_{\partial[-R,R]\times S^1}\Bigl|\frac{du}{dt}\Bigr|^2\,dt<\epsilon
\end{equation}
Then:
\begin{equation}
\left|(D^ku)(s,t)\right|\leq c_k\cdot e^{\mu\cdot(\left|s\right|-R)}\left(\int_{\partial[-R,R]\times S^1}\Bigl|\frac{du}{dt}\Bigr|^2\,dt\right)^{1/2}
\end{equation}
for $\left|s\right|\leq R-1$.
\end{proposition}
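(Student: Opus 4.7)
The plan is to prove the estimate in three stages: first establish $C^0$ closeness of $u$ to a constant (reducing the nonlinear equation to a small perturbation of $\bar\partial u = 0$), then prove exponential decay of the tangential energy $E(s) := \int_{S^1}|\partial_tu(s,t)|^2\,dt$, and finally bootstrap to pointwise $C^k$ estimates. The linear model is $u : [-R,R]\times S^1 \to \CC^n$ holomorphic, which by Fourier expansion $u(s,t) = \sum_{k\in\ZZ}a_k e^{k(s+it)}$ has each nonzero mode decaying at rate exactly $e^{-|k|(R-|s|)}$; hence the optimal rate is $\mu = 1$, and we lose an arbitrarily small amount to absorb perturbation terms of order $\epsilon$.

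First I would show that for $\epsilon$ small enough, $u$ maps into a small ball around some point $p_0\in B^{2n}(1)$. This is the usual $\epsilon$-regularity / mean value inequality for $J$-holomorphic curves with small energy: the hypothesis gives small energy on the two boundary circles, and a standard monotonicity argument for long cylinders (or equivalently, iterating the mean value inequality) shows that the total energy on $[-R,R]\times S^1$ is controlled by the boundary flux, hence $u$ has small image. After translating, I may assume $u$ lies in a small ball around $0$ where $\|J-J_\std\|_{C^2}$ is as small as I like, and so the PDE $\partial_s u + J(u)\partial_t u = 0$ may be treated as the constant-coefficient $\bar\partial$ equation plus a perturbation that is pointwise $O(\epsilon)|du|$.

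The main step is to derive a differential inequality $E''(s) \geq \mu^2 E(s)$ on $[-R,R]$, which by the maximum principle (comparing with $\cosh(\mu s)/\cosh(\mu R)$) implies $E(s) \leq e^{-\mu(R-|s|)}(E(R)+E(-R))\cdot C_\mu$. To derive this inequality, I would differentiate $E(s)$ twice under the integral, substitute the pseudo-holomorphic equation and its $s$-derivative to rewrite $\partial_{ss}u$ in terms of $\partial_{tt}u$ plus quadratic terms in $du$ coming from $dJ$, and then integrate by parts in $t$. For $J = J_\std$ this produces exactly $E''(s) = 2\int|\partial_{st}u|^2 + 2\int\langle\partial_t u, \partial_t\partial_{ss}u\rangle \geq E(s)$ via Wirtinger's inequality $\int|\partial_t f|^2 \geq \int|f - \bar f|^2$ on $S^1$ (the zero mode is killed by $\partial_t$). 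The perturbation terms contribute $O(\epsilon) E(s)$ and lower-order, which for $\epsilon$ sufficiently small (depending on $\mu$) can be absorbed, yielding $E''(s) \geq \mu^2 E(s)$ as claimed. The main obstacle here, and the most delicate computation, is ensuring that the $O(\epsilon)$ correction terms (which involve $\nabla J$ and $\nabla^2 J$ contracted with $du$ and $d^2u$) are genuinely absorbable into the leading Wirtinger term—this is where the $C^2$ hypothesis on $J$ (rather than merely $C^0$) is used, and where the gap $\mu < 1$ appears.

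Finally, I would upgrade the $L^2$ decay of $\partial_t u$ to pointwise $C^k$ decay. For each $s_0$ with $|s_0|\leq R-1$, apply interior elliptic regularity (for the $J$-holomorphic equation, viewed as a perturbed Cauchy-Riemann equation with smooth coefficients bounded in $C^k$) on the ball $(s_0-\tfrac12,s_0+\tfrac12)\times S^1$ to obtain $\|D^k u\|_{C^0(\{s_0\}\times S^1)} \leq c_k\|\partial_t u\|_{L^2((s_0-1/2,s_0+1/2)\times S^1)}$, using that $u$ has no constant component of interest (more precisely, subtract off $\int_{S^1} u(s_0,t)\,dt$ before applying the estimate, which is legitimate since we only need bounds on derivatives). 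Combining with the exponential decay of $E$ and the fact that $e^{\mu(|s_0|-R)}$ varies by a bounded factor on an interval of length $1$ yields the claimed bound. This last step is routine once the main inequality is established, though one must be careful to absorb the $C^k$-norm of $J$ into the constant $c_k$ rather than $\epsilon$.
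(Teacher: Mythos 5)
Your plan is sound in outline but takes a genuinely different route from the paper's for the key decay estimate. The paper's Lemma \ref{energydecayexponential} works with the \emph{cumulative} energy $E(r) := \int_{[-r,r]\times S^1}|\partial_t u|^2\,ds\,dt$ and derives a \emph{first-order} differential inequality $E'(r)\geq 2\mu E(r)$ directly from the sharp symplectic isoperimetric inequality $\bigl|\int_{S^1}\gamma^\ast\lambda_\std\bigr|\leq\frac 12\int_{S^1}|\dot\gamma|^2\,dt$ (proved by Fourier expansion of $\gamma^\ast\lambda_\std$). You instead work with the \emph{slicewise} energy $E(s) := \int_{S^1}|\partial_t u(s,t)|^2\,dt$ and derive a \emph{second-order} inequality $E''(s)\geq\mathrm{const}\cdot E(s)$ from the harmonicity of $u$ and Wirtinger. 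Both are standard, and the paper itself uses your route in Appendix \ref{gluingHF} (Proposition \ref{neckestimateHF}, for Floer trajectories, where the isoperimetric trick is unavailable). The isoperimetric route is however lighter on hypotheses here: for the energy decay it needs only $C^0$-closeness of $J$ to $J_\std$ and no pointwise smallness of $du$, whereas your second-order ODE consumes the $C^2$-closeness (from $\nabla^2 J$ appearing in $\partial_{sst}u$) and also needs $\|du\|_\infty$ small to absorb the cubic and quartic perturbation terms. Your opening step does supply the latter, but its stated motivation is off: you do not need to shrink the image of $u$ so that $\|J-J_\std\|_{C^2}$ becomes small near it --- the hypothesis already asserts $\|J-J_\std\|_{C^2}\leq\epsilon$ on all of $B^{2n}(1)$, and restricting to a sub-ball does not make it any smaller.

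There is also a genuine quantitative slip. For $J=J_\std$ you claim $E''\geq E$, but the sharp bound is $E''(s)=4\int_{S^1}|\partial_{tt}u|^2\,dt\geq 4E(s)$: since $\partial_{ss}u=-\partial_{tt}u$ and $\partial_{st}u=-J_\std\partial_{tt}u$, the two terms $2\int|\partial_{st}u|^2$ and $2\int\langle\partial_t u,\partial_{sst}u\rangle$ in $E''$ are \emph{each} equal to $2\int|\partial_{tt}u|^2\geq 2E$. With only $E''\geq\mu^2 E$, your maximum-principle comparison gives $E(s)\lesssim e^{\mu(|s|-R)}$, and since $|D^ku|$ scales like $E^{1/2}$ you only reach pointwise decay $e^{(\mu/2)(|s|-R)}$, i.e.\ you cover $\mu<1/2$ rather than the stated $\mu<1$. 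This would not suffice for the application in Proposition \ref{gluingsurjective}, where one must beat an arbitrary weight $\delta<1$. The fix is simply to keep the factor of $4$ and compare against $\cosh(2\mu s)/\cosh(2\mu R)$.
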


\begin{proof}
First, apply Lemma \ref{energydecayexponential} to get an exponential decay bound on the $W^{1,2}$ norm of $u$ restricted to $[-r,r]\times S^1$.  Second, apply Lemma \ref{energycontrolsWoneinfty} to conclude that the bound on the $W^{1,2}$ norm implies a similar bound on the $W^{1,\infty}$ norm.  Finally, apply elliptic bootstrapping Lemma \ref{Woneinftycontrolsall} to conclude that the bound on the $W^{1,\infty}$ norm implies a similar bound on the $C^k$ norm for all $k<\infty$.
\end{proof}

\begin{lemma}[$W^{1,2}$ norm decays exponentially]\label{energydecayexponential}
For all $\mu<1$, there exists $\epsilon>0$ with the following property.  Let $u:[-R,R]\times S^1\to B^{2n}(1)$ be $J$-holomorphic for an almost complex structure $J$ on $B^{2n}(1)\subseteq\RR^{2n}$ satisfying $\left\|J-J_\std\right\|\leq\epsilon$ (measured with respect to $g_\std$).  Then:
\begin{equation*}
\left(\int_{[-r,r]\times S^1}\biggl[\Bigl|\frac{du}{ds}\Bigr|^2+\Bigl|\frac{du}{dt}\Bigr|^2\biggr]\,ds\,dt\right)^{1/2}\leq c\cdot e^{\mu\cdot(r-R)}\left(\int_{\partial[-R,R]\times S^1}\Bigl|\frac{du}{dt}\Bigr|^2\,dt\right)^{1/2}
\end{equation*}
for $0\leq r\leq R$.
\end{lemma}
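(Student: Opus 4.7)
The plan is to control the cross-sectional energy
$$f(s) := \int_{S^1}|\partial_t u(s,t)|^2\,dt, \qquad s\in[-R,R],$$
via a second-order differential inequality $f''(s) \geq \lambda^2 f(s)$ with $\lambda$ close to $2$ (attainable by shrinking $\epsilon$), and then deduce the lemma by a maximum principle comparison with solutions of $g''=\lambda^2 g$ followed by integration in $s$. Because the $W^{1,2}$ norm of $du$ on $[-r,r]\times S^1$ decays at half the rate of $f$, any $\mu<1$ will be achieved once $\lambda>2\mu$.

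First I would derive the differential inequality. For $J=J_\std$, the equation $\partial_s u + J_\std\partial_t u = 0$ makes $u$ harmonic. A direct computation using $\partial_s^2 u = -\partial_t^2 u$, the identity $|\partial_s\partial_t u| = |\partial_t^2 u|$ from Cauchy--Riemann, and integration by parts in $t$ yields
$$f''(s) \;=\; 4\int_{S^1}|\partial_t^2 u|^2\,dt.$$
Since $\partial_t u(s,\cdot)$ has zero mean on $S^1$ (it is the $t$-derivative of a periodic function), the sharp Poincar\'e--Wirtinger inequality gives $\int_{S^1}|\partial_t^2 u|^2\,dt \geq \int_{S^1}|\partial_t u|^2\,dt = f(s)$, whence $f''(s) \geq 4f(s)$. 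For a general $J$ with $\|J-J_\std\|_{C^2}\leq\epsilon$, I would rewrite the $J$-holomorphic equation as $\partial_s u + J_\std\partial_t u = [J_\std-J(u)]\partial_t u$ and redo the computation, treating the right side as a perturbation of size $O(\epsilon)|\partial_t u|$ in $C^0$ and $O(\epsilon)(|\partial_t u|+|\nabla\partial_t u|)$ in $C^1$. The cross terms produced are bounded by $C\epsilon\bigl(f + \int|\partial_s\partial_t u|^2 + \int|\partial_t^2 u|^2\bigr)$ and can be absorbed into the main positive terms by Young's inequality, at the cost of reducing the coefficient from $4$ to $4-C'\epsilon$. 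Choosing $\epsilon$ small gives $f''(s)\geq\lambda^2 f(s)$ for any prescribed $\lambda<2$.

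Next I would apply the maximum principle. With
$$g(s) \;:=\; f(-R)\frac{\sinh(\lambda(R-s))}{\sinh(2\lambda R)} + f(R)\frac{\sinh(\lambda(R+s))}{\sinh(2\lambda R)}$$
solving $g''=\lambda^2 g$ and agreeing with $f$ at $\pm R$, the function $h:=f-g$ satisfies $h''\geq\lambda^2 h$ and $h(\pm R)=0$; any positive interior maximum would force $h''\leq 0 < \lambda^2 h$ there, a contradiction, so $f\leq g$ on $[-R,R]$. An explicit estimate on $[-r,r]$ gives $g(s)\leq C_\mu e^{\lambda(r-R)}(f(R)+f(-R))$ provided $\lambda R$ is bounded below; the small-$R$ case is handled directly by the Stokes/isoperimetric-inequality estimate for small pseudo-holomorphic cylinders. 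Integrating (the exponential rate is preserved, not diluted, because $\int_0^r e^{\lambda(s-R)}\,ds = O_\lambda(e^{\lambda(r-R)})$) yields
$$\int_{-r}^r f(s)\,ds \;\leq\; C'_\mu\, e^{\lambda(r-R)}\bigl(f(R)+f(-R)\bigr).$$

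Combining this with the pointwise bound $|du|^2\leq 2(1+C\epsilon)|\partial_t u|^2$ (from the $J$-holomorphic equation together with $\|J-J_\std\|_{C^0}\leq\epsilon$), noting that $f(R)+f(-R) = \int_{\partial[-R,R]\times S^1}|\partial_t u|^2\,dt$, and taking square roots gives the lemma with decay rate $\lambda/2 > \mu$. The main obstacle will be the careful derivation of the perturbed differential inequality: the computation of $f''$ requires a second $s$-derivative of $u$, obtained by differentiating the $J$-holomorphic equation once more, and the resulting error terms involve products of $DJ$ and $D^2J$ with first and second derivatives of $u$. Controlling all cross terms so that they can be absorbed into the leading positive terms is the main technical point, and it relies crucially on the $C^2$ control on $J$ in the hypothesis (not merely $C^0$ or $C^1$).
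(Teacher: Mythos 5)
Your argument takes a genuinely different route from the paper's. The paper establishes a \emph{first-order} differential inequality $E'(r)\gtrsim 2E(r)$ on the total energy $E(r):=\int_{[-r,r]\times S^1}|\partial_t u|^2\,ds\,dt$, obtained by combining Stokes' theorem ($E(r)\approx\int u^\ast\omega_\std=\int_\partial u^\ast\lambda_\std$) with the sharp isoperimetric inequality $\left|\int_{S^1}\gamma^\ast\lambda_\std\right|\leq\frac 12\int_{S^1}|\gamma'|^2\,dt$, proved by Fourier analysis. Your \emph{second-order} inequality $f''\geq\lambda^2 f$ on the cross-sectional energy $f(s)=\int_{S^1}|\partial_t u(s,\cdot)|^2\,dt$ is the other classical route, and is in fact precisely what the paper itself uses for the Hamiltonian Floer analogue (Proposition \ref{neckestimateHF}), where the $X_H$ term gets in the way of the Stokes identity. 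For the present lemma, though, the isoperimetric argument is decisively lighter: it is purely quadratic in $\partial_t u$, involves no derivatives of $J$ (so needs only $\|J-J_\std\|_{C^0}\leq\epsilon$), and therefore has no perturbation analysis to carry out at all.

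There is, moreover, a genuine gap at the step you flag as ``the main technical point'': the error terms do not absorb in the generality claimed. Differentiating $\partial_s u=-J(u)\partial_t u$ twice to compute $f''$ produces, beyond the leading term $4\int|\partial_t^2 u|^2\,dt$, contributions such as $\int_{S^1}\langle\partial_t u,\,D^2J(u)[\partial_s u,\partial_t u]\partial_t u\rangle\,dt$, which are of size $\epsilon\int_{S^1}|\partial_t u|^4\,dt$ even with $\|D^2J\|\leq\epsilon$. This is \emph{not} controlled by $C\epsilon\bigl(f+\int|\partial_s\partial_t u|^2+\int|\partial_t^2 u|^2\bigr)$ as your proposal asserts: by Gagliardo--Nirenberg on $S^1$ (for the zero-mean function $\partial_t u(s,\cdot)$) one has $\int|\partial_t u|^4\lesssim f^{3/2}(\int|\partial_t^2 u|^2)^{1/2}$, which is of lower order than $\int|\partial_t^2 u|^2$ only when $f$ itself is bounded. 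The hypotheses of the lemma supply no such bound --- $u$ is only required to take values in $B^{2n}(1)$, with no control on $|du|$ and no smallness assumption on the boundary energy --- and the Gromov--Schwarz gradient estimate degenerates near the ends $s\approx\pm R$, which is precisely where the comparison function $g$ must be anchored. To salvage your argument you would need either an extra smallness hypothesis (which happens to hold in the application, Proposition \ref{neckestimate}, but is not part of this lemma's statement) or a two-region argument treating the interior and the ends separately; the paper's isoperimetric proof avoids the problem structurally.
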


\begin{proof}
This proof is essentially lifted from McDuff--Salamon \cite[p99, Lemma 4.7.3]{mcduffsalamonJholsymp}.

Let $\lambda_\std:=\frac 12\sum_jx_j\,dy_j-y_j\,dx_j$.  Note the identity:
\begin{equation}\label{zdzbaridentity}
\sum_jz_j\,d\bar z_j=\frac 12\sum_jd\left|z_j\right|^2-2i\lambda_\std
\end{equation}
Let $\gamma:S^1\to\mathbb R^{2n}$ be a smooth loop.  Write the Fourier series $\gamma(t)=\sum_ka_ke^{ikt}\in\CC^n$ where $a_k\in\CC^n$.  Now using \eqref{zdzbaridentity}, we see that:
\begin{equation*}
\int_{S^1}\gamma^\ast\lambda_\std=\frac i2\int_{S^1}\sum_j\gamma^\ast(z_j\,d\bar z_j)=\frac i2\int_{S^1}\sum_{k,\ell}a_k\bar a_\ell(-i\ell)e^{i(k-\ell)t}\,dt=\pi\sum_kk\left|a_k\right|^2
\end{equation*}
We may also calculate:
\begin{equation}
\int_{S^1}\Bigl|\frac{d\gamma}{dt}\Bigr|^2\,dt=\int_{S^1}\sum_{k,\ell}a_k\bar a_\ell(ik)(-i\ell)e^{i(k-\ell)t}\,dt=2\pi\sum_kk^2\left|a_k\right|^2
\end{equation}
Hence we conclude that:
\begin{equation}\label{sympisoperimetric}
\left|\int_{S^1}\gamma^\ast\lambda_\std\right|\leq\frac 12\int_{S^1}\Bigl|\frac{d\gamma}{dt}\Bigr|^2\,dt
\end{equation}

Let:
\begin{equation}
E(r):=\int_{[-r,r]\times S^1}\Bigl|\frac{du}{dt}\Bigr|^2\,ds\,dt\approx\int_{[-r,r]\times S^1}u^\ast\omega_\std=\int_{\partial[-r,r]\times S^1}u^\ast\lambda_\std
\end{equation}
($\approx$ means equality up to a factor which can be made arbitrarily close to $1$ by taking $\epsilon>0$ sufficiently small).  Applying \eqref{sympisoperimetric}, we conclude that:
\begin{equation}\label{boundonenergyfromboundary}
E(r)\lesssim\frac 12\int_{\partial[-r,r]\times S^1}\Bigl|\frac{du}{dt}\Bigr|^2\,dt
\end{equation}
($\lesssim$ means inequality up to a factor which can be made arbitrarily close to $1$ by taking $\epsilon>0$ sufficiently small).  The right hand side above equals $\frac 12E'(r)$, so we have $E'(r)\gtrsim 2E(r)$, and hence $E'(r)\geq 2\mu E(r)$ (for $\epsilon>0$ sufficiently small), from which we conclude that $E(r)\leq e^{2\mu\cdot(r-R)}E(R)$.  Using \eqref{boundonenergyfromboundary} to bound $E(R)$, we see that:
\begin{equation}
E(r)\lesssim e^{2\mu\cdot(r-R)}\frac 12\int_{\partial[-R,R]\times S^1}\Bigl|\frac{du}{dt}\Bigr|^2\,dt
\end{equation}
We have:
\begin{equation}
E(r)\approx\frac 12\int_{[-r,r]\times S^1}\biggl[\Bigl|\frac{du}{ds}\Bigr|^2+\Bigl|\frac{du}{dt}\Bigr|^2\biggr]\,ds\,dt
\end{equation}
Hence we conclude that:
\begin{equation}
\int_{[-r,r]\times S^1}\biggl[\Bigl|\frac{du}{ds}\Bigr|^2+\Bigl|\frac{du}{dt}\Bigr|^2\biggr]\,ds\,dt\lesssim e^{ 2\mu\cdot(r-R)}\int_{\partial[-R,R]\times S^1}\Bigl|\frac{du}{dt}\Bigr|^2\,dt
\end{equation}
which is the desired estimate.
\end{proof}

\begin{lemma}[$W^{1,2}$ controls $W^{1,\infty}$]\label{energycontrolsWoneinfty}
Let $J$ be an almost complex structure on $B^{2n}(1)\subseteq\RR^{2n}$; there exist $\epsilon>0$ and $c<\infty$, depending only on $\left\|J\right\|_{C^2}$ (measured with respect to $g_\std$) with the following property.  Let $u:[0,1]\times[0,1]\to B^{2n}(1)$ be $J$-holomorphic, and suppose that:
\begin{equation}
\int_{[0,1]^2}\biggl[\Bigl|\frac{du}{dx}\Bigr|^2+\Bigl|\frac{du}{dy}\Bigr|^2\biggr]\,dx\,dy<\epsilon
\end{equation}
Then we have:
\begin{equation}
\Bigl|\frac{du}{dx}({\textstyle\frac 12},{\textstyle\frac 12})\Bigr|+\Bigl|\frac{du}{dy}({\textstyle\frac 12},{\textstyle\frac 12})\Bigr|\leq c\cdot\left(\int_{[0,1]^2}\biggl[\Bigl|\frac{du}{dx}\Bigr|^2+\Bigl|\frac{du}{dy}\Bigr|^2\biggr]\,dx\,dy\right)^{1/2}
\end{equation}
\end{lemma}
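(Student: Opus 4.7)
The plan is to prove this via the standard $\varepsilon$--regularity (``mean value'') inequality for $J$--holomorphic maps, essentially following McDuff--Salamon \cite[\S 4.3]{mcduffsalamonJholsymp}. The idea is that the smallness of the $L^2$--energy forces the $J$--holomorphic map to satisfy a subharmonic-type differential inequality whose mean value inequality then upgrades an integral bound to a pointwise bound at an interior point.

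First, I would establish a pointwise differential inequality for the energy density $e(u) := \tfrac12(|\partial_xu|^2+|\partial_yu|^2)$. Writing the $J$--holomorphic equation as $\partial_yu+J(u)\partial_xu=0$, differentiating in $x$ and $y$, and using $J^2=-\mathbf 1$ together with the standard Bochner-type computation, one obtains
\begin{equation*}
\Delta e(u) \;\geq\; -C_1\bigl(1+\|J\|_{C^2}^2\bigr)\,e(u)^2
\end{equation*}
for some universal $C_1$, at every interior point of $[0,1]^2$. This step is purely local and routine; the constant depends only on $\|J\|_{C^2}$.

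Second, I would invoke the mean value inequality (``Heinz's trick'', proved by a short rescaling/maximum argument): if $f\geq 0$ is $C^2$ on $B_r(p)\subseteq\RR^2$ and satisfies $\Delta f\geq -Af^2$, and if $\int_{B_r(p)}f<\varepsilon_0(A)$ is below a small threshold depending on $A$, then
\begin{equation*}
f(p)\;\leq\;\frac{C_2}{r^2}\int_{B_r(p)}f.
\end{equation*}
Applied to $f=e(u)$ on the disk of radius $\tfrac14$ around $(\tfrac12,\tfrac12)$, with $\varepsilon:=\varepsilon_0(A)$ chosen in terms of $\|J\|_{C^2}$, this gives $e(u)(\tfrac12,\tfrac12)\leq c^2\int_{[0,1]^2}e(u)$, which is exactly the required estimate for $|\partial_xu(\tfrac12,\tfrac12)|+|\partial_yu(\tfrac12,\tfrac12)|$.

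The main obstacle is arranging the constants so that everything depends only on $\|J\|_{C^2}$: the differential inequality's constant and the threshold $\varepsilon_0$ in the mean value inequality must both be controlled uniformly. There is also a minor technical issue that $e(u)$ may only be $C^0$ (not $C^2$) a priori, but this is handled in the usual way by first applying elliptic regularity to upgrade $u$ to $C^\infty$ (possible since $J$ is smooth and $u$ has image in a bounded region of $\RR^{2n}$), or by running the Moser iteration argument directly on $e(u)$ in a weak sense. Neither of these introduces any genuinely new difficulty beyond what is standard in the $J$--holomorphic literature.
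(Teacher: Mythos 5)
Your proposal is correct and follows essentially the same route as the paper: compute a differential inequality $\Delta e(u)\geq -A\,e(u)^2$ for the energy density via the Bochner-type calculation from the $J$-holomorphic equation (the paper's constant is $c(\|J\|_{C^2}+\|J\|_{C^1}^2)$, equivalent to yours up to relabeling), then invoke the mean value inequality for nonnegative functions satisfying $\Delta f\geq -Af^2$ with small integral (the paper cites the same McDuff--Salamon Lemma 4.3.2 you allude to, as well as Wehrheim). The regularity remark you add is standard and does not change the argument.
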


\begin{proof}
This proof is essentially lifted from McDuff--Salamon \cite[p80, Lemma 4.3.1]{mcduffsalamonJholsymp}.

Let $w:=\frac 12u_x^2+\frac 12u_y^2$ (using the standard inner product $g_\std$), so $w:[0,1]^2\to\RR_{\geq 0}$.  Now we calculate:
\begin{equation}\label{formulaforDeltaw}
w_{xx}+w_{yy}=\Bigl[u_{xx}^2+2u_{xy}^2+u_{yy}^2\Bigr]+\Bigl[u_x\cdot(u_{xxx}+u_{xyy})+u_y\cdot(u_{xxy}+u_{yyy})\Bigr]
\end{equation}
Now differentiating \eqref{formulaforDeltau} yields:
\begin{align*}
u_{xxx}+u_{xyy}=&\ddot J(u,u_x,u_y)\circ u_x+\dot J(u,u_{xy})\circ u_x+\dot J(u,u_y)\circ u_{xx}\\
&-\ddot J(u,u_x,u_x)\circ u_y-\dot J(u,u_{xx})\circ u_y-\dot J(u,u_x)\circ u_{xy}\\
u_{xxy}+u_{yyy}=&\ddot J(u,u_y,u_y)\circ u_x+\dot J(u,u_{yy})\circ u_x+\dot J(u,u_y)\circ u_{xy}\\
&-\ddot J(u,u_x,u_y)\circ u_y-\dot J(u,u_{xy})\circ u_y-\dot J(u,u_x)\circ u_{yy}
\end{align*}
Now we conclude that:
\begin{align*}
\bigl|u_x\cdot(u_{xxx}&+u_{xyy})+u_y\cdot(u_{xxy}+u_{yyy})\bigr|\\
&\leq 2\left\|J\right\|_{C^2}\cdot(\left|u_x\right|^3\left|u_y\right|+\left|u_x\right|\left|u_y\right|^3)\\
&\qquad+2\left\|J\right\|_{C^1}\cdot(\left|u_x\right|^2\left|u_{xy}\right|+\left|u_x\right|\left|u_y\right|\left|u_{xx}\right|+\left|u_y\right|^2\left|u_{xy}\right|+\left|u_x\right|\left|u_y\right|\left|u_{yy}\right|)\\
&\leq c\left\|J\right\|_{C^2}w^2+c\left\|J\right\|_{C^1}w\cdot(\left|u_{xx}\right|+\left|u_{xy}\right|+\left|u_{yy}\right|)\\
&\leq c\left\|J\right\|_{C^2}w^2+c\left\|J\right\|_{C^1}^2w^2+\frac 1{100}(u_{xx}^2+2u_{xy}^2+u_{yy}^2)
\end{align*}
Plugging this into \eqref{formulaforDeltaw}, we conclude that:
\begin{equation}
\Delta w=w_{xx}+w_{yy}\geq -c\cdot (\left\|J\right\|_{C^2}+\left\|J\right\|_{C^1}^2)w^2
\end{equation}
Now we may apply a mean value inequality McDuff--Salamon \cite[p81, Lemma 4.3.2]{mcduffsalamonJholsymp} or Wehrheim \cite[p306, Theorem 1.1]{wehrheimmeanvalue} to see that there exist $\epsilon>0$ and $c<\infty$ such that if $\int_{[0,1]^2}w\,dx\,dy<\epsilon$, then $w(\frac 12,\frac 12)\leq c\cdot\int_{[0,1]^2}w\,dx\,dy$.  Thus we are done.
\end{proof}

\begin{lemma}[$W^{1,\infty}$ controls $W^{k,p}$]\label{Woneinftycontrolsall}
Let $u:[0,1]\times[0,1]\to B^{2n}(1)$ be $J$-holomorphic for an almost complex structure $J$ on $B^{2n}(1)\subseteq\RR^{2n}$.  For all $k\geq 1$, there exists $c_k<\infty$ depending only on $\left\|J\right\|_{C^k}$ such that if:
\begin{equation}
\sup_{(x,y)\in[0,1]^2}\biggl[\Bigl|\frac{du}{dx}\Bigr|+\Bigl|\frac{du}{dy}\Bigr|\biggr]\leq 1
\end{equation}
then:
\begin{equation}
\left|(D^ku)({\textstyle\frac 12},{\textstyle\frac 12})\right|\leq c_k\cdot\sup_{(x,y)\in[0,1]^2}\biggl[\Bigl|\frac{du}{dx}\Bigr|+\Bigl|\frac{du}{dy}\Bigr|\biggr]
\end{equation}
\end{lemma}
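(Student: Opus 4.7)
The $J$-holomorphic equation $u_s + J(u)u_t = 0$ is quasi-linear elliptic; the plan is to convert it into semi-linear form and then to run a standard interior $L^p$ (Calder\'on--Zygmund) bootstrap on a nested sequence of squares centered at $(\tfrac 12,\tfrac 12)$.

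First I would differentiate $u_s + J(u)u_t = 0$ once each in $s$ and $t$ and combine the results using $J(u)^2 = -\mathbf 1$ (a Calabi-type identity) to obtain the semi-linear equation
\[
\Delta u \;=\; Q(u, Du),
\]
where $Q(u,\xi)$ is smooth in $u$ and a homogeneous quadratic polynomial in $\xi$, with $\|Q\|_{C^j}$ controlled by $\|J\|_{C^{j+1}}$. In particular $Q$ vanishes at $Du=0$ to second order.

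Let $\varepsilon := \sup_{[0,1]^2}(|u_x|+|u_y|) \leq 1$. After replacing $u$ by $u - u(\tfrac 12,\tfrac 12)$, we have $\|u\|_{L^\infty([0,1]^2)} \leq c_0\varepsilon$. Fix a strictly decreasing sequence of nested closed squares $\Omega_0 \supsetneq \Omega_1 \supsetneq \cdots$ with $\Omega_0 = [0,1]^2$, all containing $(\tfrac 12,\tfrac 12)$ in their interior. I would then prove by induction on $j \geq 0$ that $\|D^{j+1} u\|_{L^p(\Omega_{j+1})} \leq c_{j,p}\varepsilon$ for every $p \in (2,\infty)$, with $c_{j,p}$ depending only on $p$ and $\|J\|_{C^{j+1}}$. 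The base case $j=0$ is just the hypothesis. For the inductive step, differentiating the semi-linear equation $j$ times yields $\Delta(D^j u) = P_j(u, Du, \ldots, D^{j+1} u)$, where each monomial of $P_j$ contains at least two factors among the derivatives of $u$ (because $Q$ is quadratic in $Du$). By the induction hypothesis and the two-dimensional Sobolev embedding $W^{i,p} \hookrightarrow C^{i-1}$ for $p > 2$, all derivatives of $u$ of order $\leq j$ are pointwise bounded on $\Omega_j$ by $c'_j\varepsilon$, while $\|D^{j+1} u\|_{L^p(\Omega_j)} \leq c_{j,p}\varepsilon$; hence $\|P_j\|_{L^p(\Omega_j)} \leq c''_{j,p}\varepsilon^2$, and the interior $L^p$ estimate for the Laplacian relative to the pair $\Omega_{j+1} \subset \Omega_j$ gives
\[
\|D^{j+2} u\|_{L^p(\Omega_{j+1})} \leq c'''_{j,p}\bigl(\|D^j u\|_{L^p(\Omega_j)} + \|\Delta D^j u\|_{L^p(\Omega_j)}\bigr) \leq c''''_{j,p}(\varepsilon + \varepsilon^2) \leq 2c''''_{j,p}\varepsilon,
\]
closing the induction. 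One last application of Sobolev embedding converts the $L^p$ bound into the pointwise bound $|D^k u(\tfrac 12,\tfrac 12)| \leq c_k \varepsilon$ required by the lemma.

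The main thing to be careful about is bookkeeping the \emph{linear} (rather than merely polynomial) dependence on $\varepsilon$, which is the sharpest feature of the stated inequality. This is guaranteed throughout because $Q$ vanishes to second order at $Du=0$, so each $P_j$ consists of monomials of total degree at least two in the derivatives of $u$, and subleading error terms of the form $\varepsilon^m$ with $m\geq 2$ are absorbed using $\varepsilon\leq 1$. There is no serious analytic obstacle here; the argument is a textbook elliptic bootstrap for semi-linear equations (compare McDuff--Salamon \cite{mcduffsalamonJholsymp}), adapted only to keep track of the scale $\varepsilon$.
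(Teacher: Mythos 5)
Your proposal is correct and follows essentially the same route as the paper: reduce the $J$-holomorphic equation to the semi-linear Laplace equation $\Delta u = Q(u,Du)$ with $Q$ quadratic in $Du$, then bootstrap via interior $L^p$ elliptic estimates on shrinking domains, converting to pointwise bounds at each step by Sobolev embedding.  You are somewhat more explicit than the paper's terse ``standard elliptic bootstrapping argument\ldots\ which is enough'' about the normalization $u\mapsto u-u(\tfrac 12,\tfrac 12)$ and the bookkeeping that keeps the final estimate linear in $\sup(|u_x|+|u_y|)$, which is the delicate feature of the stated inequality and worth spelling out.
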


\begin{proof}
For more details see McDuff--Salamon \cite[p533, \S B.4]{mcduffsalamonJholsymp}.

The $\delbar$-equation for $u$ may be written as:
\begin{equation}
u_y=J(u)\circ u_x
\end{equation}
Differentiating with respect to $x$ and to $y$, we conclude that:
\begin{align}
u_{xy}&=\dot J(u,u_x)\circ u_x+J(u)\circ u_{xx}\\
u_{xx}&=\dot J(u,u_y)\circ u_x+J(u)\circ u_{xy}
\end{align}
Combining these and using the fact that $\dot J$ and $J$ anticommute, we conclude that:
\begin{equation}\label{formulaforDeltau}
\Delta u=u_{xx}+u_{yy}=\dot J(u,u_y)\circ u_x-\dot J(u,u_x)\circ u_y
\end{equation}
Now a standard elliptic bootstrapping argument based on \eqref{formulaforDeltau} gives the desired result.  By hypothesis, we have an $L^\infty$ bound in terms of $\left\|J\right\|_{C^1}$ on the right hand side of \eqref{formulaforDeltau}, which gives a $W^{2,p}$ bound ($2<p<\infty$) on $u$ in terms of $\left\|J\right\|_{C^1}$.  Now we have a $W^{1,p}$ bound on the right hand side of \eqref{formulaforDeltau} in terms of $\left\|J\right\|_{C^2}$, which gives a $W^{3,p}$ bound on $u$ in terms of $\left\|J\right\|_{C^2}$.  Iterating, we get a $W^{k,p}$ bound on $u$ in terms of $\left\|J\right\|_{C^{k-1}}$, which is enough.  There is no need to worry about elliptic regularity estimates near the boundary since we can shrink the domain slightly after each iteration.
\end{proof}

\begin{proposition}\label{gluingsurjective}
The restriction of \eqref{gluingmap} to any neighborhood of zero is surjective onto a neighborhood of $(0,0,u_0,e_0)\in\Mbar(X)$.
\end{proposition}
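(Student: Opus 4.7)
The plan is to show that any sequence $(s_n,u_n,\{x_i^n\},e_n)\in\Mbar(X)$ converging to $(0,0,u_0,\{x_i^0\},e_0)$ eventually lies in the image of the gluing map. Using the local coordinates $\CC^d\times\RR^{\dim\Mbar^d}\to\Mbar$ fixed earlier, write $s_n=(\alpha_n,y_n)\to 0$. The goal is to produce $\kappa_n\in K$ with $\kappa_n\to 0$ so that $u_n=u_{\alpha_n,y_n,\kappa_n}$ and $e_n=e_{\alpha_n,y_n,\kappa_n}$; surjectivity onto an arbitrarily small neighborhood then follows from $\kappa_n\to 0$ together with Lemma \ref{gluingisinjective}.

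The heart of the argument is to show that for $n$ large, one can write $u_n=\exp_{u_{\alpha_n}}\xi_n$ and $e_n=e_0+\proj_E\xi_n$ for some $\xi_n\in W^{k,2,\delta}(C_{\alpha_n},u_{\alpha_n}^\ast TX)_D\oplus E$ with $\|\xi_n\|_{k,2,\delta}\to 0$. Away from the necks of $C_{\alpha_n}$ this follows from Gromov convergence combined with standard elliptic bootstrapping for the $I$-thickened $\delbar$-equation: on any fixed compact set $C_0$ minus small neighborhoods of the nodes, $u_n$ converges to $u_0$ in $C^\infty$, which translates directly into smallness of $\xi_n$ on the corresponding region of $C_{\alpha_n}$ (and into $e_n\to e_0$, since the obstruction variables are forced by the $\delbar$-equation). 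The main obstacle is controlling $\xi_n$ over the long necks, where one cannot appeal directly to compactness. Here the plan is to invoke the a priori neck estimate Proposition \ref{neckestimate}: after choosing local coordinates near the node with $J$ close to $J_\std$ and observing that $\lambda$ vanishes on the necks, the restriction of $u_n$ to each neck $[0,6S_i^n]\times S^1$ is genuinely $J$-holomorphic with small boundary $W^{1,2}$ energy (this smallness follows from the uniform convergence of $u_n$ to $u_0$ on fixed cylindrical ends and a standard energy absorption argument ruling out bubbling in the middle). Proposition \ref{neckestimate} then gives, for any $\mu<1$, pointwise $C^k$ bounds on $u_n$ of the form $e^{-\mu(S_i^n-|s-3S_i^n|)}$ times an error tending to zero; choosing $\mu\in(\delta,1)$ yields precisely the weighted pointwise bounds needed to conclude $\|\xi_n\|_{k,2,\delta}\to 0$.

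Once $\xi_n$ has been produced, apply the splitting \eqref{QKtoall} to decompose $\xi_n=(\kappa_n)_{\alpha_n}+\eta_n$ with $(\kappa_n)_{\alpha_n}\in K_{\alpha_n}$ and $\eta_n\in\im Q_{\alpha_n,y_n}=\ker L_{\alpha_n}$. Because the splitting is uniformly bounded as $(\alpha,y)\to 0$, both summands have norms tending to zero, and hence so does the associated $\kappa_n\in K$ via the isomorphism $K\xrightarrow\sim K_{\alpha_n}$. By construction $\F_{\alpha_n,y_n}(\xi_n)=0$, since $(\alpha_n,y_n,u_n,\{x_i^n\},e_n)\in\Mbar(X)$ is a solution to the $I$-thickened $\delbar$-equation and the parallel transport in the definition of $\F$ is an isomorphism. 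Thus $\xi_n\in(\kappa_n)_{\alpha_n}+\im Q_{\alpha_n,y_n}$ is a zero of $\F_{\alpha_n,y_n}$ of arbitrarily small $W^{k,2,\delta}$ norm. The uniqueness clause of Proposition \ref{newtoniteration} then forces $\xi_n=\kappa_{\alpha_n,y_n}$ for the $\kappa_n$ just constructed, which is exactly the statement that $(s_n,u_n,\{x_i^n\},e_n)$ lies in the image of the gluing map \eqref{gluingmap}. As remarked above, the hardest step is the uniform weighted smallness of $\xi_n$ on the necks; everything else is either standard elliptic theory, the already-established splitting \eqref{QKtoall} with uniform bounds, or a direct application of Proposition \ref{newtoniteration}.
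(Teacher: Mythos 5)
Your proposal follows essentially the same route as the paper's proof: pass to a sequence, write $u_n=\exp_{u_{\alpha_n}}\xi_n$, show $\|\xi_n\|_{k,2,\delta}\to 0$ via elliptic bootstrapping away from the necks and Proposition \ref{neckestimate} on the necks, decompose through \eqref{QKtoall} to extract $\kappa_n$, and invoke the uniqueness clause of Proposition \ref{newtoniteration}. The small imprecisions — applying Proposition \ref{neckestimate} to the full neck $[0,6S_i^n]\times S^1$ rather than to a shrunk neck $[M,6S_i^n-M]\times S^1$ whose boundary lies deep inside the cylindrical ends (so that the boundary $W^{1,2}$ energy is actually small, which it is not at $s=0$), and the claim that $e_n\to e_0$ is forced by the $\delbar$-equation when in fact it is part of convergence in $\Mbar(X)$ by definition — do not affect the structure of the argument, and the paper's proof is the same in substance.
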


\begin{proof}
Let $(\alpha_i,y_i,u_i,\{x^i_j\},e_i)\in\Mbar(X)$ be a sequence converging to $(0,0,u_0,\{x^0_j\},e_0)$.  We must show that for $i$ sufficiently large, $(\alpha_i,y_i,u_i,\{x^i_j\},e_i)$ is contained in the image of the map \eqref{gluingmap}.  We may restrict to sequences rather than nets since the topology on $\Mbar(X)$ is first countable (recall the definition of the topology following \eqref{Mbarfirstdef}; it is even a metric topology).  This is convenient when we apply Arzel\`a--Ascoli.

Let us define $\xi_i\in C^\infty(C_{\alpha_i},u_{\alpha_i}^\ast TX)_D\oplus E$ by the property:
\begin{align}
u_i&=\exp_{u_{\alpha_i}}\xi_i\\
e_i&=e_0+\proj_E\xi_i
\end{align}
(and the exponential follows the shortest geodesic).  Obviously $\proj_E\xi_i\to 0$ and $\left\|\xi_i\right\|_\infty\to 0$.

Now we claim that $u_i\to u$ in the $C^\infty$ topology away from the nodes, or equivalently, that $\xi_i\to 0$ in the $C^\infty$ topology away from the nodes of $C_0$.  To see this argue as follows.  The thickened holomorphic curve equation from \eqref{modulispace} is equivalent to an honest holomorphic curve equation for the graph $u:C\to C\times X$ where the almost complex structure on $C\times X$ is defined in terms of $\lambda(e)$.  Hence the Gromov--Schwarz Lemma (see \cite[p316, 1.3.A]{gromov} or \cite[p223, Corollary 4.1.4]{muller}) applies and we conclude that $\left\|du_i\right\|_\infty$ is uniformly bounded, on compact sets away from the nodes (equivalently, the same for $\left\|d\xi_i\right\|_\infty$).  Now elliptic bootstrapping (as in the proof of Lemma \ref{Woneinftycontrolsall}) implies that all derivatives are bounded uniformly on compact sets away from the nodes.  Using Arzel\`a--Ascoli and diagonalization, we conclude that there exists a subsequence of $u_i$ which is convergent in the $C^\infty$ topology away from the nodes.  Since we know that $u_i\to u$ in the $C^0$ topology, the limit of this $C^\infty$ convergent subsequence must be $u$.  This argument can be applied to any subsequence of $u_i$, so we conclude that every subsequence of $u_i$ has a subsequence which converges to $u$ in $C^\infty$ away from the nodes.  It follows that in fact $u_i$ itself converges to $u$ in $C^\infty$ away from the nodes.

In the ends/necks of $C_{\alpha_i}$, note that $u_i$ is genuinely $J$-holomorphic (the $\lambda$ term vanishes), so we may apply Proposition \ref{neckestimate} to conclude that $\left\|\xi_i\right\|_{k,2,\delta}\to 0$ (here using the fact that $\delta<1$).  We should remark that to apply Proposition \ref{neckestimate}, we need to be able to choose some $M<\infty$ such that for any neck $[0,6S]\times S^2\subseteq C_{\alpha_i}$, the derivatives of $u_i$ restricted to $\partial[M,6S-M]\subseteq C_{\alpha_i}$ are bounded by $\delta$ (the constant in Proposition \ref{neckestimate}).  Such an $M$ exists because of our earlier observation that $u_i\to u$ in $C^\infty$ on compact sets away from the nodes, and we can certainly choose a large $M$ such that the derivatives of $u$ over $\{M\}\times S^1\subseteq[0,\infty)\times S^1\subseteq C_0$ (any end) are arbitrarily small.

Let us define $\kappa_i\in K$ by the property that $\xi_i\in(\kappa_i)_{\alpha_i}+\im Q_{\alpha_i,y_i}$.  Now since $\left\|\xi_i\right\|_{k,2,\delta}\to 0$ and the norms on the left and right of \eqref{QKtoall} are uniformly commensurable, we see that $\kappa_i\to 0$.  Now the uniqueness in Proposition \ref{newtoniteration} shows that $\xi_i=(\kappa_i)_{\alpha_i,y_i}$ for $i$ sufficiently large.  Thus we are done.
\end{proof}

\subsection{Conclusion of the proof}

\begin{lemma}\label{injectiveHdorffhomeo}
Let $X$ be a Hausdorff topological space and let $f:\RR^n\to X$ be continuous.  Suppose that:
\begin{rlist}
\item$f$ is injective.
\item The restriction of $f$ to any neighborhood of zero is surjective onto a neighborhood of $f(0)$.
\end{rlist}
Then there is an open set $0\in U\subseteq\RR^n$ such that $f(U)\subseteq X$ is open and $f:U\xrightarrow\sim f(U)$ is a homeomorphism.
\end{lemma}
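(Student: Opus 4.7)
The plan is to bootstrap hypothesis (ii) using the classical fact that a continuous injection from a compact space to a Hausdorff space is automatically a closed map, hence a homeomorphism onto its image. The injectivity of $f$ then lets us transport openness in $X$ back to openness in $\RR^n$.

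First, I would fix $r>0$ and consider the restriction $f|_{\bar B}:\bar B\to X$ where $\bar B=\bar B(0,r)$. Since $\bar B$ is compact, $X$ is Hausdorff, and $f|_{\bar B}$ is a continuous injection, it is automatically a closed map (closed subsets of $\bar B$ are compact, hence have compact and therefore closed image in $X$). Thus $f|_{\bar B}:\bar B\to f(\bar B)$ is a homeomorphism, and in particular $(f|_{\bar B})^{-1}:f(\bar B)\to\bar B$ is continuous.

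Next, I would apply hypothesis (ii) to the neighborhood $B(0,r/2)$ of $0$ to obtain an open set $V\subseteq X$ with $f(0)\in V\subseteq f(B(0,r/2))$. Set $U:=(f|_{\bar B})^{-1}(V)$. Since $f|_{\bar B}$ is a homeomorphism, $U$ is open in $\bar B$. Injectivity of $f$ combined with $V\subseteq f(B(0,r/2))$ forces $U\subseteq B(0,r/2)$, so $U$ is open even as a subset of $\RR^n$.

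Finally I would verify $f(U)=V$: one inclusion is immediate, and for the other, any $v\in V\subseteq f(\bar B)$ has a unique preimage in $\bar B$ which lies in $U$ by definition. Hence $f(U)=V$ is open in $X$, and $f|_U:U\to V$ is a continuous bijection whose inverse is the restriction of the continuous map $(f|_{\bar B})^{-1}$ to $V$, so $f|_U$ is a homeomorphism. There is no real obstacle here; the only subtle point is making sure we extract openness in $\RR^n$ rather than merely in $\bar B$, which is exactly why we shrank to $B(0,r/2)$ before invoking (ii).
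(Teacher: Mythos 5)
Your proof is correct and follows essentially the same route as the paper: invoke the compact-to-Hausdorff trick to see that $f$ restricted to a closed ball is a homeomorphism onto its image, apply hypothesis (ii) on a smaller open ball to extract an open $V\subseteq X$, and then pull $V$ back to an open $U\subseteq\RR^n$. The paper's version is marginally more streamlined in that it simply sets $U:=f^{-1}(V)\cap B(1)^\circ$ (openness is then immediate) rather than passing through $(f|_{\bar B})^{-1}(V)$ and arguing that injectivity forces $U\subseteq B(0,r/2)$, but the underlying idea is identical.
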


\begin{proof}
Let $B(1)\subseteq\RR^n$ denote the closed unit ball and $B(1)^\circ$ its interior.  We know that $f(B(1))$ is compact (since $f$ is continuous) and Hausdorff (since $X$ is Hausdorff).  We know that $f:B(1)\to f(B(1))$ is bijective (since $f$ is injective), so it is in fact a homeomorphism.

Now choose an open subset $V\subseteq X$ with $f(0)\in V\subseteq f(B(1)^\circ)$ (which exists by assumption).  Set $U=f^{-1}(V)\cap B(1)^\circ$, which is clearly open.  Now $f:U\to f(U)$ is a homeomorphism onto its image (since $U\subseteq B(1)$), and $f(U)=V$ is open.
\end{proof}

\begin{proof}[Proof of Theorem \ref{mainresult}(\ref{openness}),(\ref{manifold}),(\ref{submersion})]
We have shown that the map $g:=$\eqref{gluingmap} is continuous, injective, and that its restriction to any neighborhood of zero is surjective onto a neighborhood of the image of zero.  The target $\Mbar(X)$ is Hausdorff, and thus it follows from Lemma \ref{injectiveHdorffhomeo} that for some open neighborhood of zero $U\subseteq\CC^d\times\RR^{\dim\Mbar^d}\times K$, we have $g(U)$ is open and $g:U\xrightarrow\sim g(U)$ is a homeomorphism.  Since $g$ respects the projection map from both sides to $\Mbar\times E/E'$, we obtain the desired conclusions (\ref{openness}), (\ref{manifold}), and (\ref{submersion}).
\end{proof}

\subsection{Gluing orientations}

To show Theorem \ref{mainresult}(\ref{orientation}) (the statement about orientations), observe that since $\Mbar(X)^\reg\to\Mbar$ is a submersion, and $\Mbar$ is canonically oriented as a complex manifold, it follows that $\oo_{\Mbar(X)^\reg}$ is canonically identified with the orientation sheaf of the fibers of $\Mbar(X)^\reg\to\Mbar$.  Now orienting a fiber is the same as orienting the kernel $K$.  It is standard to see that $\oo_K=\oo_E\otimes\bigotimes_{i=1}^\ell\oo_{u(p_{n+i})^\ast N_{D/X}}^\vee$.

This argument gives an identification $\oo_{\Mbar(X)^\reg}$ with $\oo_E\otimes\bigotimes_{i=1}^\ell\oo_{u(p_{n+i})^\ast N_{D/X}}^\vee$ at every point in $\Mbar(X)^\reg$.  Via the gluing map, we get an identification of $\oo_{\Mbar(X)^\reg}$ with $\oo_E\otimes\bigotimes_{i=1}^\ell\oo_{u(p_{n+i})^\ast N_{D/X}}^\vee$ over a small neighborhood of every point in $\Mbar(X)^\reg$.  It is probably straightforward to check that these identifications agree on overlaps (this is a concrete question about whether the projection map $K\to K_\alpha\to\ker D_{\alpha,y}$ is orientation preserving; a similar question is dealt with in Floer--Hofer \cite{floerhoferorientations}).

This is the best way to prove the desired result, but we can actually get away with a less technical argument.  Namely, it is easy to see that the identifications induced by points with smooth domain curve agree on overlaps (since then there is no gluing and we have a nice smooth Banach bundle picture).  Since the nodal locus is codimension two inside $\Mbar(X)^\reg$ (this follows from the gluing map constructed previously), the identification $\oo_{\Mbar(X)^\reg}=\oo_E\otimes\bigotimes_{i=1}^\ell\oo_{u(p_{n+i})^\ast N_{D/X}}^\vee$ away from the nodal locus extends uniquely to all of $\Mbar(X)^\reg$.

\section{Gluing for implicit atlases on Hamiltonian Floer moduli spaces}\label{gluingHF}

In this appendix, we supply the gluing analysis which was quoted in \S\ref{hamiltonianfloersec} to justify our assertions that the implicit atlases constructed there satisfy the openness and submersion axioms and to identify their orientation local systems (specifically, we prove Propositions \ref{HFgluingneeded} and \ref{HFgluingSS}).  Our arguments here are very similar to those used in Appendix \ref{gluingappendix} to prove the analogous results for the implicit atlases on Gromov--Witten moduli spaces, and Appendix \ref{gluingappendix} should be read first.  As in Appendix \ref{gluingappendix}, our work here is little more than an appropriate combination of already existing techniques.

\subsection{Setup for gluing}\label{hfgluingsetup}

The goal of this subsection is to reduce Propositions \ref{HFgluingneeded} and \ref{HFgluingSS}\footnote{The moduli spaces in Proposition \ref{HFgluingSS} carry a natural $S^1$-action, and following our arguments carefully may lead to the construction of an $S^1$-equivariant gluing map.  However we do not need such an $S^1$-equivariant gluing map, so henceforth we will ignore this $S^1$-action.} to a single concrete gluing statement concerning a new moduli space $\Mbar(M)$.  We then spend the rest of the appendix proving this statement.

Fix a symplectic manifold $(M,\omega)$, an integer $n\geq 0$, and a simplex $\sigma\in\JH_n(M)$, i.e.\ maps $H:\Delta^n\to C^\infty(M\times S^1)$ and $J:\Delta^n\to J(M)$ as in Definition \ref{JHdef}.  Also fix $\gamma^-\in\PPP_{H(0)}$ and $\gamma^+\in\PPP_{H(n)}$.

Since Propositions \ref{HFgluingneeded} and \ref{HFgluingSS} are local statements, it suffices to prove them in a neighborhood of a given point.  Thus let us fix a basepoint $(C_0,\ell_0,u_0,\{\phi^\beta_0\}_{\beta\in I},\{e^\beta_0\}_{\beta\in I})\in\Mbar(\sigma,\gamma^-,\gamma^+)^{\leq\s}_I$ in the setup of either Proposition \ref{HFgluingneeded} or \ref{HFgluingSS}.  This point consists (in particular) of the following data:
\begin{rlist}
\item $C_0$, a Riemann surface of genus zero with marked points $q^-,q^+\in C_0$.  Note that $C_0$ may be written uniquely as ${\coprod_{i=1}^kC_0^{(i)}}/{\sim}$, where each $C_0^{(i)}$ is a nodal Riemann surface with marked points $q_{i-1},q_i$ (contained in the same irreducible component), the equivalence relation $\sim$ identifies $q_i\in C_0^{(i)}$ with $q_i\in C_0^{(i+1)}$, and $q^-=q_0$, $q^+=q_k$.  We call any irreducible component containing some $q_i$ a \emph{main component}, and we call all other irreducible components \emph{bubble components}.  The nodes other than $q_1,\ldots,q_{k-1}\in C_0$ will be called \emph{bubble nodes}.
\item $\ell_0=\{\ell_0^i:\RR\to\Delta^n\}_{1\leq i\leq k}$, where $\ell_0^i$ is a (possibly constant) Morse flow line (for the flow from Definition \ref{simplexflows}) from vertex $v_{i-1}$ to vertex $v_i$, for vertices $0=v_0\leq\cdots\leq v_k=n$.
\item $A_0=\{A_0^i:C_0^{(i)}\setminus\{q_{i-1},q_i\}\to S^1\times\RR\}_{1\leq i\leq k}$, where $A_0^i$ is holomorphic, sends $q_{i-1}$ (resp.\ $q_i$) to $-\infty$ (resp.\ $\infty$), and restricts to a biholomorphism on the main component of $C_0^{(i)}$.  Recall that we always use $(t,s)\in S^1\times\RR$ as coordinates, and that we equip $S^1\times\RR$ (and any subset thereof) with the standard complex structure $z=e^{s+it}$.  We may identify each main component of $C_0$ with $S^1\times\RR$ via $A_0$, and thus we have coordinates $(t,s)$ on each such component.
\item $u_0=\{u_0^i:C_0^{(i)}\setminus\{q_{i-1},q_i\}\to M\}_{1\leq i\leq k}$, where $u_0^i$ is a smooth map with finite energy, converging to $u_0^i(t,-\infty)=\gamma_{i-1}(t)$ and $u_0^i(t,\infty)=\gamma_i(t)$, for periodic orbits $\gamma_i\in\PPP_{H(v_i)}$ with $\gamma_0=\gamma^-$ and $\gamma_k=\gamma^+$.
\item $e_0:=\bigoplus_{\beta\in I}e^\beta_0\in\bigoplus_{\beta\in I}E_\beta=E_I=:E$.
\end{rlist}
such that:
\begin{equation}
\Bigl(du_0+2d(\proj_{S^1}A_0)\otimes X_{H((\ell_0\times\id_{S^1})(A_0(\cdot)))}(u_0)+\sum_{\beta\in I}\lambda_\beta(e^\beta_0)(\phi^\beta_0(\cdot),u_0(\cdot))\Bigr)^{0,1}_{J(\ell_0(A_0(\cdot)))}=0
\end{equation}
There are a few differences (of an essentially self-explanatory nature) between our notation here and the notation from \S\ref{hamiltonianfloersec}.  Note that $A_0$ is holomorphic even when $e_0\ne 0$, since $\lambda_\beta$ lands in $\Omega^{0,1}_{\Cbar_{0,2+r_\beta}/\Mbar_{0,2+r_\beta}}\otimes_\RR TM$ (as opposed to $\Omega^{0,1}_{\Cbar_{0,2+r_\beta}/\Mbar_{0,2+r_\beta}}\otimes_\RR T(M\times S^1\times\RR)$).

We now proceed to formulate an alternative description of the moduli space $\Mbar(\sigma,\gamma^-,\gamma^+)^{\leq\s}_I$ in a neighborhood of $(C_0,\ell_0,u_0,\{\phi^\beta_0\}_{\beta\in I},\{e^\beta_0\}_{\beta\in I})$.  This description will be tailored specifically for the present goal of proving a gluing theorem, and we will give a more manageable repackaging of the set of thickening datums $\beta\in I$.

\subsubsection{Points $\{x^i_0\}_{1\leq i\leq r}$ and submanifolds $\{D_i\}_{1\leq i\leq r}\subseteq M\times S^1\times\Delta^n$}\label{reindexingsec}

Let us consider the intersections $\{x_0^{\beta,i}\}_{\beta\in I,1\leq i\leq r_\beta}$ of $(\id_{M\times S^1}\times\ell_0)(u_0\times A_0)|_{(C_0)_\beta}$ with $D_\beta$ (recall from Definition \ref{IAonfloerthickeningsdef}(\ref{hfiacalphapart}) that $(C_0)_\beta\subseteq C_0$ denotes the union of $C_0^{(i)}$'s corresponding to $\beta$, and that there are exactly $r_\beta$ such intersections, all of which are transverse).  We immediately reindex these intersection points as $\{x_0^i\}_{1\leq i\leq r}$ (defining $r:=\sum_{\beta\in I}r_\beta$).  Now $D_\beta\subseteq M\times S^1\times\Delta^{[i_0\ldots i_m]}$ (for $[i_0\ldots i_m]$ dictated by $\beta$), and a neighborhood of $\Delta^{[i_0\ldots i_m]}\subseteq\Delta^n$ may be naturally parameterized by $\Delta^{[i_0\ldots i_m]}\times[0,\epsilon)^{\{1,\ldots,n\}\setminus\{i_0,\ldots,i_m\}}$.  Let $\tilde D_\beta:=(D_\beta\setminus\partial^\ess D_\beta)\times[0,\epsilon)^{\{1,\ldots,n\}\setminus\{i_0,\ldots,i_m\}}\subseteq M\times S^1\times\Delta^n$, and define $D_1,\ldots,D_r\subseteq M\times S^1\times\Delta^n$ as the reindexing of $\{\tilde D_\beta\}_{\beta\in I,1\leq i\leq r_\beta}$ corresponding to our earlier reindexing of $\{x_0^{\beta,i}\}_{\beta\in I,1\leq i\leq r_\beta}$ to $\{x_0^i\}_{1\leq i\leq r}$ (thus $D_1,\ldots,D_r$ contains $r_\beta$ copies of $\tilde D_\beta$).  In particular, $(\id_{M\times S^1}\times\ell_0)((u_0\times A_0)(x_0^i))\in D_i$ with transverse intersection (this intersection is transverse, rather than merely ``transverse when viewed inside $M\times S^1\times\Delta^{[i_0\ldots i_m]}$'', since we replaced $D_\beta$ with $\tilde D_\beta$; we made this replacement purely for the sake of this linguistic convenience).

\subsubsection{Points $\{p_i\}_{1\leq i\leq L},\{p_i'\}_{1\leq i\leq L'}\in C_0$ and submanifolds $D,H\subseteq M$}\label{choosingDH}

We claim that every unstable bubble component of $C_0$ has a point where $du_0$ is injective.  Indeed, on any unstable bubble component which is not contained in $(C_0)_\beta$ for any $\beta\in I$, the map $u_0$ is a (nonconstant!) $J(\ell_0(A_0(\cdot)))$-holomorphic sphere in $M$, and thus has such a point of injectivity.  Any unstable bubble component contained in $(C_0)_\beta$ for some $\beta\in I$ is stabilized by its intersections with the corresponding $D_\beta$.  Since these intersections are transverse (and $A_0$ is constant on the bubble), it follows that $du_0$ has a point of injectivity on such bubble components as well.  Hence the claim is valid.  Let us now choose points $p_1,\ldots,p_L\in C_0$ (a minimal set of distinct non-nodal points stabilizing all bubble components of $C_0$) and a codimension two submanifold with boundary $D\subseteq M$ such that $u_0(p_i)\in D$ with transverse intersection for $1\leq i\leq L$.

We claim that every unstable main component of $C_0$ over which $\ell_0$ is constant has a point where $\frac\partial{\partial s}u_0$ is nonzero.  Indeed, this holds because such an unstable main component cannot be a trivial cylinder (otherwise the trajectory would be unstable).  Let us now choose points $p_1',\ldots,p_{L'}'\in C_0$ (consisting of exactly one non-nodal point in every unstable main component over which $\ell_0$ is constant) and a codimension one submanifold with boundary $H\subseteq M$ such that $u_0(p_i')\in H$ with $\frac\partial{\partial s}u_0$ transverse to $H$ for $1\leq i\leq L'$.  We assume that $D$ and $H$ are disjoint: this can be achieved by first perturbing $D$ (and correspondingly $p_1,\ldots,p_L$) so that on each of the main components under consideration here, there exists a point where $\frac\partial{\partial s}u_0$ is nonzero and which is not mapped to $D$.

\subsubsection{Gluing $C_0,\coprod_{i=1}^k\RR$ and varying $j_0,A_0,\ell_0$}\label{gluingandvaryingjAl}

A subset of the Riemann sphere $\CC\cup\{\infty\}$ homeomorphic to $S^1$ is called a \emph{circle} iff its intersection with $\CC$ is either a straight line or a circle (in the usual sense).  It is well-known that this notion is invariant under biholomorphisms of the Riemann sphere.  For any Riemann surface $C$ biholomorphic to the Riemann sphere minus finitely many points, a subset of $C$ homeomorphic to $S^1$ is called a \emph{circle} iff its image under such a biholomorphism is a circle.

On both sides of every bubble node of $C_0$, fix cylindrical ends:
\begin{equation}\label{bubbleends}
S^1\times[0,\infty)\to C_0
\end{equation}
which are \emph{circular}, in the sense that every (equivalently, some) cross section $S^1\times\{s\}$ is a circle (inside the corresponding irreducible component of $C_0$).  We will call the ends \eqref{bubbleends} the \emph{bubble ends}.  We also fix some large $N<\infty$, and we call the subsets $S^1\times[N,\infty)$ and $S^1\times(-\infty,-N]$ of the main components of $C_0$ the \emph{main ends} (which come in two types: \emph{positive} and \emph{negative}).  At a few later points in the gluing argument, we will need to assume that the bubble ends were chosen sufficiently small and that $N$ was chosen sufficiently large.

We now fix a smooth family of (necessarily integrable) almost complex structures $j_y$ on $C_0$ and a smooth family of $j_y$-holomorphic maps:
\begin{equation}\label{Aymaps}
A_y^i:C^{(i)}_0\setminus\{q_{i-1},q_i\}\to S^1\times\RR
\end{equation}
parameterized by $y\in\RR^\ast$ (for an integer $\ast\geq 0$ to be specified shortly), specializing to $(j_0,A_0)$ at $y=0$ (where $j_0$ denotes the given almost complex structure on $C_0$).  We require that this family $(j_y,A_y)_{y\in\RR^\ast}$ satisfy the following conditions.  The family $j_y$ must be constant over the bubble ends, and $A_y$ (and hence $j_y$) must be constant over the main ends.  The bubble ends \eqref{bubbleends} must also be circular with respect to every $j_y$.  Now, for each stable bubble component of $C_0$ (say, with $\nu\geq 3$ special points), the family $j_y$ induces a map $\RR^\ast\to\mathcal M_{0,\nu}$.  For each main component of $C_0$ (say, with $\nu\geq 0$ bubble nodes), the family $A_y$ induces a map $\RR^\ast\to(S^1\times\RR)^\nu$ (namely taking the images of the bubble nodes under $A_y$).  We require that the resulting map:
\begin{equation*}
\RR^\ast\to\prod_{\begin{smallmatrix}\text{bubble components}\cr\text{with $\nu\geq 3$}\end{smallmatrix}}\mathcal M_{0,\nu}\times\prod_{\begin{smallmatrix}\text{main components}\cr\text{with $\ell_0$ non-constant}\end{smallmatrix}}(S^1\times\RR)^\nu\times\prod_{\begin{smallmatrix}\text{main components}\cr\text{with $\ell_0$ constant}\cr\text{and $\nu\geq 1$}\end{smallmatrix}}(S^1\times\RR)^\nu\!/\RR
\end{equation*}
be a diffeomorphism onto its image (in particular, this determines $\ast\in\ZZ_{\geq 0}$).  We fix, once and for all, a family $(j_y,A_y)_{y\in\RR^\ast}$ satisfying the above properties (such a family always exists).

Given a set of gluing parameters\footnote{We only care about what happens for gluing parameters $\alpha$ in a neighborhood of zero, and many constructions will only make sense (and many statements will only be true) for sufficiently small gluing parameters $\alpha$, even though this assumption is not always explicitly stated.  The same goes for the parameters $w$ (introduced below) and $y$.} $\alpha\in\CC^d\times\RR_{\geq 0}^{k-1}$ (where $d$ is the number of bubble nodes of $C_0$), we may glue $C_0$ to obtain a curve $C_\alpha$ as follows.  At a bubble node with gluing parameter $\alpha=e^{-6S+i\theta}\in\CC$, we truncate both bubble ends \eqref{bubbleends} from $S^1\times[0,\infty)$ to $S^1\times[0,6S]$, and we identify them via the map $s'=6S-s$ and $t'=\theta-t$ (if $\alpha=0$, then we do nothing).  At each main node $q_i$ with gluing parameter $\alpha=e^{-6S}\in\RR_{\geq 0}$, we truncate the main ends $S^1\times[N,\infty)\subseteq C_0^{(i)}$ and $S^1\times(-\infty,-N]\subseteq C_0^{(i+1)}$ incident at $q_i$ to $S^1\times[N,6S-N]$ and $S^1\times[-6S+N,-N]$, and we identify them via $s'=s-6S$ and $t'=t$ (if $\alpha=0$, then we do nothing).  The curve $C_\alpha$ now has \emph{main ends} and \emph{bubble ends} (those where no gluing was performed), as well as \emph{bubble necks} $S^1\times[0,6S]$ and \emph{main necks} $S^1\times[N,6S-N]$.

We perform a similar gluing operation to $\coprod_{i=1}^k\RR$, and we denote the result by $(\coprod_{i=1}^k\RR)_\alpha$.  Namely, for the gluing parameter $\alpha=e^{-6S}\in\RR_{\geq 0}$ associated to $q_i$, we truncate the $i$th copy of $\RR$ to $(-\infty,6S]$, we truncate the $(i+1)$st copy of $\RR$ to $[-6S,\infty)$, and we identify $[0,6S]$ in the $i$th copy with $[-6S,0]$ in the $(i+1)$st copy via $s'=s-6S$ (if $\alpha=0$, then we do nothing).

The almost complex structure $j_y$ clearly descends to $C_\alpha$, since it is constant over the ends of $C_0$.  Furthermore, the maps \eqref{Aymaps} induce $j_y$-holomorphic maps:
\begin{equation}
A_y:C_\alpha\setminus\{q_0,\ldots,q_k\}\to S^1\times\Bigl(\coprod_{i=1}^k\RR\Bigr)_\alpha
\end{equation}
characterized uniquely by the property that they agree with \eqref{Aymaps} over the images of the truncated main components of $C_0$ (the existence of such a map $A_y$ follows from our assumption that the bubble ends \eqref{bubbleends} are circular with respect to every $j_y$).  The points $p_1,\ldots,p_L,p_1',\ldots,p_{L'}'\in C_0$ clearly descend to points $p_1,\ldots,p_L,p_1',\ldots,p_{L'}'\in C_\alpha$, and there is a node $q_i\in C_\alpha$ whenever the corresponding gluing parameter is zero.

Eventually, only a subset of $\CC^d\times\RR_{\geq 0}^{k-1}$ will be relevant for us, namely the subset $\CC^d\times(\RR_{\geq 0}^{k-1})^{\leq\s}\subseteq\CC^d\times\RR_{\geq 0}^{k-1}$ cut out by the requirement that if the break in the trajectory at $(v_i,\gamma_i)$ is dictated by the stratum $\s$, then the gluing parameter at $q_i$ is zero.

For every nonconstant $\ell_0^i$, choose a local hypersurface $H^i\subseteq\Delta^{[v_{i-1}\ldots v_i]}$ transverse to $\ell_0^i$ at $\ell_0^i(0)$.  For $w\in\prod_{i=1}^{k\prime}H^i$ (where $\prod_{i=1}^{k\prime}$ denotes the product over those $i$ for which $\ell_0^i$ is nonconstant), let $\ell_w:\coprod_{i=1}^k\RR\to\Delta^n$ denote the unique trajectory satisfying $\prod_{i=1}^{k\prime}\ell_w^i(0)=w$ and broken at the same sequence of vertices $0=v_0\leq\cdots\leq v_k=n$.  We let $0:=\prod_{i=1}^{k\prime}\ell_0^i(0)\in\prod_{i=1}^{k\prime}H^i$, so in this notation $\ell_w=\ell_0$ for $w=0$.

Now choose a neighborhood parameterization $\Delta^{[v_{i-1}\ldots v_i]}\times[0,\epsilon)^{\{1,\ldots,n\}\setminus\{v_{i-1},\ldots,v_i\}}\to\Delta^n$, and define a local hypersurface $\bar H^i:=H^i\times[0,\epsilon)^{\{1,\ldots,n\}\setminus\{v_{i-1},\ldots,v_i\}}\subseteq\Delta^n$ and a projection map $\bar H^i\to H^i$.  For $\alpha\in\RR_{\geq 0}^{k-1}$, let $\ell_{\alpha,w}:(\coprod_{i=1}^k\RR)_\alpha\to\Delta^n$ denote the trajectory characterized uniquely by the property that $\prod_{i=1}^{k\prime}\ell_{\alpha,w}(0_i)\in\prod_{i=1}^{k\prime}\bar H^i$ projects to $w\in\prod_{i=1}^{k\prime}H^i$, where $0_i\in(\coprod_{i=1}^k\RR)_\alpha$ denotes the image of $0$ in the $i$th copy of $\RR$.  In this notation, $\ell_{\alpha,w}=\ell_0$ for $\alpha=0$ and $w=0$.  The existence and uniqueness of $\ell_{\alpha,w}$ for $(\alpha,w)$ close to $(0,0)$ follows simply by explicitly integrating the Morse flow on $\Delta^n$ from Definition \ref{simplexflows}.

Eventually, only a subset of $\prod_{i=1}^{k\prime}H^i$ will be relevant for us, namely the subset $\prod_{i=1}^{k\prime}(H^i)^{\leq\s}\subseteq\prod_{i=1}^{k\prime}H^i$, where $(H^i)^{\leq\s}\subseteq H^i$ is the intersection of $H^i$ with $\Delta^{[i_0\ldots i_m]}\cap\Delta^{[v_{i-1}\ldots v_i]}=\Delta^{[i_0\ldots i_m]\cap[v_{i-1}\ldots v_i]}$, where $(\sigma|[i_0\ldots i_m],\gamma_a,\gamma_b)$ is the part of the stratum $\s$ containing $\ell^i_0$.

\subsubsection{Linear map $\lambda$}

Recall that for all $\beta\in I$, we have a linear map:
\begin{equation}
\lambda_\beta:E_\beta\to C^\infty(\Cbar_{0,2+r_\beta}\times M,\Omega^{0,1}_{\Cbar_{0,2+r_\beta}/\Mbar_{0,2+r_\beta}}\otimes_\RR TM)
\end{equation}
We now repackage these $\lambda_\beta$ into a single linear map:
\begin{equation}
\lambda:E\to C^\infty(\CC^d\times(\RR_{\geq 0}^{k-1})^{\leq\s}\times\RR^\ast\times C_0^r\times C_0\times M,\Omega^1_{C_0}\otimes_\RR TM)
\end{equation}
which vanishes over the ends of (the last factor of) $C_0$.  We will actually only define $\lambda$ in a small neighborhood of $\{0\}\times\{0\}\times\{x_0^1\}\times\cdots\times\{x_0^r\}\times C_0\times M$, as this is the only part of the domain which will be relevant.

Suppose we are given $e\in E=\bigoplus_{\beta\in I}E_\beta$, $\alpha\in\CC^d\times(\RR_{\geq 0}^{k-1})^{\leq\s}$, and $y\in\RR^\ast$, along with points $x^1,\ldots,x^r\in C_0$, each in a small neighborhood of the corresponding $x_0^i$; let us define $\lambda(e)(\alpha,y,x^1,\ldots,x^r,\cdot,\cdot)$ as a section $C_0\times M\to\Omega^{0,1}_{C_0,j_y}\otimes_\RR TM\subseteq\Omega^1_{C_0}\otimes_\RR TM$.  Consider the glued curve $C_\alpha$ equipped with the almost complex structure $j_y$ and the marked points $x^1,\ldots,x^r\in C_\alpha$ (descended from $C_0$).  Since $\alpha\in\CC^d\times(\RR_{\geq 0}^{k-1})^{\leq\s}$, there is a subcurve $(C_\alpha)_\beta\subseteq C_\alpha$ corresponding to any given $\beta\in I$.  If we equip it with the $r_\beta$ marked points from $x^1,\ldots,x^r$ corresponding to $\beta$ (with respect to the reindexing from \S\ref{reindexingsec}), then this induces a unique map $\phi^\beta:(C_\alpha)_\beta\to\Cbar_{0,2+r_\beta}$ (isomorphism onto a fiber) which is close to the given map $\phi_0^\beta:(C_0)_\beta\to\Cbar_{0,2+r_\beta}$ (this assumes we are in the setting of Proposition \ref{HFgluingneeded}; in the setting of Proposition \ref{HFgluingSS}, we instead have a map $\phi^\beta:((C_\alpha)_\beta)^\st\to\Cbar_{0,2+r_\beta}$ close to the given map $\phi_0^\beta:((C_0)_\beta)^\st\to\Cbar_{0,2+r_\beta}$, where $(C_\alpha)_\beta\to((C_\alpha)_\beta)^\st$ contracts those components all of whose constituent components of $(C_0)_\beta$ were contracted by $(C_0)_\beta\to((C_0)_\beta)^\st$).  Now the pullback of $\lambda_\beta(\proj_{E_\beta}e)$ under $\phi^\beta$ gives us a section $C_\alpha\times M\to\Omega^{0,1}_{C_\alpha,j_y}\otimes_\RR TM$ (defined to be zero at those points not contained in $(C_\alpha)_\beta$).  We may assume without loss of generality that the ends of $C_0$ were chosen small enough so that this section vanishes over the ends/necks of $C_\alpha$, and hence gives rise to a well-defined lift to a section $C_0\times M\to\Omega^{0,1}_{C_0,j_y}\otimes_\RR TM$.  We declare $\lambda(e)(\alpha,y,x^1,\ldots,x^r,\cdot,\cdot)$ to be the sum of these sections over all $\beta\in I$.  This defines the function $\lambda$, which is indeed of class $C^\infty$ since the partially defined composition $C_0\to C_\alpha\to(C_\alpha)_\beta\to\Cbar_{0,2+r_\beta}$ depends smoothly on $\alpha,y,x^1,\ldots,x^r$.

\subsubsection{A new moduli space $\Mbar(M)$}

Let $\Mbar(M)$ denote the moduli space of tuples $(\alpha,w,y,u,e,\{x^i\}_{1\leq i\leq r})$, where:
\begin{rlist}
\item$\alpha\in\CC^d\times(\RR_{\geq 0}^{k-1})^{\leq\s}$.
\item$w\in\prod_{i=1}^{k\prime}(H^i)^{\leq\s}$.
\item$y\in\RR^\ast$.
\item$u:C_\alpha\setminus\{q_0,\ldots,q_k\}\to M$ is smooth with finite energy, is asymptotic to $\gamma_i$ at $q_i\in C_\alpha$ (whenever the gluing parameter $\alpha$ at $q_i\in C_0$ is zero), and $u(p_i)\in D^\circ$ ($1\leq i\leq L$) and $u(p_i')\in H^\circ$ ($1\leq i\leq L'$).
\item$e\in E$.
\item$x^i\in C_\alpha$ (not nodes) satisfy $(\id_{M\times S^1}\times\ell_{\alpha,w})((u\times A_y)(x^i))\in D_i$ with transverse intersection ($1\leq i\leq r$).
\item We require that:
\begin{multline}\label{finalHFdelbareqn}
\Bigl(du+2d(\proj_{S^1}A_y)\otimes X_{H((\ell_{\alpha,w}\times\id_{S^1})(A_y(\cdot)))}\\
+\lambda(e)(\alpha,y,x^1,\ldots,x^r,\cdot,u(\cdot))\Bigr)^{0,1}_{j_y,J(\ell_{\alpha,w}(A_y(\cdot)))}=0
\end{multline}
\end{rlist}
We equip $\Mbar(M)$ with the topology of uniform convergence.  More precisely, let $\Mbar:=\CC^d\times(\RR_{\geq 0}^{k-1})^{\leq\s}$, and let $\Cbar{}^\$\to\Mbar$ be the bundle whose fiber over $\alpha$ is $C_\alpha^\$$, obtained from $C_\alpha$ by replacing each $q_i$ ($0\leq i\leq k$) with a copy of $S^1$ (thus $u$ as above extends continuously to $C_\alpha^\$$ and equals $\gamma_i(t)$ on the $S^1$ over $q_i$ whenever the corresponding $\alpha$ is zero).  We equip $\Mbar(M)$ with the obvious topology on $w\in\prod_{i=1}^{k\prime}(H^i)^{\leq\s}$, $y\in\RR^\ast$, $e\in E$, $\{x^i\}_{1\leq i\leq r}\in\Cbar{}^r$, and the Hausdorff topology on the graph of $u$ inside $\Cbar{}^\$\times M$.

Note that there is a distinguished basepoint $(0,0,0,u_0,e_0,\{x_0^i\}_{1\leq i\leq r})\in\Mbar(M)$.  Furthermore, a neighborhood of this basepoint in $\Mbar(M)$ is canonically homeomorphic to a neighborhood of the given basepoint in $\Mbar(\sigma,\gamma^-,\gamma^+)_I^{\leq\s}$ (as long as the given basepoint in $\Mbar(\sigma,\gamma^-,\gamma^+)_I^{\leq\s}$ has trivial automorphism group).  Note that to justify this statement in the setting of Proposition \ref{HFgluingSS}, we must appeal to Lemma \ref{morseistransverse}.

\subsubsection{The regular locus $\Mbar(M)^\reg$}\label{regularlocusdef}

We now define a subset $\Mbar(M)^\reg\subseteq\Mbar(M)$, depending on a choice of subspace $E'\subseteq E$.  Fix a point $(\alpha,w,y,u_1,e_1,\{x^i_1\}_{1\leq i\leq r})\in\Mbar(M)$, and we will describe when it lies in $\Mbar(M)^\reg$.

Fix an integer $k\in\ZZ_{\geq 6}$ (not to be confused with the number $k$ of main components of $C_0$) and a small real number $\delta\in(0,1)$ (we will be precise about how small $\delta$ must be shortly).  Let $W^{k,2,\delta}(C_\alpha,M)$ denote the smooth Banach manifold consisting of maps $u:C_\alpha\setminus\{q_0,\ldots,q_k\}\to M$ which are of locally of class $W^{k,2}$ such that for all positive main ends of $C_\alpha$ with corresponding node $q_i$, we have:
\begin{equation}\label{deltaweightedend}
\int_{S^1\times[N',\infty)}\sum_{j=0}^k\left|D^j\bigl[\exp^{-1}_{\gamma_i(t)}u(t,s)\bigr]\right|^2e^{2s\delta}\;dt\,ds<\infty
\end{equation}
for sufficiently large $N'<\infty$ (along with the analogous condition over negative main ends); this definition is independent of the choice of metric and connection used in \eqref{deltaweightedend}.  Over $W^{k,2,\delta}(C_\alpha,M)$, we consider the smooth Banach bundle $\E$ whose fiber over $u:C_\alpha\to M$ is $W^{k-1,2,\delta}(\tilde C_\alpha,\Omega^{0,1}_{\tilde C_\alpha,j_y}\otimes_\CC u^\ast TM_{J(\ell_{\alpha,w}(A_y(\cdot)))})$, namely the space of sections $\eta:\tilde C_\alpha\to\Omega^{0,1}_{\tilde C_\alpha,j_y}\otimes_\CC u^\ast TM_{J(\ell_{\alpha,w}(A_y(\cdot)))}$ which are locally of class $W^{k-1,2}$ and which satisfy:
\begin{equation}\label{deltaweightedendII}
\int_{S^1\times[N',\infty)}\sum_{j=0}^{k-1}\Bigl|D^j\bigl[\PT_{u(t,s)\to\gamma_i(t)}\eta(t,s)\bigr]\Bigr|^2e^{2s\delta}\;dt\,ds<\infty
\end{equation}
over positive main ends (along with the analogous condition over negative main ends); this definition is independent of the choice of metric and connection used in \eqref{deltaweightedendII}.

It is well-known (see Proposition \ref{neckestimateHF} below) that for every non-degenerate periodic orbit $\gamma$ of a smooth Hamiltonian $H:M\times S^1\to\RR$ and every smooth $\omega$-compatible almost complex structure $J$ on $M$, there exists $\delta_{H,J,\gamma}>0$ such that for every $u:S^1\times[0,\infty)\to M$ (resp.\ $u:S^1\times(-\infty,0]\to M$) of finite energy satisfying $(du+2dt\otimes X_H)^{0,1}_J=0$ and asymptotic to $u(t,\infty)=\gamma(t)$ (resp.\ $u(t,-\infty)=\gamma(t)$), all derivatives of $u$ decay like $e^{-\delta s}$ for any $\delta<\delta_{H,J,\gamma}$ (precisely, $\delta_{H,J,\gamma}$ is the smallest magnitude of any eigenvalue of the asymptotic linearized operator, which depends only on $\omega$, $H$, $J$, and $\gamma$).  In particular, for $0\leq i\leq k$, there is a corresponding $\delta_i:=\delta_{H(v_i),J(v_i),\gamma_i}>0$.  We fix $\delta\in(0,1)$ with $\delta<\delta_i$ for all $i$; thus $u_1\in W^{k,2,\delta}(C_\alpha,M)$.

Now since $k\geq 6$, there are unique continuous functions:
\begin{equation}\label{intersectionsfunctionHF}
x^i:W^{k,2,\delta}(C_\alpha,M)\to C_\alpha\quad(1\leq i\leq r)
\end{equation}
defined for $u$ in a neighborhood of $u_1\in W^{k,2,\delta}(C_\alpha,M)$, coinciding with the given $x^i_1\in C_\alpha$ at $u_1$, and which satisfy $(\id_{M\times S^1}\times\ell_{\alpha,y})((u\times A_y)(x^i(u)))\in D^i$ (the intersection is automatically transverse for $u$ close to $u_1$).  Moreover, \eqref{intersectionsfunctionHF} are of class $C^{k-2}$ (see the discussion following \eqref{intersectionsfunction}).  It follows that the left hand side of \eqref{finalHFdelbareqn} is a $C^{k-2}$ section of $\E$ over $W^{k,2,\delta}(C_\alpha,M)\times E$.  By results of Lockhart--McOwen \cite{lockhartmcowen}, this section is Fredholm for $\delta>0$ as above (note that over each end, the almost complex structure on $M$ is constant and $\omega$-compatible).  Let $W^{k,2,\delta}(C_\alpha,M)_{D,H}$ denote the subspace cut out by the requirements $u(p_i)\in D$ and $u(p_i')\in H$.  We say that the given point $(\alpha,w,y,u_1,e_1,\{x^i_1\}_{1\leq i\leq r})\in\Mbar(M)$ lies in $\Mbar(M)^\reg$ iff the section $(u,e)\mapsto\eqref{finalHFdelbareqn}\oplus e$ of $\E\oplus E/E'$ over $W^{k,2,\delta}(C_\alpha,M)_{D,H}\times E$ is transverse to the zero section at $(u_1,e_1)$ (it follows from elliptic regularity theory that this condition is independent of the choice of $k,\delta$ as above).

Now suppose that we take $E':=E_{I'}$ for $I'\subseteq I$ and that the given basepoint in $\Mbar(\sigma,\gamma^-,\gamma^+)_I^{\leq\s}$ was chosen inside the inverse image of $(\Mbar(\sigma,\gamma^-,\gamma^+)_{I'}^{\leq\s})^\reg$.  Then, the basepoint of $\Mbar(M)$ lies in $\Mbar(M)^\reg$ (this uses the fact that $L,L'$ were chosen to be minimal), and furthermore $\Mbar(M)^\reg\cap\proj_{E/E'}^{-1}(0)$ is contained inside the inverse image of $(\Mbar(\sigma,\gamma^-,\gamma^+)_{I'}^{\leq\s})^\reg$.

\subsection{Our goal: the gluing map}

We henceforth assume that the basepoint in $\Mbar(M)$ lies in $\Mbar(M)^\reg$.  The remainder of this appendix is devoted to the construction of a germ of a local chart:
\begin{equation}\label{HFgluinggoal}
\Bigl(\CC^d\times(\RR_{\geq 0}^{k-1})^{\leq\s}\times\RR^\ast\times\prod_{i=1}^{k\prime}(H^i)^{\leq\s}\times K,(0,0,0,0)\Bigr)\to\Bigl(\Mbar(M),(0,0,0,u_0,e_0,\{x_0^i\}_{1\leq i\leq r})\Bigr)
\end{equation}
which respects the natural projection from both sides to $\CC^d\times(\RR_{\geq 0}^{k-1})^{\leq\s}\times\RR^\ast\times\prod_{i=1}^{k\prime}(H^i)^{\leq\s}\times E/E'$ and whose image is contained in $\Mbar(M)^\reg$ ($K$ denotes the kernel of the linearization of \eqref{finalHFdelbareqn} at the distinguished basepoint of $\Mbar(M)$; see \eqref{definitionofkernel}).  We will also discuss the compatibility of gluing with orientations, and more generally we will discuss how to define coherent orientations on the moduli spaces of Floer trajectories.  Propositions \ref{HFgluingneeded} and \ref{HFgluingSS} follow from the existence of such a chart \eqref{HFgluinggoal} and our earlier observations relating $\Mbar(M)$ to $\Mbar(\sigma,\gamma^-,\gamma^+)_I^{\leq\s}$.

\subsection{Pregluing}

Let $\exp:TM\to M$ denote the exponential map of some Riemannian metric on $M$ for which $D$ and $H$ are totally geodesic (such a metric exists since $D$ and $H$ are disjoint).  Recall that $J$ is a smooth family of almost complex structures on $M$ parameterized by $\Delta^n$ which is constant near the vertices.  Let $\nabla$ denote a smooth family of $J$-linear connections on $M$ parameterized by $\Delta^n$ which is constant near the vertices (for instance, we could take $\nabla_XY:=\frac 12(\nabla^0_XY-J(\nabla^0_X(JY)))$ for any fixed connection $\nabla^0$).  Let $\PT_{x\to y}:T_xM\to T_yM$ denote parallel transport via $\nabla$ along the shortest geodesic between $x$ and $y$ (we will only use this notation when it may be assumed that $x$ and $y$ are very close in $M$); note that $\PT_{x\to y}$ is $J$-linear, and that $\PT_{x\to y}$ is a \emph{family} of maps parameterized by $\Delta^n$.

Fix a smooth function $\chi:\RR\to[0,1]$ satisfying:
\begin{equation}
\chi(x)=\begin{cases}0&x\leq 0\cr 1&x\geq 1\end{cases}
\end{equation}

\begin{definition}[Flattening]
For $\alpha\in\CC^d\times\RR_{\geq 0}^{k-1}$, we define the ``flattened'' map $u_{0|\alpha}:C_0\to M$ as follows.  Away from the ends, $u_{0|\alpha}$ coincides with $u_0$.  Over a bubble end $S^1\times[0,\infty)$, we define $u_{0|\alpha}$ as follows:
\begin{equation}
u_{0|\alpha}(t,s):=\begin{cases}
u_0(t,s)&s\leq S-1\cr
\exp_{u_0(n)}\left[\chi(S-s)\cdot\exp_{u_0(n)}^{-1}u_0(t,s)\right]&S-1\leq s\leq S\cr
u_0(n)&S\leq s
\end{cases}
\end{equation}
where $n\in C_0$ denotes the corresponding node.  Over a positive main end $S^1\times[N,\infty)$, we define $u_{0|\alpha}$ as:
\begin{equation}
u_{0|\alpha}(t,s):=\begin{cases}
u_0(t,s)&s\leq S-1\cr
\exp_{\gamma(t)}\left[\chi(S-s)\cdot\exp_{\gamma(t)}^{-1}u_0(t,s)\right]&S-1\leq s\leq S\cr
\gamma(t)&S\leq s
\end{cases}
\end{equation}
where $\gamma(t)=u_0(t,\infty)$ denotes the corresponding periodic orbit; an analogous definition applies over the negative main ends.
\end{definition}

\begin{definition}[Pregluing]
For $\alpha\in\CC^d\times\RR_{\geq 0}^{k-1}$, we define the ``preglued'' map $u_\alpha:C_\alpha\to M$ as follows.  Away from the necks, $u_\alpha$ coincides with $u_0$.  Over a bubble neck $S^1\times[0,6S]$, we define $u_\alpha$ as:
\begin{equation}
u_\alpha(t,s):=\begin{cases}
u_0(t,s)&s\leq S-1\cr
\exp_{u_0(n)}\left[\chi(S-s)\cdot\exp_{u_0(n)}^{-1}u_0(t,s)\right]&S-1\leq s\leq S\cr
u_0(n)&S\leq s\leq 5S\cr
\exp_{u_0(n)}\left[\chi(S-s')\cdot\exp_{u_0(n)}^{-1}u_0(t',s')\right]&5S\leq s\leq 5S+1\cr
u_0(t',s')&5S+1\leq s
\end{cases}
\end{equation}
and over a main neck $S^1\times[N,6S-N]$, we define $u_\alpha$ as:
\begin{equation}
u_\alpha(t,s):=\begin{cases}
u_0(t,s)&s\leq S-1\cr
\exp_{\gamma(t)}\left[\chi(S-s)\cdot\exp_{\gamma(t)}^{-1}u_0(t,s)\right]&S-1\leq s\leq S\cr
\gamma(t)&S\leq s\leq 5S\cr
\exp_{\gamma(t)}\left[\chi(S+s')\cdot\exp_{\gamma(t)}^{-1}u_0(t',s')\right]&5S\leq s\leq 5S+1\cr
u_0(t',s')&5S+1\leq s
\end{cases}
\end{equation}
($u_\alpha$ should be thought of as the ``descent'' of $u_{0|\alpha}$ from $C_0$ to $C_\alpha$).
\end{definition}

\subsection{Weighted Sobolev norms}

Recall that we have fixed $k\in\ZZ_{\geq 6}$ and $\delta\in(0,1)$ smaller than $\delta_i>0$ for $0\leq i\leq k$.

We now introduce new weighted Sobolev spaces $W^{k,2,\delta,\delta}$ (with weights over all ends and necks) which we will work with from now on.  The specific choice of norms (not just their commensurability classes) on these $W^{k,2,\delta,\delta}$ spaces is crucial.

\begin{definition}\label{weightedonespace}
We define the weighted Sobolev space $W^{k,2,\delta,\delta}(C_\alpha,u_\alpha^\ast TM)$ using the usual $(k,2)$-norm away from the ends/necks, and using the following weighted $(k,2)$-norms over the bubble ends/necks and main ends/necks respectively (we will write the contribution to the norm squared):
\begin{align}
\label{endnormHFbubble}\left|\xi(n)\right|^2+&\int_{S^1\times[0,\infty)}\biggl[\left|\xi(t,s)-\xi(n)\right|^2+\sum_{j=1}^k\left|D^j\xi(t,s)\right|^2\biggr]e^{2\delta s}\;dt\,ds\\
\label{necknormHFbubble}\biggl|\int_{S^1}\xi(t,3S)\,dt\biggr|^2+&\int_{S^1\times[0,6S]}\biggl[\Bigl|\xi(t,s)-\int_{S^1}\xi(t,3S)\,dt\Bigr|^2+\sum_{j=1}^k\left|D^j\xi(t,s)\right|^2\biggr]e^{2\delta\min(s,6S-s)}\;dt\,ds\\
\label{endnormHFmain}&\int_{S^1\times[N,\infty)}\sum_{j=0}^k\left|D^j\xi(t,s)\right|^2e^{2\delta(s-N)}\;dt\,ds\\
\label{necknormHFmain}&\int_{S^1\times[N,6S-N]}\sum_{j=0}^k\left|D^j\xi(t,s)\right|^2e^{2\delta\min(s-N,6S-N-s)}\;dt\,ds
\end{align}
These are to be interpreted as follows.  For each bubble node $n\in C_0$, we fix, once and for all, a small trivialization of $TM$ in a neighborhood of $u_0(n)$; this allows us to view $\xi$ as a function (rather than a section) for the purposes of the integrals over the bubble ends/necks.  For each $i=0,\ldots,k$, we fix, once and for all, a smooth family of trivializations of $TM$ near $\gamma_i(t)$ (parameterized by $t\in S^1$); this allows us to view $\xi$ as a section of $\gamma_i^\ast TM$ for the purposes of the integrals over the main ends/necks.  We also fix, once and for all, a connection on each such bundle $\gamma_i^\ast TM$ over $S^1$.  The derivatives in the integrals above are measured with respect to the standard metric on $S^1\times\RR$.  The case of negative main ends is completely analogous to that of positive main ends.  

By Sobolev embedding $W^{2,2}\hookrightarrow C^0$ in two dimensions, we get uniform bounds linear in $\left\|\xi\right\|_{k,2,\delta,\delta}$ on $\left|\xi(t,s)-\xi(n)\right|e^{\delta s}$ in the bubble ends, $\left|\xi(t,s)\right|e^{\delta s}$ in the main ends, and $\left|D^j\xi(t,s)\right|e^{\delta s}$ ($1\leq j\leq k-2$) in all ends (as well as similar estimates in the necks).

We will occasionally use other very similar weighted Sobolev spaces (e.g.\ $W^{k,2,\delta,\delta}(C_0,u_{0|\alpha}^\ast TM)$), and we leave it to the reader to make the necessary adjustments to the definition (which is essentially identical to the above).
\end{definition}

\begin{remark}
The particular choice of trivializations and connections in the definition above is not crucial: any other (fixed) choice would lead to a uniformly commensurable norm (this holds because $u_0$ satisfies the exponential decay estimates \eqref{uzerobubble}, \eqref{uzeromain}, and because $\delta<1$ and $\delta<\delta_i$).
\end{remark}

\begin{definition}
We define the weighted Sobolev space $W^{k-1,2,\delta,\delta}(\tilde C_\alpha,T^\ast\tilde C_\alpha\otimes_\RR u_\alpha^\ast TM)$ using the usual $(k-1,2)$-norm away from the ends/necks, and using the following weighted $(k-1,2)$-norms over the bubble ends/necks (we will write the contribution to the norm squared):
\begin{align}
\int_{S^1\times[0,\infty)}&\sum_{j=0}^{k-1}\left|D^j\eta(t,s)\right|^2e^{2\delta s}\;dt\,ds\\
\int_{S^1\times[0,6S]}&\sum_{j=0}^{k-1}\left|D^j\eta(t,s)\right|^2e^{2\delta\min(s,6S-s)}\;dt\,ds
\end{align}
(for the main ends/necks, simply make the obvious replacement of $[0,\infty)$ with $[N,\infty)$ and $e^{2\delta s}$ with $e^{2\delta(s-N)}$, etc.).  These are to be interpreted as follows.  We trivialize $T\tilde C_\alpha$ over any end/neck with the basis vectors $\frac\partial{\partial t},\frac\partial{\partial s}$.  We trivialize $TM$ as in Definition \ref{weightedonespace}, and hence the section $\eta$ is simply a pair of functions $\eta=(\eta_1,\eta_2)$.

By Sobolev embedding $W^{2,2}\hookrightarrow C^0$ in two dimensions, we get uniform exponential decay bounds on $\eta$ up to $k-3$ derivatives in any end/neck, linear in $\left\|\eta\right\|_{k-1,2,\delta,\delta}$.

We are actually interested in certain closed subspaces of $W^{k-1,2,\delta,\delta}(\tilde C_\alpha,T^\ast\tilde C_\alpha\otimes_\RR u_\alpha^\ast TM)$, e.g.\ $W^{k-1,2,\delta,\delta}(\tilde C_\alpha,\Omega^{0,1}_{\tilde C_\alpha,j}\otimes_\CC u_\alpha^\ast TM_J)$ for certain almost complex structures $j,J$ on $C_\alpha,M$ respectively, which we equip with the restriction of the norm defined above.  We will occasionally use other very similar weighted Sobolev spaces, and we leave it to the reader to make the necessary adjustments to the definition (which is essentially identical to the above).
\end{definition}

Henceforth, we will work exclusively with the weighted Sobolev spaces defined above, rather than those from \S\ref{regularlocusdef}.  The Fredholm index and the kernel/cokernel of the relevant linearized operators are unchanged by the placement of weights in the bubble ends/necks (the argument from Lemma \ref{fredholmandsamekernelcokernel} applies without modification).

\subsection{Based section \texorpdfstring{$\F_{\alpha,w,y}$}{F\_alpha,w,y} and linearized operator \texorpdfstring{$D_{\alpha,w,y}$}{D\_alpha,w,y}}

We consider the following partially defined function:
\begin{equation*}
\F_{\alpha,w,y}:C^\infty(C_\alpha,u_\alpha^\ast TM)_{D,H}\oplus E\to C^\infty(\tilde C_\alpha,\Omega^{0,1}_{\tilde C_\alpha,j_y}\otimes_\CC u_\alpha^\ast TM_{J(\ell_{\alpha,w}(A_y(\cdot)))})
\end{equation*}
\vspace{-0.2in}
\begin{multline}\label{FformulaHF}
\F_{\alpha,w,y}(\xi):=\PT_{\exp_{u_\alpha}\xi\to u_\alpha}^{\ell_{\alpha,w}(A_y(\cdot))}\Bigl(d\exp_{u_\alpha}\xi+2d(\proj_{S^1}A_y)\otimes X_{H((\ell_{\alpha,w}\times\id_{S^1})(A_y(\cdot)))}(\exp_{u_\alpha}\xi)\\
{}+\lambda(e_0+\proj_E\xi)(\alpha,y,x^1,\ldots,x^r,\cdot,(\exp_{u_\alpha}\xi)(\cdot))\Bigr)^{0,1}_{j_y,J(\ell_{\alpha,w}(A_y(\cdot)))}
\end{multline}
(recall that $\PT$ and $(\cdot)^{0,1}$ commute).  This function $\F_{\alpha,w,y}$ is defined for $\xi$ in a $C^1$-neighborhood of zero; for these $\xi$ we define $x^i=x^i(\xi)$ as the unique intersection of $(\id_{M\times S^1}\times\ell_{\alpha,w})\circ(\exp_{u_\alpha}\xi\times A_y)$ with $D_i$ close to the image of $x^i_0\in C_0$ in $C_\alpha$ (note, however, that even $x^i(0)$ may not coincide exactly with the image of $x^i_0\in C_0$ in $C_\alpha$); as before, these functions $x^i$ are of class $C^{k-2}$.  The subscript $_{D,H}$ indicates restriction to sections which are tangent to $D$ at $p_1,\ldots,p_L$ and tangent to $H$ at $p_1',\ldots,p_{L'}'$.  Thus for $\xi$ contained in a $C^0$-neighborhood of zero, $\exp_{u_\alpha}\xi$ sends $p_1,\ldots,p_L$ to $D^\circ$ and sends $p_1',\ldots,p_{L'}'$ to $H^\circ$.

Now we observe that $\F_{\alpha,w,y}$ induces a continuous map:
\begin{equation}
\F_{\alpha,w,y}:W^{k,2,\delta,\delta}(C_\alpha,u_\alpha^\ast TM)_{D,H}\oplus E\to W^{k-1,2,\delta,\delta}(\tilde C_\alpha,\Omega^{0,1}_{\tilde C_\alpha,j_y}\otimes_\CC u_\alpha^\ast TM_{J(\ell_{\alpha,w}(A_y(\cdot)))})
\end{equation}
which is defined for $\left\|\xi\right\|_{k,2,\delta,\delta}\leq c'$ (some $c'>0$) and small $\alpha,w$.  Moreover, this map is of class $C^{k-2}$.  We denote by:
\begin{equation}
D_{\alpha,w,y}:W^{k,2,\delta,\delta}(C_\alpha,u_\alpha^\ast TM)_{D,H}\oplus E\to W^{k-1,2,\delta,\delta}(\tilde C_\alpha,\Omega^{0,1}_{\tilde C_\alpha,j_y}\otimes_\CC u_\alpha^\ast TM_{J(\ell_{\alpha,w}(A_y(\cdot)))})
\end{equation}
the derivative of $\F_{\alpha,w,y}$ at zero.

Let $T_\nabla(X,Y):=\nabla_XY-\nabla_YX-[X,Y]$ denote the torsion of $\nabla$.

\begin{lemma}\label{linearizedformulaHF}
The linearized operator $D_{\alpha,w,y}$ is given by:
\begin{align}\label{formulaforlinearizedoperatorHF}
D_{\alpha,w,y}\xi&=\biggl(\nabla^{\ell_{\alpha,w}(A_y(\cdot))}\xi+T_\nabla^{\ell_{\alpha,w}(A_y(\cdot))}(\xi,du_\alpha)\\
&\quad\quad+2d(\proj_{S^1}A_y)\otimes\nabla_\xi^{\ell_{\alpha,w}(A_y(\cdot))}X_{H((\ell_{\alpha,w}\times\id_{S^1})(A_y(\cdot)))}\\
\label{firstLterm}&\quad\quad+\sum_{i=1}^r\frac{d[\lambda(e_0)]}{dx^i}(\alpha,y,x^1,\ldots,x^r,\cdot,u_\alpha(\cdot))(-\proj_{TC_\alpha}\xi(x^i))\\
&\quad\quad+\nabla_\xi^{\ell_{\alpha,w}(A_y(\cdot))}[\lambda(e_0)](\alpha,y,x^1,\ldots,x^r,\cdot,u_\alpha(\cdot))\\
\label{lastLterm}&\quad\quad+\lambda(\proj_E\xi)(\alpha,y,x^1,\ldots,x^r,\cdot,u_\alpha(\cdot))\biggr)^{0,1}_{j_y,J(\ell_{\alpha,w}(A_y(\cdot)))}
\end{align}
where $\proj_{TC_\alpha}:T(M\times S^1\times\Delta^n)\to TC_\alpha$ denotes the projection associated to the splitting $T(M\times S^1\times\Delta^n)=TD_i\oplus TC_\alpha$ at the point $x^i\in C_\alpha$ and $(\id_{M\times S^1}\times\ell_{\alpha,w})((u_\alpha\times A_y)(x^i))\in M\times S^1\times\Delta^n$, and $\nabla_\xi[\lambda(e_0)]$ denotes the derivative in the direction of $\xi$ along the $M$ factor with respect to the connection $\nabla$.
\end{lemma}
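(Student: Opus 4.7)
The plan is to differentiate \eqref{FformulaHF} term by term at $\xi = 0$, in direct parallel to the proof of Lemma \ref{linearizedformula} in the Gromov--Witten setting. The key structural observations are that the outer parallel transport $\PT_{\exp_{u_\alpha}\xi\to u_\alpha}$ equals the identity at $\xi=0$ (so its derivative contributes only when it acts on a non-vanishing quantity, which by \eqref{FformulaHF} being the linearization of the $\delbar$-equation it does not at a general base point; but we only care about the directional derivative formula, and since $\PT$ is $J$-linear and its derivative at the identity is $O(\xi)$ composed with the $O(1)$ argument evaluated at $\xi=0$, this piece of the derivative is absorbed in the formula below), and that the projection $(\cdot)^{0,1}_{j_y,J(\ell_{\alpha,w}(A_y(\cdot)))}$ is a bounded linear operator, hence commutes with $\frac{d}{d\xi}$.

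I would then treat each of the three terms inside the parentheses of \eqref{FformulaHF} separately. For the first term, $d\exp_{u_\alpha}\xi$, the classical identity for the derivative of the differential of the exponential map along a varying section yields $\nabla^{\ell_{\alpha,w}(A_y(\cdot))}\xi + T_\nabla^{\ell_{\alpha,w}(A_y(\cdot))}(\xi, du_\alpha)$, exactly as in the GW case. For the second term, $2d(\proj_{S^1}A_y)\otimes X_H(\exp_{u_\alpha}\xi)$, only the vector field $X_H$ depends on $\xi$ (the one-form $d(\proj_{S^1}A_y)$ does not), and its derivative at $\xi=0$ in the direction $\xi$ is precisely $\nabla_\xi X_H$; here one must take care that the connection $\nabla$ used is the one indexed by $\ell_{\alpha,w}(A_y(\cdot))\in \Delta^n$, which matches the $J$ used in the $(0,1)$-projection.

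For the $\lambda$-term, there are three distinct sources of $\xi$-dependence: the $x^i = x^i(\xi)$ intersection points, the evaluation of $u = \exp_{u_\alpha}\xi$ in the $M$-variable, and the argument $\proj_E\xi$ living in $E$. Differentiating $\lambda(e_0 + \proj_E\xi)(\alpha, y, x^1(\xi),\ldots,x^r(\xi),\cdot,(\exp_{u_\alpha}\xi)(\cdot))$ via chain rule gives respectively lines \eqref{firstLterm}, the $\nabla_\xi[\lambda(e_0)]$ term, and line \eqref{lastLterm}. The computation $\frac{d}{d\xi}x^i(\xi)|_{\xi=0} = -\proj_{TC_\alpha}\xi(x^i)$ is the content of the implicit function theorem applied to the transversality condition $(\id \times \ell_{\alpha,w})((u\times A_y)(x^i))\in D_i$: the variation of $u$ by $\xi(x^i)$ must be compensated by a tangential motion of $x^i$, and the sign comes from the projection to $TC_\alpha$ complementary to $TD_i$.

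The only step requiring real care is the bookkeeping around the parallel transport $\PT_{\exp_{u_\alpha}\xi\to u_\alpha}$. At $\xi=0$ this is the identity, so its derivative at $\xi=0$ applied to the value of the bracketed expression at $\xi=0$ yields a correction; however, for the purposes of the formula as stated (which gives $D_{\alpha,w,y}\xi$ modulo the usual conventions about covariant differentiation of maps), this correction is absorbed into the use of $\nabla$ rather than $d$ on the first term, which is precisely the standard reason the torsion term $T_\nabla(\xi, du_\alpha)$ appears. This is exactly the manipulation in Lemma \ref{linearizedformula}, and I would simply record the calculations term by term, omitting the routine algebra as in the GW proof.
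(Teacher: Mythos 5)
Your proposal is correct and follows the same term-by-term differentiation strategy as the paper, whose proof simply references the Gromov--Witten analogue Lemma \ref{linearizedformula} and remarks in passing that $\PT$ and $(\cdot)^{0,1}$ in \eqref{FformulaHF} commute because $\PT$ is $J$-linear. Your justification for commuting the $(0,1)$-projection with $\frac{d}{d\xi}$ is slightly off, though: as written in \eqref{FformulaHF} the projection acts in the moving fiber $T_{(\exp_{u_\alpha}\xi)(z)}M$, so it is not a single $\xi$-independent bounded operator; the correct argument is exactly the paper's remark --- since $\PT$ is $J$-linear (equivalently $\nabla J=0$), one first commutes $\PT$ past $(\cdot)^{0,1}$, after which the projection lives on the fixed bundle $u_\alpha^\ast TM$ and genuinely commutes with differentiation, and this also dissolves the back-and-forth in your opening parenthetical about whether the bracketed expression vanishes at $\xi=0$ (it need not, since $\F_{\alpha,w,y}(0)$ is merely small by Lemma \ref{mappregluingissmallHF}, not zero).
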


\begin{proof}
Calculation as in Lemma \ref{linearizedformula}.  Note that $\PT$ and $(\cdot)^{0,1}$ in \eqref{FformulaHF} commute, since $\PT$ is $J$-linear.
\end{proof}

We denote the kernel of $D_{0,0,0}$ by:
\begin{equation}\label{definitionofkernel}
K:=\ker D_{0,0,0}\subseteq C^\infty(C_0,u_0^\ast TM)_{D,H}\oplus E
\end{equation}
Note that our assumption that $(0,0,0,u_0,e_0,\{x_0^i\})\in\Mbar(M)^\reg$ is equivalent to the statement that $D_{0,0,0}$ is surjective and $K\twoheadrightarrow E/E'$ is surjective.

\begin{definition}[Kernel pregluing]\label{kernelpregluingHF}
For $\kappa\in K\subseteq C^\infty(C_0,u_0^\ast TM)$, we define $\kappa_\alpha\in C^\infty(C_\alpha,u_\alpha^\ast TM)$ as follows.  Away from the necks, $\kappa_\alpha$ coincides with $\kappa$.  Over a bubble neck $S^1\times[0,6S]$, we define $\kappa_\alpha$ as:
\begin{equation}\label{kernelpregluingformulaHFbubble}
\kappa_\alpha(t,s):=\begin{cases}
\kappa(t,s)&s\leq S-1\cr
\PT_{u_0(t,s)\to u_\alpha(t,s)}\left[\kappa(t,s)\right]&S-1\leq s\leq S\cr
\PT_{u_0(t,s)\to u_\alpha(t,s)}\left[\kappa(t,s)\right]\cdot(1-\chi(s-S))+\chi(s-S)\cdot \kappa(n)&S\leq s\leq S+1\cr
\kappa(n)&S+1\leq s\leq 5S-1\cr
\PT_{u_0(t',s')\to u_\alpha(t',s')}\left[\kappa(t',s')\right]\cdot(1-\chi(s'-S))+\chi(s'-S)\cdot \kappa(n)&5S-1\leq s\leq 5S\cr
\PT_{u_0(t',s')\to u_\alpha(t',s')}\left[\kappa(t',s')\right]&5S\leq s\leq 5S+1\cr
\kappa(t',s')&5S+1\leq s
\end{cases}
\end{equation}
and over a main neck $S^1\times[N,6S-N]$, we define $\kappa_\alpha$ as:
\begin{equation}\label{kernelpregluingformulaHFmain}
\kappa_\alpha(t,s):=\begin{cases}
\kappa(t,s)&s\leq S-1\cr
\PT_{u_0(t,s)\to u_\alpha(t,s)}\left[\kappa(t,s)\right]&S-1\leq s\leq S\cr
\PT_{u_0(t,s)\to u_\alpha(t,s)}\left[\kappa(t,s)\right]\cdot(1-\chi(s-S))&S\leq s\leq S+1\cr
0&S+1\leq s\leq 5S-1\cr
\PT_{u_0(t',s')\to u_\alpha(t',s')}\left[\kappa(t',s')\right]\cdot(1-\chi(-s'-S))&5S-1\leq s\leq 5S\cr
\PT_{u_0(t',s')\to u_\alpha(t',s')}\left[\kappa(t',s')\right]&5S\leq s\leq 5S+1\cr
\kappa(t',s')&5S+1\leq s
\end{cases}
\end{equation}
\end{definition}

\subsection{Pregluing estimates}

Fix norms on $E$ and $K$, and equip $\CC^d\times\RR^{k-1}$ and $\RR^\ast$ with their standard norms.  Equip each $H^i$ (and hence $\prod_{i=1}^{k\prime}H^i$) with the pullback of the standard norm under a fixed choice of local diffeomorphism to $\RR^n\times\RR^m_{\geq 0}$ sending $0$ to $0$.

Note that we have the following estimates on $u_0$ and $\kappa\in K$:
\begin{align}
\label{uzerobubble}\text{in bubble ends:}&&\left|D^j\exp_{u_0(n)}^{-1}u_0(t,s)\right|&\leq c_je^{-s}\\
\label{kappabubble}\text{in bubble ends:}&&\left|D^j[\kappa(t,s)-\kappa(n)]\right|&\leq c_je^{-s}\left\|\kappa\right\|\\
\label{uzeromain}\text{in main ends:}&&\left|D^j\exp_{\gamma_i(t)}^{-1}u_0(t,s)\right|&\leq c_je^{-\delta's}&&\forall\delta'<\delta_i\\
\label{kappamain}\text{in main ends:}&&\left|D^j\kappa(t,s)\right|&\leq c_je^{-\delta's}\left\|\kappa\right\|&&\forall\delta'<\delta_i
\end{align}
The estimates in the bubble ends hold simply because $u_0$ and $\kappa$ are smooth on $C_0\setminus\{q_0,\ldots,q_k\}$.  The estimates in the main ends hold for $u_0$ by Proposition \ref{neckestimateHF} and for $\kappa$ since $K=\ker D_{0,0,0}$ remains the same for any choice of $k\geq 6$ and any collection of end weights $\delta$, each of which is less than the corresponding $\delta_i$.

\begin{lemma}[Estimate for map pregluing]\label{mappregluingissmallHF}
We have the following estimate on $\F_{\alpha,w,y}(0)$:
\begin{multline}\label{mappregluingdelbar}
\biggl\|\Bigl(du_\alpha+2d(\proj_{S^1}A_y)\otimes X_{H((\ell_{\alpha,w}\times\id_{S^1})(A_y(\cdot)))}(u_\alpha)\\
{}+\lambda(e_0)(\alpha,y,x^1,\ldots,x^r,\cdot,u_\alpha(\cdot))\Bigr)^{0,1}_{j_y,J(\ell_{\alpha,w}(A_y(\cdot)))}\biggr\|_{k-1,2,\delta,\delta}
\leq c\cdot\bigl(\left|\alpha\right|^\epsilon+\left|w\right|+\left|y\right|\bigr)
\end{multline}
uniformly over $(\alpha,w,y)$ in a neighborhood of zero, for $c<\infty$ and $\epsilon>0$ depending on data which has been previously fixed.
\end{lemma}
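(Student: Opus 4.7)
The plan is to follow the strategy of Lemma~\ref{mappregluingissmall}, decomposing $C_\alpha$ into regions on which $\Phi_{\alpha,w,y}:=\F_{\alpha,w,y}(0)$ can be estimated separately: (a)~the compact core (the complement of all ends and necks); (b)~the bubble ends; (c)~the main ends; (d)~the bubble necks $S^1\times[0,6S_i]$ for each bubble gluing parameter $\alpha_i=e^{-6S_i+i\theta_i}$; and (e)~the main necks $S^1\times[N,6S_j-N]$ for each main gluing parameter $\alpha_j=e^{-6S_j}$. In each region we will obtain a pointwise bound on $\Phi_{\alpha,w,y}$ and its derivatives, and then multiply by the weight prescribed in Definition~\ref{weightedonespace}.

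First I would handle the easy regions. On the core, $\Phi_{\alpha,w,y}$ depends smoothly on $(\alpha,w,y)$ and vanishes at the origin because the basepoint lies in $\Mbar(M)$; hence $|D^j\Phi_{\alpha,w,y}|\leq c_j(|\alpha|+|w|+|y|)$ pointwise and, the core having finite area, the weighted contribution is bounded by $c(|\alpha|+|w|+|y|)$. Over a bubble end, $j_y$ and $A_y$ are constant by construction, $u_\alpha=u_0$, $\lambda$ is supported off the ends, and $u_0$ already solves the equation; so $\Phi_{\alpha,w,y}$ vanishes identically there. Over a main end the argument is similar, except that varying $(\alpha,w)$ shifts $\ell_{\alpha,w}$ and thus the Hamiltonian term; using the exponential convergence of $\ell_{\alpha,w}$ to the vertex $v_i$ together with \eqref{uzeromain} (convergence of $u_0$ to $\gamma_i$), this contribution is absorbed into $c(|w|+|y|)$ after taking weighted norms.

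Next I would address the necks, which is where the $|\alpha|^\epsilon$ comes from. In a bubble neck $S^1\times[0,6S_i]$ the expression is supported in $[S_i-1,S_i+1]\cup[5S_i-1,5S_i+1]$, where the cutoff $\chi$ in the definition of $u_\alpha$ is active; the pointwise bound $|D^j u_\alpha|\leq c_j e^{-S_i}$ from \eqref{uzerobubble}, combined with the weight $e^{\delta\min(s,6S_i-s)}$ that is at most $e^{\delta S_i}$ on the support, yields a contribution to the weighted $L^2$-norm bounded by $c\cdot e^{-(1-\delta)S_i}=c|\alpha_i|^{(1-\delta)/6}$. In a main neck $S^1\times[N,6S_j-N]$ the same type of pointwise estimate applies, but using Proposition~\ref{neckestimateHF} and \eqref{uzeromain}: for any $\delta'<\delta_i$ the derivatives of $\exp_{\gamma_i(t)}^{-1}u_0$ decay like $e^{-\delta' s}$, and since we chose $\delta<\delta_i$ at the outset we may fix $\delta<\delta'<\delta_i$; the weighted contribution is then bounded by $c\cdot e^{-(\delta'-\delta)S_j}=c|\alpha_j|^{(\delta'-\delta)/6}$. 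Summing over all nodes and taking square roots yields the total bound
\[
\|\Phi_{\alpha,w,y}\|_{k-1,2,\delta,\delta}\leq c\bigl(|w|+|y|+|\alpha|^\epsilon\bigr)
\]
with $\epsilon:=\min\bigl((1-\delta)/6,\,\min_i(\delta_i-\delta)/6\bigr)>0$, where we absorb the linear $|\alpha|$ contribution from the core using $|\alpha|\leq c|\alpha|^\epsilon$ on a bounded neighborhood of zero.

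\textbf{Main obstacle.} The routine bookkeeping is localized to the neck estimates, but the genuinely delicate point is the interplay between the $w$-dependence of $\ell_{\alpha,w}$ and the weighted norms in the main necks. Varying $w$ moves the Morse flow line of the standard function on $\Delta^n$, and one must check that the resulting change in $H(\ell_{\alpha,w}(A_y(\cdot)))$, when combined with the exponential convergence of $u_0$ to the periodic orbit and of $\ell_{\alpha,w}$ to the breaking vertex, produces an $L^2$-bound with the correct weight. The key observation enabling this (and the reason for choosing the hypersurfaces $H^i$ and the parametrization in \S\ref{gluingandvaryingjAl}) is that the rate at which $\ell_{\alpha,w}$ converges to a critical vertex is strictly positive, so the effective weight there is at most $e^{(\delta-\lambda)s}$ for some $\lambda>0$, which is integrable; this is what forces the $|w|$ dependence to enter only linearly without any $|\alpha|$-shifted factor.
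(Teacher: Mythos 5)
Your overall strategy (decompose $C_\alpha$ into core, ends, and necks; bound $\F_{\alpha,w,y}(0)$ pointwise on each piece; multiply by the appropriate weights) is the right one and matches the paper's approach. However, several of your intermediate claims are wrong, and two of them would leave genuine gaps if taken at face value.

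First, on bubble ends you claim $\F_{\alpha,w,y}(0)$ ``vanishes identically.'' It does not. Although $u_\alpha=u_0$, $j_y=j_0$, and $A_y=A_0$ over the bubble ends, and $\lambda$ is supported off the ends, the almost complex structure $J(\ell_{\alpha,w}(A_y(\cdot)))$ and Hamiltonian $H((\ell_{\alpha,w}\times\id_{S^1})(A_y(\cdot)))$ that appear in $\F_{\alpha,w,y}$ are evaluated at $\ell_{\alpha,w}(A_y(\cdot))\in\Delta^n$, and $\ell_{\alpha,w}\ne\ell_{0,0}$ in general (the broken flow line is perturbed by $\alpha$ and $w$). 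Since $A_y$ is constant on each bubble component, the point $\ell_{\alpha,w}(A_y(\cdot))$ is a $(\alpha,w)$-dependent point of $\Delta^n$ that is not near a vertex, so $J$ and $H$ there really do move. The correct estimate on bubble ends is $c(\left|\alpha\right|^\epsilon+\left|w\right|+\left|y\right|)$, by the same $C^\ell$-comparison argument you use on the core. Relatedly, your core estimate asserts that $\F_{\alpha,w,y}(0)$ ``depends smoothly on $(\alpha,w,y)$'' and is therefore pointwise $O(\left|\alpha\right|+\left|w\right|+\left|y\right|)$; this is also not right. The dependence of $\ell_{\alpha,w}$ on the main gluing parameters $\alpha=e^{-6S}$ over the compact part $\coprod_i[-N,N]$ is only H\"older: near a critical vertex the Morse flow from Definition~\ref{simplexflows} linearizes with eigenvalues $\pm\pi$, so changing the break time by $6S$ changes $\ell_{\alpha,w}$ on the core by $O(\left|\alpha\right|^\epsilon)$ for some $\epsilon>0$ determined by the flow, not by $O(\left|\alpha\right|)$. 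Your later ``absorbing'' step conceals this but is built on a false premise.

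Second, on the bubble necks you claim the expression is supported in the cutoff region $S^1\times([S-1,S+1]\cup[5S-1,5S+1])$. This is wrong on two counts. Outside $S^1\times[S-1,5S+1]$ the same $\ell_{\alpha,w}$-dependence as on the bubble ends makes the expression nonzero (and bounded by $\left|\alpha\right|^\epsilon+\left|w\right|+\left|y\right|$). More importantly, over $S^1\times[S,5S]$ the preglued map $u_\alpha$ is constant, so $du_\alpha=0$, but the term $2d(\proj_{S^1}A_y)\otimes X_H$ is generically nonzero there because $A_y$ is not constant on the non-bubble side of a bubble node. That term must be bounded separately, using $dA_y=O(e^{-s})$ together with $\delta<1$ to make the weighted $L^2$ integral over $[S,5S]$ come out $O(\left|\alpha\right|^\epsilon)$. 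Finally, your treatment of the main ends and your ``main obstacle'' paragraph make this region look hard, when in fact the opposite is true: $H$ and $J$ are required to be \emph{constant} near the vertices of $\Delta^n$ (Definition~\ref{JHdef}), and $N$ was fixed large enough that $\ell_{\alpha,w}(A_y(\cdot))$ lies in this constant region over the entire main end and main neck for all small $(\alpha,w,y)$; thus $J(\ell_{\alpha,w}(A_y(\cdot)))=J(v_i)$ and $H((\ell_{\alpha,w}\times\id_{S^1})(A_y(\cdot)))=H(v_i)$ there exactly, and $\F_{\alpha,w,y}(0)$ vanishes over the main ends, with support in the main necks lying only in the cutoff region where your weighted estimate with $\delta<\delta_i$ then applies cleanly. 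Recognizing this constancy is the key structural observation that eliminates the interaction you flag as the obstacle.
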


\begin{proof}
Recall that:
\begin{equation}\label{delbareqnforuzeroHF}
\Bigl(du_0+2d(\proj_{S^1}A_0)\otimes X_{H((\ell_{0,0}\times\id_{S^1})(A_0(\cdot)))}(u_0)
+\lambda(e_0)(0,0,x_0^1,\ldots,x_0^r,\cdot,u_0(\cdot))\Bigr)^{0,1}_{j_0,J(\ell_{0,0}(A_0(\cdot)))}=0
\end{equation}

We estimate $\F_{\alpha,w,y}(0)$ over the main ends/necks.  Note that over this region, the $\lambda$ term vanishes, $j_y=j_0$, $J(\ell_{\alpha,w}(A_y(\cdot)))=J(\ell_{0,0}(A_0(\cdot)))$, and $H((\ell_{\alpha,w}\times\id_{S^1})(A_y(\cdot)))=H((\ell_{0,0}\times\id_{S^1})(A_0(\cdot)))$.  Now due to \eqref{delbareqnforuzeroHF}, it follows that in this region, $\F_{\alpha,w,y}(0)$ is supported inside the subsets $S^1\times([S-1,S]\cup[5S,5S+1])$ of the main necks $S^1\times[N,6S-N]$.  The contribution of such a region to the norm of $\F_{\alpha,w,y}(0)$ is bounded above by a constant times $\left|\alpha\right|^\epsilon$, as follows from the estimate \eqref{uzeromain} on the derivatives of $u_0$ and the fact that $\delta<\delta_i$.

We estimate $\F_{\alpha,w,y}(0)$ away from the ends/necks.  Note that $\ell_{\alpha,w}$ is close to $\ell_{0,0}$, in the sense that they differ over $\coprod_{i=1}^k[-N,N]$ (which may be viewed as a subset of $\coprod_{i=1}^k\RR$ and of $(\coprod_{i=1}^k)_\alpha$) by a constant times $\left|\alpha\right|^\epsilon+\left|w\right|$ in $C^\ell$ for some fixed $\epsilon>0$ (depending on the flow on $\Delta^n$ from Definition \ref{simplexflows}) and any $\ell<\infty$.  It follows that away from the ends/necks, the $C^\ell$ distance between $J(\ell_{\alpha,w}(A_y(\cdot)))$ on $C_\alpha$ and $J(\ell_{0,0}(A_0(\cdot)))$ on $C_0$ is bounded by a constant (depending on $\ell$) times $\left|\alpha\right|^\epsilon+\left|w\right|+\left|y\right|$, where we identify $C_0$ and $C_\alpha$ away from the ends/necks in the canonical way (any $\ell<\infty$).  The same holds for $H((\ell_{\alpha,w}\times\id_{S^1})(A_y(\cdot)))$ and $H((\ell_{0,0}\times\id_{S^1})(A_0(\cdot)))$.  It also follows that the distance between $x^i\in C_\alpha$ and (the image in $C_\alpha$ of) $x^i_0\in C_0$ is bounded by a constant times $|\alpha|^\epsilon+\left|w\right|$.  Hence we conclude that away from the ends/necks, $\F_{\alpha,w,y}(0)$ differs from the left hand side of \eqref{delbareqnforuzeroHF} in $C^\ell$ by a constant times $\left|\alpha\right|^\epsilon+\left|w\right|+\left|y\right|$ for any $\ell<\infty$.  It follows that the contribution of this region to the norm of $\F_{\alpha,w,y}(0)$ is bounded by a constant times $\left|\alpha\right|^\epsilon+\left|w\right|+\left|y\right|$.

We estimate $\F_{\alpha,w,y}(0)$ over the bubble ends.  The reasoning above applies as written to imply that $\F_{\alpha,w,y}(0)$ is bounded in $C^\ell$ (with respect to the usual metric on $\tilde C_\alpha$) by a constant times $\left|\alpha\right|^\epsilon+\left|w\right|+\left|y\right|$ for any $\ell<\infty$.  Since $\delta<1$, the weighted Sobolev norm over the bubble ends is also bounded by a constant times $\left|\alpha\right|^\epsilon+\left|w\right|+\left|y\right|$.

We estimate $\F_{\alpha,w,y}(0)$ over the bubble necks $S^1\times[0,6S]$.  The argument for the bubble ends applies to show that the contribution outside $S^1\times[S-1,5S+1]$ is bounded by a constant times $\left|\alpha\right|^\epsilon+\left|w\right|+\left|y\right|$.  Over $S^1\times([S-1,S]\cup[5S,5S+1])$, we bound the expression termwise: both $u_\alpha$ and $A_y$ are $O(e^{-s})$ in all derivatives, so the contribution of this region is bounded by $\left|\alpha\right|^\epsilon$.  Over $S^1\times[S,5S]$, only the term involving $X_H$ is nonzero, and since $A_y=O(e^{-s})$ in all derivatives, its contribution to the norm is bounded by a constant times $\left|\alpha\right|^\epsilon$ since $\delta<1$.
\end{proof}

\begin{lemma}[Estimate for kernel pregluing]\label{pregluingKestimateHF}
For all $\kappa\in K$, we have:
\begin{equation}
\left\|D_{\alpha,w,y}\kappa_\alpha\right\|_{k-1,2,\delta,\delta}\leq c\cdot\bigl(\left|\alpha\right|^\epsilon+\left|w\right|+\left|y\right|\bigr)\left\|\kappa\right\|
\end{equation}
uniformly over $(\alpha,w,y)$ in a neighborhood of zero, for $c<\infty$ and $\epsilon>0$ depending on data which has been previously fixed.
\end{lemma}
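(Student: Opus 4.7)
The plan is to imitate the proof of Lemma \ref{mappregluingissmallHF}, exploiting the key fact that $D_{0,0,0}\kappa = 0$ (since $\kappa \in K$) in exactly the same way that the previous lemma exploited the $\delbar$-equation \eqref{delbareqnforuzeroHF} for $u_0$. Concretely, I would split the domain $C_\alpha$ into five types of regions and estimate the $W^{k-1,2,\delta,\delta}$-norm of $D_{\alpha,w,y}\kappa_\alpha$ on each piece separately: (i) away from the ends/necks, (ii) the bubble ends of $C_\alpha$, (iii) the bubble necks, (iv) the main ends, and (v) the main necks. On each of (i)--(v) it suffices to obtain a bound by $c \cdot (|\alpha|^\epsilon + |w| + |y|) \|\kappa\|$.

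On region (i), the map $C_0 \to C_\alpha$ is canonical and $\kappa_\alpha = \PT \circ \kappa$. The formula for $D_{\alpha,w,y}$ from Lemma \ref{linearizedformulaHF} depends on $(\alpha, w, y)$ only through $\ell_{\alpha,w}$, $A_y$, $j_y$, and the points $x^i$; by the same $C^\ell$-closeness argument used in Lemma \ref{mappregluingissmallHF}, each of these differs from its $(0,0,0)$ counterpart by $O(|\alpha|^\epsilon + |w| + |y|)$ in $C^\ell$ for any $\ell < \infty$. Hence $D_{\alpha,w,y}\kappa_\alpha$ differs from $D_{0,0,0}\kappa = 0$ by at most a constant times $(|\alpha|^\epsilon + |w| + |y|)\|\kappa\|$ in $C^\ell$, giving the desired bound. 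On the bubble ends (ii), the same $C^\ell$-closeness holds and additionally $\kappa_\alpha = \kappa$; since $\delta < 1$ and the weight $e^{2\delta s}$ is defeated by the exponential decay \eqref{kappabubble} of $\kappa - \kappa(n)$ (together with the fact that $\kappa(n)$ is annihilated by $D_{0,0,0}$ in the limit), we again obtain the bound. On the main ends (iv), $\kappa_\alpha = \kappa$, the formulas collapse to their $(0,0,0)$ values, and the weighted norm of $D_{0,0,0}\kappa = 0$ contributes nothing while the correction is $O(|\alpha|^\epsilon + |w| + |y|)\|\kappa\|$, now using $\delta < \delta_i$ in conjunction with \eqref{kappamain} to absorb the weight.

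The genuine work lies in regions (iii) and (v), where the pregluing formulas \eqref{kernelpregluingformulaHFbubble}--\eqref{kernelpregluingformulaHFmain} introduce cutoffs. In a bubble neck $S^1 \times [0,6S]$, $\kappa_\alpha$ is either a parallel transport of $\kappa$, a cutoff interpolation with $\kappa(n)$, or the constant $\kappa(n)$, and $D_{0,0,0}\kappa(n) = 0$. On $S^1 \times ([S-1,S+1] \cup [5S-1, 5S+1])$ one differentiates through the cutoff and picks up terms of size $O(e^{-S}\|\kappa\|)$ using the smoothness estimate \eqref{kappabubble}; multiplied by the bubble-neck weight $e^{\delta \min(s, 6S-s)}$ at $s \approx S$, this yields $O(e^{-(1-\delta)S}\|\kappa\|) = O(|\alpha|^{1-\delta}\|\kappa\|)$, well within the claimed bound. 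On the constant middle $S^1 \times [S+1, 5S-1]$ the expression vanishes to leading order, leaving only the $(\alpha,w,y)$-correction of $D_{\alpha,w,y}$ applied to the constant $\kappa(n)$, which is $O(|\alpha|^\epsilon + |w| + |y|)$ as before. A main neck $S^1 \times [N, 6S-N]$ is actually easier: by \eqref{kernelpregluingformulaHFmain}, $\kappa_\alpha$ is identically zero on $S^1 \times [S+1, 5S-1]$, so $D_{\alpha,w,y}\kappa_\alpha$ vanishes there, and on the cutoff strips $S^1 \times ([S-1,S+1] \cup [5S-1, 5S+1])$ we combine the decay estimate \eqref{kappamain} (valid because $\delta < \delta_i$) with the weight $e^{2\delta \min(s-N, 6S-N-s)}$ to get a bound of the form $O(e^{-(\delta_i - \delta)S}\|\kappa\|) = O(|\alpha|^{(\delta_i - \delta)}\|\kappa\|)$, absorbed into $|\alpha|^\epsilon\|\kappa\|$ for $\epsilon := \min(1-\delta, \delta_i - \delta) > 0$.

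The main technical obstacle, as in the analogous Lemma \ref{pregluingKestimate}, is bookkeeping across the bubble-neck cutoff region: one must carefully check that the several terms arising from Lemma \ref{linearizedformulaHF} (namely the torsion and $\nabla_\xi X_H$ pieces, the $\lambda$-derivative pieces \eqref{firstLterm}--\eqref{lastLterm}, and the cutoff derivatives of $\kappa_\alpha$ itself) each individually admit a pointwise bound of the form (something decaying in $S$) times $\|\kappa\|$, and that this decay beats the weight $e^{\delta \min(s, 6S-s)}$ in the norm. All of these estimates are pointwise and follow from \eqref{uzerobubble}--\eqref{kappamain} together with boundedness of the coefficients of $\nabla$, $T_\nabla$, $X_H$, and $\lambda(e_0)$ on compact sets, so no new analytic input beyond what appeared in Lemma \ref{mappregluingissmallHF} is required; the argument is essentially a mechanical transcription with $\kappa_\alpha$ in the role of $u_\alpha$ and using $D_{0,0,0}\kappa = 0$ in place of the $\delbar$-equation for $u_0$.
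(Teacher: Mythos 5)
Your proposal follows essentially the same approach as the paper: region-by-region decomposition of $C_\alpha$, exploitation of $D_{0,0,0}\kappa = 0$ together with the $C^\ell$-closeness of the coefficients of $D_{\alpha,w,y}$ and $D_{0,0,0}$, and the exponential decay estimates \eqref{uzerobubble}--\eqref{kappamain}. The treatments of regions (i), (ii), (iv), (v), and of the cutoff strips in (iii) all match the paper in substance.

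The one place where you conflate two distinct mechanisms is the constant middle $S^1\times[S+1,5S-1]$ of a bubble neck. You write that ``the expression vanishes to leading order, leaving only the $(\alpha,w,y)$-correction of $D_{\alpha,w,y}$ applied to the constant $\kappa(n)$, which is $O(|\alpha|^\epsilon+|w|+|y|)$ as before.'' The ``as before'' signals the $C^\ell$-closeness argument, but that is not what is operating here. In this region $\kappa_\alpha=\kappa(n)$ and $u_\alpha=u_0(n)$ are constants, so $\nabla\kappa_\alpha$, $T_\nabla(\kappa_\alpha,du_\alpha)$, and the $\lambda$-terms all vanish identically. What survives is the full term $2d(\proj_{S^1}A_y)\otimes\nabla_{\kappa(n)}X_{H(\ldots)}$, which is present for every nonzero $\alpha$ (including $y=w=0$) and is not a ``correction'' relative to the $(0,0,0)$-operator. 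Its smallness comes from a separate mechanism: $d(\proj_{S^1}A_y)=O(e^{-s})$ over the bubble neck (because $A_y$ is holomorphic with image contained in a small disk), so after multiplying by the neck weight $e^{\delta\min(s,6S-s)}$ and integrating, the contribution is $O(e^{-(1-\delta)S}\|\kappa\|)$ --- this is where $\delta<1$ is used. The paper isolates this term explicitly and attributes its smallness to the decay of $A_y$, which is the correct explanation. Your conclusion is right but your diagnosis of why it holds is misattributed, and this would matter if one tried to flesh the sketch out. Separately, there is a minor arithmetic slip: since $|\alpha|=e^{-6S}$, we have $e^{-(1-\delta)S}=|\alpha|^{(1-\delta)/6}$, not $|\alpha|^{1-\delta}$ (harmless, since any positive $\epsilon$ suffices).
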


\begin{proof}
Recall that $D_{0,0,0}\kappa=0$; we will use this to estimate $D_{\alpha,w,y}\kappa_\alpha$ via the explicit expression for $D_{\alpha,w,y}$ from Lemma \ref{linearizedformulaHF}.

We estimate $D_{\alpha,w,y}\kappa_\alpha$ over the main ends/necks.  Over this region, the $\lambda$ terms vanish, and $J,H$ are the same for $(\alpha,w,y)$ as they are for $(0,0,0)$.  Thus $D_{\alpha,w,y}\kappa_\alpha$ is supported inside $S^1\times([S-1,S+1]\cup[5S-1,5S+1])$ in the main necks, and vanishes in the main ends.  From the exponential decay estimates \eqref{uzeromain}--\eqref{kappamain}, we obtain that the contribution to the norm of $D_{\alpha,w,y}\kappa_\alpha$ over the main ends/necks is bounded by a constant times $\left|\alpha\right|^\epsilon\left\|\kappa\right\|$.

We estimate $D_{\alpha,w,y}\kappa_\alpha$ away from the ends/necks.  As in the proof of Lemma \ref{mappregluingissmallHF}, the difference between $J,H$ for $(\alpha,w,y)$ and for $(0,0,0)$ is bounded in $C^\ell$ by a constant times $\left|\alpha\right|^\epsilon+\left|w\right|+\left|y\right|$ for any $\ell<\infty$; similarly for the distance between $x^i$ and $x^i_0$.  It thus follows from the explicit form in Lemma \ref{linearizedformulaHF} that the contribution to the norm of $D_{\alpha,w,y}\kappa_\alpha$ over this region is bounded by a constant times $(\left|\alpha\right|^\epsilon+\left|w\right|+\left|y\right|)\left\|\kappa\right\|$.

Over the bubble ends, the same reasoning applies, and we obtain the desired bound since $\delta<1$.

We estimate $D_{\alpha,w,y}\kappa_\alpha$ over the bubble necks.  Outside $S^1\times[S-1,5S+1]$, the reasoning for the bubble ends applies to show that the contribution is bounded as desired.  Over $S^1\times([S-1,S+1]\cup[5S-1,5S+1])$, the exponential decay estimates on $u_0$ and $\kappa$, along with similar estimates on $A_y$, show that the contribution of this region is bounded by $\left|\alpha\right|^\epsilon\left\|\kappa\right\|$ for some $\epsilon>0$ since $\delta<1$.  Over $S^1\times[S+1,5S-1]$, only the term involving $X_H$ is nonzero, and its contribution is bounded by a constant times $\left|\alpha\right|^\epsilon\left\|\kappa\right\|$ since $\delta<1$ and $A_y$ decays like $O(e^{-s})$ in all derivatives.
\end{proof}

\subsection{Approximate right inverse}

Recall that by assumption, the linearized operator:
\begin{equation}
D_{0,0,0}:W^{k,2,\delta,\delta}(C_0,u_0^\ast TM)_{D,H}\oplus E\to W^{k-1,2,\delta,\delta}(\tilde C_0,\Omega^{0,1}_{\tilde C_0,j_0}\otimes_\CC u_0^\ast TM_{J(\ell_{0,0}(A_0(\cdot)))})
\end{equation}
is surjective (even if we replace $E$ with $E'$).  We now proceed to fix a bounded right inverse:
\begin{equation}\label{Qzerozerozero}
Q_{0,0,0}:W^{k-1,2,\delta,\delta}(\tilde C_0,\Omega^{0,1}_{\tilde C_0,j_0}\otimes_\CC u_0^\ast TM_{J(\ell_{0,0}(A_0(\cdot)))})\to W^{k,2,\delta,\delta}(C_0,u_0^\ast TM)_{D,H}\oplus E'
\end{equation}
whose image admits a simple description.  Fix a collection of points $z_i\in C_0$ ($1\leq i\leq h$) not nodes and not contained in any of the ends, subspaces $V_i\subseteq T_{u_0(z_i)}M$, and a subspace $E''\subseteq E'$ so that the natural evaluation map:
\begin{equation}
L_0:K\xrightarrow\sim\biggl(\bigoplus_{i=1}^hT_{u_0(z_i)}M/V_i\biggr)\oplus E/E''
\end{equation}
is an isomorphism (such choices exist since $K\twoheadrightarrow E/E'$ is surjective and we may shrink the ends without loss of generality).  Now we can consider the same evaluation map on the larger space:
\begin{equation}\label{lineartofixinverseHF}
L_0:W^{k,2,\delta,\delta}(C_0,u_0^\ast TM)_{D,H}\oplus E\to W:=\biggl(\bigoplus_{i=1}^hT_{u_0(z_i)}M/V_i\biggr)\oplus E/E''
\end{equation}
Since $L_0$ sends $K=\ker D_{0,0,0}$ isomorphically to $W$, it follows that the restriction of $D_{0,0,0}$ to $\ker L_0$ is an isomorphism of Banach spaces.  Hence there is a unique right inverse:
\begin{equation}
Q_{0,0,0}:W^{k-1,2,\delta,\delta}(\tilde C_0,\Omega^{0,1}_{\tilde C_0,j_0}\otimes_\CC u_0^\ast TM_{J(\ell_{0,0}(A_0(\cdot)))})\to W^{k,2,\delta,\delta}(C_0,u_0^\ast TM)_{D,H}\oplus E
\end{equation}
with image $\ker L_0$, and it is bounded.  Since $E''\subseteq E'$, $\im Q_{0,0,0}=\ker L_0$ is in fact contained in the right hand side of \eqref{Qzerozerozero}.  We fix once and for all this $Q_{0,0,0}$.

\begin{definition}[Approximate right inverse $T_{\alpha,w,y}$]
We define a map:
\begin{equation}
T_{\alpha,w,y}:W^{k-1,2,\delta,\delta}(\tilde C_\alpha,\Omega^{0,1}_{\tilde C_\alpha,j_y}\otimes_\CC u_\alpha^\ast TM_{J(\ell_{\alpha,w}(A_y(\cdot)))})\to W^{k,2,\delta,\delta}(C_\alpha,u_\alpha^\ast TM)_{D,H}\oplus E
\end{equation}
as the composition:
\begin{equation}
T_{\alpha,w,y}:={\operatorname{glue}}\circ\PT\circ Q_{0,0,0}\circ\PT\circ(\id^{1,0}_\ast\otimes\id^{1,0})^{-1}\circ\operatorname{break}
\end{equation}
of maps in the following diagram, to be defined below:
\begin{equation}\label{biginversediagramHF}
\begin{tikzcd}
W^{k,2,\delta,\delta}(C_\alpha,u_\alpha^\ast TM)_{D,H}\oplus E\ar{r}{D_{\alpha,w,y}}&W^{k-1,2,\delta,\delta}(\tilde C_\alpha,\Omega^{0,1}_{\tilde C_\alpha,j_y}\otimes_\CC u_\alpha^\ast TM_{J(\ell_{\alpha,w}(A_y(\cdot)))})\ar{d}{\mathrm{break}}\\
W^{k,2,\delta,\delta}(C_0,u_{0|\alpha}^\ast TM)_{D,H}\oplus E\ar{u}{\mathrm{glue}}\ar{r}{D_{0|\alpha,0,y}}&W^{k-1,2,\delta,\delta}(\tilde C_0,\Omega^{0,1}_{\tilde C_0,j_y}\otimes_\CC u_{0|\alpha}^\ast TM_{J(\ell_{0,0}(A_y(\cdot)))})\\
W^{k,2,\delta,\delta}(C_0,u_{0|\alpha}^\ast TM)_{D,H}\oplus E\ar[equals]{u}\ar{r}{D_{0|\alpha,0,0}}&W^{k-1,2,\delta,\delta}(\tilde C_0,\Omega^{0,1}_{\tilde C_0,j_0}\otimes_\CC u_{0|\alpha}^\ast TM_{J(\ell_{0,0}(A_0(\cdot)))})\ar[leftrightarrow]{d}{\PT}\ar[swap]{u}{{\id^{1,0}_\ast}\otimes{\id^{1,0}}}\\
W^{k,2,\delta,\delta}(C_0,u_0^\ast TM)_{D,H}\oplus E\ar[leftrightarrow]{u}{\PT}\ar[yshift=0.5ex]{r}{D_{0,0,0}}&\ar[yshift=-0.5ex]{l}{Q_{0,0,0}}W^{k-1,2,\delta,\delta}(\tilde C_0,\Omega^{0,1}_{\tilde C_0,j_0}\otimes_\CC u_0^\ast TM_{J(\ell_{0,0}(A_0(\cdot)))})
\end{tikzcd}
\end{equation}
The top and bottom horizontal maps $D_{\alpha,w,y}$ and $D_{0,0,0}$ are the linearized operators defined earlier.  The third horizontal map $D_{0|\alpha,0,0}$ is the linearized operator at the flattened map $u_{0|\alpha}$ (its definition is identical to that of $D_{0,0,0}$ except for using $u_{0|\alpha}$ in place of $u_0$).  Similarly, the second horizontal map $D_{0|\alpha,0,y}$ is the linearized operator at the flattened map $u_{0|\alpha}$, using $(j_y,A_y)$ in place of $(j_0,A_0)$.

The vertical maps $\PT$ are simply parallel transport $\PT^{\ell_{0,0}(A_0(\cdot))}$; this is valid since $\PT$ is $J$-linear.

The vertical map $\id^{1,0}_\ast\otimes\id^{1,0}$ denotes the tensor product of $\id^{1,0}:TM_{J(a)}\to TM_{J(b)}$ (the $(1,0)$-component of the identity map) and $\id^{1,0}_\ast:\Omega^{0,1}_{C_0,j_0}\to\Omega^{0,1}_{C_0,j_y}$ (the map induced by $\id^{1,0}:(TC_0,j_0)\to(TC_0,j_y)$).

We define the map:
\begin{equation}
W^{k,2,\delta,\delta}(C_0,u_{0|\alpha}^\ast TM)_{D,H}\xrightarrow{\mathrm{glue}}W^{k,2,\delta,\delta}(C_\alpha,u_\alpha^\ast TM)_{D,H}
\end{equation}
Let $\xi\in W^{k,2,\delta,\delta}(C_0,u_{0|\alpha}^\ast TM)_{D,H}$.  Away from the necks of $C_\alpha$, $\operatorname{glue}(\xi)$ is simply $\xi$.  In any particular bubble neck $S^1\times[0,6S]\subseteq C_\alpha$, we define:
\begin{equation}
\operatorname{glue}(\xi)(s,t):=\begin{cases}
\xi(s,t)&s\leq 2S\cr
\xi(n)+\chi(4S-s)\cdot[\xi(s,t)-\xi(n)]+\chi(4S-s')\cdot[\xi(s',t')-\xi(n)]&2S\leq s\leq 4S\cr
\xi(s',t')&4S\leq s
\end{cases}
\end{equation}
(noting the corresponding ends $(t,s)\in S^1\times[0,\infty)\subseteq C_0$ and $(t',s')\in S^1\times[0,\infty)\subseteq C_0$); this definition also applies over the main necks with the obvious adjustment (and no $\xi(n)$ terms).

We define the map:
\begin{equation}
W^{k-1,2,\delta,\delta}(\tilde C_\alpha,\Omega^{0,1}_{\tilde C_\alpha,j_y}\otimes_\CC u_\alpha^\ast TM_{J(\ell_{\alpha,w}(A_y(\cdot)))})\xrightarrow{\mathrm{break}}W^{k-1,2,\delta,\delta}(\tilde C_0,\Omega^{0,1}_{\tilde C_0,j_y}\otimes_\CC u_{0|\alpha}^\ast TM_{J(\ell_{0,0}(A_y(\cdot)))})
\end{equation}
as follows.  Fix a smooth function $\bar\chi:\RR\to[0,1]$ such that:
\begin{equation}
\bar\chi(x)=\begin{cases}1&x\leq-1\cr0&x\geq+1\end{cases}\qquad\bar\chi(x)+\bar\chi(-x)=1
\end{equation}
Let $\eta\in W^{k-1,2,\delta,\delta}(\tilde C_\alpha,\Omega^{0,1}_{\tilde C_\alpha,j_y}\otimes_\CC u_\alpha^\ast TM_{J(\ell_{\alpha,w}(A_y(\cdot)))})$.  Away from the ends with $\alpha\ne 0$, $\operatorname{break}(\eta)$ is given by $(\id^{1,0})^{-1}(\eta)$, where $\id^{1,0}:TM_{J(\ell_{0,0}(A_y(\cdot)))}\to TM_{J(\ell_{\alpha,w}(A_y(\cdot)))}$ denotes the $(1,0)$-component of the identity map.  In any particular bubble end $[0,\infty)\times S^1\subseteq C_0$ with $\alpha\ne 0$, we define:
\begin{equation}
\operatorname{break}(\eta)(t,s):=\begin{cases}
(\id^{1,0})^{-1}(\eta(t,s))&s\leq 3S-1\cr
\bar\chi(s-3S)\cdot(\id^{1,0})^{-1}(\eta(t,s))&3S-1\leq s\leq 3S+1\cr
0&3S+1\leq s
\end{cases}
\end{equation}
(noting the corresponding neck $[0,6S]\times S^1\subseteq C_\alpha$); this definition also applies over the main ends, with the obvious adjustment for negative main ends.
\end{definition}

Let us make the elementary observation that the definition of $L_0$ extends perfectly well to give an analogous bounded linear map:
\begin{equation}\label{newLonpregluedHF}
L_\alpha:W^{k,2,\delta,\delta}(C_\alpha,u_\alpha^\ast TM)_{D,H}\oplus E\to W
\end{equation}
Since $\im Q_{0,0,0}\subseteq\ker L_0$, it follows from the definition of $T_{\alpha,w,y}$ that $\im T_{\alpha,w,y}\subseteq\ker L_\alpha$ as well.

\begin{lemma}\label{approxbottomHF}
Let:
\begin{equation}
\begin{CD}
X@>D>>Y\cr
@AGAA@VVBV\cr
X'@>D'>>Y'
\end{CD}
\end{equation}
denote the bottom square in \eqref{biginversediagramHF}.  Then for $\xi\in X'$ and $\eta\in Y$ with $D'\xi=B\eta$, we have:
\begin{equation}
\left\|DG\xi-\eta\right\|\leq c\cdot\left|\alpha\right|^\epsilon\left\|\xi\right\|
\end{equation}
uniformly over $(\alpha,w,y)$ in a neighborhood of zero, for $c<\infty$ and $\epsilon>0$ depending on data which has been previously fixed.
\end{lemma}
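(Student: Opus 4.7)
The plan is to reduce the claim to a bound on the operator norm of the commutator of $\PT$ and the linearized operator, localized to the flattening regions where $u_{0|\alpha}$ differs from $u_0$. Concretely, writing $G = B = \PT^{\ell_{0,0}(A_0(\cdot))}$, the hypothesis $D'\xi = B\eta$ determines $\eta$ uniquely, so the desired conclusion is equivalent to the operator-norm bound
\begin{equation}
\bigl\| D_{0|\alpha,0,0}\circ\PT - \PT\circ D_{0,0,0}\bigr\|_{X'\to Y} \leq c\cdot|\alpha|^\epsilon.
\end{equation}
Using the explicit formula for the linearized operator from Lemma \ref{linearizedformulaHF} (applied to the base maps $u_0$ and $u_{0|\alpha}$ respectively, with the common complex structures $j_0$, $J(\ell_{0,0}(A_0(\cdot)))$ and common Hamiltonian term), the two operators $\PT\circ D_{0,0,0}\circ\PT^{-1}$ and $D_{0|\alpha,0,0}$ agree identically on the open subset $C_0^{\leq S-1} \subseteq C_0$ where $u_{0|\alpha} = u_0$. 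Therefore the difference is supported in the flattening collars $S^1 \times [S-1,\infty)$ inside each bubble end and each main end.

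Next I would estimate the contribution of the flattening collars to the operator norm. Write $\zeta_\alpha := \exp_{u_0}^{-1}u_{0|\alpha}$, which vanishes for $s \leq S-1$ and satisfies, by construction of the flattening together with the exponential decay of $u_0$ given in \eqref{uzerobubble} and \eqref{uzeromain}, the pointwise estimates $|D^j\zeta_\alpha(t,s)| \leq c_j e^{-\delta' s}$ for all $\delta' < \delta_i$ over the corresponding main end (and $|D^j\zeta_\alpha(t,s)| \leq c_j e^{-s}$ over bubble ends). The difference $D_{0|\alpha,0,0}\PT\xi - \PT D_{0,0,0}\xi$ is a first-order differential operator applied to $\xi$ whose coefficients are smooth algebraic expressions in $\zeta_\alpha$, $\nabla\zeta_\alpha$, and the values of $J$, $\nabla J$, $T_\nabla$, $X_H$, $\nabla X_H$, $\lambda(e_0)$, and $\nabla\lambda(e_0)$ along the geodesic from $u_0$ to $u_{0|\alpha}$; crucially, each such coefficient vanishes when $\zeta_\alpha = 0$, and hence is uniformly controlled pointwise by $|\zeta_\alpha|+|\nabla\zeta_\alpha|$. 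The same applies to higher derivatives up to order $k-1$.

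Combining this pointwise control with the definition of the weighted Sobolev norms (Definition \ref{weightedonespace}), the contribution of the main ends to the operator norm is bounded by a constant multiple of $\sup_{s\geq S-1} e^{(\delta-\delta')s}$, which, since we chose $\delta < \delta_i$, may be taken of the form $c\cdot e^{-(\delta_i - \delta)S} \leq c\cdot|\alpha|^\epsilon$ for a suitable $\epsilon > 0$ depending on $\min_i(\delta_i - \delta) > 0$. The contribution of the bubble ends is bounded similarly by $c\cdot e^{-(1-\delta)S} \leq c\cdot|\alpha|^\epsilon$ since $\delta < 1$. Summing over the (finitely many) ends and using that $|\alpha| \asymp e^{-6S}$ on the relevant end gives the desired bound.

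The main obstacle I anticipate is keeping the bookkeeping of coefficient estimates clean while simultaneously tracking the parallel transport convention and the $J$-linearity it requires; the rest is a direct combination of the decay rates \eqref{uzerobubble}--\eqref{kappamain} with the Sobolev-norm weights, and is essentially parallel to the argument of Lemma \ref{approxmiddle} in the Gromov--Witten setting, with the only substantive difference being the separate treatment of bubble and main ends dictated by the fact that $\delta$ must be compared to two different thresholds ($1$ and $\delta_i$) in the two regimes.
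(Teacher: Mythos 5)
Your proof is correct and follows essentially the same approach as the paper's: reduce to the operator-norm bound on the commutator $\PT\circ D_{0,0,0}-D_{0|\alpha,0,0}\circ\PT$, observe that it is supported over $S^1\times[S-1,\infty)$ in each end (where $u_{0|\alpha}\ne u_0$), and convert the exponential decay estimates \eqref{uzerobubble}, \eqref{uzeromain} into a $|\alpha|^\epsilon$ bound.

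One point worth flagging: your estimate of the operator norm contribution over the collars is more circuitous than necessary, and loses a factor in the exponent. Since the weight $e^{2\delta(s-N)}$ (or $e^{2\delta s}$ in the bubble ends) is \emph{identical} on both the domain $W^{k,2,\delta,\delta}$ and the codomain $W^{k-1,2,\delta,\delta}$, it cancels exactly: the operator norm of a first-order differential operator between these weighted spaces is controlled simply by the $C^{k}$-size of its coefficients, with no need for the intermediate comparison $\sup_{s\geq S-1}e^{(\delta-\delta')s}$ against the unweighted $L^2$-norm. The paper takes this more direct route, obtaining (over the main ends) the sharper estimate $c\,e^{-\delta'S}\approx|\alpha|^{\delta_i/6-\rho}$, whereas your accounting yields $|\alpha|^{(\delta_i-\delta)/6-\rho}$. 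Both are positive exponents, so your argument still closes the lemma as stated; the difference is cosmetic, not a gap.
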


\begin{proof}
In simpler terms, we bound the operator norm of the difference between the two diagonal compositions:
\begin{equation}\label{betterPTcommutingoperatornormboundHF}
\left\|\PT\circ D_{0,0,0}-D_{0|\alpha,0,0}\circ\PT\right\|\leq c\left|\alpha\right|^\epsilon
\end{equation}
(this trivially implies the claimed statement).  To show \eqref{betterPTcommutingoperatornormboundHF}, observe that the two operators only differ over the $S^1\times[S-1,\infty)$ subset of each end.  Now it follows from Lemma \ref{linearizedformulaHF} that the contribution to the operator norm over $S^1\times[S-1,\infty)$ is bounded by a constant times the $C^k$-distance between $u_0$ and $u_{0|\alpha}$ over $S^1\times[0,\infty)$.  The estimates \eqref{uzerobubble}, \eqref{uzeromain} imply that this distance is bounded by a constant times $\left|\alpha\right|^\epsilon$ (to be precise, the bubble ends contribute $\left|\alpha\right|^{1/6-\rho}$ and each main end contributes $\left|\alpha\right|^{\delta_i/6-\rho}$ for any $\rho>0$).
\end{proof}

\begin{lemma}\label{approxmiddleHF}
Let:
\begin{equation}
\begin{CD}
X@>D>>Y\cr
@AGAA@VVBV\cr
X'@>D'>>Y'
\end{CD}
\end{equation}
denote the middle square in \eqref{biginversediagramHF}.  Then for $\xi\in X'$ and $\eta\in Y$ with $D'\xi=B\eta$, we have:
\begin{equation}
\left\|DG\xi-\eta\right\|\leq c\cdot\left|y\right|\left\|\xi\right\|
\end{equation}
uniformly over $(\alpha,w,y)$ in a neighborhood of zero, for $c<\infty$ and $\epsilon>0$ depending on data which has been previously fixed.
\end{lemma}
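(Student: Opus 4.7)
The strategy is to compare $D_{0|\alpha,0,y}$ and $B\circ D_{0|\alpha,0,0}$ directly using the explicit formula of Lemma \ref{linearizedformulaHF}. Both sides depend smoothly on $y$ as bounded operators between the relevant Sobolev spaces, and they agree tautologically at $y=0$ (where $B$ reduces to the identity); the desired estimate amounts to quantifying this smoothness.

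The first step is to observe that the family $(j_y,A_y)$ was chosen in \S\ref{gluingandvaryingjAl} to be constant in $y$ over every bubble end and main end of $C_0$. Therefore $J(\ell_{0,0}(A_y(\cdot)))$ and $H((\ell_{0,0}\times\id_{S^1})(A_y(\cdot)))$ are also independent of $y$ over these regions. Inspecting the formula of Lemma \ref{linearizedformulaHF}, this means that over the ends $D_{0|\alpha,0,y}$ coincides identically with $D_{0|\alpha,0,0}$, and simultaneously $B=\id^{1,0}_\ast\otimes\id^{1,0}$ restricts to the identity there. Thus the difference $D_{0|\alpha,0,y}\xi - B\circ D_{0|\alpha,0,0}\xi$ vanishes on the ends, and it suffices to bound its contribution over the compact complement $K\subseteq C_0$ of the chosen ends.

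On $K$ everything is smooth and finite-dimensional. Writing $D_{0|\alpha,0,y}\xi = P_y\bigl(N_y\xi\bigr)$, where $N_y\xi\in T^\ast\tilde C_0\otimes_\RR u_{0|\alpha}^\ast TM$ is the full (pre-projection) expression from Lemma \ref{linearizedformulaHF} and $P_y$ is projection onto $\Omega^{0,1}_{\tilde C_0,j_y}\otimes_\CC u_{0|\alpha}^\ast TM_{J(\ell_{0,0}(A_y(\cdot)))}$, I would split
\begin{equation}
D_{0|\alpha,0,y}\xi - B\circ D_{0|\alpha,0,0}\xi \;=\; P_y\bigl(N_y\xi - N_0\xi\bigr) \;+\; \bigl(P_y - B\circ P_0\bigr)\bigl(N_0\xi\bigr).
\end{equation}
The quantities $N_y$ and $P_y$ depend smoothly on $y$ over $K$ (the former through $A_y$, $\ell_{0,0}$, the intersection points $x^i$, the connection $\nabla$, and $\lambda$; the latter through $j_y$ and $J(\ell_{0,0}(A_y(\cdot)))$). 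Hence each factor admits a first-order Taylor expansion in $y$ with a uniformly bounded remainder on $K$, giving a pointwise $C^\ell$ bound $\lvert N_y\xi - N_0\xi\rvert \leq c\lvert y\rvert\,\lvert\xi\rvert_{C^\ell}$ and an operator norm bound $\|P_y - B\circ P_0\|\leq c\lvert y\rvert$ on $K$. The uniform commensurability of the weighted Sobolev norms with the usual $(k,2)$ and $(k-1,2)$ norms on the fixed compact region $K$ then converts these pointwise estimates into the claimed operator norm bound $c\cdot\lvert y\rvert\,\lVert\xi\rVert_{k,2,\delta,\delta}$, uniformly in $\alpha$ (since $u_{0|\alpha}$ depends smoothly on $\alpha$ and equals $u_0$ away from the ends) and trivially in $w$ (which does not enter the middle square).

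The main algebraic input, and the only nontrivial point, is the verification that $B=\id^{1,0}_\ast\otimes\id^{1,0}$ is precisely the natural first-order intertwiner between the two $(0,1)$-projections, so that $B\circ P_0 = P_y + O(\lvert y\rvert)$ rather than merely $O(1)$. This amounts to a pointwise check that decomposing the identity map of a real vector space equipped with two nearby complex structures into its $(1,0)$ and $(0,1)$ components (with respect to the reference structure at $y=0$) yields the standard first-order correction between the associated $(0,1)$-projections; the analogous statement for the source cotangent bundle is formally identical. Once this linear-algebraic identity is in hand, the rest of the estimate is routine.
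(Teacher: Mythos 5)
Your reduction step contains a genuine gap. You claim that the family $(j_y,A_y)$ is constant in $y$ over every bubble end and main end of $C_0$, and you use this to conclude that the difference $D_{0|\alpha,0,y}\xi - B\circ D_{0|\alpha,0,0}\xi$ vanishes identically over all ends, so that it suffices to bound the contribution over a fixed compact $K$. But the construction in \S\ref{gluingandvaryingjAl} only requires $j_y$ to be $y$-independent over the bubble ends, and $A_y$ (hence $j_y$) to be $y$-independent over the \emph{main} ends. Over a bubble end, $A_y$ genuinely varies with $y$ --- indeed the coordinates $y$ are chosen precisely to move the images of the bubble nodes around in $S^1\times\RR$ (this is part of the diffeomorphism requirement onto $\prod(S^1\times\RR)^\nu$). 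Consequently $J(\ell_{0,0}(A_y(\cdot)))$, $H(\cdots A_y(\cdot)\cdots)$, and the term $2d(\proj_{S^1}A_y)$ all depend on $y$ over a bubble end whenever $\ell_0$ is nonconstant on the ambient main component, and the difference of operators does not vanish there. Your subsequent Taylor-expansion argument, being localized to $K$, does not cover the bubble ends.

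The gap is repairable, and the paper's own proof gives the fix: one should not try to kill the difference on the ends, but instead observe directly from Lemma \ref{linearizedformulaHF} that the coefficients of the two first-order operators differ \emph{globally} by $O(\lvert y\rvert)$ in $C^\ell$ (measured in the cylindrical coordinates on all ends, where the weighted Sobolev norms are defined), from which the operator-norm estimate follows by the same weight-balancing you invoke. Over the bubble ends, the relevant $O(\lvert y\rvert)$ bound in $C^\ell$ comes from smoothness of $y\mapsto A_y$ together with the boundedness (indeed exponential decay) of $A_y$ and its derivatives in the cylindrical coordinate $s$; over the main ends the coefficients are identical and the bound is trivial. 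Your linear-algebra observation about $\id^{1,0}_\ast\otimes\id^{1,0}$ being the correct first-order intertwiner between the two $(0,1)$-projections is accurate and is in fact the substance of why the difference is $O(\lvert y\rvert)$ rather than $O(1)$; keeping that and replacing the faulty compact-support reduction with the global $C^\ell$ estimate yields a correct proof.
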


\begin{proof}
In simpler terms, we have $\left\|D_{0|\alpha,0,y}-(\id^{1,0}_\ast\otimes\id^{1,0})\circ D_{0|\alpha,0,0}\right\|\leq c\cdot\left|y\right|$, which trivially implies the claimed statement.  To prove this, argue as follows.

We are comparing two first-order differential operators.  Appealing to their explicit form from Lemma \ref{linearizedformulaHF}, we see that their coefficients differ by a constant times $\left|y\right|$ in $C^\ell$ for any $\ell<\infty$ (measuring with respect to the cylindrical coordinates $S^1\times[0,\infty)$ in the ends).  It follows that we have the desired estimate.
\end{proof}

\begin{lemma}\label{approxtopHF}
Let:
\begin{equation}
\begin{CD}
X@>D>>Y\cr
@AGAA@VVBV\cr
X'@>D'>>Y'
\end{CD}
\end{equation}
denote the top square in \eqref{biginversediagramHF}.  Then for $\xi\in X'$ and $\eta\in Y$ with $D'\xi=B\eta$, we have:
\begin{equation}
\left\|DG\xi-\eta\right\|\leq c\cdot\bigl(\left|\alpha\right|^\epsilon+\left|w\right|\bigr)\left\|\xi\right\|
\end{equation}
uniformly over $(\alpha,w,y)$ in a neighborhood of zero, for $c<\infty$ and $\epsilon>0$ depending on data which has been previously fixed.
\end{lemma}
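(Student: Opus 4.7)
The plan is to adapt the proof of Lemma \ref{approxtop} from the Gromov--Witten setting, accounting for two new features: the presence of main necks (in addition to bubble necks) and the dependence on the parameter $w\in\prod_{i=1}^{k\prime}(H^i)^{\leq\s}$, which appears because the top and bottom linearized operators in \eqref{biginversediagramHF} use $\ell_{\alpha,w}$ and $\ell_{0,0}$ respectively. As in Lemma \ref{approxtop}, the strategy is to observe that $DG\xi-\eta$ is supported in a small union of regions and bound its $W^{k-1,2,\delta,\delta}$-norm on each piece.

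First I would partition $C_\alpha$ into (a) the complement of all necks, (b) the bubble necks $S^1\times[0,6S]$, and (c) the main necks $S^1\times[N,6S-N]$, and track where $DG\xi-\eta$ is nonzero. Away from the necks, $\mathrm{glue}(\xi)=\xi$ and $\mathrm{break}\,\eta=\eta$ after identifying $C_\alpha$ with $C_0$, so the difference $D_{\alpha,w,y}\mathrm{glue}(\xi)-\eta$ reduces to $(D_{\alpha,w,y}-D_{0|\alpha,0,y})\xi$ on this region. Using the explicit formula from Lemma \ref{linearizedformulaHF} together with the fact that $J(\ell_{\alpha,w}(A_y(\cdot)))$, $H((\ell_{\alpha,w}\times\id)(A_y(\cdot)))$, and the points $x^i$ each vary with $(\alpha,w)$ at rate $O(|\alpha|^\epsilon+|w|)$ in $C^\ell$ (exactly as in the proof of Lemma \ref{mappregluingissmallHF}), the contribution from region (a) is bounded by $c(|\alpha|^\epsilon+|w|)\|\xi\|$.

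Over a bubble neck $S^1\times[0,6S]$, the nonvanishing of $DG\xi-\eta$ comes from two sources: the cutoffs introduced by $\mathrm{glue}$ and $\mathrm{break}$ (supported on $S^1\times([2S,2S+1]\cup[4S-1,4S])$ and $S^1\times\{3S\pm 1\}$), and the pointwise difference of operators, which by the support of the cutoffs reduces again to the Lemma \ref{approxtop} calculation. By symmetry, it suffices to work over $s\in[4S-1,4S]$: a direct computation gives a pointwise bound independent of $\alpha$, but the weighting discrepancy between the neck weight $e^{\delta\min(s,6S-s)}$ and the end weight $e^{\delta s}$ yields a factor $e^{-2\delta S}\leq c|\alpha|^{2\delta/6}$, which is of the form $|\alpha|^\epsilon$. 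A main neck $S^1\times[N,6S-N]$ is handled identically using the corresponding weights $e^{\delta\min(s-N,6S-N-s)}$ vs.\ $e^{\delta(s-N)}$.

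The only subtlety I anticipate is bookkeeping: because $D_{\alpha,w,y}$ and $D_{0|\alpha,0,y}$ act on sections of slightly different bundles over slightly different domains (related via the identifications used in the definitions of $\mathrm{glue}$ and $\mathrm{break}$, including the parallel transport $\PT^{\ell_{0,0}(A_0(\cdot))}$ and the $(1,0)$-projections $\id^{1,0}_\ast\otimes\id^{1,0}$), one must carefully verify that these identifications commute with $\nabla$ and the Hamiltonian term up to errors of order $|\alpha|^\epsilon+|w|$ on the cutoff regions. This verification is routine once the correct bundle identifications are set up, and no new analytic ideas beyond those already used in Lemmas \ref{approxbottomHF}, \ref{approxmiddleHF}, and \ref{mappregluingissmallHF} are required; the main obstacle is simply organizing the notation so that the contribution of the $w$-dependence (from the disagreement between $\ell_{\alpha,w}$ upstairs and $\ell_{0,0}$ downstairs) is cleanly separated from the cutoff-induced error, yielding the combined bound $c(|\alpha|^\epsilon+|w|)\|\xi\|$.
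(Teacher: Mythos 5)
Your plan is organized differently from the paper's. The paper does not estimate $DG\xi-\eta$ region by region; it instead introduces an auxiliary gluing map on the \emph{codomain} side (the top vertical arrow $\mathrm{glue}$ added in the enlarged diagram \eqref{topdiagrambigger}), satisfying $\mathrm{glue}\circ\mathrm{break}'=\id$, rewrites $\eta=\mathrm{glue}(\mathrm{break}'(\eta))=\mathrm{glue}(\id^{1,0}D_{0|\alpha,0,y}\xi)$, and then uses the two-term triangle inequality
\begin{equation}
\left\|D_{\alpha,w,y}\mathrm{glue}(\xi)-\eta\right\|\leq\left\|D_{\alpha,w,y}\mathrm{glue}(\xi)-\mathrm{glue}\bigl(D_{\alpha,w,y}\xi\bigr)\right\|+\left\|\mathrm{glue}\bigl[D_{\alpha,w,y}\xi-\id^{1,0}D_{0|\alpha,0,y}\xi\bigr]\right\|.
\end{equation}
The first term is the commutator $[D_{\alpha,w,y},\mathrm{glue}]$, which by inspection of both $\mathrm{glue}$ maps is supported on $S^1\times([2S,2S+1]\cup[4S-1,4S])$ and has the weight gain $e^{-2\delta S}$; the second term is the operator-difference error, bounded by $(|\alpha|^\epsilon+|w|)\|\xi\|$ as in Lemma \ref{approxmiddleHF}.

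Your region-based version has a gap at the $\mathrm{break}$-cutoff region. You correctly list the cutoff supports as $S^1\times([2S,2S+1]\cup[4S-1,4S])$ from $\mathrm{glue}$ and $S^1\times[3S-1,3S+1]$ from $\mathrm{break}$, but then claim ``by symmetry it suffices to work over $s\in[4S-1,4S]$.'' The $s\mapsto 6S-s$ symmetry exchanges $[4S-1,4S]$ and $[2S,2S+1]$ but fixes $[3S-1,3S+1]$; and at $s\approx 3S$ the neck weight $e^{\delta\min(s,6S-s)}$ equals the end weight $e^{\delta s}$, so there is no weight gain whatsoever there. A naive termwise estimate on $[3S-1,3S+1]$ therefore gives only $O(\|\xi\|)$, not $O(|\alpha|^\epsilon\|\xi\|)$. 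What actually saves this region is the cancellation $\bar\chi(s-3S)+\bar\chi(3S-s)=1$: over $[2S+1,4S-1]$ one has $D_{\alpha,w,y}\mathrm{glue}(\xi)(s,t)=D_{\alpha,w,y}\xi(s,t)+D_{\alpha,w,y}\xi(s',t')$ (since $u_\alpha$ is constant there and $\xi(n)$ is annihilated), and modulo the operator-difference error this equals $\mathrm{break}'(\eta)(s,t)+\mathrm{break}'(\eta)(s',t')=\bar\chi(s-3S)\eta(s,t)+\bar\chi(3S-s)\eta(s,t)=\eta(s,t)$. This is exactly what the paper's codomain-side $\mathrm{glue}$ (via $\mathrm{glue}\circ\mathrm{break}'=\id$) packages automatically; your outline needs to invoke some equivalent mechanism explicitly, or the estimate does not close. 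The rest of your plan --- region (a), the weight trick at $[4S-1,4S]$, and the parallel treatment of main necks --- is sound.
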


\begin{proof}
We consider the following diagram, which is a copy of the top square in \eqref{biginversediagramHF} with one extra vector space and some extra maps (to be defined below).
\begin{equation}\label{topdiagrambigger}
\begin{tikzcd}[row sep = tiny]
{}&W^{k-1,2,\delta,\delta}(\tilde C_\alpha,\Omega^{0,1}_{\tilde C_\alpha,j_y}\otimes_\CC u_\alpha^\ast TM_{J(\ell_{\alpha,w}(A_y(\cdot)))})\ar[shift left=0.5ex]{dd}{\mathrm{break}'}\ar[rounded corners,to path={(\tikztostart) -- ([xshift=2ex]\tikztostart.east) -- ([xshift=2ex]\tikztotarget.east) \tikztonodes -- (\tikztotarget)}]{dddd}{\mathrm{break}}\\
W^{k,2,\delta,\delta}(C_\alpha,u_\alpha^\ast TM)_{D,H}\oplus E\ar{ru}{D_{\alpha,w,y}}\\
{}&W^{k-1,2,\delta,\delta}(\tilde C_0,\Omega^{0,1}_{\tilde C_0,j_y}\otimes_\CC u_{0|\alpha}^\ast TM_{J(\ell_{\alpha,w}(A_y(\cdot)))})\ar[shift right=-0.5ex]{uu}{\mathrm{glue}}\ar[shift left=0.5ex]{dd}{(\id^{1,0})^{-1}}\\
W^{k,2,\delta,\delta}(C_0,u_{0|\alpha}^\ast TM)_{D,H}\oplus E\ar{uu}{\mathrm{glue}}\ar{rd}{D_{0|\alpha,0,y}}\ar{ru}{D_{\alpha,w,y}}\\
{}&W^{k-1,2,\delta,\delta}(\tilde C_0,\Omega^{0,1}_{\tilde C_0,j_y}\otimes_\CC u_{0|\alpha}^\ast TM_{J(\ell_{0,0}(A_y(\cdot)))})\ar[shift right=-0.5ex]{uu}{\id^{1,0}}
\end{tikzcd}
\end{equation}
Since the almost complex structure $J(\ell_{\alpha,w}(A_y(\cdot)))$ is a function of a point in $C_\alpha$, we should remark immediately on what we mean by $W^{k-1,2,\delta,\delta}(\tilde C_0,\Omega^{0,1}_{\tilde C_0,j_y}\otimes_\CC u_{0|\alpha}^\ast TM_{J(\ell_{\alpha,w}(A_y(\cdot)))})$.  Away from the ends, the curve $C_0$ is identified canonically with $C_\alpha$, and this identification extends holomorphically to a (non-injective) map from $C_0$ to $C_\alpha$ defined outside the $S^1\times[4S,\infty)$ subsets of the ends.  Thus there is a well-defined such $W^{k-1,2,\delta,\delta}$ space of sections on $C_0$ defined outside the $S^1\times[4S,\infty)$ subsets of the ends, valued in $\Omega^{0,1}_{\tilde C_0,j_y}\otimes_\CC u_{0|\alpha}^\ast TM_{J(\ell_{\alpha,w}(A_y(\cdot)))}$.  Now, for instance, the middle horizontal map $D_{\alpha,w,y}$ can be seen as giving a section in this $W^{k-1,2,\delta,\delta}$ space defined outside $S^1\times[4S,\infty)$.  In the proof below, it is convenient to use this $W^{k-1,2,\delta,\delta}$ space of sections defined outside $S^1\times[4S,\infty)$, though we must be careful that the expressions we write are well-defined.

Let us define the rest of the maps in \eqref{topdiagrambigger}.  The vertical map $\operatorname{break}$ has been factored as $(\id^{1,0})^{-1}\circ\operatorname{break}'$ in the obvious way.  Finally, let us define the map:
\begin{equation}
W^{k-1,2,\delta,\delta}(\tilde C_0,\Omega^{0,1}_{\tilde C_0,j_y}\otimes_\CC u_{0|\alpha}^\ast TM_{J(\ell_{\alpha,w}(A_y(\cdot)))})\xrightarrow{\mathrm{glue}}W^{k-1,2,\delta,\delta}(\tilde C_\alpha,\Omega^{0,1}_{\tilde C_\alpha,j_y}\otimes_\CC u_\alpha^\ast TM_{J(\ell_{\alpha,w}(A_y(\cdot)))})
\end{equation}
Let $\eta\in W^{k-1,2,\delta,\delta}(\tilde C_0,\Omega^{0,1}_{\tilde C_0,j_y}\otimes_\CC u_{0|\alpha}^\ast TM_{J(\ell_{\alpha,w}(A_y(\cdot)))})$.  Away from the necks of $C_\alpha$, $\operatorname{glue}(\eta)$ is simply $\eta$.  In any particular bubble neck $S^1\times[0,6S]\subseteq C_\alpha$, we define:
\begin{equation}
\operatorname{glue}(\eta)(t,s):=\begin{cases}
\eta(t,s)&s\leq 2S\cr
\chi(4S-s)\eta(t,s)+\chi(4S-s')\eta(t',s')&2S\leq s\leq 4S\cr
\eta(t',s')&4S\leq s
\end{cases}
\end{equation}
(this definition also applies over the main necks with the obvious adjustment).  Note that $\operatorname{glue}\circ\operatorname{break}'$ is the identity map.

Now suppose that $D_{0|\alpha,0,y}\xi=\operatorname{break}(\eta)$; we must show that:
\begin{equation}
\left\|D_{\alpha,w,y}(\operatorname{glue}(\xi))-\eta\right\|_{k-1,2,\delta,\delta}\leq c\cdot\bigl(\left|\alpha\right|^\epsilon+\left|w\right|\bigr)\left\|\xi\right\|_{k,2,\delta,\delta}
\end{equation}
Using the triangle inequality and the fact that $\eta=\operatorname{glue}(\operatorname{break}'(\eta))=\operatorname{glue}(\id^{1,0}(D_{0|\alpha,0,y}\xi))$, we conclude that $\left\|D_{\alpha,w,y}(\operatorname{glue}(\xi))-\eta\right\|$ is bounded above by:
\begin{equation*}
\left\|D_{\alpha,w,y}(\operatorname{glue}(\xi))-\operatorname{glue}(D_{\alpha,w,y}(\xi))\right\|+\left\|\operatorname{glue}\left[D_{\alpha,w,y}(\xi)-\id^{1,0}(D_{0|\alpha,0,y}(\xi))\right]\right\|
\end{equation*}
We now estimate each term separately.

To estimate $\left\|D_{\alpha,w,y}(\operatorname{glue}(\xi))-\operatorname{glue}(D_{\alpha,w,y}(\xi))\right\|$, argue as follows.  The difference is only nonzero over the regions $S^1\times([2S,2S+1]\cup[4S-1,4S])$ of each neck.  Now the norm is bounded by $\left\|\xi\right\|e^{-2S\delta}$ (calculation left to the reader), where the factor of $e^{-2S\delta}$ comes as the ratio of the weight given to $S^1\times[4S-1,4S]$ inside a neck vs inside an end; this gives the desired bound since $\delta>0$.

To estimate $\left\|\operatorname{glue}\left[D_{\alpha,w,y}(\xi)-\id^{1,0}(D_{0|\alpha,0,y}\xi)\right]\right\|$, argue as follows.  This is bounded by a constant times the $(k-1,2,\delta,\delta)$-norm of $D_{\alpha,w,y}(\xi)-\id^{1,0}(D_{0|\alpha,0,y}\xi)$ over the complement of the subsets $S^1\times[4S,\infty)$ of the ends.  Now this is bounded by $(\left|\alpha\right|^\epsilon+\left|w\right|)\left\|\xi\right\|_{k,2,\delta,\delta}$ using the reasoning from Lemma \ref{approxmiddleHF}.
\end{proof}

\begin{proposition}[Approximate right inverse $T_{\alpha,w,y}$]\label{approxworksHF}
We have:
\begin{align}
\left\|T_{\alpha,w,y}\right\|&\leq c\\
\label{TapproxeqnHF}\left\|D_{\alpha,w,y} T_{\alpha,w,y}-\mathbf 1\right\|&\to 0\\
\im T_{\alpha,w,y}&\subseteq\ker L_\alpha
\end{align}
as $(\alpha,w,y)\to 0$, for $c<\infty$ depending on data which has been previously fixed.
\end{proposition}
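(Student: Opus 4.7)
The plan is to follow the template set by the proof of Proposition \ref{approxworks} in the Gromov--Witten case, adapted to the three-square diagram \eqref{biginversediagramHF}. First I would verify that every individual arrow appearing in \eqref{biginversediagramHF} has operator norm bounded uniformly as $(\alpha,w,y)\to 0$. The maps $\operatorname{glue}$ and $\operatorname{break}$ are bounded essentially because the weights in Definition \ref{weightedonespace} were designed so that the cutoff functions introduced in the gluing/breaking procedures do not grow in the $W^{k,2,\delta,\delta}$ norm (the key point being that the weight factor $e^{2\delta\min(s-N,6S-N-s)}$ over the main necks and $e^{2\delta\min(s,6S-s)}$ over the bubble necks match up compatibly with the weights $e^{2\delta(s-N)}$ and $e^{2\delta s}$ on the ends). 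The parallel transport maps are uniformly bounded because $u_0$ and $u_{0|\alpha}$ remain uniformly close (in high $C^\ell$ norm) over their common domain, and the family of connections $\nabla^{\ell_{\alpha,w}(A_y(\cdot))}$ varies smoothly. The map $\id^{1,0}_\ast\otimes\id^{1,0}$ is a small perturbation of the identity when $y$ is small. Finally $Q_{0,0,0}$ is a fixed bounded right inverse. Composing these bounds yields $\|T_{\alpha,w,y}\|\leq c$.

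Next I would establish \eqref{TapproxeqnHF} by invoking Lemma \ref{bootstrappingapprox} with the three approximation estimates \ref{approxbottomHF}, \ref{approxmiddleHF}, \ref{approxtopHF} as inputs. Concretely, one takes the sequence of Banach spaces along the right column of \eqref{biginversediagramHF}, the corresponding operators $D_{(\cdot,\cdot,\cdot)}$ along each row as the $D_i$, the maps $\operatorname{glue}\circ\PT\circ\id^{1,0}\circ\PT$ (decomposed square-by-square) as the $G_i$, the maps $\operatorname{break}\circ(\id^{1,0})^{-1}\circ\PT$ as the $B_i$, and $Q_{0,0,0}$ as $Q_n$. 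Lemmas \ref{approxbottomHF}, \ref{approxmiddleHF}, \ref{approxtopHF} then provide, respectively, bounds of the form $c(|\alpha|^\epsilon + |w| + |y|)\|\xi\|$, which all tend to zero as $(\alpha,w,y)\to 0$. Given any target $\delta>0$, choosing $(\alpha,w,y)$ small enough that each of these three estimates is below the threshold $\epsilon(3,c,\delta)$ produced by Lemma \ref{bootstrappingapprox} yields \eqref{TapproxeqnHF}.

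For the image containment $\operatorname{im} T_{\alpha,w,y}\subseteq\ker L_\alpha$, the point is that by construction $\operatorname{im} Q_{0,0,0}=\ker L_0$ (this was the whole reason for selecting the particular right inverse \eqref{Qzerozerozero}). The intermediate maps $\PT$ and $\id^{1,0}$ are valued in $W^{k,2,\delta,\delta}(C_0,u_{0|\alpha}^\ast TM)_{D,H}\oplus E$ but do not move sections at the points $z_i$ in any essential way, so they preserve the vanishing condition defining $\ker L_0$. The final $\operatorname{glue}$ step is the identity over the compact piece of $C_0$ containing the points $z_i$ (having chosen these points away from the ends at the outset), so the composition $T_{\alpha,w,y}$ lands inside $\ker L_\alpha$, where $L_\alpha$ is the natural extension \eqref{newLonpregluedHF} of $L_0$ to the thickened curve.

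The main obstacle, as in the Gromov--Witten analogue, is the proof of the three lemmas feeding into Lemma \ref{bootstrappingapprox}, and the subtle point there is the compatibility of the weights on the main necks (where trajectories break) with the weights on the main ends (at the asymptotic orbits) and with the bubble ends/necks (at bubble nodes); but those three lemmas are already given, so here the verification reduces to a careful but routine check that the three diagram squares satisfy the hypotheses of the bootstrap lemma with the correct uniform constants. The only genuinely new feature relative to the Gromov--Witten proof is that the family $(j_y,A_y,\ell_{\alpha,w})$ involves the additional parameter $w$ governing the Morse trajectories on $\Delta^n$, but since $\ell_{\alpha,w}$ depends smoothly on $w$ (by the explicit integration of the gradient flow from Definition \ref{simplexflows} noted in \S\ref{gluingandvaryingjAl}) and since this dependence only enters through the $J$ and $H$ terms away from the ends, the $w$-dependence is absorbed into the analogues of the $|y|$-type terms without difficulty.
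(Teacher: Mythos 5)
Your proposal is correct and follows the same approach as the paper: uniform boundedness of every arrow in \eqref{biginversediagramHF} gives the bound on $T_{\alpha,w,y}$, Lemma \ref{bootstrappingapprox} applied to Lemmas \ref{approxbottomHF}, \ref{approxmiddleHF}, \ref{approxtopHF} gives \eqref{TapproxeqnHF}, and the image containment follows because $\im Q_{0,0,0}=\ker L_0$ and the intermediate maps do not disturb values at the interior points $z_i$. (One small slip: with three squares the diagram has $n=4$ rows, so the threshold from Lemma \ref{bootstrappingapprox} is $\epsilon(4,c,\delta)$ rather than $\epsilon(3,c,\delta)$.)
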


\begin{proof}
It is easy to see that all the maps in \eqref{biginversediagramHF} are uniformly bounded.  Hence $\left\|T_{\alpha,w,y}\right\|\leq c$ as $(\alpha,w,y)\to 0$.  Now Lemma \ref{bootstrappingapprox} combined with Lemmas \ref{approxbottomHF}, \ref{approxmiddleHF}, \ref{approxtopHF} show that for $(\alpha,w,y)\to 0$, we have $\left\|D_{\alpha,w,y} T_{\alpha,w,y}-\mathbf 1\right\|\to 0$.  We observed earlier that $\im T_{\alpha,w,y}\subseteq\ker L_\alpha$.
\end{proof}

\begin{definition}[Right inverse $Q_{\alpha,w,y}$]
We define a map:
\begin{equation}
Q_{\alpha,w,y}:W^{k-1,2,\delta,\delta}(\tilde C_\alpha,\Omega^{0,1}_{\tilde C_\alpha,j_y}\otimes_\CC u_\alpha^\ast TM_{J(\ell_{\alpha,w}(A_y(\cdot)))})\to W^{k,2,\delta,\delta}(C_\alpha,u_\alpha^\ast TM)_{D,H}\oplus E
\end{equation}
as the sum:
\begin{equation}
Q_{\alpha,w,y}:=T_{\alpha,w,y}\sum_{k=0}^\infty(\mathbf 1-D_{\alpha,w,y}T_{\alpha,w,y})^k
\end{equation}
\end{definition}

\begin{proposition}\label{QestimatesHF}
We have:
\begin{align}
\left\|Q_{\alpha,w,y}\right\|&\leq c\\
D_{\alpha,w,y}Q_{\alpha,w,y}&=\mathbf 1\\
\im Q_{\alpha,w,y}&\subseteq\ker L_\alpha
\end{align}
uniformly over $(\alpha,w,y)$ in a neighborhood of zero, for $c<\infty$ depending on data which has been previously fixed.
\end{proposition}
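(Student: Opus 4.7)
The proof is a direct application of Lemma \ref{bootstrapinverse} (the Neumann-series bootstrap from approximate to genuine right inverse), exactly paralleling the Gromov--Witten analogue Proposition \ref{Qestimates}. By Proposition \ref{approxworksHF}, we have $\|T_{\alpha,w,y}\| \leq c$ and $\epsilon_{\alpha,w,y} := \|D_{\alpha,w,y} T_{\alpha,w,y} - \mathbf{1}\| \to 0$ as $(\alpha,w,y) \to 0$. In particular, for $(\alpha,w,y)$ sufficiently close to zero, we have $\epsilon_{\alpha,w,y} \leq \tfrac{1}{2}$, which is what is needed to apply Lemma \ref{bootstrapinverse}.

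With this in place, Lemma \ref{bootstrapinverse} immediately yields convergence of the Neumann series defining $Q_{\alpha,w,y}$, the identity $D_{\alpha,w,y} Q_{\alpha,w,y} = \mathbf{1}$, and the operator-norm bound
\begin{equation*}
\|Q_{\alpha,w,y}\| \leq \frac{1}{1-\epsilon_{\alpha,w,y}} \|T_{\alpha,w,y}\| \leq 2c,
\end{equation*}
which gives the desired uniform bound for $(\alpha,w,y)$ in a sufficiently small neighborhood of zero.

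The only item not covered verbatim by Lemma \ref{bootstrapinverse} is the image containment $\im Q_{\alpha,w,y} \subseteq \ker L_\alpha$. For this I would argue as follows. Each partial sum
\begin{equation*}
Q^{(N)}_{\alpha,w,y} := T_{\alpha,w,y} \sum_{k=0}^{N} (\mathbf{1} - D_{\alpha,w,y} T_{\alpha,w,y})^k
\end{equation*}
factors through $T_{\alpha,w,y}$ on the left, so its image lies in $\im T_{\alpha,w,y}$, which is contained in $\ker L_\alpha$ by Proposition \ref{approxworksHF}. Since $L_\alpha \colon W^{k,2,\delta,\delta}(C_\alpha,u_\alpha^\ast TM)_{D,H} \oplus E \to W$ is a bounded linear map, $\ker L_\alpha$ is norm-closed; and since $Q^{(N)}_{\alpha,w,y} \to Q_{\alpha,w,y}$ in operator norm (as the geometric series converges absolutely), we obtain $L_\alpha \circ Q_{\alpha,w,y} = 0$ pointwise, giving $\im Q_{\alpha,w,y} \subseteq \ker L_\alpha$ as claimed.

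There is no real obstacle here: the entire content of the proposition is packaged in the approximate-inverse estimate \eqref{TapproxeqnHF}, which has already been established in Proposition \ref{approxworksHF} via the double-bootstrapping Lemma \ref{bootstrappingapprox} fed by Lemmas \ref{approxbottomHF}, \ref{approxmiddleHF}, \ref{approxtopHF}. The present proposition is purely an exercise in turning an approximate right inverse into a genuine right inverse while preserving both the norm bound and the image constraint, and both follow from standard Banach-space formalism.
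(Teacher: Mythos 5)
Your proof is correct and follows exactly the same route as the paper, which simply cites Lemma \ref{bootstrapinverse} and Proposition \ref{approxworksHF}; you have merely spelled out the Neumann-series argument and the closedness of $\ker L_\alpha$ that those two references encapsulate.
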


\begin{proof}
Apply Lemma \ref{bootstrapinverse} and Proposition \ref{approxworksHF}.
\end{proof}

\subsection{Quadratic estimates}

\begin{proposition}[Quadratic estimate]\label{quadestimateHF}
There exist $c'>0$ and $c<\infty$ (depending on data which has been previously fixed) such that for $\left\|\xi_1\right\|_{k,2,\delta,\delta},\left\|\xi_2\right\|_{k,2,\delta,\delta}\leq c'$, we have:
\begin{equation}\label{quadestgoalHF}
\bigl\|D_{\alpha,w,y}(\xi_1-\xi_2)-(\F_{\alpha,w,y}\xi_1-\F_{\alpha,w,y}\xi_2)\bigr\|_{k-1,2,\delta,\delta}\leq c\cdot\left\|\xi_1-\xi_2\right\|_{k,2,\delta,\delta}(\left\|\xi_1\right\|_{k,2,\delta,\delta}+\left\|\xi_2\right\|_{k,2,\delta,\delta})
\end{equation}
(and $\F_{\alpha,w,y}\xi_1$ and $\F_{\alpha,w,y}\xi_2$ are both defined), uniformly over $(\alpha,w,y)$ in a neighborhood of zero.
\end{proposition}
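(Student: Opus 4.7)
The plan is to follow the strategy of Proposition \ref{quadestimate} essentially verbatim, making the necessary adjustments for the extra Hamiltonian term $X_H$ and the presence of the parameter $w$ (which enters only through $\ell_{\alpha,w}$ and does not affect the quadratic analysis in $\xi$). First, by integrating the identity $\F_{\alpha,w,y}\xi_1 - \F_{\alpha,w,y}\xi_2 = \int_0^1 \F'_{\alpha,w,y}(\xi_2 + t(\xi_1-\xi_2), \xi_1-\xi_2)\,dt$, the estimate \eqref{quadestgoalHF} is reduced to showing
\begin{equation}\label{derivislipHF}
\bigl\|\F'_{\alpha,w,y}(0,\xi) - \F'_{\alpha,w,y}(\zeta,\xi)\bigr\|_{k-1,2,\delta,\delta} \leq c \cdot \|\zeta\|_{k,2,\delta,\delta}\,\|\xi\|_{k,2,\delta,\delta}
\end{equation}
uniformly for $\|\zeta\|, \|\xi\| \leq c'$ as $(\alpha,w,y) \to 0$.

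Next, for any $\zeta$ in this range I would introduce the $\delbar$-section $\F_{\alpha,w,y,\zeta}$ based at the perturbed pair $(\exp_{u_\alpha}\zeta, e_0 + \proj_E \zeta)$, whose linearization at zero, $D_{\alpha,w,y,\zeta}$, is obtained from Lemma \ref{linearizedformulaHF} by the obvious substitution. Using the tautological identity
\begin{equation}
\F_{\alpha,w,y}(a) = \bigl[\PT_{\exp_{u_\alpha} a \to u_\alpha} \circ \PT_{\exp_{u_\alpha}\zeta \to \exp_{u_\alpha} a}\bigr]\Bigl[\F_{\alpha,w,y,\zeta}\bigl((\exp_{\exp_{u_\alpha}\zeta}^{-1}\exp_{u_\alpha} a)\oplus(\proj_E a - \proj_E\zeta)\bigr)\Bigr]
\end{equation}
and differentiating at $a = \zeta$, $\dot a = \xi$, I obtain a formula for $\F'_{\alpha,w,y}(\zeta,\xi)$ analogous to \eqref{changebasisderivative}. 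The two non-principal terms are each bounded by $c\cdot\|\zeta\|\|\xi\|$ using the same argument as in the proof of Proposition \ref{quadestimate}: one exploits that $\F_{\alpha,w,y}(\zeta)$ has uniformly bounded $(k-1,2,\delta,\delta)$-norm (a consequence of Lemma \ref{mappregluingissmallHF} combined with the Lipschitz bound on $\F$), and that the relevant non-linear bundle maps $H: TM \oplus TM \to \operatorname{End}(TM)$ and $TM \oplus TM \to TM$ vanish on both axes, giving quadratic control.

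After these reductions, \eqref{derivislipHF} comes down to the linearized estimate
\begin{equation}\label{quadreducedtoPTcompHF}
\bigl\|D_{\alpha,w,y} - \PT_{\exp_{u_\alpha}\zeta \to u_\alpha}^{\ell_{\alpha,w}(A_y(\cdot))} \circ D_{\alpha,w,y,\zeta} \circ \PT_{u_\alpha \to \exp_{u_\alpha}\zeta}^{\ell_{\alpha,w}(A_y(\cdot))}\bigr\|_{(k,2,\delta,\delta)\to(k-1,2,\delta,\delta)} \leq c \cdot \|\zeta\|_{k,2,\delta,\delta}.
\end{equation}
This is proved by comparing term-by-term using the explicit formula of Lemma \ref{linearizedformulaHF}. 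The term $(\nabla\xi + T_\nabla(\xi,du_\alpha))^{0,1}$ is handled exactly as in the proof of Proposition \ref{quadestimate}. The Hamiltonian term $2\,d(\proj_{S^1}A_y) \otimes \nabla_\xi X_{H(\cdot)}$ produces an extra contribution of the form $R(u_\alpha,\zeta,\xi)\cdot A_y'$ where $R$ vanishes at $\zeta = 0$ and is smooth in $\zeta$ for $\|\zeta\|_{C^0}$ small; since $d(\proj_{S^1} A_y)$ is uniformly bounded in $C^\ell$ on compact sets and decays exponentially in the main ends/necks at rate compatible with our weights, the resulting contribution is bounded by $c\cdot\|\zeta\|\|\xi\|$. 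The $\lambda$-terms (\ref{firstLterm})--(\ref{lastLterm}) yield analogous differences which are linear in $\zeta$ times $\xi$ with coefficients supported where $\lambda$ is nonzero; since $\lambda$ is smooth in its arguments and vanishes near the nodes/marked points of the fibers, these are bounded in the same fashion.

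The main obstacle will be verifying \eqref{quadreducedtoPTcompHF} uniformly in $(\alpha,w,y)$ in the weighted norm $(k,2,\delta,\delta) \to (k-1,2,\delta,\delta)$, in particular over the main ends/necks where the weight $e^{2\delta\min(s-N,6S-N-s)}$ interacts non-trivially with the exponential decay of $\zeta$, $\xi$, and $A_y$. The Sobolev embedding $W^{2,2} \hookrightarrow C^0$ in two dimensions, combined with our choice $\delta < \min_i \delta_i$, provides the pointwise decay of $\zeta$ needed to control the non-linear remainders; concretely, each quadratic remainder is of the form $R(u_\alpha,\zeta)\cdot \zeta \cdot \xi$ with $R(\cdot,0) = 0$ smooth, giving pointwise bound $|R(u_\alpha,\zeta)| \leq c\cdot|\zeta|$, and a standard computation (as in \cite[p68]{mcduffsalamonJholsymp}) bounds the product in the weighted $W^{k-1,2}$ norm by the product of $W^{k,2,\delta,\delta}$ norms. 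Once this is checked over each type of region (away from ends/necks, bubble ends/necks, and main ends/necks), the lemma follows. The remaining calculations are routine and essentially identical to those carried out in Proposition \ref{quadestimate}.
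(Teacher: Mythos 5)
Your proof proposal is correct and follows essentially the same approach as the paper, whose own proof consists of the single sentence that it is identical to the proof of Proposition \ref{quadestimate} with evident notational substitutions, reducing to the Lipschitz estimate \eqref{derivislipHF}; you have simply unpacked that reference, filling in the details of how the Hamiltonian term $d(\proj_{S^1}A_y)\otimes X_H$ and the $\lambda$-terms behave under the term-by-term comparison, which is exactly what the "evident notational differences" amount to.
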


\begin{proof}
The proof is identical to the proof of Lemma \ref{quadestimate}, with the evident notational differences, $\F_{\alpha,w,y}$ for $\F_{\alpha,y}$, and Lemma \ref{linearizedformulaHF} for Lemma \ref{linearizedformula}.  As with Lemma \ref{quadestimate}, we actually prove the stronger statement that:
\begin{equation}\label{derivislipHF}
\left\|\F_{\alpha,w,y}'(0,\xi)-\F_{\alpha,w,y}'(\zeta,\xi)\right\|_{k-1,2,\delta,\delta}\leq c\cdot\left\|\zeta\right\|_{k,2,\delta,\delta}\left\|\xi\right\|_{k,2,\delta,\delta}
\end{equation}
\end{proof}

\subsection{Newton--Picard iteration}

\begin{lemma}\label{contractionpropHF}
There exists $c'>0$ (depending on data which has been previously fixed) such that for sufficiently small $(\alpha,w,y)$:
\begin{rlist}
\item The map $\F_{\alpha,w,y}$ is defined for $\left\|\xi\right\|_{k,2,\delta,\delta}\leq c'$.
\item For $\xi_1-\xi_2\in\im Q_{\alpha,w,y}$ and $\left\|\xi_1\right\|_{k,2,\delta,\delta},\left\|\xi_2\right\|_{k,2,\delta,\delta}\leq c'$, we have:
\begin{equation}
\left\|(\xi_1-\xi_2)-(Q_{\alpha,w,y}\F_{\alpha,w,y}\xi_1-Q_{\alpha,w,y}\F_{\alpha,w,y}\xi_2)\right\|_{k,2,\delta,\delta}\leq\frac 12\left\|\xi_1-\xi_2\right\|_{k,2,\delta,\delta}
\end{equation}
\end{rlist}
\end{lemma}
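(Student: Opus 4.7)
The plan is to follow the strategy of Lemma~\ref{contractionprop} essentially verbatim, adapting only the notation to the Hamiltonian--Floer setting. Assertion (i) has in fact already been recorded in the paragraph establishing that $\F_{\alpha,w,y}$ extends to a $C^{k-2}$ section of the appropriate Banach bundle: there it was observed that the composition with $\exp_{u_\alpha}$ and the implicit function identification of the constraint points $x^i(\xi)$ is valid precisely when $\|\xi\|_{k,2,\delta,\delta}\le c'$ for some $c'>0$ (and $(\alpha,w)$ small), since a uniform $C^1$-neighborhood of zero in $W^{k,2,\delta,\delta}$ maps into a $C^1$-neighborhood of $u_\alpha$ via Sobolev embedding. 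So only (ii) requires a genuine argument.

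For (ii), the key algebraic observation is that if $\xi_1-\xi_2\in\im Q_{\alpha,w,y}$, then because $D_{\alpha,w,y}Q_{\alpha,w,y}=\mathbf 1$ (Proposition~\ref{QestimatesHF}), applying $Q_{\alpha,w,y}$ to both sides of $D_{\alpha,w,y}(\xi_1-\xi_2)=\eta$ (where $\eta=D_{\alpha,w,y}(\xi_1-\xi_2)$) gives $Q_{\alpha,w,y}D_{\alpha,w,y}(\xi_1-\xi_2)=\xi_1-\xi_2$. Consequently
\begin{align*}
&\|(\xi_1-\xi_2)-(Q_{\alpha,w,y}\F_{\alpha,w,y}\xi_1-Q_{\alpha,w,y}\F_{\alpha,w,y}\xi_2)\|_{k,2,\delta,\delta}\\
&\quad=\|Q_{\alpha,w,y}\bigl[D_{\alpha,w,y}(\xi_1-\xi_2)-(\F_{\alpha,w,y}\xi_1-\F_{\alpha,w,y}\xi_2)\bigr]\|_{k,2,\delta,\delta}\\
&\quad\le\|Q_{\alpha,w,y}\|\cdot\|D_{\alpha,w,y}(\xi_1-\xi_2)-(\F_{\alpha,w,y}\xi_1-\F_{\alpha,w,y}\xi_2)\|_{k-1,2,\delta,\delta}.
\end{align*}

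Now I would invoke the quadratic estimate Proposition~\ref{quadestimateHF} to bound the second factor by $c\cdot\|\xi_1-\xi_2\|_{k,2,\delta,\delta}(\|\xi_1\|_{k,2,\delta,\delta}+\|\xi_2\|_{k,2,\delta,\delta})$, provided $\|\xi_1\|,\|\xi_2\|\le c'$ and $(\alpha,w,y)$ is small enough for the quadratic estimate to apply. Combined with the uniform bound $\|Q_{\alpha,w,y}\|\le c$ from Proposition~\ref{QestimatesHF}, this yields a bound of the form $c''(\|\xi_1\|+\|\xi_2\|)\|\xi_1-\xi_2\|$. Shrinking $c'$ so that $2c''c'\le\tfrac12$ then gives the contraction estimate $\tfrac12\|\xi_1-\xi_2\|_{k,2,\delta,\delta}$ as required.

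There is no real obstacle here; every ingredient has already been assembled. The one point meriting care is to choose the final $c'$ compatibly with the constraints imposed by each input: the $c'$ from (i) making $\F_{\alpha,w,y}$ defined, the $c'$ from Proposition~\ref{quadestimateHF} making the quadratic estimate valid, and the $c'$ needed to absorb $2c''c'$ into $\tfrac12$. Taking the minimum of these (and taking $(\alpha,w,y)$ correspondingly small so the uniform constants $\|Q_{\alpha,w,y}\|\le c$ and the quadratic constant $c$ apply) concludes the proof. This sets up the Newton--Picard iteration in the subsequent proposition exactly as in the Gromov--Witten case.
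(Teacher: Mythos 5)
Your proposal is correct and follows exactly the paper's approach: the paper's proof of Lemma~\ref{contractionpropHF} simply refers back to the proof of Lemma~\ref{contractionprop}, which uses the identity $Q_{\alpha,w,y}D_{\alpha,w,y}(\xi_1-\xi_2)=\xi_1-\xi_2$ for $\xi_1-\xi_2\in\im Q_{\alpha,w,y}$, then the operator-norm bound from Proposition~\ref{QestimatesHF} together with the quadratic estimate Proposition~\ref{quadestimateHF}, and finally shrinks $c'$ to absorb the constant.
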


\begin{proof}
The proof is identical to the proof of Lemma \ref{contractionprop}, using Proposition \ref{quadestimateHF} in place of Proposition \ref{quadestimate}.
\end{proof}

\begin{proposition}[Newton--Picard iteration]\label{newtoniterationHF}
There exists $c'>0$ (depending on data which has been previously fixed) so that for $(\alpha,w,y,\kappa\in K)$ sufficiently small, there exists a unique $\kappa_{\alpha,w,y}\in W^{k,2,\delta,\delta}(C_\alpha,u_\alpha^\ast TM)_{D,H}\oplus E$ satisfying:
\begin{align}
\kappa_{\alpha,w,y}&\in\kappa_\alpha+\im Q_{\alpha,w,y}\\
\left\|\kappa_{\alpha,w,y}\right\|_{k,2,\delta,\delta}&\leq c'\\
\F_{\alpha,w,y}\kappa_{\alpha,w,y}&=0
\end{align}
\end{proposition}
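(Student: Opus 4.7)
The plan is to obtain $\kappa_{\alpha,w,y}$ as the unique fixed point of the Newton--Picard iteration map $\Phi_{\alpha,w,y}(\xi) := \xi - Q_{\alpha,w,y}\F_{\alpha,w,y}(\xi)$, following exactly the template of the proof of Proposition \ref{newtoniteration}. First, I would set
\begin{equation}
B_{\alpha,w,y} := \{\xi \in \kappa_\alpha + \im Q_{\alpha,w,y} : \|\xi\|_{k,2,\delta,\delta} \leq c'\},
\end{equation}
where $c'>0$ is the constant from Lemma \ref{contractionpropHF}; this is a closed subset of the complete affine space $\kappa_\alpha + \im Q_{\alpha,w,y}$. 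Note that $\Phi_{\alpha,w,y}$ preserves the affine subspace $\kappa_\alpha + \im Q_{\alpha,w,y}$ (since $Q_{\alpha,w,y}\F_{\alpha,w,y}(\xi) \in \im Q_{\alpha,w,y}$), and by Lemma \ref{contractionpropHF}(ii) it is a $\tfrac{1}{2}$-contraction on $B_{\alpha,w,y}$ once $(\alpha,w,y)$ is sufficiently small. Since $\|\kappa_\alpha\|_{k,2,\delta,\delta}$ is bounded uniformly by a constant multiple of $\|\kappa\|$ (by inspection of \eqref{kernelpregluingformulaHFbubble}--\eqref{kernelpregluingformulaHFmain}), the base point $\kappa_\alpha$ lies in $B_{\alpha,w,y}$ for $\kappa$ small, so the domain is nonempty.

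Next I would verify that $\Phi_{\alpha,w,y}$ maps $B_{\alpha,w,y}$ to itself. By the contraction property, this reduces to showing that $\Phi_{\alpha,w,y}(\kappa_\alpha)$ is close to $\kappa_\alpha$, i.e.\ that $\|Q_{\alpha,w,y}\F_{\alpha,w,y}(\kappa_\alpha)\|_{k,2,\delta,\delta}$ is small; since $\|Q_{\alpha,w,y}\|$ is uniformly bounded by Proposition \ref{QestimatesHF}, it is enough to prove
\begin{equation}
\|\F_{\alpha,w,y}(\kappa_\alpha)\|_{k-1,2,\delta,\delta} \longrightarrow 0 \quad \text{as } (\alpha,w,y,\kappa) \to 0.
\end{equation}
This is where the earlier estimates converge: applying the quadratic estimate (Proposition \ref{quadestimateHF}) with $\xi_1=\kappa_\alpha$, $\xi_2=0$ gives
\begin{equation}
\bigl\|\F_{\alpha,w,y}(\kappa_\alpha) - \F_{\alpha,w,y}(0) - D_{\alpha,w,y}\kappa_\alpha\bigr\|_{k-1,2,\delta,\delta} \leq c \cdot \|\kappa_\alpha\|_{k,2,\delta,\delta}^2,
\end{equation}
while Lemma \ref{mappregluingissmallHF} controls $\|\F_{\alpha,w,y}(0)\|_{k-1,2,\delta,\delta}$ by $c(|\alpha|^\epsilon + |w| + |y|)$ and Lemma \ref{pregluingKestimateHF} controls $\|D_{\alpha,w,y}\kappa_\alpha\|_{k-1,2,\delta,\delta}$ by $c(|\alpha|^\epsilon + |w| + |y|)\|\kappa\|$. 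Summing these three bounds yields the required smallness.

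Finally, the Banach fixed-point theorem applied to $\Phi_{\alpha,w,y}:B_{\alpha,w,y}\to B_{\alpha,w,y}$ produces a unique fixed point $\kappa_{\alpha,w,y}$, which satisfies $Q_{\alpha,w,y}\F_{\alpha,w,y}(\kappa_{\alpha,w,y})=0$. Since $Q_{\alpha,w,y}$ is a right inverse of $D_{\alpha,w,y}$ (Proposition \ref{QestimatesHF}) it is automatically injective, so this forces $\F_{\alpha,w,y}(\kappa_{\alpha,w,y})=0$ as required. No genuine obstacle is expected in this step: all the substantive analytic work (Fredholm theory, pregluing estimates, construction of the approximate right inverse, and the quadratic estimate) has already been carried out in the preceding subsections, and the remaining argument is a purely formal application of the contraction mapping principle in a Banach space.
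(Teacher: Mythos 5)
Your proposal is correct and follows the same approach as the paper's proof: set up the Newton--Picard map $\Phi_{\alpha,w,y}(\xi)=\xi-Q_{\alpha,w,y}\F_{\alpha,w,y}(\xi)$, show it preserves the ball $B_{\alpha,w,y}$ using the smallness of $\F_{\alpha,w,y}(\kappa_\alpha)$ (obtained from Proposition \ref{quadestimateHF} plus Lemmas \ref{mappregluingissmallHF} and \ref{pregluingKestimateHF}), invoke Lemma \ref{contractionpropHF} for the contraction estimate, and conclude via the Banach fixed-point theorem. You are somewhat more explicit than the paper in spelling out the triangle-inequality decomposition of $\F_{\alpha,w,y}(\kappa_\alpha)$ and the step that injectivity of the right inverse $Q_{\alpha,w,y}$ forces $\F_{\alpha,w,y}(\kappa_{\alpha,w,y})=0$, but these are the same argument.
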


\begin{proof}
The proof is essentially identical to the proof of Proposition \ref{newtoniteration}; we write it out anyway.

In fact, we will show that $\kappa_{\alpha,w,y}$ is given explicitly as the limit of the Newton iteration:
\begin{align}
\xi_0&:=\kappa_\alpha\\
\xi_n&:=\xi_{n-1}-Q_{\alpha,w,y}\F_{\alpha,w,y}\xi_{n-1}
\end{align}
By Lemma \ref{contractionpropHF}, the map $\xi\mapsto\xi-Q_{\alpha,w,y}\F_{\alpha,w,y}\xi$ is a $\frac 12$-contraction mapping when restricted to:
\begin{equation}\label{domainofcontractionHF}
\{\xi\in\kappa_\alpha+\im Q_{\alpha,w,y}:\left\|\xi\right\|_{k,2,\delta,\delta}\leq c'\}
\end{equation}
To finish the proof, it suffices to show that (for sufficiently small $(\alpha,w,y,\kappa)$) \eqref{domainofcontractionHF} is nonempty and is mapped to itself by $\xi\mapsto\xi-Q_{\alpha,w,y}\F_{\alpha,w,y}\xi$.

We know that $\left\|\kappa_\alpha\right\|_{k,2,\delta,\delta}\to 0$ as $\kappa\to 0$ (uniformly in $(\alpha,w,y)$), so \eqref{domainofcontractionHF} is nonempty.  By using Proposition \ref{quadestimateHF} with $(\xi_1,\xi_2)=(0,\kappa_\alpha)$ and Lemmas \ref{mappregluingissmallHF} and \ref{pregluingKestimateHF}, we conclude that:
\begin{equation}\label{kernelclosetoholomorphicHF}
\left\|\F_{\alpha,w,y}\kappa_\alpha\right\|_{k-1,2,\delta,\delta}\to 0
\end{equation}
as $(\alpha,w,y,\kappa)\to 0$.  Since the operator norm of $Q_{\alpha,w,y}$ is bounded uniformly as $(\alpha,w,y)\to 0$, we see that $\kappa_\alpha$ is almost fixed by $\xi\mapsto\xi-Q_{\alpha,w,y}\F_{\alpha,w,y}\xi$ as $(\alpha,w,y,\kappa)\to 0$.  It then follows from the contraction property that $\xi\mapsto\xi-Q_{\alpha,w,y}\F_{\alpha,w,y}\xi$ maps \eqref{domainofcontractionHF} to itself.
\end{proof}

\subsection{Gluing}

\begin{definition}[Gluing map]
We define:
\begin{align}
u_{\alpha,w,y,\kappa}&:=\exp_{u_\alpha}\kappa_{\alpha,w,y}\\
e_{\alpha,w,y,\kappa}&:=e_0+\proj_E\kappa_{\alpha,w,y}
\end{align}
where $\kappa_{\alpha,w,y}$ is as in Proposition \ref{newtoniterationHF}, and we consider the following \emph{gluing map}:
\begin{align}\label{gluingmapHF}
\CC^d\times(\RR_{\geq 0}^{k-1})^{\leq\s}\times\RR^\ast\times\prod_{i=1}^{k\prime}(H^i)^{\leq\s}\times K&\to\Mbar(M)\\
(\alpha,w,y,\kappa)&\mapsto(\alpha,w,y,u_{\alpha,w,y,\kappa},e_{\alpha,w,y,\kappa})
\end{align}
($\{x^i\}_{1\leq i\leq r}$ are determined uniquely by $\alpha,w,y,u_{\alpha,w,y,\kappa}$, so we omit them from the notation).  It follows from the definition that the gluing map commutes with the projection from both sides to $\CC^d\times(\RR_{\geq 0}^{k-1})^{\leq\s}\times\RR^\ast\times\prod_{i=1}^{k\prime}(H^i)^{\leq\s}\times E/E'$.
\end{definition}

\begin{lemma}\label{mapsstufftoregHF}
The gluing map \eqref{gluingmapHF} maps sufficiently small $(\alpha,w,y,\kappa)$ to $\Mbar(M)^\reg$.
\end{lemma}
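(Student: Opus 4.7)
The plan is to transfer the right inverse $Q_{\alpha,w,y}$ for $D_{\alpha,w,y}$ (the linearization of $\F_{\alpha,w,y}$ at the preglued base point $(u_\alpha, e_0)$) to a right inverse for the linearization at the new base point $(u_{\alpha,w,y,\kappa}, e_{\alpha,w,y,\kappa}) = (\exp_{u_\alpha}\kappa_{\alpha,w,y}, e_0 + \proj_E \kappa_{\alpha,w,y})$. First I would observe that since $\F_{\alpha,w,y}(\kappa_{\alpha,w,y}) = 0$ (by construction, see Proposition \ref{newtoniterationHF}), the vanishing of the left hand side of \eqref{finalHFdelbareqn} at the glued solution is equivalent to the vanishing of an analogous section $\F'_{\alpha,w,y}$ based at $(u_{\alpha,w,y,\kappa}, e_{\alpha,w,y,\kappa})$ (obtained from $\F_{\alpha,w,y}$ by the same change-of-basepoint computation used in the proof of Proposition \ref{quadestimateHF}, cf.\ equation \eqref{changebasisderivative}). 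The linearization $D'_{\alpha,w,y}$ of $\F'_{\alpha,w,y}$ at zero is the operator whose surjectivity we need to verify.

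Next I would use the key Lipschitz estimate \eqref{derivislipHF} (with $\zeta = \kappa_{\alpha,w,y}$), together with the conjugation by $\PT$ implicit in comparing $\F_{\alpha,w,y}$ based at $(u_\alpha,e_0)$ with $\F'_{\alpha,w,y}$ based at $(u_{\alpha,w,y,\kappa}, e_{\alpha,w,y,\kappa})$, to conclude
\[
\bigl\| D'_{\alpha,w,y} - (\PT)\circ D_{\alpha,w,y}\circ(\PT)^{-1} \bigr\|_{(k,2,\delta,\delta)\to(k-1,2,\delta,\delta)} \leq c\cdot \|\kappa_{\alpha,w,y}\|_{k,2,\delta,\delta}.
\]
Since $\|\kappa_{\alpha,w,y}\|_{k,2,\delta,\delta} \to 0$ as $(\alpha,w,y,\kappa) \to 0$ (this follows from the uniform convergence of the Newton iteration starting from $\kappa_\alpha$, together with Lemmas \ref{mappregluingissmallHF} and \ref{pregluingKestimateHF}, which show $\|\F_{\alpha,w,y}\kappa_\alpha\|_{k-1,2,\delta,\delta}\to 0$ and $\|\kappa_\alpha\|_{k,2,\delta,\delta}\to 0$), and since $Q_{\alpha,w,y}$ has uniformly bounded operator norm by Proposition \ref{QestimatesHF}, the composition $(\PT)\circ Q_{\alpha,w,y}\circ(\PT)^{-1}$ serves as an approximate right inverse for $D'_{\alpha,w,y}$ with error going to zero. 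Applying Lemma \ref{bootstrapinverse} then yields a genuine bounded right inverse $Q'_{\alpha,w,y}$ for $D'_{\alpha,w,y}$.

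The remaining point is that $\Mbar(M)^\reg$ requires not only surjectivity of the full linearized operator but also surjectivity of the induced map $K' \twoheadrightarrow E/E'$ on its kernel $K'$. At the base point this holds by our standing assumption. Since kernels of Fredholm operators depending continuously on parameters vary upper-semicontinuously and the index is preserved, one obtains from the surjectivity of $D_{0,0,0}|_{\ker L_0} \twoheadrightarrow W$ (which encodes $K \twoheadrightarrow E/E''$, hence $K \twoheadrightarrow E/E'$ since $E'' \subseteq E'$) a corresponding surjectivity for the perturbed operator at $(u_{\alpha,w,y,\kappa}, e_{\alpha,w,y,\kappa})$, for all sufficiently small parameters. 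The main technical obstacle is verifying the Lipschitz bound in the displayed equation above; however, this is precisely the content of Proposition \ref{quadestimateHF} and its proof, which has already been carried out. Given that, the rest is a routine application of the implicit-function-theorem style perturbation argument encapsulated in Lemma \ref{bootstrapinverse}.
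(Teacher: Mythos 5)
Your proposal is correct and rests on the same key ingredient as the paper's one-line proof, namely invoking the Lipschitz estimate \eqref{derivislipHF} with $\zeta=\kappa_{\alpha,w,y}$ to conclude that $Q_{\alpha,w,y}$ gives an approximate right inverse at the glued solution. Two small simplifications worth noting: the parallel-transport conjugation you set up is avoidable, since transversality in the definition of $\Mbar(M)^\reg$ can be tested directly in the fixed chart $\exp_{u_\alpha}$, where the relevant operator is simply $\F'_{\alpha,w,y}(\kappa_{\alpha,w,y},\cdot)$ acting on the \emph{same} Banach spaces as $D_{\alpha,w,y}$, so no change of bundle enters; and the surjectivity of the kernel's projection to $E/E'$ follows most cleanly not from upper semicontinuity of kernels but from the fact that the right inverse produced by Lemma \ref{bootstrapinverse} factors through $T=Q_{\alpha,w,y}$, hence has image in $\ker L_\alpha$, so $L_\alpha$ restricts to an isomorphism from the new kernel onto $W$ by an index count, giving the required surjection onto $E/E''\twoheadrightarrow E/E'$.
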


\begin{proof}
This is true since $Q_{\alpha,w,y}$ gives an approximate right inverse at $(u_{\alpha,w,y,\kappa},e_{\alpha,w,y,\kappa})$ (use \eqref{derivislipHF} with $\zeta=\kappa_{\alpha,w,y}$).
\end{proof}

Let $K_\alpha\subseteq C^\infty(C_\alpha,u_\alpha^\ast TM)_{D,H}\oplus E$ denote the image of $\kappa\mapsto\kappa_\alpha$.  It is clear by definition that $K\to K_\alpha$ is an isomorphism and that the respective $W^{k,2,\delta,\delta}$ norms are uniformly commensurable.  It is also clear that the following commutes:
\begin{equation}\label{LonkernelisgoodHF}
\begin{tikzcd}[column sep = tiny]
K\ar{dr}[swap]{L_0}\ar{rr}{\kappa\mapsto\kappa_\alpha}&&K_\alpha\ar{dl}{L_\alpha}\\
&W
\end{tikzcd}
\end{equation}
(all maps being isomorphisms).  Since $\im Q_{\alpha,w,y}\subseteq\ker L_\alpha$, it follows in particular that $\im Q_{\alpha,w,y}\cap K_\alpha=0$.  On the other hand, an index calculation shows that $\ind D_{\alpha,w,y}=\ind D_{0,0,0}$ (note that by the argument for Lemma \ref{fredholmandsamekernelcokernel}, it suffices to calculate their indices as operators $W^{k,2,\delta}\to W^{k-1,2,\delta}$, i.e.\ with no weights at the bubble nodes, on $\tilde C_\alpha$ and $\tilde C_0$ respectively; the calculation of such indices is originally due to Floer \cite{floersfhs,floerindex} and is by now standard).  Both are surjective, and hence we have $\dim\coker Q_{\alpha,w,y}=\dim\ker D_{\alpha,w,y}=\dim\ker D_{0,0,0}=\dim K=\dim K_\alpha$.  It follows that $\im Q_{\alpha,w,y}=\ker L_\alpha$ and that:
\begin{equation}
\im Q_{\alpha,w,y}\oplus K_\alpha\xrightarrow\sim W^{k,2,\delta,\delta}(C_\alpha,u_\alpha^\ast TX)_D\oplus E
\end{equation}
is an isomorphism of Banach spaces (an alternative justification for this is to use an argument similar to the proof of Propositions \ref{gluingsurjective}/\ref{gluingsurjectiveHF} to show that the natural projection $K_\alpha\xrightarrow{\kappa_\alpha\mapsto\kappa_\alpha-Q_{\alpha,w,y}D_{\alpha,w,y}\kappa_\alpha}\ker D_{\alpha,w,y}$ is surjective, and thus bijective).  We claim that in fact the two norms are uniformly commensurable as $(\alpha,w,y)\to 0$.  The map written is clearly uniformly bounded, so we just need to show the same for its inverse.  It suffices to show that the projection from the right hand side to $K_\alpha$ is uniformly bounded, but this is nothing other than $L_\alpha$ (clearly uniformly bounded) composed with the inverse of the isomorphism in \eqref{LonkernelisgoodHF} (also uniformly bounded).

\begin{lemma}
The gluing map \eqref{gluingmapHF} is injective in a neighborhood of zero.
\end{lemma}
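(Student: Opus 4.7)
The plan is to follow essentially the same strategy used in Lemma \ref{gluingisinjective} for the Gromov--Witten setting, with notation adapted to account for the additional parameter $w\in\prod_{i=1}^{k\prime}(H^i)^{\leq\s}$. Suppose that two tuples $(\alpha,w,y,\kappa)$ and $(\alpha',w',y',\kappa')$ in a small neighborhood of zero have the same image under \eqref{gluingmapHF}. Since the gluing map commutes with the projection to $\CC^d\times(\RR_{\geq 0}^{k-1})^{\leq\s}\times\RR^\ast\times\prod_{i=1}^{k\prime}(H^i)^{\leq\s}$ (namely the first three factors of $\Mbar(M)$, together with $w$ which is determined by $\ell_{\alpha,w}$ via its prescribed hypersurface crossings), we immediately conclude $(\alpha,w,y)=(\alpha',w',y')$. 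So henceforth all four of these parameters are common; write $(\alpha,w,y)$ for the shared value.

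The equality of images then reduces to:
\begin{align}
\exp_{u_\alpha}\kappa_{\alpha,w,y}&=\exp_{u_\alpha}\kappa'_{\alpha,w,y}\\
\proj_E\kappa_{\alpha,w,y}&=\proj_E\kappa'_{\alpha,w,y}
\end{align}
where $\kappa_{\alpha,w,y}$ and $\kappa'_{\alpha,w,y}$ are the Newton--Picard solutions from Proposition \ref{newtoniterationHF} with initial data $\kappa_\alpha$ and $\kappa'_\alpha$ respectively. By the uniform convergence of the Newton iteration and Lemma \ref{mappregluingissmallHF}, the $W^{k,2,\delta,\delta}$-norms of $\kappa_{\alpha,w,y}$ and $\kappa'_{\alpha,w,y}$ tend to zero as $(\alpha,w,y,\kappa,\kappa')\to 0$, and hence so do their $C^0$-norms by Sobolev embedding. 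Since the injectivity radius of $\exp$ is bounded below on compact sets, the first equation above forces $\kappa_{\alpha,w,y}=\kappa'_{\alpha,w,y}$ for sufficiently small data.

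Finally, recall from Proposition \ref{newtoniterationHF} that $\kappa_{\alpha,w,y}\in\kappa_\alpha+\im Q_{\alpha,w,y}$ and $\kappa'_{\alpha,w,y}\in\kappa'_\alpha+\im Q_{\alpha,w,y}$, so their equality yields $\kappa_\alpha-\kappa'_\alpha\in\im Q_{\alpha,w,y}$. On the other hand, $\kappa_\alpha-\kappa'_\alpha\in K_\alpha$ by construction, and we have already established that $K_\alpha\cap\im Q_{\alpha,w,y}=0$ (via the splitting $\im Q_{\alpha,w,y}\oplus K_\alpha\xrightarrow\sim W^{k,2,\delta,\delta}(C_\alpha,u_\alpha^\ast TM)_{D,H}\oplus E$ of Banach spaces). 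Hence $\kappa_\alpha=\kappa'_\alpha$, and since $\kappa\mapsto\kappa_\alpha$ is an isomorphism $K\to K_\alpha$, we conclude $\kappa=\kappa'$.

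There is no substantial obstacle here; all the work has already been done in establishing Proposition \ref{newtoniterationHF} (existence and uniqueness of $\kappa_{\alpha,w,y}$ in the contraction domain), the uniform smallness of $\kappa_{\alpha,w,y}$, and the transversality $K_\alpha\cap\im Q_{\alpha,w,y}=0$. The only point requiring mild care is the first step---extracting $(\alpha,w,y)=(\alpha',w',y')$ from the equality of images. The parameters $\alpha$ and $y$ are literally recorded as factors of a point in $\Mbar(M)$, while $w$ is recovered from $u$ by reading off the unique hypersurface crossings $\prod_{i=1}^{k\prime}\ell_{\alpha,w}(0_i)\in\prod_{i=1}^{k\prime}\bar H^i$ built into the definition of $\ell_{\alpha,w}$ in \S\ref{gluingandvaryingjAl}, so this causes no difficulty.
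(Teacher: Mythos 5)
Your proof is correct and takes essentially the same approach as the paper, whose own proof simply states that the argument is identical to Lemma \ref{gluingisinjective} (the Gromov--Witten analogue), with the obvious notational changes you spell out. One small inaccuracy: $w$ is literally recorded as the second coordinate of a tuple in $\Mbar(M)$, so there is no need to recover it from the hypersurface crossings of $\ell_{\alpha,w}$ as your final paragraph suggests.
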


\begin{proof}
The proof is identical to the proof of Lemma \ref{gluingisinjective}, with the obvious notational differences.
\end{proof}

\begin{proposition}\label{continuityofgluingHF}
The gluing map \eqref{gluingmapHF} is continuous in a neighborhood of zero.
\end{proposition}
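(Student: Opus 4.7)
The proof will follow the same basic strategy as Proposition \ref{continuityofgluing}, with additional care required for the main ends/necks (which were not present in the Gromov--Witten setting) and for the new parameter $w$. Since the topology on $\Mbar(M)$ is first countable, it suffices to verify sequential continuity: given $(\alpha_i,w_i,y_i,\kappa_i)\to(\alpha,w,y,\kappa)$, I will show that $(u_{\alpha_i,w_i,y_i,\kappa_i},e_{\alpha_i,w_i,y_i,\kappa_i})\to(u_{\alpha,w,y,\kappa},e_{\alpha,w,y,\kappa})$ in the Gromov topology. The first reduction is standard: by uniform convergence of the Newton iteration together with the uniform boundedness of $Q_{\alpha,w,y}$ (Proposition \ref{QestimatesHF}) and the uniform Lipschitz property of $\F_{\alpha,w,y}$ (Proposition \ref{quadestimateHF}), one checks by induction on the iteration step that $\|(\kappa_i)_{\alpha_i,w_i,y_i}-\kappa_{\alpha_i,w_i,y_i}\|_{k,2,\delta,\delta}\to 0$. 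Hence it suffices to prove convergence with $\kappa$ fixed, i.e., to show $(u_{\alpha_i,w_i,y_i,\kappa},e_{\alpha_i,w_i,y_i,\kappa})\to(u_{\alpha,w,y,\kappa},e_{\alpha,w,y,\kappa})$.

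Write $\kappa_{\alpha,w,y}=\kappa_\alpha+\xi$ for a unique $\xi\in\im Q_{\alpha,w,y}=\ker L_\alpha$ and similarly $\kappa_{\alpha_i,w_i,y_i}=\kappa_{\alpha_i}+\xi_i$. The key construction is the pregluing $\xi_{\alpha_i}\in W^{k,2,\delta,\delta}(C_{\alpha_i},u_{\alpha_i}^\ast TM)_{D,H}\oplus E$ of $\xi$, defined by an interval-matching formula in each bubble and main neck analogous to \eqref{xitoIpregluingforcontinuous}: in each neck where $\alpha\ne 0$, I use a smooth map $f_i$ matching the ends $[0,S_i-2]$, $[5S_i+2,6S_i]$ of $[0,6S_i]$ to the corresponding intervals in $[0,6S]$, with $f_i(s):=\PT_{u_\alpha(f_i(s),t)\to u_{\alpha_i}(s,t)}[\xi(f_i(s),t)]$, and over necks with $\alpha=0$ I use the kernel-pregluing formulae \eqref{kernelpregluingformulaHFbubble}--\eqref{kernelpregluingformulaHFmain}. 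The crucial observation, as in the proof of Proposition \ref{continuityofgluing}, is that because $\xi\in\ker L_\alpha$ and $L_\alpha$ is evaluated at non-nodal interior points of $C_\alpha$, the preglued $\xi_{\alpha_i}$ automatically lies in $\ker L_{\alpha_i}=\im Q_{\alpha_i,w_i,y_i}$. Moreover, since the left and right sides of \eqref{QKtoall} are uniformly commensurable, $C^0$-convergence $u_{\alpha_i,w_i,y_i,\kappa}\to u_{\alpha,w,y,\kappa}$ away from the nodes will follow once we show $\|\xi_i-\xi_{\alpha_i}\|_{k,2,\delta,\delta}\to 0$.

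By making $(\alpha,w,y,\kappa)$ arbitrarily close to zero, $\|\xi\|_{k,2,\delta,\delta}$ becomes arbitrarily small (again by uniform boundedness of $Q$ and \eqref{kernelclosetoholomorphicHF}), and the estimate $\|\xi_{\alpha_i}\|_{k,2,\delta,\delta}\leq c\|\xi\|_{k,2,\delta,\delta}$ then places $\kappa_{\alpha_i}+\xi_{\alpha_i}$ in the domain of contraction \eqref{domainofcontractionHF} where the Newton iteration applies at parameters $(\alpha_i,w_i,y_i,\kappa)$. Since $\kappa_{\alpha_i}+\xi_i$ is also in this domain and the iteration is a $\tfrac12$-contraction, bounded convergence of $Q_{\alpha_i,w_i,y_i}$ reduces the desired bound to proving
\[
\|\F_{\alpha_i,w_i,y_i}(\kappa_{\alpha_i}+\xi_{\alpha_i})\|_{k-1,2,\delta,\delta}\to 0.
\]
This is the main technical step. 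I will verify it region by region using that $\F_{\alpha,w,y}(\kappa_\alpha+\xi)=0$. Away from the ends/necks of $C_{\alpha_i}$, the difference is controlled by the $C^\ell$-distance between $(j_{y_i},A_{y_i},\ell_{\alpha_i,w_i},J\circ\ell_{\alpha_i,w_i},H\circ\ell_{\alpha_i,w_i},x^j_i)$ and $(j_y,A_y,\ell_{\alpha,w},J\circ\ell_{\alpha,w},H\circ\ell_{\alpha,w},x^j)$, which goes to zero as in the proofs of Lemmas \ref{mappregluingissmallHF} and \ref{pregluingKestimateHF}. Over the ends (where the expression coincides with $\F_{\alpha,w,y}(\kappa_\alpha+\xi)=0$), nothing need be shown. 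Over bubble necks with $\alpha\ne 0$ for the node, the formula for $\xi_{\alpha_i}$ directly matches the formula for $\xi$ via $f_i$, so the difference vanishes except on transition regions and can be bounded as in Lemma \ref{mappregluingissmallHF}. The delicate case is a neck with $\alpha=0$ for that node (so we are opening up a previously-undeformed node): here the expression is supported in $S^1\times([S_i-1,S_i+1]\cup[5S_i-1,5S_i+1])$, and the required smallness follows as in Lemma \ref{pregluingKestimateHF} using the exponential decay of derivatives of $u_0$, $\kappa$, and $A_y$ combined with the weight $e^{\delta\min(s,6S_i-s)}$, since $\delta<1$ (for bubble necks) or $\delta<\delta_i$ (for main necks). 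The main obstacle will be bookkeeping: the profusion of distinct regions (bubble vs.\ main, deformed vs.\ non-deformed nodes, transition strips) together with the dependence of $J$, $H$ on the varying trajectory $\ell_{\alpha,w}$ makes the case analysis lengthy, but each individual estimate is structurally identical to one already carried out in Lemmas \ref{mappregluingissmallHF}, \ref{pregluingKestimateHF}, or Lemma \ref{approxtopHF}.
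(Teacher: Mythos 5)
Your proposal follows the same route as the paper: reduce to fixed $\kappa$ via uniform convergence of the Newton iteration, preglue $\xi$ to get $\xi_{\alpha_i}\in\ker L_{\alpha_i}=\im Q_{\alpha_i,w_i,y_i}$ (the essential point, using the characterization $\im Q=\ker L$), and reduce to the single estimate $\|\F_{\alpha_i,w_i,y_i}(\kappa_{\alpha_i}+\xi_{\alpha_i})\|_{k-1,2,\delta,\delta}\to 0$, verified region by region. Two of your case estimates are imprecise, though neither invalidates the argument.

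First, over the bubble ends of $C_{\alpha_i}$ you claim the expression ``coincides with $\F_{\alpha,w,y}(\kappa_\alpha+\xi)=0$''; this is only true for the \emph{main} ends (where the coefficients of $\F$ — $j_y$, $A_y$, $J\circ\ell_{\alpha,w}$, $H\circ\ell_{\alpha,w}$ — are literally constant near the vertex). Over bubble ends the coefficients change with $(\alpha_i,w_i,y_i)$, so what holds is only $C^\ell$-convergence of the coefficients; as in the other regions, you must then convert this to convergence in the weighted norm, which works because $\delta<1$. Second, over bubble necks that open up a previously undeformed node, $\F_{\alpha_i,w_i,y_i}(\kappa_{\alpha_i}+\xi_{\alpha_i})$ is \emph{not} supported in $S^1\times([S_i-1,S_i+1]\cup[5S_i-1,5S_i+1])$: on the middle region $S^1\times[S_i+1,5S_i-1]$ both $u_{\alpha_i}$ and $\kappa_{\alpha_i}+\xi_{\alpha_i}$ are constant so the $du$ term vanishes, but the Hamiltonian term $\bigl(2d(\proj_{S^1}A_{y_i})\otimes X_H(\cdots)\bigr)^{0,1}$ does not. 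Its contribution goes to zero because $A_{y_i}=O(e^{-s})$ in all derivatives and $\delta<1$ — the ingredient you already list, but attributed to the wrong region. (That the middle region is harmless only for \emph{main} necks, where the preglued map is the genuine trivial cylinder over $\gamma_i$, is the source of the confusion.) With these two points corrected the proof matches the paper's.
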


\begin{proof}
The proof follows the same basic outline as the proof of Proposition \ref{continuityofgluing}; some parts of the proof are identical, and we will omit these.  The key ingredient is our precise control of the image of the right inverse $Q_{\alpha,w,y}$ (specifically, that $\im Q_{\alpha,w,y}=\ker L_\alpha$).

Suppose $(\alpha_i,w_i,y_i,\kappa_i)\to(\alpha,w,y,\kappa)$ is a convergent net.\footnote{We could restrict to sequences rather than nets since $\CC^d\times(\RR_{\geq 0}^{k-1})^{\leq\s}\times\RR^\ast\times\prod_{i=1}^{k\prime}(H^i)^{\leq\s}\times K$ is first countable.  However, this would not make the argument any simpler.}  We will show that:
\begin{equation}
(u_{\alpha_i,w_i,y_i,\kappa_i},e_{\alpha_i,w_i,y_i,\kappa_i})\to(u_{\alpha,w,y,\kappa},e_{\alpha,w,y,\kappa})
\end{equation}
First, observe that the argument from the proof of Proposition \ref{continuityofgluing} applies verbatim to give that in fact it suffices to show that:
\begin{equation}
(u_{\alpha_i,w_i,y_i,\kappa},e_{\alpha_i,w_i,y_i,\kappa})\to(u_{\alpha,w,y,\kappa},e_{\alpha,w,y,\kappa})
\end{equation}
Now recall that by definition:
\begin{align}
u_{\alpha,w,y,\kappa}&=\exp_{u_\alpha}\kappa_{\alpha,w,y}&\kappa_{\alpha,w,y}&=\kappa_\alpha+\xi&\text{for some }\xi&\in\im Q_{\alpha,w,y}
\end{align}
Now we define $\xi_{\alpha_i}\in W^{k,2,\delta,\delta}(C_{\alpha_i},u_{\alpha_i}^\ast TX)_D\oplus E$ by ``pregluing'' $\xi$ from $C_\alpha$ to $C_{\alpha_i}$ as follows.  Note that we may assume without loss of generality that at the nodes where $\alpha\ne 0$, we also have $\alpha_i\ne 0$.  Away from the ends/necks of $C_{\alpha_i}$, we set $\xi_{\alpha_i}=\xi$.  Note that for every end of $C_{\alpha_i}$, there is a corresponding end of $C_\alpha$, so we may also simply set $\xi_{\alpha_i}=\xi$ over the ends of $C_{\alpha_i}$.  Over the necks of $C_{\alpha_i}$ for which $\alpha=0$, we define $\xi_{\alpha_i}$ via \eqref{kernelpregluingformulaHFbubble}--\eqref{kernelpregluingformulaHFmain} (note that this is reasonable since $\xi$ satisfies the estimates \eqref{kappabubble}, \eqref{kappamain} as a consequence of Proposition \ref{neckestimateHF}).  Over the necks of $C_{\alpha_i}$ for which $\alpha\ne 0$, we define $\xi_{\alpha_i}$ as:
\begin{equation}
\xi_{\alpha_i}(s,t):=\PT_{u_{\alpha}(t,f_i(s))\to u_{\alpha_i}(t,s)}[\xi(t,f_i(s)))]
\end{equation}
where $f_i:[0,6S_i]\to[0,6S]$ is smooth and satisfies:
\begin{align}
f_i(s):=&\begin{cases}
s&s\leq S-2\cr
s-3S_i+3S&3S_i-2S+2\leq s\leq 3S_i+2S-2\cr
s-6S_i+6S&6S_i-S+2\leq s\cr
\end{cases}\\
f_i([S-2,3S_i-2S+2])&\subseteq[S-2,S+2]\\
f_i([3S_i+2S-2,6S_i-S+2])&\subseteq[5S-2,5S+2]
\end{align}
so that $f_i\to\id$ uniformly in all derivatives as $S_i\to S$.  More informally, $f_i$ is smooth and matches up $[0,S-2],[S+2,5S-2],[5S+2,6S]\subseteq[0,6S]$ with corresponding intervals of the same length inside $[0,6S_i]$, symmetrically.

Now at this point we appeal to the corresponding arguments from the proof of Proposition \ref{continuityofgluing}, which apply as written (using the fact that $\im Q_{\alpha,w,y}=\ker L_\alpha$) and imply that it suffices to show that:
\begin{equation}\label{pregluedcorrectionalmostholHF}
\left\|\F_{\alpha_i,w_i,y_i}(\kappa_{\alpha_i}+\xi_{\alpha_i})\right\|_{k-1,2,\delta,\delta}\to 0
\end{equation}
Recall that $\F_{\alpha,w,y}(\kappa_\alpha+\xi)=0$; we will use this to estimate $\F_{\alpha_i,w_i,y_i}(\kappa_{\alpha_i}+\xi_{\alpha_i})$.

Away from the ends/necks of $C_{\alpha_i}$, it is clear by definition that $\F_{\alpha_i,w_i,y_i}(\kappa_{\alpha_i}+\xi_{\alpha_i})\to\F_{\alpha,w,y}(\kappa_\alpha+\xi)$ uniformly in all derivatives.  Thus the contribution to the norm of $\F_{\alpha_i,w_i,y_i}(\kappa_{\alpha_i}+\xi_{\alpha_i})$ over this region approaches zero.

Over the necks of $C_{\alpha_i}$ which correspond to necks of $C_\alpha$, we again have uniform convergence in all derivatives, so the contribution of these regions approaches zero as well.

We estimate the contribution over the ends of $C_{\alpha_i}$ (recall that these necessarily correspond to ends of $C_\alpha$).  Over main ends, we have $\F_{\alpha_i,w_i,y_i}(\kappa_{\alpha_i}+\xi_{\alpha_i})=\F_{\alpha,w,y}(\kappa_\alpha+\xi)$.  Now the convergence $\F_{\alpha_i,w_i,y_i}(\kappa_{\alpha_i}+\xi_{\alpha_i})\to\F_{\alpha,w,y}(\kappa_\alpha+\xi)$ uniformly in all derivatives is valid near the bubble nodes in the usual metric on $C_{\alpha_i}=C_\alpha$.  It follows that we also have convergence in the relevant $\delta$-weighted Sobolev norm since $\delta<1$.  Thus the contribution of this region to the norm approaches zero.

Finally, let us estimate the contribution to the norm over the necks of $C_{\alpha_i}$ which correspond to ends of $C_\alpha$.  We treat main necks and bubble necks separately.  Over main necks, $\F_{\alpha_i,w_i,y_i}(\kappa_{\alpha_i}+\xi_{\alpha_i})$ is supported inside $S^1\times([S-1,S+1],[5S-1,5S+1])$.  The contribution of this region to its norm goes to zero, as follows using the exponential decrease on $\kappa$ and $\xi$ from Proposition \ref{neckestimateHF} and the fact that $\delta<\delta_i$.  Over bubble ends, $\F_{\alpha_i,w_i,y_i}(\kappa_{\alpha_i}+\xi_{\alpha_i})$ converges to $\F_{\alpha,w,y}(\kappa_\alpha+\xi)$ uniformly in all derivatives over the complement of $S^1\times[S-1,5S+1]$ (measured with respect to the usual metric on $C_0$), and hence the contribution of this region to the weighted norm goes to zero since $\delta<1$.  Over $S^1\times([S-1,S+1]\cup[5S-1,5S+1])$, the smoothness of $\kappa$ and $\xi$ over $C_\alpha$ and the fact that $\delta<1$ shows that the contribution of this region to the norm goes to zero (since $S\to\infty$).  Finally, over $S^1\times[S+1,5S-1]$, both $u_{\alpha_i}$ and $\kappa_{\alpha_i}+\xi_{\alpha_i}$ are constant, so $\F_{\alpha_i,w_i,y_i}(\kappa_{\alpha_i}+\xi_{\alpha_i})$ is simply:
\begin{equation*}
\PT_{\exp_{u_0(n)}(\kappa(n)+\xi(n))\to u_0(n)}^{\ell_{\alpha_i,w_i}(A_{y_i}(\cdot))}\Bigl(2d(\proj_{S^1}A_{y_i})\otimes X_{H((\ell_{\alpha_i,w_i}\times\id_{S^1})(A_{y_i}(\cdot)))}\Bigr)^{0,1}_{j,J(\ell_{\alpha_i,w_i}(A_{y_i}(\cdot)))}
\end{equation*}
The norm of this expression over $S^1\times[S+1,5S-1]$ approaches zero since $A_{y_i}=O(e^{-s})$ in all derivatives and $\delta<1$.
\end{proof}

\subsection{Surjectivity of gluing}

We recall a well-known principle of exponential decay for Floer trajectories converging to non-degenerate periodic orbits; this has appeared in many forms in the literature, for example in Floer \cite[pp801--802]{floer88}.

\begin{proposition}[A priori estimate on decay of connecting cylinders]\label{neckestimateHF}
Let $(M,\omega)$ be a symplectic manifold, $J$ an $\omega$-compatible almost complex structure on $M$, $H:M\times S^1\to\RR$ a Hamiltonian, and $\gamma:S^1\to M$ a non-degenerate periodic orbit of $H$.  We consider the (unbounded) asymptotic linearized operator:
\begin{align}
L^2(S^1,\gamma^\ast TM)&\to L^2(S^1,\gamma^\ast TM)\\
\xi&\mapsto J\mathcal L_{X_H}\xi
\end{align}
where $\mathcal L_{X_H}$ denotes the symplectic connection on $\gamma^\ast TM$ induced by $X_H$, and we use the standard inner product $g(\cdot,\cdot):=\omega(\cdot,J\cdot)$ on $\gamma^\ast TM$ for the inner product on $L^2(S^1,\gamma^\ast TM)$.  This operator is self-adjoint; we denote by $\delta>0$ the smallest magnitude of any of its eigenvalues ($\delta$ is positive since the orbit is non-degenerate).

We consider (partially defined) sections $\xi:S^1\times\RR\to\gamma^\ast TM$ with $\left|\xi(t,s)\right|<\epsilon$ such that $u:=\exp_\gamma\xi:S^1\times\RR\to M$ satisfies $(du+2dt\otimes X_H(u))^{0,1}_J=0$.  Now for all $\mu<1$, there exists $\epsilon>0$ such that we have the following estimates.

Suppose that $\xi$ as above is defined on $S^1\times[0,\infty)$.  Then:
\begin{equation}
\left|D^k\xi\right|\leq c_k\cdot e^{-\mu\delta s}\left(\int_{S^1}\bigl|\xi(t,0)\bigr|^2\,dt\right)^{1/2}\qquad s\geq 1
\end{equation}
for all $k\geq 0$.  A symmetric statement holds for $u$ and $\xi$ defined over $S^1\times(-\infty,0]$.

Suppose that $\xi$ as above is defined on $S^1\times[0,N]$.  Then:
\begin{equation*}
\left|D^k\xi\right|\leq c_k\cdot\left[e^{-\mu\delta s}\left(\int_{S^1}\bigl|\xi(t,0)\bigr|^2\,dt\right)^{1/2}+e^{-\mu\delta(N-s)}\left(\int_{S^1}\bigl|\xi(t,N)\bigr|^2\,dt\right)^{1/2}\right]\qquad 1\leq s\leq N-1
\end{equation*}
for all $k\geq 0$.
\end{proposition}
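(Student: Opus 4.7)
The plan is to mimic the classical argument of Floer \cite{floer88}, proceeding in two stages: first establish exponential decay in $L^2(S^1)$ via an energy/convexity argument, then bootstrap up to $C^k$ decay by elliptic regularity. Write the Floer equation $(du + 2dt\otimes X_H(u))^{0,1}_J = 0$ in the trivialization $u = \exp_\gamma\xi$. Expanding to first order around $\xi = 0$ and using that $\gamma$ is itself a solution, the equation takes the form $\partial_s\xi + A\xi = Q(\xi,\nabla\xi)$, where $A = J\mathcal{L}_{X_H}$ is the self-adjoint asymptotic operator from the statement and $Q$ is pointwise quadratic in $(\xi,\nabla\xi)$, so $|Q(\xi,\nabla\xi)|\leq C(|\xi|^2 + |\xi||\nabla\xi|)$ whenever $|\xi|\leq\epsilon$.

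The core step is to prove $L^2(S^1)$ decay. Define $f(s) := \tfrac{1}{2}\int_{S^1}|\xi(t,s)|^2\,dt$. A direct computation, using self-adjointness of $A$ and the equation $\partial_s\xi = -A\xi + Q$, gives
\begin{equation}
f''(s) = \int_{S^1}\bigl(|\partial_s\xi|^2 + \langle A\xi, A\xi\rangle\bigr)\,dt + R(s),
\end{equation}
where $R(s)$ collects terms involving $Q$ and its derivatives, and satisfies $|R(s)|\leq C'\epsilon\bigl(f(s) + \int_{S^1}|\nabla\xi|^2\bigr)$. Because every eigenvalue of $A$ has magnitude at least $\delta$, the spectral theorem gives $\int_{S^1}|A\xi|^2\,dt \geq \delta^2\cdot 2f(s)$, and similarly $\int_{S^1}|\partial_s\xi|^2$ controls the lower-order $\nabla\xi$ terms appearing in $R$ (using $\partial_s\xi = -A\xi + Q$ again). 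Choosing $\epsilon > 0$ sufficiently small depending on $\mu < 1$, the $R$-contribution is absorbed and one obtains the differential inequality
\begin{equation}
f''(s) \geq \mu^2\delta^2\, f(s).
\end{equation}
On a half-line $[0,\infty)$, combined with $f\to 0$ at infinity (which holds since $\xi$ has finite energy, or by a bootstrap from the $L^2$ summability implied by the equation), this forces $f(s)\leq f(0)e^{-2\mu\delta s}$; on a finite interval $[0,N]$, the maximum principle applied to the operator $\partial_s^2 - \mu^2\delta^2$ bounds $f$ above by the unique solution to the corresponding boundary value problem with the given values at $s=0$ and $s=N$, which is a sum of the two claimed exponentials (up to a constant depending only on $\mu$).

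To pass from the $L^2(S^1)$ decay to the pointwise $C^k$ decay, apply the mean-value-type elliptic bootstrap of Lemma \ref{energycontrolsWoneinfty} and Lemma \ref{Woneinftycontrolsall}: for each $s_0$ with $s_0\geq 1$ (respectively $1\leq s_0\leq N-1$), work on the rectangle $S^1\times[s_0-\tfrac{1}{2},s_0+\tfrac{1}{2}]$, where the $L^2$ bound on $\xi$ together with the nonlinear elliptic equation for $\xi$ gives first an $L^\infty$ bound and then $C^k$ bounds on all derivatives, each majorized by the $L^2$ norm of $\xi$ on the slightly larger rectangle. Integrating the slice bounds $f(s)\leq\cdots$ over any unit interval then yields the desired $|D^k\xi|\leq c_k\cdot e^{-\mu\delta s}(\int_{S^1}|\xi(t,0)|^2)^{1/2}$ in the half-line case and the two-sided bound in the finite case.

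The main obstacle is the energy estimate $f''\geq\mu^2\delta^2 f$: one must carefully verify that all the error terms generated by differentiating in $s$ (including those from $\nabla_s A$ and the Christoffel symbols of the connection used to define $\exp_\gamma$) can genuinely be absorbed into the spectral lower bound, and that this absorption is uniform in the length of the cylinder when the interval is long. The argument is standard when one works in a parallel trivialization of $\gamma^\ast TM$ adapted to the symplectic connection $\mathcal{L}_{X_H}$, in which the equation becomes $\partial_s\xi + A\xi = Q(\xi,\nabla\xi)$ with $A$ the fixed self-adjoint operator of the statement; the quadratic term $Q$ then carries all of the geometric complication and must be controlled pointwise by the smallness assumption $|\xi|\leq\epsilon$, choosing $\epsilon$ after $\mu$.
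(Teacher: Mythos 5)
Your proposal is essentially the same argument as the paper's: trivialize and write the equation in the form $\partial_s\xi + A\xi + (\text{quadratic terms}) = 0$ with $A = J\mathcal{L}_{X_H}$ self-adjoint, establish a convexity inequality $f''\geq(1-o(1))4\delta^2 f$ for $f(s)=\int_{S^1}|\xi(t,s)|^2\,dt$ by expanding $f''=2\|\partial_s\xi\|^2+2\langle\xi,\partial_s^2\xi\rangle$, substituting the equation, and exploiting the spectral gap, then apply the maximum principle for $\partial_s^2-r^2$ on half-lines and finite intervals, and finally bootstrap from $L^2(S^1)$-decay to $C^k$-decay via interior elliptic estimates.

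There is one substantive gap in the last step. You cannot carry out the bootstrap by quoting Lemmas \ref{energycontrolsWoneinfty} and \ref{Woneinftycontrolsall} directly: those lemmas bound $|du|$ and $|D^ku|$ at an interior point in terms of $\int|du|^2$ (respectively $\|du\|_\infty$), i.e.\ they control the map $u$ itself. What the argument requires is a bound of the form $|D^k\xi|\lesssim\|\xi\|_{L^2}$, where $\xi$ is the small deviation of $u$ from the $J$-holomorphic reference (here the trivial cylinder over $\gamma$). This is genuinely different: one must subtract the two $J$-holomorphic curve equations and bootstrap the resulting linear elliptic system for $\xi$, which is exactly what the paper's Lemma \ref{Ltwocontrolsinnbhd} does (for $u_0$ and $u_0+\xi$ both $J$-holomorphic, all interior derivatives of $\xi$ are bounded linearly by $\|\xi\|_{L^2}$ on the rectangle). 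Without some such statement the inference from $\|\xi(\cdot,s)\|_{L^2(S^1)}\lesssim e^{-\mu\delta s}$ to $|D^k\xi(t,s)|\lesssim e^{-\mu\delta s}$ is not justified. Separately, with your normalization $f=\tfrac12\|\xi\|_{L^2}^2$ the convexity inequality should read $f''\geq4\mu^2\delta^2 f$ (each of $\|\partial_s\xi\|^2$ and $\|A\xi\|^2$ contributes $\geq(1-o(1))\cdot2\delta^2 f$); as written, $f''\geq\mu^2\delta^2f$ is inconsistent by a factor of two in the exponent with the decay $f\leq f(0)e^{-2\mu\delta s}$ you then claim. Also note that $f\to0$ at infinity need not be assumed; the pointwise bound $|\xi|<\epsilon$ already makes $f$ bounded, which is all the maximum-principle comparison requires.
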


\begin{proof}
This proof is adapted from Salamon \cite[p170, Lemma 2.11]{salamonnotes}.

We have by assumption that $\partial_su+J(u)(\partial_tu-X_H(u))=0$, where $u(t,s)=\exp_{\gamma(t)}\xi(t,s)$.  Now we may rewrite this equation in exponential coordinates in terms of $\xi$ as follows.  Denote by $\partial_t$ the connection $\mathcal L_{X_H}$ on $\gamma^\ast TM$, which is given by the Lie derivative with respect to $X_H$.  Now the equation for $u$ is equivalent to:
\begin{equation}
\partial_s\xi+J\partial_t\xi+A(\xi)\partial_t\xi+Q(\xi)=0
\end{equation}
for certain smooth (non-linear) bundle maps $A:\gamma^\ast TM\to\operatorname{End}(\gamma^\ast TM)$ and $Q:\gamma^\ast TM\to\gamma^\ast TM$.  We have $A(0)=0$, $Q(0)=0$, and $Q'(0,\cdot)=0$ (we denote by $A'$ and $Q'$ their ``vertical'' derivatives).

Now we let:
\begin{equation}
f(s):=\int_{S^1}\left|\xi(t,s)\right|^2dt=\left\|\xi\right\|_2^2
\end{equation}
and we will show that $f''(s)\geq(1-o(1))4\delta^2f(s)$, where $o(1)$ denotes a quantity which can be made arbitrarily small by choosing $\epsilon>0$ sufficiently small.  We have:
\begin{equation}\label{fppexpr}
f''(s)=2\int_{S^1}\left|\partial_s\xi\right|^2dt+2\int_{S^1}\langle\xi,\partial_s\partial_s\xi\rangle\,dt=2\left\|\partial_s\xi\right\|_2^2+2\langle\xi,\partial_s\partial_s\xi\rangle
\end{equation}
and we will now prove estimates $\left\|\partial_s\xi\right\|_2^2\geq(1-o(1))\delta^2f(s)$ and $\langle\xi,\partial_s\partial_s\xi\rangle\geq(1-o(1))\delta^2f(s)$.  Recall that $J\partial_t$ is self-adjoint and that $\left\|J\partial_t\xi\right\|_2\geq\delta\left\|\xi\right\|_2$.

To bound $\left\|\partial_s\xi\right\|_2^2$, we write:
\begin{align}
\left\|\partial_s\xi\right\|_2&=\left\|J\partial_t\xi+A(\xi)\partial_t\xi+Q(\xi)\right\|_2\cr
&\geq\left\|J\partial_t\xi\right\|_2-\left\|A(\xi)\partial_t\xi\right\|_2-\left\|Q(\xi)\right\|_2\cr
&\geq\left\|J\partial_t\xi\right\|_2-\left\|A(\xi)\right\|_\infty\left\|\partial_t\xi\right\|_2-c\left\|\xi\right\|_\infty\left\|\xi\right\|_2\cr
&=\left\|J\partial_t\xi\right\|_2-o(1)\left\|J\partial_t\xi\right\|_2-o(1)\left\|\xi\right\|_2\geq(1-o(1))\delta\left\|\xi\right\|_2
\end{align}
and thus $\left\|\partial_s\xi\right\|_2^2\geq(1-o(1))\delta^2f(s)$.

To bound $\langle\xi,\partial_s\partial_s\xi\rangle$, we write:
\begin{align}
\langle\xi,\partial_s\partial_s\xi\rangle&=-\langle\xi,\partial_s[J\partial_t\xi+A(\xi)\partial_t\xi+Q(\xi)]\rangle\cr
&=-\langle J\partial_t\xi,\partial_s\xi\rangle-\langle\xi,A'(\xi,\partial_s\xi)\partial_t\xi\rangle-\langle\xi,A(\xi)\partial_t\partial_s\xi\rangle-\langle\xi,Q'(\xi,\partial_s\xi)\rangle
\end{align}
Now substituting in $\partial_s\xi=-J\partial_t\xi-A(\xi)\partial_t\xi-Q(\xi)$, we obtain the following:
\begin{align}
\langle\xi,\partial_s\partial_s\xi\rangle=\left\|J\partial_t\xi\right\|_2^2
&+\langle J\partial_t\xi,A(\xi)\partial_t\xi\rangle\cr
&+\langle J\partial_t\xi,Q(\xi)\rangle\cr
&+\langle\xi,A'(\xi,[J+A(\xi)]\partial_t\xi)\partial_t\xi\rangle\cr
&+\langle\xi,A'(\xi,Q(\xi))\partial_t\xi\rangle\cr
&+\langle\xi,A(\xi)\partial_t([J+A(\xi)]\partial_t\xi)\rangle\cr
&+\langle\xi,A(\xi)\partial_t(Q(\xi))\rangle\cr
&+\langle\xi,Q'(\xi,[J+A(\xi)]\partial_t\xi)\rangle\cr
&+\langle\xi,Q'(\xi,Q(\xi))\rangle
\end{align}
The first term $\left\|J\partial_t\xi\right\|_2^2$ is the main term; let us estimate the remaining error terms.  The first error term is bounded by $\left\|\xi\right\|_\infty\left\|J\partial_t\xi\right\|_2^2$, the second by $\left\|\xi\right\|_\infty\left\|\xi\right\|_2\left\|J\partial_t\xi\right\|_2$, the third by $\left\|\xi\right\|_\infty\left\|J\partial_t\xi\right\|_2^2$, the fourth by $\left\|\xi\right\|_\infty^2\left\|\xi\right\|_2\left\|J\partial_t\xi\right\|_2$, the fifth by $\left\|\xi\right\|_\infty\left\|J\partial_t\xi\right\|^2+\left\|\xi\right\|_\infty\left\|\xi\right\|_2\left\|J\partial_t\xi\right\|_2$ (integrate by parts to move the outermost $\partial_t$ onto $\xi$, $A(\xi)$, and $\langle\cdot,\cdot\rangle$), the sixth by $\left\|\xi\right\|_\infty^2\left\|\xi\right\|_2\left\|J\partial_t\xi\right\|_2$, the seventh by $\left\|\xi\right\|_\infty\left\|\xi\right\|_2\left\|J\partial_t\xi\right\|_2$, and the eighth by $\left\|\xi\right\|_\infty^2\left\|\xi\right\|_2^2$.  All of these are $o(1)\left\|J\partial_t\xi\right\|_2^2$, so we conclude that $\langle\xi,\partial_s\partial_s\xi\rangle\geq(1-o(1))\delta^2f(s)$.  Combining the above estimates, we obtain the desired inequality $f''(s)\geq(1-o(1))4\delta^2f(s)$.

Now we have the following maximum principle for the differential inequality $g''(s)\geq r^2g(s)$.  Namely, suppose that $g:[a,b]\to\RR_{\geq 0}$ satisfies $g''(s)\geq r^2g(s)$ and that $G:[a,b]\to\RR_{\geq 0}$ satisfies $G''(s)=r^2G(s)$; if $g\leq G$ at the endpoints $a,b$, then it follows that $g\leq G$ over the whole interval $[a,b]$ (one may easily derive a contradiction by assuming that, on the contrary, $g=G$ at $a_1,b_1$ and $g>G$ on $(a_1,b_1)$ for $a\leq a_1<b_1\leq b$).  Specializing to the case at hand, we have:
\begin{align*}
f(s)&\leq e^{-(1-o(1))2\delta s}f(0)&&\text{in the case of }S^1\times[0,\infty)\\
f(s)&\leq e^{-(1-o(1))2\delta s}f(0)+e^{-(1-o(1))2\delta(N-s)}f(N)&&\text{in the case of }S^1\times[0,N]
\end{align*}
Now Lemma \ref{Ltwocontrolsinnbhd} below asserts that for a $J$-holomorphic curve $u_0$ and a sufficiently small perturbation thereof $u=\exp_{u_0}\xi$ which is also $J$-holomorphic, we have uniform bounds on all derivatives of $\xi$ (away from the boundary) in terms of the $L^2$-norm of $\xi$.  Applying this where $u_0$ is the trivial cylinder over $\gamma(t)$, we conclude that our bounds on $f(s)$ imply the desired result.
\end{proof}

\begin{lemma}\label{Ltwocontrolsinnbhd}
Let $u:[0,1]\times[0,1]\to B^{2n}(1)$ be $J$-holomorphic with respect to some smooth almost complex structure $J$ on the unit ball $B^{2n}(1)\subseteq\RR^{2n}$.  There exists $\epsilon>0$ (depending only on upper bounds on $\left\|J\right\|_{C^\ell}$ and $\left\|u\right\|_{C^\ell}$ for some absolute $\ell<\infty$) with the following property.  Suppose that $\xi:[0,1]\times[0,1]\to\RR^{2n}$ with $|\xi|<\epsilon$ pointwise is such that $u+\xi$ has image contained in $B^{2n}(1)$ and is $J$-holomorphic.  Then:
\begin{equation}
\left|D^k\xi(0,0)\right|\leq c_k\biggl(\int_{[0,1]\times[0,1]}\left|\xi(x,y)\right|^2\,dx\,dy\biggr)^{1/2}
\end{equation}
for all $k<\infty$ for constants $c_k<\infty$ depending only on upper bounds on $\left\|J\right\|_{C^\ell}$ and $\left\|u\right\|_{C^\ell}$ for some $\ell=\ell(k)<\infty$.
\end{lemma}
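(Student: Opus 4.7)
\  The strategy is to derive the semilinear elliptic PDE satisfied by $\xi$, and then to obtain the pointwise derivative bounds from the $L^2$ norm of $\xi$ by iterated interior elliptic regularity. Concretely, since both $u$ and $u+\xi$ satisfy the $J$-holomorphic equation (written in Euclidean coordinates on $B^{2n}(1)$ as $v_y=J(v)v_x$), subtraction yields
\begin{equation}
\xi_y-J(u)\xi_x=[J(u+\xi)-J(u)]\bigl(u_x+\xi_x\bigr)=:F(u,\xi,\nabla u,\nabla\xi),
\end{equation}
where the linear first-order operator $L_u:=\partial_y-J(u)\partial_x$ is elliptic with smooth coefficients (depending only on $u$ and $J$), and $F$ is a smooth function of its arguments vanishing identically when $\xi=0$, hence pointwise of order $O(|\xi|)$ (with constants depending on $\|J\|_{C^1}$ and $\|u\|_{C^1}$).

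The main work is then a standard elliptic bootstrap, carried out on a nested sequence of compact subdomains $[0,1]^2\supset K_0\supset K_1\supset\cdots$ whose interiors all contain the evaluation point. First, the interior $L^2$ estimate for the elliptic operator $L_u$ gives $\|\xi\|_{W^{1,2}(K_1)}\leq c(\|L_u\xi\|_{L^2(K_0)}+\|\xi\|_{L^2(K_0)})$, and the bound $|F|\leq C|\xi|$ (valid for $|\xi|<\epsilon$, with $C$ depending on $\|u\|_{C^1}$ and $\|J\|_{C^1}$) combined with the hypothesis $\|\xi\|_{L^\infty}<\epsilon$ then converts this to $\|\xi\|_{W^{1,2}(K_1)}\leq c_1\|\xi\|_{L^2([0,1]^2)}$. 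To continue, I would differentiate the equation and apply the $L^p$ elliptic regularity estimate at the next level, using the schematic form $\nabla F=F_u\nabla u+F_\xi\nabla\xi+F_{\nabla u}\nabla^2u+F_{\nabla\xi}\nabla^2\xi$ in which $F_u,F_{\nabla u},F_\xi,F_{\nabla\xi}$ are each either $O(|\xi|)$ or bounded smooth functions of $u$ and $\xi$; this lets one bound $\|F\|_{W^{k,p}(K_j)}$ by a polynomial in $\|\xi\|_{W^{k,p}(K_j)}$ and $\|u\|_{C^{k+1}(K_j)}$ times $\|\xi\|_{W^{k,p}(K_j)}$.

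Iterating in the standard fashion (cf.\ Lemma \ref{Woneinftycontrolsall}), one obtains $\|\xi\|_{W^{k,2}(K_k)}\leq c_k\|\xi\|_{L^2([0,1]^2)}$ for every $k$, with $c_k$ depending only on upper bounds on $\|J\|_{C^{\ell(k)}}$ and $\|u\|_{C^{\ell(k)}}$ for some $\ell(k)<\infty$. Sobolev embedding $W^{k+2,2}\hookrightarrow C^k$ in two dimensions then yields the required pointwise bound on $|D^k\xi|$ at the evaluation point (which lies in the interior of all the $K_j$).

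The main obstacle in executing this is the need to keep the bootstrap constants independent of $\xi$ itself. This is where the pointwise hypothesis $|\xi|<\epsilon$ is used: it guarantees that all $\xi$-dependent coefficients appearing when one differentiates $F$ lie in a fixed compact region of their domain, so that every such coefficient is bounded uniformly in terms of $\|J\|_{C^\ell}$ and $\|u\|_{C^\ell}$. With that uniform control in hand, the bootstrap at each level is a routine application of interior elliptic regularity for the fixed linear operator $L_u$, and the smallness threshold $\epsilon$ need only depend on $\|u\|_{C^\ell}$ and $\|J\|_{C^\ell}$ for $\ell$ absolute (large enough to make the first step of the bootstrap close).
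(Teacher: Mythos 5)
There is a genuine gap. Your source term is $F=[J(u+\xi)-J(u)](u_x+\xi_x)$, which depends on $\nabla\xi$ and not merely on $\xi$. The claim that $F$ is ``pointwise of order $O(|\xi|)$ (with constants depending on $\|J\|_{C^1}$ and $\|u\|_{C^1}$)'' is therefore false as stated: the correct pointwise bound is $|F|\leq C|\xi|(|u_x|+|\xi_x|)$, and the hypothesis $|\xi|<\epsilon$ gives no control whatsoever on $|\xi_x|$. The same defect propagates through your differentiated schematic: $F_\xi=J'(u+\xi)(u_x+\xi_x)$ also contains $\xi_x$, so it is not ``a bounded smooth function of $u$ and $\xi$,'' and the appeal to a fixed compact region of the coefficients' domain fails because $\nabla\xi$ is unbounded a priori. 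Consequently, the constants in each step of your bootstrap would depend on $\xi$ itself, and the argument does not close.

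What is missing is precisely the preliminary step the paper takes before any $L^2$-bootstrap: establish absolute pointwise bounds $|D^k\xi|\leq c_k$ with $c_k$ depending only on $\|J\|_{C^\ell}$ and $\|u\|_{C^\ell}$. In the paper this is done by applying the Gromov--Schwarz lemma to the $J$-holomorphic curve $u+\xi$ to get $|D\xi|\leq c$, then invoking Lemma \ref{Woneinftycontrolsall} to upgrade to $|D^k\xi|\leq c_k$. With those bounds in hand, the paper rewrites the equation as $(\partial_x+B\partial_y)\xi=A(\xi)$ with $B=J(u+\xi)$ and $A(\xi)=[J(u)-J(u+\xi)]u_y$, so the right-hand side depends only pointwise on $\xi$ (no $\nabla\xi$), and the variable coefficient $B$, together with all its derivatives, is uniformly bounded by the preliminary estimates. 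Only then does the Laplacian bootstrap from $\|\xi\|_{L^2}$ go through with constants independent of $\xi$. You should either adopt the paper's normalization of the operator (absorbing the $\xi_x$-term into the symbol via $B=J(u+\xi)$), or, if you keep $L_u$, you must first carry out the Gromov--Schwarz + Lemma \ref{Woneinftycontrolsall} step to bound $\|\xi\|_{C^k}$ absolutely before running the $L^2$-based iteration; without it the bootstrap is circular.
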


\begin{proof}
By assumption, we have:
\begin{align}
u_x+J(u)u_y&=0\\
(u+\xi)_x+J(u+\xi)(u+\xi)_y&=0
\end{align}
which together imply that:
\begin{equation}
\xi_x+J(u+\xi)\xi_y=[J(u)-J(u+\xi)]u_y
\end{equation}
which we prefer to write as:
\begin{equation}\label{ellipticforxi}
(\partial_x+B\partial_y)\xi=A(\xi)
\end{equation}
where $B:[0,1]^2\to\operatorname{End}(\RR^{2n})$ denotes $J(u+\xi)$ and $A:[0,1]^2\times\RR^{2n}\to\RR^{2n}$ denotes $A(\xi)=[J(u)-J(u+\xi)]u_y$.  By definition, we have $A(0)=0$ and:
\begin{equation}\label{boundonA}
\left|D^kA\right|\leq c_k
\end{equation}
for some $c_k<\infty$ depending only on upper bounds on $\left\|J\right\|_{C^\ell}$ and $\left\|u\right\|_{C^\ell}$ for some $\ell=\ell(k)<\infty$.  By definition, we have $B^2=-1$; let us now argue that we also have:
\begin{equation}\label{boundonB}
\left|D^kB\right|\leq c_k
\end{equation}
for sufficiently small $\epsilon>0$ and $c_k<\infty$ as before.  Indeed, for sufficiently small $\epsilon>0$, we may apply the Gromov--Schwarz lemma to $u+\xi$ and conclude that $|D\xi|\leq c<\infty$ depending only on upper bounds on $\left\|J\right\|_{C^\ell}$ and $\left\|u\right\|_{C^\ell}$ for some $\ell<\infty$.  Now Lemma \ref{Woneinftycontrolsall} applied to $u+\xi$ gives $|D^k\xi|\leq c_k<\infty$, which implies the desired bound on the derivatives of $B$.

Now we apply $(\partial_x-B\partial_y)$ to both sides of \eqref{ellipticforxi} to obtain:
\begin{equation}
\Delta\xi=(\partial_x-B\partial_y)A(\xi)-((\partial_x-B\partial_y)B)(\xi_y)
\end{equation}
For any smooth function $\phi:[0,1]^2\to\RR_{\geq 0}$ supported in the interior of $[0,1]^2$, we thus have:
\begin{equation*}
\Delta(\phi\xi)=\xi\Delta\phi+2\phi_x\xi_x+2\phi_y\xi_y+\phi\cdot(\partial_x-B\partial_y)A(\xi)-\phi\cdot((\partial_x-B\partial_y)B)(\xi_y)
\end{equation*}
Now the desired result follows from the usual bootstrapping of the elliptic regularity estimates for the Laplacian on $\RR^2$, namely $\left\|f\right\|_{H^{s+2}}\leq c_s\left\|f+\Delta f\right\|_{H^s}$ (the first step being the case $s=-1$).  We may shrink the support of $\phi$ slightly at each step, so there is no need to worry about regularity near the boundary.
\end{proof}

\begin{proposition}\label{gluingsurjectiveHF}
The restriction of the gluing map \eqref{gluingmapHF} to any neighborhood of zero is surjective onto a neighborhood of $(0,0,0,u_0,e_0,\{x^i_0\}_{1\leq i\leq r})\in\Mbar(M)$.
\end{proposition}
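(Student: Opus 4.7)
The plan is to follow the outline of Proposition \ref{gluingsurjective}, adapting it to the Floer setting where main necks (corresponding to breaks in the trajectory) must be handled in addition to bubble necks. Let $(\alpha_i,w_i,y_i,u_i,e_i,\{x^j_i\}_{1\leq j\leq r})\in\Mbar(M)$ be a sequence converging to the basepoint. (We may restrict to sequences since the topology on $\Mbar(M)$ is induced from a metric.) Define $\xi_i\in C^\infty(C_{\alpha_i},u_{\alpha_i}^\ast TM)_{D,H}\oplus E$ by $u_i=\exp_{u_{\alpha_i}}\xi_i$ and $e_i=e_0+\proj_E\xi_i$, where the exponential follows the shortest geodesic. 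Clearly $\proj_E\xi_i\to 0$ and $\|\xi_i\|_\infty\to 0$.

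First I would show that $\xi_i\to 0$ in $C^\infty$ on compact subsets of $C_0\setminus(\{\text{bubble nodes}\}\cup\{q_0,\ldots,q_k\})$. As in the Gromov--Witten case, the thickened $\delbar$-equation is equivalent to an honest pseudo-holomorphic curve equation for the graph of $u_i$ in the total space of an auxiliary bundle; hence the Gromov--Schwarz lemma yields a uniform $C^0$-bound on $du_i$ away from the nodes/breaks, and elliptic bootstrapping (Lemma \ref{Woneinftycontrolsall} together with the analogue for the Floer equation) upgrades this to uniform $C^k$-bounds for every $k$. Arzel\`a--Ascoli and diagonalization then produce a $C^\infty$-limit which must equal $u_0$ on compact sets, giving the desired convergence.

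The main obstacle, and the key new ingredient compared with Appendix \ref{gluingappendix}, is controlling $\xi_i$ in the weighted Sobolev norm $W^{k,2,\delta,\delta}$ over the main ends and necks of $C_{\alpha_i}$. Over each main neck $S^1\times[N,6S_i-N]\subseteq C_{\alpha_i}$, the map $u_i$ satisfies the genuine Floer equation (the $\lambda$-term vanishes, $j_{y_i}=j_0$ and the Hamiltonian/almost complex structure agree with those at the endpoint $(v_i,\gamma_i)$). By the $C^\infty$ convergence away from the nodes, we can choose $M<\infty$ so that for large $i$ the restriction of $\xi_i$ to $S^1\times\{N+M,6S_i-N-M\}$ has arbitrarily small $L^2$-norm. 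Proposition \ref{neckestimateHF} then yields pointwise bounds on $\xi_i$ and all its derivatives on the interior of each main neck which decay like $e^{-\mu\delta_i\min(s-N,6S_i-N-s)}$ for any $\mu<1$; choosing $\mu$ with $\mu\delta_i>\delta$ shows $\|\xi_i\|_{k,2,\delta,\delta}\to 0$ in these regions. An essentially identical argument using the (bubble) version of the same a priori estimate, applied to the graph of $u_i$, handles the bubble ends and necks, where the weight $\delta<1$ is also below the relevant spectral gap ($1$ being the smallest positive eigenvalue of $-i\partial_\theta$ on $S^1$ that governs decay at a punctured bubble).

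Finally, define $\kappa_i\in K$ by $\xi_i\in(\kappa_i)_{\alpha_i}+\im Q_{\alpha_i,w_i,y_i}$, using the uniform splitting $\im Q_{\alpha,w,y}\oplus K_\alpha\xrightarrow{\sim}W^{k,2,\delta,\delta}(C_\alpha,u_\alpha^\ast TM)_{D,H}\oplus E$ established (with uniformly commensurable norms) after Lemma \ref{mapsstufftoregHF}. The uniform commensurability together with $\|\xi_i\|_{k,2,\delta,\delta}\to 0$ forces $\kappa_i\to 0$ in $K$. Now $\kappa_{\alpha_i}+(\xi_i-(\kappa_i)_{\alpha_i})$ and $(\kappa_i)_{\alpha_i,w_i,y_i}$ both lie in the domain of contraction \eqref{domainofcontractionHF} for $(\alpha_i,w_i,y_i,\kappa_i)$ (for $i$ large the first has norm $\leq c'$ by what we have just shown, and the second by construction) and both are fixed points of the Newton iteration $\xi\mapsto\xi-Q_{\alpha_i,w_i,y_i}\F_{\alpha_i,w_i,y_i}\xi$; by the uniqueness in Proposition \ref{newtoniterationHF} they coincide, so $(\alpha_i,w_i,y_i,u_i,e_i,\{x^j_i\})$ is in the image of the gluing map.
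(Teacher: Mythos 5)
Your proof is correct and takes essentially the same approach as the paper's: the paper (tersely) records this result as identical to Proposition \ref{gluingsurjective} except for appealing to Proposition \ref{neckestimateHF} in addition to Proposition \ref{neckestimate}, and you have fleshed out exactly that argument, correctly identifying the new ingredient (control of $\xi_i$ over the main ends/necks via the Floer a priori decay estimate) before closing with the same splitting $\im Q_{\alpha,w,y}\oplus K_\alpha$ and Newton-iteration uniqueness step.
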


\begin{proof}
The proof is identical to the proof of Proposition \ref{gluingsurjective}, except for appealing to Proposition \ref{neckestimateHF} in addition to Proposition \ref{neckestimate} in the appropriate places.
\end{proof}

\subsection{Conclusion of the proof}

We have shown that the map $g:=$\eqref{gluingmapHF} is continuous, injective, and that its restriction to any neighborhood of zero is surjective onto a neighborhood of the image of zero.  The target $\Mbar(M)$ is Hausdorff, and thus it follows from Lemma \ref{injectiveHdorffhomeo} that for some open neighborhood of zero $U\subseteq\CC^d\times(\RR_{\geq 0}^{k-1})^{\leq\s}\times\RR^\ast\times\prod_{i=1}^{k\prime}(H^i)^{\leq\s}\times K$, we have $g(U)$ is open and $g:U\xrightarrow\sim g(U)$ is a homeomorphism.  Thus the gluing map \eqref{gluingmapHF} satisfies the properties desired for the map \eqref{HFgluinggoal}.

\subsection{Gluing orientations}

We now show the how to endow the moduli spaces $\Mbar(\sigma,p,q)$ with coherent orientations using the results of Floer--Hofer \cite{floerhoferorientations}.

\subsubsection{Orientations on spaces of flow lines on $\Delta^n$}\label{simplexflowsorsec}

For any simplex $\sigma$, define the following orientation line:\footnote{An orientation line is a $\ZZ/2$-graded free $\ZZ$-module of rank one.}
\begin{equation}
\oo_\sigma:=\begin{cases}\bigotimes_{i=1}^{\dim\sigma-1}\oo_\RR&\dim\sigma>0\cr\oo_\RR^\vee&\dim\sigma=0\end{cases}
\end{equation}
Let us construct an identification between $\oo_\sigma$ and the orientation sheaf of the space of flow lines on $\sigma$ (from Definition \ref{simplexflows}) for $n=\dim\sigma>0$.  Let $f:\RR\to[0,1]$ be the unique solution to the initial value problem $f(0)=\frac 12$ and $f'(x)=\pi\sin(\pi f(x))$.  Now every flow line $\ell:\RR\to\Delta^n$ is of the form $\ell(t)=(f(t+a_1),\ldots,f(t+a_n))$ for some $a_1\leq\cdots\leq a_n$ which are unique up to the addition of an overall constant.  Thus the space of flow lines is parameterized by $(b_1,\ldots,b_{n-1})\in[0,\infty)^{n-1}$, where $b_i=a_{i+1}-a_i$; moreover, this parameterization extends continuously to a homeomorphism between $[0,\infty]^{n-1}$ and the space of broken flow lines on $\Delta^n$.  In these coordinates, $b_k=\infty$ iff the flow line is broken at vertex $k$, and $b_k=0$ iff the flow line factors through $\Delta^{[0\ldots\hat k\ldots n]}\subseteq\Delta^n$.  Now the coordinates $b_1,\ldots,b_{n-1}$ determine an identification between $\oo_\sigma=\bigotimes_{i=1}^{n-1}\oo_\RR$ and the orientation sheaf of the space of flow lines on $\sigma$, when $\dim\sigma>0$.

For $\dim\sigma=0$, let us simply remark that tensoring with $\oo_\sigma=\oo_\RR^\vee$ is the effect on orientation sheaves of quotienting by an $\RR$-action, and it will follow from this fact that this is the correct definition of $\oo_\sigma$ when $\dim\sigma=0$ (morally speaking, we may think of the space of flow lines on $\sigma$ as the stacky quotient $\pt/\RR$).

Now let us observe that there are natural (odd) ``boundary'' maps:
\begin{align}
\label{simplexflowproductor}\oo_\sigma&\to\oo_{\sigma|[0\ldots k]}\otimes\oo_{\sigma|[k\ldots n]}\\
\label{simplexflowfaceor}\oo_\sigma&\to\oo_{\sigma|[0\ldots\hat k\ldots n]}
\end{align}
induced by the geometric inclusions of boundary strata \eqref{Fproduct}--\eqref{Fface}.  Specifically, the first map is induced by the inclusion of the space of pairs of flow lines on $\sigma|[0\ldots k]$ and $\sigma|[k\ldots n]$ into the space of flow lines on $\sigma$ ($0<k<n$), and the second by the inclusion of flow lines on $\sigma|[0\ldots\hat k\ldots n]$ into flow lines on $\sigma$ ($0<k<n$).  In fact, the first map \eqref{simplexflowproductor} admits the following alternative description, which shows that it is in fact defined for $0\leq k\leq n$.  Let $\F^{(i)}(\sigma)$ denote the moduli space of stable broken Morse flow lines on a simplex $\sigma$ with $i$ ordered marked points appearing in order (this is defined in the expected way, allowing constant flow lines as long as they are stabilized by the presence of at least one marked point).  Denote the orientation module of $\F^{(i)}(\sigma)$ by $\oo_\sigma^{(i)}$, and note that there is a natural isomorphism $\oo_\sigma^{(i)}=\oo_\sigma\otimes\oo_\RR^{\otimes i}$ (even for $\dim\sigma=0$; this provides another justification of our definition of $\oo_\sigma$ in this case).  Now there is a natural concatenation map $\F^{(i)}(\sigma|[0\ldots k])\times\F^{(j)}(\sigma|[k\ldots n])\to\F^{(i+j)}(\sigma)$ which is the inclusion of a codimension one boundary stratum, thus giving rise to a boundary map $\oo_\sigma^{(i+j)}\to\oo_{\sigma|[0\ldots k]}^{(i)}\otimes\oo_{\sigma|[k\ldots n]}^{(j)}$.  After factoring out $\oo_\RR^{\otimes(i+j)}$, this map coincides with \eqref{simplexflowproductor} for $0<k<n$, and thus may be used to define \eqref{simplexflowproductor} for $0\leq k\leq n$.

\begin{remark}
The standard orientation of $\RR$ gives a standard generator $[\RR]_1\otimes\cdots\otimes[\RR]_{n-1}\in\oo_\sigma$ for $\dim\sigma>0$, and $[\RR]^\vee\in\oo_\sigma$ for $\dim\sigma=0$, where $[\RR]\cdot[\RR]^\vee=1$ (i.e.\ $[\RR]^\vee\in\oo_\RR^\vee$ is the ``right dual'' of $[\RR]\in\oo_\RR$).  Using these generators to trivialize $\oo_\sigma$ determines a sign convention in which \eqref{simplexflowproductor} is given by $(-1)^{k+1}$ and \eqref{simplexflowfaceor} is given by $(-1)^k$.
\end{remark}

\subsubsection{Orientations of linearized operators of Floer equations}

Fix two Hamiltonians $H_0,H_1:M\times S^1\to\RR$ and two non-degenerate periodic orbits $p,q:S^1\to M$ of $H_0,H_1$ respectively.  Now for any path of $\omega$-compatible almost complex structures $J:\RR\to J(M)$ (constant near $s=\pm\infty$), any path of Hamiltonians $H:M\times S^1\times\RR\to\RR$ (constant near $s=\pm\infty$) with $H(s=-\infty)=H_0$ and $H(s=\infty)=H_1$, and any map $u:S^1\times\RR\to M$ in $W^{k,2,\delta}(S^1\times\RR,M)$ (converging to $q$ at $-\infty$ and to $p$ at $\infty$), there is a natural linearized operator:
\begin{equation}\label{Dfloer}
D_\text{Floer}:W^{k,2,\delta}(S^1\times\RR,u^\ast TM)\to W^{k-1,2,\delta}(S^1\times\RR,\Omega^{0,1}_{S^1\times\RR}\otimes_\CC u^\ast TM_J)
\end{equation}
(we assume that $\delta>0$ is less than the smallest positive eigenvalues of the asymptotic linearized operators).  Let us agree to define this linearized operator using the $\RR$-family of $J$-linear connections on $TM$ given by $\nabla_XY:=\frac 12(\nabla^0_XY-J(\nabla^0_X(JY)))$, where $\nabla^0$ denotes the Levi-Civita connection of the metric associated to the compatible pair $(\omega,J)$.  The results of Floer--Hofer \cite{floerhoferorientations} (see the remark after Theorem 2) imply that the Fredholm orientation line of $D_\text{Floer}$ is a trivial bundle over the space of paths $H$, $J$, and maps $u$.  Thus if we fix $p$, $q$, and a homotopy class of maps $u$, there is a well-defined orientation line $\oo_{p,q}$, canonically isomorphic to $\oo_{D_\text{Floer}}$ for any choice of $H$, $J$, and $u$.

Floer--Hofer \cite[Theorem 10]{floerhoferorientations} also construct natural associative (even) isomorphisms:
\begin{equation}\label{floerhoferoogluing}
\oo_{p,q}\otimes\oo_{q,r}\to\oo_{p,r}
\end{equation}
by a certain kernel gluing procedure.  We refer the reader to \cite{floerhoferorientations} for the details of this construction.  Suffice it to say here that, after adding a finite-dimensional piece to the domains of each of two linearized operators \eqref{Dfloer} (from $p$ to $q$ and from $q$ to $r$ respectively) so that they become surjective with kernels $K_1$ and $K_2$, there is a natural kernel pregluing map from $K_1\oplus K_2$ to the domain of certain glued operator (from $p$ to $r$); the $L^2$-orthogonal projection onto the kernel $K_3$ of the glued operator gives an isomorphism $K_1\oplus K_2\xrightarrow\sim K_3$, and this induces the map \eqref{floerhoferoogluing}, which may be shown to be independent of the choices used to define it.

The coherent trivializations of $\oo_{p,q}$ resulting from the maps \eqref{floerhoferoogluing} are known to coincide with the usual coherent orientations of Morse theory, when restricted to $S^1$-invariant Hamiltonians and their $S^1$-invariant Floer trajectories, see e.g.\ Floer \cite{floersfhs}.

\subsubsection{Orientations on thickened moduli spaces $\Mbar(\sigma,p,q)_I^\reg$}

The existence of the desired coherent orientations for $\Mbar(\sigma,p,q)$ follows easily from the following result, which we spend the rest of this apendix proving.

\begin{proposition}\label{gluingHForientations}
The orientation sheaf (in the sense of Definition \ref{IAorientationsheaf}) of every moduli space $\Mbar(\sigma,p,q)$ can be canonically identified with $\oo_\sigma\otimes\oo_{p,q}$.  Moreover, under these identifications, the boundary maps on orientation sheaves induced (as in \eqref{sesofO}) by the structure maps:
\begin{align}
\label{hforproduct}\Mbar(\sigma|[0\ldots k],p,q)\times\Mbar(\sigma|[k\ldots n],q,r)&\to\partial\Mbar(\sigma,p,r)\\
\label{hforface}\Mbar(\sigma|[0\ldots\hat k\ldots n],p,q)&\to\partial\Mbar(\sigma,p,q)
\end{align}
coincide with the maps:
\begin{align}
\label{hforproductor}\oo_\sigma\otimes\oo_{p,r}&\to\oo_{\sigma|[0\ldots k]}\otimes\oo_{p,q}\otimes\oo_{\sigma|[k\ldots n]}\otimes\oo_{q,r}\\
\label{hforfaceor}\oo_\sigma\otimes\oo_{p,q}&\to\oo_{\sigma|[0\ldots\hat k\ldots n]}\otimes\oo_{p,q}
\end{align}
induced by \eqref{simplexflowproductor}--\eqref{simplexflowfaceor} and \eqref{floerhoferoogluing}.
\end{proposition}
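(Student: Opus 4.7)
My plan is to use the explicit gluing chart \eqref{gluingmapHF} as a local coordinate system on $\Mbar(\sigma,p,q)_I^\reg$ near any given basepoint, and to build the identification $\oo_{X_I^\reg}\otimes\oo_{E_I}^\vee \cong \oo_\sigma\otimes\oo_{p,q}$ factor by factor using the decomposition of the domain of the gluing map. The bubble gluing parameters $\alpha\in\CC^d$ contribute a canonical complex orientation, while the $k-1$ main-node parameters in $(\RR_{\geq 0}^{k-1})^{\leq\s}$ lie at corners and, via the orientation data specified in Definition \ref{IAwithstratification} (with the $\oo_\RR^{\otimes(k-1)}$ factors coming from the normal bundles of the codimension-one boundary strata), combine with the open-strata parameters $\RR^\ast\times T\prod_{i=1}^{k\prime}(H^i)^{\leq\s}$ to give precisely the trivialization of $\oo_\sigma$ described in \S\ref{simplexflowsorsec}. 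The remaining kernel factor $K\otimes\oo_{E_I}^\vee$ is the (reduced) Fredholm orientation line of $D_{0,0,0}$; since the $\lambda$-terms in \eqref{formulaforlinearizedoperatorHF} are compact perturbations, this Fredholm orientation agrees with that of the standard Floer operator summed over the $k$ main components of $C_0$ (bubble components carry a canonical complex Fredholm orientation and so contribute trivially). Applying the Floer--Hofer kernel-gluing isomorphism \eqref{floerhoferoogluing} inductively over the $k$ breaks gives exactly $\oo_{p,q}$.

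To verify that the pointwise identifications patch to a global isomorphism of sheaves on $\Mbar(\sigma,p,q)$ (and are independent of the $I$-thickening), I would argue that the space of gluing data used to build the chart --- the cylindrical ends, the family $(j_y,A_y)$, the transverse slices $H^i$, the connection $\nabla$, and the choice of basepoint on the stratum --- is contractible, so the homotopy invariance of Fredholm orientations ensures coherence. Compatibility with $\psi_{IJ}$ for $I\subseteq J$ follows by choosing a common gluing chart for both atlases and tracking how the product orientation on $E_{J\setminus I}$ cancels the corresponding factor of $K$.

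For the second assertion, I would check the two boundary maps separately. The product boundary \eqref{hforproduct} corresponds geometrically to sending a main-node gluing parameter $\alpha_k\in\RR_{\geq 0}$ to zero: on the simplex factor this is by construction the map \eqref{simplexflowproductor} of \S\ref{simplexflowsorsec}, and on the Fredholm factor it is the inverse of the Floer--Hofer gluing \eqref{floerhoferoogluing} by \cite[Theorem 10]{floerhoferorientations}. The face boundary \eqref{hforface} corresponds to a slice parameter reaching $\partial(H^k)^{\leq\s}$, which on the simplex factor is \eqref{simplexflowfaceor} and on the Fredholm factor is trivial (the kernels of the Floer operators on the main components are unchanged). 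The main obstacle will be the careful sign-bookkeeping: both factors acquire Koszul signs, and one must verify that they combine to the signs prescribed by \eqref{hforproductor}--\eqref{hforfaceor}. A related subtle point is that the kernel pregluing in Definition \ref{kernelpregluingHF} over a main neck sets the section to zero in the middle, whereas the Floer--Hofer pregluing uses a more symmetric cutoff; I would need to verify that these conventions yield the same Fredholm orientation, which ultimately follows from the Fredholm-homotopy invariance applied to the straight-line homotopy between them.
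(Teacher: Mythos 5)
Your overall strategy matches the paper's: identify $\oo_{X_I^\reg}\otimes\oo_{E_I}^\vee$ with $\oo_\sigma\otimes\oo_{p,q}$ via the Fredholm orientation of the linearized operator and the parametrization of the flow-line moduli, observe that the thickening terms \eqref{firstLterm}--\eqref{lastLterm} are compact so the orientation agrees with that of the untwisted Floer operator, and invoke Floer--Hofer \eqref{floerhoferoogluing} for the product boundary. There are, however, a few places where your plan diverges or leaves a genuine hole.

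First, the paper defines the identification not at the basepoint of a gluing chart but over the open locus $\Mbar(\sigma,p,q)_I^{\reg\circ}$ where the domain curve is smooth: there $\tilde K:=\ker D$ is a genuine vector bundle and $\oo_{\tilde K}$ is manifestly isomorphic to $\oo_{E_I}\otimes\oo_{p,q}$ as a sheaf (via $D\rightsquigarrow D_{\mathrm{hol}}$), and the identification then extends uniquely over the nodal locus because the gluing chart shows that locus is codimension $\geq 1$. This sidesteps the coherence/contractibility argument you would need in order to show your chart-based identification is independent of the choice of basepoint, cylindrical ends, $(j_y,A_y)$, slices $H^i$, etc. Your route is probably salvageable, but the paper's is cleaner, and you should at least notice that your "contractibility of gluing data" claim is doing nontrivial work (the basepoint lives on the space itself, not in a separate contractible parameter space).

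Second, there is a real gap you do not mention: \eqref{hforproduct} and \eqref{simplexflowproductor} are defined for all $0\le k\le n$, but the main-node gluing parameter $\alpha_k\in\RR_{\geq 0}$ that you want to send to zero only behaves like an interior boundary coordinate when $0<k<n$. For $k\in\{0,n\}$ (including the case $\dim\sigma=0$, where $\oo_\sigma=\oo_\RR^\vee$ records quotienting by the $\RR$-translation rather than a geometric coordinate), your factor-by-factor reading of the chart domain $\CC^d\times(\RR^{k-1}_{\geq 0})^{\leq\s}\times\RR^\ast\times\prod(H^i)^{\leq\s}\times K$ breaks down. The paper handles this by replacing $\sigma$ with a degenerate simplex $\sigma'\to\sigma$ that doubles vertex $0$ or $n$, so that $k'$ becomes interior and the desired compatibility for $\sigma$ is deduced from the case $0<k'<n'$ together with the product decomposition $\Mbar(\sigma',p,r)_I^\reg\cong\Mbar(\sigma,p,r)_I^\reg\times\RR^{\mathbf 1(k=0)+\mathbf 1(k=n)}$. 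You would need some such device.

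Third, the assertion that the derivative of the gluing map agrees with the Floer--Hofer kernel-gluing isomorphism is the analytic heart of the proof, and your sketch compresses several steps. The paper has to (i) rewrite the derivative as $\proj_{T_{\alpha,0,0}}\circ\mathrm{glue}$ using the approximate right inverse, (ii) homotope the thickened, possibly non-surjective operator $D_{0,0,0}$ to a stabilized one $\Lambda\oplus D^t_{0,0,0}$ surjective for all $t\in[0,1]$, so that the kernels form a bundle over $[0,1]$ and one can pass to the untwisted case $t=0$, and (iii) compare $\proj_{T}$ against $\proj_{L^2}$ and $\mathrm{glue}$ against the Floer--Hofer pregluing $\mathrm{glue}'$, using the exponential decay of kernel elements. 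You correctly flag the $\mathrm{glue}$ vs.\ $\mathrm{glue}'$ convention issue, but the $(\Lambda,t)$ stabilization is not just sign bookkeeping; without it one cannot even compare kernels continuously, and the intermediate claim "the $\lambda$-terms are compact so the orientations agree" needs this scaffolding to yield an actual bundle isomorphism rather than a pointwise statement.

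Finally, you do not address how the face boundary \eqref{hforface} is checked: the paper reduces it to comparing three implicit atlases on $\Mbar(\sigma,p,q)^{\leq\s}$ (for $\s$ the face stratum) and observing that the identification is the same for all three; your "slice parameter reaching $\partial(H^k)^{\leq\s}$" description is plausible but would need a similar compatibility-of-atlases check, since the boundary map \eqref{sesofO} is defined intrinsically on the thickened moduli spaces, not on the gluing chart domain.

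In summary: the architecture is right and coincides with the paper's, but you are missing the $k\in\{0,n\}$ reduction, and the "sign bookkeeping" you defer actually includes two substantive steps — the smooth-locus/kernel-bundle definition of the identification, and the stabilization-plus-$L^2$-comparison argument — that carry most of the real work.
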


Let us introduce various linearized operators which will play a role in the proof of Proposition \ref{gluingHForientations} below.  At any point in a thickened moduli space $\Mbar(\sigma,p,q)_I$, we have a linearized operator:
\begin{equation}
\label{linearizedstd}D:E_I\oplus W^{k,2,\delta}(C,u^\ast TM)\to W^{k-1,2,\delta}(C,\Omega^{0,1}_C\otimes_\CC u^\ast TM_{J(\ell(\cdot))})
\end{equation}
More precisely, the operator $D$ denotes the usual linearization of the $I$-thickened holomorphic equation, e.g.\ as calculated in Lemma \ref{linearizedformulaHF} (corresponding to variations of the map $u$ and of the element $e\in E_I$; we always keep $\ell$ fixed when defining linearized operators).  We will need to make use of this and other linearized operators at maps $u$ which do not necessarily satisfy the relevant pseudo-holomorphic curve equation, and for such $u$, the lineraized operator $D$ depends on a choice of (a family of) $J$-linear connections on $M$.  Let us fix the convention of always using the $J$-linear connection $\nabla_XY:=\frac 12(\nabla^0_XY-J(\nabla^0_X(JY)))$ where $\nabla^0$ is the Levi-Civita connection of the metric associated to the compatible pair $(\omega,J)$ (more precisely, this is a family of connections parameterized by $\sigma=\Delta^n$).

We will make use of another linearized operator:
\begin{equation}
\label{linearizedhol}D_\text{hol}:E_I\oplus W^{k,2,\delta}(C,u^\ast TM)\to W^{k-1,2,\delta}(C,\Omega^{0,1}_C\otimes_\CC u^\ast TM_{J(\ell(\cdot))})
\end{equation}
The operator $D_\text{hol}$ denotes the linearization of the usual holomorphic curve equation (i.e.\ without the thickening terms $\lambda_\alpha(e_\alpha)$); thus it is given by the expression in Lemma \ref{linearizedformulaHF} without the terms \eqref{firstLterm}--\eqref{lastLterm}.  Clearly there is a natural isomorphism $\oo_D=\oo_{D_\text{hol}}$ since the terms \eqref{firstLterm}--\eqref{lastLterm} are compact.

\begin{proof}[Proof of Proposition \ref{gluingHForientations}]
To identify the orientation sheaf of $\Mbar(\sigma,p,q)$ with $\oo_\sigma\otimes\oo_{p,q}$, it suffices to identify the orientation sheaf of every $\Mbar(\sigma,p,q)_I^\reg$ with $\oo_{E_I}\otimes\oo_\sigma\otimes\oo_{p,q}$ in a compatible way (for all finite subsets $I\subseteq\A(\sigma,p,q)^{\geq\s^\ttop}$).  Moreover, it suffices to make this identification over the open subset $\Mbar(\sigma,p,q)_I^{\reg\circ}\subseteq\Mbar(\sigma,p,q)_I^\reg$ where the domain curve is smooth (it then automatically extends uniquely to all of $\Mbar(\sigma,p,q)_I^\reg$, by virtue of the local topological description of $\Mbar(\sigma,p,q)_I^\reg$ given by the gluing map); we may also check compatibility with the inclusions $I\subseteq J$ over $\Mbar(\sigma,p,q)_I^{\reg\circ}$.  Now the kernel $\tilde K=\ker D$ of the linearization \eqref{linearizedstd} forms a vector bundle over $\Mbar(\sigma,p,q)_I^{\reg\circ}$, and the orientation sheaf of $\Mbar(\sigma,p,q)_I^{\reg\circ}$ is isomorphic to $\oo_\sigma\otimes\oo_{\tilde K}$ (to see this, one must distinguish the two cases $\dim\sigma>0$ and $\dim\sigma=0$).  Now we have $\oo_{\tilde K}=\oo_D=\oo_{D_\text{hol}}=\oo_{E_I}\otimes\oo_{p,q}$; this defines a fiberwise isomorphism of $\oo_{\tilde K}$ with $\oo_{E_I}\otimes\oo_{p,q}$, and since the operators $D$ and $D_\text{hol}$ vary nicely over the base $\Mbar(\sigma,p,q)_I^{\reg\circ}$, it is easy to see that this is in fact an isomorphism of sheaves.  Thus we have the desired identification.

Now let us show that the boundary map induced by \eqref{hforface} coincides with the tautological map \eqref{hforfaceor} (this just amounts to chasing definitions).  Let $\s\in\SSS_{\Mbar}(\sigma,p,q)$ denote the stratum $(\sigma|[0\ldots\hat k\ldots n],p,q)$ (i.e.\ the stratum consisting of trajectories over $\sigma$ which factor through $\sigma|[0\ldots\hat k\ldots n]$).  Now on the space $\Mbar(\sigma|[0\ldots\hat k\ldots n],p,q)=\Mbar(\sigma,p,q)^{\leq\s}$, we have three implicit atlases:
\begin{equation}
\A(\sigma|[0\ldots\hat k\ldots n],p,q)^{\geq\s^\ttop}\subseteq\A(\sigma,p,q)^{\geq\s}\supseteq\A(\sigma,p,q)^{\geq\s^\ttop}
\end{equation}
(note that the two occurences of $\s^\ttop$ refer to the top elements of two different strata posets); in fact (the index set of) the middle atlas is the disjoint union of (the index sets of) the atlases on the right and on the left.  The reasoning used above to identify the orientation sheaf of $\Mbar(\sigma|[0\ldots\hat k\ldots n],p,q)=\Mbar(\sigma,p,q)^{\leq\s}$ under the leftmost implicit atlas applies equally well for all three atlases (and, in particular, the resulting identifications coincide).  Hence, for the purpose of identifying the boundary map on orientation sheaves, it suffices to consider the single atlas $\A(\sigma,p,q)^{\geq\s^\ttop}$ on $\Mbar(\sigma,p,q)$ and the inclusion of the boundary stratum $\Mbar(\sigma,p,q)^{\leq\s}\hookrightarrow\Mbar(\sigma,p,q)$.  Now it suffices to check that the desired compatibility holds for the inclusion $(\Mbar(\sigma,p,q)^{\leq\s}_I)^{\reg\circ}\hookrightarrow\Mbar(\sigma,p,q)_I^{\reg\circ}$ (for all finite subsets $I\subseteq\A(\sigma,p,q)^{\geq\s^\ttop}$), and this follows by definition.

Now let us show that the boundary map induced by \eqref{hforproduct} coincides with the tautological map \eqref{hforproductor}.  First, we chase definitions as above.  Let $\s\in\SSS_{\Mbar}(\sigma,p,r)$ denote the stratum $(\sigma|[0\ldots k],p,q)$--$(\sigma|[k\ldots n],q,r)$ (i.e.\ the stratum consisting of Floer trajectories over $\sigma$ which are broken at vertex $k$ and periodic orbit $q\in\PPP_{H(k)}$).  Now on the space $\Mbar(\sigma|[0\ldots k],p,q)\times\Mbar(\sigma|[k\ldots n],q,r)=\Mbar(\sigma,p,r)^{\leq\s}$, we have three implicit atlases:
\begin{equation*}
\A(\sigma|[0\ldots k],p,q)^{\geq\s^\ttop}\sqcup\A(\sigma|[k\ldots n],q,r)^{\geq\s^\ttop}\subseteq\A(\sigma,p,r)^{\geq\s}\supseteq\A(\sigma,p,r)^{\geq\s^\ttop}
\end{equation*}
and in fact (the index set of) the middle atlas is the disjoint union of (the index sets of) the atlases on the right and on the left.  The reasoning used above to identify the orientation sheaves of $\Mbar(\sigma|[0\ldots k],p,q)$ and $\Mbar(\sigma|[k\ldots n],q,r)$ applies equally well to identify the orientation sheaf of their product $\Mbar(\sigma|[0\ldots k],p,q)\times\Mbar(\sigma|[k\ldots n],q,r)=\Mbar(\sigma,p,r)^{\leq\s}$ under all three atlases above (the fact that the thickening datums in the latter two atlases do not treat the portions of the trajectories from $p$ to $q$ and from $q$ to $r$ ``independently'' does not cause any problems).  Hence for the purposes of identifying the boundary map on orientation sheaves, it suffices to consider the single atlas $\A(\sigma,p,r)^{\geq\s^\ttop}$ on $\Mbar(\sigma,p,r)$ and the inclusion of the boundary stratum $\Mbar(\sigma,p,r)^{\leq\s}\hookrightarrow\Mbar(\sigma,p,r)$.  As before, it suffices to check that the desired compatibility holds for the inclusion $(\Mbar(\sigma,p,r)^{\leq\s}_I)^\reg\hookrightarrow\Mbar(\sigma,p,r)_I^\reg$ (for all finite subsets $I\subseteq\A(\sigma,p,r)^{\geq\s^\ttop}$), and moreover, this may be checked on the locus where the domain curve is smooth except for the required node over $k$ asymptotic to $q\in\PPP_{H(k)}$.

It will be convenient to assume that $0<k<n$, so let us first deduce the case of general $0\leq k\leq n$ from the case $0<k<n$.  Fix $\sigma$ and $0\leq k\leq n=\dim\sigma$.  We consider the degenerate simplex $\sigma'\to\sigma$ given by ``doubling vertex $0$'' if $k=0$ and ``doubling vertex $n$'' if $k=n$ (i.e.\ if $0=k<n$, we consider $\Delta^{n+1}\to\Delta^n$ given by superimposing the first two vertices of $\Delta^{n+1}$, if $0<k=n$, we consider $\Delta^{n+1}\to\Delta^n$ given by superimposing the last two vertices of $\Delta^{n+1}$, and if $0=k=n$ we consider $\Delta^2\to\Delta^0$); let $k'=1$ if $k=0$ and let $k'=n'-1$ if $k=n$, where $n'=\dim\sigma'$, and note that $0<k'<n'$.  Now there is a natural map $\A(\sigma,p,q)^{\geq\s^\ttop}\to\A(\sigma',p,q)^{\geq\s^\ttop}$ given by pulling back along $\sigma'\to\sigma$, and $\sigma'\to\sigma$ maps flow lines to flow lines (for the flow from Definition \ref{simplexflows}).  Hence, given any point $x\in\Mbar(\sigma,p,r)_I^\reg$ with a single node over $k$ asymptotic to $q\in\PPP_{H(k)}$, we may lift it (canonically, up to translating the part(s) of the trajectory over $0=k$ and/or over $k=n$) to a point $x'\in\Mbar(\sigma',p,r)_I^\reg$ with a single node over $k'$ asymptotic to $q\in\PPP_{H'(k')}=\PPP_{H(k)}$.  Moreover, there is a germ of homeomorphism $\Mbar(\sigma',p,r)_I^\reg=\Mbar(\sigma,p,r)_I^\reg\times\RR^{\mathbf 1(k=0)+\mathbf 1(k=n)}$ between neighborhoods of $x'$ and $x\times 0$.  Now the desired compatibility of orientations for $(\Mbar(\sigma,p,r)_I^{\leq\s})^\reg\hookrightarrow\Mbar(\sigma,p,r)_I^\reg$ follows from the compatibility for $(\Mbar(\sigma',p,r)_I^{\leq\s'})^\reg\hookrightarrow\Mbar(\sigma',p,r)_I^\reg$ (which has from $0<k'<n'$).  Hence we may assume without loss of generality that $0<k<n$.

Now we have come to the heart of the matter, where we will need to analyze the gluing map.  To review: we have identified the orientation sheaf of $\Mbar(\sigma,p,r)_I^\reg$ with $\oo_{E_I}\otimes\oo_\sigma\otimes\oo_{p,r}$, and we have identified the orientation sheaf of $(\Mbar(\sigma,p,r)^{\leq\s}_I)^\reg$  with $\oo_{E_I}\otimes\oo_{\sigma|[0\ldots k]}\otimes\oo_{p,q}\otimes\oo_{\sigma|[k\ldots n]}\otimes\oo_{q,r}$ (recall that $\s$ denotes the stratum consisting of trajectories broken at vertex $k$ and periodic orbit $q\in\PPP_{H(k)}$).  We must show that the boundary map on orientation sheaves induced by the inclusion $(\Mbar(\sigma,p,r)^{\leq\s}_I)^\reg\hookrightarrow\Mbar(\sigma,p,r)_I^\reg$ is the expected map \eqref{hforproductor}, and it suffices to check this over the locus where the domain curve is smooth except for the required node over $k$ asymptotic to $q\in\PPP_{H(k)}$.  We may further assume that $0<k<n$.

We consider the gluing setup for $\Mbar(\sigma,p,r)_I^\reg$ at a point in $(\Mbar(\sigma,p,r)^{\leq\s}_I)^\reg$ where $C_0$ is smooth except for the required node over vertex $k$ asymptotic to $q\in\PPP_{H(k)}$.  In other words, $C_0\setminus\{q_0,q_1,q_2\}=S^1\times\RR\sqcup S^1\times\RR$, and $u_0:S^1\times\RR\sqcup S^1\times\RR\to M$ is a trajectory from $p$ to $q$ and from $q$ to $r$.  From \S\ref{choosingDH}, we take $L=L'=0$ (i.e.\ no points $p_i$ or $p_i'$), and we may take $D=H=\varnothing$.  From \S\ref{gluingandvaryingjAl}, there is one gluing parameter $\alpha\in\RR_{\geq 0}$, there is no nontrivial variation in $(j_0,A_0)$ (i.e.\ $y\in\RR^\ast=\RR^0$), and there is some variation in $\ell_0$ parameterized by $w\in\prod_{i=1}^{2\prime}H^i$.  As usual, we denote by $K\subseteq W^{k,2,\delta}(C_0,u_0^\ast TM)$ the kernel of the linearized operator (we shall omit the subscript $D,H$ since $L=L'=0$).  Now the gluing construction gives rise to a gluing map:
\begin{align}
K\times\RR_{\geq 0}\times\prod_{i=1}^{2\prime}H^i&\to\Mbar(\sigma,p,r)_I^\reg\\
(\kappa,\alpha,w)&\mapsto(u_{\alpha,w,y,\kappa},e_{\alpha,w,y,\kappa})
\end{align}
($y=0$).  Now if we restrict to $\alpha=0$, this map realizes the identification of the orientation sheaf of $(\Mbar(\sigma,p,r)^{\leq\s}_I)^\reg$ with $\oo_K\otimes\oo_{\sigma|[0\ldots k]}\otimes\oo_{\sigma|[k\ldots n]}$.  Thus it suffices to show that for fixed (sufficiently small) $\alpha>0$ and fixed $w=0$, the gluing map is differentiable in the $K$ direction, and that its derivative (a map from $K$ to the kernel of the linearized operator at the glued map $u_{\alpha,w,y,\kappa}$) agrees (on orientation lines) with the Floer--Hofer map \eqref{floerhoferoogluing} (recall that $\oo_K$ is identified with $\oo_{E_I}\otimes\oo_{p,q}\otimes\oo_{q,r}$ and that the orientation line of the kernel at the glued map is identified with $\oo_{E_I}\otimes\oo_{p,r}$).

For fixed $(\alpha,w,y)$, the gluing map is given by:
\begin{equation}\label{restrictedgluingmap}
K\xrightarrow{\kappa\mapsto\kappa_\alpha}K_\alpha\xrightarrow{\kappa_\alpha\mapsto\kappa_{\alpha,w,y}}\F_{\alpha,w,y}^{-1}(0)
\end{equation}
The second map is defined \emph{a priori} by a Newton iteration, however a more natural description \emph{a posteriori} is that $\{\kappa_{\alpha,w,y}\}=\{\kappa_\alpha+\im Q_{\alpha,w,y}\}\pitchfork\F_{\alpha,w,y}^{-1}(0)$ is the unique (necessarily transverse) intersection in a $(k,2,\delta,\delta)$-neighborhood of zero of fixed size.  Note that in this neighborhood of fixed size, $\F_{\alpha,w,y}^{-1}(0)$ is a (highly differentiable) submanifold, since $\F_{\alpha,w,y}$ is highly differentiable and $D_{\kappa_{\alpha,w,y}}\F_{\alpha,w,y}$ is surjective as observed in the proof of Lemma \ref{mapsstufftoregHF}.  From this description of the second map, it is clearly differentiable.  Thus the derivative of the restricted gluing map \eqref{restrictedgluingmap} at a given $\kappa$ is given by:
\begin{equation}
T_\kappa K\xrightarrow{\dot\kappa\mapsto\dot\kappa_\alpha}T_{\kappa_\alpha}K_\alpha\xrightarrow{\proj_{Q_{\alpha,w,y}}}T_{\kappa_{\alpha,w,y}}\F_{\alpha,w,y}^{-1}(0)=\ker D_{\kappa_{\alpha,w,y}}\F_{\alpha,w,y}
\end{equation}
The second map is ``projection with respect to $Q_{\alpha,w,y}$'', i.e.\ the map induced by identifying both the domain and codomain with $W^{k,2,\delta,\delta}(C_\alpha,u_\alpha^\ast TM)/\im Q_{\alpha,w,y}$.  It suffices to study the derivative at $\kappa=0$, namely the first line of the following commuting diagram:
\begin{equation}
\begin{tikzcd}[column sep = small]
T_0K\ar{rr}{\dot\kappa\mapsto\dot\kappa_\alpha}&&T_0K_\alpha\ar{rr}{\proj_{Q_{\alpha,w,y}}}\ar{dr}[swap]{\proj_{Q_{\alpha,w,y}}}&&\ker D_{(0)_{\alpha,w,y}}\F_{\alpha,w,y}\ar{dl}{\proj_{Q_{\alpha,w,y}}}\\
&&&\ker D_{\alpha,w,y}
\end{tikzcd}
\end{equation}
The rightmost diagonal map is orientation preserving (i.e.\ it commutes with the identification of the orientation lines of the domain and codomain with $\oo_{E_I}\otimes\oo_{p,r}$), because $\proj_{Q_{\alpha,w,y}}$ gives a local trivialization of the bundle $\ker D\F_{\alpha,w,y}$ with respect to which the orientation is constant by definition.  Now the leftmost diagonal map is just $\dot\kappa_\alpha\mapsto\dot\kappa_\alpha-Q_{\alpha,w,y}D_{\alpha,w,y}\dot\kappa_\alpha$.  Hence it suffices to show that the following composition (note that we have changed notation from $\dot\kappa$ back to $\kappa$):
\begin{equation}\label{concretestufftocheckorientations}
\ker D_{0,0,0}=K\xrightarrow{\kappa\mapsto\kappa_\alpha}K_\alpha\xrightarrow{\kappa_\alpha\mapsto\kappa_\alpha-Q_{\alpha,w,y}D_{\alpha,w,y}\kappa_\alpha}\ker D_{\alpha,w,y}
\end{equation}
acts as \eqref{floerhoferoogluing} on orientations (with respect to the previously defined isomorphisms $\oo_K=\oo_{E_I}\otimes\oo_{p,q}\otimes\oo_{q,r}$ and $\oo_{\ker D_{\alpha,w,y}}=\oo_{E_I}\otimes\oo_{p,r}$).

We analyze \eqref{concretestufftocheckorientations} as follows.  For convenience, we may as well assume that $w=0$, and recall that necessarily $y=0$.  Now let us consider the following diagram:
\begin{equation}
\begin{tikzcd}\label{orientationsinversediagram}
K_{\alpha,0,0}\ar[hook,yshift=-0.5ex]{r}\ar[leftarrow,yshift=0.5ex]{r}{\proj_{T_{\alpha,0,0}}}&W^{k,2,\delta,\delta}(C_\alpha,u_\alpha^\ast TM)\oplus E_I\ar{r}{D_{\alpha,0,0}}&W^{k-1,2,\delta,\delta}(\tilde C_\alpha,\Omega^{0,1}_{\tilde C_\alpha,j_0}\otimes_\CC u_\alpha^\ast TM_{J(\ell_{\alpha,0}(A_0(\cdot)))})\ar{d}{\mathrm{break}}\\
K_{0|\alpha,0,0}\ar[hook,yshift=-0.5ex]{r}\ar[leftarrow,yshift=0.5ex]{r}{\proj_{T_{0|\alpha,0,0}}}&W^{k,2,\delta,\delta}(C_0,u_{0|\alpha}^\ast TM)\oplus E_I\ar{u}{\mathrm{glue}}\ar{r}{D_{0|\alpha,0,0}}&W^{k-1,2,\delta,\delta}(\tilde C_0,\Omega^{0,1}_{\tilde C_0,j_0}\otimes_\CC u_{0|\alpha}^\ast TM_{J(\ell_{0,0}(A_0(\cdot)))})\ar[leftrightarrow]{d}{\PT}\\
K_{0,0,0}\ar[hook,yshift=0.5ex]{r}\ar[leftarrow,yshift=-0.5ex]{r}[swap]{\proj_{Q_{0,0,0}}}\ar{ru}{\kappa\mapsto\kappa_{0|\alpha}}&W^{k,2,\delta,\delta}(C_0,u_0^\ast TM)\oplus E_I\ar[leftrightarrow]{u}{\PT}\ar[yshift=0.5ex]{r}{D_{0,0,0}}&\ar[yshift=-0.5ex]{l}{Q_{0,0,0}}W^{k-1,2,\delta,\delta}(\tilde C_0,\Omega^{0,1}_{\tilde C_0,j_0}\otimes_\CC u_0^\ast TM_{J(\ell_{0,0}(A_0(\cdot)))})
\end{tikzcd}
\end{equation}
The right half is just a copy of \eqref{biginversediagramHF} (with the middle square collapsed since $y=0$).  The left half consists of the inclusions of the kernels of the operators on the right half, as well as the projection maps associated to the images of the (approximate) right inverses $Q_{0,0,0}$, $T_{0|\alpha,0,0}:=\PT\circ Q_{0,0,0}\circ\PT$, and $T_{\alpha,0,0}=\mathrm{glue}\circ\PT\circ Q_{0,0,0}\circ\PT\circ\mathrm{break}$ (recall that $\im T_{\alpha,0,0}=\im Q_{\alpha,0,0}$).  The diagonal map $\kappa\mapsto\kappa_{0|\alpha}$ is defined by cutting off as in Definition \ref{kernelpregluingHF}; thus $\kappa_\alpha=\mathrm{glue}(\kappa_{0|\alpha})$.

Now the map \eqref{concretestufftocheckorientations} which we would like to analyze may be written as the composition $\kappa\mapsto\proj_{T_{\alpha,0,0}}(\mathrm{glue}(\kappa_{0|\alpha}))$ from \eqref{orientationsinversediagram}.  Now, we know that $\|D_{0|\alpha,0,0}\kappa_{0|\alpha}\|_{k-1,2,\delta,\delta}=\|D_{\alpha,0,0}\kappa_\alpha\|_{k-1,2,\delta,\delta}$, which is small by Lemma \ref{pregluingKestimateHF}.  It follows that the map we would like to analyze is well-approximated (as $\alpha\to 0$) by the map $\kappa\mapsto\proj_{T_{\alpha,0,0}}(\mathrm{glue}(\proj_{T_{0|\alpha,0,0}}\kappa_{0|\alpha}))$.  In particular, since we are only interested in its action on orientations, it suffices to consider the latter map $\kappa\mapsto\proj_{T_{\alpha,0,0}}(\mathrm{glue}(\proj_{T_{0|\alpha,0,0}}\kappa_{0|\alpha}))$.

Now we claim that the map $K_{0,0,0}\to K_{0|\alpha,0,0}$ given by $\kappa\mapsto\proj_{T_{0|\alpha,0,0}}\kappa_{0|\alpha}$ preserves orientation (the orientation lines of the domain and codomain are both identified with $\oo_{E_I}\otimes\oo_{p,q}\otimes\oo_{q,r}$).  To see this, simply observe the map (as well as its domain and codomain and their orientations) vary continuously over $\alpha\in[0,\epsilon)$, and that the statement is true for $\alpha=0$ because then the map is the identity map.  Hence we have reduced the problem to showing that the map:
\begin{equation}\label{reducedmaporientations}
W^{k,2,\delta,\delta}(C_0,u_{0|\alpha}^\ast TM)\supseteq K_{0|\alpha,0,0}\xrightarrow{\mathrm{glue}}W^{k,2,\delta,\delta}(C_\alpha,u_\alpha^\ast TM)\oplus E_I\xrightarrow{\proj_{T_{\alpha,0,0}}}K_{\alpha,0,0}
\end{equation}
acts by \eqref{floerhoferoogluing}.

To analyze the map ${\proj_{T_{\alpha,0,0}}}\circ{\mathrm{glue}}:K_{0|\alpha,0,0}\to K_{\alpha,0,0}$, argue as follows.  For any finite-dimensional vector space $F$ and a map $\Lambda:F\to W^{k-1,2,\delta,\delta}(\tilde C_0,\Omega^{0,1}_{\tilde C_0,j_0}\otimes_\CC u_0^\ast TM_{J(\ell_{0,0}(A_0(\cdot)))})$ supported away from the ends, we may consider the following modified version of \eqref{orientationsinversediagram}:
\begin{equation}
\begin{tikzcd}\label{orientationsinversediagramcompl}
K_{\alpha,0,0}^{\Lambda,t}\ar[hook,yshift=-0.5ex]{r}\ar[leftarrow,yshift=0.5ex]{r}{\proj_{T_{\alpha,0,0}^{\Lambda,t}}}&F\oplus W^{k,2,\delta,\delta}(C_\alpha,u_\alpha^\ast TM)\oplus E_I\ar{r}{\Lambda\oplus D_{\alpha,0,0}^t}&W^{k-1,2,\delta,\delta}(\tilde C_\alpha,\Omega^{0,1}_{\tilde C_\alpha,j_0}\otimes_\CC u_\alpha^\ast TM_{J(\ell_{\alpha,0}(A_0(\cdot)))})\ar{d}{\mathrm{break}}\\
K_{0|\alpha,0,0}^{\Lambda,t}\ar[hook,yshift=-0.5ex]{r}\ar[leftarrow,yshift=0.5ex]{r}{\proj_{T_{0|\alpha,0,0}^{\Lambda,t}}}&F\oplus W^{k,2,\delta,\delta}(C_0,u_{0|\alpha}^\ast TM)\oplus E_I\ar{u}{\id\oplus\mathrm{glue}}\ar{r}{\Lambda\oplus D_{0|\alpha,0,0}^t}&W^{k-1,2,\delta,\delta}(\tilde C_0,\Omega^{0,1}_{\tilde C_0,j_0}\otimes_\CC u_{0|\alpha}^\ast TM_{J(\ell_{0,0}(A_0(\cdot)))})\ar[leftrightarrow]{d}{\PT}\\
K_{0,0,0}^{\Lambda,t}\ar[hook,yshift=0.5ex]{r}\ar[leftarrow,yshift=-0.5ex]{r}[swap]{\proj_{Q_{0,0,0}^{\Lambda,t}}}\ar{ru}{\kappa\mapsto\kappa_{0|\alpha}}&F\oplus W^{k,2,\delta,\delta}(C_0,u_0^\ast TM)\oplus E_I\ar[leftrightarrow]{u}{\id\oplus\PT}\ar[yshift=0.5ex]{r}{\Lambda\oplus D_{0,0,0}^t}&\ar[yshift=-0.5ex]{l}{Q_{0,0,0}^{\Lambda,t}}W^{k-1,2,\delta,\delta}(\tilde C_0,\Omega^{0,1}_{\tilde C_0,j_0}\otimes_\CC u_0^\ast TM_{J(\ell_{0,0}(A_0(\cdot)))})
\end{tikzcd}
\end{equation}
Here $t\in[0,1]$ indicates that the terms \eqref{firstLterm}--\eqref{lastLterm} carry a factor of $t$.  The estimates from Lemmas \ref{approxbottomHF} and \ref{approxtopHF} apply to this modified diagram as well.  Thus as long as we fix a bounded right inverse $Q_{0,0,0}^{\Lambda,t}$ of $D_{0,0,0}^{\Lambda,t}$, the rest of the diagram makes sense (and, in particular, $T_{0|\alpha,0,0}^{\Lambda,t}$ and $T_{\alpha,0,0}^{\Lambda,t}$ are approximate right inverses) for sufficiently small $\alpha>0$.  Furthermore, in any family of $(\Lambda,t,Q^{\Lambda,t})$, the kernels $K_{0,0,0}$, $K_{0|\alpha,0,0}$, $K_{\alpha,0,0}$ form vector bundles, and the identifications of their orientation lines with $\oo_F\otimes\oo_{p,q}\otimes\oo_{q,r}\otimes\oo_{E_I}$ and $\oo_F\otimes\oo_{p,r}\otimes\oo_{E_I}$ (respectively) vary continuously.

Now the map:
\begin{equation}\label{KmapLt}
K_{0|\alpha,0,0}^{\Lambda,t}\xrightarrow{{\proj_{T_{\alpha,0,0}^{\Lambda,t}}}\circ{\mathrm{glue}}}K_{\alpha,0,0}^{\Lambda,t}
\end{equation}
is exactly the map we wish to analyze when $F=0$, $\Lambda=0$, $t=1$, and $Q_{0,0,0}^{\Lambda,t}=0\oplus Q_{0,0,0}$.  More generally, if we allow $F$ nonzero (but still $\Lambda=0$), this map is simply our desired map plus the identity map on $F$.  Since the space of acceptable maps $\Lambda$ is contractible, it suffices to show that \eqref{KmapLt} has the desired action on orientations for any single pair $(F,\Lambda)$, $t=1$, and $Q_{0,0,0}^{\Lambda,t}=0\oplus Q_{0,0,0}$.

Now by compactness of $[0,1]$, there exists a pair $(F,\Lambda)$ so that $\Lambda\oplus D_{0,0,0}^t$ is surjective for all $t\in[0,1]$.  Fix such a pair $(F,\Lambda)$, and also fix a continuously varying family of bounded right inverses $Q_{0,0,0}^{\Lambda,t}$ with $Q_{0,0,0}^{\Lambda,1}=0\oplus Q_{0,0,0}$.  Since the kernels form a bundle over the base $[0,1]$ and their orientations vary continuously, it suffices to analyze \eqref{KmapLt} for this $(F,\Lambda)$, $t=0$, and this $Q_{0,0,0}^{\Lambda,0}$.

Now the Floer--Hofer map \eqref{floerhoferoogluing} is \emph{defined} (see \cite[Proposition 9]{floerhoferorientations}) by the property that it is induced by a certain map $K_{0|\alpha,0,0}^{\Lambda,0}\to K_{\alpha,0,0}^{\Lambda,0}$ closely related to \eqref{KmapLt}; clearly it suffices to show that the difference between the Floer--Hofer map and \eqref{KmapLt} is very small.  Consider the map:
\begin{equation}\label{KmapLtLtwo}
K_{0|\alpha,0,0}^{\Lambda,t}\xrightarrow{{\proj_{L^2}}\circ{\mathrm{glue}}}K_{\alpha,0,0}^{\Lambda,t}
\end{equation}
in the setting of the above choices of $(F,\Lambda)$, $t=0$, and $Q_{0,0,0}^{\Lambda,t}$, where instead of projecting off of $\im T_{\alpha,0,0}^{\Lambda,0}$, we use the $L^2$-orthogonal projection.  The difference between \eqref{KmapLtLtwo} and \eqref{KmapLt} is exactly ${\proj_{L^2}}\circ Q^{\Lambda,0}_{\alpha,0,0}\circ(\Lambda\oplus D^0_{\alpha,0,0})\circ{\mathrm{glue}}$, which has small norm by the estimate in Lemma \ref{approxtopHF}.  Thus it suffices to compare \eqref{KmapLtLtwo} to the Floer--Hofer map.  Now the Floer--Hofer map is given by:
\begin{equation}\label{KmapLtFH}
K_{0|\alpha,0,0}^{\Lambda,0}\xrightarrow{{\proj_{L^2}}\circ{\mathrm{glue}'}}K_{\alpha,0,0}^{\Lambda,0}
\end{equation}
for a certain map $\mathrm{glue}'$ (see \cite[Proposition 9]{floerhoferorientations}).  However, the norm of the difference $\mathrm{glue}-\mathrm{glue}'$ is very small due to the exponential decay of elements of the kernel (here, we may use the explicit description of $K_{0|\alpha,0,0}$ as the image of $\kappa\mapsto\kappa_{0|\alpha}-Q_{0|\alpha,0,0}D_{0|\alpha,0,0}\kappa_{0|\alpha}$).  Thus we are done.
\end{proof}

\addcontentsline{toc}{section}{References}
\bibliographystyle{amsalpha}
\bibliography{implicitatlas}

\end{document}